\author{Titus Lupu}
\address{Laboratoire de Probabilités, Statistique et Modélisation, Sorbonne Université, 75005 Paris (France)}
\email{titus.lupu@upmc.fr}
\title{Poisson ensembles of loops of one-dimensional diffusions}
\newlength{\temp@wc@width}
\newlength{\temp@wc@height}
\newcommand{\widecheck}[1]{%
  \setlength{\temp@wc@width}{\widthof{$#1$}}%
  \setlength{\temp@wc@height}{\heightof{$#1$}}%
  #1\hspace{-\temp@wc@width}%
  \raisebox{\temp@wc@height+1pt}[\heightof{$\widehat{#1}$}]%
     {\rotatebox[origin=c]{180}{\vbox to 0pt{\hbox{$\widehat{\hphantom{#1}}$}}}}%
}
\numberwithin{equation}{section}
\numberwithin{figure}{chapter}
\begin{document}

\newenvironment{property}{\begin{enonce}{Property}}{\end{enonce}}
\newenvironment{proc}{\begin{enonce}{Procedure}}{\end{enonce}}
\newenvironment{vervaat}{\begin{enonce*}{Theorem(Vervaat)}}{\end{enonce*}}
\newenvironment{dynkin}{\begin{enonce*}{Theorem(Dynkin's Isomorphism)}}{\end{enonce*}}

\frontmatter

\begin{abstract}
There is a natural measure on loops (time-parametrized trajectories that in the end return to the origin), which one can associate to a wide class of Markov processes. The Poisson ensembles of Markov loops are Poisson point processes with intensity proportional to these measures. In wide generality, these Poisson ensembles of Markov loops are related, at intensity parameter $1/2$, to the Gaussian free field,
and at intensity parameter 1, to the loops done by a Markovian sample path. Here, we study the specific case when the Markov process is a one-dimensional diffusion. We give a detailed description of the corresponding measure on loops. 
Further, we describe the Poisson point processes of loops, their occupation fields, and explain how to sample these Poisson ensembles of loops out of diffusion sample path perturbed at their successive minima. Finally, we introduce a couple of interwoven determinantal point processes on the line, which is a dual through Wilson's algorithm of Poisson ensembles of loops, and study the properties of these determinantal point processes. 
\end{abstract}

\begin{altabstract}
Il y a une mesure naturelle sur les boucles (trajectoires paramétrées par le temps, qui à la fin retournent à leur origine) qu'on peut associer à une large classe de processus de Markov. Les ensembles poissoniens de boucles markoviennes sont des processus ponctuels de Poisson d'intensité proportionnelle à ces mesures. Dans une grande généralité, ces ensembles poissoniens de boucles markoviennes sont reliés, au paramètre d'intensité $1/2$, au champ libre gaussien, 
et au paramètre d'intensité 1, aux boucles crées par une trajectoire markovienne. Ici nous étudions le cas spécifique où le processus de Markov est une diffusion unidimensionnelle. Nous donnons une description détaillée de la mesure sur le boucles correspondante. 
Ensuite, nous décrivons les processus ponctuels de Poisson des boucles, leurs champs d'occupation et expliquons comment séquencer ces ensembles poissoniens de boucles à partir de trajectoires de diffusions perturbées à leur minima successifs. Enfin, nous introduisons un couple de processus ponctuels déterminantaux sur la droite, entrelacés, qui est un dual, à travers l'algorithme de Wilson, de l'ensemble poissonien de boucles, et étudions les propriétés de ces processus ponctuels déterminantaux.
\end{altabstract}

\subjclass{60-02, 60G15, 60G17, 60G55, 60G60, 60J55, 60J60, 60J65, 60J80}

\keywords{Poisson ensembles of Markov loops, loop-soup, one-dimensional diffusion processes, Vervaat's transformation, continuous state branching processes with immigration, Gaussian free field, Dynkin's isomorphism, Wilson's algorithm, uniform spanning tree, determinantal point processes
}

\altkeywords{Ensembles poissoniens de boucles markoviennes, soupes des boucles, processus de diffusion unidimensionnel, transformation de Vervaat, processus de branchement avec immigration à  état continu, champ libre gaussien, isomorphisme de Dynkin, algorithme de Wilson, arbre couvrant uniforme, processus ponctuels déterminantaux
}

\maketitle
\tableofcontents

\mainmatter

\chapter{Introduction}
\label{Ch1}

\section{Measure on loops: history}

There is a measure on loops one can naturally associate to a wide class of Markov processes. In general, it is expressed as
\begin{equation}
\label{EqPatternLoop}
\mu(d\gamma):=\int_{t>0}\int_{x}\mathbb{P}_{x,x}^{t}(d\gamma)p_{t}(x,x)m(dx) \dfrac{dt}{t},
\end{equation}
where $p_{t}(x,y)$ are the transition densities with respect to a $\sigma$-finite measure $m(dy)$, and 
$\mathbb{P}_{x,y}^{t}$ are the bridge probability measures. Here, a loop $\gamma$ is a path parametrized by continuous time, which at the end returns to the origin ($\gamma(0)=\gamma(T(\gamma))$). 

Such a measure first appeared, to my knowledge, in the work of Symanzik \cite{Symanzik65Scalar,Symanzik66Scalar,Symanzik1969QFT}. There, studying the $\phi^{4}$ model in dimension $d$, he expressed the moments, and some other functionals, of such fields in terms of multiple intergals over measures on massive Brownian loops and massive Brownian paths, corresponding to a $d$-dimensional Brownian motion killed at independent exponential time. He called this loop expansion, and saw it as Euclidean analogue of Feynman diagrams in Minkowski space QFT. Later, in the work of Brydges, Frölich and Spencer \cite{BFS82Loop}, an analogous measure appeared, but for discrete time random walk loops.

Then, an analogously defined measure in the setting of the two-dimensional Brownian motion appeared in the work of Lawler and Werner
\cite{LawlerWerner2004ConformalLoopSoup}. They also considered the Poisson point processes of intensity proportional to this measure on loops, which they called "loop-soups". The main motivation for studying this object was that the obtained loops, up to reroooting and time-reparametrization, were unvariant in law by conformal transformations. This properties were used by Sheffield and Werner in \cite{SheffieldWerner2012CLE} to construct
the Conformal Loop Ensemble (CLE) as outer boundaries of clusters in a Brownian loop-soup. 

Lawler and Trujillo-Ferreras initiated in \cite{LawlerFerreras2007RWLoopSoup} the study of discrete time random walk loop-soups. 
See also a recent survey by Lawler \cite{Lawler2018Topics}.
The measure they used was actually the same that appeared in Brydges, Frölich and Spencer \cite{BFS82Loop}.
Le Jan considered loops parametrized by continuous time, associated to Markov jump process on an electrical network, with intensity following rhe pattern \eqref{EqPatternLoop}, rather than the discrete time random walk loops \cite{LeJan2011Loops}. He also called the object Poisson ensemble of Markov loops rather than loop-soup. We will adopt Le Jan's terminology. If one takes the discrete skeletton of Le Jan's loops, one gets the random walk loops studied in \cite{LawlerFerreras2007RWLoopSoup}.

Taking loops parametrized by continuous time allowed Le Jan to consider the occupation field of Poisson ensembles of Markov loops, that is to say the total time spent by loops on each vertex. Le Jan identified two properties universally satisfied by these Poisson ensembles of Markov loops. The first one, is that at intensity parameter $1/2$, the occupation field has same law as half the square of a Gaussian free field. This is an extension of Dynkin's isomorphism \cite{Dynkin1984Isomorphism,Dynkin1984PolynomOccupField,Dynkin1984IsomorphismPresentation}, and is closely related to Symanzik's identities \cite{Symanzik1969QFT}. The second universal property is that, at intensity parameter 1, the loops erased during the loop-erasure algorithm applied to a Markovian sample path, are part of a Poisson ensembles of Markov loops. This property has been previously observed in the two-dimensional Brownian setting. See \cite{LawlerWerner2004ConformalLoopSoup}, Conjecture $1$, and \cite{LawlerSchrammWerner2003ConformalRestr}, Theorem $7.3$. See also \cite{DaisukeSapozh2017LERW3D} for recent developments in dimension three. On an electrical network, Wilson's algorithm 
\cite{Wilson1996UST} samples a uniform spanning tree by running successive loop erased random walks. Le Jan observed that the loops erased during this algorithm give a Poisson ensembles of Markov loops of intensity parameter 1.

Since then, the definition of the measure of loops was extended to a wide class of Markov processes, including some not having transition densities
\cite{LeJanMarcusRosen2012Loops,FitzsimmonsRosen2012LoopsIsomorphism,FzLJRosen2015Loops}. Actually, one rather considers measures on unrooted loops, seen as parametrized by a circle, where the cut between the start and the endpoint is not identified. What makes such measures on unrooted loops natural is actually the covariance by time change by an inverse of a Continuous Additive Functional (CAF). The time change by an inverse of a CAF transforms a Markov process into an other one, possibly on a smaller state space. The measure on unrooted loops of the time changed process is the pushforward by time-change of the measure on loops of the initial Markov process. This covariance is no longer true at the level of rooted loops.
In dimension two for Brownian loops, the covariance by time-change implies the invariance by conformal transformations of the range
\cite{LawlerWerner2004ConformalLoopSoup}.

\section{Measure on loops: present work and recent developments}

These notes are devoted to the study of loop measures, Poisson ensembles of loops and of an analogue of Wilson's algorithm in the setting of one-dimensional diffusions. This is a work done during my Ph.D. under the direction of Le Jan at Université Paris-Sud, Orsay
\cite{Lupu2015Thesis}. The aim of these notes is to give a complete and self-contained presentation of the topic in the continuous one-dimensional setting, displaying both results true more generally and those specific to our setting, and making the connection with the accumulated knowledge on the Brownian motion and diffusions, in particular their path decomposition. The study will also extend to loop measures associated to some infinitesimal generators containing a creation of mass term, under a negativity condition, as those enter in the expression of some exponential moments of occupation fields of Poisson ensembles of loops.

The aspects covered in these notes are the following: We will study the measures on loops, their covariance by change of scale and of speed of the underlying diffusions, covariance by adding a killing or creation of mass term, and invariance by conjugation of the generators of diffusions by positive continuous functions (generalization of h-transforms). Since the loops we consider are unrooted, a natural operation to do is to root the loops at their minima. In the Brownian case, this is related to Vervaat's bridge-to-excursion transformation \cite{Vervaat1979BridgeToExc}.
For more general diffusion, this leads to a disintegrated analogue of Vervaat's transformation, where the law of bridge conditioned on its minimum, and after exchange of pre- and postminimum part, is identified as absolutely continuous with respect the law of the excursion 
(Proposition \ref{Ch3Sec6: PropConditionnedVervaat}). We will further study the Poisson point processes of loops. It turns out that in the Brownian case, the Poisson ensembles of loops under the form of excursions (not bridges) already appeared in the litterature, in the Lévy-Hincin decompostion of square Bessel processes \cite{LeGallYor86CarreBessel}, and in the decompositions of a family of perturbed Brownian motions
\cite{LeGallYor86CarreBessel,Perman96}. We will investigate the occupation field Poisson ensembles of loops, which is a sum of local times, and identify it to (in general non-homogeneous) continuous state branching processes with immigration. We will revisit the relation to the Gaussian free field for the intensity parameter $1/2$. Further, we will explain how to sample Poisson ensembles of loops by slicing sample paths of diffusions perturbed at their successive minima. At intensity parameter 1, one uses for this unperturbed diffusions, which is the duality between loops and loop-erasure.
Then, we study the analogue of Wilson's algorithm applied to one-dimensional diffusions with a killing measure. It returns on one hand a Poisson ensemble of loops of intensity parameter 1, and on the other hand a couple of interwoven determinantal point processes on the line, which can be seen as an analogue of a random spanning tree. One point process gives the points connected to the root/cemetery, and the other point process corresponds to "deleted edges". The duality between loops and uniform spanning trees/determinantal point processes given by Wilson's algorithm deserves further investigation. Le Jan in \cite{LeJan2011Loops}, Section 8.4, suggests that it might be related to the supersymmetry, as determinatal point processes are associated to fermionic fields, and occupation fields of Poisson ensembles of loops to bosonic fields. Finally, we study some monotone coupling properties for our determinantal point processes as the killing measure increases.

One elementary, but very important observation done in this work is that in continuous one dimensional setting, the clusters of loops are exactly delimited by the zeroes of the occupation field (Proposition \ref{Ch4Sec2 PropClusters}). This observation led the author of these notes to study the loops associated to diffusions on metric graphs \cite{Lupu2014LoopsGFF}. A metric graph is obtained by replacing the edges in an electrical network by continuous line segments. A diffusion on it behaves like a one-dimensional diffusion inside the edges, and performs excursions in all directions once it reaches a vertex \cite{BaxterChacon1984DiffusionsNetworks}. Loops of metric graph diffusions have both a non-trivial geometry and continuous local times. Out of this one obtains that on metric graphs, the sign components of a Gaussian free field are exactly the clusters of a Poisson ensemble of loops of intensity parameter $1/2$. In this Poisson ensemble of metric graph loops, the restriction of loops to vertices gives a random walk loop-soup, and the loops that do not visit any vertex form Poisson ensembles of loops of one-dimensional diffusions inside each edge.
This isomorphism with the signed Gaussian free field led to a proof of convergence on two-dimensional lattices of clusters of rescaled random walk loop-soups to clusters of a Brownian loop-soup \cite{Lupu2015ConvCLE}. Further, in a collaboration with Aru and Sepúlveda
\cite{ALS2017FPS,ALS2}, we show that the continuum Gaussian free field in dimension two, which is a random generalized function, actually lives on clusters of a two-dimensional Brownian loop-soup. For this, we use the approximation by metric graphs of two-dimensional lattices.

The author thanks Professor Yves Le Jan (Université Paris-Sud, Orsay) for fruitful discussions and its helpful advice in relation with this work.
The author also thanks Professor Jim Pitman (University of California, Berkeley) for his comments on a previous version of these notes and his bibliographical suggestions.

\section{Results and layout}

The layout of these notes is the following: In Chapter \ref{Ch2} we recall some facts on one-dimensional diffusions and set the important notations. In Section \ref{Ch2Sec1} we recall the properties of the solutions to the second order ODE
\begin{displaymath}
\dfrac{d^{2}u}{dx^{2}} + u \nu = 0,
\end{displaymath}
where $\nu$ is a signed measure. In Section \ref{Ch2Sec2} we review the theory of one-dimensional diffusions, their generators, Green's functions, transition densities, excursions, bridge measures, etc. 
In Section \ref{Ch2Sec3} we further consider "generators" with creation of mass term and characterize a class of such operators which up to a conjugation are equivalent to generator of diffusions
(Proposition \ref{Ch2Sec3: PropPositivity}). 

The Chapter \ref{Ch3} is devoted to the properties of the measure on loops of one-dimensional diffusions. In Section \ref{Ch3Sec1} we introduce the functional spaces of continuous rooted and unrooted loops. In Section
\ref{Ch3Sec2} we define the measures $\mu^{x,y}$ on paths joining two points $x$ and $y$,  and study their invariance and covariance properties by different transformations such as time-reversal, restriction to a subinterval, increase of killing measure, change of scale, change of time and conjugation. In Section \ref{Ch3Sec3} we introduce the measure $\mu$
on rooted loops, and $\mu^{\ast}$ on unrooted loops, and again study the invariance and covariance properties. In particular, one distinctive property of $\mu^{\ast}$ is the covariance by time change
(Corollary \ref{Ch3Sec3: CorTimeChange}), which justifies the study of unrooted loops rather then rooted ones. In Section \ref{Ch3Sec4} we deal with the multiple local times of loops, and show that these appear as densities of concatenation of independent paths
$\mu^{x_{1},x_{2}}\lhd\dots\lhd\mu^{x_{n-1},x_{n}}\lhd\mu^{x_{n},x_{1}}$
relative to $\mu^{\ast}$ (Proposition \ref{Ch3Sec4: PropMultLocTimeDens}).
We also show that two diffusions have the same measure on loops if and only if they are conjugate 
(Proposition \ref{Ch3Sec4: PropConversehTransfInv}).
In Section \ref{Ch3Sec5}, we make a connection between the Brownian measure on loops and the Lévy-Itô measure on Brownian excursions using the Vervaat's bridge-to-excursion transformation. The idea is to root loops at their minima.
This in turn leads us in Section \ref{Ch3Sec6} 
to a conditioned version of Vervaat's transformation that holds for any one-dimensional diffusion process 
(Proposition \ref{Ch3Sec6: PropConditionnedVervaat}). It relates a bridge conditioned on its minimum and the excursion measure above this minimum.
In Section \ref{Ch3Sec7} we show that by restricting continuous loops to a discrete subsets we get the natural measure on random walk loops
(Proposition \ref{Ch3Sec7: PropRestrictionLoops}). In Section
\ref{Ch3Sec8} we consider loop measure in case of creation of mass terms
and see what properties generalize to this case.

In Chapter \ref{Ch4} we poissonize the measure on loops and study the occupation fields of Poisson ensembles of Markov loops. In Section \ref{Ch4Sec1} we recall the properties of the continuous state branching process with immigration.
In Section \ref{Ch4Sec2} we show that these processes, parametrized by the space variable, appear as occupation fields of Poisson ensembles (Poisson point processes) of loop of one-dimensional diffusions
(Proposition \ref{Ch4Sec2: PropCBPI}). We also observe that the clusters of loops are exactly the excursions of the occupation field above zero
(Proposition \ref{Ch4Sec2: PropOccupFieldZeroes}).
In Section \ref{Ch4Sec3} we consider the particular case of intensity parameter 1/2. We identify occupation fields as squares of Gaussian free fields (Property \ref{Ch4Sec3: PropertyIso}), give a signed version of this isomorphism (Proposition \ref{Prop PolIso}), and show how it is possible to derive particular versions of Dynkin's isomorphism using the isomorphism with loops and Palm's identity for Poisson point processes.
 
In Chapter \ref{Ch5} we root each loop in its minimum and obtain this way a collection of positive excursions. Then, we order these excursions in the decreasing sense of their minima and glue them together.
In Section \ref{Ch5Sec1} we recall some deterministic facts on paths obtained by this procedure.
In Section \ref{Ch5Sec2} we apply this procedure to random loops in our Poisson point processes. We obtain this way continuous paths which can be described as diffusions perturbed at their successive minima.
(Propositions \ref{Ch5Sec2: PropBMContour} and
\ref{Ch5Sec2: PropGeneralContour}). We study the particular case of intensity parameter 1 in Section \ref{Ch5Sec3}. For this particular value of intensity, the loops can be recovered from unperturbed diffusion sample
paths, by slicing them (Propositions \ref{Ch5Sec3: PropDirichletSlice} and
\ref{Ch5Sec3: PropTwoDecomp}).

In Chapter \ref{Ch6} we apply an extension of Wilson's loop-erasure algorithm (used to sample uniform spanning trees on electrical networks out of random walks) to transient one-dimensional diffusions, and obtain a couple of interwoven determinantal point processes on $\mathbb{R}$, 
$(\mathcal{Y}_{\infty},\mathcal{Z}_{\infty})$, dual to Poisson ensembles of loops of intensity parameter 1. In Section \ref{Ch6Sec1} we describe our construction, which involves an arbitrary choice of a countable everywhere dense family of points on the line, which are the starting points for diffusions. In Section \ref{Ch6Sec2} we relate the paths erased during the algorithm to the Poisson ensemble of loops of intensity parameter $1$ (Proposition \ref{Ch6Sec2: PropErasedLoops}).
In Section \ref{Ch6Sec3} we consider our algorithm in case of Brownian motion on $\mathbb{R}$ with a killing measure $\kappa$. We show that the law of $(\mathcal{Y}_{\infty},\mathcal{Z}_{\infty})$ does not depend on the arbitrary choice of the countable everywhere dense family of starting points (Proposition \ref{Ch6Sec3: PropInvPerm}). We identify 
$\mathcal{Y}_{\infty}$ and $\mathcal{Z}_{\infty}$ separately as determinantal point processes 
(Propositions \ref{Ch3Sec3: PropDeterminantalY} and
\ref{Sec3PropDiamondsDet}), and then identify the joint law of 
$(\mathcal{Y}_{\infty},\mathcal{Z}_{\infty})$
(Proposition \ref{Ch6Sec3: PropJointLaw}). We also give a criterion for
$\mathcal{Y}_{\infty}$ and $\mathcal{Z}_{\infty}$ to be finite or bounded on one side (Proposition \ref{Ch6Sec3: PropFiniteness}).
In Section \ref{Ch6Sec4} we point out how our identities transform if one takes a general transient diffusion instead of a Brownian motion with killing measure (Proposition \ref{Ch6Sec4: PropJointLawGeneral}).

In Chapter \ref{Ch7} we prove some monotone coupling properties for the determinantal point processes introduced in Chapter \ref{Ch6}. These monotone couplings do not follow from the construction by loop-erasure, but rather from the form of determinantal kernels.
First, in Section \ref{Ch7Sec1} we describe the conditional laws for
$(\mathcal{Y}_{\infty},\mathcal{Z}_{\infty})$, when conditioning on 
$\mathcal{Z}_{\infty}$ not charging an interval 
(Proposition \ref{Ch7Sec1: PropEdgeContr}) 
or $\mathcal{Y}_{\infty}$ not charging an interval 
(Corollary \ref{Ch7Sec1: CorAbsenceRoot}). In Section \ref{Ch7Sec2} we show that by increasing the killing measure in the diffusion, one can increase $\mathcal{Z}_{\infty}$ and make $\mathcal{Y}_{\infty}$ satisfy some constraints (Propositions \ref{Ch7Sec2: PropCouplingPoisson}
and Proposition \ref{Ch7Sec2: PropStrongCoup}). The monotone couplings of determinantal point processes we obtain are explicit.

\section{A list of commonly used notations}

\textbf{Chapter \ref{Ch2}:}

\begin{description}
\item[$I$] $\mathbb{R}$ or an open subinterval of $\mathbb{R}$.
\item[$\nu$] a signed $\sigma$-finite measure on $I$.
\item[$\kappa$] a positive Radon measure on $I$, considered as a killing measure.
\item[$\operatorname{Supp}$] the support of a measure.
\item[$m$] density of a speed measure on $I$.
\item[$w$] density of a scale measure on $I$.
\item[$L$] either the infinitesimal generator of a, possibly killed, diffusion on $I$, or a more general second order differential operator containing a creation of mass term.
\item[$u$] in general, a function on $I$ with $\frac{d^{2}u}{dx^{2}}$ a signed measure, often also assumed to be positive.
\item[$u_{\uparrow}$] a positive non-decreasing solution to $Lu=0$.
\item[$u_{\downarrow}$] a positive non-increasing solution to $Lu=0$.
\item[$W(u_{1},u_{2})$] Wronskian of $u_{1}$ and $u_{2}$.
\item[$X_{t}$] diffusion of generator $L$.
\item[$\zeta$] $+\infty$ or killing time for $X_{t}$.
\item[$\ell^{x}_{t}(X)$] local times of $X_{t}$.
\item[$G_{L}(x,y)$ or $G(x,y)$] Green's function of $L$.
\item[$p_{t}(x,y)$] transition densities.
\item[$\mathbb{P}^{t}_{x,y}$] bridge probability measure, from $x$ to $y$, of duration $t$.
\item[$\eta_{\rm exc}^{>x}$] measure on excursions above $x$.
\item[$\eta_{\rm exc}^{<x}$] measure on excursions below $x$.
\item[$\gamma$] a generic continuous path or loop.
\item[$\operatorname{Scale}_{A}$] change of scale operator (by $A$) acting on paths.
\item[$\operatorname{Scale}_{A}^{\rm gen}$] change of scale operator (by $A$) acting on generators.
\item[$\operatorname{Speed}_{V}$] change of speed operator (by $V$) acting on paths.
\item[$L_{\vert \widetilde{I}}$] operator $L$ restricted to functions supported on an open subinterval $\widetilde{I}$ of $I$.
\item[$\operatorname{Conj}(u,L)$] conjugation of the differential operator $L$ by the function $u$, $u^{-1}Lu$.
\item[$\mathfrak{D}^{+}$] operators that are not conjugates of generators of diffusions.
\item[$\mathfrak{D}^{0,-}$] operators that are conjugates of generators of diffusions.
\item[$\mathfrak{D}^{0}$] operators that are conjugates of generators of recurrent diffusions.
\item[$\mathfrak{D}^{-}$] operators that are conjugates of generators of transient diffusions.
\end{description}

\textbf{Chapter \ref{Ch3}:}

\begin{description}
\item[$T(\gamma)$] total life-time of a path or loop $\gamma$.
\item[$\operatorname{shift}_{v}$] a cyclic translation of parametrization of a loop.
\item[$\mathfrak{L}$] space of rooted loops.
\item[$\mathfrak{L}^{\ast}$] space of unrooted loops.
\item[$\pi$] projection form $\mathfrak{L}$ to $\mathfrak{L}^{\ast}$.
\item[$d_{\rm paths}$] distance on paths.
\item[$d_{\mathfrak{L}^{\ast}}$] distance on unrooted loops.
\item[$\mathcal{B}_{\mathfrak{L}}$] Borel $\sigma$-algebra on $\mathfrak{L}$.
\item[$\mathcal{B}_{\mathfrak{L}^{\ast}}$] Borel $\sigma$-algebra on $\mathfrak{L}^{\ast}$.
\item[$\tau^{y}_{l}$] stopping time at local time at $y$ equal to $l$.
\item[$T_{a}$] first passage time at level $a$.
\item[$^{\wedge}$] given $\mu_{1}$ a measure on paths, $\mu_{1}^{\wedge}$ is the image of $\mu_{1}$ by time reversal.
\item[$\lhd$] given $\mu_{1}$ and $\mu_{2}$ two measures on paths, $\mu_{1}\lhd\mu_{2}$ is the image measure of 
$\mu_{1}\otimes\mu_{2}$ by the operation of concatenation of paths.
\item[$\mu^{x,y}_{L}$ or $\mu^{x,y}$] a measure on finite life-time paths, starting in $x$ and ending in $y$, associated to the generator $L$.
\item[$\mu_{L}$ or $\mu$] measure on rooted loops associated to the generator $L$. 
\item[$\mu^{\ast}_{L}$ or $\mu^{\ast}$] measure on unrooted loops associated to the generator $L$.
\item[$\ell^{x_{1},x_{2},\dots,x_{n}}$] multiple local times.
\item[$\ell^{\ast x_{1},x_{2},\dots,x_{n}}$] circular multiple local times.
\item[$\mathcal{V}(\gamma)$] transformation that exchanges the pre- and the postminimum part of a bridge $\gamma$ (Vervaat's transformation).
\item[$\mu_{BM}$] measure on unrooted loops associated to the standard Brownian motion on $\mathbb{R}$.
\item[$\mu^{\ast}_{BM}$] measure on unrooted loops associated to the standard Brownian motion on $\mathbb{R}$.
\item[$\mathbb{P}_{BM,0,0}^{t}$] law of standard Brownian bridge from $0$ to $0$ in time $t$.
\item[$\eta_{t,BM}^{>0}$] probability measure on Brownian excursions above $0$ in time $t$.
\item[$B_{t}$] Brownian motion.
\item[$\rho_{t}$] Bessel 3 process.
\item[$\eta_{t}^{>a}$] probability measure on excursions above $a$ in time $t$ for general diffusions.
\item[$p_{t}^{(a \times)}(x,y)$] transition densities of a diffusion on $I\cap (a,+\infty)$, killed in $a$, that is of
generator $L_{\vert I\cap (a,+\infty)}$.
\item[$\mathbb{P}_{x,y}^{(a \times),t}$] bridge probability measures associated to $L_{\vert I\cap (a,+\infty)}$.
\item[$\rho_{t}^{+,a}$] diffusion on $I\cap (a,+\infty)$ obtained by conditioning the original diffusion not to hit $a$.
\item[$\mathbb{P}^{+,a}_{a}$] law of $\rho_{t}^{+,a}$ starting from $a$.
\end{description}


\textbf{Chapter \ref{Ch4}:}

\begin{description}
\item[$d\mathbb{B}_{x}$] spatial Gaussian white noise.
\item[$\alpha$] a positive constant, intensity parameter.
\item[$\mathcal{L}_{\alpha,L}$ or $\mathcal{L}_{\alpha}$] Poisson ensemble of loops of intensity
$\alpha\mu^{\ast}_{L}$.
\item[$\widehat{\mathcal{L}}_{\alpha,L}^{x}$ or $\widehat{\mathcal{L}}_{\alpha}^{x}$]
occupation field of $\mathcal{L}_{\alpha,L}$.
\item[$\#$] cardinal.
\item[$\mathfrak{G}_{s,\tilde{\nu}}$] operator
$f\mapsto\int_{I} G_{L+s\tilde{\nu}}(x,y)f(y)\tilde{\nu}(dy)$.
\item[$\vert\mathfrak{G}_{s,\tilde{\nu}}\vert$] operator
$f\mapsto\int_{I} G_{L+s\tilde{\nu}}(x,y)f(y)\vert\tilde{\nu}\vert(dy)$.
\item[$\phi_{x}$] Gaussian free field on $I$ of covariance function $G(x,y)$.
\item[$\mathfrak{M}(\mathcal{E})$] space of locally finite measure on an abstarct Polish space $\mathcal{E}$.
\item[$\Phi$] abstract Poisson random measure.
\item[$\mathfrak{P}_{n}$] set of partitions of $\lbrace 1,\dots,n\rbrace$.
\end{description}

\textbf{Chapter \ref{Ch5}:}

\begin{description}
\item[$\mathtt{e}_{q}$] a generic excursion above $0$.
\item[$\mathcal{Q}$]  countable everywhere dense subset of $(-\infty, x_{0})$, indexing the excursions.
\item[$\xi(t)$] function obtained by gluing together the excursions $(q+\mathtt{e}_{q})_{q\in\mathcal{Q}}$ ordered in decreasing sense of their minima.
\item[$\theta(t)$] $\inf_{[0,t]}\xi$.
\item[$\mathcal{L}_{\alpha, BM}$] Poisson ensemble of Brownian loops of intensity $\alpha\mu^{\ast}_{BM}$.
\item[$\xi_{\alpha, BM}^{(x_{0})}(t)$] path starting in $x_{0}$, obtained by gluing together some loops in $\mathcal{L}_{\alpha, BM}$ under the form of excursions above the minimum.
\item[$\theta_{\alpha, BM}^{(x_{0})}(t)$] $\inf_{[0,t]}\xi_{\alpha, BM}^{(x_{0})}$.
\item[$\Xi_{\alpha, BM}^{(x_{0})}(t)$] $\big(\xi_{\alpha, BM}^{(x_{0})}(t),
\theta_{\alpha, BM}^{(x_{0})}(t)\big)$.
\item[$T^{+}(\mathbb{R}^{2})$] $\lbrace(x,a)\in\mathbb{R}^{2}\vert x\geq a\rbrace$.
\item[$\operatorname{Diag}(\mathbb{R}^{2})$] $\lbrace(x,x)\vert x\in\mathbb{R}\rbrace$.
\item[$\mathcal{D}_{\alpha, BM}$] a functional space of functions on $T^{+}(\mathbb{R}^{2})$, which is a core for the generator of
$\Xi_{\alpha, BM}^{(x_{0})}(t)$.
\item[$\xi_{\alpha, L}^{(x_{0})}(t)$] path starting in $x_{0}$, obtained by gluing together some loops in $\mathcal{L}_{\alpha, L}$ under the form of excursions above the minimum.
\item[$\theta_{\alpha, L}^{(x_{0})}(t)$] $\inf_{[0,t]}\xi_{\alpha, L}^{(x_{0})}$.
\item[$\Xi_{\alpha, L}^{(x_{0})}(t)$] $\big(\xi_{\alpha, L}^{(x_{0})}(t),
\theta_{\alpha, L}^{(x_{0})}(t)\big)$.
\item[$T^{+}(I^{2})$] $\lbrace(x,a)\in I^{2}\vert x\geq a\rbrace$.
\item[$\widehat{T^{+}(I^{2})}$] closure of $T^{+}(I^{2})$ in $(\inf I, \sup I]^{2}$.
\item[$\operatorname{Diag}(I^{2})$] $\lbrace(x,x)\vert x\in I\rbrace$.
\item[$\widehat{\mathcal{D}}_{\alpha, \tilde{L}}$] a functional space of functions on $T^{+}(I^{2})$, which is a core for the generator of
$\Xi_{\alpha, L}^{(x_{0})}(t)$.
\item[$\mathscr{L}^{1}((X_{t})_{0\leq t<\zeta})$]  first way to slice a diffusion sample path $(X_{t})_{0\leq t<\zeta}$ into loops of
$\mathcal{L}_{1, L}$, corresponding to the loop-erasure procedure.
\item[$\mathscr{L}^{2}((X_{t})_{0\leq t<\zeta})$] second way to slice a diffusion sample path $(X_{t})_{0\leq t<\zeta}$ into loops of
$\mathcal{L}_{1, L}$, corresponding to the loop-erasure procedure applied to the time-reversed path.
\end{description}


\textbf{Chapter \ref{Ch6}:}

\begin{description}
\item[$\mathcal{T}$] a tree on a graph, often a spanning tree.
\item[$C(e)$] positive weight, conductance of an edge $e$.
\item[$(\mathcal{Y}_{n},\mathcal{J}_{n})$] result obtained after $n$ first steps of Wilson's algorithm applied to one-dimensional diffusions.
$\mathcal{Y}_{n}$ is a finite set of points, where the killing by $\kappa$ occurred. $\mathcal{J}_{n}$ is a finite set of disjoint line segments,
corresponding to the branches of the "tree" discovered so far.
\item[$(\mathcal{Y}_{\infty},\mathcal{Z}_{\infty})$] final result of Wilson's algorithm applied to one-dimensional diffusions.
$\mathcal{Y}_{\infty}$ and $\mathcal{Z}_{\infty}$ are two interwoven point processes, such that between any tow points in $\mathcal{Y}_{\infty}$,
there is a point of $\mathcal{Z}_{\infty}$, and vice-versa. Points in $\mathcal{Y}_{\infty}$ are those connected to the root/cemetery, where the killing by $\kappa$ occurred. Points in $\mathcal{Z}_{\infty}$ are analogues of missing edges in a spanning tree, separating different branches connected to the root/cemetery.
\item[$Q_{n}$] cardinal of $\mathcal{Y}_{n}$.
\item[$Y_{n,1},Y_{n,2},\dots$,$Y_{n,Q(n)}$]
points in $\mathcal{Y}_{n}$ ordered in the increasing sense.
\item[${[p^{-}_{n,1},p^{+}_{n,1}],[p^{-}_{n,2},p^{+}_{n,2}],\dots,[p^{-}_{n,Q_{n}},p^{+}_{n,Q_{n}}]}$]
intervals in $\mathcal{J}_{n}$ ordered in the increasing sense.
\item[$E^{-}_{n}$] subset of indices $q\in\lbrace 1,\dots,Q_{n}\rbrace$ for which $Y_{n,q}=p^{-}_{n,q}$.
\item[$E^{+}_{n}$] subset of indices $q\in\lbrace 1,\dots,Q_{n}\rbrace$ for which $Y_{n,q}=p^{+}_{n,q}$.
\item[$E^{-,+}_{n}$] subset of indices $q\in\lbrace 1,\dots,Q_{n}\rbrace$ for which $p^{-}_{n,q}<Y_{n,q}<p^{+}_{n,q}$.
\item[$\mathfrak{G}_{\kappa}$] operator $f\mapsto\int_{\mathbb{R}}G(x,y)f(y)\kappa(dy)$.
\item[$G^{(x\times)}(y,z)$] $G(y,z)-\dfrac{G(x,y)G(x,z)}{G(x,x)}$.
\item[$\mathcal{K}(y,z)$] determinantal kernel for $\mathcal{Z}_{\infty}$.
\item[$\mathcal{K}^{(x\triangleright)}(y,z)$] 
$\mathcal{K}(y,z)-\dfrac{\mathcal{K}(x,y)\mathcal{K}(x,z)}{\mathcal{K}(x,x)}$.
\item[$M_{n}(\mathcal{Y}_{\infty},\mathcal{Z}_{\infty})(dy_{0},dz_{1},dy_{1},\dots dz_{n},dy_{n})$]
infinitesimal probability for $y_{0},y_{1},\dots y_{n}$ being $n+1$ consecutive points in $\mathcal{Y}_{\infty}$, and $z_{1},\dots z_{n}$ being the $n$ points in $\mathcal{Z}_{\infty}$ separating them.
\item[$\mathscr{C}_{n}(a_{0},b_{0},\tilde{a}_{1},\tilde{b}_{1},a_{1},b_{1},\dots,\tilde{a}_{n},\tilde{b}_{n},a_{n},b_{n})$]
an event corresponding to $\mathcal{Y}_{\infty}$ having points in $[a_{i},b_{i}]$-s, $\mathcal{Z}_{\infty}$ having points in
$[\tilde{a}_{i},\tilde{b}_{i}]$-s, and some additional conditions.
\end{description}

\textbf{Chapter \ref{Ch7}:}

\begin{description}
\item[$\Upsilon$] a uniform spanning tree on a weighted graph (electrical network).
\item[$\mathcal{Y}_{\infty}^{(x_{0}\times)},\mathcal{Z}_{\infty}^{(x_{0}\times)},
\mathcal{Y}_{\infty}^{(\times x_{0})},\mathcal{Z}_{\infty}^{(\times x_{0})},
\mathcal{Y}_{\infty}^{(x_{0}\triangleright)},\mathcal{Z}_{\infty}^{(x_{0}\triangleright)},
\mathcal{Y}_{\infty}^{(\triangleleft x_{0})},\mathcal{Z}_{\infty}^{(\triangleleft x_{0})}$]
conditioned versions of $(\mathcal{Y}_{\infty},\mathcal{Z}_{\infty})$, above and below $x_{0}$.
\item[$G^{(x_{0}\times)},\mathcal{K}^{(x_{0}\times)},
G^{(\times x_{0})},\mathcal{K}^{(\times x_{0})},
G^{(x_{0}\triangleright)},\mathcal{K}^{(x_{0}\triangleright)},
G^{(\triangleleft x_{0})},\mathcal{K}^{(\triangleleft x_{0})}$]
associated determinantal kernels.
\end{description}

\chapter{Preliminaries on generators and semi-groups}
\label{Ch2}

\section{A second order ODE}
\label{Ch2Sec1}

In this chapter we will introduce the one-dimensional diffusions we will consider throughout this work (Section \ref{Ch2Sec2}). In Section
\ref{Ch2Sec3} we will extend the framework to "generators" containing a mass-creation term. In Section \ref{Ch2Sec1} we will prove or recall some facts on functions harmonic for these generators.

Let $I$ be an open interval of $\mathbb{R}$ and $\nu$ a signed measure on $I$. By signed measure we mean that the total variation $\vert\nu\vert$ is a positive Radon measure, but not necessarily finite, and $\nu(dx) = \epsilon(x)\vert\nu\vert(dx)$ where $\epsilon$ takes values in $\lbrace \pm 1\rbrace$. We look for the solutions of the linear second order differential equation on $I$:
\begin{equation}
\label{Ch2Sec1: Eq2ndOrderODE}
\dfrac{d^{2}u}{dx^{2}} + u \nu = 0.
\end{equation}
Given a solution $u$ of \eqref{Ch2Sec1: Eq2ndOrderODE}, we will write $\frac{du}{dx}(x^{+})$ and $\frac{du}{dx}(x^{-})$ for the right-hand side respectively left-hand side derivative of $u$ at $x$. The two are related by
\begin{displaymath}
\dfrac{du}{dx}(x^{+}) - \dfrac{du}{dx}(x^{-}) = - u(x)\nu(\lbrace x \rbrace).
\end{displaymath} 

Using a standard fixed point argument one can show that \eqref{Ch2Sec1: Eq2ndOrderODE} satisfies a Cauchy-Lipschitz principle: if $x_{0}\in I$ and $u_{0}, v_{0} \in\mathbb{R}$, there is a unique solution $u$ of \eqref{Ch2Sec1: Eq2ndOrderODE}, continuous on $I$, satisfying $u(x_{0})= u_{0}$ and $\frac{du}{dx}(x_{0}^{+})=v_{0}$. Let $x_{1}\in I\cap(x_{0},+\infty)$. A continuous function $u$ on $[x_{0},x_{1}]$ is solution of 
\eqref{Ch2Sec1: Eq2ndOrderODE} with previous initial conditions at $x_{0}$ if and only if it is a fixed point of the affine operator $\mathfrak{I}$ on $\mathcal{C}([x_{0},x_{1}])$ defined as
\begin{displaymath}
(\mathfrak{I}u)(x):= u_{0}+(x-x_{0})v_{0}-\int_{(x_{0},x]}(x-y)u(y)\nu(dy).
\end{displaymath}
The Lipschitz norm of $\mathfrak{I}^{n}$ is smaller or equal to $\frac{\vert\nu\vert([x_{0}, x_{1}])^{n}(x_{1}-x_{0})^{n}}{n!}$. So for $n$ large enough $\mathfrak{I}^{n}$ is contracting and thus $\mathfrak{I}$ has a unique fixed point in $\mathcal{C}([x_{0},x_{1}])$. 

Let $W(u_{1},u_{2})(x)$ be the Wronskian of two functions $u_{1}, u_{2}$:
\begin{displaymath}
{W(u_{1},u_{2})(x):=u_{1}(x)\dfrac{du_{2}}{dx}(x^{+})-u_{2}(x)\dfrac{du_{1}}{dx}(x^{+})}.
\end{displaymath}
If $u_{1}, u_{2}$ are both solutions of \eqref{Ch2Sec1: Eq2ndOrderODE}, $W(u_{1},u_{2})$ is constant on $I$. Using this fact we get a results which is similar to Sturm's separation theorem for the case of a measure $\nu$ with a continuous density with respect to the Lebesgue measure (see Theorem $7$, Section $2.6$ in \cite{BirkhoffRota1989ODE}):

\begin{property} 
\label{Ch2Sec1: PropertySturm}
Given $x_{0}<x_{1}$ two points in $I$:
\begin{itemize}
\item[(i)] Let $u_{1}$ be a solution of \eqref{Ch2Sec1: Eq2ndOrderODE} satisfying $u_{1}(x_{0})=0$, 
$\frac{du_{1}}{dx}(x_{0}^{+})>0$, and  $u_{2}$ a solution such that $u_{2}(x_{0})>0$. Assume that $u_{2}\geq 0$ on $[x_{0},x_{1}]$.  Then $u_{1}> 0$ on $(x_{0},x_{1}]$.
\item[(ii)] Let $u_{1}, u_{2}$ be two solutions such that $u_{1}(x_{0})=u_{2}(x_{0})>0$ and $\frac{du_{1}}{dx}(x_{0}^{+})>\frac{du_{2}}{dx}(x_{0}^{+})$. Assume that $u_{2}\geq 0$ on $[x_{0},x_{1}]$.  Then $u_{1}> u_{2}$ on $(x_{0},x_{1}]$.
\item[(iii)] If there is a solution $u$ to \eqref{Ch2Sec1: Eq2ndOrderODE} positive on $(x_{0}, x_{1})$ and zero at $x_{0}$ and $x_{1}$ then any other linearly independent solution of \eqref{Ch2Sec1: Eq2ndOrderODE} has exactly one zero in $(x_{0}, x_{1})$.
\end{itemize}
\end{property}

Next we prove a lemma that will be useful in Section \ref{Ch2Sec3}.

\begin{lemm}
\label{Ch2Sec1: LemNonDecrSol}
Let $\nu_{+}$ be the positive part of $\nu$. Let $x_{0}<x_{1}\in I$. Let $f$ be a continuous positive function on $[x_{0},x_{1}]$ such that ${\min_{[x_{0},x_{1}]}f>\nu_{+}([x_{0},x_{1}])^{2}}$. Then the equation 
\begin{equation}
\label{Ch2Sec1: Eq2ndOrderODE2}
\dfrac{d^{2}u}{dx^{2}} + u \nu - u f = 0
\end{equation}
has a positive solution that is non-decreasing on $[x_{0},x_{1}]$.
\end{lemm}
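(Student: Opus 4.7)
The plan is to construct the desired $u$ via a Riccati substitution. Given any càdlàg function $v$ on $[x_0,x_1]$, the function $u(x):=\exp\int_{x_0}^{x}v(y)\,dy$ is positive and satisfies $u'(x^+)=v(x)u(x)$. A direct Stieltjes product-rule computation shows that $u$ so defined solves \eqref{Ch2Sec1: Eq2ndOrderODE2} if and only if $v$ satisfies the Riccati-type equation
$$v(x) \;=\; v_0 \;+\; \int_{x_0}^{x}\bigl(f(y)-v(y)^2\bigr)\,dy \;-\; \nu((x_0,x]),$$
together with the jump relation $v(x^+)-v(x^-)=-\nu(\{x\})$. Moreover $u$ is non-decreasing precisely when $v\geq 0$. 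The lemma thus reduces to finding an initial value $v_0$ for which this equation admits a non-negative càdlàg solution on $[x_0,x_1]$.

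Set $M:=\nu_+([x_0,x_1])$, so by hypothesis $v_0:=\sqrt{\min_{[x_0,x_1]}f}>M$. I would show that with this choice the Riccati equation has a solution $v$ with $v\geq v_0-M>0$ throughout. Local existence of a càdlàg $v$ follows from a contraction argument analogous to the one used for \eqref{Ch2Sec1: Eq2ndOrderODE} in section \ref{Ch2Sec1}, since the non-linearity $v\mapsto -v^2$ is locally Lipschitz. The a priori lower bound is proved by contradiction: suppose $\tau\in(x_0,x_1]$ is the first point at which $v(\tau^+)<v_0-M$. Càdlàg-ity forces $v(\tau^-)\geq v_0-M$ and the drop occurs through a positive atom $\nu(\{\tau\})>0$. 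Let $\sigma:=\sup\{s\in[x_0,\tau):v(s^-)\geq v_0\text{ or }v(s^+)\geq v_0\}$, with the convention $\sigma=x_0$ if this set is empty. On $(\sigma,\tau)$ one has $0\leq v<v_0=\sqrt{\min f}$, hence $v^2<\min f\leq f$, so $\int_\sigma^\tau(f-v^2)\,dy\geq 0$, and the integrated Riccati equation between $\sigma$ and $\tau$ yields
$$v(\tau^+)\;\geq\;v(\sigma^+)\;-\;\nu_+((\sigma,\tau]).$$
One checks that $v(\sigma^+)\geq v_0-\nu_+(\{\sigma\})$: this is trivial if $\sigma=x_0$, and if $v$ has a down-jump at $\sigma$ then $v(\sigma^-)\geq v_0$ and the jump size equals $\nu_+(\{\sigma\})$. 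Combining with the disjointness bound $\nu_+(\{\sigma\})+\nu_+((\sigma,\tau])\leq\nu_+([x_0,x_1])=M$, we obtain $v(\tau^+)\geq v_0-M$, contradicting the choice of $\tau$.

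Once the lower bound $v\geq v_0-M$ is established, the complementary upper bound $v\leq v_0+(\max f)(x_1-x_0)+\nu_-([x_0,x_1])$ (obtained by dropping the non-positive $-v^2$ term in the Riccati equation) keeps $v$ uniformly bounded on $[x_0,x_1]$, so the local solution extends to the whole interval. The function $u(x):=\exp\int_{x_0}^{x}v(y)\,dy$ is then the required positive non-decreasing solution of \eqref{Ch2Sec1: Eq2ndOrderODE2}. The main technical difficulty is the careful bookkeeping of the atomic mass of $\nu_+$ at the endpoints $\sigma$ and $\tau$; the hypothesis $\min f>M^{2}$ is precisely what guarantees that the initial margin $v_0=\sqrt{\min f}$ exceeds the total $\nu_+$-budget $M$, so that the cumulative downward jumps of $v$ can never exhaust it.
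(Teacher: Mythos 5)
Your proposal is correct and uses essentially the same argument as the paper: the Riccati substitution $v=u'/u$, the initial value $\sqrt{\min_{[x_0,x_1]}f}$, the reduction to an a priori lower bound on $v$, and the contradiction obtained by integrating from the last time $v$ was $\geq \sqrt{\min f}$ and balancing the nonnegative drift $f-v^2$ against the total $\nu_+$-budget. The one organizational difference is that the paper first takes $u$ to be the solution of the \emph{linear} equation \eqref{Ch2Sec1: Eq2ndOrderODE2} (whose global existence on $[x_0,x_1]$ is already supplied by the Cauchy--Lipschitz discussion in section~\ref{Ch2Sec1}) and only then introduces $r=u'/u$ on the interval where $u>0$, so no separate local-existence-plus-no-blow-up argument for the nonlinear Riccati equation is needed; your route of solving the Riccati equation directly is also fine, but it makes the a priori bounds do double duty (existence \emph{and} positivity).
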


\begin{proof}
Set $a:=\min_{[x_{0},x_{1}]}f$. Let $u$ be the solution to \eqref{Ch2Sec1: Eq2ndOrderODE2} with the initial values $u(x_{0})=1$, $\frac{du}{dx}(x_{0}^{+})=\sqrt{a}$. We will show that $u$ is non-decreasing on 
$[x_{0},x_{1}]$. Assume that this is not the case. This means that $\frac{du}{dx}(x^{+})$ takes negative values somewhere in $[x_{0},x_{1}]$. Let 
\begin{displaymath}
x_{2}:=\inf\Big\lbrace x\in [x_{0},x_{1}]\Big\vert \dfrac{du}{dx}(x^{+})\leq 0\Big\rbrace.
\end{displaymath}
Since $\frac{du}{dx}(x^{+})$ is right-continuous, $\frac{du}{dx}(x_{2}^{+})\leq 0$.
Let $r(x):=\frac{1}{u(x)}\frac{du}{dx}(x^{+})$. $u$ is positive on $[x_{0}, x_{2}]$ hence $r$ is defined 
$[x_{0}, x_{2}]$. $r(x_{0})=\sqrt{a}$. $r$ is cadlag and satisfies the equation
\begin{displaymath}
dr = (f - r^{2}) dx - d\nu.
\end{displaymath}
Let $x_{3}:=\sup \lbrace x\in [x_{0},x_{2}]\vert r(x)\geq \sqrt{a} \rbrace$. We have
\begin{displaymath}
r(x_{2}) = r(x_{3}^{-}) + \int_{x_{3}}^{x_{2}}(f(x) - r^{2}(x)) dx - \nu([x_{3},x_{2}]).
\end{displaymath}
By construction $r(x_{3}^{-})\geq \sqrt{a}$. By definition, $f - r^{2}\geq 0$ on $(x_{3},x_{2}]$. Thus,
\begin{displaymath}
r(x_{2})\geq \sqrt{a} - \nu([x_{3},x_{2}])>0.
\end{displaymath} 
It follows that $r(x_{2})>0$, which is absurd.
\end{proof}

In the case $\nu = -2\kappa$ where $\kappa$ is a non-zero positive Radon measure, the equation \eqref{Ch2Sec1: Eq2ndOrderODE} becomes:
\begin{equation}
\label{Ch2Sec1: Eq2ndOrderODE3}
\dfrac{1}{2}\dfrac{d^{2}u}{dx^{2}} - u \kappa = 0.
\end{equation}
This is the equation of exit probabilities of a Brownian motion with killing measure $\kappa$. In this case, the two-dimensional linear space of solutions is spanned by two convex positive solutions $u_{\uparrow}$ and $u_{\downarrow}$, $u_{\uparrow}$ being non-decreasing and $u_{\downarrow}$ non-increasing. Given $x_{0}\in I$, we can construct $u_{\uparrow}$ as the limit when $x_{1}\to \inf I$ of the unique solution which equals $0$ in $x_{1}$ and $1$ in $x_{0}$. For $u_{\downarrow}$ we take the limit as $x_{1}\rightarrow \sup I$. $u_{\uparrow}$ and $u_{\downarrow}$ are defined up to a positive multiplicative constant. See \cite{Breiman1992Probability}, Section $16.11$, or \cite{RevuzYor1999BMGrundlehren}, Appendix $8$, for more details. Next we give equivalent conditions on the asymptotic behavior of $u_{\uparrow}$ and $u_{\downarrow}$ that will be used in Chapter \ref{Ch6}.

\begin{prop}
\label{Ch2Sec1: PropAsymptotics}
In case $[0,+\infty)\subseteq I$, the following four conditions are equivalent:
\begin{itemize}
\item[(i)] $\int_{(0,+\infty)}x\kappa(dx) <+\infty$,
\item[(ii)] $u_{\downarrow}(+\infty)>0$,
\item[(iii)] There is $C>0$ such that for all $x\geq 1$,
$u_{\uparrow}(x)\leq C x$,
\item[(iv)] $\int_{(0,+\infty)}u_{\uparrow}(x)u_{\downarrow}(x)\kappa(dx)<+\infty$.
\end{itemize}
\end{prop}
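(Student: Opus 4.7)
The plan is to establish the cycle (ii) $\Rightarrow$ (iii) $\Rightarrow$ (i) $\Rightarrow$ (ii) and then to deduce (iv) via integration by parts. Throughout I write $W := u_{\uparrow} u_{\downarrow}' - u_{\downarrow} u_{\uparrow}'$ for the constant Wronskian and $|W| := -W > 0$ (the sign follows from $u_{\uparrow}$ non-decreasing and $u_{\downarrow}$ non-increasing, both positive). Set $M := u_{\uparrow}'(+\infty) \in (0,+\infty]$ and $d := u_{\downarrow}(+\infty) \in [0,+\infty)$, limits that exist by monotonicity; convexity of $u_{\downarrow}$ combined with $u_{\downarrow} \geq 0$ forces $u_{\downarrow}'(+\infty) = 0$. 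Integrating the ODE $u'' = 2 u \kappa$ gives the basic identities $u_{\uparrow}'(x^+) - u_{\uparrow}'(0^+) = 2\int_{(0,x]} u_{\uparrow}\kappa$ and $-u_{\downarrow}'(x^+) = 2\int_{(x,+\infty)} u_{\downarrow}\kappa$.

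The heart of (i)--(iii) is the pointwise Wronskian identity $u_{\downarrow}(y) u_{\uparrow}'(y) = |W| + u_{\uparrow}(y) u_{\downarrow}'(y) \leq |W|$. Under (ii), i.e., $d > 0$, this gives $u_{\uparrow}' \leq |W|/d$ for large $y$, so $M < +\infty$, and convexity of $u_{\uparrow}$ yields $u_{\uparrow}(x) \sim Mx$, which is (iii). For (iii) $\Rightarrow$ (i), the identity $M - u_{\uparrow}'(0^+) = 2\int u_{\uparrow}\kappa$ makes $\int u_{\uparrow}\kappa < +\infty$; since $u_{\uparrow}'(y_0^+) > 0$ for some $y_0$ (as $M > 0$ whenever $\kappa \neq 0$), convexity gives $u_{\uparrow}(x) \geq c x$ for $x$ large, whence (i). For (i) $\Rightarrow$ (ii), I apply Fubini to the second integral identity to obtain $u_{\downarrow}(0) - d = 2\int_{(0,+\infty)} y u_{\downarrow}(y)\kappa(dy)$, and then construct a bounded positive solution $v$ of the ODE with $v(+\infty) = 1$ by an Arzelà--Ascoli argument on the family $v_n$ solving $v_n'' = 2 v_n \kappa$ with $v_n(n) = 1$, $v_n'(n^+) = 0$. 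Each $v_n$ is convex non-increasing on $[0,n]$ with $v_n \geq 1$, and the integral equation $v_n(x) = 1 + 2\int_{(x,n]} (z - x) v_n(z)\kappa(dz)$ combined with (i) supplies a uniform upper bound. Since $\kappa \neq 0$ implies $u_{\uparrow}(+\infty) = +\infty$, the bounded positive limit $v$ must be a positive multiple of $u_{\downarrow}$, giving $d > 0$.

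The direction (i) $\Rightarrow$ (iv) follows from $u_{\uparrow}(x) u_{\downarrow}(x) \leq u_{\downarrow}(0) \bigl( u_{\uparrow}(0) + M x \bigr)$, which is $\kappa$-integrable under (i) (noting that (i) also forces $\kappa([1,+\infty)) < +\infty$). For (iv) $\Rightarrow$ (i), integration by parts using $u_{\uparrow}'' = 2 u_{\uparrow} \kappa$ yields
\[ 2\int_0^T u_{\uparrow} u_{\downarrow}\,\kappa \,=\, u_{\downarrow}(T) u_{\uparrow}'(T) - u_{\downarrow}(0) u_{\uparrow}'(0) + \int_0^T u_{\uparrow}' |u_{\downarrow}'|\,dy, \]
with the boundary term bounded by $|W|$, so (iv) is equivalent to finiteness of $\int_0^{+\infty} u_{\uparrow}' |u_{\downarrow}'|\,dy$. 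The main obstacle is showing divergence of this last integral when (i)--(iii) all fail, so that $d = 0$ and $M = +\infty$. For this I plan to use the algebraic identity $(F')^2 - W^2 = 4 F u_{\uparrow}' u_{\downarrow}'$ with $F := u_{\uparrow} u_{\downarrow}$, which rewrites the integrand as $\bigl( W^2 - (F')^2 \bigr)/(4F)$, in conjunction with $\bigl( \log(u_{\uparrow}/u_{\downarrow}) \bigr)' = |W|/F$; since $u_{\uparrow}/u_{\downarrow} \to +\infty$, the latter gives $\int_0^{+\infty} dy/F = +\infty$. A case analysis on the asymptotic behaviour of $F'$ then concludes: whenever $F'$ is bounded away from $\pm |W|$, the integrand dominates a positive multiple of $1/F$; in the borderline regime where $F'(y) \to |W|$, finer asymptotic estimates on $W^2 - (F')^2$ are required to show that the divergence of $\int dy/F$ still produces a divergent answer.
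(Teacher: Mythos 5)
Your cycle (ii)$\Rightarrow$(iii)$\Rightarrow$(i)$\Rightarrow$(ii) is sound and routed differently from the paper, which proves the four implications (ii)$\Rightarrow$(i), (iii)$\Rightarrow$(i), (i)$\Rightarrow$(ii), (i)$\Rightarrow$(iii) separately. The Wronskian bound $u_{\downarrow}u_{\uparrow}' \leq |W|$ you use for (ii)$\Rightarrow$(iii) is a clean device, and the Arzel\`a--Ascoli construction for (i)$\Rightarrow$(ii) works, although it is heavier than the paper's one-line estimate $u_{\downarrow}(y)-u_{\downarrow}(+\infty) \leq 2u_{\downarrow}(y)\int_{(y,+\infty)}(x-y)\kappa(dx)$, which closes immediately once (i) makes the factor on the right eventually less than $1$. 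Your direct argument for (i)$\Rightarrow$(iv) via $u_{\uparrow}(x)u_{\downarrow}(x) \leq u_{\downarrow}(0)(u_{\uparrow}(0)+Mx)$ is also fine.

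The genuine gap is in (iv)$\Rightarrow$(i). You correctly reduce (iv) to finiteness of $\int_0^{\infty} u_{\uparrow}'\,|u_{\downarrow}'|\,dy$, and the rewriting as $\int (W^2-(F')^2)/(4F)$ together with $\int dy/F = +\infty$ is a valid observation. But the case analysis does not close the argument, and you concede as much by deferring to ``finer asymptotic estimates.'' Moreover, the borderline regime you single out, $F'(y)\to |W|$, is not the one that occurs. Under the contradiction hypothesis ($u_{\downarrow}(+\infty)=0$ and (iv) holding), monotonicity of $u_{\uparrow}'$ gives
\[
u_{\uparrow}'(x)\,u_{\downarrow}(x) \;=\; -u_{\uparrow}'(x)\int_x^{\infty} u_{\downarrow}'(y)\,dy \;\leq\; \int_x^{\infty} u_{\uparrow}'(y)\,|u_{\downarrow}'(y)|\,dy \;\longrightarrow\; 0,
\]
so $u_{\downarrow}u_{\uparrow}'\to 0$, hence $u_{\uparrow}|u_{\downarrow}'| = |W| - u_{\downarrow}u_{\uparrow}' \to |W|$ and $F' = u_{\downarrow}u_{\uparrow}' - u_{\uparrow}|u_{\downarrow}'| \to -|W|$, not $+|W|$. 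Once one has $u_{\uparrow}|u_{\downarrow}'|\to|W|$, the contradiction follows at once from $\int^{\infty} u_{\uparrow}'|u_{\downarrow}'|\,dy \gtrsim \tfrac{|W|}{2}\int^{\infty} u_{\uparrow}'/u_{\uparrow}\,dy = +\infty$ since $u_{\uparrow}(+\infty)=+\infty$; this is exactly the paper's (iv)$\Rightarrow$(ii) argument. The $F$-identity is an algebraic detour that does not by itself pin down the rate at which $W^2-(F')^2$ vanishes, so the step you acknowledge is missing is in fact the crux, and it is reached most directly by the monotonicity estimate above rather than through a case analysis on $F'$.
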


\begin{proof}
We will prove in order that (ii) implies (i), (iii) implies (i), (i) implies (ii), (i) implies (iii) and (iv) implies (ii). (iv) is obviously implied by the combination of (i), (ii) and (iii).

(ii) implies (i): For all $x\in[0,+\infty)$,
\begin{displaymath}
-\dfrac{du_{\downarrow}}{dx}(x^{+})=2\int_{(x,+\infty)}u_{\downarrow}(y)\kappa(dy)
\leq 2u_{\downarrow}(+\infty)\kappa((x,+\infty)).
\end{displaymath}
$-\frac{du_{\downarrow}}{dx}(x^{+})$ is integrable on $(0,+\infty)$. Since $u_{\downarrow}(+\infty)>0$, this implies that,
\begin{displaymath}
\int_{(0,+\infty)}\kappa((x,+\infty))dx < +\infty.
\end{displaymath}
But
\begin{displaymath}
\int_{(0,+\infty)}\kappa((x,+\infty))dx = \int_{(0,+\infty)}y\kappa(dy), 
\end{displaymath}
and hence (i).

(iii) implies (i): If (iii) holds then for all $x\in[0,+\infty)$, $\frac{du_{\uparrow}}{dx}(x^{+})\leq C$. 
But
\begin{displaymath}
\dfrac{du_{\uparrow}}{dx}(x^{+})=\dfrac{du_{\uparrow}}{dx}(0^{+})+ 
2\int_{(0,x]}u_{\uparrow}(y)\kappa(dy).
\end{displaymath}
This implies that
\begin{displaymath}
\int_{(0,+\infty)}u_{\uparrow}(y)\kappa(dy)<+\infty.
\end{displaymath}
Since $u_{\uparrow}$ is convex, 
$u_{\uparrow}(y)\geq u_{\uparrow}(0)+\frac{du_{\uparrow}}{dy}(0^{+})y$. So (i) is satisfied.

(i) implies (ii): For all $y\in[0,+\infty)$,
\begin{displaymath}
u_{\downarrow}(y)-u_{\downarrow}(+\infty)=
2\int_{y}^{+\infty}\int_{(z,+\infty)}u_{\downarrow}(x)\kappa(dx)dz
\leq 2u_{\downarrow}(y)\int_{(y,+\infty)}(x-y)\kappa(dx).
\end{displaymath}
Condition (i) implies that
\begin{displaymath}
\lim_{y\rightarrow +\infty}2\int_{(y,+\infty)}(x-y)\kappa(dx)=0.
\end{displaymath}
So for $y$ large enough, $u_{\downarrow}(y)-u_{\downarrow}(+\infty)<u_{\downarrow}(y)$. Necessarily, $u_{\downarrow}(+\infty)>0$.

(i) implies (iii): For all $y<x\in[0,+\infty)$,
\begin{equation}
\label{Ch2Sec1: EqIneq1}
\dfrac{du_{\uparrow}}{dx}(x^{+})=\dfrac{du_{\uparrow}}{dy}(y^{+})+
2u_{\uparrow}(y)\kappa((y,x])+2\int_{(y,x]}(u_{\uparrow}(z)-u_{\uparrow}(y))\kappa(dz).
\end{equation}
Let $y$ be large enough such that
\begin{displaymath}
2\int_{(y,+\infty)}(z-y)\kappa(dz)<1.
\end{displaymath}
Then there is $C>0$ large enough such that
\begin{equation}
\label{Ch2Sec1: EqIneq2}
C>\dfrac{du_{\uparrow}}{dy}(y^{+})+2u_{\uparrow}(y)\kappa((y,+\infty)) + 2C\int_{(y,+\infty)}(z-y)\kappa(dz).
\end{equation}
Assume that there is $x\in[0,+\infty)$ such that $\frac{du_{\uparrow}}{dx}(x^{+})\geq C$. Let
\begin{displaymath}
x_{0}:=\inf\Big\lbrace x\geq y\Big\vert \dfrac{du_{\uparrow}}{dx}(x^{+})\geq C\Big\rbrace.
\end{displaymath}
$x\mapsto \frac{du_{\uparrow}}{dx}(x^{+})$ is right-continuous. Thus 
$\frac{du_{\uparrow}}{dx}(x_{0}^{+})\geq C$. By definition, for all $z\in[y,x_{0}]$, 
$\frac{du_{\uparrow}}{dz}(z^{+})\leq C$, and hence $u_{\uparrow}(z)-u_{\uparrow}(y)\leq C(z-y)$.
But then \eqref{Ch2Sec1: EqIneq1} and \eqref{Ch2Sec1: EqIneq2} imply that 
$\frac{du_{\uparrow}}{dx}(x_{0}^{+})< C$ which is contradictory. It follows that $\frac{du_{\uparrow}}{dx}(x^{+})$ is bounded by $C$, which implies property (iii).

(iv) implies (ii): Applying integration by parts we get that for all $x>0$,
\begin{equation*}
\begin{split}
2\int_{(0,x]}u_{\uparrow}(y)u_{\downarrow}(y)\kappa(dy)=&
\int_{(0,x]}u_{\downarrow}(y)d\Big(\dfrac{du_{\uparrow}}{dy}\Big)(dy)\\=&
\dfrac{du_{\uparrow}}{dx}(x^{+})u_{\downarrow}(x)-\dfrac{du_{\uparrow}}{dx}(0^{+})u_{\downarrow}(0)-
\int_{0}^{x}\dfrac{du_{\downarrow}}{dy}(y^{+})\dfrac{du_{\uparrow}}{dy}(y^{+})dy.
\end{split}
\end{equation*}
$\frac{du_{\uparrow}}{dx}(x^{+})u_{\downarrow}(x)$ is positive. We get that
\begin{equation}
\label{Ch2Sec1: EqIneq3}
-\int_{0}^{+\infty}\dfrac{du_{\downarrow}}{dy}(y^{+})\dfrac{du_{\uparrow}}{dy}(y^{+})dy\leq
2\int_{(0,+\infty)}u_{\uparrow}(y)u_{\downarrow}(y)\kappa(dy)+
\dfrac{du_{\uparrow}}{dx}(0^{+})u_{\downarrow}(0)<+\infty.
\end{equation}
Next,
\begin{equation}
\label{Ch2Sec1: EqIneq4}
\begin{split}
\dfrac{du_{\uparrow}}{dx}(x^{+})(u_{\downarrow}(x)-u_{\downarrow}(+\infty))=&
-\dfrac{du_{\uparrow}}{dx}(x^{+})\int_{x}^{+\infty}\dfrac{du_{\downarrow}}{dy}(y^{+})dy\\
\leq&-\int_{x}^{+\infty}\dfrac{du_{\downarrow}}{dy}(y^{+})\dfrac{du_{\uparrow}}{dy}(y^{+})dy.
\end{split}
\end{equation}
Assume that $u_{\downarrow}(+\infty)=0$. Then \eqref{Ch2Sec1: EqIneq4} implies that
\begin{displaymath}
\lim_{x\rightarrow +\infty}\dfrac{du_{\uparrow}}{dx}(x^{+})u_{\downarrow}(x)=0
\end{displaymath}
and
\begin{equation}
\label{Ch2Sec1: EqLim}
\lim_{x\rightarrow +\infty}-\dfrac{du_{\downarrow}}{dx}(x^{+})u_{\uparrow}(x)=
W(u_{\downarrow},u_{\uparrow})-\lim_{x\rightarrow +\infty}\dfrac{du_{\uparrow}}{dx}(x^{+})u_{\downarrow}(x)
=W(u_{\downarrow},u_{\uparrow}).
\end{equation}
\eqref{Ch2Sec1: EqIneq3} together with \eqref{Ch2Sec1: EqLim} imply that
\begin{displaymath}
\int_{0}^{+\infty}\dfrac{1}{u_{\uparrow}(y)}\dfrac{du_{\uparrow}}{dy}(y^{+})dy <+\infty.
\end{displaymath}
But this is impossible because $\log(u_{\uparrow}(+\infty))=+\infty$. Thus $u_{\downarrow}(+\infty)>0$.
\end{proof}

Next we deal with the continuity of $u_{\uparrow}$ and $u_{\downarrow}$ with respect the measure $\kappa$. We will write $u_{\kappa,\uparrow}$ and $u_{\kappa,\downarrow}$ to denote the dependence on $\kappa$.

\begin{lemm}
\label{Ch2Sec1: LemContKappa}
Let $x_{0}\in I$. Let $(\kappa_{n})_{n\geq 0}$ be a sequence of non-zero positive Radon measures on $I$ converging vaguely (i.e. against functions with compact support) to $\kappa$. Then 
$\frac{u_{\kappa_{n},\uparrow}}{u_{\kappa_{n},\uparrow}(x_{0})}$ converges to 
$\frac{u_{\kappa,\uparrow}}{u_{\kappa,\uparrow}(x_{0})}$, 
$\frac{u_{\kappa_{n},\downarrow}}{u_{\kappa_{n},\downarrow}(x_{0})}$ converges to
$\frac{u_{\kappa,\downarrow}}{u_{\kappa,\downarrow}(x_{0})}$ and the convergences are uniform on compact subsets of $I$.
\end{lemm}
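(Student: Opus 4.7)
The plan is to reduce the statement to a continuity result for the Cauchy problem associated with \eqref{Ch2Sec1: Eq2ndOrderODE3} under vague convergence of the killing measure, and then to combine it with the construction of $u_{\uparrow}$ and $u_{\downarrow}$ recalled in the excerpt. Write $v_{n}:=u_{\kappa_{n},\uparrow}/u_{\kappa_{n},\uparrow}(x_{0})$ and $v:=u_{\kappa,\uparrow}/u_{\kappa,\uparrow}(x_{0})$, with right derivatives $s_{n}:=\frac{dv_{n}}{dx}(x_{0}^{+})$ and $s:=\frac{dv}{dx}(x_{0}^{+})$. Once I know $s_{n}\to s$, the continuity of the Cauchy problem applied at $x_{0}$ with data $(1,s_{n})\to(1,s)$ will deliver $v_{n}\to v$ uniformly on compact subsets of $I$. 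The case of $u_{\downarrow}$ is entirely symmetric.

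\emph{Continuity of the Cauchy problem.} Fix a compact interval $[a,b]\subset I$ containing $x_{0}$. Vague convergence yields $\sup_{n}\kappa_{n}([a,b])<+\infty$ (test against a continuous compactly supported majorant of $\mathbf{1}_{[a,b]}$). The Lipschitz estimate on $\mathfrak{I}^{k}$ from section \ref{Ch2Sec1}, applied uniformly in $n$, produces a Gronwall-type $L^{\infty}$ bound on the solutions $u_{n}$ of $u_{n}''=2u_{n}\kappa_{n}$ with initial data $(\alpha_{n},\beta_{n})\to(\alpha,\beta)$ at $x_{0}$. The identity $\frac{du_{n}}{dx}(x^{+})=\beta_{n}+2\int_{(x_{0},x]}u_{n}\,d\kappa_{n}$ bounds right derivatives as well, yielding equicontinuity, so Arzelà--Ascoli extracts a subsequential uniform limit $\widetilde{u}$. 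Vague convergence applied to the uniformly converging integrands $(x-y)u_{n}(y)$ lets one pass to the limit in the integral formulation and identify $\widetilde{u}$ with the Cauchy solution for $\kappa$; the Cauchy-Lipschitz principle from section \ref{Ch2Sec1} then forces uniform convergence of the whole sequence on $[a,b]$.

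\emph{Convergence $s_{n}\to s$.} For $x_{1}\in I$ with $x_{1}<x_{0}$, let $u_{n,x_{1}}$ denote the solution of the ODE for $\kappa_{n}$ with $u_{n,x_{1}}(x_{1})=0$ and $u_{n,x_{1}}(x_{0})=1$. The Wronskian $W(v_{n},u_{n,x_{1}})$ is constant; evaluating at $x_{1}$ and at $x_{0}$, and using $v_{n}(x_{1})\geq 0$ together with $\frac{du_{n,x_{1}}}{dx}(x_{1}^{+})\geq 0$, one obtains
\[
0\leq s_{n}\leq \frac{du_{n,x_{1}}}{dx}(x_{0}^{+}).
\]
By the continuity of the Cauchy problem applied with base point $x_{1}$, the right-hand side converges to $\frac{du_{\kappa,x_{1}}}{dx}(x_{0}^{+})$ as $n\to\infty$; this quantity decreases to $s$ as $x_{1}\to\inf I$ by the construction of $u_{\uparrow}$ recalled in the excerpt. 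Hence $\limsup_{n}s_{n}\leq s$. Conversely, any subsequential limit $s^{*}$ of the bounded sequence $(s_{n})$ yields via the Cauchy continuity a uniform limit $v^{*}$ of the corresponding subsequence of $v_{n}$, solving the ODE for $\kappa$ with data $(1,s^{*})$. The function $v^{*}$ is non-decreasing as a pointwise limit of non-decreasing functions, and it is strictly positive on $I$: an interior zero $z$ would be a local minimum, forcing both one-sided derivatives at $z$ to vanish (the jump equals $2v^{*}(z)\kappa(\{z\})=0$), whence Cauchy-Lipschitz would give $v^{*}\equiv 0$, contradicting $v^{*}(x_{0})=1$. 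Uniqueness of positive non-decreasing solutions up to a multiplicative constant, recalled in the excerpt, forces $v^{*}=v$, so $s^{*}=s$ and $s_{n}\to s$. The technical crux is the continuity of the Cauchy problem under vague convergence; the rest is a limit-interchange based on the variational characterization of $u_{\uparrow}$.
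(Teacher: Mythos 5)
The first paragraph (continuity of the Cauchy problem under vague convergence) is sound, and your Wronskian bound $s_n\leq \frac{du_{n,x_1}}{dx}(x_0^+)$ together with the monotone decrease of $\frac{du_{\kappa,x_1}}{dx}(x_0^+)$ to $s$ as $x_1\to\inf I$ correctly gives $\limsup_n s_n\leq s$; this corresponds to the paper's step $\underline{v}\geq \frac{du_{\kappa,\downarrow}}{dx}(x_0^+)$, which the paper instead obtains by noting that a solution with too small an initial slope must change sign. You should, as the paper does, reduce to $\kappa(\{x_0\})=0$ before pushing vague convergence through the derivative formula, which integrates against the discontinuous kernel $\mathbf{1}_{(x_1,x_0]}(\cdot)$.

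The gap is in the reverse inequality $\liminf_n s_n\geq s$. You assert that ``uniqueness of positive non-decreasing solutions up to a multiplicative constant, recalled in the excerpt, forces $v^{*}=v$.'' No such uniqueness is recalled in the excerpt --- it only states that $u_\uparrow,u_\downarrow$ span the solution space and that the exhaustion construction determines $u_\uparrow$ up to a positive scalar --- and the claim is false in general. Take $I=(0,+\infty)$, $\kappa=\lambda\,dx$, $x_0>0$: both $\sinh(\sqrt{2\lambda}\,x)/\sinh(\sqrt{2\lambda}\,x_0)=u_{\kappa,\uparrow}/u_{\kappa,\uparrow}(x_0)$ and $\cosh(\sqrt{2\lambda}\,x)/\cosh(\sqrt{2\lambda}\,x_0)$ are positive, non-decreasing solutions of \eqref{Ch2Sec1: Eq2ndOrderODE3} normalized at $x_0$, yet they differ, with right derivatives $\sqrt{2\lambda}\coth(\sqrt{2\lambda}\,x_0)$ and $\sqrt{2\lambda}\tanh(\sqrt{2\lambda}\,x_0)$ at $x_0$. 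So knowing that a subsequential limit $v^{*}$ is positive, non-decreasing and normalized does not force $v^{*}=v$, and your argument does not exclude $s^{*}<s$. The paper closes this direction quantitatively: using the convexity of the distinguished solution and the boundary constraint (reflected to your setting, $v_n(x)\leq (x-\inf I)/(x_0-\inf I)$ when $\inf I$ is finite, and simply $v_n\leq 1$ on $(\inf I,x_0]$ otherwise), it shows that any solution with initial slope strictly below $s_n$ must become nonpositive before a point $z_n$ that stays in a fixed compact subset of $I$ along the subsequence, then lets $n\to\infty$ and applies Sturm comparison. A step of that kind, not a uniqueness appeal, is needed to finish your proof.
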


\begin{proof}
We will deal with the convergence of $\frac{u_{\kappa_{n},\downarrow}}{u_{\kappa_{n},\downarrow}(x_{0})}$, the other one being similar. To simplify notations we will chose the normalization 
$u_{\kappa,\downarrow}(x_{0})=u_{\kappa_{n},\downarrow}(x_{0})=1$. Without loss of generality we will also assume that $\kappa(\lbrace x_{0}\rbrace)=0$. The proof will be made of two parts. First we will show that if $u$ is the solution of \eqref{Ch2Sec1: Eq2ndOrderODE3} and $u_{n}$ solution of
\begin{equation}
\label{Ch2Sec1: Eq2ndOrderODEn}
\dfrac{1}{2}\dfrac{d^{2}u}{dx^{2}} - u \kappa_{n} = 0,
\end{equation}
and if $u_{n}(x_{0})=u(x_{0})=1$ and 
$\frac{du}{dx}(x_{0}^{+})=\lim_{n\rightarrow +\infty}\frac{du_{n}}{dx}(x_{0}^{+})$, 
then $u_{n}$ converges to $u$ uniformly on compact subsets of $I$. After that we will show that 
$\frac{du_{\kappa_{n},\downarrow}}{dx}(x_{0}^{+})$
converges to $\frac{du_{\kappa,\downarrow}}{dx}(x_{0}^{+})$.

Let $x_{1}\in I\cap(x_{0},+\infty)$. Let $(v_{n})_{n\geq 0}$ be a sequence in $\mathbb{R}$ converging to $v$. 
Let $\mathfrak{I}_{n}$ respectively $\mathfrak{I}$ be the following affine operators on $\mathcal{C}([x_{0},x_{1}])$:
\begin{displaymath}
(\mathfrak{I}_{n}f)(x):=1+(x-x_{0})v_{n}+2\int_{(x_{0},x]}(x-y)f(y)\kappa_{n}(dy).
\end{displaymath}
\begin{displaymath}
(\mathfrak{I}f)(x):=1+(x-x_{0})v+2\int_{(x_{0},x]}(x-y)f(y)\kappa(dy).
\end{displaymath}
Let $u_{n}$ respectively $u$ be the fixed points of $\mathfrak{I}_{n}$ respectively $\mathfrak{I}$. Let $\varepsilon\in(0,1)$. The Lipschitz norm of $\mathfrak{I}_{n}^{j}$ is bounded by
$\frac{2^{j}}{j!}\kappa_{n}([x_{0}, x_{1}])^{j}(x_{1}-x_{0})^{j}$. For $j\geq j_{\varepsilon}$, for all $n\in\mathbb{N}$, this norm is less then $\varepsilon$. Then
\begin{equation*}
\begin{split}
\max_{[x_{0},x_{1}]}\vert u_{n}-u\vert=&
\max_{[x_{0},x_{1}]}\vert \mathfrak{I}^{j_{\varepsilon}}_{n}u_{n}-\mathfrak{I}^{j_{\varepsilon}}u\vert
\leq\max_{[x_{0},x_{1}]}\vert \mathfrak{I}^{j_{\varepsilon}}_{n}u-\mathfrak{I}^{j_{\varepsilon}}u\vert
+\max_{[x_{0},x_{1}]}\vert \mathfrak{I}^{j_{\varepsilon}}_{n}u_{n}-\mathfrak{I}^{j_{\varepsilon}}_{n}u\vert\\
\leq&\max_{[x_{0},x_{1}]}\vert \mathfrak{I}^{j_{\varepsilon}}_{n}u-\mathfrak{I}^{j_{\varepsilon}}u\vert
+\varepsilon\max_{[x_{0},x_{1}]}\vert u_{n}-u\vert.
\end{split}
\end{equation*}
Hence,
\begin{equation}
\label{Ch2Sec1: EqIneq5}
\max_{[x_{0},x_{1}]}\vert u_{n}-u\vert\leq \dfrac{1}{1-\varepsilon}
\max_{[x_{0},x_{1}]}\vert \mathfrak{I}^{j_{\varepsilon}}_{n}u-\mathfrak{I}^{j_{\varepsilon}}u\vert.
\end{equation}
For $y<x\in I$ and $i\in\mathbb{N}^{\ast}$, let
\begin{displaymath}
f_{n,i}(y,x):=\int_{y<y_{1}<\dots<y_{i-1}<x}(x-y_{i-1})
\dots(y_{2}-y_{1})(y_{1}-y)\kappa_{n}(dy_{1})\dots \kappa_{n}(dy_{i-1}),
\end{displaymath}
\begin{displaymath}
f_{i}(y,x):=\int_{y<y_{1}<\dots<y_{i-1}<x}(x-y_{i-1})\dots
(y_{2}-y_{1})(y_{1}-y)\kappa(dy_{1})\dots \kappa(dy_{i-1}),
\end{displaymath}
and $f_{0,i}(y,x)=f_{0}(y,x)=x-y$. $f_{n,i}$ and $f_{i}$ are continuous functions. Moreover, the vague convergence of $\kappa_{n}$ to $\kappa$ ensures that if $(y_{n},x_{n})_{n\geq 0}$ is a sequence converging to $(y,x)$, then $f_{n,i}(y_{n},x_{n})$
converges to $f_{i}(y,x)$.
\begin{equation*}
\begin{split}
(\mathfrak{I}^{j_{\varepsilon}}_{n}u)(x)=&1+(x-x_{0})v_{n}+
\sum_{i=0}^{j_{\varepsilon}-2}\int_{x_{0}}^{x}(1+(y-x_{0})v_{n})f_{n,i}(y,x)\kappa_{n}(dy)\\
+&\int_{x_{0}}^{x}u(y)f_{n,j_{\varepsilon}-1}(y,x)\kappa_{n}(dy),\\
(\mathfrak{I}^{j_{\varepsilon}}u)(x)=&1+(x-x_{0})v+
\sum_{i=0}^{j_{\varepsilon}-2}\int_{x_{0}}^{x}(1+(y-x_{0})v)f_{i}(y,x)\kappa(dy)\\
+&\int_{x_{0}}^{x}u(y)f_{j_{\varepsilon}-1}(y,x)\kappa(dy).
\end{split}
\end{equation*}
For fixed $x$, the functions $y\mapsto 1_{x_{0}<y<x}f_{n,i}(y,x)$ and $y\mapsto 1_{x_{0}<y<x}f_{i}(y,x)$ have a compact support but are discontinuous in $x_{0}$. If $(z_{n})_{n\geq 0}$ is a sequence in $[x_{0},x_{1}]$ converging to $z$, then the convergence of $v_{n}$ to $v$, the weak convergence of $\kappa_{n}$ to $\kappa$ and the condition $\kappa(\lbrace x_{0}\rbrace)=0$ ensure that $(\mathfrak{I}^{j_{\varepsilon}}_{n}u)(z_{n})$ converges to $(\mathfrak{I}^{j_{\varepsilon}}u)(z)$. This implies the uniform convergence of 
$\mathfrak{I}^{j_{\varepsilon}}_{n}u$ to $\mathfrak{I}^{j_{\varepsilon}}u$ on $[x_{0},x_{1}]$. From \eqref{Ch2Sec1: EqIneq5} follows that $u_{n}$ converges uniformly to $u$ on $[x_{0},x_{1}]$. The situation is similar for $x_{1}<x_{0}$ and we get the uniform convergence on compact sets of $u_{n}$ to $u$.

Let
\begin{displaymath}
\underline{v}:=\liminf_{n\rightarrow +\infty}\dfrac{du_{\kappa_{n},\downarrow}}{dx}
(x_{0}^{+}),\qquad
\overline{v}:=\limsup_{n\rightarrow +\infty}\dfrac{du_{\kappa_{n},\downarrow}}{dx}
(x_{0}^{+}).
\end{displaymath}
Let $v<\frac{du_{k,\downarrow}}{dx}(x_{0}^{+})$. There is $x_{1}\in I\cap(x_{0},+\infty)$ such that the solution of \eqref{Ch2Sec1: Eq2ndOrderODE3} with initial conditions $u(x_{0})=1$, $\frac{du}{dx}(x^{+})=v$, is zero in $x_{1}$. Since $u_{\kappa_{n},\downarrow}$ converges to $u_{\kappa,\downarrow}$ uniformly on $[x_{0},x_{1}]$ and $u_{\kappa,\downarrow}$ is positive on $[x_{0},x_{1}]$, we get that for $n$ large enough, $u_{\kappa_{n},\downarrow}$ is positive on $[x_{0},x_{1}]$ and $\frac{du_{\kappa_{n},\downarrow}}{dx}(x_{0}^{+})>v$. Thus 
$\underline{v}\geq\frac{du_{k,\downarrow}}{dx}(x_{0}^{+})$. 

Conversely, let $v<\overline{v}$. Let $u_{n}$ be the solution of 
\eqref{Ch2Sec1: Eq2ndOrderODEn} with initial conditions $u_{n}(x_{0})=1$, 
$\frac{du_{n}}{dx}(x_{0}^{+})=v$. If 
$\frac{du_{\kappa_{n},\downarrow}}{dx}(x_{0}^{+})>v$, then for any 
$x\in I\cap[x_{0},+\infty)$,
\begin{displaymath}
\dfrac{du_{n}}{dx}(x^{+})\leq \dfrac{du_{\kappa_{n},\downarrow}}{dx}(x^{+})
-\Big(\dfrac{du_{\kappa_{n},\downarrow}}{dx}(x_{0}^{+})-v\Big)\leq
-\Big(\dfrac{du_{\kappa_{n},\downarrow}}{dx}(x_{0}^{+})-v\Big),
\end{displaymath}
\begin{displaymath}
u_{n}(x)\leq u_{\kappa_{n},\downarrow}-
\Big(\dfrac{du_{\kappa_{n},\downarrow}}{dx}(x_{0}^{+})-v\Big)(x-x_{0}).
\end{displaymath}
If $\sup I<+\infty$, then by convexity of $u_{\kappa_{n},\downarrow}$,
\begin{displaymath}
u_{n}(x)\leq \dfrac{\sup I -x}{\sup I -x_{0}} - 
\Big(\dfrac{du_{\kappa_{n},\downarrow}}{dx}(x_{0}^{+})-v\Big)(x-x_{0}),
\end{displaymath}
and $u_{n}(z_{n})\leq 0$, where
\begin{displaymath}
z_{n}:=\dfrac{\sup I + \Big(\dfrac{du_{\kappa_{n},\downarrow}}{dx}
(x_{0}^{+})-v\Big)x_{0}(\sup I - x_{0})}
{1+\Big(\dfrac{du_{\kappa_{n},\downarrow}}{dx}(x_{0}^{+})-v\Big)
(\sup I - x_{0})}.
\end{displaymath}
This is also true if $\sup I = +\infty$, and in this case 
$z_{n}=x_{0}+\Big(\frac{du_{\kappa_{n},\downarrow}}{dx}
(x_{0}^{+})-v\Big)^{-1}$.
Let $u$ be the solution of of \eqref{Ch2Sec1: Eq2ndOrderODE3} with initial conditions $u(x_{0})=1$, $\frac{du}{dx}(x^{+})=v$ and 
\begin{displaymath}
z_{\infty}:=\dfrac{\sup I + (\overline{v}-v)x_{0}(\sup I - x_{0})}
{1+(\overline{v}-v)(\sup I - x_{0})}.
\end{displaymath}
Considering a subsequence along which $\frac{du_{\kappa_{n},\downarrow}}{dx}(x_{0}^{+})$ converges to $\overline{v}$, we get by uniform convergence of $u_{n}$ tu $u$ on compact sets that $u(z_{\infty})\geq 0$. It follows that 
$\frac{du_{\kappa,\downarrow}}{dx}(x_{0}^{+})\geq v$. Hence 
$\frac{du_{\kappa,\downarrow}}{dx}(x_{0}^{+})\geq \overline{v}$.

Finally, $\underline{v}=\overline{v}=\frac{du_{\kappa,\downarrow}}{dx}(x_{0}^{+})$, and this implies the uniform convergence on compact subsets of $u_{\kappa_{n},\downarrow}$
to $u_{\kappa,\downarrow}$.
\end{proof}

\section{One-dimensional diffusions}
\label{Ch2Sec2}

In this subsection we will describe the kind of linear diffusion we are interested in, recall some facts and introduce notations that will be used subsequently. For a detailed presentation of one-dimensional diffusions see \cite{ItoMcKean1974Diffusions} and \cite{Breiman1992Probability}, Chapter $16$.

Let $I$ be an open interval of $\mathbb{R}$, $m$ and $w$ continuous positive functions on $I$. We consider a diffusion $(X_{t})_{0\leq t<\zeta^{(0)}}$ on $I$ with generator
\begin{displaymath}
L^{(0)}:=\dfrac{1}{m(x)}\dfrac{d}{dx}\left(\dfrac{1}{w(x)}\dfrac{d}{dx}\right),
\end{displaymath}
and killed as it hits the boundary of $I$. In case $I$ is unbounded, we also allow $X$ to blow up to infinity in finite time. $\zeta^{(0)}$ is the first time $X$ either hits the boundary or explodes. To avoid some technicalities we will assume that $\frac{dw}{dx}$ is locally bounded, although this condition is not essential. Given such a diffusion, the speed measure $m(x)dx$ and the scale measure $w(x)dx$ are defined up to a positive multiplicative constant, but the product $mw$ is uniquely defined. A primitive $S$ of $w$ is a natural scale function of $X$. Consider the random time change $d\tilde{t}=\frac{1}{m(X_{t})}dt$. Then, 
$(\frac{1}{2}S(X_{\tilde{t}}))_{0\leq\tilde{t}<\tilde{\zeta}^{(0)}}$ is a standard Brownian motion on $S(I)$ killed when it first hits the boundary of $S(I)$. For all $f,g$ smooth, compactly supported in $I$, 
\begin{displaymath}
\int_{I}(L^{(0)} f)(x)g(x)m(x)dx =  \int_{I}f(x)(L^{(0)} g)(x)m(x)dx.
\end{displaymath}
The diffusion $X$ has a family of local times $(\ell_{t}^{x}(X))_{x\in I, t\geq 0}$ with respect to the measure $m(x)dx$ such that $(x,t)\mapsto \ell_{t}^{x}(X)$ is continuous. We can further consider diffusions with killing measures. Let $\kappa$ be a non-negative Radon measure on $I$. We kill $X$ as soon as $\int_{I}\ell_{t}^{x}(X)m(x)\kappa(dx)$ hits an independent exponential time with parameter $1$. The corresponding generator is
\begin{equation}
\label{Ch2Sec2: EqGenerator}
L = \dfrac{1}{m(x)}\dfrac{d}{dx}\left(\dfrac{1}{w(x)}\dfrac{d}{dx}\right) - \kappa.
\end{equation}

Let $(X_{t})_{0\leq t<\zeta}$ be the diffusion of generator 
\eqref{Ch2Sec2: EqGenerator}, which is killed either by hitting $\partial I$, or by exploding, or by the killing measure $\kappa$. 
For $x\in I$, let $\eta_{\rm exc}^{>x}$ and $\eta_{\rm exc}^{<x}$ be the excursion measures of $X$ above and below the level $x$ up to the last time $X$ visits $x$. The behavior of $X$ from the first to the last time it visits $x$ is a Poisson point process with intensity $\eta_{\rm exc}^{>x}+\eta_{\rm exc}^{<x}$, parametrized by the local time at $x$ up to the value $\ell_{t}^{\zeta}(X)$. $\eta_{\rm exc}^{>x}$ and $\eta_{\rm exc}^{<x}$ are obtained from the Lévy-Itô measure on Brownian excursions through scale change, time change and multiplication by a density function accounting for the killing. See \cite{SalminenValloisYor2007Excur} for details on excursion measures in case of recurrent diffusions.

If $X$ is transient, the Green's function of $L$,
\begin{displaymath}
G(x,y):= \mathbb{E}_{x}[\ell_{t}^{\zeta}(X)],
\end{displaymath}
is finite, continuous and symmetric. For $x\leq y$ it can be written
\begin{displaymath}
G(x,y)=u_{\uparrow}(x)u_{\downarrow}(y)
\end{displaymath} 
where $u_{\uparrow}(x)$ and $u_{\downarrow}(y)$ are positive , respectively non-decreasing and non-increasing solutions to the equation $Lu=0$, which through a change of scale reduces to an equation of form \eqref{Ch2Sec1: Eq2ndOrderODE3}. If $S$ is bounded from below, $u_{\uparrow}(\inf I^{+})=0$. If $S$ is bounded from above, $u_{\downarrow}(\sup I^{-})=0$. $u_{\uparrow}(x)$ and $u_{\downarrow}(y)$ are each determined up to a multiplication by a positive constant, but when entering the expression of $G$, the two constants are related. For $x\leq y\in I$,
\begin{displaymath}
\dfrac{u_{\uparrow}(x)}{u_{\uparrow}(y)}=\mathbb{P}_{y}(X~\text{hits}~x~\text{before time}~\zeta),\qquad
\dfrac{u_{\downarrow}(y)}{u_{\downarrow}(x)}=\mathbb{P}_{x}(X~\text{hits}~y~\text{before time}~\zeta).
\end{displaymath}
See \cite{ItoMcKean1974Diffusions} or \cite{Breiman1992Probability}, Chapter $16$, for details. Let $W(u_{\downarrow},u_{\uparrow})$ be the Wronskian of $u_{\downarrow}$ and $u_{\uparrow}$:
\begin{displaymath}
W(u_{\downarrow},u_{\uparrow})(x):=u_{\downarrow}(x)\dfrac{du_{\uparrow}}{dx}(x^{+})-u_{\uparrow}(x)\dfrac{du_{\downarrow}}{dx}(x^{+}).
\end{displaymath}
This Wronskian is actually the density of the scale measure: $W(u_{\downarrow},u_{\uparrow})\equiv w$. We may write $G_{L}$ when there is an ambiguity on $L$.

If the killing measure $\kappa$ is non zero, then the probability that $X$, starting from $x$, gets killed by $\kappa$ before reaching a boundary of $I$ or exploding equals ${\int_{I}G(x,y)m(y)\kappa(dy)}$. Conditionally on this event, the distribution of $X_{\zeta^{-}}$ is
\begin{displaymath}
\dfrac{1_{z\in I}G(x,z)m(z)\kappa(dz)}{\int_{I}G(x,y)m(y)\kappa(dy)}.
\end{displaymath}
Indeed, let $f$ be a non-negative compactly supported measurable function on $I$ and
\begin{displaymath}
\tau_{l}:=\inf\Big\lbrace t\in [0,\zeta^{(0)}\Big\vert \int_{I}\ell^{y}_{t}(X)
m(y)\kappa(dy)>l\Big\rbrace.
\end{displaymath}
Then, by definition,
\begin{displaymath}
\mathbb{E}_{x}\big[f(X_{\zeta^{-}})\big]=\int_{0}^{+\infty}e^{-l}
\mathbb{E}_{x}\big[f(X_{\tau_{l}\wedge\zeta^{(0)}})\big]dl=
\int_{0}^{+\infty}dv e^{-v}
\mathbb{E}_{x}\Big[\int_{0}^{v}f(X_{\tau_{l}\wedge\zeta^{(0)}})dl\Big].
\end{displaymath}
But
\begin{displaymath}
\int_{0}^{v}f(X_{\tau_{l}\wedge\zeta^{(0)}})dl=\int_{I}
\ell^{y}_{\tau_{v}\wedge\zeta^{(0)}}(X)m(y)\kappa(dy)
\end{displaymath}
(see Corollary $2.13$, Chapter X in \cite{RevuzYor1999BMGrundlehren}). 
It follows that
\begin{displaymath}
\mathbb{E}\big[f(X_{\zeta^{-}})\big]=
\int_{I}f(y)\Big[\int_{0}^{+\infty}e^{-v}\ell^{y}_{\tau_{v}\wedge\zeta^{(0)}}(X)dv\Big]m(y)\kappa(dy)=\int_{I}f(y)G(x,y)m(y)\kappa(dy).
\end{displaymath}

The semi-group of $L$ has positive transition densities $p_{t}(x,y)$ with respect to the speed measure $m(y)dy$ and ${(t,x,y)\mapsto p_{t}(x,y)}$ is continuous on 
$(0,+\infty)\times I\times I$. McKean gives a proof of this in \cite{McKean1956EigenDiffExp} in case when the killing measure $\kappa$ has a continuous density with respect to the Lebesgue measure. If this is not the case, we can take $u$ a positive continuous solution to $Lu=0$ and consider the h-transform of $L$ by $u$: $u^{-1}Lu$. The latter is the generator of a diffusion without killing measure and by \cite{McKean1956EigenDiffExp} this diffusion has continuous transition densities $\tilde{p}_{t}(x,y)$ with respect to $m(y)dy$. Then $u(x)\tilde{p}_{t}(x,y)\frac{1}{u(y)}$ are the transition densities of the semi-group of $L$. Transition densities with respect to the speed measure are symmetric: $p_{t}(x,y)=p_{t}(y,x)$. For all $x,y\in I$ and $t\geq 0$ the following equality holds:
\begin{equation}
\label{Ch2Sec2: EqTempsLocSemiGroup}
\mathbb{E}_{x}\big[\ell^{y}_{t\wedge \zeta}(X)\big]=\int_{0}^{t}p_{s}(x,y)ds. 
\end{equation}

Next we deal with bridge probability measures.
\begin{prop}
\label{Ch2Sec2: PropContinuityBridge}
The bridge probability measures $\mathbb{P}_{x,y}^{t}(\cdot)$ (bridge of $X$ from $x$ to $y$ in time $t$ conditioned neither to die nor to explode in the interval) satisfy: for all $x\in I$ the map ${(x,y,t)\mapsto \mathbb{P}_{x,y}^{t}(\cdot)}$ is continuous for the weak topology on probability measures on continuous paths.
\end{prop}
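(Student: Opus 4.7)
The plan is to combine convergence of finite-dimensional distributions with tightness, exploiting the h-transform description of the bridge in terms of the transition density $p_t(x, y)$ together with the $m$-symmetry of $X$.

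\textbf{Finite-dimensional distributions.} To deal with bridges of varying duration, I rescale time to $[0, 1]$ by setting $\widetilde{X}^{(n)}_u := X_{u t_n}$, so all paths live in $\mathcal{C}([0, 1], I)$. For fixed rescaled times $0 < u_1 < \cdots < u_k < 1$, the standard bridge formula gives the joint density of $(\widetilde{X}^{(n)}_{u_1}, \ldots, \widetilde{X}^{(n)}_{u_k})$ with respect to $m^{\otimes k}$ as
\[
\rho_n(z_1, \ldots, z_k) = \frac{p_{u_1 t_n}(x_n, z_1) \prod_{i=2}^k p_{(u_i - u_{i-1}) t_n}(z_{i-1}, z_i)\, p_{(1 - u_k) t_n}(z_k, y_n)}{p_{t_n}(x_n, y_n)}.
\]
If $(x_n, y_n, t_n) \to (x, y, t)$, the joint continuity and strict positivity of $(t, x, y) \mapsto p_t(x, y)$ on $(0, +\infty) \times I \times I$ recalled above yield pointwise convergence of $\rho_n$ to the analogous limiting density $\rho$. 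Since each $\rho_n$ is a probability density, Scheff\'e's lemma upgrades pointwise to $L^1(m^{\otimes k})$ convergence, and hence weak convergence of all finite-dimensional marginals.

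\textbf{Tightness.} Split $[0, 1]$ into $[0, 1/2]$ and $[1/2, 1]$. On $\mathcal{F}_{t_n/2}$ the Radon-Nikodym derivative of $\mathbb{P}_{x_n, y_n}^{t_n}$ with respect to $\mathbb{P}_{x_n}$ equals $p_{t_n/2}(X_{t_n/2}, y_n)/p_{t_n}(x_n, y_n)$, which is uniformly bounded on $\{X_{t_n/2} \in K\}$ for any compact $K \subset I$ and $n$ large, by continuity of $p_\cdot(\cdot, \cdot)$ and positivity of the denominator. Combined with the classical tightness of the diffusion laws $\mathbb{P}_{x_n}$ on compact time intervals, together with the fact (obtained from the previous step at rescaled time $u = 1/2$) that the bridge marginal at $1/2$ lies in a compact subset of $I$ with probability close to $1$, this produces tightness of the family of restricted bridges on $[0, 1/2]$.

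\textbf{Second half by time-reversal.} Because $p_t(x, y)$ is $m$-symmetric, the reversed process $u \mapsto X_{t_n - u}$ under $\mathbb{P}_{x_n, y_n}^{t_n}$ is itself a bridge, of duration $t_n$ from $y_n$ to $x_n$, of the same diffusion $X$. Applying the previous step to this reversed family gives tightness on $[1/2, 1]$ of the original bridges. Combined with the finite-dimensional convergence this yields weak convergence of $\mathbb{P}_{x_n, y_n}^{t_n}$ to $\mathbb{P}_{x, y}^{t}$.

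\textbf{Main obstacle.} The principal technical point is the simultaneous handling of the varying time-scale $t_n \to t$ and the control of the h-transform density near the endpoint of the bridge, where the path concentrates near $y_n$. Both issues are resolved by the joint continuity and strict positivity of $p_t(x, y)$ on $(0, +\infty) \times I \times I$, which prevents the normalizing denominator from degenerating along the sequence.
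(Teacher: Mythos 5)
Your proof follows the same overall architecture as the paper's: rescale to unit time, exploit the Radon–Nikodym density of the bridge with respect to the unconditioned diffusion on an initial sub-interval, and handle the degeneracy near the terminal endpoint by $m$-symmetric time reversal. The finite-dimensional step via Scheff\'e is clean and correct. The point where your argument diverges from the paper, and where a genuine gap appears, is the tightness step.

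You appeal to ``classical tightness of the diffusion laws $\mathbb{P}_{x_n}$ on compact time intervals.'' For a non-conservative diffusion on an open interval $I$, this is not automatic: tightness in $\mathcal{C}([0,T],I)$ requires not only a modulus-of-continuity bound but also that the paths remain in a compact subinterval $J\subset I$ with probability close to $1$, uniformly in $n$. Moreover the unconditioned $\mathbb{P}_{x_n}$ is a sub-probability measure on $\mathcal{C}([0,T],I)$ once one conditions on survival, and uniform compact containment for this family (paths not creeping arbitrarily close to $\partial I$ while still surviving) is exactly the nontrivial point; the modulus-of-continuity estimate alone gives tightness only in $\mathcal{C}([0,T],\bar I)$ or $\mathcal{C}([0,T],I\cup\{\Delta\})$, which is insufficient. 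Knowing that the marginal at $1/2$ lies in a compact with high probability does not control the entire path. The paper handles this by a different device: it inserts compactly supported cutoffs $f_n$ depending on $\sup\gamma$ and $\inf\gamma$, first establishes weak convergence of the cut-off bridges (using continuity of the cut-off unconditioned measures and the boundedness of the Radon--Nikodym densities on the compact $J_n$), and only then deduces the uniform compact containment as a consequence of convergence to the single limiting bridge measure, which is tight because any Borel probability on a Polish space is. This trick sidesteps the need for any a priori tightness of the unconditioned laws. Your proof would need to either establish the uniform compact containment for $\mathbb{P}_{x_n}$ directly, or adopt the paper's Portmanteau-style deduction from the convergence you have already established.
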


\begin{proof}
Our proof mainly relies on absolute continuity arguments of \cite{FitzsimmonsPitmanYor1993Bridges} and \cite{ChaumontBravo2011ContMarkBridges}, and the time reversal argument of \cite{FitzsimmonsPitmanYor1993Bridges}. \cite{ChaumontBravo2011ContMarkBridges} gives a proof of weak continuity of bridges for conservative Feller cadlag processes on second countable locally compact spaces. But since the proof contains an error and we do not restrict to conservative diffusions, we give here accurate arguments for the weak continuity.

First, we can restrict to the case $\kappa=0$. Otherwise, consider $u$ a solution to $Lu=0$, positive on $I$. The generator of the h-transform of $L$ by $u$ is 
\begin{displaymath}
\dfrac{1}{u(x)^{2}m(x)}\dfrac{d}{dx}\left(\dfrac{u(x)^{2}}{w(x)}\dfrac{d}{dx}\right), 
\end{displaymath}
and does not contain any killing measure. The h-transform preserves the bridge measures and changes the density functions relatively to $m(y)dy$ to $\frac{1}{u(x)}p_{t}(x,y)u(y)$, and thus preserves their continuity.

Then we normalize the length of bridges: if $(X^{(x,y,t)}_{s})_{0\leq s\leq t}$ is a path under the law $\mathbb{P}_{x,y}^{t}(\cdot)$, let $\widetilde{\mathbb{P}}_{x,y}^{t}(\cdot)$ be the law of 
$(X^{(x,y,t)}_{rt})_{0\leq r\leq 1}$. It is sufficient to prove that 
${(x,y,t)\mapsto \widetilde{\mathbb{P}}_{x,y}^{t}(\cdot)}$ is continuous. For $v\in[0,1]$, let $\widetilde{\mathbb{P}}_{x,y}^{t,v}(\cdot)$ be the law of $(X^{(x,y,t)}_{rt})_{0\leq r\leq v}$. Let $\widetilde{\mathbb{P}}_{x}^{t,v}(\cdot)$ be the law of the Markovian path $(X_{rt})_{0\leq r\leq v}$ starting from $x$. For $v\in [0,1)$, we have the following absolute continuity relationship:
\begin{equation}
\label{Ch2Sec2: EqBridgeAbsCont}
d\widetilde{\mathbb{P}}_{x,y}^{t,v} = 1_{vt<\zeta}\dfrac{p_{(1-v)t}(X_{vt},y)}{p_{t}(x,y)}d\widetilde{\mathbb{P}}_{x}^{t,v}.
\end{equation}

Let $(J_{n})_{n\geq 0}$ be an increasing sequence of compact subintervals of $I$ such that $I=\bigcup_{n\geq 0} J_{n}$. Let $T_{n}$ be the first exit time from $J_{n}$. Let $f_{n}$ be continuous compactly supported function on $I$ such that $0\leq f_{n}\leq 1$ and 
${f_{n\vert J_{n}}\equiv 1}$. We can further assume that the sequence $(f_{n})_{n\geq 0}$ is non-decreasing. The map
\begin{displaymath}
(x,y,t)\mapsto f_{n}(\sup_{[0,vt]}X)f_{n}(\inf_{[0,vt]}X)d\widetilde{\mathbb{P}}_{x}^{t,v}
\end{displaymath}
is weakly continuous. Let $(x_{j},y_{j},t_{j})_{j\geq 0}$ be a sequence converging to $(x,y,t)$. Let $F$ be a continuous bounded functional on $\mathcal{C}([0,v])$. Then, applying \eqref{Ch2Sec2: EqBridgeAbsCont}, we get
\begin{equation}
\label{Ch2Sec2: EqBridgeCont1}
\widetilde{\mathbb{P}}^{t_{j}, v}_{x_{j},y_{j}}(f_{n}(\sup_{[0,v]}\gamma)f_{n}(\inf_{[0,v]}\gamma)F(\gamma))-
\widetilde{\mathbb{P}}^{t, v}_{x,y}(f_{n}(\sup_{[0,v]}\gamma)f_{n}(\inf_{[0,v]}\gamma)F(\gamma))=
\end{equation}
\begin{equation}
\label{Ch2Sec2: EqBridgeCont2}
\widetilde{\mathbb{P}}^{t_{j}, v}_{x_{j}}\left(\dfrac{p_{(1-v)t}(\gamma(v),y)}{p_{t}(x,y)}f_{n}(\sup_{[0,v]}\gamma)f_{n}(\inf_{[0,v]}\gamma)F(\gamma)\right) 
\end{equation}
\begin{equation}
\label{Ch2Sec2: EqBridgeCont3}
-\widetilde{\mathbb{P}}^{t, v}_{x}\left(\dfrac{p_{(1-v)t}(\gamma(v),y)}{p_{t}(x,y)}f_{n}(\sup_{[0,v]}\gamma)f_{n}(\inf_{[0,v]}\gamma)F(\gamma)\right) 
\end{equation}
\begin{equation}
\label{Ch2Sec2: EqBridgeCont4}
+\widetilde{\mathbb{P}}^{t_{j}, v}_{x_{j}}\left(\dfrac{p_{(1-v)t_{j}}(\gamma(v),y_{j})}{p_{t_{j}}(x_{j},y_{j})}f_{n}(\sup_{[0,v]}\gamma)f_{n}(\inf_{[0,v]}\gamma)F(\gamma)\right) 
\end{equation}
\begin{equation}
\label{Ch2Sec2: EqBridgeCont5}
-\widetilde{\mathbb{P}}^{t_{j}, v}_{x_{j}}\left(\dfrac{p_{(1-v)t}(\gamma(v),y)}{p_{t}(x,y)}f_{n}(\sup_{[0,v]}\gamma)f_{n}(\inf_{[0,v]}\gamma)F(\gamma)\right). 
\end{equation}
Since $\frac{p_{(1-v)t}(\cdot,y)}{p_{t}(x,y)}$ is continuous and bounded on $J_{n}$, \eqref{Ch2Sec2: EqBridgeCont2}$-$\eqref{Ch2Sec2: EqBridgeCont3} converges to $0$. Moreover, for $j$ large enough, 
$\frac{p_{(1-v)t_{j}}(\cdot,y_{j})}{p_{t_{j}}(x_{j},y_{j})}$ is uniformly close on $J_{n}$ to $\frac{p_{(1-v)t}(\cdot,y)}{p_{t}(x,y)}$. Thus, 
\eqref{Ch2Sec2: EqBridgeCont4}$-$\eqref{Ch2Sec2: EqBridgeCont5} converges to $0$, and finally \eqref{Ch2Sec2: EqBridgeCont1} converges to $0$. 
Let $n_{0}\in\mathbb{N}$ and 
$n\geq n_{0}$. Then,
\begin{multline*}
\widetilde{\mathbb{P}}^{t_{j},v}_{x_{j},y_{j}}(1-f_{n}(\sup_{[0,v]}\gamma)f_{n}(\inf_{[0,v]}\gamma))=1-\widetilde{\mathbb{P}}^{t_{j}, v}_{x_{j},y_{j}}(f_{n}(\sup_{[0,v]}\gamma)f_{n}(\inf_{[0,v]}\gamma))\\
\leq 1-\widetilde{\mathbb{P}}^{t_{j},v}_{x_{j},y_{j}}(f_{n_{0}}(\sup_{[0,v]}\gamma)f_{n_{0}}(\inf_{[0,v]}\gamma))\rightarrow 1-\widetilde{\mathbb{P}}^{t,v}_{x,y}(f_{n_{0}}(\sup_{[0,v]}\gamma)f_{n_{0}}(\inf_{[0,v]}\gamma)),
\end{multline*}
and consequently
\begin{displaymath}
\lim_{n\rightarrow +\infty} \limsup_{j\rightarrow +\infty } \widetilde{\mathbb{P}}^{t_{j},v}_{x_{j},y_{j}}(1-f_{n}(\sup_{[0,v]}\gamma)f_{n}(\inf_{[0,v]}\gamma)) = 0.
\end{displaymath}
It follows that
\begin{displaymath}
\lim_{j\rightarrow +\infty}\widetilde{\mathbb{P}}^{t_{j}, v}_{x_{j},y_{j}}(F(\gamma))=\widetilde{\mathbb{P}}^{t,v}_{x,y}(F(\gamma)).
\end{displaymath}
From this we get that the law of any finite-dimensional family of marginals of $\widetilde{\mathbb{P}}^{t}_{x,y}(\cdot)$ depends continuously on $(x,y,t)$. To conclude we need a tightness result for $(x,y,t)\mapsto\widetilde{\mathbb{P}}^{t}_{x,y}(\cdot)$. 
We have already tightness for $(x,y,t)\mapsto\widetilde{\mathbb{P}}^{t,v}_{x,y}(\cdot)$. The image of $\widetilde{\mathbb{P}}^{t}_{x,y}(\cdot)$ through time reversal is $\widetilde{\mathbb{P}}^{t}_{y,x}(\cdot)$. So we also have tightness on intervals 
$[1-v',1]$ where $0<v'<1$. But if $v+v'>1$, tightness on $[0,v]$ and on 
$[1-v',1]$ implies tightness on $[0,1]$. This concludes. The article \cite{ChaumontBravo2011ContMarkBridges} contains an error in the proof of the tightness of bridge measures in the neighborhood of the endpoint.
\end{proof}

\section{"Generators" with creation of mass}
\label{Ch2Sec3}

In this section we consider more general operators 
\begin{equation}
\label{Ch2Sec3: GenSignedMeasure}
L = \dfrac{1}{m(x)}\dfrac{d}{dx}\left(\dfrac{1}{w(x)}\dfrac{d}{dx}\right) + \nu
\end{equation}
with zero Dirichlet boundary conditions on $\partial I$, where $\nu$ is a signed measure on $I$ which is no longer assumed to be negative. We set
\begin{displaymath}
L^{(0)}:=L-\nu.
\end{displaymath}
In the sequel we may call $L$ "generator" 
even in case the semi-group $(e^{tL})_{t\geq 0}$ does not make sense. Our main goal in this subsection is to characterize through a positivity condition the subclass of operators of form \eqref{Ch2Sec3: GenSignedMeasure} that are equivalent up to a conjugation to the generator of a diffusion of form \eqref{Ch2Sec2: EqGenerator}.

We will consider several kinds of transformations on operators of the form \eqref{Ch2Sec3: GenSignedMeasure}. First, the conjugation. Let $u$ be a positive continuous function on $I$ such that $\frac{d^{2}u}{dx^{2}}$ is a signed measure. We call $\operatorname{Conj}(u,L)$ the operator
\begin{displaymath}
\operatorname{Conj}(u,L) = \dfrac{1}{u(x)^{2}m(x)}\dfrac{d}{dx}\left(\dfrac{u(x)^{2}}{w(x)}\dfrac{d}{dx}\right) + \nu + \dfrac{1}{u}L^{(0)}u.
\end{displaymath}
If $f$ is smooth function compactly supported in $I$ then
\begin{displaymath}
\operatorname{Conj}(u,L)f = u^{-1}L(uf).
\end{displaymath}
In case $L$ is the generator of a diffusion and $Lu=0$, $\operatorname{Conj}(u,L)$ is a Doob's h-transform
(see \cite{ChungWalsh05MP}, Chapter 11).

Second, the change of scale. If $A$ is a $\mathcal{C}^{1}$ function on $I$ such that $\frac{dA}{dx}>0$ and $\frac{d^{2}A}{dx^{2}}\in\mathbb{L}^{\infty}_{\rm loc}(I)$ and $(\gamma(t))_{0\leq t\leq T}$ a continuous path in $I$, then we will set 
$\operatorname{Scale}_{A}(\gamma)$ to be the continuous path $(A(\gamma(s)))_{0\leq t\leq T}$ in $A(I)$. Let $\operatorname{Scale}_{A}^{\rm gen}(L)$ be the operator on functions on $A(I)$ with zero Dirichlet boundary conditions induced by this change of scale:
\begin{displaymath}
\operatorname{Scale}_{A}^{\rm gen}(L) = \dfrac{1}{m\circ A^{-1}(a)}\dfrac{d}{da}\left(\dfrac{1}{w\circ A^{-1}(a)}\dfrac{d}{da}\right) + A_{\ast}\nu,
\end{displaymath}
where $A_{\ast}\nu$ is the push-forward of the measure $\nu$ by $A$.

Third, the change of time. If $V$ is positive continuous on $I$ then we can consider the change of time $ds = V(\gamma(t))dt$. 
Let $\operatorname{Speed}_{V}$ be the corresponding transformation on paths. The corresponding "generator" is $\frac{1}{V}L$.

Finally, the restriction. If $\widetilde{I}$ is an open subinterval of $I$, then set $L_{\vert \widetilde{I}}$ to be the operator $L$ acting on functions supported on $\widetilde{I}$ and with zero Dirichlet conditions on $\partial\widetilde{I}$.

For the analysis of $L$ we will use a bit of spectral theory. If $[x_{0}, x_{1}]$ is a compact interval of $\mathbb{R}$ and $\tilde{m}$, $\tilde{w}$ are positive continuous functions on $[x_{0}, x_{1}]$, then the operator $\frac{1}{\tilde{m}(x)}\frac{d}{dx}\left(\frac{1}{\tilde{w}(x)}\frac{d}{dx}\right)$ with zero Dirichlet boundary conditions has a discrete spectrum of negative eigenvalues. Let $-\tilde{\lambda}_{1}$ be the first eigenvalue. It is simple. According to Sturm-Liouville theory (see for instance \cite{Teschl2012ODE}, Section $5.5$) we have the following picture:

\begin{property}
\label{Ch2Sec3: PropertyFundEigenval}
Let $\lambda >0$ and $u$ a solution to 
\begin{displaymath}
\dfrac{1}{\tilde{m}}\dfrac{d}{dx}\left(\dfrac{1}{\tilde{w}}\dfrac{d}{dx}\right) + \lambda u =0
\end{displaymath}
with initial conditions $u(x_{0})=0, \dfrac{du}{dx}(x_{0})>0$.
\begin{itemize}
\item[(i)] If $u$ is positive on $(x_{0},x_{1})$ and $u(x_{1})=0$, then $\lambda=\tilde{\lambda}_{1}$ and $u$ is the fundamental eigenfunction.
\item[(ii)] If $u$ is positive on $(x_{0},x_{1}]$, then $\lambda < \tilde{\lambda}_{1}$.
\item[(iii)] If $u$ changes sign on $(x_{0},x_{1})$, then $\lambda > \tilde{\lambda}_{1}$.
\end{itemize}
\end{property}

Next we state and prove the main result of this section.

\begin{prop}
\label{Ch2Sec3: PropPositivity}
The following two conditions are equivalent:
\begin{itemize}
\item[(i)] There is a positive continuous function $u$ on $I$ satisfying $Lu=0$.
\item[(ii)] For any $f$ smooth compactly supported in $I$,
\begin{equation}
\label{Ch2Sec3: EqNegativity}
\int_{I}(L^{(0)}f)(x)f(x) m(x)dx + \int_{I}f(x)^{2}m(x)\nu(dx) \leq 0.
\end{equation}
\end{itemize}
\end{prop}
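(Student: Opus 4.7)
My plan is to establish the two implications separately.

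For (i) $\Rightarrow$ (ii) I would exploit the h-transform machinery set up just before the statement. The hypothesis $Lu=0$ is equivalent to $L^{(0)}u=-u\nu$, so $\frac{1}{u}L^{(0)}u=-\nu$ and the signed-measure term in $Conj(u,L)$ cancels, leaving
\[
Conj(u,L)\;=\;\frac{1}{u(x)^{2}m(x)}\frac{d}{dx}\!\left(\frac{u(x)^{2}}{w(x)}\frac{d}{dx}\right),
\]
a pure diffusion generator. For $f$ smooth and compactly supported in $I$, setting $g:=f/u$ (also smooth and compactly supported) and using $Lf=u\,Conj(u,L)g$, one integration by parts yields
\[
\int_{I}(L^{(0)}f)(x)f(x)m(x)\,dx+\int_{I}f(x)^{2}m(x)\nu(dx)=\int_{I}(Lf)fm\,dx=-\int_{I}\frac{u^{2}}{w}(g')^{2}dx\leq 0,
\]
which is (ii).

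For (ii) $\Rightarrow$ (i) the plan is to build the positive harmonic function as a limit of solutions of one-sided Dirichlet problems. For $c\in I$, let $u^{c}$ be the unique solution of $Lu=0$ with $u^{c}(c)=0$ and $\frac{du^{c}}{dx}(c^{+})=1$, and let $\tau_{c}\in(c,+\infty]$ be its first zero to the right of $c$. The crucial claim is that whenever $c>\inf I$ one has $\tau_{c}\geq\sup I$. Indeed if instead $\tau_{c}\in I$, then $u^{c}$ would be positive on $(c,\tau_{c})$ and vanishing at both endpoints, hence the first Dirichlet eigenvalue of $-L$ on $[c,\tau_{c}]$ would equal $0$ with $u^{c}$ as corresponding eigenfunction. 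Strict monotonicity of the first Dirichlet eigenvalue under strict enlargement of the domain then yields $\lambda_{1}([c-\varepsilon,\tau_{c}+\varepsilon])<0$ for small enough $\varepsilon>0$ with $[c-\varepsilon,\tau_{c}+\varepsilon]\Subset I$, producing a smooth compactly supported test function of strictly positive quadratic form and contradicting (ii). By symmetry, when $c<\sup I$ the leftward continuation of $u^{c}$ stays positive on $(\inf I,c)$.

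To conclude, fix $x_{0}\in I$ and a sequence $c_{n}\downarrow\inf I$ with $c_{n}<x_{0}$, and normalise $\hat{u}^{c_{n}}:=u^{c_{n}}/u^{c_{n}}(x_{0})$. A Harnack-type estimate on any fixed compact $[\tilde a,\tilde b]\Subset I$ containing $x_{0}$---each $\hat u^{c_{n}}$ being eventually positive on $[\tilde a,\tilde b]$ with value $1$ at $x_{0}$---controls $\hat u^{c_{n}}$ and its right-derivative at $x_{0}$ uniformly in $n$. Passing to a subsequence along which $(\hat u^{c_{n}})'(x_{0}^{+})$ converges and invoking the Cauchy--Lipschitz continuous dependence of solutions on initial data (as in Section~\ref{Ch2Sec1}), one obtains a solution $u$ of $Lu=0$ on all of $I$ with $u(x_{0})=1$ and $u\geq 0$; ODE uniqueness then rules out any zero of $u$ in $I$, so $u>0$ on $I$.

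The main obstacle I expect is the strict monotonicity of $\lambda_{1}$ under enlargement of the interval, because $\nu$ is only a signed measure (possibly with atoms) and smooth Sturm--Liouville perturbation does not apply verbatim: the test function on $[c-\varepsilon,\tau_{c}+\varepsilon]$ has to be produced by perturbing the zero-extension of $u^{c}$ by a small bump supported in $[c-\varepsilon,c]\cup[\tau_{c},\tau_{c}+\varepsilon]$, using density of $C_{c}^{\infty}$ in $H_{0}^{1}$ and carefully tracking the cross terms against $d\nu$. A secondary technical point is ensuring that the Harnack-type bound on $[\tilde a,\tilde b]$ survives atoms of $\nu$, which can be handled by rewriting the ODE in its integral form and using the positivity and bounded-variation structure of $\frac{1}{w}\frac{du}{dx}$.
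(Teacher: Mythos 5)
Your proof of (i)$\Rightarrow$(ii) is essentially the paper's: both conjugate by the positive $L$-harmonic $u$, observe that the resulting operator is a pure diffusion generator with nonpositive Dirichlet form, and substitute $f=ug$. For (ii)$\Rightarrow$(i) you take a genuinely different route. The paper first invokes Lemma~\ref{Ch2Sec1: LemNonDecrSol} to produce, on a compact $J\subset I$, a positive nondecreasing $u_\lambda$ with $Lu_\lambda=\lambda u_\lambda$; conjugating $L_{\vert\mathring{J}}$ by $u_\lambda$ and subtracting $\lambda$ eliminates the measure $\nu$ entirely, yielding a classical Sturm--Liouville operator to which Property~\ref{Ch2Sec3: PropertyFundEigenval} applies, and the resulting inequality $\lambda\le\lambda_1$ forces the Dirichlet solution to stay positive on $\mathring{J}$; the positive $L$-harmonic function on all of $I$ is then obtained by exhausting $I$ by compacts and bounding the derivatives at a fixed $x_0$. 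You instead keep the measure potential and argue by contradiction from the existence of a nodal interval $[c,\tau_c]$ of $u^c$. The paper's detour buys it the classical spectral theory off the shelf; your argument is shorter and avoids Lemma~\ref{Ch2Sec1: LemNonDecrSol} and the double conjugation, but it requires you to establish (or at least invoke) the variational Dirichlet theory and domain monotonicity of $\lambda_1$ for operators with a measure-valued coefficient, which is exactly the step the paper is at pains to sidestep.

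Your sketched perturbation does not work as written, however. A continuous bump $\phi$ whose support is contained in $[c-\varepsilon,c]\cup[\tau_c,\tau_c+\varepsilon]$ necessarily has $\phi(c)=\phi(\tau_c)=0$, and then the cross term vanishes identically. Writing $f:=u^c\,1_{[c,\tau_c]}$, one has $Q(f)=0$, and after an integration by parts the cross term is purely a boundary expression, $B(f,\phi)=\frac{(u^c)'(c^{+})}{w(c)}\phi(c)-\frac{(u^c)'(\tau_c^{-})}{w(\tau_c)}\phi(\tau_c)$: the $d\nu$ cross terms cancel because $u^c$ vanishes at $c$ and $\tau_c$, and the jumps of $(u^c)'$ at those two points are zero for the same reason. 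So with $\phi(c)=\phi(\tau_c)=0$ you get $B(f,\phi)=0$ and $Q(f+t\phi)=t^{2}Q(\phi)$, of indeterminate sign. To get a strictly positive form you need $\phi$ strictly positive at $c$ or $\tau_c$ (e.g.\ a nonnegative smooth $\phi$ supported in $(c-\varepsilon,\tau_c+\varepsilon)$ and positive there); since $(u^c)'(c^{+})>0>(u^c)'(\tau_c^{-})$, this yields $B(f,\phi)>0$ and hence $Q(f+t\phi)>0$ for small $t>0$, after which a $C_c^{\infty}$ approximation finishes. Alternatively the strict domain monotonicity of $\lambda_1$ can be proved without any test function via the Wronskian: if $v$ were a positive Dirichlet solution on $[c-\varepsilon,\tau_c+\varepsilon]$ with eigenvalue $0$, the constant $W(u^c,v)$ would be negative at $c$ and positive at $\tau_c$, a contradiction. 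Your Harnack-type bound on $(\hat u^{c_n})'(x_0^{+})$ is fine and in fact follows directly from Property~\ref{Ch2Sec1: PropertySturm} by comparing with the solutions vanishing at $\tilde a$ and at $\tilde b$ respectively.
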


\begin{proof}
(i) implies (ii): First, observe that the equation $Lu=0$ reduces through a change of scale to an equation of the form 
\eqref{Ch2Sec1: Eq2ndOrderODE}. Let $u$ be given by condition (i). Let $\widetilde{L}:=\operatorname{Conj}(u,L)$. Since $Lu=0$, $\widetilde{L}$ is a generator of a diffusion without killing measure. Let $\tilde{m}(x):=u^{2}(x)m(x)$. Then for all $g$ smooth compactly supported in $I$, 
$\int_{I}(\widetilde{L}g)(x)g(x)\tilde{m}(x)dx\leq 0$. But
\begin{displaymath}
\int_{I}(\widetilde{L}g)(x)g(x)\tilde{m}(x)dx = 
\int_{I}(L^{(0)}(ug))(x)(ug)(x)m(x)dx + \int_{I}(ug)(x)^{2}m(x)\nu(dx).
\end{displaymath}
Thus \eqref{Ch2Sec3: EqNegativity} holds for all $f$ positive compactly supported in $I$ such that $u^{-1}f$ is smooth. By density arguments, this holds for general smooth $f$.

(ii) implies (i): First we will show that for every compact subinterval $J$ of $I$, there is a positive continuous function $u_{J}$ on $\mathring{J}$ satisfying $Lu_{J} =0$ on $\mathring{J}$. Let $J$ be such an interval. According to Lemma 
\ref{Ch2Sec1: LemNonDecrSol}, there is $\lambda>0$ and $u_{\lambda}$ positive continuous on $J$ satisfying ${Lu_{\lambda}-\lambda u_{\lambda} = 0}$ on $J$. Let ${L_{\lambda}:=\operatorname{Conj}(u_{\lambda},L_{\vert \mathring{J}})}$. Then,
\begin{displaymath}
L_{\lambda} = \dfrac{1}{u^{2}m}\dfrac{d}{dx}\left(\dfrac{u^{2}}{w}\dfrac{d}{dx}\right)+\lambda.
\end{displaymath}
Let $L^{(0)}_{\lambda}:=L_{\lambda}-\lambda$. $L^{(0)}$ is the generator of a diffusion on $\mathring{J}$. We can apply the standard spectral theorem to $L^{(0)}_{\lambda}$. Let $-\lambda_{1}$ be its fundamental eigenvalue.  $L^{(0)}_{\lambda} + \lambda = L_{\lambda}$ is a non-positive operator because it is a conjugate of $L_{\vert J}$ which satisfies condition (ii). This implies that $\lambda\leq\lambda_{1}$. Let $\tilde{u}$ be a solution of $L^{(0)}_{\lambda}\tilde{u} + \lambda\tilde{u}=0$ with initial conditions $\tilde{u}(\min J)=0$ and $\frac{d\tilde{u}}{dx}(\min J)>0$. Since $\lambda\leq\lambda_{1}$, according to Property \ref{Ch2Sec3: PropertyFundEigenval}, $\tilde{u}$ is positive on $\mathring{J}$. We set $u_{J}:= u_{\lambda}\tilde{u}$. Then $u_{J}$ is positive continuous on $\mathring{J}$ and satisfies $Lu_{J}=0$. This finishes the proof of the first step.

Now consider a fixed point $x_{0}$ in $I$ and $(J_{n})_{n\geq 0}$ an increasing sequence of compact subintervals of $I$ such that $x_{0}\in \mathring{J_{0}}$ and $\bigcup_{n\geq 0} J_{n} =I$. Let $u_{J_{n}}$ be a positive $L$-harmonic function on $\mathring{J_{n}}$. We may assume that ${u_{J_{n}}(x_{0})=1}$. The sequence 
$\Big(\frac{du_{J_{n}}}{dx}(x_{0}^{+})\Big)_{n\geq 0}$ is bounded from below. Otherwise, some of the $u_{J_{n}}$ would change sign on $I\cap(x_{0},+\infty)$. Similarly, since none of the $u_{J_{n}}$ changes sign on $I\cap(-\infty, x_{0})$, $\Big(\frac{du_{J_{n}}}{dx}(x_{0}^{+})\Big)_{n\geq 0}$ is bounded from above. Let $v$ be an accumulation value of the sequence 
$\Big(\frac{du_{J_{n}}}{dx}(x_{0}^{+})\Big)_{n\geq 0}$. Then the $L$-harmonic function satisfying the initial conditions $u(x_{0})=1$ and $\frac{du}{dx}(x_{0}^{+})=v$ is positive on $I$.
\end{proof}

We will divide the operators of the form \eqref{Ch2Sec3: EqNegativity} in two sets: $\mathfrak{D}^{0,-}$ for those that satisfies the constraints of the proposition \ref{Ch2Sec3: PropPositivity}, and $\mathfrak{D}^{+}$ for those that do not. $\mathfrak{D}^{0,-}$ is made exactly of operators that are equivalent up to a conjugation to the generator of a diffusion. We will subdivide the set $\mathfrak{D}^{0,-}$ in two:  $\mathfrak{D}^{-}$ for the operators that are a
conjugate of the generator of a transient diffusion, and $\mathfrak{D}^{0}$ for those that are a conjugate of the generator of a recurrent diffusion. These two subclasses are well defined since a transient diffusion can not be a conjugate of a recurrent one. Observe that each of $L\in\mathfrak{D}^{-}$, $\mathfrak{D}^{0}$ and $\mathfrak{D}^{+}$ is stable under conjugations, changes of scale and of speed. Operators in $\mathfrak{D}^{-}$ and $\mathfrak{D}^{0}$ do not need to be generators of transient or recurrent diffusions themselves. For instance consider on $\mathbb{R}$
\begin{displaymath}
L = \dfrac{1}{2}\dfrac{d^{2}}{dx^{2}} + a_{+}\delta_{1} - a_{-}\delta_{-1},
\end{displaymath}
where $a_{+}, a_{-} >0$. If $3a_{+}-a_{-}>0$ then $L\in\mathfrak{D}^{+}$, if $3a_{+}-a_{-}=0$ then $L\in\mathfrak{D}^{0}$, if $3a_{+}-a_{-}<0$ then $L\in\mathfrak{D}^{-}$. 

If $L\in \mathfrak{D}^{0,-}$, the semi-group $(e^{tL})_{t\geq 0}$ is well defined. Indeed, let $X$ be the diffusion on $I$ of generator $L^{(0)}$ and $\zeta$ the first time it hits the boundary of $I$ or blows up to infinity. Let $u$ be a positive $L$-harmonic function and $\widetilde{L}:=\operatorname{Conj}(u,L)$. $\widetilde{L}$ is the generator of a diffusion $\widetilde{X}$ on $I$ without killing measure. Let $\tilde{\zeta}$ be the first time $\widetilde{X}$ hits the boundary of $I$ or blows up to infinity. Using Girsanov's theorem, one can show that for any $F$ positive measurable functional on paths, $x\in I$ and $t>0$ the following equality holds:
\begin{multline*}
\mathbb{E}_{x}\left[1_{t<\zeta}\exp\left(\int_{I}\ell_{t}^{y}(X)m(y)\nu(dy)\right)F((X_{s})_{0\leq s\leq t})\right]=\\
\dfrac{1}{u(x)}\mathbb{E}_{x}\left[1_{t<\tilde{\zeta}}u(\widetilde{X}_{t})
F((\widetilde{X}_{s})_{0\leq s\leq t})\right].
\end{multline*}
In case $L\in\mathfrak{D}^{-}$, let $(G_{\widetilde{L}}(x,y))_{x,y\in I}$ be the Green's function of $\widetilde{L}$ relatively to the measure $u(x)^{2}m(x)dx$. Then $L$ has a Green's function $(G_{L}(x,y))_{x,y\in I}$ that equals
\begin{displaymath}
G_{L}(x,y)=\mathbb{E}_{x}\left[\int_{0}^{\zeta}\exp\left(\int_{I}\ell_{t}^{z}(X)m(z)\nu(dz)\right)d_{t}\ell^{y}_{t}(X)\right] = u(x)u(y) G_{\widetilde{L}}(x,y).
\end{displaymath}
For $L\in\mathfrak{D}^{-}$, the Green's functions $G_{L}$ satisfy the following resolvent identities:

\begin{lemm}
\label{Ch2Sec3: LemResolvent}
If $L\in\mathfrak{D}^{-}$ and $\tilde{\nu}$ is a signed measure with compact support on $I$ such that $L+\tilde{\nu}\in\mathfrak{D}^{-}$, then for all $x,y\in I$,
\begin{equation*}
\begin{split}
G_{L+\tilde{\nu}}(x,y)-G_{L}(x,y)=&\int_{I}G_{L+\tilde{\nu}}(x,z)G_{L}(z,y)m(z)\tilde{\nu}(dz)\\
=&\int_{I}G_{L}(x,z)G_{L+\tilde{\nu}}(z,y)m(z)\tilde{\nu}(dz).
\end{split}
\end{equation*}
\end{lemm}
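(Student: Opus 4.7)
The plan is to manipulate the Feynman--Kac representation of the two Green's functions that is recalled just above the lemma. Let $X$ be the diffusion with generator $L^{(0)}$ of lifetime $\zeta$, and set, for $t<\zeta$,
\begin{displaymath}
A_{t}:=\int_{I}\ell_{t}^{z}(X)m(z)\nu(dz),\qquad \tilde{A}_{t}:=\int_{I}\ell_{t}^{z}(X)m(z)\tilde{\nu}(dz).
\end{displaymath}
Since $L$ and $L+\tilde{\nu}$ share the same unperturbed operator $L^{(0)}$, the formula for $G_{L}$ displayed just above the lemma gives simultaneously
\begin{displaymath}
G_{L}(x,y)=\mathbb{E}_{x}\Big[\int_{0}^{\zeta}e^{A_{t}}d_{t}\ell^{y}_{t}(X)\Big],\qquad G_{L+\tilde{\nu}}(x,y)=\mathbb{E}_{x}\Big[\int_{0}^{\zeta}e^{A_{t}+\tilde{A}_{t}}d_{t}\ell^{y}_{t}(X)\Big].
\end{displaymath}

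Next, I would use the elementary identity $e^{A_{t}+\tilde{A}_{t}}-e^{A_{t}}=e^{A_{t}}(e^{\tilde{A}_{t}}-1)$ together with the fact that $s\mapsto\tilde{A}_{s}$ is continuous and of finite variation, so $e^{\tilde{A}_{t}}-1=\int_{0}^{t}e^{\tilde{A}_{s}}d\tilde{A}_{s}$ by the fundamental theorem of Stieltjes calculus. Substituting inside the expectation and swapping the $s$- and $t$-integrals by Fubini yields
\begin{displaymath}
G_{L+\tilde{\nu}}(x,y)-G_{L}(x,y)=\mathbb{E}_{x}\Big[\int_{0}^{\zeta}e^{\tilde{A}_{s}}\Big(\int_{s}^{\zeta}e^{A_{t}}d_{t}\ell^{y}_{t}(X)\Big)d\tilde{A}_{s}\Big].
\end{displaymath}

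Then I would factor $e^{A_{t}}=e^{A_{s}}e^{A_{t}-A_{s}}$ inside the inner integral and apply the Markov property of $X$ at time $s$: the additive functional $(A_{t}-A_{s})_{t\ge s}$ and the local time increment $(\ell^{y}_{t}-\ell^{y}_{s})_{t\ge s}$ are functionals of the post-$s$ path, so the $\mathcal{F}_{s}$-conditional expectation of $\int_{s}^{\zeta}e^{A_{t}-A_{s}}d_{t}\ell^{y}_{t}(X)$ equals $G_{L}(X_{s},y)$. The right-hand side therefore reduces to $\mathbb{E}_{x}\big[\int_{0}^{\zeta}e^{A_{s}+\tilde{A}_{s}}G_{L}(X_{s},y)\,d\tilde{A}_{s}\big]$. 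Expanding $d\tilde{A}_{s}=\int_{I}d_{s}\ell^{z}_{s}(X)\,m(z)\tilde{\nu}(dz)$ and using that $d_{s}\ell^{z}_{s}(X)$ is carried by $\{X_{s}=z\}$, so that $G_{L}(X_{s},y)$ can be replaced by $G_{L}(z,y)$ under the corresponding integral, one more application of Fubini gives
\begin{displaymath}
G_{L+\tilde{\nu}}(x,y)-G_{L}(x,y)=\int_{I}G_{L}(z,y)\,\mathbb{E}_{x}\Big[\int_{0}^{\zeta}e^{A_{s}+\tilde{A}_{s}}d_{s}\ell^{z}_{s}(X)\Big]m(z)\tilde{\nu}(dz).
\end{displaymath}
The inner expectation is exactly $G_{L+\tilde{\nu}}(x,z)$, which yields the first identity. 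The second identity then follows by exchanging the roles of $x$ and $y$ and invoking the symmetry of both Green's functions with respect to the speed measure, $G_{L}(y,z)=G_{L}(z,y)$ and $G_{L+\tilde{\nu}}(x,z)=G_{L+\tilde{\nu}}(z,x)$.

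The main technical obstacle will be the rigorous justification of the Fubini interchanges and of the Markov-property manipulation, since $\tilde{\nu}$ is only a signed measure. The compact support of $\tilde{\nu}$ combined with the assumption $L,L+\tilde{\nu}\in\mathfrak{D}^{-}$ (which gives finite, continuous Green's functions) and the continuity of the local time field is exactly what is needed: on a neighbourhood of the support of $\tilde{\nu}$ the Green's functions and the expected local time densities are bounded, so the signed kernels appearing under the expectations have finite total variation and all the swaps are legitimate.
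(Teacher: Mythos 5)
Your proof is correct and follows essentially the same route as the paper: write both Green's functions via the Feynman--Kac formula with the same underlying $L^{(0)}$-diffusion, factor the difference of exponentials as $e^{A_t}(e^{\tilde A_t}-1)=e^{A_t}\int_0^t e^{\tilde A_s}d\tilde A_s$, apply the Markov property at time $s$ to split off $G_{L+\tilde\nu}(x,z)G_L(z,y)$, and justify Fubini by compactness of $\mathrm{supp}\,\tilde\nu$ and finiteness of the Green's functions. The only cosmetic difference is how the second identity is obtained (you swap $x\leftrightarrow y$ and use symmetry of the kernels, while the paper swaps $L\leftrightarrow L+\tilde\nu$ via $\tilde\nu\mapsto-\tilde\nu$); both are valid.
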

\begin{proof}
We decompose $L$ as $L=L^{(0)}+\nu$, where $L^{(0)}$ does not contain measures and $\nu$ is a signed measure on $I$.
Let $(X_{t})_{0\leq t<\zeta}$ be the diffusion of generator $L^{(0)}$. Then
\begin{displaymath}
G_{L}(x,y)=\mathbb{E}_{x}\left[\int_{0}^{\xi}\exp\left(\int_{I}\ell_{t}^{a}(X)m(a)\nu(da)\right)
d_{t}\ell^{y}_{t}(X)\right],
\end{displaymath}
\begin{displaymath}
G_{L+\tilde{\nu}}(x,y)=\mathbb{E}_{x}\left[\int_{0}^{\xi}\exp\left(\int_{I}\ell_{t}^{a}(X)m(a)(\nu+\tilde{\nu})(da)\right)d_{t}\ell^{y}_{t}(X)\right],
\end{displaymath}
and
\begin{equation*}
\begin{split}
\exp&\left(\int_{I}\ell_{t}^{a}(X)m(a)(\nu+\tilde{\nu})(da)\right)-
\exp\left(\int_{I}\ell_{t}^{a}(X)m(a)\nu(da)\right)\\&=
\exp\left(\int_{I}\ell_{t}^{a}(X)m(a)\nu(da)\right)\times
\left(\exp \left(\int_{I}\ell_{t}^{a}(X)m(a)\tilde{\nu}(da)\right)-1\right)\\
&=\exp\left(\int_{I}\ell_{t}^{a}(X)m(a)\nu(da)\right)\int_{I}\int_{0}^{t}
\exp\left(\int_{I}\ell_{s}^{a}(X)m(a)\tilde{\nu}(da)\right)d_{s}\ell_{s}^{z}(X)m(z)\tilde{\nu}(dz).
\end{split}
\end{equation*}
Thus, $G_{L+\tilde{\nu}}(x,y)-G_{L}(x,y)$ equals
\begin{equation}
\label{Ch2Sec3: EqDiffGreen}
\mathbb{E}_{x}\left[\int_{I}\int_{0}^{\xi}\int_{0}^{t}
\exp\left(\int_{I}m(a)(\ell_{t}^{a}(X)\nu(da)+\ell_{s}^{a}(X)\tilde{\nu}(da))\right)
d_{s}\ell_{s}^{z}(X)d_{s}\ell_{t}^{y}(X)m(z)\tilde{\nu}(dz)\right].
\end{equation}
We would like to interchange $\mathbb{E}_{x}\left[\cdot\right]$ and $\int_{I}(\cdot)m(z)\tilde{\nu}(dz)$. Let
$z\in I$ and $(X^{(x)}_{t})_{0\leq t<\zeta_{x}}$, $(X^{(z)}_{t})_{0\leq t<\zeta_{z}}$ be two independent diffusions of generator $L^{(0)}$ starting in $x$ respectively $z$. Applying Markov property, we get
\begin{equation*}
\begin{split}
\mathbb{E}_{x}\Bigg[\int_{0}^{\xi}\int_{0}^{t}
\exp\bigg(\int_{I}m(a)&(\ell_{t}^{a}(X)\nu(da)+\ell_{s}^{a}(X)\tilde{\nu}(da))\bigg)
d_{s}\ell_{s}^{z}(X)d_{s}\ell_{t}^{y}(X)\Bigg]\\
=&\mathbb{E}\Bigg[\int_{0}^{\zeta_{x}}\int_{0}^{\zeta_{z}}
\exp\left(\int_{I}m(a)(\ell_{s}^{a}(X^{(x)})(\nu+\tilde{\nu})(da)\right)
\\&\times\exp\bigg(\int_{I}m(a)(\ell_{u}^{a}(X^{(z)})\nu(da)\bigg)
d_{u}\ell^{z}_{u}(X^{(z)})d_{s}\ell^{x}_{s}(X^{(x)})\Bigg]\\
=&G_{L+\tilde{\nu}}(x,z)G_{L}(z,y).
\end{split}
\end{equation*}
Since $\tilde{\nu}$ has compact support,
\begin{equation*}
\begin{split}
\mathbb{E}_{x}\Bigg[\int_{I}\int_{0}^{\xi}\int_{0}^{t}
\exp\bigg(\int_{I}m(a)&(\ell_{t}^{a}(X)\nu(da)+\ell_{s}^{a}(X)\tilde{\nu}(da))\bigg)
d_{s}\ell_{s}^{z}(X)d_{s}\ell_{t}^{y}(X)m(z)\vert\tilde{\nu}\vert(dz)\Bigg]\\=&
\int_{I}\mathbb{E}_{x}\Bigg[\int_{0}^{\xi}\int_{0}^{t}
\exp\left(\int_{I}m(a)(\ell_{t}^{a}(X)\nu(da)+\ell_{s}^{a}(X)\tilde{\nu}(da))\right)
\\&d_{s}\ell_{s}^{z}(X)d_{s}\ell_{t}^{y}(X)\Bigg]m(z)\vert\tilde{\nu}\vert(dz)
\\=&\int_{I}G_{L+\tilde{\nu}}(x,z)G_{L}(z,y)m(z)\vert\tilde{\nu}\vert(dz)<+\infty.
\end{split}
\end{equation*}
Thus, in \eqref{Ch2Sec3: EqDiffGreen} we can interchange $\mathbb{E}_{x}\left[\cdot\right]$ and $\int_{I}(\cdot)m(z)\tilde{\nu}(dz)$ and get
\begin{displaymath}
G_{L+\tilde{\nu}}(x,y)-G_{L}(x,y)=\int_{I}G_{L+\tilde{\nu}}(x,z)G_{L}(z,y)m(z)\tilde{\nu}(dz).
\end{displaymath}
Since $L$ and $L+\tilde{\nu}$ play symmetric roles, we also have
\begin{displaymath}
G_{L}(x,y)-G_{L+\tilde{\nu}}(x,y)=\int_{I}G_{L}(x,z)G_{L+\tilde{\nu}}(z,y)m(z)(-\tilde{\nu})(dz).
\qedhere
\end{displaymath}
\end{proof}

The discrete analogue of the sets $\mathfrak{D}^{-}$, $\mathfrak{D}^{0}$ and $\mathfrak{D}^{+}$ are symmetric matrices with non-negative off-diagonal coefficients inducing a connected transition graph, with the highest eigenvalue that is respectively negative, zero and positive. However, in continuous case, the sets $L\in\mathfrak{D}^{-}$, $\mathfrak{D}^{0}$ and $\mathfrak{D}^{+}$ can not be defined spectrally because for operators from $L\in\mathfrak{D}^{-}$ and $\mathfrak{D}^{+}$ the maximum of the spectrum can also equal zero. However, the next result shows that the sets $\mathfrak{D}^{-}$ and $\mathfrak{D}^{+}$ are stable under small perturbations of the measure $\nu$ and that $\mathfrak{D}^{0}$ is not.

\begin{prop}
\label{Ch2Sec3: PropStability}
\begin{itemize}
\item[(i)] If $L\in\mathfrak{D}^{0}$ and $\kappa$ is a non-zero positive Radon measure on $I$, then $L-\kappa\in\mathfrak{D}^{-}$ and $L+\kappa\in\mathfrak{D}^{+}$.
\item[(ii)] If $L\in\mathfrak{D}^{-}$ and $J$ is a compact subinterval of $I$, then there is $K>0$ such that for any positive measure $\kappa$ supported in $J$ satisfying $\kappa(J)< K$, we have $L+\kappa\in\mathfrak{D}^{-}$.
\item[(iii)] If $L\in\mathfrak{D}^{+}$, then there is $K>0$ such that for any positive finite measure $\kappa$ satisfying $\kappa(I)< K$, we have $L-\kappa\in\mathfrak{D}^{+}$.
\item[(iv)] If $L\in\mathfrak{D}^{+}$, there is a positive Radon measure $\kappa$ on $I$ such that $L-\kappa\in\mathfrak{D}^{0}$.
\item[(v)] Let $L\in\mathfrak{D}^{+}$ and $x_{0}<x_{1}\in I$. Then 
${L_{\vert (x_{0},x_{1})}\in\mathfrak{D}^{0}}$ if and only if there is an $L$-harmonic function $u$ positive on $(x_{0},x_{1})$ and zero in $x_{0}$ and $x_{1}$.
\end{itemize}
\end{prop}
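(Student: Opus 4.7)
The plan is to rely throughout on Proposition~\ref{Ch2Sec3: PropPositivity}: $L\in\mathfrak{D}^{0,-}$ iff there is a positive continuous $L$-harmonic function $u$, iff the quadratic form
\[
Q_L(f):=\int_{I}(L^{(0)}f)\,f\,m\,dx + \int_{I}f^{2}\,m\,d\nu
\]
is non-positive on $C_c^\infty(I)$. Given such a $u$, the conjugate $\widetilde L:=Conj(u,L)$ is a pure diffusion generator on $I$, and the refinement $\mathfrak{D}^0$ versus $\mathfrak{D}^-$ reads off, in natural scale, from divergence or convergence of the scale integral $\int w/u^2$ at the endpoints.

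For~(i), $\widetilde L=Conj(u,L)$ generates a recurrent one-dimensional diffusion, so in natural scale it is Brownian motion on $\mathbb R$, and $Conj(u, L\mp\kappa)=\widetilde L\mp\kappa$. Then $\widetilde L-\kappa$ is a recurrent diffusion with a nontrivial killing measure, hence transient, so $L-\kappa\in\mathfrak{D}^-$. For $L+\kappa$, any positive $(\widetilde L+\kappa)$-harmonic $v$ satisfies $\widetilde L v=-v\kappa\le 0$; in natural scale $v$ is positive and concave on $\mathbb R$, hence constant, and $\widetilde L v=0$ forces $v\kappa=0$, contradicting $\kappa\ne 0$. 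For~(iii), since $L\in\mathfrak{D}^+$ there is $f_0\in C_c^\infty(I)$ with $Q_L(f_0)>0$; the estimate $Q_{L-\kappa}(f_0)\ge Q_L(f_0)-\|f_0\|_\infty^2(\sup_{\mathrm{supp}\,f_0} m)\,\kappa(I)$ shows that $K:=Q_L(f_0)/(\|f_0\|_\infty^2\sup_{\mathrm{supp}\,f_0} m)$ keeps $L-\kappa\in\mathfrak{D}^+$ whenever $\kappa(I)<K$.

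For~(ii), after an h-transform we may assume $L$ itself generates a transient diffusion, with finite continuous Green's function $G_L$. Iterating the resolvent identity of Lemma~\ref{Ch2Sec3: LemResolvent} one writes formally
\[
G_{L+\kappa}(x,y)=\sum_{n\ge 0}\int_{J^n} G_L(x,z_1)G_L(z_1,z_2)\cdots G_L(z_n,y)\prod_{i=1}^n m(z_i)\,\kappa(dz_i).
\]
With $M_J:=\sup_{J\times J}G_L<\infty$, the series is dominated termwise by a geometric one of ratio $M_J(\sup_J m)\kappa(J)$ and converges once $\kappa(J)<(M_J\sup_J m)^{-1}$; the resolvent identity then identifies the limit as $G_{L+\kappa}$, which is therefore finite. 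In parallel, adapting the fixed-point contraction of Lemma~\ref{Ch2Sec1: LemContKappa} to a compactly supported signed perturbation shows that the fundamental solutions $u_\uparrow, u_\downarrow$ of $L$ deform continuously into positive solutions of $(L+\kappa)v=0$; together these give $L+\kappa\in\mathfrak{D}^{0,-}$, and finiteness of the Green's function then places it in $\mathfrak{D}^-$. For~(v), if $u>0$ on $(x_0,x_1)$ solves $Lu=0$ with $u(x_0)=u(x_1)=0$, Cauchy--Lipschitz forces $u(x)\sim c_i|x-x_i|$ near each endpoint, so the scale density $w/u^2$ of $Conj(u,L_{|(x_0,x_1)})$ has non-integrable singularities; both boundaries are inaccessible, the h-transformed diffusion is recurrent, and $L_{|(x_0,x_1)}\in\mathfrak{D}^0$. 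Conversely, any positive $L$-harmonic $u$ witnessing $L_{|(x_0,x_1)}\in\mathfrak{D}^0$ must give an inaccessible h-transformed boundary at both endpoints, so $\int w/u^2$ must diverge there, forcing $u(x_0^+)=u(x_1^-)=0$.

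The main obstacle is~(iv). Take $\kappa_0:=\nu_+ + m\,dx$, a positive Radon measure on $I$ with full topological support that dominates $\nu_+$; the estimate $Q_L(f)\le\int f^2\,m\,d\kappa_0$ then makes the supremum $T:=\sup_{0\ne f\in C_c^\infty(I)} Q_L(f)/\int f^2\,m\,d\kappa_0$ lie in $(0,1]$ (positivity because $L\in\mathfrak{D}^+$). For $L_t:=L-t\kappa_0$, the identity $Q_{L_t}(f)=Q_L(f)-t\int f^2\,m\,d\kappa_0$ shows $L_t\in\mathfrak{D}^{0,-}$ iff $t\ge T$, and the critical value is $t^*:=T$. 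By the continuity of fundamental solutions in the perturbing measure (an extension of Lemma~\ref{Ch2Sec1: LemContKappa} to signed perturbations), passing to the limit along $t\downarrow t^*$ produces a positive $L_{t^*}$-harmonic function, so $L_{t^*}\in\mathfrak{D}^{0,-}$. The delicate step is to exclude $L_{t^*}\in\mathfrak{D}^-$: exhausting $I$ by compacts $J_n$, choose $n_0$ and $\varepsilon_0>0$ with $\sup_{f\in C_c^\infty(J_{n_0})} Q_L(f)/\int f^2 m\,d\kappa_0>T-\varepsilon_0$, so a common witness $f_0\in C_c^\infty(J_{n_0})$ gives $Q_{L_{t^*-\varepsilon}}(f_0)>0$ for every $\varepsilon<\varepsilon_0$. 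By~(ii) applied with $J=J_{n_0}$, $L_{t^*}+\varepsilon\kappa_0|_{J_{n_0}}\in\mathfrak{D}^-$ for $\varepsilon$ small; but since $\mathrm{supp}\,f_0\subset J_{n_0}$ we have $Q_{L_{t^*}+\varepsilon\kappa_0|_{J_{n_0}}}(f_0)=Q_{L_{t^*-\varepsilon}}(f_0)>0$, so the same operator is in $\mathfrak{D}^+$, a contradiction. Hence $L_{t^*}\in\mathfrak{D}^0$, and $\kappa:=t^*\kappa_0$ is the required measure.
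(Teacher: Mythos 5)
Parts (i), (iii) and (v) are correct and close to the paper's arguments (in (i) you rule out $L+\kappa\in\mathfrak{D}^{0,-}$ by concavity of a positive $\widetilde L$-superharmonic function on $\mathbb R$, where the paper reuses the first half of (i) — both work). Your resolvent--Neumann series for (ii) is a reasonable alternative to the paper's direct ODE construction of a non-decreasing $(L+\kappa)$-harmonic function, but as written it is circular: the sentence ``the resolvent identity then identifies the limit as $G_{L+\kappa}$'' invokes Lemma~\ref{Ch2Sec3: LemResolvent}, which already presupposes $L+\kappa\in\mathfrak{D}^-$. To make this rigorous you would instead run the Neumann series directly on $u_{\uparrow}$ and $u_{\downarrow}$, observe the iterates stay $\ge u_{\uparrow}$ (resp.\ $u_{\downarrow}$), and then note the two limits remain linearly independent; the vague continuity statement does not by itself give the uniform threshold $K$.

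The genuine gap is in (iv), and the conclusion of your construction is in fact false. Because $\kappa_0=\nu_++m\,dx$ has \emph{full} support, the deformation $t\mapsto L_t=L-t\kappa_0$ perturbs the operator globally, and part~(ii) — which only controls perturbations supported in a fixed compact — cannot force $L_{t^*}\notin\mathfrak{D}^-$. The bridging claim ``a common witness $f_0\in C_c^\infty(J_{n_0})$ gives $Q_{L_{t^*-\varepsilon}}(f_0)>0$ for every $\varepsilon<\varepsilon_0$'' is false: with $R_0:=Q_L(f_0)/\int f_0^2\,m\,d\kappa_0\le T$ one only gets $Q_{L_{t^*-\varepsilon}}(f_0)>0$ for $\varepsilon>t^*-R_0\ge 0$, and there is no way to beat the (ii)-threshold, which itself varies with $J_{n_0}$. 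Concretely, take $I=(0,+\infty)$ and $L=\tfrac12\tfrac{d^2}{dx^2}+c\,dx$ for $c>0$, so $m\equiv1$, $\nu_+=c\,dx$ and $L\in\mathfrak{D}^+$; then $\kappa_0=(c+1)\,dx$ and, since the Dirichlet Poincar\'e constant of the half-line is $0$,
\begin{displaymath}
T=\sup_{f}\dfrac{-\tfrac12\int(f')^2+c\int f^2}{(c+1)\int f^2}=\dfrac{c}{c+1},
\end{displaymath}
so $L_{t^*}=L-T\kappa_0=\tfrac12\tfrac{d^2}{dx^2}$, the generator of Brownian motion on $(0,+\infty)$ killed at $0$, which is transient: $L_{t^*}\in\mathfrak{D}^-$, not $\mathfrak{D}^0$, and $\kappa:=t^*\kappa_0=c\,dx$ fails. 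The paper avoids this precisely by restricting the moving part of the killing to a compact $J$ containing $\mathrm{supp}\,f$, deforming along $s\mapsto L-\nu_++s\,1_J\nu_+$; then part~(ii) gives genuine openness of $\lbrace s:\cdot\in\mathfrak{D}^-\rbrace$, its supremum $s_{\max}$ is not in $\mathfrak{D}^-$, and a limiting argument places it in $\mathfrak{D}^{0,-}$, hence in $\mathfrak{D}^0$. You need to revert to a compactly supported deformation of this kind.
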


\begin{proof}
(i): Consider $u$ positive continuous on $I$ such that $\operatorname{Conj}(u,L)$ is the generator of a recurrent diffusion. Since $\operatorname{Conj}(u,L-\kappa)=\operatorname{Conj}(u,L)-\kappa$, $\operatorname{Conj}(u,L-\kappa)$ is the generator of a diffusion killed at rate $\kappa$ and thus 
$L-\kappa\in\mathfrak{D}^{-}$. Similarly we can not have $L+\kappa\in\mathfrak{D}^{0,-}$ because this would mean 
$L=(L+\kappa)-\kappa\in\mathfrak{D}^{-}$.  

(ii): Without loss of generality we may assume that $L$ is the generator of a transient diffusion and that it is at natural scale, that is to say
$L=\frac{1}{m(x)}\frac{d^{2}}{dx^{2}}$. Since the diffusion is transient, $I\neq\mathbb{R}$. We may assume that $x_{0}:=\inf I > -\infty$. Write $J=[x_{1},x_{2}]$. Let $\kappa$ be a positive measure supported in $[x_{1},x_{2}]$. Let $u$ be the solution to $Lu + u\kappa=0$ with initial conditions $u(x_{0})=0, \frac{du}{dx}(x_{0}^{+})=1$. $u$ is affine on $[x_{0},x_{1}]$ and on $[x_{2},\sup I)$. On $[x_{1},x_{2}]$ $u$ is bounded from above by $x_{2}-x_{0}$. Thus, if
\begin{displaymath}
\kappa([x_{1},x_{2}])\leq \dfrac{\min_{[x_{1},x_{2}]} m}{(x_{2}-x_{0})},
\end{displaymath}
then $u$ is non-decreasing on $I$ and hence positive. This implies that $L+\kappa\in\mathfrak{D}^{0,-}$. By the point (i) of current proposition, if $\kappa([x_{1},x_{2}])<\frac{\min_{[x_{1},x_{2}]} m}{(x_{2}-x_{0})}$ then $L+\kappa\in\mathfrak{D}^{-}$.

(iii): By definition, there is $f$ smooth compactly supported in $I$ such that \eqref{Ch2Sec3: EqNegativity} does not hold for $f$. Let $U$ be the value of the left-hand side in \eqref{Ch2Sec3: EqNegativity}. $U>0$. If $\kappa$ is a positive finite measure on $I$ satisfying
\begin{displaymath}
\kappa(I)<\dfrac{U}{\Vert f \Vert_{\infty}^{2}\max_{\operatorname{Supp}(f)}m},
\end{displaymath}
then if we replace $\nu$ by $\nu-\kappa$ in \eqref{Ch2Sec3: EqNegativity}, keeping the same function $f$, we still get something positive. Thus, 
$L-\kappa\in\mathfrak{D}^{+}$.

(iv): Let $f$ be a smooth function compactly supported in $I$ such that \eqref{Ch2Sec3: EqNegativity} does not hold for $f$. Let $J$ be a compact subinterval of $I$ containing the support of $f$. The set
\begin{displaymath}
\lbrace s\in[0,1]\vert L-\nu_{+}+s\,1_{J}\nu_{+}\in\mathfrak{D}^{-}\rbrace
\end{displaymath}
is not empty because it contains $0$, and open by Proposition 
\ref{Ch2Sec3: PropStability} (ii). Let $s_{\rm max}$ by its supremum.
Then $s_{\rm max}<1$ and $L-\nu_{+}+s_{\rm max}1_{J}\nu_{+}\in\mathfrak{D}^{0}$. Then
\begin{displaymath}
\kappa:=1_{I\setminus J}\nu_{+}+(1-s_{\rm max})1_{J}\nu_{+}
\end{displaymath}
is appropriate.

(v): First, assume that there is such a function $u$. Then, by definition, 
$L_{\vert (x_{0},x_{1})}\in\mathfrak{D}^{0,-}$. $\operatorname{Conj}(u,L_{\vert (x_{0}, x_{1})})$ does not have any killing measure and the derivative of its natural scale function is $\frac{w}{u^{2}}$. It is not integrable in the neighborhood of $x_{0}$ or $x_{1}$. Thus, the corresponding diffusion never hits $x_{0}$ or $x_{1}$. This means that it is recurrent. Conversely, assume that $L_{\vert (x_{0},x_{2})}\in \mathfrak{D}^{0}$. Let $u$ be a solution to $Lu = 0$ satisfying $u(x_{0})=0$ and 
$\frac{du}{dx}(x_{0}^{+})>0$. If $u$ changed its sign on $(x_{0},x_{1})$, then according to the preceding  we would have 
$L_{\vert (x_{0},x_{1})}\in\mathfrak{D}^{+}$. If $u$ were positive on an interval larger that $(x_{0},x_{1})$, we would have 
$L_{\vert (x_{0},x_{1})}\in\mathfrak{D}^{-}$. The only possibility is that $u$ is positive on $(x_{0},x_{1})$ and zero in $x_{1}$.
\end{proof}

\chapter{Measure on loops and its basic properties}
\label{Ch3}

\section{Spaces of loops}
\label{Ch3Sec1}

In this chapter, in Section \ref{Ch3Sec3}, we will introduce the infinite measure $\mu^{\ast}$ on loops which is at the center of this work. Prior to this, in the section \ref{Ch3Sec2} we will introduce measures $\mu^{x,y}$ on finite life-time paths which will be instrumental for defining $\mu^{\ast}$. In Sections \ref{Ch3Sec4}, \ref{Ch3Sec5}, \ref{Ch3Sec7}, \ref{Ch3Sec8} will be explored different aspects of $\mu^{\ast}$. 
In Section \ref{Ch3Sec6} we will extend the Vervaat's Brownian bridge to Brownian excursion transformation to general diffusions. This generalisation can be easily interpreted in terms of measure $\mu^{\ast}$ and is related to the results of section \ref{Ch3Sec5}. In Section \ref{Ch3Sec1} we  will introduce the spaces of paths and loops on witch will be defined the measures we will consider throughout the paper. 

First we will consider continuous, time parametrized paths on $\mathbb{R}$, 
$(\gamma(t))_{0\leq t\leq T(\gamma)}$, with finite life-time $T(\gamma)\in (0,+\infty)$. Given two such paths $(\gamma(t))_{0\leq t\leq T(\gamma)}$ and $(\gamma'(t))_{0\leq t\leq T(\gamma')}$, a natural distance between them is 
\begin{displaymath}
d_{\rm paths}(\gamma,\gamma'):=\vert \log(T(\gamma))-\log(T(\gamma'))\vert + \max_{v\in [0,1]}\vert \gamma(vT(\gamma))-\gamma '(vT(\gamma')) \vert.
\end{displaymath}

A rooted loop in $\mathbb{R}$ will be a continuous finite life-time path $(\gamma(t))_{0\leq t\leq T(\gamma)}$ such that $\gamma(T(\gamma))=\gamma(0)$ and $\mathfrak{L}$ will stand for the space of such loops. $\mathfrak{L}$ endowed with the metric $d_{\rm paths}$ is a Polish space. In the sequel we will use the corresponding Borel $\sigma$-algebra, $\mathcal{B}_{\mathfrak{L}}$, for the definition of measures on  $\mathfrak{L}$. 
For $v\in[0,1]$, we define a parametrization shift transformation $\operatorname{shift}_{v}$ on $\mathfrak{L}$: 
${\operatorname{shift}_{v}(\gamma)=\tilde{\gamma}}$ where $T(\tilde{\gamma})=T(\gamma)$ and 
\begin{displaymath}
\tilde{\gamma}(t) = \left\lbrace 
\begin{array}{ll}
\gamma(vT(\gamma)+t) & \text{if}~t\leq (1-v)T(\gamma), \\ 
\gamma(t-(1-v)T(\gamma)) & \text{if}~t\geq (1-v)T(\gamma).
\end{array} 
\right. 
\end{displaymath}
We introduce an equivalence relation on $\mathfrak{L}$: $\gamma'\sim \gamma$ if $T(\gamma')=T(\gamma)$ and there is $v\in[0,1]$ such that $\gamma'=\operatorname{shift}_{v}(\gamma)$. We call the quotient space $\raisebox{0.6ex}{$\mathfrak{L}$}\diagup\raisebox{-0.6ex}{$\sim$}$ the space of unrooted loops, or just loops, and denote it $\mathfrak{L}^{\ast}$. Let $\pi$ be the projection 
$\pi: \mathfrak{L} \rightarrow \mathfrak{L}^{\ast}$. 
There is a natural metric $\delta_{\mathfrak{L}^{\ast}}$ on $\mathfrak{L}^{\ast}$:
\begin{displaymath}
d_{\mathfrak{L}^{\ast}}(\pi(\gamma),\pi(\gamma')):=\min_{v\in[0,1]}d_{\rm paths}(\operatorname{shift}_{v}(\gamma),\gamma').
\end{displaymath}
$(\mathfrak{L}^{\ast},d_{\mathfrak{L}^{\ast}})$ is a Polish space and $\pi$ is continuous. For defining measures on $\mathfrak{L}^{\ast}$ we will use its Borel $\sigma$-algebra $\mathcal{B}_{\mathfrak{L}^{\ast}}$. $\pi^{-1}(\mathcal{B}_{\mathfrak{L}^{\ast}})$, the inverse image of 
$\mathcal{B}_{\mathfrak{L}^{\ast}}$ by $\pi$, is a sub-algebra of $\mathcal{B}_{\mathfrak{L}}$.

In the sequel we will consider paths and loops that have a continuous family of local times $(\ell^{x}_{t}(\gamma))_{x\in\mathbb{R}, 0\leq t\leq T(\gamma)}$ relatively to a measure $m(x)dx$ such that for any positive measurable function $f$ on $\mathbb{R}$ and any $t\in [0,T(\gamma)]$,
\begin{displaymath}
\int_{0}^{t}f(\gamma(s))ds = \int_{I}\ell^{x}_{t}(\gamma)m(x)dx.
\end{displaymath}
We will simply write $\ell^{x}(\gamma)$ for $\ell^{x}_{T(\gamma)}(\gamma)$.

In the sequel we will also consider transformations on paths and loops and the images of different measures by these transformation. We will use everywhere the following notation: If $\mathcal{E}$ and $\mathcal{E}'$ are two measurable spaces, $F :\mathcal{E}\mapsto\mathcal{E}'$ a measurable map and $\eta$ a positive measure on $\mathcal{E}$, $F_{\ast}\eta$ will be the measure on $\mathcal{E}'$ obtained as the image of $\eta$ trough $F$.

\section{Measures $\mu^{x,y}$ on finite life-time paths}
\label{Ch3Sec2}

First we recall the framework that Le Jan used in \cite{LeJan2011Loops}: $\mathbb{G}=(V,E)$ is a finite connected undirected graph. $L_{\mathbb{G}}$ is the generator of a symmetric Markov jump process with killing on $\mathbb{G}$. $m_{\mathbb{G}}$ is the duality measure for $L_{\mathbb{G}}$. $(p^{\mathbb{G}}_{t}(x,y))_{x,y\in V,t\geq 0}$ is the family of transition densities of the jump process and $(\mathbb{P}^{\mathbb{G}, t}_{x,y}(\cdot))_{x,y\in V,t\geq 0}$ the family of bridge probability measures. The measure on rooted loops associated with $L_{\mathbb{G}}$ is
\begin{equation}
\label{Ch3Sec2: EqMesLoopsGraph}
\mu_{L_{\mathbb{G}}}(\cdot)=\int_{t>0}\sum_{x\in V}\mathbb{P}^{\mathbb{G}, t}_{x,x}(\cdot)p^{\mathbb{G}}_{t}(x,x)m_{\mathbb{G}}(x)\dfrac{dt}{t}.
\end{equation}
$\mu^{\ast}_{L_{\mathbb{G}}}$ is the image of $\mu_{L_{\mathbb{G}}}$ by the projection on unrooted loops. The definition of $\mu^{\ast}_{L_{\mathbb{G}}}$ is the exact formal analogue of the definition used in \cite{LawlerWerner2004ConformalLoopSoup} for two-dimensional Brownian loops and in \cite{Symanzik1969QFT} for massive Brownian loops in dimension four. In \cite{LeJan2011Loops} also appear variable life-time bridge measures $(\mu^{x,y}_{L_{\mathbb{G}}})_{x,y\in V}$ which are related to $\mu^{\ast}_{L_{\mathbb{G}}}$:
\begin{equation}
\label{Ch3Sec2: EqVarBridgeGraph}
\mu^{x,y}_{L_{\mathbb{G}}}(\cdot)=\int_{0}^{+\infty}\mathbb{P}^{\mathbb{G}, t}_{x,y}(\cdot)p^{\mathbb{G}}_{t}(x,y)dt.
\end{equation}
In this subsection we will define and give the important properties of the formal analogue of the measures $\mu^{x,y}_{L_{\mathbb{G}}}$ in case of one-dimensional diffusions. In the next section \ref{Ch3Sec2} we will do the same with the measure on loops $\mu^{\ast}_{L_{\mathbb{G}}}$.

$I$ is an open interval of $\mathbb{R}$. $(X_{t})_{0\leq t<\zeta}$ is a diffusion on $I$ with a generator $L$ of the form \eqref{Ch2Sec2: EqGenerator}. We use the notations of Section \ref{Ch2Sec1}. Let $x,y\in I$. Following the pattern of \eqref{Ch3Sec2: EqVarBridgeGraph}, we define:

\begin{defi}
\label{Ch3Sec2: DefBridge}
\begin{displaymath}
\mu^{x,y}_{L}(\cdot):=\int_{0}^{+\infty}\mathbb{P}_{x,y}^{t}(\cdot)p_{t}(x,y)dt.
\end{displaymath}
\end{defi}

We will write $\mu^{x,y}$ instead of $\mu^{x,y}_{L}$ whenever there is no ambiguity on $L$. The definition of $\mu^{x,y}$ depends on the choice of $m$, but $m(y)\mu^{x,y}$ does not. Measures $\mu^{x,y}$ appear in \cite{Dynkin1984Isomorphism} and enter the expression of Dynkin's isomorphism between the Gaussian free field and the local times of random paths. Pitman and Yor studied these measures in \cite{PitmanYor81BesselDiv,PitmanYor1996MaximumBridge} in the setting of one-dimensional diffusions without killing measure ($\kappa=0$). Next we give a handy representation of $\mu^{x,y}$ in the setting of one-dimensional diffusions. It was observed and proved by Pitman and Yor in case $\kappa=0$. See also \cite{SalYenYor2015IntRep}. We consider the general case.

\begin{prop}
\label{Ch3Sec2: PropLocTimeDesinteg}
Let $F$ be a non-negative measurable functional on the space of variable life-time paths starting from $x$. Then
\begin{equation}
\label{Ch3Sec2: EqBridgeLocalTime}
\mu^{x,y}(F(\gamma))=\mathbb{E}_{x}\left[\int_{0}^{\zeta} F((X_{s})_{0\leq s\leq t})d_{t}\ell^{y}_{t}(X)\right].
\end{equation}
Equivalently
\begin{displaymath}
\mu^{x,y}(F(\gamma))=\mathbb{E}_{x}\left[\int_{0}^{\ell^{y}_{\zeta}(X)} F((X_{s})_{0\leq s\leq \tau^{y}_{l}})dl\right], 
\end{displaymath}
where $\tau^{y}_{l}:=\inf\lbrace t\geq 0\vert \ell^{z}_{t}(X)> l\rbrace$.
\end{prop}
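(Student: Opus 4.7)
The plan is to establish the identity by testing both sides against an arbitrary non-negative continuous function $g$ of the endpoint $y$, thereby reducing the claim to the occupation times formula combined with the bridge disintegration of the semi-group.

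First I would compute $\mathbb{E}_{x}\left[\int_{0}^{\zeta} F((X_{s})_{0\leq s\leq t})g(X_{t})dt\right]$ in two different ways. On one hand, writing the time integral as $\int_{0}^{+\infty}\mathbb{E}_{x}[\,F((X_{s})_{0\leq s\leq t})g(X_{t})1_{t<\zeta}\,]\,dt$, disintegrating each slice against the transition density $p_{t}(x,y)m(y)dy$ via the bridge measures $\mathbb{P}^{t}_{x,y}$, and using Fubini yields
\[
\mathbb{E}_{x}\!\left[\int_{0}^{\zeta} F((X_{s})_{0\leq s\leq t})g(X_{t})\,dt\right]=\int_{I}g(y)\,m(y)\,\mu^{x,y}(F)\,dy.
\]
On the other hand, since the Revuz measure of $\ell^{y}_{\cdot}(X)$ with respect to $m(y)dy$ is the Dirac mass at $y$, the path-dependent occupation times formula (which follows by a monotone class argument from the standard occupation times formula applied to the adapted process $t\mapsto F((X_{s})_{0\leq s\leq t})$, whose product with $g(X_{t})$ equals, on the support of $d_{t}\ell^{y}_{t}$, $g(y)F((X_{s})_{0\leq s\leq t})$) gives
\[
\mathbb{E}_{x}\!\left[\int_{0}^{\zeta} F((X_{s})_{0\leq s\leq t})g(X_{t})\,dt\right]=\int_{I}g(y)\,m(y)\,\mathbb{E}_{x}\!\left[\int_{0}^{\zeta}F((X_{s})_{0\leq s\leq t})\,d_{t}\ell^{y}_{t}(X)\right]dy.
\]
Equating the two expressions for every non-negative continuous $g$ with compact support in $I$ yields equality of the two sides of \eqref{Ch3Sec2: EqBridgeLocalTime} as functions of $y$, almost everywhere with respect to $m(y)dy$; both sides are continuous in $y$ (using Proposition \ref{Ch2Sec2: PropContinuityBridge} and joint continuity of $p_{t}$ for the left-hand side, and continuity of the local time field for the right-hand side), so the equality promotes to every $y\in I$.

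For the equivalent formulation, I would apply the change of variables $l=\ell^{y}_{t}(X)$ to the Stieltjes integral $\int_{0}^{\zeta}F((X_{s})_{0\leq s\leq t})\,d_{t}\ell^{y}_{t}(X)$. Since the measure $d_{t}\ell^{y}_{t}$ is carried by $\{t:X_{t}=y\}$ and $\tau^{y}_{l}=\inf\{t:\ell^{z}_{t}(X)>l\}$ is the right-continuous inverse of the continuous non-decreasing function $t\mapsto\ell^{y}_{t}$, we have $\tau^{y}_{\ell^{y}_{t}(X)}=t$ on this support, so the Stieltjes integral transforms into $\int_{0}^{\ell^{y}_{\zeta}(X)}F((X_{s})_{0\leq s\leq \tau^{y}_{l}})\,dl$. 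The main delicate point is the path-dependent occupation times formula in the first step—more specifically, the fact that one may replace $g(X_{t})$ by $g(y)$ under the integral against $d_{t}\ell^{y}_{t}(X)$; this is the only place where anything beyond straightforward Fubini and disintegration is used, and it is essentially where the definition of $\ell^{y}_{t}(X)$ as an occupation time density relative to $m(y)dy$ enters.
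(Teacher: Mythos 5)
Your argument is correct, but it takes a genuinely different route from the paper's. The paper proves the identity directly for a fixed $y$: it approximates the Stieltjes integral $\int_{0}^{\zeta}F\,d_{t}\ell^{y}_{t}$ by Riemann-type sums $\sum_{j}F((X_{s})_{s\le t_{j,n}})(\ell^{y}_{t_{j+1,n}\wedge\zeta}-\ell^{y}_{t_{j,n}\wedge\zeta})$, computes the expectation of each sum via the Markov property together with the local-time/semigroup identity \eqref{Ch2Sec2: EqTempsLocSemiGroup} and the symmetry of $p_{r}$, and then passes to the limit using dominated convergence and the weak continuity of the bridge measures (Proposition \ref{Ch2Sec2: PropContinuityBridge}). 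You instead integrate both sides of the claimed identity against a test measure $g(y)m(y)\,dy$: on one side the occupation-times formula (applied pathwise, with a Fubini step for non-negative integrands) gives $\int_{I}g(y)m(y)\,\mathbb{E}_{x}[\int F\,d_{t}\ell^{y}_{t}]\,dy$, while on the other side the bridge disintegration of the Markov kernel gives $\int_{I}g(y)m(y)\,\mu^{x,y}(F)\,dy$; equating for all $g$ gives the identity for $m$-a.e.\ $y$, and you then promote to every $y$ by continuity of both sides. Your route is conceptually cleaner — it exhibits the proposition as the fiber-wise version of the trivial occupation-times identity — and it only invokes the disintegration of the kernel, not the weak continuity of bridges, to get the a.e.\ statement. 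The price is paid in the a.e.-to-everywhere step: continuity of the left-hand side in $y$ is Proposition \ref{Ch2Sec2: PropContinuityBridge} plus an interchange of limit and $dt$-integral (harmless once $F$ is supported in $\{t\le t_{\max}\}$), but continuity of $y\mapsto\mathbb{E}_{x}[\int F\,d_{t}\ell^{y}_{t}]$ requires more than the joint continuity of the local time field you cite: one needs a dominated- or Scheffé-type argument to pass the a.s.\ continuity of $\int F\,d_{t}\ell^{y}_{t}$ through the expectation (the paper carries out exactly this Scheffé step in the subsequent Proposition \ref{Ch3Sec2: PropContinuityBridge}, and your proof should spell it out here). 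Both proofs, incidentally, require a preliminary reduction to bounded $F$ vanishing outside $\{t_{\min}\le T(\gamma)\le t_{\max}\}$ and with endpoint in a fixed compact subinterval, followed by a monotone-class argument; the paper states this reduction explicitly and yours should too. With those two points filled in, your proof stands as a valid and somewhat tidier alternative.
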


\begin{proof}
It is enough to prove this for $F$ non-negative continuous bounded functional witch takes value $0$ if either the life-time of the paths exceeds some value $t_{\rm max}<+\infty$ or of it is inferior to some value $t_{\rm min}$, or if the endpoint of the path lies out of a compact subinterval $[z_{1},z_{2}]$ of $I$. For $j\leq n\in\mathbb{N}$, set $t_{j,n}:=t_{\rm min}+\frac{j(t_{\rm max}-t_{\rm min})}{n}$ and $\Delta t_{n}:=\frac{t_{\rm max}-t_{\rm min}}{n}$. Almost surely, $\int_{0}^{\zeta} F((X_{s})_{0\leq s\leq t})d_{t}l^{y}_{t}$ is a limit as $n\rightarrow +\infty$ of
\begin{equation}
\label{Ch3Sec2: EqApproxBridge}
\sum_{j=0}^{n-1}F((X_{s})_{0\leq s\leq t_{j,n}})(\ell^{y}_{t_{j+1,n}\wedge\zeta}(X)-\ell^{y}_{t_{j,n}\wedge\zeta}(X)).
\end{equation}
Moreover, \eqref{Ch3Sec2: EqApproxBridge} is dominated by $\Vert F\Vert_{\infty}l^{y}_{t_{\rm max}\wedge\zeta}$. It follows that the expectations converge too. Using the Markov property and \eqref{Ch2Sec2: EqTempsLocSemiGroup}, we get that the expectation of \eqref{Ch3Sec2: EqApproxBridge} equals
\begin{equation}
\label{Ch3Sec2: EqExpectApproxBridge}
\sum_{j=0}^{n-1}\int_{z\in I}\int_{0}^{\Delta t_{n}}\mathbb{P}_{x,z}^{t_{j,n}}
\left(F((X_{s})_{0\leq s\leq t_{j,n}})\right) p_{t_{j,n}}(x,z)p_{r}(z,y)dr m(z)dz.
\end{equation}
Using the fact that $p_{r}(\cdot,\cdot)$ is symmetric, we can rewrite \eqref{Ch3Sec2: EqExpectApproxBridge} as
\begin{equation}
\label{Ch3Sec2: EqExpectApproxBridge2}
\int_{z_{1}}^{z_{2}}\Big(\sum_{j=0}^{n-1}\Delta t_{n}\mathbb{P}_{x,z}^{t_{j,n}}
\left(F((X_{s})_{0\leq s\leq t_{j,n}})\right)p_{t_{j,n}}(x,z)\Big)
\frac{1}{\Delta t_{n}}\int_{0}^{\Delta t_{n}}p_{r}(y,z)dr m(z)dz.
\end{equation}
As $n\rightarrow +\infty$, the measure $\frac{1}{\Delta t_{n}}\int_{0}^{\Delta t_{n}}p_{r}(y,z) dr m(z)dz$ 
converges weakly to $\delta_{y}$. Using the weak continuity of bridge probabilities 
(Proposition \ref{Ch2Sec2: PropContinuityBridge}), we get that \eqref{Ch3Sec2: EqExpectApproxBridge2} converges to
\begin{displaymath}
\int_{t_{\rm min}}^{t_{\rm max}}\mathbb{P}_{x,y}^{t}\left(F((X_{s})_{0\leq s\leq t})\right)p_{t}(x,y)dt.
\qedhere
\end{displaymath}
\end{proof}

Proposition \ref{Ch3Sec2: PropLocTimeDesinteg} also holds in case of a Markov jump processes on a graph, where the local time is replaced by the occupation time in a vertex dived by its weight. Proposition \ref{Ch3Sec2: PropLocTimeDesinteg} shows that we can consider $\mu^{x,y}$ as a measure on paths $(\gamma(t))_{0\leq t\leq T(\gamma)}$ endowed with continuous occupation densities $(\ell_{t}^{z}(\gamma))_{z\in I, 0\leq t\leq T(\gamma)}$. Next we state several properties  that follow almost immediately from Definition \ref{Ch3Sec2: DefBridge} and Proposition \ref{Ch3Sec2: PropLocTimeDesinteg}.
\begin{property}
\label{Ch3Sec2: PropertyBridges}
\begin{itemize}
\item[(i)] The total mass of the measure $\mu^{x,y}$ is finite if and only if $X$ is transient and then it equals $G(x,y)$.
If it is the case, $\frac{1}{G(x,x)}\mu^{x,x}$ is the law of $X$, starting from $X(0)=x$, up to the last time it visits $x$. $\frac{1}{G(x,y)}\mu^{x,y}$ is the law of $X$, starting from $X(0)=x$, conditioned to visit $y$ before $\zeta$, up to the last time it visits $y$.
\item[(ii)] The measure $\mu^{y,x}$ is image of the measure $\mu^{x,y}$ by time reversal.
\item[(iii)] If $\widetilde{I}$ is an open subinterval of $I$, then
\begin{displaymath}
\mu^{x,y}_{L_{\vert \widetilde{I}}}(d\gamma) = 1_{\gamma~contained~in~\tilde{I}}\mu^{x,y}_{L}(d\gamma).
\end{displaymath}
\item[(iv)] If $\tilde{\kappa}$ is a positive Radon measure on $I$, then
\begin{displaymath}
\mu^{x,y}_{L-\tilde{\kappa}}(d\gamma) = \exp\left(-\int_{I}\ell^{z}(\gamma)m(z)\tilde{\kappa}(dz)\right)
\mu^{x,y}_{L}(d\gamma).
\end{displaymath}
\item[(v)] If $A$ is a change of scale function, then
\begin{displaymath}
\mu^{A(x), A(y)}_{\operatorname{Scale}_{A}^{\rm gen} L} = \operatorname{Scale}_{A \ast}\mu^{x,y}_{L}.
\end{displaymath}
\item[(vi)] If $V$ is a positive continuous function on $I$, then for the time changed diffusion of generator $\frac{1}{V}L$,
\begin{displaymath}
\mu^{x,y}_{\frac{1}{V}L} = \operatorname{Speed}_{V \ast}\mu^{x,y}_{L}.
\end{displaymath}
\item[(vii)] If $u$ is a positive continuous function on $I$ such that $\frac{d^{2}u}{dx^{2}}$ is a signed measure and $Lu$ is a non-positive measure, then
\begin{displaymath}
\mu^{x,y}_{\operatorname{Conj}(u,L)}=\frac{1}{u(x)u(y)}\mu^{x,y}_{L}.
\end{displaymath}
\end{itemize}
\end{property}

Previous equalities depend on a particular choice of the speed measure for the modified generator. For (iv) we keep the measure $m(y)dy$. For (iii) we restrict $m(y)dy$ to $\widetilde{I}$. For (v) we choose 
$\left(\frac{dA}{dx}\circ A^{-1}\right)^{-1} m\circ A^{-1}da$. For (vi) we choose $\frac{1}{V(y)}m(y)dy$. For (vii) we choose $u(y)^{2}m(y)\,dy$. Property (ii) follows from that ${p_{t}(x,y)=p_{t}(y,x)}$ and $\mathbb{P}_{y,x}^{t}(\cdot)$ is the image of $\mathbb{P}_{x,y}^{t}(\cdot)$ by time reversal. Property (vi) is not immediate from definition $1$ because fixed times are transformed by time change in random times, but follows from proposition 
\ref{Ch3Sec2: PropLocTimeDesinteg}. Property (vii) follows from that a conjugate does not change bridge probability measures and changes the semi-group $(p_{t}(x,y)m(y)dy)_{t\geq 0, x\in I}$ to 
$(\frac{1}{u(x)}p_{t}(x,y)u(y)m(y)dy)_{t\geq 0, x\in I}$. Property (ii) was proved by Pitman and Yor in case $\kappa=0$. See \cite{PitmanYor1996MaximumBridge}. The case $\kappa\neq 0$ can be obtained through conjugation.

One can decompose the measures $\mu^{x,y}$ at the minimum of the path. This is done in 
\cite{PitmanYor1996MaximumBridge} for the case $\kappa=0$. The general case can be obtained
through conjugation. This is a generalization of Williams path decomposition \cite{Williams74Decomposition}.
See also \cite{Fitzsimmons2013ExcMin}.

\begin{property}
\label{Ch3Sec2: PropertyBridgesMin}
Let $X$ and $\widetilde{X}$ be two independent Markovian paths of generator $L$ starting from $X(0)=x$ and $\widetilde{X}(0)=y$. For $a\leq x\wedge y$, we introduce $T_{a}$ and $\widetilde{T}_{a}$ the first time $X$ respectively $\widetilde{X}$ hits $a$. Let $\mathbb{P}_{x}^{T_{a}}$ be the first passage bridge of $X$ from $x$ to $a$, conditioned by the event $T_{a}<\zeta$. Let $\widetilde{\mathbb{P}}_{y}^{\widetilde{T}_{a}}$ be the analogue for $\widetilde{X}$. Let $\widetilde{\mathbb{P}}_{y}^{\widetilde{T}_{a} \wedge}$ be the image of 
$\widetilde{\mathbb{P}}_{y}^{\widetilde{T}_{a}}$ through time reversal and 
$\mathbb{P}_{x}^{T_{a} }\lhd\widetilde{\mathbb{P}}_{y}^{\widetilde{T}_{a} \wedge}$ the image of $\mathbb{P}_{x}^{T_{a}}\otimes\widetilde{\mathbb{P}}_{y }^{\widetilde{T}_{a} \wedge}$ through concatenation at $a$ of two paths, one ending and the other starting in $a$. Then,
\begin{displaymath}
\mu^{x,y}(\cdot) = \int_{a\in I, a\leq x\wedge y}\mathbb{P}_{x}(T_{a}<\zeta)\mathbb{P}_{y}
(\widetilde{T}_{a}<\tilde{\zeta}) \left(\mathbb{P}_{x}^{T_{a}}\lhd\widetilde{\mathbb{P}}_{y }^
{\widetilde{T}_{a} \wedge}\right)(\cdot) w(a)da.
\end{displaymath}
\end{property}

Next property was given without proof by Dynkin in \cite{Dynkin1984Isomorphism}.

\begin{lemm}
\label{Ch3Sec2: LemPathUntilKilling}
Assume $\kappa\neq 0$. Let $\mathbb{P}_{x}(\cdot)$ be the law of $(X_{t})_{0\leq t<\zeta}$ where $X(0)=x$. Then
\begin{displaymath}
\int_{y\in I} \mu^{x,y}(\cdot)m(y)\kappa(dy) = 1_{X~\text{killed by}~\kappa}\mathbb{P}_{x}(\cdot).
\end{displaymath}
\end{lemm}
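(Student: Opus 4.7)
The plan is to compare both sides of the claimed identity, evaluated on an arbitrary non-negative measurable functional $F$ on variable life-time paths, by expressing each in terms of the underlying diffusion $X^{(0)}$ with generator $L^{(0)}$ (no killing measure). Recall from section \ref{Ch2Sec2} that one may realize $X$ by running $X^{(0)}$ and killing at the first time the additive functional
\begin{displaymath}
A^{(0)}_t := \int_I \ell^y_t(X^{(0)}) m(y)\kappa(dy)
\end{displaymath}
exceeds an independent exponential time $\mathbf{e}$ with parameter $1$. Since the family of local times is jointly continuous and $\kappa$ is a Radon measure, $t \mapsto A^{(0)}_t$ is continuous and non-decreasing.

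First, I would handle the right-hand side. Conditioning on $\mathbf{e} = l$ and letting $\tau_l := \inf\{t \ge 0 : A^{(0)}_t > l\}$, the event $\{X \text{ killed by } \kappa\}$ corresponds exactly to $\{\tau_\mathbf{e} < \zeta^{(0)}\}$, and on this event the killed trajectory coincides with $(X^{(0)}_s)_{0 \le s \le \tau_\mathbf{e}}$. Therefore
\begin{displaymath}
\mathbb{E}_x\bigl[\mathbf{1}_{X \text{ killed by }\kappa} F(X)\bigr]
= \mathbb{E}^{(0)}_x\!\left[\int_0^\infty e^{-l} \mathbf{1}_{\tau_l < \zeta^{(0)}} F\bigl((X^{(0)}_s)_{0 \le s \le \tau_l}\bigr) dl\right].
\end{displaymath}
By continuity of $A^{(0)}$, one has $A^{(0)}_{\tau_l} = l$ on $\{\tau_l < \zeta^{(0)}\}$, so the change of variables $l = A^{(0)}_t$ (the same manipulation used at the end of section \ref{Ch2Sec2} to derive the law of $X_{\zeta^-}$, via corollary $2.13$, chapter X in \cite{RevuzYor1999BMGrundlehren}) transforms the above into
\begin{displaymath}
\mathbb{E}^{(0)}_x\!\left[\int_0^{\zeta^{(0)}} e^{-A^{(0)}_t} F\bigl((X^{(0)}_s)_{0 \le s \le t}\bigr) dA^{(0)}_t\right].
\end{displaymath}

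Next, I would handle the left-hand side. Combining proposition \ref{Ch3Sec2: PropLocTimeDesinteg} applied to $L^{(0)}$ with property \ref{Ch3Sec2: PropertyBridges}(iv), one obtains
\begin{displaymath}
\mu^{x,y}_L(F) = \mathbb{E}^{(0)}_x\!\left[\int_0^{\zeta^{(0)}} e^{-A^{(0)}_t} F\bigl((X^{(0)}_s)_{0\le s\le t}\bigr) d_t \ell^y_t(X^{(0)})\right].
\end{displaymath}
Integrating against $m(y)\kappa(dy)$, invoking Tonelli to exchange the $y$-integral with the expectation and the $t$-integral, and using the defining identity $dA^{(0)}_t = \int_I d_t \ell^y_t(X^{(0)}) m(y)\kappa(dy)$, I obtain exactly the same expression as in the preceding display. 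This proves the equality of measures on paths.

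The one delicate point is the change of variable $dl = dA^{(0)}_t$: it requires continuity of $A^{(0)}$, which is inherited from the continuity of the local times, and a careful handling of the flat portions of $A^{(0)}$ (on which $dA^{(0)}_t \equiv 0$ and which correspond to jumps of $\tau_l$, producing no contribution on either side). This is a routine Stieltjes-integration argument, already implicitly used in section \ref{Ch2Sec2}, so no genuinely new difficulty arises.
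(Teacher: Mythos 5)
Your proof is correct, and it takes a genuinely different route from the paper's. The paper proves the lemma by computing the finite-dimensional marginals of $\int_{y\in I}\mu^{x,y}(\cdot)m(y)\kappa(dy)$ directly from the Markov property satisfied by $\mu^{x,y}$ (including the distribution of the endpoint $\gamma(T(\gamma))$, whose conditional law given $\gamma(t_n)$ is $G(\gamma(t_n),y)m(y)\kappa(dy)$), and matching them against the finite-dimensional marginals of $1_{X\text{ killed by }\kappa}\mathbb{P}_x$ obtained from the Markov property of $X$ and the distribution of $X_{\zeta^-}$. You instead reduce both sides to the unkilled diffusion $X^{(0)}$ through the additive functional $A^{(0)}$: the right-hand side becomes $\mathbb{E}^{(0)}_x[\int_0^{\zeta^{(0)}}e^{-A^{(0)}_t}F\,dA^{(0)}_t]$ after the Stieltjes change of variable, and the left-hand side becomes the same expression once you apply proposition~\ref{Ch3Sec2: PropLocTimeDesinteg} to $L^{(0)}$, absolute continuity with respect to the killing (property~\ref{Ch3Sec2: PropertyBridges}(iv)), and the defining relation $dA^{(0)}_t=\int_I d_t\ell^y_t(X^{(0)})m(y)\kappa(dy)$ together with Tonelli. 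Both proofs are sound. The paper's argument is more elementary in the sense that it stays at the level of finite-dimensional distributions and does not rely on proposition~\ref{Ch3Sec2: PropLocTimeDesinteg} (so the two results remain logically independent), whereas your argument is shorter, is in the same spirit as the rest of the section, and makes the underlying mechanism — that integrating $\mu^{x,y}$ against the killing measure reconstitutes the killing rate of $X$ — completely transparent. One small caution: your formulation assumes the coupling between $X$ and $(X^{(0)},\mathbf{e})$ as given; for this to yield the stated identity one needs to check that the event $\{\tau_l<\zeta^{(0)}\}$ corresponds exactly to $l<A^{(0)}_{\zeta^{(0)}-}$, which holds because $A^{(0)}$ is continuous — you note this, so the gap is only cosmetic.
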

\begin{proof}
Let $0<t_{1}<t_{2}<\dots<t_{n}$ and let $A_{1},A_{2},\dots A_{n}, A_{n+1}$ be Borel subsets of $I$. The measure 
$\mu^{x,y}$ satisfies the following Markov property
\begin{equation*}
\begin{split}
&\mu^{x,y}(T(\gamma)>t_{n},\gamma(t_{1})\in A_{1},\dots\gamma(t_{n})\in A_{n},\gamma(T(\gamma))\in A_{n+1})=
\\&\int\limits_{A_{1}\times\dots\times A_{n}}p_{t_{1}}(x,x_{1})m(x_{1})\dots p_{t_{n}-t_{n-1}}(x_{n-1},x_{n})m(x_{n})
\mu^{x_{n},y}(T(\gamma)\in A_{n+1}) dx_{1}\dots dx_{n}
\\&=1_{y\in A_{n+1}}\int\limits_{A_{1}\times\dots\times A_{n}}p_{t_{1}}(x,x_{1})m(x_{1})\dots p_{t_{n}-t_{n-1}}(x_{n-1},x_{n})m(x_{n})G(x_{n},y) dx_{1}\dots dx_{n}.
\end{split}
\end{equation*}
Hence,
\begin{multline}
\label{Ch3Sec2: EqMarginalsMu}
\int_{y\in I} \mu^{x,y}(T(\gamma)>t_{n},\gamma(t_{1})\in A_{1},\dots\gamma(t_{n})\in A_{n},
\gamma(T(\gamma))\in A_{n+1})m(y)\kappa(dy)=
\\ \int\limits_{A_{1}\times\dots\times A_{n+1}}p_{t_{1}}(x,x_{1})m(x_{1})\dots p_{t_{n}-t_{n-1}}(x_{n-1},x_{n})m(x_{n})
G(x_{n},y)m(y) dx_{1}\dots dx_{n}\kappa(dy).
\end{multline}
From Markov property of $X$ follows
\begin{multline*}
\mathbb{P}_{x}(\zeta > t_{n}, X_{t_{1}}\in A_{1},\dots, X_{t_{n}}\in A_{n},X_{\zeta^{-}}\in A_{n+1})=
\\\int\limits_{A_{1}\times\dots\times A_{n}}p_{t_{1}}(x,x_{1})m(x_{1})\dots p_{t_{n}-t_{n-1}}(x_{n-1},x_{n})m(x_{n})
P_{x_{n}}(X_{\zeta^{-}}\in A_{n+1}) dx_{1}\dots dx_{n}.
\end{multline*}
Since the distribution of $X_{\zeta^{-}}$ on the event of $X$ killed by $\kappa$ is 
$1_{y\in I}G(X_{0},y)m(y)\kappa(dy)$, we get
\begin{multline}
\label{Ch3Sec2: EqMarginalsX}
\mathbb{P}_{x}(\zeta > t_{n}, X_{t_{1}}\in A_{1},\dots, X_{t_{n}}\in A_{n},X_{\zeta^{-}}\in A_{n+1})=
\\\int\limits_{A_{1}\times\dots\times A_{n+1}}p_{t_{1}}(x,x_{1})m(x_{1})\dots p_{t_{n}-t_{n-1}}(x_{n-1},x_{n})m(x_{n})
G(x_{n},y)m(y) dx_{1}\dots dx_{n}\kappa(dy).
\end{multline}
The equality between \eqref{Ch3Sec2: EqMarginalsMu} and \eqref{Ch3Sec2: EqMarginalsX} implies the lemma.
\end{proof}

Next we study the continuity of $(x,y)\mapsto \mu^{x,y}$.

\begin{lemm}
\label{Ch3Sec2: LemUniformConvLocTime}
Let $J$ be a compact subinterval of $I$. Then the family of local times of $X$ satisfies: for every $\varepsilon>0$,
\begin{displaymath}
\lim_{t\rightarrow 0^{+}}\sup_{x\in J}\mathbb{P}_{x}\left(\sup_{y\in I}\ell^{y}_{t\wedge\zeta}(X)>\varepsilon\right) =0.
\end{displaymath}
\end{lemm}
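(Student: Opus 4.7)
The plan is to reduce the statement to the analogous and well-known property of one-dimensional Brownian motion, for which translation invariance makes the uniformity in the starting point free. I would proceed in three reduction steps: first remove the killing measure $\kappa$, then localize to a compact subinterval, then apply the scale and time change to land on a Brownian motion.

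To remove the killing, I would use the standard construction of $X$ as the diffusion $X^{(0)}$ of generator $L^{(0)}$ killed at the stopping time when the additive functional $\int_{I}\ell_{t}^{y}(X^{(0)})m(y)\kappa(dy)$ first exceeds an independent $\mathrm{Exp}(1)$ variable. The paths and local times of $X$ and of $X^{(0)}$ agree up to $\zeta\leq\zeta^{(0)}$, so $\ell^{y}_{t\wedge\zeta}(X)\leq \ell^{y}_{t\wedge\zeta^{(0)}}(X^{(0)})$ and it suffices to prove the statement for $X^{(0)}$. Next, I would pick a compact subinterval $J'$ of $I$ with $J\subset\mathring{J'}$ and let $T_{J'}$ denote the first exit time of $X^{(0)}$ from $J'$. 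The union bound
\begin{displaymath}
\mathbb{P}_{x}\bigl(\sup_{y\in I}\ell^{y}_{t\wedge\zeta^{(0)}}(X^{(0)})>\varepsilon\bigr)\leq \mathbb{P}_{x}(T_{J'}\leq t)+\mathbb{P}_{x}\bigl(\sup_{y\in J'}\ell^{y}_{t\wedge T_{J'}}(X^{(0)})>\varepsilon\bigr)
\end{displaymath}
isolates two contributions. The first tends to $0$ uniformly in $x\in J$ as $t\to 0^{+}$ by continuity of the paths and the fact that $J$ is at positive distance from $\partial J'$ (after the scale change $S$, this reduces to the analogous Brownian fact).

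For the second term I would apply the scale change $S$ (a primitive of $w$) and the time change $d\tilde t=m(X^{(0)}_{t})^{-1}dt$, under which $\tfrac{1}{2}S(X^{(0)}_{\tilde t})$ is a standard Brownian motion $B$ on $S(I)$. Writing out the occupation time formula shows that the local time $\ell^{y}_{t}(X^{(0)})$ with respect to $m(y)dy$ is an explicit factor (depending on $m(y)$ and $S'(y)=w(y)$) times the Brownian local time $L^{b}_{\tilde t}(B)$ with respect to Lebesgue at the point $b=\tfrac{1}{2}S(y)$ and the time-changed parameter. Since $m$ and $w$ are continuous and positive on $J'$, they are bounded above and below by positive constants, and there exist $c_{1},c_{2}>0$ depending only on $J'$ such that
\begin{displaymath}
\sup_{y\in J'}\ell^{y}_{t\wedge T_{J'}}(X^{(0)})\leq c_{1}\sup_{b\in\mathbb{R}}L^{b}_{c_{2} t}(B).
\end{displaymath}

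It then remains to observe that $\sup_{b}L^{b}_{t}(B^{x})\to 0$ in probability as $t\to 0^{+}$, uniformly in the starting point $x$. By translation invariance of Brownian motion the law of $\sup_{b}L^{b}_{t}(B^{x})$ does not depend on $x$; and almost surely $\sup_{b}L^{b}_{t}(B^{0})\to 0$ as $t\to 0^{+}$, because $(b,t)\mapsto L^{b}_{t}$ is jointly continuous, $L^{b}_{t}$ vanishes outside the interval $[\min_{[0,t]}B,\max_{[0,t]}B]$, and this range shrinks to $\{0\}$ as $t\to 0^{+}$, so any accumulation point of the sup must equal $L^{0}_{0}=0$. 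The step I expect to be the most delicate is the comparison in the third paragraph: one must write out how local times transform under the joint scale and time change and check that the multiplicative constants can be chosen to depend only on $J'$ and not on $x$ or $y$. This is routine bookkeeping but must be done cleanly to guarantee the uniformity in $x\in J$.
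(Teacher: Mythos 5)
Your proof is correct and takes essentially the same route as the paper's: drop the killing (which only lowers local times), localize to a compact neighbourhood $J'$ of $J$ on which $m$ and $w$ are bounded above and below, reduce by scale and time change to Brownian motion, and conclude by translation invariance together with the almost-sure vanishing of $\sup_b L^b_t(B)$ as $t\to 0^+$. One small simplification for your ``bookkeeping'' step: with local times defined relative to the speed measure $m(y)\,dy$, the identity $\ell^y_t(X)=L^{S(y)/2}_{\tilde t(t)}(B)$ holds exactly with no multiplicative factor, where $\tilde t(t)=\int_0^t \tfrac{w(X_s)}{2m(X_s)}\,ds$, so on the event $\{T_{J'}>t\}$ one has $\tilde t(t)\le c_2 t$ with $c_2=\max_{J'}\tfrac{w}{2m}$ and your $c_1$ may simply be taken equal to $1$.
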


\begin{proof}
It is enough to prove it in case the killing measure $\kappa$ is zero because adding a killing measure only lowers $\ell^{y}_{t\wedge\zeta}(X)$. Without loss of generality we may also assume that the diffusion is on its natural scale, that is to say $w\equiv 2$. Then, $X$ is just a time changed Brownian motion on some open subinterval of $\mathbb{R}$. For a Brownian motion $(B_{t})_{t\geq 0}$ the statement is clear. In this case $\mathbb{P}_{x}\left(\sup_{y\in \mathbb{R}}\ell^{y}_{t\wedge\zeta}(B)>\varepsilon\right)$ does not depend on $x$ and for a given $x$,
\begin{displaymath}
\lim_{t\rightarrow 0^{+}}\mathbb{P}_{x}\left(\sup_{y\in \mathbb{R}}\ell^{y}_{t\wedge\zeta}(B)>\varepsilon\right) =0.
\end{displaymath}
Otherwise, let 
\begin{displaymath}
\mathcal{I}_{t}:=\int_{0}^{t}m(X_{s})ds.
\end{displaymath}
Then, given the time change that transforms $X$ into a Brownian motion $B$, we have
\begin{displaymath}
\ell^{y}_{t}(X)=\ell^{y}_{\mathcal{I}_{t}}(B).
\end{displaymath}
Let $J=[x_{0}, x_{1}]$. Let $x_{\rm min}\in I$, $x_{\rm min}<x_{0}$ and $x_{\rm max}\in I$, $x_{\rm max}>x_{1}$. Let $T_{x_{\rm min},x_{\rm max}}$ be the first time $X$ hits either $x_{\rm min}$ or $x_{\rm max}$. Let $s>0$, $\varepsilon>0$ and $x\in J$. If 
$t\leq \frac{s}{\max_{[x_{\rm min},x_{\rm max}]} m}$, then on the event $T_{x_{\rm min},x_{\rm max}}\geq t$, $\mathcal{I}_{t}$ is less or equal to $s$. So, for $t$ small enough,
\begin{displaymath}
\mathbb{P}_{x}\left(\sup_{y\in I}\ell^{y}_{t\wedge\zeta}(X)>\varepsilon\right)\leq 
\mathbb{P}_{x}\left(\sup_{y\in \mathbb{R}}\ell^{y}_{s}(B)>\varepsilon\right)+ \mathbb{P}_{x}\left(T_{x_{\rm min},x_{\rm max}}<t\right).
\end{displaymath}
But,
\begin{displaymath}
\mathbb{P}_{x}\left(T_{x_{\rm min},x_{\rm max}}<t\right) =
\mathbb{P}_{x_{0}}\left(T_{x_{\rm min},x_{\rm max}}<t\right) +
\mathbb{P}_{x_{1}}\left(T_{x_{\rm min},x_{\rm max}}<t\right),
\end{displaymath}
and
\begin{displaymath}
\lim_{t\rightarrow 0^{+}}\sup_{x\in J} \mathbb{P}_{x}\left(T_{x_{\rm min},x_{\rm max}}<t\right) = 0.
\end{displaymath}
Thus,
\begin{displaymath}
\limsup_{t\rightarrow 0^{+}}\sup_{x\in J}\mathbb{P}_{x}\left(\sup_{y\in I}\ell^{y}_{t\wedge\zeta}(X)>\varepsilon\right)\leq
\mathbb{P}_{x}\left(\sup_{y\in \mathbb{R}}\ell^{y}_{s}(B)>\varepsilon\right).
\end{displaymath}
Letting $s$ go to $0$ we get the statement of the lemma.
\end{proof}

\begin{prop}
\label{Ch3Sec2: PropContinuityBridge}
Let $t_{\rm max}>0$. Let $F$ be a bounded functional on finite life-time paths endowed with continuous local times that depends continuously on the path $(\gamma_{t})_{0\leq t\leq T(\gamma)}$ and on $(l^{x}_{T(\gamma)}(\gamma))_{x\in I}$, where we take the topology of uniform convergence for the occupation densities on $I$. On top of that we assume that $F$ is zero if $T(\gamma)>t_{\rm max}$. Then the function $(x,y)\mapsto \mu^{x,y}(F(\gamma))$ is continuous on $I\times I$.
\end{prop}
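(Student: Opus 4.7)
The plan is to unfold the bridge measure into an ordinary integral
\begin{equation*}
\mu^{x,y}(F(\gamma))=\int_{0}^{t_{\max}}\mathbb{P}_{x,y}^{t}(F(\gamma))\,p_{t}(x,y)\,dt
\end{equation*}
(the upper limit can be taken to be $t_{\max}$ because $F\equiv 0$ on longer paths), and to establish continuity in $(x,y)$ by splitting the integration at a small threshold $\varepsilon>0$.

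On the inner window $[\varepsilon,t_{\max}]$, joint continuity of the transition density $p_{t}(x,y)$ on $(0,+\infty)\times I\times I$ together with the weak continuity of bridge measures in proposition \ref{Ch2Sec2: PropContinuityBridge} give continuity in $(x,y,t)$ of the integrand, provided $F$ can be passed through weakly converging bridges. To handle the fact that $F$ depends on the terminal occupation densities in uniform topology, I would invoke Skorokhod's representation to turn weak convergence of bridges into almost sure uniform convergence of paths, and then argue that the occupation densities converge uniformly along this coupling. This last step reduces, via the scale change to natural scale and the time change to a standard Brownian bridge (as recalled in section \ref{Ch2Sec2}), to the analogous statement for Brownian local times, which can be extracted from the Tanaka decomposition $\ell^{z}_{T}=|\gamma(T)-z|-|\gamma(0)-z|-\int_{0}^{T}\mathrm{sgn}(\gamma(s)-z)\,d\gamma(s)$ and the stability of the stochastic integral under weak convergence of semimartingales. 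Once pointwise-in-$t$ convergence $\mathbb{P}_{x_{n},y_{n}}^{t}(F)\to\mathbb{P}_{x,y}^{t}(F)$ is in hand, dominated convergence—with dominating function $\|F\|_{\infty}\sup_{(x',y')\in K,\,t\in[\varepsilon,t_{\max}]}p_{t}(x',y')$ on a compact neighbourhood $K$ of $(x,y)$—gives continuity of the truncated integral.

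On the outer window $(0,\varepsilon]$, the brutal bound
\begin{equation*}
\left|\int_{0}^{\varepsilon}\mathbb{P}_{x,y}^{t}(F(\gamma))\,p_{t}(x,y)\,dt\right|\leq\|F\|_{\infty}\int_{0}^{\varepsilon}p_{t}(x,y)\,dt=\|F\|_{\infty}\,\mathbb{E}_{x}\!\left[\ell^{y}_{\varepsilon\wedge\zeta}(X)\right],
\end{equation*}
which follows from \eqref{Ch2Sec2: EqTempsLocSemiGroup}, together with lemma \ref{Ch3Sec2: LemUniformConvLocTime}, yields a bound that tends to $0$ as $\varepsilon\to 0$ uniformly for $(x,y)$ in a compact. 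To upgrade the in-probability convergence of $\sup_{y}\ell^{y}_{\varepsilon\wedge\zeta}(X)$ from lemma \ref{Ch3Sec2: LemUniformConvLocTime} to the required $L^{1}$ control, one can use the uniform integrability of $\sup_{y}\ell^{y}_{t_{\max}\wedge\zeta}(X)$, which follows from the Brownian time-change identity combined with the classical $L^{2}$ bound on the maximum of Brownian local time. Letting $\varepsilon\to 0$ after the inner continuity closes the argument.

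The principal obstacle is the step that passes the local-time-dependent functional $F$ through the weak bridge convergence: one must check that Skorokhod-coupled bridge paths have uniformly (in the spatial variable $z$) convergent occupation densities. Should the Tanaka approach above turn out delicate, a fallback is the first-passage decomposition of property \ref{Ch3Sec2: PropertyBridges}(viii), which writes $\mu^{x,y}$ as an integral over levels $a\leq x\wedge y$ of concatenated first-passage bridges; continuity can then be established level by level via the same $\varepsilon$-split, using that the first-passage bridge carries paths confined to $[a,\sup I)$, on which local-time control is more easily obtained.
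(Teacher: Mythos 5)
Your overall strategy is genuinely different from the paper's: you unfold $\mu^{x,y}$ as $\int_{0}^{t_{\max}}\mathbb{P}^{t}_{x,y}(F)\,p_{t}(x,y)\,dt$ and split at $\varepsilon$, whereas the paper works with the representation of proposition \ref{Ch3Sec2: PropLocTimeDesinteg}, $\mu^{x,y}(F)=\mathbb{E}_{x}\big[\int_{0}^{\ell^{y}_{\zeta}(X)}F((X_{s})_{0\leq s\leq\tau^{y}_{l}})\,dl\big]$, and builds an explicit coupling: sample $X$ from $x$ and $X^{(j)}$ from $x_{j}$, let $X^{(j)}$ run independently until it meets $X$, then stick the two paths together. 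After the meeting time $T^{(j)}$ the paths \emph{coincide}, so their local-time increments are literally equal, and lemma \ref{Ch3Sec2: LemUniformConvLocTime} controls the discrepancy accumulated on $[0,T^{(j)}]$, which vanishes because $T^{(j)}\to 0$. Scheff\'e's lemma then upgrades a.s.\ to $L^{1}$. Your outer-window bound via \eqref{Ch2Sec2: EqTempsLocSemiGroup} and lemma \ref{Ch3Sec2: LemUniformConvLocTime} is fine and close in spirit to what the paper does in the $L^{1}$ step.

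The gap is exactly where you flag it, and I do not think the Skorokhod/Tanaka route closes it. Skorokhod's representation gives a.s.\ \emph{uniform convergence of paths}, but the map from a path to its family of occupation densities, in the uniform-in-$z$ topology, is not continuous for the uniform topology on paths: two paths can be uniformly close while spending very different amounts of time near a given level. So a.s.\ uniform path convergence does not by itself give a.s.\ uniform convergence of $(\ell^{z}_{T})_{z}$. Tanaka's formula does not rescue this, because the term $\int_{0}^{T}\operatorname{sgn}(\gamma(s)-z)\,d\gamma(s)$ has a path-dependent, discontinuous integrand, and the "stability of stochastic integrals under weak convergence of semimartingales" results (UT/Good Sequence conditions of Jakubowski--M\'emin--Pag\`es or Kurtz--Protter) require the integrand to converge suitably; here the integrand's discontinuity set is $\{\gamma(s)=z\}$, whose size is controlled precisely by the local time you are trying to estimate, so the argument risks being circular, and in any case none of these results gives convergence \emph{uniform in $z$}. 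This is why the paper's coupling is not just convenient but essential: by forcing the paths to be identical after $T^{(j)}$ rather than merely close, the uniform local-time comparison reduces to bounding $\sup_{z}\ell^{z}_{T^{(j)}}$, which lemma \ref{Ch3Sec2: LemUniformConvLocTime} handles. Your fallback via property \ref{Ch3Sec2: PropertyBridges}(viii) inherits the same difficulty, since continuity of first-passage bridges in the local-time-sensitive topology is exactly the kind of statement you would need to prove, not merely cite. To repair your argument you would either have to replace the Skorokhod coupling with a coupling that makes paths coincide after a vanishing time (at which point you have reproduced the paper's idea), or prove a genuine local-time stability theorem, which is a substantially harder task than the rest of the proposal.
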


\begin{proof}
If we had assumed that $F$ does only depend on the path regardless to its occupation field then the continuity of $(x,y)\mapsto \mu^{x,y}(F(\gamma))$ would just be a consequence of the continuity of transition densities and of the weak continuity of bridge probability measures. For our proof we further assume that $L$ does not contain any killing measure. If this is not the case, then we can consider a continuous positive $L$-harmonic function $u$. Then $\operatorname{Conj}(u,L)$ does not contain any killing measure and up to a continuous factor $u(x)u(y)$ gives the same measure $\mu^{x,y}$ (Property 
\ref{Ch3Sec2: PropertyBridges} (vii)). We will mainly rely on the representation given by Proposition 
\ref{Ch3Sec2: PropLocTimeDesinteg}.

Let $x,y\in I$ and $(x_{j},y_{j})_{j\geq 0}$ a sequence in $I\times I$ converging to $(x,y)$. Without loss of generality, we assume that $(x_{j})_{j\geq 0}$ is increasing. We consider sample paths $(X_{t})_{0\leq t<\zeta}$ and $(X^{(j)}_{t})_{0\leq t<\zeta_{j}}$ of the diffusion of generator $L$ starting from $x$ and each of $x_{j}$, coupled on a same probability space in the following way: First we sample $X$ starting from $x$. Then we sample $X^{(0)}$ starting from $x_{0}$. It starts independently from $X$ until the first time $X^{(0)}_{t}=X_{t}$. After that time $X^{(0)}$ sticks to $X$. This two paths may never meet if one of them dies to early. If 
$X,X^{(0)},\dots,X^{(j)}$ are already sampled, we start $X^{(j+1)}$ from $x_{j+1}$ independently from the preceding sample paths until it meets one of them. After that time $X^{(j+1)}$ sticks to the path it has met. Let
\begin{displaymath}
T^{(j)}:=\inf\lbrace t\geq 0\vert X^{(j)}_{t}=X_{t}\rbrace.
\end{displaymath} 
If $X^{(j)}$ does not meet $X$, we set $T^{(j)}=+\infty$. By construction, $(T^{(j)})_{j\geq 0}$ is a non-increasing sequence. Here we use that there is no killing measure. $T^{(j)}$ is equal in law to the first time two independent sample paths of the diffusion, one starting from $x$ and the other from $x_{j}$, meet. Thus, the sequence $(T^{(j)})_{j\geq 0}$ converges to $0$ in probability. Since it is decreasing, it converges almost surely to $0$.

We use reduction to absurdity. The sequence $(\mu^{x_{j},y_{j}}(F(\gamma)))_{j\geq 0}$ is bounded because $F$ is bounded and zero on paths with life-time greater then $t_{\rm max}$. Assume that it does not converge to $\mu^{x,y}(F(\gamma))$. Then there is a subsequence that converges to a value other than $\mu^{x,y}(F(\gamma))$. We may as well assume that the whole sequence $(\mu^{x_{j},y_{j}}(F(\gamma)))_{j\geq 0}$ converges to a value $v\neq\mu^{x,y}(F(\gamma))$. According to Lemma \ref{Ch3Sec2: LemUniformConvLocTime}, the sequence $((\ell^{z}_{T^{(j)}}(X^{(j)}))_{z\in I})_{j\geq 0}$ of occupation density functions converges in probability to the null function. Thus, there is an extracted  subsequence $((\ell^{z}_{T^{(j_{n})}}(X^{(j_{n})}))_{z\in I})_{n\geq 0}$ that converges almost surely uniformly to the null function. We will show that $(\mu^{x_{j_{n}},y_{j_{n}}}(F(\gamma)))_{n\geq 0}$ converges to $\mu^{x,y}(F(\gamma))$ and obtain a contradiction.

For $z\in I$ and $l>0$ let
\begin{displaymath}
\tau^{z}_{l}:=\inf\lbrace t\geq 0\vert \ell^{z}_{t}(X)> l\rbrace
\end{displaymath}
and 
\begin{displaymath}
\tau^{z}_{j,l}:=\inf\lbrace t\geq 0\vert \ell^{z}_{t}(X^{(j)})> l\rbrace.
\end{displaymath}
Then, according to Proposition \ref{Ch3Sec2: PropLocTimeDesinteg},
\begin{displaymath}
\mu^{x,y}(F(\gamma))=\mathbb{E}\bigg[\int_{0}^{\ell^{y}_{t_{\rm max}\wedge\zeta}(X)}
F((X_{s})_{0\leq s\leq }\tau^{y}_{l})dl\bigg], 
\end{displaymath}
\begin{displaymath}
\mu^{x_{j},y_{j}}(F(\gamma))=\mathbb{E}\bigg[\int_{0}^{\ell^{y_{j}}_{t_{\rm max}\wedge\zeta_{j}}(X^{(j)})}F((X^{(j)}_{s})_{0\leq s\leq }\tau^{y_{j}}_{j,l})dl\bigg].
\end{displaymath}
For any $z\in I$, if $\tau^{z}_{j,l}\in [T^{(j)},\zeta_{j})$, then $\tau^{z}_{j,l} = \tau^{z}_{l'}$, where 
\begin{displaymath}
l' = l + \ell^{z}_{T^{(j)}}(X)-\ell^{z}_{T^{(j)}}(X^{(j)}). 
\end{displaymath}
Along the subset of indices $(j_{n})_{n\geq 0}$, $\tau^{y_{j_{n}}}_{j_{n},l}$ converges to $\tau^{y}_{l}$ for every $l\in(0, l^{y}_{\zeta}(X))$ except possibly the countable set of values of $l$ where $l\mapsto \tau^{y}_{j,l}$ jumps. For any $l$ such that $\tau^{y_{j_{n}}}_{j_{n},l}$ converges to $\tau^{y}_{l}$, the path $(X^{(j)}_{s})_{0\leq s\leq \tau^{y_{j_{n}}}_{j_{n},l}}$ converges to the path $(X_{s})_{0\leq s\leq }\tau^{y}_{l}$. Moreover, for such $l$, the occupation densities $(l^{z}_{\tau^{y_{j_{n}}}_{j_{n},l}}(X^{(j_{n})}))_{z\in I}$ converge uniformly to $(l^{z}_{\tau^{y}_{l}}(X))_{z\in I}$. Indeed,
\begin{displaymath}
\ell^{z}_{\tau^{y_{j_{n}}}_{j_{n},l}}(X^{(j_{n})}) = \ell^{z}_{\tau^{y_{j_{n}}}_{j_{n},l}}(X)-\ell^{z}_{T^{(j)}}(X) + \ell^{z}_{T^{(j)}}(X^{(j_{n})}).
\end{displaymath}
Thus, for all $l\in(0, \ell^{y}_{\zeta}(X))$, except possibly countably many,
\begin{displaymath}
\lim_{n\rightarrow +\infty}F((X^{(j_{n})}_{s})_{0\leq s\leq }\tau^{y_{j_{n}}}_{j_{n}, l}) = 
F((X_{s})_{0\leq s\leq }\tau^{y}_{l}).
\end{displaymath}
For $n$ large enough, $\zeta_{j}=\zeta$ and $\ell^{y_{j_{n}}}_{t_{\rm max}\wedge\zeta_{j_{n}}}(X^{(j_{n})})$ converges to $\ell^{y}_{t_{\rm max}\wedge\zeta}(X)$. It follows that the following almost sure convergence holds
\begin{equation}
\label{Ch3Sec2: EqConvF}
\lim_{n\rightarrow+\infty}\int_{0}^{\ell^{y_{j_{n}}}_{t_{\rm max}\wedge\zeta_{j_{n}}}(X^{(j_{n})})}
F((X^{(j_{n})}_{s})_{0\leq s\leq }\tau^{y_{j_{n}}}_{j_{n}, l})dl=
\int_{0}^{l^{y}_{t_{\rm max}\wedge\zeta}(X)}F((X_{s})_{0\leq s\leq }\tau^{y}_{l})dl.
\end{equation}
The left-hand side of \eqref{Ch3Sec2: EqConvF} is dominated by $\Vert F\Vert_{+\infty} \ell^{y_{j_{n}}}_{t_{\rm max}\wedge\zeta_{j_{n}}}(X^{(j_{n})})$. In order to conclude that the almost sure convergence 
\eqref{Ch3Sec2: EqConvF} is also an $\mathbb{L}^{1}$ convergence we need only to show that
\begin{equation}
\label{Ch3Sec2: EqConvDom}
\mathbb{E}\left[\vert \ell^{y_{j_{n}}}_{t_{\rm max}\wedge\zeta_{j_{n}}}(X^{(j_{n})}) -
 \ell^{y}_{t_{\rm max}\wedge\zeta}(X) \vert \right] = 0.
\end{equation}
We already know that $\ell^{y_{j_{n}}}_{t_{\rm max}\wedge\zeta_{j_{n}}}(X^{(j_{n})})$ converges almost surely to $\ell^{y}_{t_{\rm max}\wedge\zeta}(X)$. Moreover,
\begin{displaymath}
\mathbb{E}\left[\ell^{y_{j_{n}}}_{t_{\rm max}\wedge\zeta_{j_{n}}}(X^{(j_{n})})\right]=
\int_{0}^{t_{\rm max}}p_{t}(x_{j_{n}},y_{j_{n}}),
\end{displaymath} 
and
\begin{displaymath}
\mathbb{E}\left[\ell^{y}_{t_{\rm max}\wedge\zeta}(X)\right]=
\int_{0}^{t_{\rm max}}p_{t}(x,y).
\end{displaymath}
It follows that the expectations converge. By Scheffe's lemma, the $\mathbb{L}^{1}$ convergence 
\eqref{Ch3Sec2: EqConvDom} holds.

We have shown that there is always a subsequence $(\mu^{x_{j_{n}},y_{j_{n}}}(F(\gamma)))_{n\geq 0}$ that converges to $\mu^{x,y}(F(\gamma))$, which contradicts the convergence of $(\mu^{x_{j},y_{j}}(F(\gamma)))_{j\geq 0}$ to a different value.
\end{proof}

\section{The measure $\mu^{\ast}$ on unrooted loops}
\label{Ch3Sec3}

The measure $\mu^{x,x}$ can be seen as a measure on the space of rooted loops $\mathfrak{L}$. Next we define a natural measure $\mu^{\ast}_{L}$ on $\mathfrak{L}^{\ast}$ following the pattern \eqref{Ch3Sec2: EqMesLoopsGraph}.

\begin{defi}
\label{Ch3Sec3: DefMesLoops}
Let $\mu_{L}$ be the following measure on $\mathfrak{L}$:
\begin{displaymath}
\mu_{L}(d\gamma):=\int_{t>0}\int_{x\in I}\mathbb{P}_{x,x}^{t}(d\gamma)p_{t}(x,x)m(x) dx \dfrac{dt}{t} = \dfrac{1}{T(\gamma)}\int_{x\in I} \mu^{x,x}_{L}(d\gamma) m(x) dx.
\end{displaymath}
$\mu^{\ast}_{L}:=\pi_{\ast} \mu_{L}$ is a measure on $\mathfrak{L}^{\ast}$.
\end{defi}

We will drop the subscript $L$ whenever there is no ambiguity on $L$. Definition \ref{Ch3Sec3: DefMesLoops} does not depend on the choice of the speed measure $m(x)\,dx$. The measures $\mu$ and $\mu^{\ast}$ are $\sigma$-finite but not finite. They satisfy the following elementary properties:

\begin{property}
\label{Ch3Sec3: PropertyTrivialitiesLoops}
\begin{itemize}
\item[(i)] $\mu$ is invariant by time reversal.
\item[(ii)] If $\widetilde{I}$ is an open subinterval of $I$, then
\begin{displaymath}
\mu_{L_{\vert \widetilde{I}}}(d\gamma) = 1_{\gamma~contained~in~\tilde{I}}~\mu_{L}(d\gamma).
\end{displaymath}
\item[(iii)] If $\tilde{\kappa}$ is a positive Radon measure on $I$, then
\begin{displaymath}
\mu_{L-\tilde{\kappa}}(d\gamma) = \exp\left(-\int_{I}\ell^{z}(\gamma)m(z)\tilde{\kappa}(dz)\right) \mu_{L}(d\gamma).
\end{displaymath}
\item[(iv)] If $A$ is a change of scale function, then
\begin{displaymath}
\mu_{\operatorname{Scale}_{A}^{\rm gen} L} = \operatorname{Scale}_{A \ast}\mu_{L}.
\end{displaymath}
\item[(v)] If $u$ is a positive continuous function on $I$ such that $\frac{d^{2}u}{dx^{2}}$ is a signed measure and $Lu$ is a non-positive measure, then
\begin{displaymath}
\mu_{\operatorname{Conj}(u,L)}=\mu_{L}.
\end{displaymath}
\end{itemize}
Same properties hold for $\mu^{\ast}$.
\end{property}

The measures $\mu$ and $\mu^{\ast}$ contain some information on the diffusion $X$ but the invariance by conjugation (Property \ref{Ch3Sec3: PropertyTrivialitiesLoops} (v)) shows that they do not capture its asymptotic behavior. In Section \ref{Ch3Sec4} we will prove a converse to Property \ref{Ch3Sec3: PropertyTrivialitiesLoops} (v). In our setting, most important examples of conjugates are:
\begin{itemize}
\item The Bessel $3$ process on $(0,+\infty)$ is a conjugate of the Brownian motion on $(0,+\infty)$, killed when hitting $0$, through the function $x\mapsto x$.
\item The Brownian motion on $\mathbb{R}$ killed with uniform rate $\kappa dx$ (i.e. $\kappa$ constant) is a conjugate of the drifted Brownian motion on $\mathbb{R}$ with constant drift $\sqrt{2\kappa}$, through the function 
$x\mapsto e^{-\sqrt{2 \kappa}x}$.
\end{itemize}

In the sequel we will be interested mostly in $\mu^{\ast}$ and not $\mu$. As it will be clear from next propositions, the measure $\mu^{\ast}$ has some nice features that $\mu$ does not.

\begin{prop}
\label{Ch3Sec3: PropShiftInv}
Let $v\in[0,1]$. Then $\operatorname{shift}_{v \ast} \mu = \mu$. In particular,
\begin{equation}
\label{Ch3Sec3: EqCoupure}
\mu(\cdot) = \int_{v\in[0,1]} \operatorname{shift}_{v \ast} \mu(\cdot) dv. 
\end{equation}
\end{prop}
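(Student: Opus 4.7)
The natural approach is to work level by level in the lifetime parameter $t$. Set
\begin{displaymath}
\nu_t(d\gamma):=\int_{x\in I}\mathbb{P}^t_{x,x}(d\gamma)\,p_t(x,x)\,m(x)\,dx,
\end{displaymath}
so that $\mu_L=\int_0^{+\infty}\nu_t\,\frac{dt}{t}$. Since $shift_v$ preserves the lifetime $T(\gamma)$, it suffices to prove $shift_{v\ast}\nu_t=\nu_t$ for every $t>0$ and every $v\in[0,1]$; the $\frac{dt}{t}$ factor then carries through without change.

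To establish the identity at fixed $t$, I would fix a bounded continuous functional $F$ on loops of length $t$, set $s=vt$, and compute $\nu_t(F\circ shift_v)$ by disintegrating $\mathbb{P}^t_{x,x}$ at time $s$. Under $\mathbb{P}^t_{x,x}$ the law of $\gamma(s)$ has density $p_s(x,y)p_{t-s}(y,x)/p_t(x,x)$ with respect to $m(y)\,dy$, and conditionally on $\gamma(s)=y$ the two halves of $\gamma$ are independent bridges (Markov property for bridges; the continuity needed to make disintegration clean is provided by Proposition~\ref{Ch2Sec2: PropContinuityBridge}). The shifted loop $\tilde\gamma=shift_v(\gamma)$ starts and ends at $y$ and passes through $x$ at time $t-s$, and it is exactly the concatenation of a $(y,x)$-bridge of length $t-s$ with an $(x,y)$-bridge of length $s$. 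This yields
\begin{displaymath}
\int F\circ shift_v\,d\nu_t=\iint_{I\times I} p_s(x,y)p_{t-s}(y,x)\,\mathbb{E}_{y,x}^{t-s}\otimes\mathbb{E}_{x,y}^{s}[F(\text{concat})]\,m(x)\,dx\,m(y)\,dy.
\end{displaymath}

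The crux is then a Fubini swap: integrate first in $x$ at fixed $y$. By the same Markov-disintegration applied now to $\mathbb{P}^t_{y,y}$ at time $t-s$ (using the symmetry $p_r(a,b)=p_r(b,a)$ of the transition densities against the speed measure), the inner integral equals $p_t(y,y)\,\mathbb{E}_{y,y}^t[F(\gamma)]$. This identifies $\nu_t(F\circ shift_v)=\nu_t(F)$, hence $shift_{v\ast}\nu_t=\nu_t$, and integrating against $\frac{dt}{t}$ gives the first assertion. The formula \eqref{Ch3Sec3: EqCoupure} is then immediate since $\int_0^1 shift_{v\ast}\mu\,dv=\int_0^1\mu\,dv=\mu$.

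The main technical point to be careful about is the joint measurability of the bridge disintegration in the variables $(x,y,t,s)$; this is where the weak continuity $(x,y,t)\mapsto\mathbb{P}_{x,y}^t$ established in Proposition~\ref{Ch2Sec2: PropContinuityBridge} is used, together with the continuity of $(t,x,y)\mapsto p_t(x,y)$, so that the double integral above is well-defined and Fubini applies. Everything else is bookkeeping: the symmetry $p_s(x,y)=p_s(y,x)$ together with the fact that swapping the two bridge factors in the integrand corresponds exactly to exchanging the roles of $x$ and $y$ in the Markov decomposition of $\mathbb{P}^t_{y,y}$ at time $t-s$.
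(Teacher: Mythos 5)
Your proof is correct and follows essentially the same route as the paper: you disintegrate the bridge measure $\mathbb{P}^t_{x,x}$ at the intermediate time $s=vt$ into two independent sub-bridges weighted by $p_s(x,y)p_{t-s}(y,x)$, then use the symmetry of $p$ and a Fubini swap over $x$ and $y$ to re-assemble the inner integral as $p_t(y,y)\,\mathbb{E}^t_{y,y}[F]$. The paper states this more compactly by writing $\mu(d\gamma_1,d\gamma_2)$ as a double integral over $(x,y)$ and observing that exchanging the two halves amounts to swapping $x$ and $y$; your version just makes the disintegration and the Fubini step explicit.
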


\begin{proof}
For a rooted loop $\gamma$ of life-time $T(\gamma)$, we will introduce $\gamma_{1}$ the path restricted to time interval $[0,vT(\gamma)]$ and $\gamma_{2}$ the path restricted to $[vT(\gamma), T(\gamma)]$. By bridge decomposition property, the measure $\mu(d\gamma_{1},d\gamma_{2})$ equals
\begin{displaymath}
\int_{t>0}\int_{I}\int_{I}\mathbb{P}_{x,y}^{vt}(d\gamma_{1})\mathbb{P}_{y,x}^{(1-v)t}(d\gamma_{2})p_{vt}(x,y) p_{(1-v)t}(y,x)m(y)\,dy\,m(x)\,dx\,\dfrac{dt}{t}.
\end{displaymath}
Since $\gamma_{1}$ and $\gamma_{2}$ play symmetric roles, changing the order of $\gamma_{1}$ and $\gamma_{2}$ does not change the measure $\mu$.
\end{proof}

Formula \eqref{Ch3Sec3: EqCoupure} shows that we can get back to the measure $\mu$ from the measure $\mu^{\ast}$ by cutting the circle parametrizing a loop in $\mathfrak{L}^{\ast}$ in a point chosen uniformly on this circle, in order to separate the start from the end. 

\begin{coro}
\label{Ch3Sec3: CorProjSubalg}
Let $F$ be a positive measurable functional on $\mathfrak{L}$. Then, the map 
$\gamma\mapsto\int_{0}^{1}F(\operatorname{shift}_{v}(\gamma))dv$ is 
$\pi^{-1}(\mathcal{B}_{\mathfrak{L}^{\ast}})$-measurable and 
\begin{displaymath}
\dfrac{d(F(\gamma)\mu)}{d\mu}_{\vert \pi^{-1}(\mathcal{B}_{\mathfrak{L}^{\ast}})} = \int_{0}^{1}F(\operatorname{shift}_{v}(\gamma))dv.
\end{displaymath}
\end{coro}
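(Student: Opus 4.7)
The plan is to combine two observations: (a) a function on $\mathfrak{L}$ is $\pi^{-1}(\mathcal{B}_{\mathfrak{L}^{\ast}})$-measurable if and only if it is invariant under every $shift_{u}$, and (b) $\mu$ is invariant under each $shift_{u}$ (by Proposition \ref{Ch3Sec3: PropShiftInv}), while sets in $\pi^{-1}(\mathcal{B}_{\mathfrak{L}^{\ast}})$ are by construction invariant under every $shift_{u}$. The identity to prove is essentially a projection-onto-invariants statement and both facts together will yield it via Fubini.

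First I would check measurability of $G(\gamma):=\int_{0}^{1}F(shift_{v}(\gamma))\,dv$. Measurability in $\gamma$ is routine since $(v,\gamma)\mapsto shift_{v}(\gamma)$ is jointly measurable (in fact continuous) on $[0,1]\times\mathfrak{L}$, so Fubini gives measurability of $G$ as a function on $\mathfrak{L}$. For $\pi^{-1}(\mathcal{B}_{\mathfrak{L}^{\ast}})$-measurability, since this $\sigma$-algebra is exactly the $\sigma$-algebra of $\sim$-invariant Borel sets, it suffices to show $G(shift_{u}\gamma)=G(\gamma)$ for every $u\in[0,1]$. Using the semigroup relation $shift_{v}\circ shift_{u}=shift_{\{v+u\}}$ (where $\{\cdot\}$ denotes the fractional part, reading the circle parametrisation modulo the life-time), a change of variable $w=\{v+u\}$ on the unit interval gives $G(shift_{u}\gamma)=\int_{0}^{1}F(shift_{w}\gamma)\,dw=G(\gamma)$.

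Next I would verify the Radon–Nikodym identity. Pick $A\in\pi^{-1}(\mathcal{B}_{\mathfrak{L}^{\ast}})$, so $\mathbf{1}_{A}\circ shift_{v}=\mathbf{1}_{A}$ for every $v\in[0,1]$. For each fixed $v$, applying the change of variables $\gamma\mapsto shift_{v}\gamma$ and using $shift_{v\,\ast}\mu=\mu$ (Proposition \ref{Ch3Sec3: PropShiftInv}):
\begin{equation*}
\int_{A}F(shift_{v}\gamma)\,d\mu(\gamma)
=\int\mathbf{1}_{A}(shift_{v}\gamma)\,F(shift_{v}\gamma)\,d\mu(\gamma)
=\int_{A}F(\gamma)\,d\mu(\gamma).
\end{equation*}
Integrating this equality in $v$ over $[0,1]$ and swapping the order of integration by Fubini (valid for $F\geq 0$) gives $\int_{A}G\,d\mu=\int_{A}F\,d\mu$, which is exactly the defining property of the Radon–Nikodym derivative on the sub-$\sigma$-algebra $\pi^{-1}(\mathcal{B}_{\mathfrak{L}^{\ast}})$.

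There is no real obstacle here; the only mild point of care is the semigroup property of $shift_{v}$ (the parametrisation is on a circle, so $shift_{v}\circ shift_{u}$ corresponds to a shift by $v+u$ read modulo $1$), together with the joint measurability needed to invoke Fubini. Both are immediate from the explicit formula for $shift_{v}$ given just before Proposition \ref{Ch3Sec3: PropShiftInv}.
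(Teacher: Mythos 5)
Your proof is correct and follows essentially the same route as the paper: both reduce the Radon–Nikodym identity to integrating, over $v\in[0,1]$, the shift-invariance of $\mu$ from Proposition \ref{Ch3Sec3: PropShiftInv} against a $\pi^{-1}(\mathcal{B}_{\mathfrak{L}^{\ast}})$-measurable test object (the paper uses $F'\circ\pi$ for $F'$ measurable on $\mathfrak{L}^{\ast}$, you use $\mathbf{1}_A$ for $A\in\pi^{-1}(\mathcal{B}_{\mathfrak{L}^{\ast}})$), then swaps the integrals by Fubini for $F\geq 0$. Your additional verification that $\int_0^1 F(shift_v\gamma)\,dv$ is $\pi^{-1}(\mathcal{B}_{\mathfrak{L}^{\ast}})$-measurable via the circle group law $shift_v\circ shift_u=shift_{\{u+v\}}$ is a genuine point the paper leaves implicit; it does rely on the (standard, but worth knowing) fact that for a continuous quotient of a Polish space with compact fibers onto a Polish space, $\pi^{-1}(\mathcal{B}_{\mathfrak{L}^{\ast}})$ coincides with the $\sigma$-algebra of $\sim$-invariant Borel sets, which follows from the existence of a Borel section.
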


\begin{proof}
We need only to show that for every $F'$ measurable functional on $\mathfrak{L}^{\ast}$,
\begin{equation}
\label{Ch3Sec3: EqRootedLoopsConditional}
\int_{\mathfrak{L}}F(\gamma)F'(\pi(\gamma))\mu(d\gamma) = \int_{0}^{1}\int_{\mathfrak{L}}
F(\operatorname{shift}_{v}(\gamma))F'(\pi(\gamma))\mu(d\gamma)dv.
\end{equation}
From Proposition \ref{Ch3Sec3: PropShiftInv} follows that for every $v\in[0,1]$,
\begin{equation}
\label{Ch3Sec3: EqDualityShiftv}
\int_{\mathfrak{L}}F(\gamma)F'(\pi(\gamma))\mu(d\gamma) = 
\int_{\mathfrak{L}}F(\operatorname{shift}_{v}(\gamma))F'(\pi(\gamma))\mu(d\gamma).
\end{equation}
Integrating \eqref{Ch3Sec3: EqDualityShiftv} on $[0,1]$ leads to \eqref{Ch3Sec3: EqRootedLoopsConditional}.
\end{proof}

The next identity appears in \cite{LeJan2011Loops} in the setting of Markov jump processes on graphs. It can be generalized to a wider class of Markov processes admitting local times (see Lemma $2.2$ in \cite{FitzsimmonsRosen2012LoopsIsomorphism}). We will give a short proof that suits our framework.
\begin{coro}
\label{Ch3Sec3: CorLoopLocTimeBridge}
Let $x\in I$. Then,
\begin{equation}
\label{Ch3Sec3: EqLoopMeasureLocTime}
\ell^{x}(\gamma) \mu^{\ast}(d\gamma) = \pi_{\ast} \mu^{x,x}(d\gamma).
\end{equation}
For $l>0$, let $\mathbb{P}_{x}^{\tau^{x}_{l}}(\cdot)$ be the law of the sample paths of a diffusion $X$ of generator $L$, started from $x$, until the time $\tau^{x}_{l}$ when $\ell^{x}_{t}(X)$ hits $l$, conditioned by $\tau^{x}_{l}<\zeta$. Then,
\begin{equation}
\label{Ch3Sec3: EqDecompLoopLocTimex}
1_{\gamma~visits~x}\mu^{\ast}(d\gamma)=\int_{0}^{+\infty}\pi_{\ast}\mathbb{P}_{x}^{\tau^{x}_{l}}(d\gamma)e^{-\frac{l}{G(x,x)}} \dfrac{dl}{l}.
\end{equation}
By convention, we set $G(x,x)=+\infty$ if $X$ is recurrent.
\end{coro}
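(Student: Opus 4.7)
My plan is to verify each identity by testing both sides against a nonnegative measurable functional $F'$ on $\mathfrak{L}^{\ast}$. For \eqref{Ch3Sec3: EqLoopMeasureLocTime}, since $\ell^{x}$ and $F'\circ\pi$ both descend to $\mathfrak{L}^{\ast}$, definition \ref{Ch3Sec3: DefMesLoops} gives
\begin{equation*}
\int F'(\gamma_{\ast})\ell^{x}(\gamma_{\ast})\mu^{\ast}(d\gamma_{\ast})=\int_{0}^{+\infty}\dfrac{dt}{t}\int_{I}\mathbb{P}^{t}_{z,z}\big(F'(\pi(\omega))\ell^{x}_{t}(\omega)\big)p_{t}(z,z)m(z)\,dz.
\end{equation*}
A conditioning of the bridge on $\omega(s)$, together with proposition \ref{Ch3Sec2: PropLocTimeDesinteg} and the bridge Markov property, produces the bridge local-time disintegration
\begin{equation*}
\mathbb{P}^{t}_{z,z}\big(F'(\pi(\omega))\ell^{x}_{t}(\omega)\big)p_{t}(z,z)=\int_{0}^{t}\mathbb{P}^{t}_{z,z}\big(F'(\pi(\omega))\,\big|\,\omega(s)=x\big)p_{s}(z,x)p_{t-s}(x,z)\,ds,
\end{equation*}
which I substitute into the previous display.

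The change of variable $(s,t)\mapsto(u,v)=(s,t-s)$ turns the double integral into one over $(0,+\infty)^{2}$ with $ds\,dt=du\,dv$ and $\frac{1}{t}=\frac{1}{u+v}$. The key geometric observation is that the shift $shift_{u/(u+v)}$ sends a $z$-to-$z$ bridge of length $u+v$ conditioned on $\omega(u)=x$ to an $x$-to-$x$ bridge of length $u+v$ conditioned on $\omega(v)=z$, while leaving the image under $\pi$ unchanged; this is the same type of identification used in proposition \ref{Ch3Sec3: PropShiftInv}. Combined with the symmetry $p_{s}(y,z)=p_{s}(z,y)$, the integral over $z$ reassembles into the unconditional bridge expectation since
\begin{equation*}
\int_{I}\mathbb{P}^{u+v}_{x,x}\big(F'(\pi(\omega))\,\big|\,\omega(v)=z\big)p_{v}(x,z)p_{u}(z,x)m(z)\,dz=\mathbb{P}^{u+v}_{x,x}\big(F'(\pi(\omega))\big)p_{u+v}(x,x).
\end{equation*}
Finally, going back to the variable $t=u+v$ cancels the factor $\frac{1}{u+v}$ and leaves $\int_{0}^{+\infty}\mathbb{P}^{t}_{x,x}(F'(\pi(\omega)))p_{t}(x,x)\,dt=\int F'(\pi(\gamma))\mu^{x,x}(d\gamma)$, which is \eqref{Ch3Sec3: EqLoopMeasureLocTime}.

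For \eqref{Ch3Sec3: EqDecompLoopLocTimex}, I combine \eqref{Ch3Sec3: EqLoopMeasureLocTime} with an inverse-local-time rewriting of $\mu^{x,x}$. Starting from proposition \ref{Ch3Sec2: PropLocTimeDesinteg} and changing variable $l=\ell^{x}_{t}(X)$,
\begin{equation*}
\mu^{x,x}(F)=\mathbb{E}_{x}\Big[\int_{0}^{\ell^{x}_{\zeta}(X)}F(X_{[0,\tau^{x}_{l}]})\,dl\Big]=\int_{0}^{+\infty}\mathbb{P}_{x}(\tau^{x}_{l}<\zeta)\,\mathbb{P}^{\tau^{x}_{l}}_{x}(F)\,dl.
\end{equation*}
Iterating the strong Markov property at the times $\tau^{x}_{l}$ (equivalently, standard excursion theory at $x$) shows that under $\mathbb{P}_{x}$ the total local time $\ell^{x}_{\zeta}(X)$ is exponentially distributed with mean $G(x,x)=\mathbb{E}_{x}[\ell^{x}_{\zeta}(X)]$, yielding $\mathbb{P}_{x}(\tau^{x}_{l}<\zeta)=e^{-l/G(x,x)}$, interpreted as $1$ when $X$ is recurrent and $G(x,x)=+\infty$. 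Pushing forward by $\pi$ and substituting into \eqref{Ch3Sec3: EqLoopMeasureLocTime} I get $\ell^{x}(\gamma)\mu^{\ast}(d\gamma)=\int_{0}^{+\infty}\pi_{\ast}\mathbb{P}^{\tau^{x}_{l}}_{x}(d\gamma)\,e^{-l/G(x,x)}\,dl$; since $\ell^{x}(\gamma)=l$ identically on the support of $\pi_{\ast}\mathbb{P}^{\tau^{x}_{l}}_{x}$, dividing through by $\ell^{x}(\gamma)$ on the event $\{\gamma\text{ visits }x\}$ produces the $\frac{dl}{l}$ factor and gives \eqref{Ch3Sec3: EqDecompLoopLocTimex}.

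The main obstacle is the first identity, and within it the precise bookkeeping in the change of variables: verifying that after the shift, the conditioned $z$-to-$z$ bridge and the conditioned $x$-to-$x$ bridge agree as measures on unrooted loops, and that the symmetries of $p_{t}$ line up to let the $z$-integral recombine. The second identity is then an essentially computational consequence of the first, granted the exponential law of $\ell^{x}_{\zeta}(X)$.
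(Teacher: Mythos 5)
Your argument for \eqref{Ch3Sec3: EqLoopMeasureLocTime} is correct but takes a genuinely different route from the paper's. The paper fixes $\varepsilon>0$, applies the shift-invariance \eqref{Ch3Sec3: EqCoupure} to the time $T_{[x-\varepsilon,x+\varepsilon]}(\gamma)$ spent near $x$ (getting an exact identity relating $T_{[x-\varepsilon,x+\varepsilon]}(\gamma)\mu^{\ast}(d\gamma)$ to the average of $\pi_{\ast}\mu^{z,z}$ over $z\in[x-\varepsilon,x+\varepsilon]$), and then passes to the limit $\varepsilon\to 0$ using joint continuity of local times on the left and the weak continuity of $z\mapsto\mu^{z,z}$ (proposition \ref{Ch3Sec2: PropContinuityBridge}) on the right. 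You instead carry out the computation directly: disintegrate the bridge expectation weighted by $\ell^{x}_{t}$ over the visiting time $s$, change variables $(s,t)\mapsto(u,v)=(s,t-s)$, observe that $shift_{u/(u+v)}$ turns the $z$-to-$z$ bridge conditioned on $\omega(u)=x$ into an $x$-to-$x$ bridge conditioned on $\omega(v)=z$, and then reassemble the $z$-integral. Your "geometric observation" is correct (it is exactly the bridge-decomposition factorisation $\mathbb{P}^{u+v}_{z,z}(\cdot\mid\omega(u)=x)=\mathbb{P}^{u}_{z,x}\lhd\mathbb{P}^{v}_{x,z}$ read in the two cyclic orders), and the two occurrences of the shift-invariance idea are really the same mechanism applied microscopically (at a single visiting time) rather than mesoscopically (averaged over a shrinking window). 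What your route buys is an explicit computational identity that avoids the weak-continuity machinery; what the paper's route buys is that it only relies on soft continuity arguments already established, whereas you lean on the bridge local-time disintegration
\begin{equation*}
\mathbb{P}^{t}_{z,z}\big(F'(\pi(\omega))\,\ell^{x}_{t}(\omega)\big)p_{t}(z,z)
=\int_{0}^{t}\mathbb{P}^{t}_{z,z}\big(F'(\pi(\omega))\mid\omega(s)=x\big)p_{s}(z,x)p_{t-s}(x,z)\,ds,
\end{equation*}
which is not stated in the paper; it is true, but a careful proof requires a Riemann-sum approximation in the spirit of the proof of proposition \ref{Ch3Sec2: PropLocTimeDesinteg}, which you should flag rather than attribute it to the bridge Markov property alone. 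Your treatment of the second identity is essentially identical to the paper's: disintegrate $\mu^{x,x}$ via proposition \ref{Ch3Sec2: PropLocTimeDesinteg}, use the exponential law of $\ell^{x}_{\zeta}(X)$ to produce $e^{-l/G(x,x)}$, and cancel $\ell^{x}(\gamma)=l$ on the support of $\pi_{\ast}\mathbb{P}_{x}^{\tau^{x}_{l}}$ to obtain the $\frac{dl}{l}$ factor.
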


\begin{proof}
Let $\varepsilon>0$ such that $[x-\varepsilon, x+\varepsilon]\subseteq I$. Let $T_{[x-\varepsilon, x+\varepsilon]}(\gamma)$ be the time a loop $\gamma$ spends in $[x-\varepsilon, x+\varepsilon]$. From the identity 
\eqref{Ch3Sec3: EqCoupure} follows that
\begin{displaymath}
\dfrac{T_{[x-\varepsilon, x+\varepsilon]}(\gamma)}{T(\gamma)} \mu^{\ast}(d\gamma) = 
\dfrac{1}{T(\gamma)}\int_{x-\varepsilon}^{x+\varepsilon}\pi_{\ast} \mu^{z,z}(d\gamma) m(z) dz,
\end{displaymath}
and simplifying $T(\gamma)$,
\begin{displaymath}
T_{[x-\varepsilon, x+\varepsilon]}(\gamma)\mu^{\ast}(d\gamma) = \int_{x-\varepsilon}^{x+\varepsilon}\pi_{\ast} \mu^{z,z}(d\gamma) m(z) dz.
\end{displaymath}
Using local times we rewrite the previous expression as
\begin{equation}
\label{Ch3Sec3: EqTimeSpendIntervalLocTime}
\dfrac{\int_{x-\varepsilon}^{x+\varepsilon}\ell^{z}(\gamma)m(z) dz}{\int_{x-\varepsilon}^{x+\varepsilon}m(z) dz} \mu^{\ast}(d\gamma) = \dfrac{1}{\int_{x-\varepsilon}^{x+\varepsilon}m(z) dz} \int_{x-\varepsilon}^{x+\varepsilon}\pi_{\ast} \mu^{z,z}(d\gamma) m(z) dz.
\end{equation}
Let $\varepsilon_{0}>0$ such that $[x-\varepsilon_{0}, x+\varepsilon_{0}]\subseteq I$. Let $F$ be a continuous bounded functional on loops endowed with continuous local times such that $F$ is zero if the life-time of the loop exceeds $t_{\rm max}>0$ and if $\sup_{z\in[x-\varepsilon_{0}, x+\varepsilon_{0}]} l^{z}(\gamma)$ exceeds $l_{\rm max}$. According to Proposition \ref{Ch2Sec2: PropContinuityBridge}, the right-hand side of \eqref{Ch3Sec3: EqTimeSpendIntervalLocTime} applied to $F$ converges as $\varepsilon\rightarrow 0$ to $(\pi_{\ast} \mu^{x,x})(F(\gamma))$. By dominated convergence 
it follows that the left-hand side of \eqref{Ch3Sec3: EqTimeSpendIntervalLocTime} applied to $F$ converges as 
$\varepsilon\rightarrow 0$ to
\begin{displaymath}
\int_{\mathfrak{L}^{\ast}}\ell^{x}(\gamma)F(\gamma)\mu^{\ast}(d\gamma).
\end{displaymath}
Thus, we have the equality
\begin{equation}
\label{Ch3Sec3: EqLoopMeasureLocTimeInt}
\int_{\mathfrak{L}^{\ast}}\ell^{x}(\gamma)F(\gamma)\mu^{\ast}(d\gamma) = (\pi_{\ast} \mu^{x,x})(F(\gamma)).
\end{equation}
The set of test functionals $F$ that satisfy \eqref{Ch3Sec3: EqLoopMeasureLocTimeInt} is large enough to deduce the equality \eqref{Ch3Sec3: EqLoopMeasureLocTime} between measures.

From Proposition \ref{Ch3Sec2: PropLocTimeDesinteg} follows that
\begin{displaymath}
\mu^{x,x}(\cdot)=\int_{0}^{+\infty}\mathbb{P}_{x}^{\tau^{x}_{l}}(\cdot)e^{-\frac{l}{G(x,x)}} dl.
\end{displaymath}
Applying \eqref{Ch3Sec3: EqLoopMeasureLocTime} to the above disintegration, we get 
\eqref{Ch3Sec3: EqDecompLoopLocTimex}. 
\end{proof}

\begin{coro}
\label{Ch3Sec3: CorTimeChange}
Let $V$ be a positive continuous function on $I$. We consider a time change with speed $V$: $ds=V(x)dt$. Then
\begin{equation}
\label{Ch3Sec3: EqLoopTimeChange}
\mu^{\ast}_{\frac{1}{V}L} = \operatorname{Speed}_{V \ast} \mu^{\ast}_{L}.
\end{equation}
\end{coro}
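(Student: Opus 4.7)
My plan is to reduce \eqref{Ch3Sec3: EqLoopTimeChange} to the corresponding identity for the variable life-time bridge measures $\mu^{x,x}$ (property~\ref{Ch3Sec2: PropertyBridges}~(vi)) by means of the local-time formula $\ell^{x}(\gamma)\,\mu^{\ast}(d\gamma)=\pi_{\ast}\mu^{x,x}(d\gamma)$ from Corollary~\ref{Ch3Sec3: CorLoopLocTimeBridge}. A direct manipulation of the definition of $\mu$ is awkward because the factor $1/T(\gamma)$ transforms nontrivially under $Speed_{V}$, so going through the bridge measures (where no such $1/T$ appears) is much cleaner.

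Write $\tilde{m}$ and $\tilde{\ell}^{x}$ for the speed measure and associated family of local times chosen for $\frac{1}{V}L$ so that property~\ref{Ch3Sec2: PropertyBridges}~(vi) is available. First I would apply Corollary~\ref{Ch3Sec3: CorLoopLocTimeBridge} to $\frac{1}{V}L$, getting $\tilde{\ell}^{x}(\gamma)\,\mu^{\ast}_{\frac{1}{V}L}(d\gamma)=\pi_{\ast}\mu^{x,x}_{\frac{1}{V}L}(d\gamma)$, and then invoke property~\ref{Ch3Sec2: PropertyBridges}~(vi) to rewrite $\mu^{x,x}_{\frac{1}{V}L}$ in terms of $Speed_{V\,\ast}\mu^{x,x}_{L}$. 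Since $Speed_{V}$ acts pointwise on the parametrization, it descends to unrooted loops and commutes with the projection $\pi:\mathfrak{L}\to\mathfrak{L}^{\ast}$, so $\pi_{\ast}Speed_{V\,\ast}=Speed_{V\,\ast}\pi_{\ast}$. A second application of Corollary~\ref{Ch3Sec3: CorLoopLocTimeBridge}, this time to $L$, then turns $\pi_{\ast}\mu^{x,x}_{L}$ into $\ell^{x}(\gamma')\,\mu^{\ast}_{L}(d\gamma')$. The change of variable $\int_{0}^{T(\tilde{\gamma})}f(\tilde{\gamma}(s))\,ds=\int_{0}^{T(\gamma)}f(\gamma(t))V(\gamma(t))\,dt$ relates $\tilde{\ell}^{x}\circ Speed_{V}$ to $\ell^{x}$ by a pointwise factor depending only on $x$, which combines with the corresponding normalisation factor carried by property~\ref{Ch3Sec2: PropertyBridges}~(vi) so that the two weights cancel exactly and one obtains
\[
\tilde{\ell}^{x}(\gamma)\,\mu^{\ast}_{\frac{1}{V}L}(d\gamma)=\tilde{\ell}^{x}(\gamma)\,Speed_{V\,\ast}\mu^{\ast}_{L}(d\gamma)\qquad\text{for every }x\in I.
\]

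To remove the local-time weight I would integrate both sides against $\tilde{m}(x)\,dx$ on $I$. Using $\int_{I}\tilde{\ell}^{x}(\gamma)\tilde{m}(x)\,dx=T(\gamma)$ this yields $T(\gamma)\,\mu^{\ast}_{\frac{1}{V}L}=T(\gamma)\,Speed_{V\,\ast}\mu^{\ast}_{L}$ as $\sigma$-finite measures on $\mathfrak{L}^{\ast}$, and dividing by the strictly positive function $T(\gamma)$ gives \eqref{Ch3Sec3: EqLoopTimeChange}. The only delicate step is the bookkeeping between the two families of local times under $Speed_{V}$ together with the normalisation in property~\ref{Ch3Sec2: PropertyBridges}~(vi); once these are aligned, the corollary follows by straightforward manipulation.
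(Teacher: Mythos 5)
Your argument is correct, and it takes a genuinely different route from the paper's. You reduce the claim to the local-time identity $\ell^{x}(\gamma)\,\mu^{\ast}(d\gamma)=\pi_{\ast}\mu^{x,x}(d\gamma)$ of Corollary~\ref{Ch3Sec3: CorLoopLocTimeBridge}, applied to both $L$ and $\frac{1}{V}L$, together with property~\ref{Ch3Sec2: PropertyBridges}~(vi), and then absorb the local-time weight by integrating against the speed measure, recovering $T(\gamma)$ on both sides. The paper instead works directly from Definition~\ref{Ch3Sec3: DefMesLoops} and property~(vi), writing the Radon--Nikodym derivative of $\mu_{\frac{1}{V}L}$ with respect to $Speed_{V\ast}\mu_{L}$ on rooted loops as the nonconstant functional $\frac{1}{T(\gamma)}\int_{0}^{T(\gamma)}\frac{V(\gamma(0))}{V(\gamma(s))}\,ds$, and then uses Corollary~\ref{Ch3Sec3: CorProjSubalg} to average it over $(shift_{v})_{v\in[0,1]}$ and show that its conditional expectation on $\pi^{-1}(\mathcal{B}_{\mathfrak{L}^{\ast}})$ is identically $1$. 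Your route is not more elementary --- Corollary~\ref{Ch3Sec3: CorLoopLocTimeBridge} is itself proved from Corollary~\ref{Ch3Sec3: CorProjSubalg} and Proposition~\ref{Ch3Sec3: PropShiftInv} --- but it does neatly sidestep the explicit bookkeeping with the $1/T(\gamma)$ factor, which transforms awkwardly under $Speed_{V}$.

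On the ``delicate step'' you flag: it is actually cleaner than you suggest once the speed measure for $\frac{1}{V}L$ is fixed consistently. The natural choice is $\tilde{m}(y)\,dy=V(y)m(y)\,dy$ (the remark in the paper reads $\frac{1}{V(y)}m(y)\,dy$, but this appears to be a misprint, as one can check against the factor $\frac{V(\gamma(0))}{V(\gamma(s))}$ appearing in the paper's own proof of the present corollary and against the total masses in property~(vi)). With this normalisation the occupation formulas give $\tilde{\ell}^{x}\bigl(Speed_{V}(\gamma)\bigr)=\ell^{x}(\gamma)$ for every $x$ and $\gamma$ --- no pointwise factor at all --- so $Speed_{V\ast}\bigl(\ell^{x}\mu^{\ast}_{L}\bigr)=\tilde{\ell}^{x}\,Speed_{V\ast}\mu^{\ast}_{L}$ holds exactly, the identity $\tilde{\ell}^{x}(\gamma)\,\mu^{\ast}_{\frac{1}{V}L}(d\gamma)=\tilde{\ell}^{x}(\gamma)\,Speed_{V\ast}\mu^{\ast}_{L}(d\gamma)$ follows at once, and integrating against $\tilde{m}(x)\,dx$ gives $T(\gamma)$ on each side as you say. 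Dividing by the strictly positive functional $T$ then yields \eqref{Ch3Sec3: EqLoopTimeChange}.
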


\begin{proof}
By Definition \ref{Ch3Sec3: DefMesLoops} and Property \ref{Ch3Sec2: PropertyBridges} (vi),
\begin{displaymath}
\mu_{\frac{1}{V}L}(d\gamma) = \frac{1}{T(\gamma)}\int_{0}^{T(\gamma)}\frac{V(\gamma(0))}{V(\gamma(s))} ds~\operatorname{Speed}_{V \ast} (\mu_{L}(d\gamma)).
\end{displaymath}
Applying Corollary \ref{Ch3Sec3: CorProjSubalg}, we obtain
\begin{displaymath}
\dfrac{d \operatorname{Speed}_{V \ast} \mu_{L}}{d \mu_{\frac{1}{V}L}}_{\vert \pi^{-1}(\mathcal{B}_{\mathfrak{L}^{\ast}})} = \frac{\int_{0}^{1}V^{-1}(\gamma(v T(\gamma))) dv}{\frac{1}{T(\gamma)}\int_{0}^{T(\gamma)}V^{-1}(\gamma(s)) ds} = 1.
\end{displaymath}
This concludes.
\end{proof}

In dimension two, the time change covariance of the measure $\mu^{\ast}$ on loops plays a key role for the construction of the Conformal Loop Ensembles (CLE) using loop-soups as in \cite{SheffieldWerner2012CLE}: Let  $D$ be an open domain of the complex plane, $(B_{t})_{0\leq t <\zeta}$ the two-dimensional standard Brownian motion in $D$ killed when hitting $\partial D$ and $\mu^{\ast}$ the corresponding measure on loops. If $f:D\rightarrow D$ is a conformal map, then $(f(B_{t}))_{0\leq t <\zeta}$ is a time changed Brownian motion. If we consider $\mu^{\ast}$ not as a measure on loops parametrized by time but a measure on the geometrical drawings of loops, then $\mu^{\ast}$ is invariant by the transformation ${(\gamma(t))_{0\leq t\leq T(\gamma)}\mapsto (f(\gamma(t)))_{0\leq t\leq T(\gamma)}}$. This is proved in \cite{LawlerWerner2004ConformalLoopSoup}.

Given that $\mu^{\ast}$ is invariant through conjugation and covariant with the change of scale and change of time, if $X$ is a recurrent diffusion, then up to a change of scale and time, $\mu^{\ast}$ is the same as for the Brownian motion on $\mathbb{R}$, and if $X$ is a transient diffusion, even if the killing measure $\kappa$ is non-zero, then up to a change of scale and time, $\mu^{\ast}$ is the same as for the Brownian motion on a bounded interval, killed when it hits the boundary.

\section{Multiple local times}
\label{Ch3Sec4}

In this subsection we define the multiple local time functional on loops. Corollary \ref{Ch3Sec3: CorLoopLocTimeBridge} gives a link between the measure $\mu^{\ast}$ and the measures $(\mu^{x,x})_{x\in I}$. Using multiple local times we will get a further relation between $\mu^{\ast}$ and $(\mu^{x,y})_{x,y\in I}$. This will allow us to prove a converse to Pproperty \ref{Ch3Sec3: PropertyTrivialitiesLoops} (v): two diffusions that have the same measure on unrooted loops are conjugates.

\begin{defi}
\label{Ch3Sec4: DefMultipleLocalTime}
If $(\gamma(t))_{0\leq t \leq T(\gamma)}$ is a continuous path in $I$ having a family of local times 
$(\ell^{x}_{t}(\gamma))_{x\in I, 0\leq t \leq T(\gamma)}$ relatively to the measure $m(x)dx$, we introduce multiple local times $\ell^{x_{1}, x_{2},\dots,x_{n}}(\gamma)$ for $x_{1}, x_{2},\dots, x_{n}\in I$:
\begin{displaymath}
\ell^{x_{1}, x_{2},\dots,x_{n}}(\gamma):=\int_{0\leq t_{1}\leq t_{2}\leq \dots\leq t_{n}\leq T(\gamma)} d_{t_{1}}\ell^{x_{1}}_{t_{1}}(\gamma)d_{t_{2}}\ell^{x_{2}}_{t_{2}}(\gamma)\dots d_{t_{n}}\ell^{x_{n}}_{t_{n}}(\gamma).
\end{displaymath} 
If $\gamma\in\mathfrak{L}$ and has local times, we introduce circular local times for $\gamma$:
\begin{displaymath}
\ell^{\ast x_{1}, x_{2},\dots,x_{n}}(\gamma):=
\sum_{\substack{
c~\text{circular} \\ 
\text{permutation} \\ 
\text{of}~\lbrace 1, 2,\dots, n\rbrace
}}
\ell^{x_{c(1)}, x_{c(2)},\dots,x_{c(n)}}(\gamma).
\end{displaymath}
$\ell^{\ast x_{1}, x_{2},\dots,x_{n}}$ being invariant under the transformations $(\operatorname{shift}_{v})_{v\in[0,1]}$, we see it as a functional defined on $\mathfrak{L}^{\ast}$.
\end{defi}

Multiple local times of form $\ell^{x, x,\dots,x}(\gamma)$, called self intersection local times, were studied by Dynkin in \cite{Dynkin1984PolynomOccupField}. Circular local times were introduced by Le Jan in \cite{LeJan2011Loops}.

Let $n\in\mathbb{N}^{\ast}$ and $p\in\lbrace 1,\dots,n\rbrace$. Let $\operatorname{Shuffle}_{p,n}$ be the set of permutations $\sigma$ of $\lbrace 1,\dots,n\rbrace$ such that for all $i\leq j\in \lbrace 1,\dots,p\rbrace$, $\sigma(i)\leq\sigma(j)$, and for all $i\leq j\in \lbrace p+1,\dots,n\rbrace$, $\sigma(i)\leq\sigma(j)$. Permutations in $\operatorname{Shuffle}_{p,n}$ are obtained by shuffling two card decks $\lbrace 1,\dots,p\rbrace$ and $\lbrace p+1,\dots,n\rbrace$. Let $\operatorname{Shuffle}'_{p,n}$ be the permutations of $\lbrace 1,\dots,n\rbrace$ of the form $\sigma\circ c$ where $c$ is a circular permutation of 
$\lbrace p+1,\dots,n\rbrace$, and $\sigma\in \operatorname{Shuffle}_{p,n}$ satisfies $\sigma(1)=1$. One can check the following:

\begin{property}
\label{Ch3Sec4: PropertyShuffleAlg}
For all $x_{1},\dots,x_{p}, x_{p+1},\dots, x_{n}\in I$,
\begin{itemize}
\item[(i)]
\begin{displaymath}
\ell^{x_{1},\dots,x_{p}}(\gamma)\ell^{x_{p+1},\dots,x_{n}}(\gamma) = 
\sum_{\sigma\in \operatorname{Shuffle}_{p,n}}\ell^{x_{\sigma(1)},\dots, x_{\sigma(p)}, x_{\sigma(p+1)},\dots,x_{\sigma(n)}}(\gamma),
\end{displaymath}
\item[(ii)]
\begin{displaymath}
\ell^{\ast x_{1},\dots,x_{p}}(\gamma)\ell^{\ast x_{p+1},\dots,x_{n}}(\gamma) = 
\sum_{\sigma'\in \operatorname{Shuffle}'_{p,n}}\ell^{x_{\sigma'(1)},\dots, x_{\sigma'(p)}, x_{\sigma'(p+1)},\dots,x_{\sigma'(n)}}(\gamma).
\end{displaymath}
\end{itemize}
\end{property}

The equality \ref{Ch3Sec4: PropertyShuffleAlg} (ii) appears in \cite{LeJan2011Loops}. It is also shown in \cite{LeJan2011Loops} that for transient Markov jump processes,
\begin{equation}
\label{Ch3Sec4: EqMultLocTimeDensity}
\int \ell^{\ast x_{1}, x_{2},\dots,x_{n}}(\gamma) \mu(d\gamma) = G(x_{1},x_{2})\times \dots\times G(x_{n-1},x_{n})
\times G(x_{n},x_{1}).
\end{equation}

It turns out that we have more. We consider $L$ a generator of a diffusion on $I$ of form \eqref{Ch2Sec2: EqGenerator}. If $\gamma_{i}$ for $i\in\lbrace 1, 2,\dots,n-1\rbrace$ is a continuous path from $x_{i}$ to $x_{i+1}$, then we can concatenate $\gamma_{1}, \gamma_{2},\dots, \gamma_{n-1}$ to obtain a continuous path  $\gamma_{1}\lhd\gamma_{2}\lhd\dots\lhd\gamma_{n-1}$ from $x_{1}$ to $x_{n}$. Let 
$\mu^{x_{1},x_{2}}\lhd\dots\lhd\mu^{x_{n-1},x_{n}}$ be the image measure
$\mu^{x_{1},x_{2}}\otimes\dots\otimes\mu^{x_{n-1},x_{n}}$ by this concatenation procedure.

\begin{prop}
\label{Ch3Sec4: PropMultLocTimeDens}
The following absolute continuity relations hold:
\begin{itemize}
\item[(i)] $(\mu^{x_{1},x_{2}}\lhd\dots\lhd\mu^{x_{n-1},x_{n}})(d\gamma) = 
\ell^{x_{2},\dots,x_{n-1}}(\gamma) \mu^{x_{1},x_{n}}(d\gamma)$,
\item[(ii)] $\pi_{\ast}(\mu^{x_{1},x_{2}}\lhd\dots\lhd\mu^{x_{n-1},x_{n}}\lhd\mu^{x_{n},x_{1}})(d\gamma) = 
\ell^{\ast x_{1}, x_{2},\dots,x_{n}}(\gamma)\mu^{\ast}(d\gamma)$.
\end{itemize}
\end{prop}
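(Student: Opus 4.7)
The plan is to handle (i) by induction on $n$ and derive (ii) from (i) via a re-rooting identity for $\mu^{x_1,x_1}$. For (i), the base case $n=2$ is a tautology. For the induction step, assuming (i) for $n-1$, write
\[
(\mu^{x_1,x_2}\lhd\dots\lhd\mu^{x_{n-1},x_n})(F)=\int\ell^{x_2,\dots,x_{n-2}}(\gamma_1)F(\gamma_1\lhd\gamma_2)\,\mu^{x_1,x_{n-1}}(d\gamma_1)\mu^{x_{n-1},x_n}(d\gamma_2),
\]
expand both bridge measures via proposition \ref{Ch3Sec2: PropLocTimeDesinteg}, and apply the strong Markov property of the diffusion at the gluing point $x_{n-1}$. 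The concatenated path then has the law of a single sample path $(X_s)_{0\le s\le t}$ started at $x_1$, with $t$ a $d\ell^{x_n}_t$-increment preceded by a $d\ell^{x_{n-1}}_{t_{n-1}}$-increment, itself preceded by the chain of visits to $x_2,\dots,x_{n-2}$ coming from $\ell^{x_2,\dots,x_{n-2}}(\gamma_1)$. Consolidating the nested local-time integrals into $\ell^{x_2,\dots,x_{n-1}}(X_{[0,t]})$ and invoking proposition \ref{Ch3Sec2: PropLocTimeDesinteg} once more gives the claim.

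For (ii), applying (i) in the cyclic case $x_{n+1}=x_1$ yields $\mu^{x_1,x_2}\lhd\dots\lhd\mu^{x_n,x_1}=\ell^{x_2,\dots,x_n}(\gamma)\,\mu^{x_1,x_1}(d\gamma)$, so it remains to compute $\pi_{\ast}$ of the right-hand side. The key step is the following re-rooting identity, which extends corollary \ref{Ch3Sec3: CorLoopLocTimeBridge} to functionals that are not shift-invariant: for any non-negative measurable $H$ on $\mathfrak{L}$,
\begin{equation}
\int H(\gamma)\,\mu^{x_1,x_1}(d\gamma)=\int\mu^{\ast}(d\tilde\gamma)\int_0^{T(\tilde\gamma)}H(shift_{s/T(\tilde\gamma)}\gamma_0)\,d\ell^{x_1}_s(\gamma_0),\label{Ch3Sec4:EqReroot}
\end{equation}
where $\gamma_0$ is any representative of $\tilde\gamma$ (the inner integral being shift-invariant in $\gamma_0$). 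I would prove \eqref{Ch3Sec4:EqReroot} by unfolding the definitions $\mu^{\ast}=\pi_{\ast}\mu$ and $\mu(d\gamma)=\int_I\frac{m(y)}{T(\gamma)}\mu^{y,y}(d\gamma)\,dy$, using proposition \ref{Ch3Sec2: PropLocTimeDesinteg} and Fubini, then applying strong Markov at the pair of local-time times where $X_s=x_1$ and $X_t=y$: this splits the re-rooted loop into two independent excursions of laws $\mu^{y,x_1}$ and $\mu^{x_1,y}$, which part (i) in the case $n=3$ recombines as $\ell^y(\gamma)\mu^{x_1,x_1}(d\gamma)$, and the occupation-time formula $\int_I m(y)\ell^y(\gamma)\,dy=T(\gamma)$ absorbs the $y$-integral while cancelling the factor $1/T(\gamma)$, producing the left-hand side.

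Applying \eqref{Ch3Sec4:EqReroot} with $H(\gamma)=F'(\pi(\gamma))\ell^{x_2,\dots,x_n}(\gamma)$ for a test functional $F'$ on $\mathfrak{L}^{\ast}$, I would evaluate the quantity $\int_0^{T(\gamma_0)}\ell^{x_2,\dots,x_n}(shift_{s/T(\gamma_0)}\gamma_0)\,d\ell^{x_1}_s(\gamma_0)$ by a change of variables in the nested local-time integrals: it counts $n$-tuples $(u_0,u_1,\dots,u_{n-1})$ of points on the circle at which $\gamma_0$ visits $x_1,x_2,\dots,x_n$ in the prescribed cyclic order, which, summed over the $n$ cyclic choices of which visit to $x_1$ is rooted, is precisely $\ell^{\ast x_1,\dots,x_n}(\tilde\gamma)$ as in definition \ref{Ch3Sec4: DefMultipleLocalTime}. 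The main obstacle is the re-rooting identity \eqref{Ch3Sec4:EqReroot}; the induction for (i) and the final combinatorial step are routine once the correct strong Markov decomposition is in place.
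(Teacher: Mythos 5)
Your argument for (i) is the paper's own, merely reorganized as an induction: both rest on Proposition \ref{Ch3Sec2: PropLocTimeDesinteg} and the strong Markov property at the gluing point, and the inductive step collapses to the same nested local-time integral. No substantive difference there.

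For (ii) you take a genuinely different route, and it is the more careful one. The paper projects $\ell^{x_2,\dots,x_n}(\gamma)\,\mu^{x_1,x_1}(d\gamma)$ by $\pi$ using the uniform re-rooting of Corollary \ref{Ch3Sec3: CorProjSubalg} and then asserts the pointwise identity $\ell^{x_1}(\gamma)\int_0^1\ell^{x_2,\dots,x_n}(shift_v\gamma)\,dv=\ell^{\ast x_1,\dots,x_n}(\gamma)$. But that corollary describes the conditional root law under $\mu$, which is uniform in time, not under $\mu^{x_1,x_1}$, whose root is pinned to a visit of $x_1$ and whose conditional root law over the unrooted loop is proportional to $d\ell^{x_1}_s$; and the asserted pointwise identity actually fails for $n\ge 3$, because the left side weights an ordered tuple $(u_2,\dots,u_n)$ by $\ell^{x_1}(\gamma)$ times the Lebesgue fraction of the arc from $u_n$ to $u_2$, whereas $\ell^{\ast x_1,\dots,x_n}$ weights it by the $\ell^{x_1}$-mass carried by that arc. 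Your re-rooting identity, in which $ds/T$ is replaced by $d\ell^{x_1}_s$, is precisely the fiber measure of $\mu^{x_1,x_1}$ over $\pi$, and then the correct pointwise identity $\int_0^T\ell^{x_2,\dots,x_n}(shift_{s/T}\gamma_0)\,d\ell^{x_1}_s(\gamma_0)=\ell^{\ast x_1,\dots,x_n}(\tilde\gamma)$ does hold: it counts tuples $(s,u_2,\dots,u_n)$ on the circle visiting $x_1,\dots,x_n$ in cyclic order, which is exactly what the sum over cyclic permutations in Definition \ref{Ch3Sec4: DefMultipleLocalTime} counts (one summand for each arc that could contain the root). Your sketch of the re-rooting identity is also sound: unfold $\mu(d\gamma)=\frac{1}{T(\gamma)}\int_I\mu^{y,y}(d\gamma)\,m(y)\,dy$, split a $\mu^{y,y}$-loop at an $\ell^{x_1}$-chosen time into $\mu^{y,x_1}\otimes\mu^{x_1,y}$, reglue at $x_1$ and apply (i) with $n=3$ to recognize $\ell^{y}(\gamma)\,\mu^{x_1,x_1}(d\gamma)$, and let the occupation formula $\int_I\ell^y(\gamma)\,m(y)\,dy=T(\gamma)$ absorb both the $y$-integral and the $1/T(\gamma)$. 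So your route for (ii) is not merely an alternative: it repairs a real gap in the paper's derivation.
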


\begin{proof}
(i): Let $((X^{(j)}_{t})_{0\leq t<\zeta_{j}})_{0\leq j\leq n-1}$ be $n-1$ independent diffusions of generator $L$, with $X^{(j)}_{0}=x_{j}$. For $l \geq 0$, let
\begin{displaymath}
\tau_{j,l}^{x_{j+1}}:=\inf\left\lbrace t_{j}\geq 0\vert \ell^{x_{j+1}}_{t_{j}}(X^{(j)})> l \right\rbrace. 
\end{displaymath}
According to Proposition \ref{Ch3Sec2: PropLocTimeDesinteg}, 
$(\mu^{x_{1},x_{2}}\lhd\dots\lhd\mu^{x_{n-1},x_{n}})(F(\gamma))$ equals
\begin{equation}
\label{Ch3Sec4: EqMultLocTimeStopTimes}
\mathbb{E}\bigg[\int\limits_{l_{j}<\ell^{x_{j+1}}_{\zeta_{j}}(X^{(j)}), 1\leq j\leq n-1} 
F\big((X^{(1)}_{t})_{0\leq t \leq \tau_{1,l_{1}}^{x_{2}}}\lhd\dots
\lhd(X^{(n-1)}_{t})_{0\leq t\leq \tau_{n-1,l_{n-1}}^{x_{n}}}\big)dl_{1}\dots dl_{n-1}\bigg].
\end{equation}
Let $(X_{t})_{0\leq t<\zeta}$ be an other diffusion of generator $L$. Let
\begin{displaymath}
\tau_{l_{1}}:=\inf\lbrace t\geq 0\vert l^{x_{2}}_{t}(X)> l_{1}\rbrace,
\end{displaymath}
and recursively defined,
\begin{displaymath}
\tau_{l_{1},\dots,l_{j-1}, l_{j}}:=\inf\lbrace t\geq \tau_{l_{1},\dots,l_{j-1}}\vert 
\ell^{x_{j+1}}_{t}(X)> l_{j}\rbrace.
\end{displaymath}
Then, by strong Markov property, \eqref{Ch3Sec4: EqMultLocTimeStopTimes} equals
\begin{displaymath}
\mathbb{E}\left[\int 1_{\tau_{l_{1},\dots,l_{n-1}}\leq \zeta} F\left((X_{t})_{0\leq t\leq \tau_{l_{1},\dots,l_{n-1}}}\right) dl_{1}\dots dl_{n-1}\right],
\end{displaymath}
which in turn equals
\begin{equation}
\label{Ch3Sec4: EqMultLocTimes}
\mathbb{E}\left[ \int 1_{\forall j, t_{j}<\zeta} F\left((X_{t})_{0\leq t\leq t_{n-1}}\right) d_{t_{1}}\ell^{x_{2}}_{t_{1}}(X)\dots d_{t_{n-1}}\ell^{x_{n}}_{t_{n-1}}(X)\right].
\end{equation}
By Proposition \ref{Ch3Sec2: PropLocTimeDesinteg}, \eqref{Ch3Sec4: EqMultLocTimes} equals 
$\int \ell^{x_{1},\dots,x_{n-1}}(\gamma)F(\gamma)\mu^{x_{1}, x_{n}}(d\gamma)$.

(ii): According to the identity (i) and Corollary \ref{Ch3Sec3: CorProjSubalg}, we have
\begin{displaymath}
\pi_{\ast}(\mu^{x_{1},x_{2}}\lhd \dots\lhd\mu^{x_{n-1},x_{n}}\lhd\mu^{x_{n},x_{1}})(d\gamma)=
\int_{0}^{1}\ell^{x_{2},\dots,x_{n}}(\operatorname{shift}_{v}(\gamma))dv\,\pi_{\ast} \mu^{x_{1},x_{1}}(d\gamma).
\end{displaymath}
According to Corollary \ref{Ch3Sec3: CorLoopLocTimeBridge}, 
\begin{displaymath}
\int_{0}^{1} \ell^{x_{2},\dots,x_{n}}(\operatorname{shift}_{v}(\gamma))dv \pi_{\ast} \mu^{x_{1},x_{1}}(d\gamma) 
= \ell^{x_{1}}(\gamma)\int_{0}^{1} \ell^{x_{2},\dots,x_{n}}(\operatorname{shift}_{v}(\gamma))dv \mu^{\ast}(d\gamma).
\end{displaymath}
But,
\begin{displaymath}
\ell^{x_{1}}(\gamma)\int_{0}^{1}\ell^{x_{2},\dots,x_{n}}(\operatorname{shift}_{v}(\gamma)) dv =
\ell^{\ast x_{1}, x_{2},\dots,x_{n}}(\gamma),
\end{displaymath}
which ends the proof.
\end{proof}

Note that Proposition \ref{Ch3Sec4: PropMultLocTimeDens} (ii) implies \eqref{Ch3Sec4: EqMultLocTimeDensity}.

\begin{prop}
\label{Ch3Sec4: PropConversehTransfInv}
If $L$ and $\widetilde{L}$ are two generators of diffusions on $I$ of form \eqref{Ch2Sec2: EqGenerator} such that 
$\mu^{\ast}_{L} = \mu^{\ast}_{\widetilde{L}}$, then there is a positive continuous function $u$ on $I$ such that $\frac{d^{2}u}{dx^{2}}$ is a signed measure, $Lu$ a non-positive measure and 
$\widetilde{L}=\operatorname{Conj}(u,L)$. If the diffusion of generator $L$ is recurrent, then $\widetilde{L}=L$.
\end{prop}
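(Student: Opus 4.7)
The plan is to extract from $\mu^{\ast}$ enough information about the family $(\mu^{x,y})$ to pin down the generator up to an h-transform. The key observation is that the occupation measure $\ell^{x}(\gamma)\,m(x)\,dx$ of a loop $\gamma$ is intrinsic to $\gamma$ and does not depend on the choice of $m$. Consequently the products $\ell^{x}(\gamma)\,m(x)$ and $\ell^{\ast x,y}(\gamma)\,m(x)m(y)$ coincide whether computed with $m_{L}$ or $m_{\widetilde{L}}$. Feeding this into Corollary \ref{Ch3Sec3: CorLoopLocTimeBridge} and into Proposition \ref{Ch3Sec4: PropMultLocTimeDens} (ii) with $n=2$ (using the shuffle identity $\ell^{\ast x,y} = \ell^{x}\ell^{y}$ from Property \ref{Ch3Sec4: PropertyShuffleAlg} (i)), the assumption $\mu^{\ast}_{L} = \mu^{\ast}_{\widetilde{L}}$ yields
\[
\pi_{\ast}\mu^{x,x}_{L}\,m_{L}(x) = \pi_{\ast}\mu^{x,x}_{\widetilde{L}}\,m_{\widetilde{L}}(x),
\]
\[
\pi_{\ast}(\mu^{x,y}_{L}\lhd\mu^{y,x}_{L})\,m_{L}(x)m_{L}(y) = \pi_{\ast}(\mu^{x,y}_{\widetilde{L}}\lhd\mu^{y,x}_{\widetilde{L}})\,m_{\widetilde{L}}(x)m_{\widetilde{L}}(y).
\]

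In the transient case, the total-mass form of the second identity reads $G_{L}(x,y)^{2}\,m_{L}(x)m_{L}(y) = G_{\widetilde{L}}(x,y)^{2}\,m_{\widetilde{L}}(x)m_{\widetilde{L}}(y)$. Taking the positive square root and setting $h(x) := (m_{\widetilde{L}}(x)/m_{L}(x))^{1/2}$, which is positive continuous on $I$, gives $G_{\widetilde{L}}(x,y) = G_{L}(x,y)/(h(x)h(y))$. These are exactly the Green's function and speed measure of $Conj(h, L)$. Since a one-dimensional diffusion generator is uniquely determined by the pair (speed measure, Green's function) --- via the factorisation $G(x,y) = u_{\uparrow}(x\wedge y)u_{\downarrow}(x\vee y)$ recalled in Section \ref{Ch2Sec2}, which recovers $w$ as $W(u_{\downarrow}, u_{\uparrow})$ and then $\kappa$ from $Lu_{\uparrow} = 0$ --- I conclude $\widetilde{L} = Conj(h, L)$. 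That $Lh$ is a negative measure follows because $Conj(h, L) = \widetilde{L}$ has a non-negative killing measure.

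If $L$ is recurrent, so is $\widetilde{L}$ (as $G_{L}(x,x)m_{L}(x) = G_{\widetilde{L}}(x,x)m_{\widetilde{L}}(x)$ is infinite iff either Green's function is), and the transient argument does not apply directly; I reduce to it by restriction. For every compact subinterval $\widetilde{I}$ of $I$, Property \ref{Ch3Sec3: PropertyTrivialitiesLoops} (ii) gives $\mu^{\ast}_{L_{|\widetilde{I}}} = \mu^{\ast}_{\widetilde{L}_{|\widetilde{I}}}$, and both restricted operators are transient because they are killed at $\partial\widetilde{I}$. The transient argument then produces $h_{\widetilde{I}} = (m_{\widetilde{L}}/m_{L})^{1/2}$ on $\widetilde{I}$ with $\widetilde{L}_{|\widetilde{I}} = Conj(h_{\widetilde{I}}, L_{|\widetilde{I}})$, and these patch into a global $h = (m_{\widetilde{L}}/m_{L})^{1/2}$ on $I$ with $\widetilde{L} = Conj(h, L)$. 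If moreover $L$ generates a recurrent diffusion (hence has no killing), the same invariant identifies $\widetilde{L}$ as recurrent too, so its killing measure $-Lh/h$ vanishes and $h$ is positive $L$-harmonic on $I$; since the only positive harmonic functions of a one-dimensional recurrent diffusion are constants, $h$ is constant and $\widetilde{L} = L$.

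The main difficulty is the inversion step in the transient case: showing that two generators of form \eqref{Ch2Sec2: EqGenerator} with the same speed measure and the same Green's function must coincide. This rests on the ODE theory of Section \ref{Ch2Sec1}, which gives that $G(\cdot,\cdot)$ determines the pair $(u_{\uparrow}, u_{\downarrow})$ up to transposition of a normalising constant, and hence determines $w$ through the Wronskian and $\kappa$ through the equation $Lu_{\uparrow} = 0$. The required regularity of $h$ (namely that $d^{2}h/dx^{2}$ be a signed measure) is inherited from the regularity built into the generators themselves, since both $\widetilde{L}$ and $Conj(h, L)$ must fit the framework of Section \ref{Ch2Sec3}.
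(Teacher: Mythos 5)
Your approach is essentially sound and, in the transient case, genuinely simpler than the paper's. The paper extracts from the hypothesis both the two-point relation $G_{\widetilde{L}}(x,y)^{2}\tilde m(x)\tilde m(y)=G_{L}(x,y)^{2}m(x)m(y)$ and an analogous three-point relation, sets $h(x)=G_{\widetilde{L}}(x_{0},x)\tilde m(x)/\big(G_{L}(x_{0},x)m(x)\big)$, and needs \emph{both} relations to verify $h^{-1}G_{L}\,h\,m=G_{\widetilde{L}}\,\tilde m$. You observe that taking positive square roots of the two-point relation alone and putting $h=\sqrt{\tilde m/m}$ already gives $G_{\widetilde{L}}=G_{L}/(h\cdot h)$, making the three-point relation redundant (indeed it follows by multiplying the two-point relation at $(x,y)$, $(y,z)$, $(z,x)$; your $h$ agrees with the paper's up to a constant). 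Your reduction of the recurrent case to the transient one by restricting to compact subintervals via Property \ref{Ch3Sec3: PropertyTrivialitiesLoops}(ii) also differs from the paper, which instead adds a uniform killing $-\lambda$ and works with $L-\lambda$, $\widetilde{L}-\lambda$; both routes are valid, and yours patches trivially since $h$ does not depend on the subinterval.

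There is, however, one genuine gap: the regularity of $h$. You assert that $d^{2}h/dx^{2}$ being a signed measure "is inherited from the regularity built into the generators themselves, since both $\widetilde{L}$ and $Conj(h,L)$ must fit the framework of Section \ref{Ch2Sec3}." This is circular: to even form $Conj(h,L)$ you must first know that $h$ is admissible, and $\sqrt{\tilde m/m}$ is a priori only continuous, since speed densities carry no differentiability assumption in this setting. The gap can be closed. From $G_{\widetilde L}(x,y)=G_{L}(x,y)/(h(x)h(y))$ together with the factorisations $G_{L}(x,y)=u_{\uparrow}(x\wedge y)u_{\downarrow}(x\vee y)$ and $G_{\widetilde L}(x,y)=\tilde u_{\uparrow}(x\wedge y)\tilde u_{\downarrow}(x\vee y)$, separating variables in $\tilde u_{\uparrow}(x)\tilde u_{\downarrow}(y)=u_{\uparrow}(x)u_{\downarrow}(y)/(h(x)h(y))$ (for $x<y$) yields $h=c\,u_{\uparrow}/\tilde u_{\uparrow}$ for a constant $c>0$, a ratio of solutions to second-order ODEs of the form \eqref{Ch2Sec1: Eq2ndOrderODE}, from which the signed-measure property of $d^{2}h/dx^{2}$ follows directly. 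This is precisely what the paper's intermediate identity (3.4.8) accomplishes; you should make this step explicit before invoking uniqueness of a generator with prescribed speed measure and Green's function.
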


\begin{proof}
Let $m(x)dx$ be a speed measure for $L$ and $\tilde{m}(x)dx$ be a speed measure for $\widetilde{L}$. First let us assume that both $L$ and $\widetilde{L}$ are generators of transient diffusions. Applying the identity 
\eqref{Ch3Sec4: EqMultLocTimeDensity} to $\int_{\mathfrak{L}^{\ast}} \ell^{\ast x,y}(\gamma)\mu^{\ast}(d\gamma)$ we get that for all $x,y\in I$,
\begin{equation}
\label{Ch3Sec4: EqEqualityGreenFuncTwo}
G_{\widetilde{L}}(x,y)G_{\widetilde{L}}(y,x)\tilde{m}(x)\tilde{m}(y) = G_{L}(x,y)G_{L}(y,x)m(x)m(y),
\end{equation}
and for all $x, y, z\in I$,
\begin{equation}
\label{Ch3Sec4: EqEqualityGreenFuncThree}
G_{\widetilde{L}}(x,y)G_{\widetilde{L}}(y,z)G_{\widetilde{L}}(z,x) \tilde{m}(x)\tilde{m}(y)\tilde{m}(z)=
G_{L}(x,y)G_{L}(y,z)G_{L}(z,x)m(x)m(y)m(z).
\end{equation}
Fix $x_{0}\in I$. Let $u$ be
\begin{displaymath}
u(x):=\dfrac{G_{\widetilde{L}}(x_{0},x)\tilde{m}(x)}{G_{L}(x_{0},x) m(x)}.
\end{displaymath}
$u$ is positive and continuous. $\frac{1}{u(x)}G_{L}(x,y)u(y)m(y)$ equals:
\begin{multline}
\label{Ch3Sec4: EqConjGreenFunc}
\dfrac{G_{L}(x_{0},x)G_{L}(x,y)G_{L}(y,x_{0})m(x_{0})m(x)m(y)}
{G_{\widetilde{L}}(x_{0},x)G_{\widetilde{L}}(x,y)G_{\widetilde{L}}(y,x_{0})\tilde{m}(x_{0})\tilde{m}(x)\tilde{m}(y)}\\
\times\dfrac{G_{\widetilde{L}}(x_{0},y)G_{\widetilde{L}}(y,x_{0})\tilde{m}(x_{0})\tilde{m}(y)}
{G_{L}(x_{0},y)G_{L}(y,x_{0})m(x_{0})m(y)}\times G_{\widetilde{L}}(x,y)\tilde{m}(y).
\end{multline}
Applying \eqref{Ch3Sec4: EqEqualityGreenFuncTwo} and \eqref{Ch3Sec4: EqEqualityGreenFuncThree} to 
\eqref{Ch3Sec4: EqConjGreenFunc}, we get that
\begin{equation}
\label{Ch3Sec4: EqConjGreenFunc2}
\dfrac{1}{u(x)}G(x,y)h(y)m(y) = G_{\widetilde{L}}(x,y)\tilde{m}(y).
\end{equation}
Applying \eqref{Ch3Sec4: EqConjGreenFunc2} once to $(x,y)$ and once do $(x,x)$, we get that
\begin{equation}
\label{Ch3Sec4: EqExprh}
u(y) = u(x)\dfrac{ G_{\widetilde{L}}(x,y)}{G(x,y)}\dfrac{G(y,y)}{ G_{\widetilde{L}}(y,y)}.
\end{equation}
From \eqref{Ch3Sec4: EqExprh} we deduce that $\frac{d^{2}u}{dx^{2}}$ is a signed measure.
From \eqref{Ch3Sec4: EqConjGreenFunc2} we deduce that $\widetilde{L}=\operatorname{Conj}(u,L)$. 
$-Lu$ is the killing measure of $\widetilde{L}$ and is non-negative.

If we no longer assume that $L$ and $\widetilde{L}$ generate transient diffusions then consider $\lambda>0$. Then $\mu^{\ast}_{L-\lambda}=\mu^{\ast}_{\widetilde{L}-\lambda}$. According to the above, there is $u$ positive continuous function on $I$ such that $\frac{d^{2}u}{dx^{2}}$ is a signed measure and
\begin{displaymath}
\widetilde{L}-\lambda = \operatorname{Conj}(u,L-\lambda) = \operatorname{Conj}(u,L)-\lambda.
\end{displaymath}
Then $\widetilde{L}=\operatorname{Conj}(u,L)$ and necessarily $Lu$ is a non-positive measure. 

The class of recurrent diffusions is preserved by conjugation. If $L$ is the generator of a recurrent diffusion then so is $\widetilde{L}$, and thus, $u$ is bound to satisfy $Lu=0$. But since the diffusion of $L$ is recurrent, the only solutions to $Lu = 0$ are constant functions. Thus, $\widetilde{L}=L$.
\end{proof}

\section{Measure on loops rooted at the minimum}
\label{Ch3Sec5}

By conditioning the measure $\mu$ by the life-time of loops we get a sum of bridge measures. In this section we will 
disintegrate the measure $\mu^{\ast}$ as a measure on the minimal value of the loop and its behavior above this value.
By doing this way we will obtain a sum of excursion measures $\eta^{>x}_{\rm exc}$. In case of Brownian loops on $\mathbb{R}$ this disintegration will follow from the Vervaat's bridge to excursion transformation. The case of general diffusion will be obtained using covariance of the measure on loops by time and scale change, restriction to a subinterval, killing, as well as invariance by conjugation.

\begin{vervaat}
(\cite{Vervaat1979BridgeToExc},\cite{Biane1986Vervaat}) 
Let $(\gamma(s))_{0\leq s\leq t}$ be a random path following the Brownian bridge probability measure $\mathbb{P}^{t}_{BM,0,0}(\cdot)$. Let $s_{\rm min}:=\operatorname{argmin}\gamma$. Then the path
\begin{displaymath}
s\mapsto -\min \gamma + (\operatorname{shift}_{\frac{s_{\rm min}}{t}}\gamma)(s)
\end{displaymath}
has the law of a positive Brownian excursion of life-time $t$.
\end{vervaat}

In the sequel if $\eta$ is a measure on paths and $x\in\mathbb{R}$, we will write  $(x+\eta)$ for the image of $\eta$ by $\gamma\mapsto x+ \gamma$. $\eta_{BM}^{>0}$ will be the Lévy-Itô measure on positive Brownian excursions and 
$\eta_{t, BM}^{>0}$ the probability measure on positive Brownian excursions of duration $t$. Given a continuous loop $(\gamma_{t})_{0\leq t\leq T(\gamma)}$ and $t_{\rm min}$ the first time $\gamma$ hits $\min\gamma$, let $\mathcal{V}(\gamma)$ be the transformation $\operatorname{shift}_{\frac{t_{\rm min}}{T(\gamma)}}$. $\mathcal{V}$ is $\mathcal{B}_{\mathfrak{L}}$-measurable.

\begin{prop}
\label{Ch3Sec5: PropDesintegMinBM}
Let $\mu^{\ast}_{BM}$ be the measure on loops associated to the Brownian motion on $\mathbb{R}$. Then,
\begin{equation}
\label{Ch3Sec5: EqDisintegBrownianLoops}
\mu^{\ast}_{BM}(d\gamma) = 2\int_{a\in\mathbb{R}}\pi_{\ast}(a + \eta_{BM}^{>0})(d\gamma)\,da.
\end{equation}
The measure on $(\min\gamma, \max\gamma)$ induced by $\mu^{\ast}_{BM}$ is $1_{a<b}(b-a)^{-2} da db$. Let 
$a<b\in \mathbb{R}$ and $\rho, \tilde{\rho}$ two independent Bessel $3$ processes starting from $0$. Let $T_{b-a}$ and $\widetilde{T}_{b-a}$ be the first times $\rho$ respectively $\tilde{\rho}$ hits $b-a$. Let 
$(\beta_{t})_{0\leq t\leq T_{b-a} + \widetilde{T}_{b-a}}$ be the path
\begin{displaymath}
\beta_{t}:=
\left\lbrace 
\begin{array}{ll}
a+\rho_{t} & \text{if}~t\leq T_{b-a}, \\ 
a+\tilde{\rho}_{T_{b-a}+\widetilde{T}_{b-a} - t}& \text{if}~t\geq T_{b-a}.
\end{array} 
\right. 
\end{displaymath}
Then the law of $(\beta_{t})_{0\leq t\leq T_{b-a} + \widetilde{T}_{b-a}}$ is the probability measure obtained by conditioning the measure $\mu^{\ast}_{BM}$ on $(\min\gamma, \max\gamma)=(a,b)$.
\end{prop}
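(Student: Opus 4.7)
The plan is to reduce all three assertions to Vervaat's theorem combined with translation invariance of Brownian motion and two classical facts about the Lévy--Itô measure: the tail of the excursion maximum and the Williams decomposition of an excursion conditioned by its height. Starting from Definition~\ref{Ch3Sec3: DefMesLoops} applied to standard Brownian motion with speed measure $dx$ and $p_{t}(0,0)=(2\pi t)^{-1/2}$, I would write
\begin{displaymath}
\mu_{BM}(d\gamma)=\int_{t>0}\int_{x\in\mathbb{R}}\mathbb{P}^{t}_{BM,x,x}(d\gamma)\,\frac{dx}{\sqrt{2\pi t}}\,\frac{dt}{t}.
\end{displaymath}
By translation invariance $\mathbb{P}^{t}_{BM,x,x}=x+\mathbb{P}^{t}_{BM,0,0}$. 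For a sample $B$ under $\mathbb{P}^{t}_{BM,0,0}$, Vervaat's theorem produces a positive Brownian excursion $e$ of duration $t$ with $shift_{s_{\min}/t}(B)=\min B+e$. Since $\pi$ is invariant under the family $(shift_{v})_{v\in[0,1]}$, we get $\pi(x+B)=\pi((x+\min B)+e)$, and for each realisation of $(\min B,e)$ the Lebesgue measure $dx$ becomes Lebesgue in $a:=x+\min B$. A Fubini and this change of variable yield, for each fixed $t$,
\begin{displaymath}
\int_{x\in\mathbb{R}}\pi_{\ast}\mathbb{P}^{t}_{BM,x,x}\,dx=\int_{a\in\mathbb{R}}\pi_{\ast}(a+\eta^{>0}_{t,BM})\,da.
\end{displaymath}
Integrating in $t$ and using the standard disintegration $\eta^{>0}_{BM}=\int_{0}^{+\infty}\eta^{>0}_{t,BM}\,(2\sqrt{2\pi t^{3}})^{-1}\,dt$ together with the identity $(t\sqrt{2\pi t})^{-1}\,dt=2\cdot(2\sqrt{2\pi t^{3}})^{-1}\,dt$ accounts for the factor $2$ and yields the first claim.

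For the joint law of $(\min\gamma,\max\gamma)$ it suffices, by the first identity, to push the measure $2\,da\,\eta^{>0}_{BM}(de)$ through the map $(a,e)\mapsto(a,a+\max e)$. So the only missing input is the marginal of $\max e$ under $\eta^{>0}_{BM}$, and Itô's excursion theory gives $\eta^{>0}_{BM}(\max e>h)=(2h)^{-1}$ and hence $\eta^{>0}_{BM}(\max e\in dh)=dh/(2h^{2})$. The announced density $1_{a<b}(b-a)^{-2}\,da\,db$ is then immediate after setting $b=a+\max e$.

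For the conditional law given $(\min\gamma,\max\gamma)=(a,b)$, translating by $-a$ reduces the question to identifying $e$ under $\eta^{>0}_{BM}$ conditioned on $\max e=h$ with $h=b-a$. Williams' decomposition of the Brownian excursion asserts precisely that, conditionally on its maximum $h$, the excursion $e$ splits at its unique time $T_{h}$ of attaining $h$ into two independent pieces: the pre-$T_{h}$ piece is a Bessel~$3$ process started from $0$ run until its first hit of $h$, and the post-$T_{h}$ piece is the time reversal of an independent such process. Translating back by $a$ delivers exactly the description of $\beta$ in the statement. The main obstacle in this whole argument is to invoke Williams' decomposition in the right form; the rest is genuinely routine bookkeeping with Vervaat, scaling, and the classical formula $\eta^{>0}_{BM}(\max>h)=(2h)^{-1}$.
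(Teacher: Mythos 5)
Your proof is correct and follows essentially the same route as the paper: Vervaat's transformation applied to $\mu_{BM}$ expressed as an integral of translated bridge measures, the change of variable $a=x+\min B$ absorbing the law of the bridge minimum, and then the life-time density $(2\sqrt{2\pi t^{3}})^{-1}dt$ of $\eta^{>0}_{BM}$ giving the factor $2$. The only difference is that you spell out the max-marginal $\eta^{>0}_{BM}(\max e\in dh)=dh/(2h^{2})$ before invoking Williams' path decomposition, whereas the paper compresses both of the last two claims into a single appeal to ``the Williams representation.''
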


\begin{proof}
For the Brownian motion on $\mathbb{R}$, $\mu_{BM}$ writes
\begin{displaymath}
\mu_{BM}(\cdot) = \int_{x\in \mathbb{R}}\int_{t>0}(x+\mathbb{P}_{BM,0,0}^{t})(\cdot)\dfrac{dt}{\sqrt{2\pi t^{3}}}dx.
\end{displaymath}
Let $\upsilon_{t}(a)da$ be the law of the minimum of the bridge under $\mathbb{P}_{BM,0,0}^{t}$. Applying the Vervaat's transformation, we get that
\begin{displaymath}
\mathcal{V}_{\ast}\mu_{BM}(\cdot) = \int_{a\in\mathbb{R}}\int_{t>0}\left(\int_{x>a}\upsilon_{t}(x-a)dx\right) 
(a+\eta_{t, BM}^{>0})(\cdot)\dfrac{dt}{\sqrt{2\pi t^{3}}}da.
\end{displaymath}
Since $\int_{x>a}\upsilon_{t}(x-a)\,dx=1$, the right-hand side above equals
\begin{displaymath}
\int_{a\in\mathbb{R}}\int_{t>0}(a+\eta_{t, BM}^{>0})(\cdot)\dfrac{dt}{\sqrt{2\pi t^{3}}}da.
\end{displaymath}
But,
\begin{displaymath}
\int_{t>0}(a+\eta_{t, BM}^{>0})(\cdot)\dfrac{dt}{\sqrt{2\pi t^{3}}} = 2(a+\eta^{>0}_{BM})(\cdot).
\end{displaymath}
The equality \eqref{Ch3Sec5: EqDisintegBrownianLoops} follows. The rest of the proposition is a consequence of the William's representation of Brownian excursions (see \cite{RevuzYor1999BMGrundlehren}, Chapter XII).
\end{proof}

\begin{coro}
\label{Ch3Sec5: CorInvRefBM}
Let $I$ be an open interval of $\mathbb{R}$ and $\lambda\geq 0$. Let $\mu^{\ast}$ be the measure on loops in $I$ associated to the generator $\frac{1}{2}\frac{d^{2}}{dx^{2}}-\lambda$. Given a loop $(\gamma(t))_{0\leq t\leq T(\gamma)}$, let $R(\gamma)$ be the loop 
\begin{displaymath}
R(\gamma):=(\max \gamma +\min \gamma - \gamma(t))_{0\leq t\leq T(\gamma)},
\end{displaymath}
that is to say the image of $\gamma$ through reflection relatively to $\frac{\max \gamma +\min \gamma}{2}$. Then,
\begin{displaymath}
R_{\ast}\mu^{\ast} = \mu^{\ast}.
\end{displaymath}
\end{coro}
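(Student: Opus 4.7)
My plan is to reduce the claim to the case of standard Brownian motion on $\mathbb{R}$ and then exploit the two complementary disintegrations of $\mu^{\ast}_{BM}$ produced by the symmetry $\gamma\mapsto-\gamma$, whose uniqueness will force the conditional law of $\mu^{\ast}_{BM}$ on each fiber $\{(\min\gamma,\max\gamma)=(a,b)\}$ to be $R$-invariant.

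First I would reduce to the Brownian case on $\mathbb{R}$: applying Property \ref{Ch3Sec3: PropertyTrivialitiesLoops}(ii) with ambient interval $\mathbb{R}$ and subinterval $I$, followed by Property \ref{Ch3Sec3: PropertyTrivialitiesLoops}(iii) for the killing measure $\lambda\,dx$, one writes
\begin{displaymath}
\mu^{\ast}(d\gamma)=1_{\gamma\subset I}\,e^{-\lambda T(\gamma)}\,\mu^{\ast}_{BM}(d\gamma).
\end{displaymath}
The indicator $1_{\gamma\subset I}=1_{\min\gamma\geq\inf I}\,1_{\max\gamma\leq\sup I}$ depends only on $(\min\gamma,\max\gamma)$ and $T(\gamma)$ only on the total duration, both of which $R$ leaves invariant; so the claim reduces to $R_{\ast}\mu^{\ast}_{BM}=\mu^{\ast}_{BM}$ for standard Brownian motion on $\mathbb{R}$.

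For the Brownian case, Proposition \ref{Ch3Sec5: PropDesintegMinBM} supplies the disintegration $\mu^{\ast}_{BM}=2\int_{a}\pi_{\ast}(a+\eta^{>0}_{BM})\,da$ along the minimum, together with the Bessel-$3$ description which shows that, after normalisation, the fibers are genuine probability measures on unrooted loops with $(\min,\max)=(a,b)$. Pushing this identity forward by the involution $\gamma\mapsto-\gamma$, which preserves $\mu^{\ast}_{BM}$ by the symmetry of Brownian motion, using $(-\cdot)_{\ast}\eta^{>0}_{BM}=\eta^{<0}_{BM}$ and changing variables $b=-a$, yields the twin disintegration along the maximum
\begin{displaymath}
\mu^{\ast}_{BM}(d\gamma)=2\int_{b}\pi_{\ast}(b-\eta^{>0}_{BM})(d\gamma)\,db,
\end{displaymath}
where $b-\eta^{>0}_{BM}$ is the image of $\eta^{>0}_{BM}$ under $\tilde{\gamma}\mapsto b-\tilde{\gamma}$. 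Both expressions induce the same $\sigma$-finite marginal $1_{a<b}(b-a)^{-2}\,da\,db$ on $(\min\gamma,\max\gamma)$, which one reads off from the Proposition and which matches under the substitution $a\leftrightarrow b-h$.

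By uniqueness of disintegration, the two conditional probability measures on each fiber $\{(\min,\max)=(a,b)\}$ must coincide. Writing $\nu_{h}$ for the conditional law of a positive Brownian excursion given its maximum equals $h=b-a$, this reads $\pi_{\ast}(a+\nu_{h})=\pi_{\ast}(b-\nu_{h})$ as probability measures on unrooted loops. Since $b-\tilde{\gamma}=(a+b)-(a+\tilde{\gamma})$, this is exactly the statement that the conditional measure $\pi_{\ast}(a+\nu_{h})$ is invariant under $\tilde{\gamma}\mapsto(a+b)-\tilde{\gamma}$, and on that fiber this transformation is precisely $R$. Integrating over the $R$-invariant marginal of $(\min,\max)$ then gives $R_{\ast}\mu^{\ast}_{BM}=\mu^{\ast}_{BM}$. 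The most delicate point of the argument is the bookkeeping that certifies that the second disintegration is expressed with the same marginal $(b-a)^{-2}\,da\,db$ as the first; once this is checked, the equality of conditional laws forced by uniqueness of disintegration delivers the $R$-invariance essentially for free.
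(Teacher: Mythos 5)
Your proof is correct and takes a genuinely different route from the paper's. Both begin by reducing to $\lambda=0$, $I=\mathbb{R}$: the Radon--Nikodym density $1_{\gamma\subset I}e^{-\lambda T(\gamma)}$ of $\mu^{\ast}$ with respect to $\mu^{\ast}_{BM}$ depends only on $(\min\gamma,\max\gamma,T(\gamma))$, which $R$ preserves. For the Brownian identity $R_{\ast}\mu^{\ast}_{BM}=\mu^{\ast}_{BM}$, the paper works directly on the fibers where $(\min\gamma,\max\gamma)=(a,b)$: it takes the explicit Williams description from Proposition~\ref{Ch3Sec5: PropDesintegMinBM} of the conditional law as the unrooted concatenation of two independent Bessel-$3$ paths from $a$ to $b$, and reads off the $R$-invariance of that fiber law from the Bessel-$3$ time-reversal identity. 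You instead bypass the Bessel-$3$ description altogether: you push the minimum-disintegration of Proposition~\ref{Ch3Sec5: PropDesintegMinBM} forward under the sign symmetry $\gamma\mapsto -\gamma$ (which preserves $\mu^{\ast}_{BM}$) to obtain a mirror disintegration along the maximum, observe that both refine to disintegrations over $(\min\gamma,\max\gamma)$ with the common $R$-invariant marginal $(b-a)^{-2}\,da\,db$, and then invoke a.e.\ uniqueness of the conditional probability kernel; since the two refined kernels differ by precisely the pushforward under $R$, uniqueness forces the fiber law to be $R$-invariant. Your route trades the concrete Bessel-$3$ ingredient for an abstract disintegration-uniqueness argument; it is arguably more portable, as it needs only the disintegration along the minimum and the sign symmetry and would adapt to other settings with a comparable involutive symmetry, while the paper's route is more self-contained once the Bessel-$3$ description is already in hand and makes the invariance immediately visible by explicit re-rooting.
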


\begin{proof}
It is enough to prove this in case $\lambda = 0$ and $I = \mathbb{R}$. Otherwise, we multiply the measure $\mu^{\ast}_{BM}$ by a density function that is invariant by $R$. Then we use the description of the measure $\mu^{\ast}_{BM}$ conditional the value of $(\min \gamma, \max \gamma)$ and the fact that if $a>0$, $(\rho_{t})_{t\geq}$ is a Bessel $3$ process starting from $0$ and $T_{b}$ is the first time it hits $b$, then $(y-\rho_{T_{b}-t})_{0\leq t\leq T_{b}}$ has the same law as $(\rho_{t})_{0\leq t\leq T_{b}}$ (see \cite{RevuzYor1999BMGrundlehren}, Section VII.4).
\end{proof}

Now we consider that $L$ is a generator of a diffusion on $I$ of form \eqref{Ch2Sec2: EqGenerator}. Given a point $x_{0}\in I$, $u^{+, x_{0}}$ and $u^{-, x_{0}}$ will be the $L$-harmonic functions satisfying the initial conditions $u^{+, x_{0}}(x_{0})=u^{-, x_{0}}(x_{0})=0$, $\frac{du^{+, x_{0}}}{dx}(x_{0}^{+})=1$ and 
$\frac{du^{-, x_{0}}}{dx}(x_{0}^{-})=-1$. If $x\leq y\in I$ then
\begin{equation}
\label{Ch3Sec5: EqEqualRatioWronsk}
w(y)u^{-,y}(x) = w(x) u^{+,x}(y).
\end{equation}
Indeed, the Wronskian $W(u^{-,y},u^{+,x})$ takes in $x$ the value $u^{-,y}(x)$ and in $y$ the value $u^{+,x}(y)$, and the ratio $\frac{1}{w(z)}W(u^{-,y},u^{+,x})(z)$ is constant. If $\kappa = 0$, then the both sides of 
\eqref{Ch3Sec5: EqEqualRatioWronsk} equal $\int_{x}^{y}w(z)dz$. $u^{+,x_{0}}$ is positive on $I\cap(x_{0},+\infty)$ and $u^{-,x_{0}}$ is positive on $I\cap(-\infty, x_{0})$. Let $L^{+,x_{0}}$ be $\operatorname{Conj}(u^{+,x_{0}}, L)$ restricted to $I\cap(x_{0},+\infty)$ and $L^{-,x_{0}}$ be $\operatorname{Conj}(u^{-,x_{0}},L)$ restricted to $I\cap(-\infty, x_{0})$. $L^{+,x_{0}}$ and  $L^{-,x_{0}}$ are generators of transient diffusions without killing measures. If $L$ is the generator of the Brownian motion on $\mathbb{R}$, then $L^{+,0}$ is just the generator of a Bessel $3$ process. In general case, $x_{0}$ is an entrance boundary for $L^{+,x_{0}}$ and $L^{-,x_{0}}$, that is to say a diffusion started from $x\neq x_{0}$ will never reach the boundary at $x_{0}$, and we can also start this diffusions at the boundary point $x_{0}$, in which case it will be immediately repelled away from $x_{0}$. Let $x\in I$ and $(\rho^{+,x}_{t})_{0\leq t< \zeta^{+,x}}$ be a diffusion of generator $L^{+,x}$ starting from $x$. Let $y\in I$, $y>x$. Let $T_{y}^{+,x}$ be the first time $\rho^{+,x}$ hits $y$ and $\widehat{T}_{y}^{+,x}$ the last time it visits $y$. Then 
$(\rho^{+,x}_{\widehat{T}_{y}^{+,x}+t})_{0\leq t< \zeta^{+,x}-\widehat{T}_{y}^{+,x}}$ is a diffusion of generator $L^{+,y}$ starting from $y$. Let $(\rho^{-,y}_{t})_{0\leq t<\zeta^{-,y}}$ be a diffusion of generator $L^{-,y}$ starting from $y$ and $T^{-,y}_{x}$ the first time it hits $x$. Then $(\rho^{+,x}_{t})_{0\leq t\leq T_{y}^{+,x}}$ and $(\rho^{-,y}_{T_{x}^{-,y}-t})_{0\leq t\leq T_{x}^{-,y}}$ are equal in law: Indeed let $C$ be the constant
\begin{displaymath}
C=\dfrac{w(z)}{W(u^{-,y},u^{+,x})(z)}.
\end{displaymath}
The Green's operator of $\rho^{+,x}$ killed in $y$ is 
\begin{displaymath}
((-L^{+,x}_{\vert (x,y)})^{-1}f)(x')=C\int_{x}^{y}u^{+,x}(x'\wedge y')u^{-,y}(x'\vee y')\dfrac{u^{+,x}(y')}{u^{+,x}(x')}m(y')dy',
\end{displaymath}
and the Green's operator of $\rho^{-,y}$ killed in $x$ is 
\begin{displaymath}
((-L^{-,y}_{\vert (x,y)})^{-1}f)(x')=C\int_{x}^{y}u^{+,x}(x'\wedge y')u^{-,y}(x'\vee y')\dfrac{u^{-,y}(y')}{u^{-,y}(x')}m(y')dy'.
\end{displaymath}
The potential measure of $(\rho^{+,x}_{t})_{0\leq t\leq T_{y}^{+,x}}$ starting from $x$ is
\begin{displaymath}
U(x')dx' = Cu^{+,x}(x')u^{-,y}(x')m(x')dx',
\end{displaymath}
and for any $f, g$ bounded functions on $(x,y)$,
\begin{equation}
\label{Ch3Sec5: EqDualityTimeRev}
\int_{x}^{y}((-L^{+,x}_{\vert (x,y)})^{-1}f)(x')g(x')U(x')dx'=
\int_{x}^{y}f(x')((-L^{-,y}_{\vert (x,y)})^{-1}g)(x')U(x')dx'.
\end{equation}
The time reversal property for $(\rho^{+,x}_{t})_{0\leq t\leq T_{y}^{+,x}}$ follows from the duality relation \eqref{Ch3Sec5: EqDualityTimeRev}. See \cite{RevuzYor1999BMGrundlehren}, Section VII.4 for details on time reversal.

\begin{coro}
\label{Ch3Sec5: CorDesintegMinGen}
If $L$ is a generator of a diffusion on $I$ of form \eqref{Ch2Sec2: EqGenerator}, then
\begin{equation}
\label{Ch3Sec5: EqDisintegGeneralLoops}
\mu^{\ast}(\cdot) = \int_{a\in I}\pi_{\ast}\eta^{>a}(\cdot)w(a)da.
\end{equation}
The measure on $(\min\gamma, \max\gamma)$ induced by $\mu^{\ast}$ is $1_{a<b\in I}\frac{da db}{u^{+,a}(b)u^{-,b}(a)}$. Let $a<b\in I$. Let $(\rho^{+,a}_{t})_{0\leq t< \zeta^{+,a}}$ and $(\rho^{-,b}_{t})_{0\leq t< \zeta^{-,b}}$ be two independent diffusion, the first of generator $L^{+,a}$ starting from $a$ and the second of generator $L^{-,b}$ starting from $b$. Let $T_{b}^{+,a}$ be the first time $\rho^{+,a}$ hits $b$ and $T^{-,b}_{a}$ the first time $\rho^{-,b}$ hits $a$. Let $(\beta_{t})_{0\leq t\leq T_{b}^{+,a} + T^{-,b}_{a}}$ be the path
\begin{displaymath}
\beta_{t}:=
\left\lbrace 
\begin{array}{ll}
\rho^{+,a}_{t} & \text{if}~t\leq T_{b}^{+,a}, \\ 
\rho^{-,b}_{t-T_{b}^{+,a}}& \text{if}~t\geq T_{b}^{+,a}.
\end{array} 
\right. 
\end{displaymath}
Then the law of $(\beta_{t})_{0\leq t\leq T_{b}^{+,a} + T^{-,b}_{a}}$ is the probability measure obtained by conditioning the measure $\mu^{\ast}$ on $(\min\gamma, \max\gamma)=(a,b)$.
\end{coro}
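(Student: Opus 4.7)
My plan is to reduce the identity to the Brownian case of Proposition~\ref{Ch3Sec5: PropDesintegMinBM} by exploiting the covariance and invariance properties of the loop measure recorded in Property~\ref{Ch3Sec3: PropertyTrivialitiesLoops} and Corollary~\ref{Ch3Sec3: CorTimeChange}, together with the analogous transformation rules for the excursion measures $\eta^{>a}$. Writing $L=L^{(0)}-\kappa$ with $L^{(0)}$ killing-free, Property~\ref{Ch3Sec3: PropertyTrivialitiesLoops}(iii) identifies $\mu^{\ast}_{L}$ with $\exp(-\int_{I}\ell^{z}(\gamma)m(z)\kappa(dz))\,\mu^{\ast}_{L^{(0)}}$, and the same exponential density relates $\eta^{>a}_{L}$ to $\eta^{>a}_{L^{(0)}}$ excursion by excursion, since the killing measure acts multiplicatively on path laws. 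Because $w$ depends only on the second-order part of $L$ and is unchanged by the killing term, both sides of the claimed identity pick up the same density, so I may assume $\kappa=0$ from here on.

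Next I apply the scale change $A=S$ (Property~\ref{Ch3Sec3: PropertyTrivialitiesLoops}(iv)) followed by an appropriate time change (Corollary~\ref{Ch3Sec3: CorTimeChange}) to transport $L^{(0)}$ to a standard Brownian motion on $S(I)\subseteq\mathbb{R}$. Under this transport $\mu^{\ast}_{L^{(0)}}$ is pushed forward to $\mu^{\ast}_{BM}$ restricted to loops staying in $S(I)$ (Property~\ref{Ch3Sec3: PropertyTrivialitiesLoops}(ii)), and each $\eta^{>a}_{L^{(0)}}$ is pushed forward to the Brownian excursion measure above the level $S(a)$. Since $S'=w$, the substitution $a'=S(a)$ satisfies $da'=w(a)\,da$, so the putative identity
\begin{displaymath}
\mu^{\ast}(d\gamma)=\int_{a\in I}\pi_{\ast}\eta^{>a}(d\gamma)\,w(a)\,da
\end{displaymath}
becomes exactly the Brownian identity
\begin{displaymath}
\mu^{\ast}_{BM}(d\gamma)=\int_{a'\in\mathbb{R}}\pi_{\ast}\eta^{>a'}_{BM}(d\gamma)\cdot 2\,da'
\end{displaymath}
restricted to loops staying inside $S(I)$. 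This is precisely Proposition~\ref{Ch3Sec5: PropDesintegMinBM}, using $w_{BM}\equiv 2$ and $\eta^{>a'}_{BM}=a'+\eta^{>0}_{BM}$ by translation invariance of the Itô excursion measure.

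For the second part, I further disintegrate each $\eta^{>a}$ by the location of its maximum. In the Brownian case this is the Williams decomposition already contained in Proposition~\ref{Ch3Sec5: PropDesintegMinBM}: conditioned on $\max=b$, the excursion is a Bessel~$3$ from $a$ to $b$ concatenated with a time-reversed Bessel~$3$ from $b$ to $a$. Pulling this decomposition back through the scale and time change yields the general analogue: the pre-maximum portion is the sample path of an $L^{+,a}$-diffusion started from the entrance boundary $a$ and run until its first hitting time of $b$, because $L^{+,a}=Conj(u^{+,a},L_{|(a,\sup I)})$ is exactly the generator of $L_{|(a,\sup I)}$ conditioned to reach $b$; the post-maximum portion, reversed in time, is an $L^{-,b}$-diffusion from $b$ to $a$, which is the content of the time-reversal duality established in equation~\eqref{Ch3Sec5: EqDualityTimeRev}. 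Pulling back the Brownian intensity $db'/(2(b'-S(a))^{2})$ of excursions reaching level $b'$, and rewriting it via the Wronskian identity~\eqref{Ch3Sec5: EqEqualRatioWronsk}, gives the intensity $db/(w(a)u^{+,a}(b)u^{-,b}(a))$ for excursions of $L$ above $a$ reaching level $b$; multiplying by $w(a)\,da$ cancels the $w(a)$ factor and yields the claimed joint density $\frac{da\,db}{u^{+,a}(b)u^{-,b}(a)}$ for $(\min\gamma,\max\gamma)$. The main obstacle I anticipate is not conceptual but the careful bookkeeping of the transformation rules for $\eta^{>a}$, $w(a)\,da$, and the level-$b$ excursion intensity at each reduction step, verifying that they combine so as to make the entire right-hand side covary with $\mu^{\ast}$ in exactly the required manner.
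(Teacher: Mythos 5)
Your reduction of the identity \eqref{Ch3Sec5: EqDisintegGeneralLoops} matches the paper's: strip off the killing measure via Property \ref{Ch3Sec3: PropertyTrivialitiesLoops}(iii), since both $\mu^{\ast}$ and $\eta^{>a}$ acquire the same exponential density while $w$ is unchanged; then change scale and time to land on the Brownian statement \eqref{Ch3Sec5: EqDisintegBrownianLoops}. That part is fine.

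For the second part of the corollary — the intensity $\frac{da\,db}{u^{+,a}(b)u^{-,b}(a)}$ on $(\min\gamma,\max\gamma)$ and the $\rho^{+,a}\lhd\rho^{-,b}$ description of the conditional law — your plan has a gap. You declare "so I may assume $\kappa=0$ from here on," but the density trick that justified that reduction for \eqref{Ch3Sec5: EqDisintegGeneralLoops} does not carry over to the disintegration data. Multiplying $\mu^{\ast}_{L^{(0)}}$ by $\exp(-\int\ell^{z}\kappa)$ changes both the intensity on $(a,b)$ (by the factor $E^{(0)}_{(a,b)}[\exp(-\int\ell^{z}\kappa)]$, which you would have to show equals $u^{(0),+,a}(b)\,u^{(0),-,b}(a)/(u^{+,a}(b)\,u^{-,b}(a))$) and the conditional law (you would have to show that the weighted $\rho^{(0),\pm}$ path gives exactly the $\rho^{\pm}$ path for $L$; this is not immediate because $Conj(u^{(0),+,a},L)$ and $L^{+,a}=Conj(u^{+,a},L_{|(a,\sup I)})$ involve functions harmonic for different operators). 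Those verifications are true but require an $h$-transform computation of their own — and the paper avoids them entirely by reducing $\kappa\neq 0$ via Property \ref{Ch3Sec3: PropertyTrivialitiesLoops}(v): since $\mu^{\ast}_{L}=\mu^{\ast}_{Conj(u,L)}$ exactly, the $(\min,\max)$ intensity and the conditional laws are literally the same for $L$ and $Conj(u,L)$, and one checks that $u^{+,a},u^{-,b},L^{+,a},L^{-,b}$ built from $Conj(u,L)$ coincide with those built from $L$. You should substitute that reduction for the density trick in the second part. As a smaller point, $L^{+,a}$ is not "the generator of $L$ conditioned to reach $b$"; the Doob transform by $u^{+,a}$ (which vanishes at $a$) conditions on never returning to $a$ and is independent of $b$; what identifies the pre-maximum piece with $\rho^{+,a}$ stopped at $T_{b}^{+,a}$ is the time-reversal duality around \eqref{Ch3Sec5: EqDualityTimeRev}, which you do cite but misattribute.
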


\begin{proof}
Both sides of \eqref{Ch3Sec5: EqDisintegGeneralLoops} are covariant by scale and time change. Moreover both sides satisfy Pproperty \ref{Ch3Sec3: PropertyTrivialitiesLoops} (ii) for the restriction to a subinterval and Property 
\ref{Ch3Sec3: PropertyTrivialitiesLoops} (iii) when adding a killing measure. Thus, the general case 
\eqref{Ch3Sec5: EqDisintegGeneralLoops} follows from the Brownian case \eqref{Ch3Sec5: EqDisintegBrownianLoops} by this covariance properties.

If $L$ is a generator without killing measure ($\kappa=0$) then the description of the measure on 
$(\min\gamma, \max\gamma)$ and the probabilities obtained after conditioning by  the value of $(\min\gamma,\max\gamma)$ follow through a change of scale and time from the analogous description in Proposition \ref{Ch3Sec5: PropDesintegMinBM}. If $\kappa\neq 0$, then we can take $u$ a positive $L$-harmonic function and deduce the result for $L$ from the result for $\operatorname{Conj}(u,L)$ using the fact that $\mu^{\ast}_{L}=\mu^{\ast}_{\operatorname{Conj}(u,L)}$.
\end{proof}

The relation between the measure on loops and the excursions measures in dimension $1$ 
(identity \eqref{Ch3Sec5: EqDisintegGeneralLoops}) has an analogue in the setting of two-dimensional Brownian loops.
See Propositions $7$ and $8$ in \cite{LawlerWerner2004ConformalLoopSoup} by Lawler and Werner. It is possible to disintegrate the measure on loops in dimension 2 using the so called bubble measures. 

\section{A generalization of the Vervaat's transformation}
\label{Ch3Sec6}

In this section we will show a conditioned version of the Vervaat's transformation that holds for any one-dimensional diffusion of form \eqref{Ch2Sec2: EqGenerator} and not just for the Brownian motion.  $L$ will be a generator of a diffusion on $I$ of form \eqref{Ch2Sec2: EqGenerator}. From Corollary \ref{Ch3Sec3: CorLoopLocTimeBridge} and identity \eqref{Ch3Sec5: EqDisintegGeneralLoops} follows that for every $x\in I$,
\begin{equation}
\label{Ch3Sec6: EqVervaatIntegrated}
\int_{t>0}\mathcal{V}_{\ast}\mathbb{P}^{t}_{x,x}(d\gamma)p_{t}(x,x)dt = 
\int_{a\in I, a<x}\ell^{x}(\gamma)\eta^{>a}(d\gamma)w(a)da.
\end{equation}
Let $\mathbb{P}^{t}_{x,x}(d\gamma\vert \min \gamma = a)$ be the bridge probability measure conditioned on the value of the minimum being to equal $a$. Further we will show that there is a version that depends continuously on $(a,t)$. Let $\eta^{>a}_{t}$ be the probability measure obtained from $\eta^{>a}$ by conditioning the excursion on having a life-time $t$. The identity \eqref{Ch3Sec6: EqVervaatIntegrated} suggests the following:

\begin{prop}
\label{Ch3Sec6: PropConditionnedVervaat}
For every $a<x\in I$ and $t>0$
\begin{equation}
\label{Ch3Sec6: EqConditionedVervaat}
\mathcal{V}_{\ast}\mathbb{P}^{t}_{x,x}(d\gamma\vert \min \gamma = a)= 
\dfrac{\ell^{x}_{t}(\gamma)\eta^{>a}_{t}(d\gamma)}{\eta^{>a}_{t}(\ell^{x}_{t}(\gamma))}.
\end{equation}
The distribution of $\min \gamma$ under $\mathbb{P}_{x,x}^{t}$ equals
\begin{equation}
\label{Ch3Sec6: EqDistribMinBridge}
w(a)\eta^{>a}_{t}(\ell^{x}_{t}(\gamma))\dfrac{1}{p_{t}(x,x)}\dfrac{\eta^{>a}(T(\gamma)\in (t, t+dt))}{dt} da,
\end{equation}
where $\frac{\eta^{>a}(T(\gamma)\in (t, t+dt))}{dt}$ is the density of the measure on the life-time of the excursion induced by $\eta^{>a}$. Given an excursion $\gamma$ following the law 
$\frac{\ell^{x}_{t}(\gamma)\eta^{>a}_{t}(d\gamma)}{\eta^{>a}_{t}(\ell^{x}_{t}(\gamma))}$, the local time in $x$ is a measure on $\lbrace s\in[0,t]\vert \gamma(s)=x\rbrace$. The transformation $\mathcal{V}$ sends the starting point of the bridge to a point $s\in[0,t]$ distributed, conditional on the excursion $\gamma$, according to the measure $\frac{d_{s}\ell^{x}_{s}(\gamma)}{\ell^{x}_{t}(\gamma)}$.
\end{prop}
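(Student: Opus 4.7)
The backbone is the integrated identity \eqref{Ch3Sec6: EqVervaatIntegrated}, which follows from Corollary \ref{Ch3Sec3: CorLoopLocTimeBridge} combined with the Vervaat-based disintegration \eqref{Ch3Sec5: EqDisintegGeneralLoops}, once one observes that $\mathcal{V}$ supplies a canonical measurable section of $\pi$ on loops with a uniquely attained minimum, and that $\mathcal{V}$ acts as the identity on excursions (their first time of minimum is $0$). My plan is to disintegrate both sides of \eqref{Ch3Sec6: EqVervaatIntegrated} jointly in $(t,a):=(T(\gamma),\min\gamma)$ and to match factors.

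Conditioning $\mathcal{V}_{\ast}\mathbb{P}^{t}_{x,x}(d\gamma)$ on $\min\gamma=a$ rewrites the left-hand side of \eqref{Ch3Sec6: EqVervaatIntegrated} as
\[
\int_{t>0}\!\int_{a<x}\mathcal{V}_{\ast}\mathbb{P}^{t}_{x,x}(d\gamma\mid\min\gamma=a)\,\mathbb{P}^{t}_{x,x}(\min\gamma\in da)\,p_{t}(x,x)\,dt,
\]
while disintegrating each excursion measure by its life-time, $\eta^{>a}(d\gamma)=\eta^{>a}_{t}(d\gamma)\,q^{>a}(t)\,dt$ with $q^{>a}(t)\,dt:=\eta^{>a}(T(\gamma)\in(t,t+dt))$, rewrites the right-hand side as
\[
\int_{a<x}\!\int_{t>0}\ell^{x}_{t}(\gamma)\,\eta^{>a}_{t}(d\gamma)\,q^{>a}(t)\,w(a)\,dt\,da.
\]
Granted continuous-in-$(a,t)$ versions of all the factors, uniqueness of disintegration yields, for every $t>0$, the identity
\[
\mathcal{V}_{\ast}\mathbb{P}^{t}_{x,x}(d\gamma\mid\min\gamma=a)\,\mathbb{P}^{t}_{x,x}(\min\gamma\in da)\,p_{t}(x,x)=\ell^{x}_{t}(\gamma)\,\eta^{>a}_{t}(d\gamma)\,q^{>a}(t)\,w(a)\,da
\]
as measures in $(a,\gamma)$. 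Integrating out $\gamma$ gives the marginal \eqref{Ch3Sec6: EqDistribMinBridge}; dividing by it yields \eqref{Ch3Sec6: EqConditionedVervaat}.

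For the ``cut point'' description, enrich the Vervaat map to $\gamma'\mapsto(\mathcal{V}(\gamma'),s^{\ast})$ with $s^{\ast}:=T(\gamma')-t_{\min}(\gamma')$; outside a $\mathbb{P}^{t}_{x,x}$-null set, this is a bijection with inverse $shift_{s^{\ast}/t}$ between bridges from $x$ to $x$ of life-time $t$ with $\min=a$ and pairs $(\tilde\gamma,s^{\ast})$ where $\tilde\gamma$ is an excursion above $a$ of life-time $t$ and $s^{\ast}\in[0,t]$ satisfies $\tilde\gamma(s^{\ast})=x$. Writing $\ell^{x}(\gamma)\,\eta^{>a}(d\gamma)=\int_{0}^{T(\gamma)}d_{s}\ell^{x}_{s}(\gamma)\,\eta^{>a}(d\gamma)$ refines \eqref{Ch3Sec6: EqVervaatIntegrated} into an equality of measures on (rooted excursion, marked time) pairs; repeating the disintegration identifies the joint conditional law of $(\tilde\gamma,s^{\ast})$ as $\eta^{>a}_{t}(\ell^{x}_{t})^{-1}\,d_{s^{\ast}}\ell^{x}_{s^{\ast}}(\tilde\gamma)\,\eta^{>a}_{t}(d\tilde\gamma)$. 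Marginalizing $s^{\ast}$ via $\int_{0}^{t}d_{s}\ell^{x}_{s}(\tilde\gamma)=\ell^{x}_{t}(\tilde\gamma)$ recovers \eqref{Ch3Sec6: EqConditionedVervaat}, and conditioning on $\tilde\gamma$ assigns $s^{\ast}$ the density $d_{s}\ell^{x}_{s}(\tilde\gamma)/\ell^{x}_{t}(\tilde\gamma)$, as claimed. The main obstacle is the regularity needed to pass from the integrated identity to its pointwise-in-$(t,a)$ form: Proposition \ref{Ch2Sec2: PropContinuityBridge} supplies continuity of ordinary bridges, but continuity of the further conditioning by the minimum and continuity/positivity of $q^{>a}(t)$ require separate arguments, most conveniently obtained by combining an $h$-transform and scale/time change to reduce to the Brownian case, where the explicit formulas make continuity transparent.
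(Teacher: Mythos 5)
Your high-level strategy coincides with the paper's: start from the integrated identity \eqref{Ch3Sec6: EqVervaatIntegrated}, disintegrate both sides jointly in $(t,\min\gamma)$, and match; the ``marked time'' refinement you sketch for the cut point is also the same idea as the paper uses. The gap is in the one step you yourself flag as the main obstacle, and your proposed fix for it does not work. You want to obtain continuity in $(a,t)$ of $\mathbb{P}^{t}_{x,x}(\,\cdot\mid\min\gamma=a)$, of $\eta^{>a}_{t}$, and of $q^{>a}(t)$ by ``combining an $h$-transform and scale/time change to reduce to the Brownian case.'' The $h$-transform reduction is fine (it preserves bridge laws and life-times — this is exactly how the paper reduces to $Conj(u_{\downarrow},L)$), and a scale change is also fine since it does not touch time. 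But a time change does not preserve paths of fixed life-time: a bridge of life-time $t$ for $L$ becomes, after $d\tilde t = V(X_t)dt$, a Brownian path whose life-time $\int_0^t V(X_s)\,ds$ is random and path-dependent. In particular $\eta^{>a}_t$, $q^{>a}(t)$, and $\mathbb{P}^{t}_{x,x}(\cdot\mid\min\gamma=a)$ are \emph{not} the pushforwards of their Brownian analogues under a deterministic reparametrization, so there is no ``transparent'' Brownian formula to read the continuity off of. You cannot reduce further than a diffusion in natural scale with a general speed density.

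This is exactly why the paper, after the $h$-transform reduction, does substantial work rather than invoking Brownian explicitness: it builds a concrete continuous version of the conditioned bridge from first-passage bridges (Property \ref{Ch3Sec2: PropertyBridges}~(viii) and Property \ref{Ch3Sec6: PropertyDesintegBridgeMin}), proves joint continuity of the densities $p_t^{(a\times)}(x,a^+)$ and $\partial_2 p_t^{(a\times)}(x,a^+)$ via McKean's eigendifferential expansion (Lemma \ref{Ch3Sec6: LemContinuityDensities}), establishes weak continuity of $a\mapsto\mathbb{P}^{+,a}_a$ (Lemma \ref{Ch3Sec6: LemContinuityFromLastTime}) and of the conditioned bridge (Proposition \ref{Ch3Sec6: PropContinuityBridgeMin}), and derives a Bismut-type disintegration of the excursion measure (Proposition \ref{Ch3Sec6: PropExcLocTime}) so that both sides can be matched formula by formula. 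A second, smaller issue: your argument leans on ``uniqueness of disintegration,'' but conditioning on $\min\gamma=a$ is a singular conditioning and one must first exhibit a regular, jointly continuous version; the paper does this constructively, whereas your proposal takes its existence for granted. So the skeleton is right, but the hard half of the proof — the regularity in $(a,t)$ — is not supplied by the reduction you propose, and must instead be carried out for general natural-scale diffusions using the kind of transition-density estimates the paper develops.
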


Identities \eqref{Ch3Sec6: EqConditionedVervaat} and \eqref{Ch3Sec6: EqDistribMinBridge} can be viewed as a conditioned analogue of the Vervaat's relation between the Brownian bridge and the Brownian excursion. The latter can be deduced from \eqref{Ch3Sec6: EqConditionedVervaat} and \eqref{Ch3Sec6: EqDistribMinBridge} using the translation invariance of the Brownian motion. From \eqref{Ch3Sec6: EqConditionedVervaat} we can only deduce that \eqref{Ch3Sec6: EqConditionedVervaat} and \eqref{Ch3Sec6: EqDistribMinBridge} hold for Lebesgue almost all $t$ and $a$. We need to show the weak continuity in $(a,t)$ of conditioned bridge probabilities and biased conditioned excursion probabilities to conclude. It is enough to prove Proposition \ref{Ch3Sec6: PropConditionnedVervaat} for $L$ not containing any killing measure and such that for all $a<x\in I$, a diffusion starting from $x$ reaches $a$ almost surely. Indeed, for a general generator, $\operatorname{Conj}(u_{\downarrow},L)$ does satisfy the above constraints and if Proposition \ref{Ch3Sec6: PropConditionnedVervaat} is true for $\operatorname{Conj}(u_{\downarrow},L)$, then it is also true for $L$. From now on we assume that $L$ satisfies the above constraints. Next, we give a more constructive description of the conditioned bridges and biased conditioned excursions. We start with bridges.

Property \ref{Ch3Sec2: PropertyBridgesMin} shows that the measure $\mathbb{P}_{x}^{T_{a}}\lhd\tilde{\mathbb{P}}_{x}^{\tilde{T}^{a} \wedge}$ conditioned on $T_{a}+\tilde{T}_{a}=t$ is a version of $\mathbb{P}_{x,x}^{t}(d\gamma\vert \min \gamma = a)$. Let $p_{t}^{(a\times)}(x,y)$ be the transition density on $I\cap(a,+\infty)$ relatively to $m(y)dy$ of the semi-group generated by $L_{\vert I\cap(a,+\infty)}$. Then $p_{t}^{(a\times)}(x,a^{+})=0$. According to \cite{McKean1956EigenDiffExp}, for all $t>0$, 
$y\mapsto p_{t}^{(a\times)}(x,y)$ is $\mathcal{C}^{1}$. Let $\partial_{2}p_{t}^{(a\times)}(x,y)$ be the derivative relatively to $y$. It has a positive limit $\partial_{2}p_{t}^{(a\times)}(x,a^{+})$ as $y\rightarrow a^{+}$. Extended in this way, the map $(t,x,y)\mapsto\partial_{2}p_{t}^{(a\times)}(x,y)$ is continuous on 
$(0+\infty)\times I\cap(a,+\infty)\times I\cap[a,+\infty)$. The distribution of $T_{a}$ under $\mathbb{P}_{x}$ is 
(see \cite{ItoMcKean1974Diffusions}, page 154)
\begin{displaymath}
\dfrac{1}{w(a)}\partial_{2}p_{t}^{(a\times)}(x,a^{+})dt.
\end{displaymath}
Let $\mathbb{P}_{x,y}^{(a\times),t}$ be the bridge probability measure of $L_{\vert I\cap(a,+\infty)}$. It has a weak limit $\mathbb{P}_{x,a^{+}}^{(a\times), t}$ as $y\rightarrow a^{+}$. Let $\mathcal{F}_{s}$ be the sigma-algebra generated by the restriction of a continuous path to the time interval $[0,s]$. Let $\mathbb{P}^{+,a}_{a}$ be the law of $\rho^{+,a}$ starting from $a$. For all $s\in (0,t)$, we have the following absolute continuity relations:
\begin{equation}
\label{Ch3Sec6: EqBridgePlusAbsCont}
\dfrac{d\mathbb{P}_{x,a^{+}}^{(a\times),t}}{d\mathbb{P}_{x}}_{\vert \mathcal{F}_{s}} = 
1_{s<T_{a}}\dfrac{\partial_{2}p_{t-s}^{(a\times)}(X_{s},a^{+})}{\partial_{2}p_{t}^{(a\times)}(x,a^{+})},
\end{equation}
and for the time reversed bridge,
\begin{equation}
\label{Ch3Sec6: EqBridgePlusRevAbsCont}
\dfrac{d\mathbb{P}_{x,a^{+}}^{(a\times),t \wedge}}{d\mathbb{P}^{+,a}_{a}}_{\vert \mathcal{F}_{s}} = 
\dfrac{p_{t-s}^{(a\times)}(\rho^{+,a}_{s},x)}{\partial_{2}p_{t}^{(a\times)}(x,a^{+})}.
\end{equation}
Using the absolute continuity relation \eqref{Ch3Sec6: EqBridgePlusAbsCont} and \eqref{Ch3Sec6: EqBridgePlusRevAbsCont}, one can prove in a similar way as in Proposition \ref{Ch2Sec2: PropContinuityBridge} that the map $(t,y)\mapsto\mathbb{P}_{x,a^{+}}^{(a\times),t}$ is continuous for the weak topology. The first passage bridge $\mathbb{P}_{x}^{T_{a}}$ disintegrates as follows
\begin{equation}
\label{Ch3Sec6: EqDisintegFPBridge}
\mathbb{P}_{x}^{T_{a}}(\cdot) = \dfrac{1}{w(a)}\int_{t>0}\mathbb{P}_{x,a^{+}}^{(a\times),t}(\cdot)
\partial_{2}p_{t}^{(a\times)}(x,a^{+})dt.
\end{equation}
From Property \ref{Ch3Sec2: PropertyBridgesMin} and \eqref{Ch3Sec6: EqDisintegFPBridge} we get that:

\begin{property}
\label{Ch3Sec6: PropertyDesintegBridgeMin}
The distribution of $\min\gamma$ under $P^{t}_{x,x}$ is
\begin{equation}
\label{Ch3Sec6: EqDistribMinBridge2}
\dfrac{da}{w(a) p_{t}(x,x)}\int_{0}^{t}\partial_{2}p_{s}^{(a\times)}(x,a^{+})\partial_{2}p_{t-s}^{(a)}(x,a^{+})ds.
\end{equation}
There is a version of $\mathbb{P}_{x,x}^{t}(d\gamma\vert \min \gamma = a)$ that disintegrates as
\begin{equation}
\label{Ch3Sec6: EqDesintegMinBridge}
\dfrac{\int_{0}^{t}\left(\mathbb{P}_{x,a^{+}}^{(a\times),s}\lhd \mathbb{P}_{x,a^{+}}^{(a\times),t-s \wedge}\right)(d\gamma)\partial_{2}p_{s}^{(a\times)}(x,a^{+})\partial_{2}p_{t-s}^{(a\times)}(x,a^{+})ds}{\int_{0}^{t}\partial_{2}p_{s}^{(a\times)}(x,a^{+})\partial_{2}p_{t-s}^{(a\times)}(x,a^{+})ds}.
\end{equation}
\end{property}
Next we show that the probability measure given by \eqref{Ch3Sec6: EqDesintegMinBridge} depends continuously on $(a,t)$.

\begin{lemm}
\label{Ch3Sec6: LemContinuityDensities}
The functions  $(x,a,t)\mapsto p_{t}^{(a\times)}(x,a^{+})$ and $(x,a,t)\mapsto\partial_{2}p_{t}^{(a\times)}(x,a^{+})$ are continuous on $\lbrace (x,a)\vert x>a\in I\rbrace\times(0,+\infty)$.
\end{lemm}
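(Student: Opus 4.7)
The plan begins with the observation that the first function $p_t^{(a\times)}(x, a^+)$ is identically zero by the Dirichlet boundary condition, so its continuity is trivial. All the content lies in the second claim, which I would prove in three steps: identify the Laplace transform in $t$ of $\partial_2 p_t^{(a\times)}(x, a^+)$, show it is jointly continuous in $(x, a, \lambda)$, and then upgrade this to joint continuity in $(x, a, t)$.

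\textbf{Step 1 (Laplace transform identification).} I would use the identification recalled in the paragraph preceding the lemma: for $x > a$, the quantity $\frac{1}{w(a)}\partial_2 p_t^{(a\times)}(x,a^+)$ is the density of the first hitting time $T_a$ under $\mathbb{P}_x$. The strong Markov property of the diffusion of generator $L$ applied at $T_a$, specialised at $y=a$, furnishes the Volterra identity
\[
w(a)\,p_t(x,a) \;=\; \int_0^t p_{t-s}(a,a)\,\partial_2 p_s^{(a\times)}(x,a^+)\,ds,\qquad x>a,\ t>0.
\]
Taking Laplace transforms in $t$ and using $\int_0^\infty e^{-\lambda t}p_t(x,y)\,dt = G_{L-\lambda}(x,y)$ together with the factorisation $G_{L-\lambda}(x,y) = u_{L-\lambda,\uparrow}(x\wedge y)\,u_{L-\lambda,\downarrow}(x\vee y)/W$ from section \ref{Ch2Sec2}, yields
\[
\int_0^\infty e^{-\lambda t}\,\partial_2 p_t^{(a\times)}(x,a^+)\,dt \;=\; w(a)\,\frac{u_{L-\lambda,\downarrow}(x)}{u_{L-\lambda,\downarrow}(a)},\qquad x\geq a,\ \lambda>0.
\]

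\textbf{Step 2 (continuity of the Laplace transform).} The operator $L-\lambda$ has killing measure $\kappa + \lambda\,m(y)\,dy$, which depends vaguely continuously on $\lambda > 0$. Lemma \ref{Ch2Sec1: LemContKappa} then produces joint continuity of $(x,\lambda)\mapsto u_{L-\lambda,\downarrow}(x)$ on $I \times (0,+\infty)$, uniform on compact subsets of $I$. Consequently the right-hand side is jointly continuous in $(x,a,\lambda)$ on $\{(x,a):x\geq a\in I\}\times(0,+\infty)$.

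\textbf{Step 3 (upgrade to density continuity).} This is the heart of the proof. I would write, for $0 < t_0 < t$,
\[
\partial_2 p_t^{(a\times)}(x,a^+) \;=\; \int_{(a,\sup I)} p_{t-t_0}^{(a\times)}(x,z)\,\partial_2 p_{t_0}^{(a\times)}(z,a^+)\,m(z)\,dz.
\]
Joint continuity of the kernel $(t,x,z,a)\mapsto p_t^{(a\times)}(x,z)$ on $\{(x,z,a):x,z>a\in I\}\times(0,+\infty)$ follows from an eigenfunction expansion on compact subintervals of $I\cap(a,+\infty)$ combined with Lemma \ref{Ch2Sec1: LemContKappa}, followed by a monotone passage to the full half-line. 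To apply dominated convergence, I would then reduce, via scale change, time change, and an $h$-transform absorbing the killing measure, to the case of Brownian motion with killing measure, where the boundary first-passage density admits the Gaussian bound $(z-a)/\sqrt{2\pi t_0^3}\,\exp(-(z-a)^2/(2t_0))$ times a Girsanov factor locally bounded in $(a,z)$. This provides the uniform integrability needed to pass the joint continuity of the kernel through the integral, giving joint continuity of $\partial_2 p_t^{(a\times)}(x,a^+)$ in $(x,a,t)$.

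The main obstacle will be the uniform domination in Step 3 as $a$ varies: the parameter $a$ enters both as an argument of the density and as the boundary of the domain, so one cannot simply freeze $a$. The reduction to the Brownian case via the invariance properties established in chapter \ref{Ch2} is the key to obtaining uniform Gaussian tail estimates on the first-passage density, and thus to closing the argument.
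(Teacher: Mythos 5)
Your observation that $p_t^{(a\times)}(x,a^+)\equiv 0$ is correct, and Steps 1 and 2 are sound: the Volterra identity and the Laplace transform identity $\int_0^\infty e^{-\lambda t}\,\partial_2 p_t^{(a\times)}(x,a^+)\,dt = w(a)\,u_{L-\lambda,\downarrow}(x)/u_{L-\lambda,\downarrow}(a)$ are right, and Lemma \ref{Ch2Sec1: LemContKappa} does yield joint continuity of this transform in $(x,a,\lambda)$ after reducing to natural scale. This is a genuinely different route from the paper, which instead invokes McKean's eigendifferential expansion and establishes vague convergence of the matrix-valued spectral measures $\mathfrak{f}^{(a_n\times)}$ directly, bypassing the Laplace transform entirely.

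However, Step 3 as written is circular and does not close the gap between Laplace-transform continuity and pointwise density continuity. The Chapman--Kolmogorov decomposition
\[
\partial_2 p_t^{(a\times)}(x,a^+) = \int_{(a,\sup I)} p_{t-t_0}^{(a\times)}(x,z)\,\partial_2 p_{t_0}^{(a\times)}(z,a^+)\,m(z)\,dz
\]
expresses the unknown at time $t$ as an integral of the interior kernel against $\partial_2 p_{t_0}^{(a\times)}(z,a^+)$ --- but to pass a limit $(x_n,a_n,t_n)\to(x,a,t)$ through the integral you need joint continuity of $\partial_2 p_{t_0}^{(a\times)}(z,a^+)$ in $(z,a)$, which is precisely the thing you are trying to prove (for a different time value, but the regress never bottoms out because $t_0$ can be taken small but not zero). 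Moreover, this convolution is in the space variable $z$, whereas your Step 2 gives you continuity of a Laplace transform in the time variable $t$; those two pieces of information do not feed into each other in the way your outline suggests. The Gaussian domination is a valid bound but only provides uniform integrability, not the missing pointwise convergence of the integrand.

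To repair your argument without falling back on the paper's spectral approach, you would need to inject a genuine smoothing mechanism that exploits Step 2. One possibility is to run a Bromwich inversion: the Laplace transform $\lambda\mapsto u_{L-\lambda,\downarrow}(x)/u_{L-\lambda,\downarrow}(a)$ extends holomorphically to $\mathrm{Re}\,\lambda > 0$ and, for $x>a$ strictly, decays uniformly on vertical lines $\mathrm{Re}\,\lambda = \sigma$ as $|\mathrm{Im}\,\lambda|\to\infty$ (this is the standard WKB-type decay of the eigenfunction ratio; the decay rate depends on $x-a$, which is bounded below on compact subsets of $\{x>a\}$). Combining this uniform decay with the joint continuity from Step 2 would let you pass to the limit under the Bromwich integral and conclude pointwise continuity of the density. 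This is doable but requires establishing the uniform decay estimate, which is comparable in effort to the paper's spectral-measure argument; the paper's choice of McKean's eigendifferential expansion packages the needed uniform-in-$a$ estimates (the bound $\|\mathfrak{e}(\cdot,\lambda)\|\leq Ce^{c'\sqrt{|\lambda|}}$ and the uniform mass bound on $1\wedge|\lambda|^{-2}\|\mathfrak{f}^{(a\times)}\|$) in a single stroke.
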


\begin{proof}
As in \cite{McKean1956EigenDiffExp}, we can use the eigendifferential expansion of $L$ to express 
$p_{t}^{(a\times)}(x,a^{+})$ and $\partial_{2}p_{t}^{(a\times)}(x,a^{+})$. Let $x_{0}$. For $\lambda\in\mathbb{R}$, consider $e_{1}(\cdot,\lambda)$ and $e_{2}(\cdot,\lambda)$ two solutions to $Lu+\lambda u =0$ with initial conditions
\begin{displaymath}
e_{1}(x_{0},\lambda)=1,\quad\dfrac{\partial e_{1}}{\partial x}(x_{0},\lambda)=0,\quad
e_{2}(x_{0},\lambda)=0\quad\dfrac{\partial e_{2}}{\partial x}(x_{0},\lambda)=1.
\end{displaymath}
Let $\mathfrak{e}(x,\lambda)$ be the $2$-vector whose entries are $e_{1}(x,\lambda)$ and $e_{2}(x,\lambda)$. According to Theorems $3.2$ and $4.3$ in \cite{McKean1956EigenDiffExp}, for all $a\in I$, there is a Radon measure $\mathfrak{f}^{(a)}$ on $(-\infty,0]$ with values in the space of $2\times 2$ symmetric positive semi-definite matrices such that for all $x\in I\cap(a,+\infty)$,
\begin{displaymath}
p_{t}^{(a\times)}(x,a^{+})=\int_{-\infty}^{0}e^{t\lambda}{}^\intercal \mathfrak{e}(x,\lambda)\mathfrak{f}^{(a\times)}(d\lambda)\mathfrak{e}(a,\lambda),
\end{displaymath}
\begin{displaymath}
\partial_{2}p_{t}^{(a\times)}(x,a^{+})=\int_{-\infty}^{0}e^{t\lambda}{}^\intercal 
\mathfrak{e}(x,\lambda)\mathfrak{f}^{(a\times)}(d\lambda)\dfrac{\partial\mathfrak{e}}{\partial x}(a,\lambda).
\end{displaymath}
Let $x>a\in I$. Consider two sequences $(x_{n})_{n\geq 0}$ and $(a_{n})_{n\geq 0}$ in $I\cap(-\infty,x)$, converging to $x$ respectively $a$, such that for all $n\geq 0$, $x_{n}>a_{n}$. Let $(b_{j})_{j\geq 0}$ be an increasing sequence in $I\cap(x,\sup I)$ converging to $\sup I$. Let $\mathfrak{f}_{n,j}$ be the $2\times 2$-matrix valued measure on $(-\infty,0]$ corresponding to the eigendifferential expansion of $L$ restricted to $(a_{n},b_{j})$. $\mathfrak{f}_{n,j}$ charges only a discrete set of atoms. As shown in the proof of Theorem $3.2$ in \cite{McKean1956EigenDiffExp}, the total mass of the measures $1\wedge\vert\lambda\vert^{-2}\Vert\mathfrak{f}_{n,j}\Vert(d\lambda)$, $1\wedge\vert\lambda\vert^{-2}\Vert\mathfrak{f}^{(a_{n}\times)}\Vert(d\lambda)$ and $1\wedge\vert\lambda\vert^{-2}\Vert\mathfrak{f}^{(a\times)}\Vert(d\lambda)$ is uniformly bounded. Moreover, for a fixed $n$, as $j\rightarrow +\infty$, $1\wedge\vert\lambda\vert^{-2}\mathfrak{f}_{n,j}(d\lambda)$ converges vaguely, that is against continuous functions vanishing at infinity, to the measure 
$1\wedge\vert\lambda\vert^{-2}\mathfrak{f}^{(a_{n}\times)}(d\lambda)$. Moreover, for any increasing integer-valued sequence $(j_{n})_{n\geq 0}$ converging to $+\infty$, $1\wedge\vert\lambda\vert^{-2}\mathfrak{f}_{n,j_{n}}(d\lambda)$ converges vaguely as $n\rightarrow +\infty$ to $1\wedge\vert\lambda\vert^{-2}\mathfrak{f}^{(a\times)}(d\lambda)$. Since the sequence $(j_{n})_{n\geq 0}$ is arbitrary, this implies that $1\wedge\vert\lambda\vert^{-2}\mathfrak{f}^{(a_{n}\times)}(d\lambda)$ converges vaguely as 
$n\rightarrow +\infty$ to $1\wedge\vert\lambda\vert^{-2}\mathfrak{f}^{(a\times)}(d\lambda)$.

There are constants $C, c'>0$ such that for all $\lambda\leq 0$ and $n\geq 0$,
\begin{equation}
\label{Ch3Sec6: EqEncadrEigenDiff}
\Vert\mathfrak{e}(x_{n},\lambda)\Vert\leq C e^{c'\sqrt{\vert\lambda\vert}},\quad
\Vert\mathfrak{e}(a_{n},\lambda)\Vert\leq C e^{c'\sqrt{\vert\lambda\vert}},\quad
\Vert\dfrac{\partial\mathfrak{e}}{\partial x}(a_{n},\lambda)\Vert\leq C e^{c'\sqrt{\vert\lambda\vert}}.
\end{equation}
Let $t>0$ and $(t_{n})_{n\geq 0}$ a sequence of times converging to $t$. From \eqref{Ch3Sec6: EqEncadrEigenDiff} follows that
\begin{displaymath}
\lim_{\lambda \rightarrow -\infty}\sup_{n\geq 0}\vert\lambda\vert^{2}e^{t_{n}\lambda}\Vert
\mathfrak{e}(x_{n},\lambda)\Vert\times\Vert\mathfrak{e}(a_{n},\lambda)\Vert = 0.
\end{displaymath}
$\lambda\mapsto 1\vee\vert\lambda\vert^{2}e^{t_{n}\lambda}\left(\mathfrak{e}(x_{n},\lambda),\partial\mathfrak{e}
(a_{n},\lambda)\right)$ vanishes at infinity and converges uniformly on $(-\infty, 0]$ to 
$\lambda\mapsto 1\vee\vert\lambda\vert^{2}e^{t\lambda}\left(\mathfrak{e}(x,\lambda),\mathfrak{e}(a,\lambda)\right)$.
The vague convergence of measures implies that
\begin{displaymath}
\lim_{n\rightarrow +\infty}\int_{-\infty}^{0}e^{t_{n}\lambda}{}^{\intercal}
\mathfrak{e}(x_{n},\lambda)\mathfrak{f}^{(a_{n}\times)}(d\lambda)\mathfrak{e}(a_{n},\lambda) = \int_{-\infty}^{0}e^{t\lambda}{}^{\intercal}\mathfrak{e}(x,\lambda)\mathfrak{f}^{(a\times)}(d\lambda)\mathfrak{e}(a,\lambda).
\end{displaymath}
Similarly, $\partial_{2}p_{t_{n}}^{(a_{n}\times)}(x_{n},a_{n}^{+})$ converges to $\partial_{2}p_{t}^{(a\times)}(x,a^{+})$.
\end{proof}

\begin{lemm}
\label{Ch3Sec6: LemContinuityFromLastTime}
The map $a\mapsto\mathbb{P}^{+,a}_{a}$ is weakly continuous. 
\end{lemm}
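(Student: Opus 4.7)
The plan is to exploit the time-reversal identity stated just before the lemma: for any $b \in I$ with $b > a$, the stopped trajectory $(\rho^{+,a}_{t})_{0 \leq t \leq T^{+,a}_{b}}$ under $\mathbb{P}^{+,a}_{a}$ has the same law as the image by time-reversal of $(\rho^{-,b}_{t})_{0 \leq t \leq T^{-,b}_{a}}$ under $\mathbb{P}^{-,b}_{b}$, while by the strong Markov property the piece of $\rho^{+,a}$ after $T^{+,a}_{b}$ has, conditionally, law $\mathbb{P}^{+,a}_{b}$ (the law of $\rho^{+,a}$ started from $b$). I will decompose a trajectory of $\mathbb{P}^{+,a}_{a}$ at $T^{+,a}_{b}$, establish continuity in $a$ of each of the two pieces separately, and recombine.

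Fix a sequence $a_{n} \to a$ in $I$ and choose $b \in I$ strictly larger than $a$ and each $a_{n}$. Under the standing reductions for Proposition \ref{Ch3Sec6: PropConditionnedVervaat} (no killing measure, and every level reached a.s.), the paths of $\rho^{-,b}$ are non-degenerate continuous trajectories, and the hitting times satisfy $T^{-,b}_{a_{n}} \to T^{-,b}_{a}$ almost surely. Consequently the stopped trajectories converge a.s.\ in the metric $d_{paths}$, and after applying time-reversal (a continuous operation on finite life-time paths) this yields almost sure, hence weak, convergence of the pre-$T_{b}$ pieces of $\mathbb{P}^{+,a_{n}}_{a_{n}}$ to the pre-$T_{b}$ piece of $\mathbb{P}^{+,a}_{a}$.

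For the post-$T_{b}$ piece I will prove the weak continuity of $a \mapsto \mathbb{P}^{+,a}_{b}$ via the h-transform absolute-continuity identity
\begin{displaymath}
\mathbb{E}^{+,a}_{b}\bigl[F((X_{s})_{0\leq s\leq t})\bigr] = \frac{1}{u^{+,a}(b)}\,\mathbb{E}_{b}\bigl[1_{t<T_{a}}\,u^{+,a}(X_{t})\,F((X_{s})_{0\leq s\leq t})\bigr],
\end{displaymath}
where $X$ is a sample path of generator $L$. The $L$-harmonic function $u^{+,a}$ depends continuously on its base point $a$, uniformly on compact subsets of $I \cap (a,+\infty)$: this is obtained by a fixed-point argument modelled on the proof of Lemma \ref{Ch2Sec1: LemContKappa}, adapted to varying the initial conditions rather than the measure $\kappa$. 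Combined with the almost sure convergence $T_{a_{n}} \to T_{a}$ under $\mathbb{P}_{b}$, dominated convergence (the indicator $1_{t<T_{a_{n}}}$ restricts $X_{t}$ to a region on which the family $(u^{+,a_{n}})_{n}$ is uniformly bounded by uniform convergence on compacts) yields continuity of the right-hand side, and thus the weak continuity of $\mathbb{P}^{+,a}_{b}$.

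Putting the two pieces together is straightforward: by the strong Markov property the post-$T_{b}$ piece is, conditionally on the pre-$T_{b}$ piece, distributed as $\mathbb{P}^{+,a}_{b}$ regardless of the specific first piece (they share only the endpoint $b$). The joint law is therefore a product of the two marginals, and concatenation at the common endpoint $b$ is almost surely continuous on the support of these laws. Weak convergence of each factor then implies the weak convergence of $\mathbb{P}^{+,a_{n}}_{a_{n}}$ to $\mathbb{P}^{+,a}_{a}$. The most delicate step is ensuring the almost sure continuity of the hitting times $T^{-,b}_{a_{n}} \to T^{-,b}_{a}$: this rests on the fact that the one-dimensional continuous diffusion $\rho^{-,b}$ does not linger at the level $a$, which holds under the standing reductions (the diffusion reaches every level a.s.\ and its scale function is strictly monotonic, so levels are crossed transversally in the relevant sense).
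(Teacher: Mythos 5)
Your argument takes a genuinely different route from the paper's. The paper's proof is a one-step coupling: fix a reference level $a_{0}$ below $a$, run a single process $\rho^{+,a_{0}}$ started from $a_{0}$, and observe (using the last-passage-time decomposition stated just before the lemma) that $(\rho^{+,a_{0}}_{\widehat{T}_{a}+t})_{t\geq 0}$ has law $\mathbb{P}^{+,a}_{a}$; since $a\mapsto\widehat{T}_{a}$ and hence $a\mapsto(\rho^{+,a_{0}}_{\widehat{T}_{a}+t})_{t\geq 0}$ is almost surely continuous, weak continuity of the laws is immediate. You instead decompose $\rho^{+,a}$ at the \emph{first} passage of a fixed level $b>a$, control the pre-$T_{b}$ piece by time-reversing $\rho^{-,b}$ on a single probability space, control the post-$T_{b}$ piece via the h-transform absolute-continuity relation, and recombine. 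Your route is heavier but it does isolate and reuse the ingredients that the paper will need anyway (continuity of hitting times, continuity of the harmonic functions $u^{+,a}$, the strong Markov decomposition). The paper's coupling at the last passage time is markedly shorter and avoids the absolute-continuity step entirely.

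There is, however, a real gap in your treatment of the post-$T_{b}$ piece. You assert that the indicator $1_{t<T_{a_{n}}}$ confines $X_{t}$ to a region on which the family $(u^{+,a_{n}})_{n}$ is uniformly bounded. This is false in general: the indicator only forces $X_{t}\in(a_{n},\sup I)$, and when $\sup I=+\infty$ (which is not excluded by the standing reductions — take $L$ the Brownian generator on all of $\mathbb{R}$, for which $u^{+,a}(y)=y-a$ is unbounded above), the functions $u^{+,a_{n}}$ are unbounded on exactly the region you need to control. So dominated convergence as you have invoked it does not apply. The conclusion can still be reached, but you should replace the domination argument: note that the absolute-continuity identity with $F\equiv 1$ gives $\mathbb{E}_{b}\bigl[1_{t<T_{a_{n}}}u^{+,a_{n}}(X_{t})\bigr]=u^{+,a_{n}}(b)$, which converges to $u^{+,a}(b)=\mathbb{E}_{b}\bigl[1_{t<T_{a}}u^{+,a}(X_{t})\bigr]$ by the continuity of $a\mapsto u^{+,a}(b)$. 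Since the integrands are nonnegative, converge almost surely, and their integrals converge, Scheffé's lemma yields $\mathbb{L}^{1}$-convergence of $1_{t<T_{a_{n}}}u^{+,a_{n}}(X_{t})$ to $1_{t<T_{a}}u^{+,a}(X_{t})$, and multiplying by the bounded continuous functional $F$ then gives the desired convergence. With that correction (and the routine remark that convergence of the finite-horizon restrictions $\mathcal{F}_{t}$ for all $t$, plus tightness, upgrades to weak convergence on the path space), your proof closes.
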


\begin{proof}
Let $a_{0}\in I$. Consider the process $(\rho^{+,a_{0}}_{t})_{t\geq 0}$ following the law $\mathbb{P}^{+,a_{0}}_{a_{0}}$. For $a\in I\cap(a_{0},+\infty)$, let $\widehat{T}_{a}$ be the last time $\rho^{+,a_{0}}$ visits $a$. Then $(\rho^{+,a_{0}}_{\widehat{T}_{a}+t})_{t\geq 0}$ follows the law $\mathbb{P}^{+,a}_{a}$. The process-valued map 
$a\mapsto (\rho^{+,a_{0}}_{\widehat{T}_{a}+t})_{t\geq 0}$ is almost surely continuous on $I\cap(a_{0},+\infty)$ and thus, the laws depend weakly continuously on $a$.
\end{proof}

\begin{prop}
\label{Ch3Sec6: PropContinuityBridgeMin}
The version of $\mathbb{P}_{x,x}^{t}(d\gamma\vert \min \gamma = a)$ given by \eqref{Ch3Sec6: EqDesintegMinBridge} is weakly continuous in $(a,t)$.
\end{prop}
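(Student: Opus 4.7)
The plan is to check that both the numerator and the denominator of \eqref{Ch3Sec6: EqDesintegMinBridge} are continuous in $(a,t)$ and that the denominator does not vanish, so that the quotient is weakly continuous. Throughout we test against an arbitrary bounded continuous functional $F$ on continuous paths; by the Portmanteau theorem, it suffices to show that
\begin{displaymath}
N_F(a,t):=\int_{0}^{t}\left(\mathbb{P}_{x,a^{+}}^{(a\times),s}\lhd \mathbb{P}_{x,a^{+}}^{(a\times),t-s\,\wedge}\right)(F(\gamma))\,\partial_{2}p_{s}^{(a\times)}(x,a^{+})\,\partial_{2}p_{t-s}^{(a\times)}(x,a^{+})\,ds
\end{displaymath}
is continuous in $(a,t)$, and likewise for the denominator $N_{1}(a,t)$ (obtained with $F\equiv 1$).

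First, Lemma \ref{Ch3Sec6: LemContinuityDensities} gives continuity of $(a,s)\mapsto\partial_{2}p_{s}^{(a\times)}(x,a^{+})$ on $\{a<x\}\times(0,+\infty)$. Next, using the absolute continuity relations \eqref{Ch3Sec6: EqBridgePlusAbsCont} and \eqref{Ch3Sec6: EqBridgePlusRevAbsCont} and Lemma \ref{Ch3Sec6: LemContinuityFromLastTime}, the same argument as in Proposition \ref{Ch2Sec2: PropContinuityBridge} (truncation of the path to the time intervals $[0,v]$ and $[1-v',1]$, combined with the time-reversal trick to recover tightness on the full interval) yields that $(a,s)\mapsto\mathbb{P}_{x,a^{+}}^{(a\times),s}$ and $(a,s)\mapsto\mathbb{P}_{x,a^{+}}^{(a\times),s\,\wedge}$ are weakly continuous on $\{a<x\}\times(0,+\infty)$. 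Since concatenation of two paths whose endpoints agree is a continuous operation on the product of path spaces, the map
\begin{displaymath}
(a,s,u)\mapsto\mathbb{P}_{x,a^{+}}^{(a\times),s}\lhd\mathbb{P}_{x,a^{+}}^{(a\times),u\,\wedge}
\end{displaymath}
is weakly continuous on $\{a<x\}\times(0,+\infty)^{2}$. Hence the integrand
\begin{displaymath}
G_F(a,t,s):=\left(\mathbb{P}_{x,a^{+}}^{(a\times),s}\lhd\mathbb{P}_{x,a^{+}}^{(a\times),t-s\,\wedge}\right)(F(\gamma))\,\partial_{2}p_{s}^{(a\times)}(x,a^{+})\,\partial_{2}p_{t-s}^{(a\times)}(x,a^{+})
\end{displaymath}
is jointly continuous on $\{a<x\}\times\{(t,s):0<s<t\}$.

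To carry this continuity through the integral in $s$, I substitute $s=\sigma t$, turning $N_F(a,t)$ into $t\int_{0}^{1}G_F(a,t,\sigma t)\,d\sigma$ on the compact parameter $\sigma\in[0,1]$. For a sequence $(a_{n},t_{n})\to(a,t)$ with $a<x$ and $t>0$, the pointwise convergence $G_F(a_{n},t_{n},\sigma t_{n})\to G_F(a,t,\sigma t)$ holds for every $\sigma\in(0,1)$. Domination is the only real obstacle: $G_F$ is bounded in absolute value by $\|F\|_\infty\,\partial_{2}p_{\sigma t}^{(a\times)}(x,a^{+})\,\partial_{2}p_{(1-\sigma)t}^{(a\times)}(x,a^{+})$, and $\partial_{2}p_{s}^{(a\times)}(x,a^{+})$ is, up to the factor $w(a)$, the first-passage time density of $T_a$ under $\mathbb{P}_x$, hence uniformly bounded on compact subintervals of $(0,+\infty)$ bounded away from $0$. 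One then controls the behaviour near $\sigma=0,1$ by observing that $\mathbb{E}_{x}[T_{a}^{-1/2}]$-type bounds (obtained from a comparison with a Brownian first-passage density after the scale change) give a locally uniform $L^1(d\sigma)$ majorant; this is the step where the continuity result of Lemma \ref{Ch3Sec6: LemContinuityDensities} is used in its full strength, as it also ensures uniform convergence of the densities on compact sub-intervals of $(0,+\infty)$ that do not touch $s=0$, while the integrable tail near $s=0$ is handled by the dominating majorant.

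Applying dominated convergence then gives $N_F(a_{n},t_{n})\to N_F(a,t)$, and in particular $N_{1}(a_{n},t_{n})\to N_{1}(a,t)>0$ since the minimum of a bridge almost surely belongs to an open interval strictly inside $(-\infty,x)$. The quotient is therefore continuous, which is exactly weak continuity of \eqref{Ch3Sec6: EqDesintegMinBridge} in $(a,t)$.
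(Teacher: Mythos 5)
Your proof follows essentially the same route as the paper's: absolute continuity relations \eqref{Ch3Sec6: EqBridgePlusAbsCont}--\eqref{Ch3Sec6: EqBridgePlusRevAbsCont}, continuity of the densities from Lemma \ref{Ch3Sec6: LemContinuityDensities}, and weak continuity of $a\mapsto\mathbb{P}^{+,a}_a$ from Lemma \ref{Ch3Sec6: LemContinuityFromLastTime} together yield weak continuity of $(a,s)\mapsto\mathbb{P}_{x,a^{+}}^{(a\times),s}$ by mimicking Proposition \ref{Ch2Sec2: PropContinuityBridge}, and concatenation preserves weak continuity. The paper stops there with a terse remark that the densities in \eqref{Ch3Sec6: EqDesintegMinBridge} are continuous; you go further and spell out the integral step via the substitution $s=\sigma t$ and dominated convergence, and you explicitly record that the denominator is strictly positive. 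Those additions make the argument more self-contained but do not constitute a different approach. Two small remarks: the $L^1(d\sigma)$-majorant machinery is more than is needed, since each factor $\partial_{2}p_{s}^{(a\times)}(x,a^{+})$ is a first-passage density that tends to $0$ as $s\to 0^{+}$ and is therefore bounded on $[0,t]$ locally uniformly in $a$, so a constant majorant already does the job; and the positivity of $N_{1}(a,t)$ is most directly seen from the fact that the integrand $\partial_{2}p_{s}^{(a\times)}(x,a^{+})\partial_{2}p_{t-s}^{(a\times)}(x,a^{+})$ is strictly positive for every $s\in(0,t)$, rather than from a statement about where the minimum of a bridge lives.
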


\begin{proof}
From the absolute continuity relations \eqref{Ch3Sec6: EqBridgePlusAbsCont} for the bridge $\mathbb{P}_{x,a^{+}}^{(a\times),t}$ and \eqref{Ch3Sec6: EqBridgePlusRevAbsCont} for its time reversal, together with the continuity of the densities which follows from Lemma \ref{Ch3Sec6: LemContinuityDensities}, and the weak continuity of $a\mapsto\mathbb{P}^{+,a}_{a}$, we can deduce in a very similar way as in Proposition \ref{Ch2Sec2: PropContinuityBridge} that the map $(a,t)\mapsto\mathbb{P}_{x,a^{+}}^{(a\times),t}$ is weakly continuous on $(0,+\infty)\times I\cap(-\infty,x)$, and hence $(a,s,t)\mapsto\mathbb{P}_{x,a^{+}}^{(a\times),s}\lhd \mathbb{P}_{x,a^{+}}^{(a\times),t-s \wedge}$ is weakly continuous. Finally, the densities that appear in expression \eqref{Ch3Sec6: EqDesintegMinBridge} are continuous with respect to $(a,s,t)$.
\end{proof}

Next we will give a decomposition of the measure $\eta^{>a}$ which is similar to Bismut's decomposition of Brownian excursions (see \cite{RevuzYor1999BMGrundlehren}, Section XII.4, Theorem 4.7). Biane used this Bismut's decomposition to give an alternative proof for the Brownian Vervaat's transformation (\cite{Biane1986Vervaat}). $\partial_{2}p_{t}^{(a\times)}(x,a^{+})$ is $\mathcal{C}^{1}$ relatively to $x$ and the derivative $\partial_{1,2}p_{t}^{(a\times)}(x,a^{+})$ has a positive limit $\partial_{1,2}p_{t}^{(a\times)}(a^{+},a^{+})$ as $y\rightarrow a^{+}$. Moreover $t\mapsto\partial_{1,2}p_{t}^{(a\times)}(a^{+},a^{+})$ is continuous. The measure on the life-time of the excursion induced by $\eta^{>a}$ is (see \cite{SalminenValloisYor2007Excur})
\begin{displaymath}
\dfrac{1}{w(a)^{2}}\partial_{1,2}p_{t}^{(a\times)}(a^{+},a^{+})dt.
\end{displaymath}
Let $s\in[0,t]$. The measure $\eta^{>a}_{t}(\cdot)$ disintegrates as (see \cite{SalminenValloisYor2007Excur})
\begin{equation}
\label{Ch3Sec6: EqDisintegExcursions}
\int_{x\in I, x>a}\left(\mathbb{P}_{x,a^{+}}^{(a\times),s \wedge}\lhd \mathbb{P}_{x,a^{+}}^{(a\times),t-s}\right)(\cdot)\dfrac{\partial_{2}p_{s}^{(a\times)}(x,a^{+})\partial_{2}p_{t-s}^{(a\times)}(x,a^{+})m(y)}{\partial_{1,2}p_{t}^{(a\times)}(a^{+},a^{+})}dy.
\end{equation}
For every $s_{1}<s_{2}\in[0,s]$, under the bridge measure $\mathbb{P}_{y,z}^{(a\times),s}$,
\begin{equation}
\label{Ch3Sec6: EqLocTimeBridgePlus1}
\mathbb{P}_{y,z}^{(a\times),t}(\ell^{x}_{s_{2}}(\gamma)-\ell^{x}_{s_{1}}(\gamma))= 
\int_{s_{1}}^{s_{2}}\dfrac{p_{r}^{(a\times)}(y,x)p_{s-r}^{(a\times)}(x,z)}{p_{s}^{(a\times)}(y,z)}dr,
\end{equation}
and under the bridge measure $\mathbb{P}_{y,a^{+}}^{(a\times),s}$,
\begin{equation}
\label{Ch3Sec6: EqLocTimeBridgePlus2}
\mathbb{P}_{y,a^{+}}^{(a\times),t}(\ell^{x}_{s_{2}}(\gamma)-\ell^{x}_{s_{1}}(\gamma))= \int_{s_{1}}^{s_{2}}\dfrac{p_{r}^{(a\times)}(y,x)\partial_{2}p_{s-r}^{(a\times)}(x,a^{+})}
{\partial_{2}p_{s}^{(a\times)}(y,a^{+})}dr.
\end{equation}
Combining \eqref{Ch3Sec6: EqDisintegExcursions} and \eqref{Ch3Sec6: EqLocTimeBridgePlus2} we get that for every $s_{1}<s_{2}\in[0,s]$,
\begin{equation}
\label{Ch3Sec6: EqLocTimeExc}
\eta_{t}^{>a}(\ell^{x}_{s_{2}}(\gamma)-\ell^{x}_{s_{1}}(\gamma))= \int_{s_{1}}^{s_{2}}\dfrac{\partial_{2}
p_{s}^{(a\times)}(x,a^{+})\partial_{2}p_{t-s}^{(a\times)}(x,a^{+})}{\partial_{1,2}p_{t}^{(a\times)}(a^{+},a^{+})}ds.
\end{equation}

\begin{prop}
\label{Ch3Sec6: PropExcLocTime}
Let $F_{1}$ and $F_{2}$ be two non-negative measurable functional on the paths with variable life-time. Then,
\begin{multline}
\label{Ch3Sec6: EqExcLocTimeDesinteg}
\eta^{>a}_{t}\left(\int_{0}^{t}F_{1}((\gamma(r))_{0\leq r\leq s})F_{2}((\gamma(s+r))_{0\leq r\leq t-s})
d_{s}\ell^{x}_{s}(\gamma)\right)=\\
\int_{0}^{t}\mathbb{P}_{x,a^{+}}^{(a\times),s\wedge}(F_{1})\mathbb{P}_{x,a^{+}}^{(a\times),t-s}(F_{2})\dfrac{\partial_{2}p_{s}^{(a\times)}(x,a^{+})\partial_{2}p_{t-s}^{(a\times)}(x,a^{+})}
{\partial_{1,2}p_{t}^{(a)}(a^{+},a^{+})}\,ds.
\end{multline}
In particular,
\begin{equation}
\label{Ch3Sec6: EqExcLocTimeDesinteg2}
\ell^{x}_{t}(\gamma)\eta^{>a}_{t}(d\gamma)=
\int_{0}^{t}\left(\mathbb{P}_{x,a^{+}}^{(a\times),s \wedge}\lhd\mathbb{P}_{x,a^{+}}^{(a\times),t-s}\right)(d\gamma) \dfrac{\partial_{2}p_{s}^{(a\times)}(x,a^{+})\partial_{2}p_{t-s}^{(a\times)}(x,a^{+})}
{\partial_{1,2}p_{t}^{(a\times)}(a^{+},a^{+})}ds.
\end{equation}
\end{prop}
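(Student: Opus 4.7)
The plan is to deduce this Bismut-type decomposition from the pivot disintegration (\ref{Ch3Sec6: EqDisintegExcursions}) at the intermediate time $s$, combined with a Revuz-type approximation of $d_s\ell^x_s(\gamma)$. For every path $\gamma$ with a continuous family of local times and every bounded measurable $g$ on $[0,t]$, the occupation density formula applied to the time-dependent functional $F(y,s)=g(s)\mathbf{1}_{|y-x|<\varepsilon}$, together with the continuity of $y\mapsto \int_0^t g(s)\,d_s\ell^y_s(\gamma)$, yields
\[ \int_0^t g(s)\,d_s\ell^x_s(\gamma) = \lim_{\varepsilon\to 0}\ \frac{1}{2\varepsilon\,m(x)}\int_0^t g(s)\,\mathbf{1}_{|\gamma(s)-x|<\varepsilon}\,ds. \]
I apply this with $g(s)=F_1((\gamma(r))_{r\le s})F_2((\gamma(s+r))_{r\le t-s})$, integrate against $\eta^{>a}_t$, and push $\eta^{>a}_t$ inside the limit and inside the $s$-integral by Fubini and dominated convergence.

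For each $s\in(0,t)$, equation (\ref{Ch3Sec6: EqDisintegExcursions}) with pivot time $s$ gives
\[ \eta^{>a}_t\!\left[F_1(\gamma^{\le s})F_2(\gamma^{\ge s})\mathbf{1}_{|\gamma(s)-x|<\varepsilon}\right] = \!\int_{|y-x|<\varepsilon,\, y>a}\!\! \mathbb{P}_{y,a^+}^{(a\times),s\wedge}(F_1)\,\mathbb{P}_{y,a^+}^{(a\times),t-s}(F_2)\, D_s(y)\, m(y)\,dy, \]
where $D_s(y):=\partial_2 p_s^{(a\times)}(y,a^+)\,\partial_2 p_{t-s}^{(a\times)}(y,a^+)/\partial_{1,2}p_t^{(a\times)}(a^+,a^+)$. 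By Lemma \ref{Ch3Sec6: LemContinuityDensities} the density $D_s(\cdot)$ is continuous in $y$, and for $F_1,F_2$ bounded continuous cylindrical the maps $y\mapsto \mathbb{P}_{y,a^+}^{(a\times),s\wedge}(F_1)$, $y\mapsto\mathbb{P}_{y,a^+}^{(a\times),t-s}(F_2)$ are also continuous --- in the interior by Proposition \ref{Ch2Sec2: PropContinuityBridge}, and at the entrance boundary $y\to a^+$ by the absolute-continuity formulas (\ref{Ch3Sec6: EqBridgePlusAbsCont})--(\ref{Ch3Sec6: EqBridgePlusRevAbsCont}) combined with Lemma \ref{Ch3Sec6: LemContinuityFromLastTime}. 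Dividing by $2\varepsilon m(x)$ and letting $\varepsilon\to 0$ produces the pointwise limit $\mathbb{P}_{x,a^+}^{(a\times),s\wedge}(F_1)\,\mathbb{P}_{x,a^+}^{(a\times),t-s}(F_2)\,D_s(x)$. The domination needed to interchange the $\varepsilon$-limit with the $s$-integral and the $\eta^{>a}_t$-expectation is provided by the integrability of $D_s(x)$ on $(0,t)$ coming from (\ref{Ch3Sec6: EqLocTimeExc}) and the local boundedness of $D_s(y)m(y)$ near $y=x$. This establishes (\ref{Ch3Sec6: EqExcLocTimeDesinteg}) for bounded continuous cylindrical $F_1,F_2$; a monotone class argument extends it to all bounded measurable $F_1,F_2$.

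For the measure identity (\ref{Ch3Sec6: EqExcLocTimeDesinteg2}), apply (\ref{Ch3Sec6: EqExcLocTimeDesinteg}) to a cylindrical $F(\gamma)=f(\gamma(t_1),\ldots,\gamma(t_k))$, which for each $s$ avoiding $\{t_1,\ldots,t_k\}$ factors as $F=F_1^{(s)}(\gamma^{\le s})F_2^{(s)}(\gamma^{\ge s})$ whose value does not depend on the split point. The left-hand side of (\ref{Ch3Sec6: EqExcLocTimeDesinteg}) then reads $\eta^{>a}_t[F(\gamma)\,\ell^x_t(\gamma)]$ while the right-hand side is $\int_0^t(\mathbb{P}_{x,a^+}^{(a\times),s\wedge}\lhd\mathbb{P}_{x,a^+}^{(a\times),t-s})(F)\,D_s(x)\,ds$, so the two measures appearing in (\ref{Ch3Sec6: EqExcLocTimeDesinteg2}) agree on cylindrical functions and hence, by a standard monotone class argument, are equal. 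The main obstacle is the uniform control near $s=0$ and $s=t$, where $D_s$ may blow up; this is absorbed by the global integrability of $D_s(x)$ on $(0,t)$ guaranteed by (\ref{Ch3Sec6: EqLocTimeExc}) and by the eigendifferential bounds underlying Lemma \ref{Ch3Sec6: LemContinuityDensities}.
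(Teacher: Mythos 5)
Your proof is correct and takes a genuinely different route from the paper's. The paper approximates the local-time integral $\int_0^t F_1 F_2\,d_s\ell^x_s(\gamma)$ by Riemann sums in the \emph{time} variable, passes $\eta^{>a}_t$ through the limit via domination by $\Vert F_1\Vert_\infty\Vert F_2\Vert_\infty\,\ell^x_t(\gamma)$, evaluates each term by combining the pivot disintegration \eqref{Ch3Sec6: EqDisintegExcursions} (at the discretization point $s_{j,n}$) with the bridge local-time increment formula \eqref{Ch3Sec6: EqLocTimeBridgePlus1}, and concludes via the weak convergence of $\frac{1}{\Delta s_n}\int_0^{\Delta s_n}q_n(r,y,z)\,dr\,dy\,dz$ to $\delta_{(x,x)}$. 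You instead approximate the same measure $d_s\ell^x_s(\gamma)$ in the \emph{space} variable by an $\varepsilon$-band around $x$, apply \eqref{Ch3Sec6: EqDisintegExcursions} at each fixed $s$, and recover the kernel $D_s(x)$ by continuity in $y$. The two constructions are essentially dual (a Dirac in space versus a Dirac in time) and rest on the same pivot disintegration, Lemma \ref{Ch3Sec6: LemContinuityDensities}, and the bridge continuity coming from \eqref{Ch3Sec6: EqBridgePlusAbsCont}--\eqref{Ch3Sec6: EqBridgePlusRevAbsCont} and Lemma \ref{Ch3Sec6: LemContinuityFromLastTime}.

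Two small points. Your worry that $D_s$ ``may blow up'' near $s=0$ and $s=t$ is unfounded: for $y>a$ fixed, $\partial_2 p_s^{(a\times)}(y,a^+)$ is (up to normalization) the first-passage density of $T_a$ from $y$ and vanishes as $s\to 0^+$, so $D_s(y)$ tends to $0$ at both endpoints and is uniformly bounded for $y$ in a compact subset of $(a,\sup I)$ — the paper itself notes this boundedness — which makes the $s$-domination trivial. On the other hand, the interchange of $\eta^{>a}_t$ with the $\varepsilon\to 0$ limit deserves more care than you give it: your dominating sequence $\Vert F_1\Vert_\infty\Vert F_2\Vert_\infty\,\frac{1}{2\varepsilon m(x)}\int_{x-\varepsilon}^{x+\varepsilon}\ell^y_t(\gamma)\,m(y)\,dy$ is $\varepsilon$-dependent, so instead of a fixed dominating function one should invoke Scheff\'e's lemma (the $\eta^{>a}_t$-expectations of these averages converge to $\eta^{>a}_t(\ell^x_t)$ by \eqref{Ch3Sec6: EqLocTimeExc} and continuity, and the averages converge a.s.\ to $\ell^x_t$) to get uniform integrability. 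The paper's Riemann-sum route sidesteps this entirely because its partial sums are dominated outright by $\Vert F_1\Vert_\infty\Vert F_2\Vert_\infty\,\ell^x_t(\gamma)$, which is the one place where the temporal discretization is slightly cleaner than the spatial one.
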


\begin{proof}
It is enough to prove the result in case $F_{1}$ and $F_{2}$ are non-negative, continuous and bounded. On top of that we may assume that there are $s_{\rm min}<s_{\rm max}\in (0,t)$ such that $F_{1}$ respectively $F_{2}$ takes value $0$ if the life-time of a path is smaller than $s_{\rm min}$ respectively $t-s_{\rm max}$, and that there is $C\in I$, $C>a$, such that $F_{1}$ and $F_{2}$ take value $0$ if $\max \gamma >C$. For $j\leq n\in\mathbb{N}$, set 
$\Delta s_{n}:=\frac{1}{n}(s_{\rm max}-s_{\rm min})$ and $s_{j,n}:=s_{\rm min}+j\Delta s_{n}$. Then, almost surely,
\begin{multline}
\label{Ch3Sec6: EqExcLocTimeDesintegApprox}
\int_{0}^{t}F_{1}((\gamma(r))_{0\leq r\leq s})F_{2}((\gamma(s+r))_{0\leq r\leq t-s})d_{s}\ell^{x}_{s}(\gamma)=\\
\lim_{n\rightarrow +\infty}\sum_{j=0}^{n-1}\!F_{1}((\gamma(r))_{0\leq r\leq s_{j,n}})(\ell^{x}_{s_{j+1,n}}(\gamma)-\ell^{x}_{s_{j,n}}(\gamma))F_{2}((\gamma(s_{j+1,n}+r))_{0\leq r\leq t-s_{j+1,n}}).
\end{multline}
Moreover, the right-hand side of \eqref{Ch3Sec6: EqExcLocTimeDesintegApprox} is dominated by 
$\ell^{x}_{t}(\gamma)\Vert F_{1}\Vert_{\infty}\Vert F_{2}\Vert_{\infty}$. Thus, the $\eta_{t}^{>a}$-expectation converges too. Applying \eqref{Ch3Sec6: EqDisintegExcursions} and \eqref{Ch3Sec6: EqLocTimeBridgePlus1} we get
\begin{multline*}
\eta_{t}^{>a}\big(F_{1}((\gamma(r))_{0\leq r\leq s_{j,n}})(\ell^{x}_{s_{j+1,n}}(\gamma)-\ell^{x}_{s_{j,n}}(\gamma))F_{2}((\gamma(s_{j+1,n}+r))_{0\leq r\leq t-s_{j+1,n}})\big)=\\
\int_{0}^{\Delta s_{n}}\int_{(a,C)^{2}}
\mathbb{P}_{y,a^{+}}^{(a\times),s_{j,n} \wedge}(F_{1})\mathbb{P}_{z,a^{+}}^{(a\times),t-s_{j+1,n}}(F_{2})
q_{n}(r,y,z) m(y)dy m(z) dz dr,
\end{multline*}
where
\begin{displaymath}
q_{n}(r,y,z) = \dfrac{\partial_{2}p_{s_{j,n}}^{(a\times)}(y,a^{+})\partial_{2}p_{t-s_{j+1,n}}^{(a\times)}(z,a^{+})}{\partial_{1,2}p_{t}^{(a\times)}(a^{+},a^{+})}p_{r}^{(a\times)}(y,x)p_{\Delta s_{n}-r}^{(a\times)}(x,z).
\end{displaymath} 
The measure $1_{y,z>a\in I}\dfrac{1}{\Delta s_{n}}\int_{0}^{\Delta s_{n}}q_{n}(r,y,z)dr dy dz$ converges weakly as $n\rightarrow +\infty$ to $\delta_{(x,x)}$. The maps $(s,y)\mapsto\partial_{2}p_{s}^{(a\times)}(x,a^{+})$  and  $(s,y)\mapsto\mathbb{P}_{s}^{(a\times),y,a^{+}}(\cdot)$ are continuous. Moreover, 
$\partial_{2}p_{s_{j,n}}^{(a\times)}(y,a^{+})\partial_{2}p_{t-s_{j+1,n}}^{(a\times)}(z,a^{+})$ is uniformly bounded for $j\leq n\in \mathbb{N}$ and $y,z\in (a,C]$. All this ensures that the $\eta_{t}^{>a}$-expectation of the right-hand side of \eqref{Ch3Sec6: EqExcLocTimeDesintegApprox} converges as $n\rightarrow +\infty$ to the right-hand side of \eqref{Ch3Sec6: EqExcLocTimeDesinteg}.
\end{proof}

Now we need only to match the preceding descriptions to prove Proposition \ref{Ch3Sec6: PropConditionnedVervaat}. \eqref{Ch3Sec6: EqDesintegMinBridge} and \eqref{Ch3Sec6: EqExcLocTimeDesinteg2} imply 
\eqref{Ch3Sec6: EqConditionedVervaat}. \eqref{Ch3Sec6: EqDistribMinBridge2} and \eqref{Ch3Sec6: EqLocTimeExc} imply \eqref{Ch3Sec6: EqDistribMinBridge}. The fact that the point where the excursion is split is distributed according to $\frac{d_{s}\ell^{x}_{s}(\gamma)}{\ell^{x}_{t}(\gamma)}$ follows from \eqref{Ch3Sec6: EqExcLocTimeDesinteg}.

\section{Restricting loops to a discrete subset}
\label{Ch3Sec7}

Let $L$ be the generator of a diffusion on $I$ of form \eqref{Ch2Sec2: EqGenerator} and $(X_{t})_{0\leq t <\zeta}$ be the corresponding diffusion. Let $\mathbb{J}$ be a countable discrete subset of $I$. A Markov jump process to nearest neighbors on $\mathbb{J}$ is naturally embedded in the diffusion $X$. In this section we will show that, given any $x,y\in\mathbb{J}$, the image of the measure $\mu^{x,y}_{L}$ through the restriction application that sends a sample paths of the diffusion $(X_{t})_{0\leq t <\zeta}$ to a sample path of a Markov jump process on $\mathbb{J}$, is a measure on $\mathbb{J}$-valued paths that follows the pattern \eqref{Ch3Sec2: EqVarBridgeGraph}. From this we will deduce that the image of the measure $\mu^{\ast}_{L}$ through the restriction to $\mathbb{J}$ is a measure on $\mathbb{J}$-valued loops following the pattern \eqref{Ch3Sec2: EqMesLoopsGraph} and which was studied in \cite{LeJan2011Loops}. This property will be used in Section \ref{Ch4Sec2} to express the law of finite-dimensional marginals of the occupation field of a Poisson ensemble of intensity $\alpha\mu^{\ast}_{L}$.

For a continuous path $(\gamma(t))_{0\leq t\leq T(\gamma)}$ in $I$, endowed with continuous local times, let
\begin{displaymath}
\mathcal{I}^{\mathbb{J}}_{t}(\gamma):=\sum_{x\in \mathbb{J}} \ell^{x}_{t}(\gamma)m(x).
\end{displaymath}
For $s\geq 0$, we introduce the stopping time
\begin{displaymath}
\tau^{\mathbb{J}}_{s}(\gamma):=\inf\lbrace t\geq 0\vert \mathcal{I}^{\mathbb{J}}_{t}(\gamma) \geq s\rbrace.
\end{displaymath}
We write $\gamma^{\mathbb{J}}$ for the path 
$(\gamma(\tau^{\mathbb{J}}_{s}))_{0\leq s\leq\mathcal{I}^{\mathbb{J}}_{T(\gamma)}(\gamma)}$ on $\mathbb{J}$. Let $m_{\mathbb{J}}$ be the measure
\begin{displaymath}
m_{\mathbb{J}}:=\sum_{x\in\mathbb{J}}m(x)\delta_{x}.
\end{displaymath}
The occupation measure of $\gamma^{\mathbb{J}}$ is
\begin{displaymath}
\sum_{x\in\mathbb{J}}\ell^{x}(\gamma)m(x)\delta_{x},
\end{displaymath}
and $(\ell^{x}(\gamma))_{x\in \mathbb{J}}$ are also occupation densities of the restricted path $\gamma^{\mathbb{J}}$ with respect to $m_{\mathbb{J}}$.

The restricted diffusion $X^{\mathbb{J}}$ is a Markov jump process to nearest neighbors on $\mathbb{J}$, potentially with killing. If $x_{0}<x_{1}$ are two consecutive points in $\mathbb{J}$, the jump rate from $x_{0}$ to $x_{1}$ is 
$\frac{1}{m(x_{0})w(x_{0})}\frac{1}{u^{+,x_{0}}(x_{1})}$ and the jump rate from $x_{1}$ to $x_{0}$ is 
$\frac{1}{m(x_{1})w(x_{1})}\frac{1}{u^{-,x_{1}}(x_{0})}$. If $x_{0}<x_{1}<x_{2}$ are three consecutive points in $\mathbb{J}$, then the rate of killing while in $x_{1}$ is
\begin{displaymath}
\dfrac{1}{m(x_{1})w(x_{1})}\left( \dfrac{W(u^{-,x_{2}},u^{+,x_{0}})(x_{1})}{u^{-,x_{2}}(x_{1})u^{+,x_{0}}(x_{1})}
-\dfrac{1}{u^{-,x_{1}}(x_{0})}-\dfrac{1}{u^{+,x_{1}}(x_{2})}\right). 
\end{displaymath}
If $\mathbb{J}$ has a minimum $x_{0}$ and $x_{1}$ is the second lowest point in $\mathbb{J}$, then the killing rate while in $x_{0}$ is 
\begin{displaymath}
\dfrac{1}{m(x_{0})w(x_{0})}\left(\dfrac{W(u^{-,x_{1}},u_{\uparrow})(x_{0})}{u^{-,x_{1}}(x_{0})u_{\uparrow}(x_{0})}
-\dfrac{1}{u^{+,x_{0}}(x_{1})}\right). 
\end{displaymath}
An analogous expression holds for the killing rate while in a possible maximum of $\mathbb{J}$. $X^{\mathbb{J}}$ is transient if and only if $X$ is. Let $L_{\mathbb{J}}$ be the generator of $X^{\mathbb{J}}$. $L_{\mathbb{J}}$ is symmetric relatively to $m_{\mathbb{J}}$. Its Green's function relatively to $m_{\mathbb{J}}$ is $(G(x,y))_{x,y\in I}$, that is the restriction of the Green's function of $L$ to $\mathbb{J}\times \mathbb{J}$. $X^{\mathbb{J}}$ may not be conservative even if the diffusion $X$ is. In case if  $\mathbb{J}$ is not finite, $X^{\mathbb{J}}$ may blow up performing an infinite number of jumps in finite time. Measures $(\mu^{x,y}_{L})_{x,y\in I}$, $\mu_{L}$ and $\mu^{\ast}_{L}$ have discrete space analogues $(\mu^{x,y}_{L_{\mathbb{J}}})_{x,y\in \mathbb{J}}$, $\mu_{L_{\mathbb{J}}}$ and $\mu^{\ast}_{L_{\mathbb{J}}}$, as defined in \cite{LeJan2011Loops}, that follow the patterns \eqref{Ch3Sec2: EqVarBridgeGraph} and 
\eqref{Ch3Sec2: EqMesLoopsGraph}.

\begin{prop}
\label{Ch3Sec7: PropRestrictionLoops}
Let $x,y\in \mathbb{J}$. Then $\gamma\mapsto\gamma^{\mathbb{J}}$ transforms $\mu^{x,y}_{L}$ into $\mu^{x,y}_{L_{\mathbb{J}}}$ and $\mu^{\ast}_{L}$ into $\mu^{\ast}_{L_{\mathbb{J}}}$.
\end{prop}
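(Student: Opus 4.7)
The plan is to prove the statement for the bridge measures $\mu^{x,y}$ first, then derive the loop measure statement by comparing the two sides after multiplication by the local times $\ell^x$ for $x\in\mathbb{J}$ and invoking Corollary \ref{Ch3Sec3: CorLoopLocTimeBridge}.

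For the bridge part, fix $x,y\in\mathbb{J}$ and a nonnegative measurable functional $\widetilde F$ on $\mathbb{J}$-valued paths. By Proposition \ref{Ch3Sec2: PropLocTimeDesinteg} applied to the composed functional $\gamma\mapsto\widetilde F(\gamma^{\mathbb{J}})$,
\[
\mu^{x,y}_L\bigl(\widetilde F(\gamma^{\mathbb{J}})\bigr)=\mathbb{E}_{x}\!\left[\int_{0}^{\zeta}\widetilde F\bigl((X^{\mathbb{J}}_r)_{0\leq r\leq \mathcal{I}^{\mathbb{J}}_t(X)}\bigr)\,d_t\ell^{y}_{t}(X)\right].
\]
I would then perform the time change $s=\mathcal{I}^{\mathbb{J}}_t(X)$. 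Since $y\in\mathbb{J}$, the random measure $d_t\ell^{y}_{t}(X)$ is supported on $\{t:X_{t}=y\}$, which is precisely the set of $t$ such that $s(t)$ lies in the support of $d_s\ell^{y}_{s}(X^{\mathbb{J}})$ (the local time of $X^{\mathbb{J}}$ at $y$ relative to the discrete speed measure $m_{\mathbb{J}}$). The identity $\ell^{y}_{s(t)}(X^{\mathbb{J}})=\ell^{y}_{t}(X)$ holds because the occupation time of $X^{\mathbb{J}}$ at $y$ up to $s(t)$ equals $m(y)\ell^{y}_{t}(X)$, and dividing by $m(y)=m_{\mathbb{J}}(\{y\})$ recovers $\ell^{y}_{t}(X)$. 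Consequently the pushforward of $d_t\ell^{y}_{t}(X)$ by $t\mapsto s(t)$ equals $d_s\ell^{y}_{s}(X^{\mathbb{J}})$, and the integral rewrites as $\mu^{x,y}_{L_{\mathbb{J}}}(\widetilde F)$ by applying Proposition \ref{Ch3Sec2: PropLocTimeDesinteg} to $X^{\mathbb{J}}$ (which, being a symmetric Markov jump process on $\mathbb{J}$ with Green's function $G_{|\mathbb{J}\times\mathbb{J}}$, admits the analogous local time decomposition noted after the proposition).

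For the unrooted loop part, let $\rho:\mathfrak{L}^{\ast}\to\mathfrak{L}^{\ast}_{\mathbb{J}}$ denote the map induced by $\gamma\mapsto\gamma^{\mathbb{J}}$ at the level of unrooted loops. Fix $x\in\mathbb{J}$ and apply $\rho_{\ast}$ to both sides of Corollary \ref{Ch3Sec3: CorLoopLocTimeBridge} written for $L$; using Part 1 with $x=y$ and the fact that $\ell^{x}(\gamma)=\ell^{x}(\gamma^{\mathbb{J}})$ for $x\in\mathbb{J}$, one obtains
\[
\ell^{x}(\widetilde\gamma)\,(\rho_{\ast}\mu^{\ast}_{L})(d\widetilde\gamma)=\rho_{\ast}\pi_{\ast}\mu^{x,x}_{L}=\pi_{\ast}\mu^{x,x}_{L_{\mathbb{J}}}=\ell^{x}(\widetilde\gamma)\,\mu^{\ast}_{L_{\mathbb{J}}}(d\widetilde\gamma),
\]
the last equality being the discrete analogue of Corollary \ref{Ch3Sec3: CorLoopLocTimeBridge} proven in \cite{LeJan2011Loops}. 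Thus $\rho_{\ast}\mu^{\ast}_{L}$ and $\mu^{\ast}_{L_{\mathbb{J}}}$ agree after multiplication by $\ell^{x}(\widetilde\gamma)$ for every $x\in\mathbb{J}$. Summing over $\mathbb{J}$ with weights $m(x)$ (a countable sum of equalities of positive measures), they agree after multiplication by the lifetime $T(\widetilde\gamma)=\sum_{x\in\mathbb{J}}\ell^{x}(\widetilde\gamma)m(x)$ of the restricted loop, which is strictly positive on $\mathfrak{L}^{\ast}_{\mathbb{J}}$. Since both measures charge only loops of positive lifetime, they coincide.

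The main obstacle is the time change identification in Part 1: one must justify rigorously that the random measure $d_t\ell^{y}_{t}(X)$ pushes forward under $t\mapsto\mathcal{I}^{\mathbb{J}}_t(X)$ to $d_s\ell^{y}_{s}(X^{\mathbb{J}})$. This requires that $t\mapsto\mathcal{I}^{\mathbb{J}}_t(X)$ is constant on excursions of $X$ away from $\mathbb{J}$ and strictly increasing on the (Cantor-like) set of times $X$ visits $\mathbb{J}$, so that the two local time processes match on a corresponding set of times, a verification based on the explicit description of $\gamma^{\mathbb{J}}$ and its occupation densities recalled at the start of the section.
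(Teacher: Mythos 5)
Your proposal is correct and follows essentially the same route as the paper: both prove the bridge case via Proposition \ref{Ch3Sec2: PropLocTimeDesinteg} and then deduce the loop case from the identity $\ell^{x}(\gamma)\mu^{\ast}(d\gamma)=\pi_{\ast}\mu^{x,x}(d\gamma)$ in both the continuous and discrete settings. The only cosmetic difference is that the paper parametrizes the bridge identification by the inverse local times $\tau_{l}^{y}$ and $\tau_{l}^{y,\mathbb{J}}$ (observing that restricting $(X_{t})_{0\le t\le\tau_{l}^{y}}$ to $\mathbb{J}$ yields $(X^{\mathbb{J}}_{s})_{0\le s\le\tau_{l}^{y,\mathbb{J}}}$), which sidesteps the pushforward-of-measures bookkeeping you flag as the main obstacle, whereas you express the same fact as a pushforward of $d_{t}\ell^{y}_{t}(X)$ under the time change.
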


\begin{proof}
The representation \eqref{Ch3Sec2: EqBridgeLocalTime} also holds for $\mu^{x,y}_{L_{\mathbb{J}}}$. For $l>0$, let 
\begin{displaymath}
\tau_{l}^{y}:=\inf\lbrace t\geq 0\vert \ell^{y}_{t}(X)> l\rbrace,
\end{displaymath}
and
\begin{displaymath}
\tau_{l}^{y,\mathbb{J}}:=\inf\lbrace s\geq 0\vert \ell^{y}_{s}(X^{\mathbb{J}})> l\rbrace.
\end{displaymath}
Then for any non-negative measurable functional $F$,
\begin{displaymath}
\mu^{x,y}_{L_{\mathbb{J}}}(F(\gamma))=\int_{0}^{+\infty}dl\mathbb{E}_{x}
\left[1_{\tau_{l}^{y,\mathbb{J}}<\mathcal{I}^{\mathbb{J}}_{\zeta}} 
F((X^{\mathbb{J}}_{s})_{0\leq s\leq \tau_{l}^{y,\mathbb{J}}})\right]. 
\end{displaymath}
But, $(X^{\mathbb{J}}_{s})_{0\leq s\leq \tau_{\lambda}^{y,\mathbb{J}}}$ is the image of 
$(X_{t})_{0\leq t\leq \tau_{\lambda}^{y}}$ by the map $\gamma\mapsto\gamma^{\mathbb{J}}$, and 
$\tau_{l}^{y,\mathbb{J}}<\mathcal{I}^{\mathbb{J}}_{\zeta}$ if and only if $\tau_{l}^{y}<\zeta$. Thus, $\mu^{x,y}_{L_{\mathbb{J}}}$ is the image of $\mu^{x,y}_{L}$ through the restriction of paths to $\mathbb{J}$. The second part of the proposition can be deduced from that for any $x\in \mathbb{J}$,
\begin{displaymath}
\ell^{x}(\gamma)\mu^{\ast}_{L}(d\gamma)=\pi_{\ast}\mu^{x,x}_{L}(d\gamma),
\end{displaymath}
and as noticed in \cite{LeJan2011Loops},
\begin{displaymath}
\ell^{x}(\gamma)\mu^{\ast}_{L_{\mathbb{J}}}(d\gamma^{\mathbb{J}})=\pi_{\ast}\mu^{x,x}_{L_{\mathbb{J}}}(d\gamma^{\mathbb{J}}).
\qedhere
\end{displaymath}
\end{proof}

Previous restriction property and the time-change covariance of $\mu^{\ast}$ (Corollary \ref{Ch3Sec3: CorTimeChange}) can be treated in a unified framework of the time change by the inverse of a continuous additive functional. This is done in \cite{FitzsimmonsRosen2012LoopsIsomorphism}, Section $7$.

\section{Measure on loops in case of creation of mass}
\label{Ch3Sec8}

We can further extend the definition of the measures $\mu^{x,y}$ on paths and $\mu$ and $\mu^{\ast}$ on loops to the case of $L$ being a "generator" on $I$ containing a creation of mass term, as in \eqref{Ch2Sec3: GenSignedMeasure}. Doing so will enable us to emphasize further the conjugation invariance of the measure on loops and will be useful in Section \ref{Ch4Sec2} to compute the exponential moments of the occupation field of Poisson ensembles of Markov loops. Let $\nu$ be signed measure on $I$. Let $L^{(0)}:=\frac{1}{m(x)}\frac{d}{dx}\left(\frac{1}{w(x)}\frac{d}{dx}\right)$ and $L:=L^{(0)}+\nu$.

\begin{defi}
\label{Ch3Sec8: DefLoopsSignedMes}
\begin{itemize}
\item The measure on paths is $$\mu_{L}^{x,y}(d\gamma):=\exp\left(\int_{I}l^{x}(\gamma)m(x)\nu(dx)\right)\mu_{L^{(0)}}^{x,y}(d\gamma),$$
\item the measure on rooted loops is $$\mu_{L}(d\gamma):=\exp\left(\int_{I}l^{x}(\gamma)m(x)\nu(dx)\right)\mu_{L^{(0)}}(d\gamma),$$
\item the measure on unrooted loops is $\mu^{\ast}_{L}:=\pi_{\ast}\mu_{L}$.
\end{itemize}
\end{defi}

Definition \ref{Ch3Sec8: DefLoopsSignedMes} is consistent with Properties \ref{Ch3Sec2: PropertyBridges} (iv) and 
\ref{Ch3Sec3: PropertyTrivialitiesLoops} (iii). If $\tilde{\nu}$ is any other signed measure on $I$, then
\begin{equation}
\label{Ch3Sec8: EqAddingSignedMeasure}
\mu_{L + \tilde{\nu}}^{x,y}(d\gamma):=\exp\left(\int_{I}\ell^{x}(\gamma)m(x)\tilde{\nu}(dx)\right)\mu_{L}^{x,y}(d\gamma).
\end{equation}
Same holds for $\mu$ and $\mu^{\ast}$. Under the extended definition, the measures $\mu^{x,y}$ still satisfy Properties \ref{Ch3Sec2: PropertyBridges} (ii), (iii), (v) and (vi). Proposition \ref{Ch3Sec2: PropContinuityBridge} remains true. $\mu$ still satisfies Properties \ref{Ch3Sec3: PropertyTrivialitiesLoops} (i), (ii) and (iv). Proposition 
\ref{Ch3Sec3: PropShiftInv} and Corollary \ref{Ch3Sec3: CorProjSubalg} still hold. The identities 
\eqref{Ch3Sec3: EqLoopMeasureLocTime} and \eqref{Ch3Sec3: EqLoopTimeChange} remain true for $\mu^{\ast}$.
Concerning the conjugation, we have:

\begin{prop}
\label{Ch3Sec8: ProphTransfInv}
Let $u$ be a continuous positive function on $I$ such that $\frac{d^{2}u}{dx^{2}}$ is a signed measure. $u(x)^{2}m(x)dx$ is a speed measure for $\operatorname{Conj}(u,L)$. Then for all $x,y\in I$, $\mu^{x,y}_{\operatorname{Conj}(u,L)} = \frac{1}{u(x)u(y)}\mu^{x,y}_{L}$, and 
$\mu_{\operatorname{Conj}(u,L)}=\mu_{L}$. Conversely, if $L$ and $\widetilde{L}$ are two "generators" with or without creation of mass such that $\mu_{L}^{\ast}  = \mu_{\widetilde{L}}^{\ast}$, then there is a positive continuous function $u$ on $I$ such that $\frac{d^{2}u}{dx^{2}}$ is a signed measure and $\widetilde{L}=\operatorname{Conj}(u,L)$.
\end{prop}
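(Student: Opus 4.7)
That $h(x)^{2}m(x)\,dx$ is a speed measure for $Conj(h,L)$ is immediate from the explicit form of $Conj(h,L)$ given in section \ref{Ch2Sec3}, namely $Conj(h,L)=\tilde{L}^{(0)}+(\nu+h^{-1}L^{(0)}h)$ with $\tilde{L}^{(0)}=\frac{1}{h^{2}m}\frac{d}{dx}(\frac{h^{2}}{w}\frac{d}{dx})$.

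For the identity $\mu^{x,y}_{Conj(h,L)}=\frac{1}{h(x)h(y)}\mu^{x,y}_{L}$, my plan is to reduce to the pure-diffusion case and then argue via Feynman-Kac / Girsanov. Applying definition \ref{Ch3Sec8: DefLoopsSignedMes} and using that the density $\tilde{\ell}^{z}(\gamma)\tilde{m}(z)=\ell^{z}(\gamma)m(z)$ of the occupation measure is intrinsic, one obtains
\begin{displaymath}
\mu^{x,y}_{Conj(h,L)}(d\gamma)=\exp\!\Big(\int_{I}\ell^{z}(\gamma)m(z)\nu(dz)\Big)\mu^{x,y}_{Conj(h,L^{(0)})}(d\gamma),
\end{displaymath}
which reduces the claim to the case $L=L^{(0)}$. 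Property \ref{Ch3Sec2: PropertyBridges} (vii) handles superharmonic $h$; for general $h$ I would apply Itô's formula to $\log h(X_{t})$ along the $L^{(0)}$-diffusion $X$ to show that
\begin{displaymath}
M^{h}_{t}:=\frac{h(X_{t})}{h(x)}\exp\!\Big(-\int_{0}^{t}\frac{L^{(0)}h}{h}(X_{s})\,ds\Big)
\end{displaymath}
is a local martingale on $[0,\zeta^{(0)})$, and that the Girsanov change of measure by $M^{h}_{t}$ yields the $\tilde{L}^{(0)}$-diffusion. Combining this with the local-time representation of proposition \ref{Ch3Sec2: PropLocTimeDesinteg}, rewriting $\int_{0}^{t}(L^{(0)}h/h)(X_{s})\,ds$ as the signed-measure pairing $\int\ell^{z}_{t}(X)m\,(h^{-1}L^{(0)}h)(dz)$, and using $h(X_{t})=h(y)$ on the support of $d\ell^{y}_{t}$, one reaches
\begin{displaymath}
\mu^{x,y}_{\tilde{L}^{(0)}}(d\gamma)=\frac{1}{h(x)h(y)}\exp\!\Big(-\int\ell^{z}(\gamma)m(z)\,(h^{-1}L^{(0)}h)(dz)\Big)\mu^{x,y}_{L^{(0)}}(d\gamma).
\end{displaymath}
The compensating exponential cancels the one which (per definition \ref{Ch3Sec8: DefLoopsSignedMes}) separates $\mu^{x,y}_{\tilde L^{(0)}}$ from $\mu^{x,y}_{Conj(h,L^{(0)})}=\mu^{x,y}_{\tilde L^{(0)}+h^{-1}L^{(0)}h}$, giving the desired identity. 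Then $\mu_{Conj(h,L)}=\mu_{L}$ follows by integrating $\mu^{x,x}_{Conj(h,L)}=h(x)^{-2}\mu^{x,x}_{L}$ against the speed measure $\tilde{m}(x)\,dx=h(x)^{2}m(x)\,dx$ in definition \ref{Ch3Sec3: DefMesLoops}, the $h(x)^{\pm 2}$ cancelling.

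For the converse, suppose $\mu_{L}=\mu_{\tilde{L}}$, so that $\mu^{*}_{L}=\mu^{*}_{\tilde{L}}$. I would work locally: by property \ref{Ch3Sec3: PropertyTrivialitiesLoops} (ii), which transfers verbatim to the extended framework, $\mu^{*}_{L_{\vert J}}=\mu^{*}_{\tilde{L}_{\vert J}}$ for every compact $J\subset I$. Lemma \ref{Ch2Sec1: LemNonDecrSol} furnishes $\lambda>0$ (depending on $J$) such that $L_{\vert J}-\lambda\,dz$ and $\tilde{L}_{\vert J}-\lambda\,dz$ both admit positive harmonic functions on $J$, hence lie in $\mathfrak{D}^{-}$ with finite Green's functions; the loop-measure equality is preserved since both sides pick up the same weight $e^{-\lambda T(\gamma)}$. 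Extending proposition \ref{Ch3Sec4: PropMultLocTimeDens} and the formula \eqref{Ch3Sec4: EqMultLocTimeDensity} to $\mathfrak{D}^{-}$ by conjugating with a positive harmonic function (invoking the h-transform invariance of $\mu^{*}$ established above), one can run the Green's-function identification from the proof of proposition \ref{Ch3Sec4: PropConversehTransfInv} to produce a positive $h_{J}$ on $J$ with $\tilde{L}_{\vert J}=Conj(h_{J},L_{\vert J})$. Since the conjugation relation determines $h_{J}$ up to a positive multiplicative constant, renormalising along an exhausting sequence of compacts yields a global $h$ on $I$ with $\tilde{L}=Conj(h,L)$.

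The main obstacle will be the Itô/Girsanov step when $h$ is only continuous with $d^{2}h/dx^{2}$ a signed measure: one must justify the semimartingale decomposition of $h(X_{t})$ at this regularity, handling atoms and jumps of $dh/dx$ via a Tanaka-Meyer type formula, and interpret $\int_{0}^{t}(L^{(0)}h/h)(X_{s})\,ds$ as a pairing between the occupation density $\ell^{z}_{t}m$ and the signed measure $h^{-1}L^{(0)}h$ (in the sense of \ref{Ch3Sec8: DefLoopsSignedMes}) rather than a naive Riemann integral. A secondary technical point is the clean extension of multiple-local-time identities to $\mathfrak{D}^{-}$, which should follow routinely from h-transform invariance.
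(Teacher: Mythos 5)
Your proposal is correct in broad outline but takes a noticeably heavier route than the paper, and the very technical obstacle you flag at the end is precisely what the paper's argument sidesteps. For the direct statement, the paper does not re-derive the $h$-transform identity from scratch; it observes that one can always choose a positive Radon measure $\tilde{\kappa}$ large enough that both $L-\tilde{\kappa}$ and $Conj(h,L)-\tilde{\kappa}=Conj(h,L-\tilde{\kappa})$ are generators of killed diffusions. The latter condition forces $(L-\tilde{\kappa})h\leq 0$, i.e. $h$ is superharmonic for $L-\tilde{\kappa}$, so property \ref{Ch3Sec2: PropertyBridges}~(vii) applies \emph{as stated}, giving $\mu^{x,y}_{Conj(h,L)-\tilde{\kappa}}=\frac{1}{h(x)h(y)}\mu^{x,y}_{L-\tilde{\kappa}}$; one then adds $\tilde{\kappa}$ back on both sides via \eqref{Ch3Sec8: EqAddingSignedMeasure}. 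Your reduction from $L$ to $L^{(0)}$ is sound, but it does not make $h$ superharmonic, so you are forced into the Girsanov / Tanaka--Meyer computation with $d^{2}h/dx^{2}$ only a signed measure — a genuine technical burden that the $\tilde{\kappa}$-shift trick removes entirely. For the converse, the paper again applies the same global $\tilde{\kappa}$-subtraction and invokes proposition \ref{Ch3Sec4: PropConversehTransfInv} directly, rather than your local restrict-to-compacts, conjugate, run-the-Green's-function-argument, and patch strategy; your plan should work (the patching is unproblematic because the local $h_{J}$ are each determined up to a multiplicative constant), but it redoes internally what \ref{Ch3Sec4: PropConversehTransfInv} and the killing-measure reduction already package. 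In short: your approach would likely succeed after considerable effort, but the intended proof is a two-line reduction to the diffusion case in both directions, obtained by subtracting a single sufficiently large killing measure globally.
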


\begin{proof}
There is a positive Radon measure $\tilde{\kappa}$ on $I$ such that both $L-\tilde{\kappa}$ and $\operatorname{Conj}(h,L)-\tilde{\kappa}$ are generators of (killed) diffusions. But
\begin{displaymath}
\operatorname{Conj}(u,L)-\tilde{\kappa}=\operatorname{Conj}(u,L-\tilde{\kappa}).
\end{displaymath}
It follows that ${\mu^{x,y}_{\operatorname{Conj}(u,L)-\tilde{\kappa}} = \frac{1}{u(x)u(y)}\mu^{x,y}_{L-\tilde{\kappa}}}$, and $\mu_{\operatorname{Conj}(u,L)-\tilde{\kappa}}=\mu_{L-\tilde{\kappa}}$. Applying \eqref{Ch3Sec8: EqAddingSignedMeasure} 
we get the result.

If $\mu_{L}^{\ast}  = \mu_{\widetilde{L}}^{\ast}$, we can again consider $\tilde{\kappa}$ a positive Radon measure on $I$ such that both $L-\tilde{\kappa}$ and $\widetilde{L}-\tilde{\kappa}$ are generators of (killed) diffusions.
We have $\mu_{L-\tilde{\kappa}}^{\ast}  = \mu_{\widetilde{L}-\tilde{\kappa}}^{\ast}$. According to Proposition \ref{Ch3Sec4: PropConversehTransfInv}, there is a positive continuous function $u$ on $I$ such that $\frac{d^{2}u}{dx^{2}}$ is a signed measure and 
$\widetilde{L}-\tilde{k}=\operatorname{Conj}(u,L-\tilde{k})$. Then, $\widetilde{L}=\operatorname{Conj}(u,L)$.
\end{proof}

Similarly to the case of generators of diffusions (Section \ref{Ch3Sec5}), one can consider $L$-harmonic functions $u^{-,x}$ and $u^{+,x}$ in case of $L$ containing creation of mass. If $L\in\mathfrak{D}^{+}$, then $u^{-,x}$ respectively $u^{+,x}$ is not necessarily positive on $I\cap(-\infty,x)$ respectively $I\cap(x,+\infty)$. Let
\begin{displaymath}
M(x):=\sup\lbrace y\in I, y\geq x\vert\forall z\in (x,y),u^{+,x}(z)> 0\rbrace\in I\cup\lbrace\sup I\rbrace.
\end{displaymath}
If $L\in\mathfrak{D}^{0,-}$, then for all $x\in I$, $M(x)=\sup I$. Let $y\in I$, $y>x$. If $y<M(x)$, then 
$L_{\vert (x,y)}\in\mathfrak{D}^{-}$. If $y=M(x)$, then $L_{\vert (x,y)}\in\mathfrak{D}^{0}$. If $y>M(x)$, then 
$L_{\vert (x,y)}\in\mathfrak{D}^{+}$. The diffusion $\rho^{+,x}$ of generator 
$L^{+,x}=\operatorname{Conj}(u^{+,x},L^{+,x}_{\vert (x,M(x))})$ is defined on $(x,M(x))$. Similarly for $\rho^{-,y}$. Moreover, if $M(x)\in I$, then ${L^{+,x}_{\vert (x,M(x))}=L^{-,M(x)}_{\vert (x,M(x))}}$.

If $L\in \mathfrak{D}^{0,-}$, the description of the measure on $(\min\gamma,\max\gamma)$ induced by $\mu^{\ast}$ as well as of the probability measures obtained by conditioning $\mu^{\ast}$ by the value of $(\min\gamma,\max\gamma)$ is the same as given by Corollary \ref{Ch3Sec5: CorDesintegMinGen}, with the same formal expressions. Next we state what happens if $L\in\mathfrak{D}^{+}$:

\begin{prop}
\label{Ch3Sec8: PropMinMaxD+}
Let $L\in\mathfrak{D}^{+}$. The measure on $(\min \gamma, \max \gamma)$ induced by $\mu^{\ast}$ and restricted to the set $\lbrace a\in I, b\in (a,M(a))\rbrace$ is $1_{a\in I, b\in (a,M(a))}\frac{da db}{u^{+,a}(b)u^{-,b}(a)}$. If $a<b<M(a)$, then the probability measure obtained through conditioning by $(\min \gamma, \max \gamma)=(a,b)$ has the same description as in corollary \ref{Ch3Sec5: CorDesintegMinGen}. Outside the set $\lbrace a\in I, b\in (a,M(a))\rbrace$, the measure on $(\min \gamma, \max \gamma)$ is not locally finite. That is to say that, if $a<b\in I$ and $b\geq M(a)$, then for all $\varepsilon>0$.
\begin{equation}
\label{Ch3Sec8: EqInfiniteTailsLoops}
\mu^{\ast}(\lbrace\min \gamma \in (a,a+\varepsilon), \max \gamma \in (b-\varepsilon,b)\rbrace)=+\infty.
\end{equation}
\end{prop}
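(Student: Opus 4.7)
The strategy is two-fold: first, establish the explicit density on $\{b<M(a)\}$ by a restriction-and-h-transform reduction to Corollary~\ref{Ch3Sec5: CorDesintegMinGen}; second, deduce \eqref{Ch3Sec8: EqInfiniteTailsLoops} by treating separately $b=M(a)$ (where the Part~1 density already blows up non-integrably near $(a,b)$) and $b>M(a)$ (where a Feynman--Kac argument will show the conditional expectation of the tilting factor is identically $+\infty$ on $R_{\varepsilon}$).

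For the density, fix $a_{0}<b_{0}<M(a_{0})$. The zero of $u^{+,a_{0}}$ at $M(a_{0})$ is simple (otherwise Cauchy--Lipschitz uniqueness forces $u^{+,a_{0}}\equiv 0$); combined with continuous dependence of $u^{+,a}$ on $a$ this yields continuity of $M$ at $a_{0}$, so $\delta>0$ may be chosen with $J:=(a_{0}-\delta,b_{0}+\delta)\cap I$ and $b_{0}+\delta<M(a_{0}-\delta)$. Then $u^{+,a_{0}-\delta}_{L}$ is a positive $L$-harmonic on $J$ whose zero set avoids $\overline{J}$, so by Proposition~\ref{Ch2Sec3: PropStability}(v), $L_{\vert J}\in\mathfrak{D}^{-}$. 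Setting $\widetilde{L}_{J}:=Conj(u^{+,a_{0}-\delta}_{L},L_{\vert J})$ gives a transient-diffusion generator, and Proposition~\ref{Ch3Sec8: ProphTransfInv} gives $\mu^{\ast}_{L_{\vert J}}=\mu^{\ast}_{\widetilde{L}_{J}}$. Corollary~\ref{Ch3Sec5: CorDesintegMinGen} applied to $\widetilde{L}_{J}$ produces a density whose denominator $u^{+,a}_{\widetilde{L}_{J}}(b)\,u^{-,b}_{\widetilde{L}_{J}}(a)$ equals $u^{+,a}_{L}(b)\,u^{-,b}_{L}(a)$, thanks to the h-transform relation $u^{+,a}_{Conj(h,L)}=(h(a)/h)\,u^{+,a}_{L}$ (and its symmetric analogue for $u^{-,b}$), and the $L_{\vert J}$-harmonics coincide with the $L$-harmonics on $J$. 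Covering $\{a<b<M(a)\}$ by such $J$'s globalises the formula, while the conditional description transfers since $L^{+,a}=\widetilde{L}_{J}^{+,a}$.

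For \eqref{Ch3Sec8: EqInfiniteTailsLoops} when $b=M(a)$: Sturm's property~\ref{Ch2Sec1: PropertySturm}(i) applied to $(u^{+,a'},u^{-,b})$ for $a'\in(a,b)$ (the latter positive on $(a,b]$ because the Wronskian identity $w(b)u^{-,b}(a)=w(a)u^{+,a}(b)=0$ gives $u^{-,b}(a)=0$) yields $u^{+,a'}(b)>0$, hence $M(a')>b$, so $R_{\varepsilon}$ lies inside the nice set. By simplicity of the zero, $u^{+,a'}(b')\simeq A(a'-a)+B(b'-b)$ with $A=(u^{-,b})'(a^{+})>0$ and $B=(u^{+,a})'(b^{+})<0$, and the Wronskian identity gives an analogous expansion for $u^{-,b'}(a')$. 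Substituting into the density yields an integrand of order $(As+|B|t)^{-2}$ with $s=a'-a$, $t=b-b'$, and direct computation shows
\[
\int_{0}^{\varepsilon}\!\!\int_{0}^{\varepsilon}(As+|B|t)^{-2}\,ds\,dt=\frac{1}{A|B|}\int_{0}^{\varepsilon}\frac{dt}{t}+O(1)=+\infty.
\]

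For $b>M(a)$, the nice region is empty inside $R_{\varepsilon}$ for small $\varepsilon$. Instead, writing $\mu^{\ast}_{L}=\exp(\int\ell\,m\,\nu)\,\mu^{\ast}_{L^{(0)}}$ and disintegrating via the Part~1 density for $\mu^{\ast}_{L^{(0)}}$ (in $\mathfrak{D}^{0,-}$) gives
\[
\mu^{\ast}_{L}(R_{\varepsilon})=\int_{R_{\varepsilon}}\mathbb{E}^{a',b'}_{L^{(0)}}\!\left[\exp\!\Bigl(\int\!\ell^{x}\,m(x)\,\nu(dx)\Bigr)\right]\frac{da'\,db'}{u^{+,a'}_{L^{(0)}}(b')\,u^{-,b'}_{L^{(0)}}(a')},
\]
where $\mathbb{E}^{a',b'}_{L^{(0)}}$ denotes expectation under $\mu^{\ast}_{L^{(0)}}$ conditioned on $(\min,\max)=(a',b')$. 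Corollary~\ref{Ch3Sec5: CorDesintegMinGen} decomposes this conditional loop into two independent first-passage paths of $\rho^{+,a'}_{L^{(0)}}$ and $\rho^{-,b'}_{L^{(0)}}$, and the exponential factorises; each factor is the Feynman--Kac value $v(a')=\mathbb{E}_{a'}[\exp(\int_{0}^{T^{+}_{b'}}\!\ell\,m\,\nu)\mathbf{1}_{T^{+}_{b'}<\zeta}]$, and $v$ is a positive solution of $(L^{(0),+,a'}+\nu)v=0$ on $(a',b')$ with $v(b')=1$. Because $L^{(0),+,a'}+\nu=Conj(u^{+,a'}_{L^{(0)}},L)$ and the classes $\mathfrak{D}^{\pm,0}$ are invariant under h-transform, such a finite positive $v$ exists iff $L_{\vert(a',b')}\in\mathfrak{D}^{-}$; hence the expectation equals $+\infty$ whenever $L_{\vert(a',b')}\in\mathfrak{D}^{+}$. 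By Proposition~\ref{Ch2Sec3: PropPositivity}, any test function witnessing $L_{\vert(a,b)}\in\mathfrak{D}^{+}$ is compactly supported in some $(a+\eta,b-\eta)$, so $L_{\vert(a',b')}\in\mathfrak{D}^{+}$ for every $(a',b')\in R_{\varepsilon}$ with $\varepsilon<\eta$, making the integrand identically $+\infty$ on $R_{\varepsilon}$; the larger-$\varepsilon$ case follows by monotonicity. The main obstacle I anticipate is the rigorous Feynman--Kac identification $v<\infty\iff L_{\vert(a',b')}\in\mathfrak{D}^{-}$, which requires care at the entrance boundary $a'$ of $\rho^{+,a'}_{L^{(0)}}$ and at the possible marginal $\mathfrak{D}^{0}$-case on the critical curve $b=M(a)$.
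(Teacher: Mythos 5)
Your proposal follows a valid strategy and handles the density on $\{b<M(a)\}$ essentially as the paper does (cover by subintervals whose restriction is in $\mathfrak{D}^{0,-}$, apply Corollary~\ref{Ch3Sec5: CorDesintegMinGen}, and invoke $h$-transform invariance), but for the two divergence cases you take a genuinely different and considerably longer route than the paper, and the $b>M(a)$ branch contains the gap you correctly anticipate.

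For $b=M(a)$ the paper argues in one stroke: $L_{|(a,b)}\in\mathfrak{D}^{0}$, so $1_{a<\gamma<b}\,\mu^{\ast}$ is the image of $\mu^{\ast}_{BM}$ on $\mathbb{R}$ under scale and time change, and the Brownian $(\min,\max)$-measure $(y-x)^{-2}\,dx\,dy$ trivially gives infinite mass to $(-\infty,A)\times(B,+\infty)$. Your alternative — a direct first-order expansion of the density $1/(u^{+,a'}(b')u^{-,b'}(a'))$ near the corner — is correct and worth noting as a hands-on variant, but the joint expansion of $u^{+,a'}(b')$ in $(a',b')$ needs justification (write $u^{+,a'}$ in a basis $\{\phi_{1}=u^{+,a},\phi_{2}\}$ and use $C^{1}$-regularity of each $\phi_{i}$; it is not automatic for measure coefficients), your value $A=(u^{-,b})'(a^{+})$ is off by a factor $w(b)/w(a)$ (harmless), and the claim that $u^{-,b}>0$ on $(a,b]$ should be argued via $u^{-,b}$ being a positive multiple of $u^{+,a}$, not just from $u^{-,b}(a)=0$.

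For $b>M(a)$ the paper's argument is a three-line comparison that entirely sidesteps Feynman--Kac: by Proposition~\ref{Ch2Sec3: PropStability}(iv) choose $\kappa\geq 0$ with $L_{|(a,b)}-\kappa\in\mathfrak{D}^{0}$; adding back $\kappa$ only increases $\mu^{\ast}$ since $\mu^{\ast}_{L-\kappa}=e^{-\int\ell\kappa m}\mu^{\ast}_{L}\leq\mu^{\ast}_{L}$; apply the $b=M_{L-\kappa}(a)$ case to $L_{|(a,b)}-\kappa$. Your route — disintegrating $\mu^{\ast}_{L^{(0)}}$ over $(\min,\max)$ and arguing the tilting factor has infinite conditional expectation — is correct in spirit, but the identification ``$v<\infty\iff L_{|(a',b')}\in\mathfrak{D}^{-}$'' is a genuine gap, not a technicality: deducing from $v(a')<\infty$ that $v$ is a continuous, positive $(L^{(0),+,a'}+\nu)$-harmonic function (so that Proposition~\ref{Ch2Sec3: PropPositivity} forces $L_{|(a',b')}\in\mathfrak{D}^{0,-}$) requires propagating finiteness of $v$ across $(a',b')$ via the strong Markov property, establishing continuity, handling the entrance boundary at $a'$, and treating the critical curve separately. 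None of this is immediate when $\nu$ has a large positive part — precisely the regime where the Feynman--Kac semigroup fails to be bounded — and all of it is avoided by the paper's monotonicity trick, which is the preferable argument here.
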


\begin{proof}
For the behavior on $\lbrace a\in I, b\in (a,M(a))\rbrace$: There is a countable collection $(I_{j})_{j\geq 0}$ of open subintervals of $I$ such that
\begin{displaymath}
\lbrace a\in I, b\in (a,M(a))\rbrace = \bigcup_{j\geq 0}\lbrace x<y\in I_{j}\rbrace.
\end{displaymath}
Since for all $j$, $L_{\vert I_{j}}\in\mathfrak{D}^{0,-}$, Corollary \ref{Ch3Sec5: CorDesintegMinGen} applies to 
$L_{\vert I_{j}}$. Combining the descriptions on different $\lbrace a<b\in I_{j}\rbrace$, we get the description on $\lbrace a\in I, b\in (a,M(a))\rbrace$.

For the behavior outside  $\lbrace a\in I, b\in (a,M(a))\rbrace$: Let $A<B\in\mathbb{R}$. Then, 
\begin{equation}
\label{Ch3Sec8: EqInfiniteTailsLoopsBM}
\mu^{\ast}_{BM}(\lbrace\min\gamma <A, \max\gamma >B\rbrace)=\int_{B}^{+\infty}\int_{-\infty}^{A}\dfrac{da db}{(b-a)^{2}} = +\infty.
\end{equation}
If $a<b\in I$ and $M(a)=b$, then $1_{a<\gamma <b}\mu^{\ast}$ is the image of $\mu^{\ast}_{BM}$ through a change of scale and time. In this case, \eqref{Ch3Sec8: EqInfiniteTailsLoops} follows from \eqref{Ch3Sec8: EqInfiniteTailsLoopsBM}. If $b>M(a)$, then $L_{\vert (a,b)}\in\mathfrak{D}^{+}$. According to Proposition \ref{Ch2Sec3: PropStability} (iv), there is a positive measure Radon measure $\kappa$ on $(a,b)$ such that $L_{\vert (a,b)}-\kappa\in\mathfrak{D}^{0}$. From what precedes, \eqref{Ch3Sec8: EqInfiniteTailsLoops} holds for $\mu^{\ast}_{L_{\vert (a,b)}-\kappa}$. Moreover, $\mu^{\ast}_{L_{\vert (a,b)}}\geq\mu^{\ast}_{L_{\vert (a,b)}-\kappa}$. So, \eqref{Ch3Sec8: EqInfiniteTailsLoops} holds for $\mu^{\ast}_{L_{\vert (a,b)}}$.
\end{proof}

\chapter{Occupation fields of the Poisson ensembles of Markov loops}
\label{Ch4}

\section{Inhomogeneous continuous state branching processes with immigration}
\label{Ch4Sec1}

In this chapter we will introduce the Poisson ensembles of Markov loops $\mathcal{L}_{\alpha}$ of intensity $\alpha\mu^{\ast}$ and consider their occupation fields, that is to say the sums over individual loops of the local times at some level.
We will identify these occupation fields as inhomogeneous continuous state branching processes with immigration. This will be done in Section \ref{Ch4Sec2}. In the Section \ref{Ch4Sec1} we will give the basic properties of such branching processes. In Section \ref{Ch4Sec3} we will deal with the particular case of the intensity being $\frac{1}{2}\mu^{\ast}$, in relation with Dynkin's isomorphism. 

Let $I$ be an open interval of $\mathbb{R}$. We will consider stochastic processes where $x\in I$ is the evolution variable. We do not call it time because in the sequel it will rather represent a space variable. Let $(\mathbb{B}_{x})_{x\in\mathbb{R}}$ be a standard Brownian motion. Consider the following SDEs:
\begin{equation}
\label{Ch4Sec1: EqBranchingMechanism}
d\widetilde{Z}_{x}=\sigma(x)\sqrt{\widetilde{Z}_{x}}d\mathbb{B}_{x} + b(x)\widetilde{Z}_{x}dx,
\end{equation}
\begin{equation}
\label{Ch4Sec1: EqBranchingImmigration}
dZ_{x} = \sigma(x)\sqrt{Z_{x}} d\mathbb{B}_{x} + b(x)Z_{x} dx + c(x) dx.
\end{equation}
$d\mathbb{B}_{x}$ is the spatial white noise. We use Itô's integral for it, as our space is ordered incresingly.

For our needs we will assume that $\sigma$ is positive and continuous on $I$, that $b$ and $c$ are only locally bounded and that $c$ is non negative. In this case existence and pathwise uniqueness holds for 
\eqref{Ch4Sec1: EqBranchingMechanism} and \eqref{Ch4Sec1: EqBranchingImmigration} (see \cite{RevuzYor1999BMGrundlehren}, Section IX.3), and $\widetilde{Z}$ and $Z$ take values in $\mathbb{R}_{+}$. $0$ is an absorbing state for $\widetilde{Z}$.

\eqref{Ch4Sec1: EqBranchingMechanism} satisfies the branching property: if $\widetilde{Z}^{(1)}$ and $\widetilde{Z}^{(2)}$ are two independent processes solutions in law to \eqref{Ch4Sec1: EqBranchingMechanism}, defined on 
$I\cap[x_{0},+\infty)$, then $\widetilde{Z}^{(1)} + \widetilde{Z}^{(2)}$ is a solution in law to 
\eqref{Ch4Sec1: EqBranchingMechanism}. If $\widetilde{Z}$ and $Z$ are two independent processes, $\widetilde{Z}$ solution in law to \eqref{Ch4Sec1: EqBranchingMechanism} and $Z$ solution in law to \eqref{Ch4Sec1: EqBranchingImmigration}, defined on $I\cap[x_{0},+\infty)$, then $Z+\widetilde{Z}$ is a solution in law to \eqref{Ch4Sec1: EqBranchingImmigration}. Solutions to \eqref{Ch4Sec1: EqBranchingImmigration} are (inhomogeneous) continuous state branching processes with immigration. The branching mechanism is given by \eqref{Ch4Sec1: EqBranchingMechanism} and the immigration measure is $c(x) dx$. The homogeneous case ($\sigma$, $b$ and $c$ constant) was extensively studied. 
See \cite{KawazuWatanabe1971HomCSBI,ShigaWatanabe73BesselCBI}.

The case of inhomogeneous branching without immigration reduces to the homogeneous case as follows: Let $x_{0}\in I$ and let
\begin{displaymath}
C(x):=\exp\left(-\int_{x_{0}}^{x} b(y)\,dy\right),\quad A(x):=\int_{x_{0}}^{x} \sigma(y)^{2}C(y)^{2}dy.
\end{displaymath}
If $(\widetilde{Z}_{x})_{x\in I}$ is a solution to \eqref{Ch4Sec1: EqBranchingMechanism}, then 
$(C(A^{-1}(a))\widetilde{Z}_{A^{-1}(a)})_{a\in A(I)}$ is a solution in law to
\begin{displaymath}
d\widetilde{\mathcal{Z}}_{a} = 2\sqrt{\widetilde{\mathcal{Z}}_{a}} d\mathbb{B}_{a}.
\end{displaymath}

Let $\widetilde{Z}$ be a solution to \eqref{Ch4Sec1: EqBranchingMechanism} defined on $I\cap[x_{0},+\infty)$, starting at $x_{0}$ with the initial condition $\widetilde{Z}_{x_{0}}=z_{0}\geq 0$. Then, for $\lambda\geq 0$ and $x\in I$, 
$x\geq x_{0}$:
\begin{displaymath}
\mathbb{E}_{\widetilde{Z}_{x_{0}}=z_{0}}\left[e^{-\lambda \widetilde{Z}_{x}}\right] = e^{-z_{0}\psi(x_{0},x,\lambda)}.
\end{displaymath}
$\psi(x_{0},x,\lambda)$ depends continuously on $(x_{0},x,\lambda)$. If $x=x_{0}$, then
\begin{equation}
\label{Ch4Sec1: EqCharBranchingInitialCond}
\psi(x_{0},x_{0},\lambda)=\lambda .
\end{equation}
If $x_{0}\leq x_{1}\leq x_{2}\in I$, then
\begin{displaymath}
\psi(x_{0},x_{2},\lambda) = \psi(x_{0},x_{1},\psi(x_{1},x_{2},\lambda)).
\end{displaymath}
$\psi$ satisfies the differential equation
\begin{equation}
\label{Ch4Sec1: EqCharEqBranching}
\dfrac{\partial \psi}{\partial x_{0}}(x_{0},x,\lambda) = 
\dfrac{\sigma(x_{0})^{2}}{2}\psi(x_{0},x,\lambda)^{2} - b(x_{0})\psi(x_{0},x,\lambda).
\end{equation}
If $b$ is not continuous, equation \eqref{Ch4Sec1: EqCharEqBranching} should be understand in the weak sense. If $b$ is continuous, then \eqref{Ch4Sec1: EqCharEqBranching} satisfies the Cauchy-Lipschitz conditions, and $\psi$ is uniquely determined by \eqref{Ch4Sec1: EqCharEqBranching} and the initial condition \eqref{Ch4Sec1: EqCharBranchingInitialCond}. This is also the case even if $b$ is not continuous. Indeed, by considering $C(x)\widetilde{Z}_{x}$ rather than $\widetilde{Z}_{x}$, that is to say considering $\frac{C(x)}{C(x_{0})}\psi(x_{0},x,\lambda)$ rather than 
$\psi(x_{0},x,\lambda)$, we get rid of $b$.

Inhomogeneous branching processes are related to the local times of general one-dimensional diffusions:

\begin{prop}
\label{Ch4Sec1: PropRayKnight}
Let $x_{0}\in I$ and let $(X_{t})_{0\leq t<\zeta}$ be a diffusion on $I$ of generator $L$ of form 
\eqref{Ch2Sec2: EqGenerator} starting from $x_{0}$. Let $z_{0}>0$ and
\begin{displaymath}
\tau^{x_{0}}_{z_{0}}:=\inf\lbrace t\geq 0\vert \ell^{x_{0}}_{t}(X)> z_{0}\rbrace.
\end{displaymath}
Then, conditional on $\tau^{x_{0}}_{z_{0}}<\zeta$, $(\ell^{x}_{\tau^{x_{0}}_{z_{0}}}(X))_{x\in I, x\geq x_{0}}$ is a solution in law to the SDE
\begin{equation}
\label{Ch4Sec1: EqFirstRayKnight}
d\widetilde{Z}_{x}=\sqrt{2w(x)}\sqrt{\widetilde{Z}_{x}} d\mathbb{B}_{x}+ 
2\dfrac{d\log u_{\downarrow}}{dx}(x)\widetilde{Z}_{x} dx.
\end{equation}
\end{prop}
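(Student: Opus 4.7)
The plan is to identify the process $(\ell^y_{\tau^{x_0}_{z_0}}(X))_{y \geq x_0}$ as a Markov process in the spatial variable $y$, conditionally on $\tau^{x_0}_{z_0} < \zeta$, and then compute its infinitesimal dynamics by reducing to the Brownian case. First, I would establish the Markov property: for $y > x_0$, by the strong Markov property of $X$ and Itô's excursion theory at the level $y$, the trajectory $(X_t)_{0 \leq t \leq \tau^{x_0}_{z_0}}$ decomposes, conditionally on $\ell^y_{\tau^{x_0}_{z_0}} = z$, as a Poisson point process of excursions above $y$ of intensity $z\cdot \eta^{>y}_{exc}$ (parametrized by the local time at $y$), together with a ``skeleton'' of excursions below $y$. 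Since the local times at levels $y' \geq y$ are measurable with respect to the excursions above $y$ alone, they are conditionally independent of the skeleton given $\ell^y_{\tau^{x_0}_{z_0}}$, which yields the Markov property of $y \mapsto \ell^y_{\tau^{x_0}_{z_0}}$.

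To identify the law, I would reduce to Brownian motion using the transformations from Chapter 2 and Section 3.3. First the h-transform by $u_\downarrow$: since $Lu_\downarrow = 0$, this yields a diffusion $\widetilde X$ of generator $Conj(u_\downarrow, L)$ without killing measure, for which the analogous ``$\tilde u_\downarrow$'' is identically $1$. Next, changing scale by a primitive of the new $\tilde w = w/u_\downarrow^2$ and applying a suitable time change reduces $\widetilde X$ to a standard Brownian motion $\beta$ on an interval of $\mathbb{R}$. The local times of $X$ and of $\beta$ are related by explicit Jacobians arising from each step, and $\tau^{x_0}_{z_0}$ maps to the analogous first-passage time of the Brownian local time at the image of $x_0$ to a suitable threshold.

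For the Brownian motion $\beta$, the classical first Ray--Knight theorem asserts that the local time process at the image stopping time is a $0$-dimensional squared Bessel process, i.e.\ satisfies $dZ = 2\sqrt{Z}\, dB$. Pushing this SDE back through the inverse transformations yields equation \eqref{Ch4Sec1: EqFirstRayKnight}: the scale change contributes the factor $\sqrt{2w(y)}$ in the diffusion coefficient (the factor $2$ originating from the normalization of standard BM), and the h-transform by $u_\downarrow$ contributes the drift term $2\,\frac{d\log u_\downarrow}{dy}(y)\, \widetilde Z_y$, which can also be interpreted as a Girsanov-type drift arising from the martingale $u_\downarrow(X_t)/u_\downarrow(x_0)$.

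The main obstacle is the careful bookkeeping of Jacobians and normalizations when composing these three transformations, since local times are defined relative to specific speed measures and each step reindexes the reference measure. In particular, one must verify (using Property \ref{Ch3Sec2: PropertyBridges}(v)--(vii) and the standard change-of-variables for continuous additive functionals) that the conditioning event transforms correctly, that the initial condition $\widetilde Z_{x_0} = z_0$ is preserved, and that the driving noise of the transformed SDE, after absorbing the time change into the coefficients, is still a standard Brownian motion $\mathbb{B}$ indexed by $x$.
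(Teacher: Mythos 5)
Your proposal follows essentially the same route as the paper: reduce to the Brownian case by composing an h-transform by $u_\downarrow$ with a scale change and time change, apply the Ray--Knight theorem there to obtain a squared Bessel process of dimension $0$, and push the SDE back through the transformations (the paper infers the Markov property from this reduction rather than proving it separately via excursion decomposition, so your first step is an unnecessary detour). One small slip: since $\tau^{x_0}_{z_0}$ is an inverse local time at $x_0$, the relevant result for Brownian motion is the \emph{second} Ray--Knight theorem, not the first; your description of its conclusion (a BESQ$(0)$ process, $dZ = 2\sqrt{Z}\,dB$) is nonetheless correct.
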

\begin{proof}
If $X$ is the Brownian motion on $\mathbb{R}$, then $w\equiv 2$ and $u_{\downarrow}$ is constant. In this case the assertion is the second Ray-Knight theorem. See \cite{RevuzYor1999BMGrundlehren}, Section XI.2. The equation 
\eqref{Ch4Sec1: EqFirstRayKnight} is then the equation of a square of Bessel $0$ process. If $x_{\rm min}<x_{0}$ and $X$ is the Brownian motion on $(x_{\rm min},+\infty)$ killed in $x_{\rm min}$, then the law of 
$(\ell^{x}_{\tau^{x_{0}}_{z_{0}}}(X))_{x\in I, x\geq x_{0}}$ conditional on $\tau^{x_{0}}_{z_{0}}<\zeta$ does not depend on $x_{\rm min}$ and is the same as in case of the Brownian motion on $\mathbb{R}$. Equation 
\eqref{Ch4Sec1: EqFirstRayKnight} is still satisfied.

If $X$ is a diffusion on $I$ that satisfies that for all $x>a\in I$, starting from $x$, $X$ reaches almost surly $a$, which is equivalent to $u_{\downarrow}$ being constant, then through a change of scale and time $X$ is the Brownian motion on some $(x_{\rm min},+\infty)$, where $x_{\rm min}\in[-\infty,+\infty)$. Time change does not change the local times because we defined them relatively to the speed measure. Only the change of scale matters. If $S$ is a primitive of $w$, then, conditional on $\tau^{x_{0}}_{z_{0}}<\zeta$, $(\ell^{S^{-1}(2y)}_{\tau^{x_{0}}_{z_{0}}}(X))_{y\geq \frac{1}{2}S(x_{0})}$ is a square of Bessel $0$ process. The equation \eqref{Ch4Sec1: EqFirstRayKnight} follows from the equation of the square of Bessel $0$ process by deterministic change of variable $dy:=\frac{1}{2}w(x)dx$.

Now the general case. Let $(\widetilde{X}_{t})_{0\leq t<\tilde{\zeta}}$ be the diffusion of generator $\operatorname{Conj}(u_{\downarrow},L)$. $\frac{w(x)}{u_{\downarrow}(x)^{2}}dx$ is the natural scale measure of $\widetilde{X}$ and $u_{\downarrow}(x)^{2}m(x)dx$ is its speed measure. We assume that both $X$ and $\widetilde{X}$ start from $x_{0}$. The law of $\widetilde{X}$ up to the last time it visits $x_{0}$ is the same as for $X$. Let
\begin{displaymath}
\tilde{\tau}:=\inf\left\lbrace t\geq 0\vert \ell^{x_{0}}_{t}(\widetilde{X})> \frac{1}{u_{\downarrow}(x_{0})^{2}}z_{0}\right\rbrace. 
\end{displaymath}
Then, the law of $(\ell^{x}_{\tau^{x_{0}}_{z_{0}}}(X))_{x\in I, x\geq x_{0}}$ conditional on $\tau^{x_{0}}_{z_{0}}<\zeta$ is the same as the law of $(u_{\downarrow}(x)^{2}\ell^{x}_{\tilde{\tau}}(\widetilde{X}))_{x\in I, x\geq x_{0}}$ conditional on $\tilde{\tau}<\tilde{\zeta}$. The factor $u_{\downarrow}(x)^{2}$ comes from the fact that performing an h-transform we change the measure relatively to which the local times are defined. For any $a<x_{0}\in I$, $\widetilde{X}$ reaches $a$ a.s. Thus, $(\ell^{x}_{\tilde{\tau}}(\tilde{X}))_{x\in I, x\geq x_{0}}$ satisfies the SDE
\begin{displaymath}
d\widetilde{Z}_{x} = \dfrac{\sqrt{2w(x)}}{u_{\downarrow}(x)}\sqrt{\widetilde{Z}_{x}} d\mathbb{B}_{x} ,
\end{displaymath}
and $(u_{\downarrow}(x)^{2}\ell^{x}_{\tilde{\tau}^{x_{0}}_{z_{0}}}(\widetilde{X}))_{x\in I, x\geq x_{0}}$ satisfies \eqref{Ch4Sec1: EqFirstRayKnight}.
\end{proof}

If there is immigration: Let $Z$ be a solution to \eqref{Ch4Sec1: EqBranchingImmigration} defined on 
$I\cap[x_{0},+\infty)$, starting at $x_{0}$ with the initial condition $Z_{x_{0}}=z_{0}\geq 0$. Then, for $\lambda\geq 0$ and $x\in I$, $x\geq x_{0}$,
\begin{equation}
\label{Ch4Sec1: EqLTCSBI}
\mathbb{E}_{Z_{x_{0}}=z_{0}}\left[e^{-\lambda Z_{x}}\right] = \exp\left(-z_{0}\psi(x_{0},x,\lambda)- \int_{x_{0}}^{x}\psi(y,x,\lambda) c(y) dy \right).
\end{equation}

\section{Occupation field}
\label{Ch4Sec2}

Let $L$ be the generator of a diffusion on $I$ of form \eqref{Ch2Sec2: EqGenerator}. 

\begin{defi}
The Poisson point process of Markov loops associated to $L$, with intensity parameter $\alpha>0$, is the
Poisson ensemble of intensity $\alpha \mu^{\ast}_{L}$, denoted $\mathcal{L}_{\alpha, L}$.
$\mathcal{L}_{\alpha, L}$ is a random infinite countable collection of unrooted loops supported in $I$. It is also called "loop-soup".
\end{defi}
 
\begin{defi}
\label{Ch4Sec2: DefOccupationField}
The occupation field of $\mathcal{L}_{\alpha, L}$ is $(\widehat{\mathcal{L}}_{\alpha,L}^{x})_{x\in I}$, where
\begin{displaymath}
\widehat{\mathcal{L}}_{\alpha,L}^{x}:=\sum_{\gamma\in \mathcal{L}_{\alpha, L}} \ell^{x}(\gamma).
\end{displaymath}
\end{defi}
We will drop out the subscript $L$ whenever there is no ambiguity on $L$. In this section we will identify the law of $(\widehat{\mathcal{L}}_{\alpha}^{x})_{x\in I}$ as an inhomogeneous continuous state branching process with immigration.
If $\mathbb{J}$ is a discrete subset of $I$, then applying Proposition \ref{Ch3Sec7: PropRestrictionLoops}, we deduce that $(\widehat{\mathcal{L}}_{\alpha}^{x})_{x\in\mathbb{J}}$ is the occupation field of the Poisson ensemble of discrete loops of intensity $\alpha\mu^{\ast}_{L_{\mathbb{J}}}$, as defined in \cite{LeJan2011Loops}, Chapter $4$. This fact allows us to apply the results of \cite{LeJan2011Loops} in order to describe the finite-dimensional marginals of the occupation field. If the diffusion is recurrent, then for all $x\in I$, $\widehat{\mathcal{L}}_{\alpha}^{x}=+\infty$ a.s. If the diffusion is transient, then for all $x\in I$, $\widehat{\mathcal{L}}_{\alpha}^{x}<+\infty$ a.s. Next we state how does the occupation field behave if we apply various transformations on $L$.

\begin{property}
\label{Ch4Sec2: PropertyOccupFieldTransf}
Let $L$ be the generator of a transient diffusion. 
\begin{itemize}
\item[(i)] If $A$ is a change of scale function, then
\begin{displaymath}
\widehat{\mathcal{L}}_{\alpha, \operatorname{Scale}_{A}^{\rm gen}L}^{A(x)} = \widehat{\mathcal{L}}_{\alpha,L}^{x}.
\end{displaymath}
\item[(ii)] If $V$ is a positive continuous function on $I$, then
\begin{displaymath}
\widehat{\mathcal{L}}_{\alpha, \frac{1}{V}L}^{x} = \widehat{\mathcal{L}}_{\alpha,L}^{x}.
\end{displaymath}
\item[(iii)] If $u$ is a positive continuous function on $I$ such that $Lu$ is a non-positive measure, then
\begin{displaymath}
\widehat{\mathcal{L}}_{\alpha,\operatorname{Conj}(u,L)}^{x} = \dfrac{1}{u(x)^{2}}\widehat{\mathcal{L}}_{\alpha,L}^{x}.
\end{displaymath}
\end{itemize}
\end{property}

Previous equalities depend on a particular choice of the speed measure for the modification of $L$. For (i) we choose $\left(\frac{dA}{dx}\circ A^{-1}\right)^{-1} m\circ A^{-1}da$. For (ii) we choose $\frac{1}{V(x)}m(x)dx$. For (iii) we choose $u(x)^{2}m(x)dx$. The fact that 
$\widehat{\mathcal{L}}_{\alpha,\operatorname{Conj}(u,L)}^{x}\neq\widehat{\mathcal{L}}_{\alpha,L}^{x}$ despite $\mathcal{L}_{\alpha,\operatorname{Conj}(u,L)}= \mathcal{L}_{\alpha,L}$ comes from a change of speed measure.

Next we characterize the finite-dimensional marginals of the occupation field by stating the results that appear in \cite{LeJan2011Loops}, Chapter 4.

\begin{property}
\label{Ch4Sec2: PropertyMarginals}
The distribution of $\widehat{\mathcal{L}}_{\alpha}^{x}$ is
\begin{displaymath}
\frac{(G_{L}(x,x))^{\alpha}}{\Gamma(\alpha)}l^{\alpha - 1}\exp\left(-\frac{l}{G_{L}(x,x)}\right)1_{l>0}dl.
\end{displaymath}
Let $x_{1}, x_{2},\dots,x_{n}\in I$ and $\lambda_{1},\lambda_{2},\dots,\lambda_{n}\geq 0$. Then,
\begin{equation}
\label{Ch4Sec2: EqLaplaceTransformOccupField}
\mathbb{E}\left[\exp\left(-\sum_{i=1}^{n}\lambda_{i}\widehat{\mathcal{L}}_{\alpha}^{x_{i}} \right) \right] = 
\left( \dfrac{\det(G_{L-\sum_{i=1}^{n}\lambda_{i}\delta_{x_{i}}}(x_{i},x_{j}))_{1\leq i,j\leq n}}
{\det(G_{L}(x_{i},x_{j}))_{1\leq i,j\leq n}}\right)^{\alpha}.
\end{equation}
The moment $\mathbb{E}\left[\widehat{\mathcal{L}}_{\alpha}^{x_{1}}\widehat{\mathcal{L}}_{\alpha}^{x_{2}}\dots
\widehat{\mathcal{L}}_{\alpha}^{x_{n}} \right]$ is an $\alpha$-permanent:
\begin{displaymath}
\mathbb{E}\left[\widehat{\mathcal{L}}_{\alpha}^{x_{1}}\widehat{\mathcal{L}}_{\alpha}^{x_{2}}\dots
\widehat{\mathcal{L}}_{\alpha}^{x_{n}} \right]=
\sum_{\sigma\in\mathfrak{S}_{n}}\alpha^{\#~\text{cycles of}~\sigma}\prod_{i=1}^{n}G(x_{i},x_{\sigma(i)}).
\end{displaymath}
If $\mathbb{J}$ is a discrete subset of $I$, then $(\widehat{\mathcal{L}}_{\alpha}^{x})_{x\in\mathbb{J}}$, viewed as a stochastic process that evolves when  $x$ increases, is an inhomogeneous continuous state branching process with immigration defined on the discrete set $\mathbb{J}$. In particular, for any $x_{1}\leq x_{2}\leq\dots\leq x_{n}\in I$ and $p\in\lbrace 1,2,\dots, n\rbrace$, 
$\left( \widehat{\mathcal{L}}_{\alpha}^{x_{1}}, \widehat{\mathcal{L}}_{\alpha}^{x_{2}},\dots,\widehat{\mathcal{L}}_{\alpha}^{x_{p}}\right)$ and 
$\left( \widehat{\mathcal{L}}_{\alpha}^{x_{p}}, \widehat{\mathcal{L}}_{\alpha}^{x_{p+1}},\dots,\widehat{\mathcal{L}}_{\alpha}^{x_{n}}\right)$ are independent conditional on $\widehat{\mathcal{L}}_{\alpha}^{x_{p}}$.
\end{property}

Next we show that the processes $x\mapsto\widehat{\mathcal{L}}_{\alpha}^{x}$ parametrized by $x\in I$, where $x$ is assumed to increase, is an inhomogeneous branching process with immigration of form 
\eqref{Ch4Sec1: EqBranchingImmigration}. In particular, it has a continuous version and is inhomogeneous Markov.

\begin{prop}
\label{Ch4Sec2: PropCBPI}
$(\widehat{\mathcal{L}}_{\alpha}^{x})_{x\in I}$ has the same finite-dimensional marginals as a solution to the stochastic differential equation
\begin{equation}
\label{Ch4Sec2: EqOccupFieldCSBI}
dZ_{x}=\sqrt{2w(x)}\sqrt{Z_{x}}d\mathbb{B}_{x} + 2\dfrac{d\log u_{\downarrow}}{dx}(x)Z_{x}dx + \alpha w(x)dx.
\end{equation}
If $L$ is the generator of a Brownian motion on $(0,+\infty)$ killed when it hits $0$, then $(\widehat{\mathcal{L}}_{\alpha}^{x})_{x>0}$ has the same law as the square of a Bessel process of dimension $2\alpha$ starting from $0$ at $x=0$. If $L$ is the generator of a Brownian motion on $(0,x_{\rm max})$, killed when hitting the boundary, then $(\widehat{\mathcal{L}}_{\alpha}^{x})_{0<x<x_{\rm max}}$ has the same law as the square of a Bessel bridge of dimension $2\alpha$ from $0$ at $x=0$ to $0$ at $x=x_{\rm max}$.
\end{prop}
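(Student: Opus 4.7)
The plan is to fix an arbitrary $x_0 \in I$ and decompose the Poisson ensemble $\mathcal{L}_\alpha$ into independent pieces adapted to $x_0$: for $x \geq x_0$, loops with $\max\gamma < x_0$ contribute nothing, while loops visiting $x_0$ contribute $Z^{(1)}_x$ and loops with $\min\gamma > x_0$ contribute $Z^{(2)}_x$ (so $\widehat{\mathcal{L}}_\alpha^x = Z^{(1)}_x + Z^{(2)}_x$, $Z^{(2)}_{x_0}=0$, $Z^{(1)}_{x_0}=\widehat{\mathcal{L}}_\alpha^{x_0}$). The goal is to compute $\mathbb{E}[e^{-\lambda \widehat{\mathcal{L}}_\alpha^x}\mid \widehat{\mathcal{L}}_\alpha^{x_0}]$, match it with the CSBI Laplace transform \eqref{Ch4Sec1: EqLTCSBI} for immigration rate $c(a)=\alpha w(a)$, and finally use the conditional-independence statement in Property \ref{Ch4Sec2: PropertyMarginals} to extend to all finite-dimensional marginals.

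For $Z^{(1)}$, corollary \ref{Ch3Sec3: CorLoopLocTimeBridge} realizes the Poisson ensemble of loops visiting $x_0$ as a Poisson point process $\Pi \subset (0,+\infty)$ with intensity $\alpha e^{-l/G(x_0,x_0)}\,dl/l$, each point $l \in \Pi$ carrying an independent diffusion path with law $\mathbb{P}^{\tau^{x_0}_l}_{x_0}$ and contributing local time $l$ at $x_0$. Proposition \ref{Ch4Sec1: PropRayKnight} identifies the local-time profile of such an attached path above $x_0$ as a CSB with initial value $l$ whose Laplace transform at $x$ is $e^{-l\psi(x_0,x,\lambda)}$, where $\psi$ solves \eqref{Ch4Sec1: EqCharEqBranching} for the branching mechanism of \eqref{Ch4Sec1: EqFirstRayKnight}. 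A Campbell computation over $\Pi$ then yields
\begin{displaymath}
\mathbb{E}[\exp(-\mu Z^{(1)}_{x_0} - \lambda Z^{(1)}_x)] = (1 + G(x_0,x_0)(\mu + \psi(x_0,x,\lambda)))^{-\alpha},
\end{displaymath}
from which $\mathbb{E}[e^{-\lambda Z^{(1)}_x} \mid Z^{(1)}_{x_0}=z_0] = e^{-z_0 \psi(x_0,x,\lambda)}$ is exactly the conditional CSB Laplace transform.

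For $Z^{(2)}$, the disintegration \eqref{Ch3Sec5: EqDisintegGeneralLoops} says the loops with $\min\gamma > x_0$ form a Poisson process of intensity $\alpha w(a)\,da \otimes \pi_\ast \eta^{>a}$, so Campbell's formula gives $-\log\mathbb{E}[e^{-\lambda Z^{(2)}_x}] = \alpha \int_{x_0}^x w(a)\,\eta^{>a}(1-e^{-\lambda \ell^x(\gamma)})\,da$. The key identity to establish is
\begin{displaymath}
\eta^{>a}(1-e^{-\lambda \ell^x(\gamma)}) = \psi(a,x,\lambda),
\end{displaymath}
which I would prove by starting the diffusion at $a$ and decomposing $\ell^x_{\tau^a_l}(X)$ as a Poisson sum of $\ell^x(e)$ over above-$a$ excursions indexed by local time at $a$: Campbell yields $e^{-l\eta^{>a}(1-e^{-\lambda \ell^x})}$ times the survival probability $e^{-l/G(a,a)}$ on the event $\tau^a_l<\zeta$, while Proposition \ref{Ch4Sec1: PropRayKnight} gives $e^{-l\psi(a,x,\lambda)}$ times the same survival factor, and matching the two identifies them. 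Combining the two parts,
\begin{displaymath}
\mathbb{E}[e^{-\lambda \widehat{\mathcal{L}}_\alpha^x}\mid \widehat{\mathcal{L}}_\alpha^{x_0}=z_0] = \exp(-z_0 \psi(x_0,x,\lambda) - \alpha \int_{x_0}^x w(a)\psi(a,x,\lambda)\,da),
\end{displaymath}
which is precisely \eqref{Ch4Sec1: EqLTCSBI} for \eqref{Ch4Sec2: EqOccupFieldCSBI}; Property \ref{Ch4Sec2: PropertyMarginals} then propagates this to all finite-dimensional marginals. The Brownian cases follow from explicit computation of $u_\downarrow$: on $(0,+\infty)$ killed at $0$, $u_\downarrow$ is constant and $w\equiv 2$, giving $dZ = 2\sqrt{Z}\,d\mathbb{B} + 2\alpha\,dx$ with $Z_{0^+}=0$ (since $G(x,x)\to 0$), i.e.\ squared Bessel of dimension $2\alpha$; on $(0,x_{max})$, $u_\downarrow(x) = x_{max} - x$ produces the extra drift $-2Z/(x_{max}-x)\,dx$, the squared Bessel bridge SDE. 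The main technical hurdle is the bookkeeping for the key identity above, since $\eta^{>a}$ must correctly refer to completed excursions returning to $a$ and the Campbell-formula accounting must line up precisely with the conditional Ray-Knight statement, together with the fact that the killing rate per unit local time at $a$ equals $1/G(a,a)$ for a transient diffusion.
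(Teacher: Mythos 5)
Your argument is correct, but it takes a genuinely different, more probabilistic route than the paper. The paper's proof computes the two-point Laplace transform directly from the determinant formula \eqref{Ch4Sec2: EqLaplaceTransformOccupField} (inherited from the discrete case via Property \ref{Ch4Sec2: PropertyMarginals}), then defines $\psi$ and $\varphi$ explicitly in terms of the Green's function and verifies by differentiation that they solve the characteristic ODEs \eqref{Ch4Sec1: EqCharEqBranching}. You instead exploit the Poisson structure of $\mathcal{L}_{\alpha}$ directly: split the ensemble into the independent sub-ensembles of loops visiting $x_{0}$ and loops with $\min\gamma > x_{0}$, represent the first via Corollary \ref{Ch3Sec3: CorLoopLocTimeBridge} and the second via the Vervaat disintegration \eqref{Ch3Sec5: EqDisintegGeneralLoops}, apply the Ray--Knight statement (Proposition \ref{Ch4Sec1: PropRayKnight}) to the attached paths, and assemble the conditional Laplace transform by Campbell's formula. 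The key identity $\eta^{>a}(1 - e^{-\lambda\ell^{x}}) = \psi(a,x,\lambda)$, which you prove by writing $\ell^{x}_{\tau^{a}_{l}}(X)$ as a Poisson functional of the above-$a$ excursions and comparing against Proposition \ref{Ch4Sec1: PropRayKnight}, is exactly what links the excursion measure to the branching semigroup; this is the heart of your argument and it is sound, provided one keeps in mind that $\eta^{>a}$ charges only excursions that return to $a$ and that the survival factor $e^{-l/G(a,a)}$ cancels on both sides. What your approach buys is structural insight: the immigration rate $\alpha w(a)\,da$ is identified as the intensity of loop minima, and the branching mechanism is identified with the single-path Ray--Knight evolution -- this is precisely the interpretation the paper offers as a remark \emph{after} the proposition but does not use to prove it. The paper's proof, by contrast, is more compact once one accepts \eqref{Ch4Sec2: EqLaplaceTransformOccupField} as given, and avoids some delicate bookkeeping about the excursion Poisson process of a killed transient diffusion. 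Your treatment of the two Brownian special cases, including identifying the drift $-2Z_{x}/(x_{\max}-x)$ from $u_{\downarrow}(x)=x_{\max}-x$ as the squared Bessel bridge drift, is correct (and in fact the drift written in the paper's proof at that point appears to carry a typographical slip).
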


\begin{proof}
Let $x_{0}<x\in I$ and $\lambda_{0},\lambda\geq 0$. Applying the identity \eqref{Ch4Sec2: EqLaplaceTransformOccupField} to the case of two points, we get that
\begin{equation}
\label{Ch4Sec2: EqLaplTransfOccField2Points}
\mathbb{E}\left[\exp\left(-\lambda_{0}\widehat{\mathcal{L}}_{\alpha}^{x_{0}}
-\lambda\widehat{\mathcal{L}}_{\alpha}^{x}\right)\right] =
\left((1+\lambda_{0}G(x_{0},x_{0}))(1+\lambda G(x,x))-\lambda_{0}\lambda(G(x_{0},x))^{2}\right) ^{-\alpha}.
\end{equation}
Let
\begin{displaymath}
\Psi(x_{0},\lambda_{0}):=\mathbb{E}\left[e^{-\lambda_{0}\widehat{\mathcal{L}}_{\alpha}^{x_{0}}}\right]
=\left(\dfrac{G(x_{0},x_{0})}{G(x_{0},x_{0})+\lambda_{0}}\right)^{\alpha}.
\end{displaymath}
For $y\leq x$, let
\begin{displaymath}
\psi(y,x,\lambda):=\dfrac{G(x,y)G(y,x)\lambda}{G(y,y)(G(y,y)+\lambda\det_{y,x}G)},
\end{displaymath}
\begin{displaymath}
\varphi(y,x,\lambda):=-\log\left(\dfrac{G(y,y)}{G(y,y)+\lambda\det_{y,x}G}\right). 
\end{displaymath}
One can check that the right-hand side of \eqref{Ch4Sec2: EqLaplTransfOccField2Points} equals
\begin{displaymath}
\Psi(x_{0},\lambda_{0}+\psi(x_{0},x,\lambda))\exp(-\alpha \varphi(x_{0},x,\lambda)).
\end{displaymath}
In particular, for the conditional Laplace transform:
\begin{equation}
\label{Ch4Sec2: EqCondLTOccupField}
\mathbb{E}\left[\exp\left(-\lambda\widehat{\mathcal{L}}_{\alpha}^{x}\right)\vert \widehat{\mathcal{L}}_{\alpha}^{x_{0}}\right] = 
\exp\left(-\widehat{\mathcal{L}}_{\alpha}^{x_{0}}\psi(x_{0},x,\lambda)\right)\exp(-\alpha \varphi(x_{0},x,\lambda))~\text{a.s}.
\end{equation}
Moreover,
\begin{equation*}
\begin{split}
\dfrac{\partial\psi}{\partial y}(y,x,\lambda)=& W(u_{\downarrow},u_{\uparrow})(y)\psi(y,x,\lambda)^{2}
-\dfrac{2}{u_{\downarrow}(y)}\dfrac{du_{\downarrow}}{dy}(y)\psi(y,x,\lambda)\\
=& w(y)\psi(y,x,\lambda)^{2}-2\dfrac{d\log u_{\downarrow}}{dy}(y)\psi(y,x,\lambda),
\end{split}
\end{equation*}
and
\begin{displaymath}
\dfrac{\partial \varphi}{\partial y}(y,x,\lambda) = -W(u_{\downarrow},u_{\uparrow})(y)\psi(y,x,\lambda) 
= -w(y)\psi(y,x,\lambda),
\end{displaymath}
and we have the initial conditions $\psi(x,x,\lambda)=\lambda$ and $\varphi(x,x,\lambda)=0$.
Thus, \eqref{Ch4Sec2: EqCondLTOccupField} has the same form as \eqref{Ch4Sec1: EqLTCSBI} where $c(y)=\alpha w(y)$. Let $(Z_{y})_{y\in I, y\geq x_{0}}$ be a solution to \eqref{Ch4Sec2: EqOccupFieldCSBI} with the initial condition $Z_{x_{0}}$ being a gamma random variable of parameter $\alpha$ with mean $\alpha G(x_{0},x_{0})$. It follows from what precedes that $(\widehat{\mathcal{L}}_{\alpha}^{x_{0}},\widehat{\mathcal{L}}_{\alpha}^{x})$ has the same law as $(Z_{x_{0}},Z_{x})$. Using the conditional independence satisfied by the occupation field, we deduce that $(\widehat{\mathcal{L}}_{\alpha}^{y})_{y\in I, y\geq x_{0}}$ has the same finite-dimensional marginals as  
$(Z_{y})_{y\in I, y\geq x_{0}}$. Making $x_{0}$ converge to $\inf I$ along a countable subset, we get a consistent family of continuous stochastic processes, which induces a continuous stochastic process $(Z_{y})_{y\in I}$ defined on whole $I$. It satisfies \eqref{Ch4Sec2: EqOccupFieldCSBI} and has the same finite-dimensional marginals as $(\widehat{\mathcal{L}}_{\alpha}^{y})_{y\in I}$.

In case of a Brownian motion in $(0,+\infty)$ killed in $0$, the equation \eqref{Ch4Sec2: EqOccupFieldCSBI} becomes
\begin{displaymath}
dZ_{x} = 2\sqrt{Z_{x}} d\mathbb{B}_{x} + 2\alpha\,dx,
\end{displaymath}
which is the SDE satisfied by the square of a Bessel process of dimension $2\alpha$. Moreover $(\widehat{\mathcal{L}}_{\alpha}^{x})_{x>0}$ has the same one-dimensional marginals as the latter, more precisely $\widehat{\mathcal{L}}_{\alpha}^{x}$ is a gamma r.v. of parameter $\alpha$ with mean $2\alpha x$. This shows the equality in law.

In case of a Brownian motion in $(0,x_{\rm max})$ killed in $0$ and $x_{\rm max}$, the equation \eqref{Ch4Sec2: EqOccupFieldCSBI} becomes
\begin{displaymath}
dZ_{x} = 2\sqrt{Z_{x}} d\mathbb{B}_{x} + \dfrac{1}{x_{\rm max}-x}Z_{x} dx + 2\alpha dx,
\end{displaymath}
which is the SDE satisfied by the square of a Bessel bridge of dimension $2\alpha$ from $0$ at $x=0$ to $0$ at $x=x_{\rm max}$. Moreover, the latter process and $(\widehat{\mathcal{L}}_{\alpha}^{x})_{0<x<x_{\rm max}}$ have the same one-dimensional marginals, more precisely gamma r.v. of parameter $\alpha$ with mean $2\alpha(x_{\rm max}-x)\frac{x}{x_{\rm max}}$. Thus, the two have the same law.
\end{proof}

We showed that $(\widehat{\mathcal{L}}_{\alpha}^{x})_{x\in I}$ has the same finite-dimensional marginals as a continuous stochastic process. We will assume in the sequel and prove in Section \ref{Ch5Sec2} that one can couple the Poisson ensemble $\mathcal{L}_{\alpha}$ and a continuous version of its occupation field $(\widehat{\mathcal{L}}_{\alpha}^{x})_{x\in I}$ on the same probability space. This does not follow trivially from the fact that the process $(\widehat{\mathcal{L}}_{\alpha}^{x})_{x\in I}$ has a continuous version. Consider the following counterexample: Let $U$ be an uniform r.v. on $(0,1)$. Let $\mathscr{E}$ be a countable random set of Brownian excursions defined as follows: conditional on $U$, $\mathscr{E}$ is a Poisson ensemble with intensity $\eta_{BM}^{>U}+\eta_{BM}^{<U}$. Let $(\widehat{\mathscr{E}}_{x})_{x\in\mathbb{R}}$ be the occupation field of $\mathscr{E}$. Then, $\widehat{\mathscr{E}}$ is continuous on $(-\infty, U)$ and $(U,+\infty)$ but not at $U$. Indeed, $\widehat{\mathscr{E}}_{U}=0$ and
\begin{displaymath}
\lim_{x\rightarrow U^{-}}\widehat{\mathscr{E}}_{x}=\lim_{x\rightarrow U^{-}}\widehat{\mathscr{E}}_{x}=1.
\end{displaymath}
Let $(\widehat{\mathscr{E}}'_{x})_{x\in\mathbb{R}}$ be the field defined by: $\widehat{\mathscr{E}}'_{x}=\widehat{\mathscr{E}}_{x}$ if $x\neq U$ and $\widehat{\mathscr{E}}'_{U}=1$. $(\widehat{\mathscr{E}}'_{x})_{x\in\mathbb{R}}$ is continuous and for any fixed $x\in\mathbb{R}$ $\widehat{\mathscr{E}}'_{x}=\widehat{\mathscr{E}}_{x}$ a.s. Thus, $(\widehat{\mathscr{E}}'_{x})_{x\in\mathbb{R}}$ is a continuous version of the process $(\widehat{\mathscr{E}}_{x})_{x\in\mathbb{R}}$, but it can not be implemented as a sum of local time across the excursions in $\mathscr{E}$. As we will show in Section \ref{Ch5Sec2}, such a difficulty does not arise in case of $\mathcal{L}_{\alpha}$.

$(\widehat{\mathcal{L}}_{\alpha}^{x})_{x\in I}$ is an inhomogeneous continuous state branching with immigration. The branching mechanism is the same as for the local times of the diffusion $X$, given by \eqref{Ch4Sec1: PropRayKnight}. The immigration measure is $\alpha w(x)dx$. The interpretation is the following: given a loop in $\mathcal{L}_{\alpha}$, its family of local times performs a branching according to the mechanism \eqref{Ch4Sec1: PropRayKnight}, independently from the other loops. The immigration between $x$ and $x+\Delta x$ comes from the loops whose minima belong to 
$(x,x+\Delta x)$. For a better understanding of this, it is useful to keep in mind the representation of the unrooted loops as positive excursions above their minima, as in Corollary \ref{Ch3Sec5: CorDesintegMinGen}. In case of Brownian loops on $(0,+\infty)$ or $(0,1)$, one recovers in this way the Lévy-Hincin representation of squares of Bessel processes and Bessel bridges 
\cite{PitmanYor82DecompBessBridge,LeGallYor86CarreBessel,Perman96}.
We would also like to mention Pitman's work \cite{Pitman96LTCicle}, where he studied a cyclically stationary local time process on a circle, which he decomposed as a sum of individual local times over an infinite countable collection of Poisson distributed loops, called "pulses".

It is remarkable that, although the immigration measure is absolutely continuous with respect to Lebesgue measure, there is only a countable number of moments at which immigration occurs. These are the positions of the minima of loops in $\mathcal{L}_{\alpha}$. Moreover, the local time of each loop at its minimum is zero. For $x>a\in I$, let
\begin{displaymath}
\widehat{\mathcal{L}}_{\alpha}^{(a), x} : = 
\sum_{
\substack{
\gamma\in \mathcal{L}_{\alpha} \\ 
\min \gamma >a
} } 
\ell^{x}(\gamma).
\end{displaymath}
Let $a<b\in I$. For $j\leq n\in \mathbb{N}$, let $\Delta x_{n}:=\frac{1}{n}(b-a)$ and let $x_{j,n}:=a+j\Delta x_{n}$. Then, $\Big(\widehat{\mathcal{L}}_{\alpha}^{(x_{j-1}), x_{j}}\Big)_{1\leq j\leq n}$ is a sequence of independent gamma r.v. of parameter $\alpha$ and the mean of $\widehat{\mathcal{L}}_{\alpha}^{(x_{j-1}), x_{j}}$ is $\alpha\bigg(G(x_{j},x_{j})-\dfrac{G(x_{j-1},x_{j})G(x_{j},x_{j-1})}{G(x_{j-1},x_{j-1})}\bigg)$. For $n$ large,
\begin{displaymath}
G(x_{j},x_{j})-\dfrac{G(x_{j-1},x_{j})G(x_{j},x_{j-1})}{G(x_{j-1},x_{j-1})} = w(x_{j-1})\Delta x_{n} + o(\Delta x_{n}),
\end{displaymath}
and $o(\Delta x_{n})$ is uniform in $j$. Thus,
\begin{multline*}
\lim_{n \rightarrow +\infty}\mathbb{E}\Big[\sum_{j=1}^{n}\widehat{\mathcal{L}}_{\alpha}^{(x_{j-1}), x_{j}}\Big]= \\
\lim_{n \rightarrow +\infty}
\alpha\sum_{j=1}^{n}\left(G(x_{j},x_{j})-\dfrac{G(x_{j-1},x_{j})G(x_{j},x_{j-1})}{G(x_{j-1},x_{j-1})}\right) 
=\alpha\int_{a}^{b}w(x) dx,
\end{multline*}
and
\begin{multline*}
\lim_{n \rightarrow +\infty}\operatorname{Var}\Big(\sum_{j=1}^{n}\widehat{\mathcal{L}}_{\alpha}^{(x_{j-1}), x_{j}}\Big)
=\\\lim_{n \rightarrow +\infty}
\alpha\sum_{j=1}^{n}\!\left(G(x_{j},x_{j})-\dfrac{G(x_{j-1},x_{j})G(x_{j},x_{j-1})}{G(x_{j-1},x_{j-1})}\right)^{2}=0.
\end{multline*}
It follows that $\sum_{j=1}^{n}\widehat{\mathcal{L}}_{\alpha}^{(x_{j-1}), x_{j}}$ converges in probability to $\alpha\int_{a}^{b}w(x)\,dx$. This is consistent with our interpretation of immigration.

Next proposition deals with the zeroes of the occupation field.

\begin{prop}
\label{Ch4Sec2: PropOccupFieldZeroes}
Let $x_{0}\in I$. If $\int_{\inf I}^{x_{0}}w(x) dx<+\infty$, then
\begin{displaymath}
\lim_{x\rightarrow \inf I}\widehat{\mathcal{L}}_{\alpha}^{x}=0.
\end{displaymath}
Analogous result holds if $\int_{x_{0}}^{\sup I}w(x)\,dx<+\infty$.

If $\alpha\geq 1$, then the continuous process $(\widehat{\mathcal{L}}_{\alpha}^{x})_{x\in I}$ stays almost surely positive on $I$. If $\alpha<1$, then $(\widehat{\mathcal{L}}_{\alpha}^{x})_{x\in I}$ hits $0$ infinitely many times on $I$.
\end{prop}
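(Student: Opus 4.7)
Assuming transience (otherwise $\widehat{\mathcal{L}}_\alpha^x \equiv +\infty$ and the statements become trivial), the plan is to reduce $(\widehat{\mathcal{L}}_\alpha^x)_{x\in I}$, via the three items of Property \ref{Ch4Sec2: PropertyOccupFieldTransf}, to a standard squared Bessel process of dimension $2\alpha$. The $h$-transform by $h=u_\downarrow$ produces a generator $L':=\mathrm{Conj}(u_\downarrow,L)$ free of killing measure whose non-increasing harmonic function is the constant $1$, so that the drift term $d\log u_\downarrow/dx$ in \eqref{Ch4Sec2: EqOccupFieldCSBI} vanishes; by (iii) one has $\widehat{\mathcal{L}}_{\alpha,L}^x = u_\downarrow(x)^2\,\widehat{\mathcal{L}}_{\alpha,L'}^x$. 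A primitive $A$ of the scale density $w/u_\downarrow^2$ further reduces $L'$, via (i), to an operator at natural scale on $A(I)\subset\mathbb{R}$, and a time change (ii) normalizes the speed to $1$; after these three reductions we are left with the Brownian motion on $A(I)$ killed at the endpoints, whose occupation field is, by Proposition \ref{Ch4Sec2: PropCBPI}, a squared Bessel process (or bridge) $(Y_a)_{a\in A(I)}$ of dimension $2\alpha$. Hence
\begin{equation*}
\widehat{\mathcal{L}}_{\alpha,L}^x \;=\; u_\downarrow(x)^2\, Y_{A(x)},\qquad x\in I.
\end{equation*}

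\textbf{Positivity and zeros.} Since $u_\downarrow(x)>0$ and $A$ is a continuous bijection of $I$ onto $A(I)$, the zero set of $\widehat{\mathcal{L}}_\alpha$ on $I$ is the preimage under $A$ of the zero set of $Y$. For $\alpha\geq 1$ the dimension $2\alpha\geq 2$ makes the squared Bessel process a.s.\ positive in the interior of its time interval (whether the left endpoint of $A(I)$ is an accessible boundary where $Y$ starts from $0$, or an entrance boundary where it starts from a positive value), hence $\widehat{\mathcal{L}}_\alpha^x>0$ on $I$. For $\alpha<1$ the dimension $2\alpha<2$ puts $Y$ in the point-recurrent regime at $0$: it returns to $0$ instantaneously after each visit and its zero set is a perfect set of Hausdorff dimension $1-\alpha$, so in particular infinite on every open subinterval of $A(I)$; translating back, $\widehat{\mathcal{L}}_\alpha$ hits $0$ infinitely often on $I$.

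\textbf{Limit at $\inf I$.} Suppose $\int_{\inf I}^{x_0} w\,dy<\infty$. The identity $(u_\uparrow/u_\downarrow)' = w/u_\downarrow^2$, with the normalization $u_\uparrow(\inf I^+)=0$, yields $u_\uparrow(x)/u_\downarrow(x) = A(x) - A(\inf I^+)$; combined with $u_\downarrow(y)\geq u_\downarrow(x)$ for $y\leq x$, it gives
\begin{equation*}
G(x,x) \;=\; u_\downarrow(x)^2\bigl(A(x)-A(\inf I^+)\bigr) \;\leq\; \int_{\inf I}^x w(y)\,dy \;\xrightarrow[x\to\inf I]{}\;0,
\end{equation*}
and in particular $A(\inf I^+)=:a_0>-\infty$. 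Thus $Y$ is a squared Bessel process on $[a_0,\cdot)$ starting continuously from $Y_{a_0}=0$, so $Y_{A(x)}\to 0$ a.s., and $\mathbb{E}[\widehat{\mathcal{L}}_\alpha^x]=\alpha G(x,x)\to 0$. To upgrade to the almost sure statement one chooses a sequence $x_n\to\inf I$ with $G(x_n,x_n)$ summable, applies Borel--Cantelli to the gamma-distributed marginals, and controls the continuous process on each $[x_{n+1},x_n]$ via a maximal inequality for the CSBI, whose deterministic immigration increment $\alpha\int_{x_{n+1}}^{x_n} w$ vanishes with $n$. The limit at $\sup I$ is obtained by the symmetric argument with $u_\downarrow$ replaced by $u_\uparrow$.

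\textbf{Main obstacle.} The reduction and the positivity/zero-set statements are then routine from the classical theory of squared Bessel processes. The subtle point is the a.s.\ limit at $\inf I$ when $u_\downarrow$ is unbounded: the product $u_\downarrow(x)^2\cdot Y_{A(x)}$ is of indeterminate form $\infty\cdot 0$, and the law of the iterated logarithm at the origin for the squared Bessel process even implies $\limsup_{a\to a_0^+} Y_a/(a-a_0)=+\infty$, so no deterministic linear rate is available. The required control comes purely from the integral bound $G(x,x)\leq\int_{\inf I}^x w$ combined with the Borel--Cantelli / maximal-inequality scheme sketched above.
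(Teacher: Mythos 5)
The zeros/positivity part of your argument coincides with the paper's (both reduce to a squared Bessel process of dimension $2\alpha$, and scale change, time change and $h$-transform preserve the zero set since the prefactor $u_\downarrow^2$ is positive). The a.s.\ limit at $\inf I$, however, is handled by a genuinely different route in the paper, and the obstacle you flag yourself is real and is not closed in your write-up.

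Your $h$-transform by $u_\downarrow$ yields $\widehat{\mathcal{L}}_{\alpha,L}^x=u_\downarrow(x)^2\,Y_{A(x)}$ with $Y$ a squared Bessel process started from $0$ at $a_0=A(\inf I^+)$. When $u_\downarrow(\inf I^+)=+\infty$ — which is possible even under $\int_{\inf I}^{x_0}w<\infty$, for instance when $\kappa$ is heavily concentrated near $\inf I$ — this is of indeterminate form. Your bound $G(x,x)=u_\downarrow(x)^2\int_{\inf I}^x w/u_\downarrow^2\le\int_{\inf I}^x w$ controls the mean, hence gives convergence in probability; the a.s.\ upgrade is the missing part. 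By the law of the iterated logarithm for $Y$ at its starting time, the worst-case size of $u_\downarrow(x)^2 Y_{A(x)}$ is of order $G(x,x)\log\log\bigl(u_\downarrow(x)^2/G(x,x)\bigr)$, and it is not true that this tends to $0$ ``purely from the integral bound'': one must also exploit the convexity of $u_\downarrow$ in natural scale to control how fast the double-log term can grow relative to $G$. The Borel--Cantelli / maximal-inequality scheme you name is not executed and is not immediate: $\widehat{\mathcal{L}}_\alpha^x$ is not a submartingale as $x$ decreases (the drift $2(\log u_\downarrow)'(x)\,Z_x\,dx$ in the CSBI equation is nonpositive and can be singular at $\inf I$), so Doob's inequality does not apply to it directly, and pushing a maximal inequality for $Y$ through the factor $u_\downarrow^2$ still requires controlling the ratio $u_\downarrow(x_{n+1})/u_\downarrow(x_n)$ along the chosen sequence, which you do not address.

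The paper sidesteps all of this by a shorter and different reduction: instead of $h$-transforming, it \emph{removes the killing measure}. Since $\mu^*_{L}=\exp\bigl(-\int_I\ell^z(\gamma)m(z)\kappa(dz)\bigr)\,\mu^*_{L+\kappa}\le\mu^*_{L+\kappa}$, one may realize $\mathcal{L}_{\alpha,L}$ as a Poisson thinning of $\mathcal{L}_{\alpha,L+\kappa}$, giving a monotone coupling $\widehat{\mathcal{L}}_{\alpha,L}^x\le\widehat{\mathcal{L}}_{\alpha,L+\kappa}^x$ for every $x$ simultaneously. The generator $L+\kappa$ has no killing measure (so its $u_\downarrow$ is constant), is still transient because $\int_{\inf I}^{x_0}w<\infty$, and after a change of scale alone — no $h$-transform, hence no $u_\downarrow^2$ prefactor — its occupation field is a squared Bessel process or bridge of dimension $2\alpha$ starting continuously from $0$ at $\inf I$. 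The a.s.\ limit $0$ then follows by domination, with no indeterminate form. Your route may ultimately be salvageable, but as written it is incomplete at precisely the step you identify as the main obstacle; the paper's monotone coupling avoids the difficulty entirely.
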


\begin{proof}
If $\int_{\inf I}^{x_{0}}w(x) dx<+\infty$, then $L+\kappa$, where $\kappa$ is the killing measure of $L$, is also the generator of a transient diffusion. We can couple $(\widehat{\mathcal{L}}_{\alpha, L}^{x})_{x\in I}$ and $(\widehat{\mathcal{L}}_{\alpha, L+\kappa}^{x})_{x\in I}$ on the same probability space such that a.s. for all $x\in I$, $\widehat{\mathcal{L}}_{\alpha, L}^{x}\leq \widehat{\mathcal{L}}_{\alpha, L+\kappa}^{x}$. But, according to Property \ref{Ch4Sec2: PropertyOccupFieldTransf} (i), $(\widehat{\mathcal{L}}_{\alpha, L+\kappa}^{x})_{x\in I}$ is just a scale changed square of Bessel process starting from $0$ or square of a Bessel bridge from $0$ to $0$. Thus,
\begin{displaymath}
\lim_{x\rightarrow \inf I}\widehat{\mathcal{L}}_{\alpha, L}^{x}\leq 
\lim_{x\rightarrow \inf I}\widehat{\mathcal{L}}_{\alpha, L+\kappa}^{x}=0.
\end{displaymath}

Regarding the number of zeros of $(\widehat{\mathcal{L}}_{\alpha}^{x})_{x\in I}$ on $I$, Property 
\ref{Ch4Sec2: PropertyOccupFieldTransf} ensures that it remains unchanged if we apply scale, time changes and conjugations to $L$. Since any generator of a transient diffusion is equivalent through latter transformation to the generator of a Brownian motion on $(0,+\infty)$ killed in $0$, the result on the number of zeros of $(\widehat{\mathcal{L}}_{\alpha}^{x})_{x\in I}$ follows from standard properties of Bessel processes.
\end{proof}

Next we study the clusters of loops. We introduce an equivalence relation on  the loops of $\mathcal{L}_{\alpha}$: $\gamma$ is in the same class as $\tilde{\gamma}$ if there is a finite chain of loops $\gamma_{0}, \gamma_{1},\dots,\gamma_{n}$ in $\mathcal{L}_{\alpha}$ such that $\gamma_{0}=\gamma$, $\gamma_{n}=\tilde{\gamma}$, and for all 
$i\in\lbrace 0,1,\dots,n-1\rbrace$, $\gamma_{i}([0,T(\gamma_{i})])\cap\gamma_{i+1}([0,T(\gamma_{i+1})])\neq\emptyset$. A cluster is the union of all $\gamma([0,T(\gamma)])$ where the loops $\gamma$ belong to the same equivalence class. It is a subinterval of $I$. By definition, clusters corresponding to different equivalence classes are disjoint.

The study of clusters of a loop-soup was initiated in \cite{SheffieldWerner2012CLE} in the two-dimensional Brownian setting, where these clusters were used to construct the Conformal Loop Ensembles (CLE). In \cite{LeJanLemaire2012LoopClusters} were studied the clusters of loops of Markovian jump processes on a general graph. The particular case of the discrete circle was treated in
\cite{Chang2015Cricle}. The loop clusters on $\mathbb{Z}^{d}$, $d\geq 3$, and the question of their percolation, were studied in
\cite{ChangSapozhnikov2014PercLoops,Chang2017Supercritical}. In our setting of one dimensional diffusions the description of clusters of loops is simple: these are exactly the connected components of the positive set of the occupation field. This key observation led the author of these notes to study the Poisson ensembles of loops associated to diffusions on metric graphs \cite{Lupu2014LoopsGFF}, obtained by replacing discrete edges in a graph by continuous line segments. There, the loops combine both non-trivial geometry and continuous local times. There too, the clusters of loops are exactly the connected components of the positive set of the occupation field. Out of this, it was deduced that on the discrete half-plane
$\mathbb{Z}\times\mathbb{N}$, the critical intensity for the nearest neighbor random walk loops is
$\alpha=1/2$ \cite{Lupu2014LoopsHalfPlane}, the same as in the two-dimensional Brownian case studied in \cite{SheffieldWerner2012CLE},
and that one obtains CLE as scaling limits of discrete clusters of loops on $\mathbb{Z}\times\mathbb{N}$
\cite{Lupu2015ConvCLE}.

\begin{prop}
\label{Ch4Sec2 PropClusters}
Let $L$ be the generator of a transient diffusion on $I$. If $\alpha\geq 1$, the loops in $\mathcal{L}_{\alpha}$ form a single cluster: $I$. If $\alpha\in(0,1)$, there are infinitely many clusters. These are the maximal open intervals on which $(\widehat{\mathcal{L}}_{\alpha}^{x})_{x\in I}$ is positive. In case of the Brownian motion on $(0,+\infty)$ killed at $0$, the clusters correspond to the jumps of a stable subordinator with index $1-\alpha$. In case of a general diffusion, by performing a change of scale of derivative $\frac{1}{2}\frac{w}{u_{\downarrow}^{2}}$, we reduce the problem to the previous case. In case of the Brownian motion on $(0,+\infty)$ killed at $0$ and with uniform killing $\kappa$, the
clusters correspond to the jumps of a subordinator with Levy measure 
$1_{x>0}\frac{e^{2\sqrt{2\kappa}x}dx}{(e^{2\sqrt{2\kappa}x}-1)^{2-\alpha}}$.
\end{prop}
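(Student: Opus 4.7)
My plan is to identify the clusters with the connected components of the positivity set of the continuous occupation field $\widehat{\mathcal{L}}_\alpha$, and then to transport the analysis to the Brownian case through the covariance of the occupation field under h-transform and change of scale.

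I will first argue that, under the joint realisation of $\mathcal{L}_\alpha$ with a continuous version of its occupation field produced in chapter \ref{Ch5}, the clusters coincide almost surely with the maximal open intervals on which $\widehat{\mathcal{L}}_\alpha^x>0$. By the intermediate value theorem the trace of each loop equals the closed interval $[\min\gamma,\max\gamma]$, and the disintegration \eqref{Ch3Sec5: EqDisintegGeneralLoops} of $\mu^\ast$ as a mixture of excursion laws shows that $\mu^\ast$-almost every loop satisfies $\ell^x(\gamma)>0$ for every $x\in(\min\gamma,\max\gamma)$ and $\ell^{\min\gamma}(\gamma)=\ell^{\max\gamma}(\gamma)=0$. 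Moreover the point process of pairs $(\min\gamma,\max\gamma)$ on $\lbrace a<b\in I\rbrace$ is a simple Poisson process with intensity $\frac{da\,db}{u^{+,a}(b)u^{-,b}(a)}$, so almost surely no two loops share an endpoint; hence two loops are cluster-equivalent iff their open ranges overlap, iff they lie in the same connected component of $\lbrace x:\widehat{\mathcal{L}}_\alpha^x>0\rbrace$. The dichotomy between $\alpha\geq 1$ and $\alpha<1$ then follows immediately from proposition \ref{Ch4Sec2: PropOccupFieldZeroes}.

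For the Brownian motion on $(0,+\infty)$ killed at $0$, proposition \ref{Ch4Sec2: PropCBPI} identifies $(\widehat{\mathcal{L}}_\alpha^x)_{x>0}$ with a squared Bessel process of dimension $2\alpha$ started at $0$. Classical Itô excursion theory for subcritical squared Bessel processes (see \cite{RevuzYor1999BMGrundlehren}, chapters XI--XII) states that for $\alpha\in(0,1)$ the inverse local time at $0$ is a stable subordinator of index $1-\alpha$, whose jumps are exactly the lengths of the successive excursion intervals. For a general transient generator $L$ I will invoke property \ref{Ch4Sec2: PropertyOccupFieldTransf}: the h-transform $Conj(u_\downarrow,L)$ multiplies the occupation field by the positive factor $u_\downarrow^{-2}$ and so preserves its zero set, while the scale change of derivative $\frac{1}{2}w/u_\downarrow^2$ applied to $Conj(u_\downarrow,L)$ produces the generator of a standard Brownian motion killed at the boundary. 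Thus the zero set of the original occupation field is the diffeomorphic image of the zero set of a squared Bessel process of dimension $2\alpha$, which reduces the general transient case to the preceding one.

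For the final claim concerning $L=\frac{1}{2}\partial_x^2-\kappa$ on $(0,+\infty)$ killed at $0$, one has $u_\downarrow(x)=e^{-\sqrt{2\kappa}x}$ and the reducing scale change is $A(x)=\frac{1}{2\sqrt{2\kappa}}(e^{2\sqrt{2\kappa}x}-1)$. The SDE \eqref{Ch4Sec2: EqOccupFieldCSBI} has $x$-independent coefficients in this setting, so $(\widehat{\mathcal{L}}_\alpha^x)_{x>0}$ is itself a time-homogeneous Markov process, and for $\alpha\in(0,1)$ the inverse of its local time at $0$ is a genuine subordinator in $x$. Its Lévy measure is obtained by pushing the stable Lévy measure $c_\alpha\,\ell^{\alpha-2}\,d\ell$ of the $A$-scale subordinator forward through $A^{-1}$ at the origin; an elementary change of variable yields a density proportional to $e^{2\sqrt{2\kappa}\ell}/(e^{2\sqrt{2\kappa}\ell}-1)^{2-\alpha}$, as announced. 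The step I expect to require the most care is the very first one: the counterexample given just before the statement of the proposition shows that one cannot simply pick an arbitrary continuous modification of $\widehat{\mathcal{L}}_\alpha$ and read off the clusters from it, so the pointwise identification of the positivity set with the union of the open ranges of the loops relies crucially on the joint construction of chapter \ref{Ch5}.
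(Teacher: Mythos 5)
Your argument follows the paper's route for the identification of clusters with the positivity set of the occupation field, the dichotomy via proposition~\ref{Ch4Sec2: PropOccupFieldZeroes}, the pure Brownian case, and the reduction of the general transient case by h-transform and scale change. Where you diverge is the last claim, on the uniform-killing Brownian motion. The paper works with the process $S^{-1}(Y_t)$, observes it is \emph{not} a subordinator, and proves the Lévy-measure formula by a truncation-and-random-time-change argument: it removes jumps of $Y$ below a threshold $\varepsilon$, shows the resulting jump process in $x$-scale has a position-dependent jump rate, absorbs the position-dependence by a random time change, and passes to the limit $\varepsilon\to 0$ to identify the set of jumps with that of a genuine subordinator. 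You instead note that \eqref{Ch4Sec2: EqOccupFieldCSBI} has constant coefficients here, so $(\widehat{\mathcal{L}}_\alpha^x)_{x>0}$ is itself a time-homogeneous (CIR-type) Markov process, hence its inverse local time at $0$ is automatically a subordinator, and you read off the Lévy measure by pushing forward the stable Lévy density through $A^{-1}$. This is a genuinely different (and in my view tidier) route: the time-homogeneity makes the existence of the subordinator free, where the paper has to manufacture it. The step you should flesh out is the "pushforward at the origin": the path-level bijection between excursions of $\widehat{\mathcal{L}}_\alpha$ and of the squared Bessel process is position-dependent (an excursion over $[a_0,a_0+\ell]$ in $a$-scale maps to a time interval of $x$-length $A^{-1}(a_0+\ell)-A^{-1}(a_0)$, not $A^{-1}(\ell)$), so the Lévy measures are \emph{not} related by a naïve pushforward of the two point-processes of excursions. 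What saves you is time-homogeneity: all excursion laws of the CIR field coincide with the law of the first excursion from $0$, and only for that one is the life-time map literally $A^{-1}$. Spelling this out — or, equivalently, verifying as the paper does that the position-dependent factor can be absorbed into the local-time normalization — would close the argument.
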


\begin{proof}
Assume that $\mathcal{L}_{\alpha}$ and a continuous version of $(\widehat{\mathcal{L}}_{\alpha}^{x})_{x\in I}$ are defined on the same probability space. Almost surely the following holds:
\begin{itemize}
\item Given $\gamma\neq\gamma'\in\mathcal{L}_{\alpha}$, $\min\gamma\neq\max\gamma'$ and $\max\gamma\neq\min\gamma'$.
\item For all $\gamma\in\mathcal{L}_{\alpha}$, $\ell^{\min\gamma}(\gamma)=\ell^{\max\gamma}(\gamma)=0$ and $\ell^{x}(\gamma)$ is positive for $x\in(\min\gamma, \max\gamma)$.
\end{itemize}
Whenever the above two conditions hold it follows deterministically that the clusters are the intervals on which $(\widehat{\mathcal{L}}_{\alpha}^{x})_{x\in I}$ stays positive. We deduce then the number of clusters from Proposition \ref{Ch4Sec2: PropOccupFieldZeroes}.

If $L$ is the generator of the Brownian motion on $(0,+\infty)$ killed at $0$, then $(\widehat{\mathcal{L}}_{\alpha}^{x})_{x\in I}$ is the square of a Bessel process of dimension $2\alpha$ and its excursions correspond to the jumps of a stable subordinator with index $1-\alpha$. 

In general, a generator $L$ has the same measure on loops as $\operatorname{Conj}(u_{\downarrow},L)$. A diffusion of generator 
$\operatorname{Conj}(u_{\downarrow},L)$ transforms through a change of time and a change of scale of density 
$\frac{1}{2}\frac{w}{u_{\downarrow}^{2}}$ into a Brownian motion on $(0,+\infty)$ killed at $0$. For the clusters, the change of time does not matter.

In case of a Brownian motion on $(0,+\infty)$ killed at $0$ and with uniform killing $\kappa$, we can take
$u_{\downarrow}(x)=e^{-\sqrt{2\kappa}x}$. The scale function is then
\begin{displaymath}
S(x)=\int_{0}^{x}\dfrac{dy}{u_{\downarrow}(y)^{2}}=\int_{0}^{x}e^{2\sqrt{2\kappa}y}dy=
\dfrac{1}{2\sqrt{2\kappa}}(e^{2\sqrt{2\kappa}x}-1).
\end{displaymath}
Let $(Y_{t})_{t\geq 0}$ be an $1-\alpha$ stable subordinator with Levy measure $1_{y>0}y^{-(2-\alpha)}dy$. The clusters 
of $\mathcal{L}_{\alpha,\frac{1}{2}\frac{d^{2}}{dx^{2}}-\kappa}$ correspond to the jumps of the process 
$(S^{-1}(Y_{t}))_{t\geq 0}$, which is not a subordinator. We will see that nevertheless the latter process has the same set of jumps as a subordinator with Levy measure $1_{x>0}\frac{e^{2\sqrt{2\kappa}x}dx}{(e^{2\sqrt{2\kappa}x}-1)^{2-\alpha}}$. Let $\varepsilon>0$ and $(Y_{\varepsilon,t})_{t\geq 0}$ be the process obtained from $(Y_{t})_{t\geq 0}$ by removing all the jumps of height less then $\varepsilon$. By construction 
$Y_{\varepsilon,t}\leq Y_{t}$. $(S^{-1}(Y_{\varepsilon,t}))_{t\geq 0}$ is a Markov process: given the position of 
$S^{-1}(Y_{\varepsilon,t})$ at time $t$, the process waits an exponential holding time with inverse of the mean
equal to
\begin{displaymath}
\int_{\varepsilon}^{+\infty}\dfrac{dy}{y^{2-\alpha}}=\dfrac{1}{(1-\alpha)\varepsilon^{1-\alpha}}.
\end{displaymath}
Once a jump occurs, the jump of $Y_{\varepsilon}$ is distributed according the probability
\begin{displaymath}
1_{y>\varepsilon}(1-\alpha)\varepsilon^{1-\alpha}\dfrac{dy}{y^{2-\alpha}}.
\end{displaymath}
The distribution of the corresponding jump of $S^{-1}(Y_{\varepsilon,t})$ is obtained by pushing forward the above probability by the map $y\mapsto S^{-1}(y+Y_{\varepsilon,t})-S^{-1}(Y_{\varepsilon,t})$, which gives
\begin{multline*}
1_{x>S^{-1}(\varepsilon +Y_{\varepsilon,t})-S^{-1}(Y_{\varepsilon,t})}
(1-\alpha)\varepsilon^{1-\alpha}\dfrac{(2\sqrt{2\kappa})^{2-\alpha}e^{2\sqrt{2\kappa}(x+S^{-1}(Y_{\varepsilon,t}))}dx}
{\big(e^{2\sqrt{2\kappa}(x+S^{-1}(Y_{\varepsilon,t}))}-e^{2\sqrt{2\kappa}S^{-1}(Y_{\varepsilon,t})}\big)^{2-\alpha}}
\\=1_{x>S^{-1}(\varepsilon +Y_{\varepsilon,t})-S^{-1}(Y_{\varepsilon,t})}
(1-\alpha)\varepsilon^{1-\alpha}(2\sqrt{2\kappa})^{2-\alpha}e^{-(1-\alpha)2\sqrt{2\kappa}S^{-1}(Y_{\varepsilon,t})}
\dfrac{e^{2\sqrt{2\kappa}x}dx}{(e^{2\sqrt{2\kappa}x}-1)^{2-\alpha}}
\\=1_{x>S^{-1}(\varepsilon +Y_{\varepsilon,t})-S^{-1}(Y_{\varepsilon,t})}
(1-\alpha)\varepsilon^{1-\alpha}\dfrac{(2\sqrt{2\kappa})^{2-\alpha}}
{(1+2\sqrt{2\kappa}Y_{\varepsilon,t})^{1-\alpha}}
\dfrac{e^{2\sqrt{2\kappa}x}dx}{(e^{2\sqrt{2\kappa}x}-1)^{2-\alpha}}.
\end{multline*}
Consider now the random time change
\begin{displaymath}
\tau_{\varepsilon}(v):=\inf\left\lbrace t\geq 0\Big\vert\int_{0}^{t}
\dfrac{(2\sqrt{2\kappa})^{2-\alpha}}{(1+2\sqrt{2\kappa}Y_{\varepsilon,s})^{1-\alpha}}ds\geq v\right\rbrace,
\end{displaymath}
and at the limit as $\varepsilon \rightarrow 0$,
\begin{displaymath}
\tau(v):=\inf\left\lbrace t\geq 0 \Big\vert\int_{0}^{t}
\dfrac{(2\sqrt{2\kappa})^{2-\alpha}}{(1+2\sqrt{2\kappa}Y_{\varepsilon,s})^{1-\alpha}}ds\geq v\right\rbrace.
\end{displaymath}
For the time-changed process  $(S^{-1}(Y_{\varepsilon,\tau_{\varepsilon}(v)}))_{v\geq 0}$, the rate of jumps of height
belonging to $[x,x+dx]$ is
\begin{displaymath}
\left\lbrace
\begin{array}{ll}
\dfrac{e^{2\sqrt{2\kappa}x}dx}{(e^{2\sqrt{2\kappa}x}-1)^{2-\alpha}} 
& \text{if}~x>S^{-1}(\varepsilon +Y_{\varepsilon,\tau_{\varepsilon}(v)})-S^{-1}(Y_{\varepsilon,\tau_{\varepsilon}(v)}),\\ 
0 & \text{otherwise}.
\end{array} 
\right.
\end{displaymath}
Thus, as $\varepsilon$ goes to $0$, on one hand the process $(S^{-1}(Y_{\varepsilon,\tau_{\varepsilon}(v)}))_{v\geq 0}$
converges in law to $(S^{-1}(Y_{\tau(v)}))_{v\geq 0}$, and on the other hand it converges in law to a subordinator with Levy measure $1_{x>0}\frac{e^{2\sqrt{2\kappa}x}dx}{(e^{2\sqrt{2\kappa}x}-1)^{2-\alpha}}$.
\end{proof}

The clusters coalesce when $\alpha$ increases and fragment when $\alpha$ decreases. Some information on the coalescence of clusters delimited by the zeroes of Bessel processes is given in \cite{BertoinPitman1999TwoCoalStableSubord}, Section $3$. This clusters can be obtained as a limit of clusters of discrete loops on discrete subsets. In case of a symmetric jump process to the nearest neighbors on $\varepsilon\mathbb{N}$, if $\alpha> 1$, there are finitely many clusters, and if $\alpha\in(0,1)$, there are infinitely many clusters and these clusters are given by the holding times of a renewal process, which suitable normalized converges in law as $\varepsilon\rightarrow 0^{+}$ to the inverse of a stable subordinator with index $1-\alpha$. See Remark $3.3$ in \cite{LeJanLemaire2012LoopClusters}.

We can consider the occupation field $(\widehat{\mathcal{L}}_{\alpha, L}^{x})_{x\in I}$ if $L$ is not the generator of a diffusion but contains creation of mass as in \eqref{Ch2Sec3: GenSignedMeasure}. In this setting, if $u$ is a positive continuous function on $I$ such that $\frac{d^{2}u}{dx^{2}}$ is a signed measure, then for all $x\in I$
\begin{displaymath}
\widehat{\mathcal{L}}_{\alpha,\operatorname{Conj}(u,L)}^{x} = \dfrac{1}{u(x)^{2}}\widehat{\mathcal{L}}_{\alpha,L}^{x}.
\end{displaymath}
It follows that if $L\in\mathfrak{D}^{-}$, then for all $x\in I$, $\widehat{\mathcal{L}}_{\alpha, L}^{x}<+\infty$ a.s. and if $L\in\mathfrak{D}^{0}$, then for all $x\in I$, $\widehat{\mathcal{L}}_{\alpha, L}^{x}=+\infty$ a.s. If $L\in\mathfrak{D}^{+}$, then according to Proposition \ref{Ch2Sec3: PropStability} (iv), there is a positive Radon measure $\tilde{\kappa}$ such that $L-\tilde{\kappa}\in\mathfrak{D}^{0}$. Then, for all $x\in I$, 
$\widehat{\mathcal{L}}_{\alpha, L}^{x}\geq\widehat{\mathcal{L}}_{\alpha, L-\tilde{\kappa}}^{x}=+\infty$. If $L\in\mathfrak{D}^{-}$, then Properties \ref{Ch4Sec2: PropertyOccupFieldTransf} (i) and (ii) still hold. The description given by Property \ref{Ch4Sec2: PropertyMarginals} of the finite-dimensional marginals of  $(\widehat{\mathcal{L}}_{\alpha}^{x})_{x\in I}$ is still true, although the case of creation of mass was not considered in \cite{LeJan2011Loops}. $(\widehat{\mathcal{L}}_{\alpha}^{x})_{x\in I}$ still satisfies the SDE 
\eqref{Ch4Sec2: EqOccupFieldCSBI}.

\begin{prop}
\label{Ch4Sec2: PropExpMoments}
Let $L\in\mathfrak{D}^{-}$ and $\tilde{\nu}$ a finite signed measure with compact support in $I$. Then there is equivalence between:
\begin{itemize}
\item[(i)] $\mathbb{E}\left[\exp\left(\int_{I}
\widehat{\mathcal{L}}_{\alpha, L}^{x}\tilde{\nu}(dx)\right)\right]<+\infty$,
\item[(ii)] $L+\tilde{\nu}\in\mathfrak{D}^{-}$ .
\end{itemize}
If $L+\tilde{\nu}\in\mathfrak{D}^{-}$, then for $s\in[0,1]$,
\begin{equation}
\label{Ch4Sec2: EqExpMom}
\mathbb{E}\left[\exp\left(\int_{I}\widehat{\mathcal{L}}_{\alpha, L}^{x}\tilde{\nu}(dx)\right)\right] = 
\exp\left(\alpha\int_{0}^{1}\int_{I}G_{L+s\tilde{\nu}}(x,x)\tilde{\nu}(dx)ds\right).
\end{equation}
\end{prop}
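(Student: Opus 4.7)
The plan is to apply Campbell's exponential formula for the Poisson point process $\mathcal{L}_{\alpha,L}$ and then differentiate in an auxiliary parameter $s\in[0,1]$, reducing everything to Green's function integrals.

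First I would write $\int_{I}\widehat{\mathcal{L}}^{x}_{\alpha,L}\tilde{\nu}(dx)=\sum_{\gamma\in\mathcal{L}_{\alpha,L}}F(\gamma)$ with $F(\gamma):=\int_{I}\ell^{x}(\gamma)\tilde{\nu}(dx)$. Campbell's exponential formula applied to the Poisson point process of intensity $\alpha\mu^{\ast}_{L}$ gives
\[
\mathbb{E}\left[\exp\left(\int_{I}\widehat{\mathcal{L}}^{x}_{\alpha,L}\tilde{\nu}(dx)\right)\right]=\exp(\alpha\Phi(1)),\qquad \Phi(s):=\int_{\mathfrak{L}^{\ast}}(e^{sF(\gamma)}-1)\mu^{\ast}_{L}(d\gamma),
\]
with both sides possibly infinite. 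Decomposing $\tilde{\nu}=\tilde{\nu}_{+}-\tilde{\nu}_{-}$ and using that $\int F_{\pm}d\mu^{\ast}_{L}=\int G_{L}(x,x)\tilde{\nu}_{\pm}(dx)<+\infty$ (by Corollary \ref{Ch3Sec3: CorLoopLocTimeBridge} and $L\in\mathfrak{D}^{-}$), the sums $\sum F_{\pm}$ are a.s.\ finite and the identity above is the standard Campbell identity applied on each sign.

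Next I would compute $\Phi(s)$ by differentiation. By Definition \ref{Ch3Sec8: DefLoopsSignedMes} together with \eqref{Ch3Sec8: EqAddingSignedMeasure}, whenever $L+s\tilde{\nu}$ still defines a "generator", $e^{sF(\gamma)}\mu^{\ast}_{L}(d\gamma)=\mu^{\ast}_{L+s\tilde{\nu}}(d\gamma)$. Hence for $s$ with $L+s\tilde{\nu}\in\mathfrak{D}^{-}$, Fubini (legitimate by splitting in the sign of $\tilde{\nu}$) gives
\[
\Phi'(s)=\int_{I}\tilde{\nu}(dx)\int_{\mathfrak{L}^{\ast}}\ell^{x}(\gamma)\,\mu^{\ast}_{L+s\tilde{\nu}}(d\gamma)=\int_{I}G_{L+s\tilde{\nu}}(x,x)\tilde{\nu}(dx),
\]
where the last equality uses equation \eqref{Ch3Sec3: EqLoopMeasureLocTime} and the fact that $\mu^{x,x}_{L+s\tilde{\nu}}$ has total mass $G_{L+s\tilde{\nu}}(x,x)$ (Property \ref{Ch3Sec2: PropertyBridges}(i), extended from transient diffusions to $\mathfrak{D}^{-}$ through an h-transform).

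For the direction (ii)$\Rightarrow$(i) together with the formula \eqref{Ch4Sec2: EqExpMom}, I would show that the set $\mathcal{S}:=\{s\in[0,1]:L+s\tilde{\nu}\in\mathfrak{D}^{-}\}$ is a subinterval of $[0,1]$. Condition (ii) of Proposition \ref{Ch2Sec3: PropPositivity} is affine in $s$ for each test function $f$, so $\{s:L+s\tilde{\nu}\in\mathfrak{D}^{0,-}\}$ is convex; combining this with the openness of $\mathfrak{D}^{-}$ and $\mathfrak{D}^{+}$ from Proposition \ref{Ch2Sec3: PropStability}(ii),(iii) shows that $\mathcal{S}$ is a relatively open sub-interval of $[0,1]$, hence equal to $[0,1]$ when both endpoints are in $\mathfrak{D}^{-}$. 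Continuity of $(x,s)\mapsto G_{L+s\tilde{\nu}}(x,x)$ on $\mathrm{supp}(\tilde{\nu})\times[0,1]$ (obtained by applying Lemma \ref{Ch2Sec1: LemContKappa} after an h-transform that turns the creation-of-mass term on the compact support of $\tilde{\nu}$ into a killing measure) then gives a uniform bound, and integrating $\Phi'$ from $0$ to $1$ yields \eqref{Ch4Sec2: EqExpMom}.

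The main obstacle is (i)$\Rightarrow$(ii). Assuming $L+\tilde{\nu}\notin\mathfrak{D}^{-}$, the convexity just established produces $s^{\ast}\in(0,1]$ with $L+s^{\ast}\tilde{\nu}\in\mathfrak{D}^{0}$ and $L+s\tilde{\nu}\in\mathfrak{D}^{-}$ for $s<s^{\ast}$. I would show $\Phi(s)\to+\infty$ as $s\uparrow s^{\ast}$, so that $\Phi(1)=+\infty$. The crucial input is that $G_{L+s\tilde{\nu}}(x,x)\to+\infty$ locally uniformly in $x$ as $s\uparrow s^{\ast}$, extracted from the resolvent identity (Lemma \ref{Ch2Sec3: LemResolvent}): the latter presents $G_{L+s\tilde{\nu}}-G_{L}$ as the solution of a Fredholm-type equation whose kernel has spectral radius approaching $1$ precisely at the transition to $\mathfrak{D}^{0}$, so the resolvent norm blows up. The sign of $\tilde{\nu}$ is handled by noting that $\int G_{L+s\tilde{\nu}}(x,x)\tilde{\nu}_{-}(dx)$ stays finite up to $s^{\ast}$ (same continuity/stability argument), so the divergence is carried by the $\tilde{\nu}_{+}$-part. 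As a fall-back route to sidestep delicate spectral estimates for signed $\tilde{\nu}$, I would approximate $\tilde{\nu}$ by finite combinations of point masses $\tilde{\nu}_{n}$, invoke Property \ref{Ch4Sec2: PropertyMarginals} for the finite-dimensional marginals, and use the matrix identity $\log\det M(1)-\log\det M(0)=\int_{0}^{1}\mathrm{tr}(M(s)^{-1}M'(s))\,ds$ to simultaneously recover \eqref{Ch4Sec2: EqExpMom} and detect the divergence as the vanishing of the relevant determinant at $s^{\ast}$.
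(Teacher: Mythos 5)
Your Campbell-formula computation of the exponential moment and the differentiation in $s$ via the absolute-continuity relation $e^{sF(\gamma)}\mu^{\ast}_{L}(d\gamma)=\mu^{\ast}_{L+s\tilde{\nu}}(d\gamma)$ is, in substance, the same mechanism the paper uses for the formula \eqref{Ch4Sec2: EqExpMom} and for (ii)$\Rightarrow$(i): Campbell's exponential formula and the Radon--Nikodym density for tilting a Poisson process are two ways of writing one identity, and both reduce $\frac{d}{ds}$ of the log-Laplace transform to $\alpha\int_{I}G_{L+s\tilde{\nu}}(x,x)\tilde{\nu}(dx)$. Your argument that the segment $\{s:L+s\tilde{\nu}\in\mathfrak{D}^{-}\}$ is an interval (quadratic form affine in $s$, plus the stability statements) is also the ingredient the paper invokes when it records that $\mathfrak{D}^{-}$ is convex.

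The genuine gap is in (i)$\Rightarrow$(ii). Your plan is to locate a transition parameter $s^{\ast}$ with $L+s^{\ast}\tilde{\nu}\in\mathfrak{D}^{0}$ and then prove $\Phi(s)\to+\infty$ as $s\uparrow s^{\ast}$ by showing that $G_{L+s\tilde{\nu}}$ blows up, extracted from the resolvent identity / spectral radius approaching $1$. This is exactly the part you flag as ``delicate,'' and it is: the paper itself points out (after Lemma~\ref{Ch2Sec3: LemResolvent}) that $\mathfrak{D}^{-}$ cannot be characterized spectrally in the continuous setting because operators in $\mathfrak{D}^{-}$ can have spectral maximum equal to $0$. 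So the uniform divergence of $G_{L+s\tilde{\nu}}(x,x)$ is not a routine consequence of a ``spectral radius reaching $1$'' picture, and the fall-back route through finite point-mass approximations is likewise left unargued (you would still need to justify the passage to the limit at the critical parameter, which is the hard step). The paper sidesteps all of this: granting (i), the Poisson tilting gives absolute continuity of $\mathbb{P}_{\mathcal{L}_{\alpha,L+\tilde{\nu}}}$ with respect to $\mathbb{P}_{\mathcal{L}_{\alpha,L}}$ via \eqref{Ch4Sec2: EqAbsContLoopSoup}; but if $L+\tilde{\nu}\notin\mathfrak{D}^{-}$ the occupation field $\widehat{\mathcal{L}}^{x}_{\alpha,L+\tilde{\nu}}$ is $+\infty$ almost surely for every $x$, while $\widehat{\mathcal{L}}^{x}_{\alpha,L}<+\infty$ almost surely since $L\in\mathfrak{D}^{-}$ --- an immediate contradiction, with no analysis of the Green's function near the transition. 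You should replace your blow-up argument by this contradiction; the ingredients (the dichotomy on finiteness of the occupation field depending on whether the generator lies in $\mathfrak{D}^{-}$ or not, and the tilting identity) are all already available to you.
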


\begin{proof}
First, observe that $\int_{I}\widehat{\mathcal{L}}_{\alpha, L}^{x}\vert\tilde{\nu}\vert(dx)$ is almost surely finite because $\vert\tilde{\nu}\vert$ is finite and has compact support and $(\widehat{\mathcal{L}}_{\alpha, L}^{x})_{x\in I}$ is continuous. Also, observe that $\mathfrak{D}^{-}$ is convex. So, if $L+\tilde{\nu}\in\mathfrak{D}^{-}$, then for all $s\in [0,1]$, $ L+s\tilde{\nu}\in\mathfrak{D}^{-}$.

(i) implies (ii): Let $\mathbb{P}_{\mathcal{L}_{\alpha, L}}$ be the law of $\mathcal{L}_{\alpha, L}$ and  $\mathbb{P}_{\mathcal{L}_{\alpha, L+\tilde{\nu}}}$ be the law of $\mathcal{L}_{\alpha, L+\tilde{\nu}}$. There is an absolute continuity relation between the intensity measures:
\begin{displaymath}
\mu_{L+\tilde{\nu}}(d\gamma)=\exp\left(\int_{I}\ell^{x}(\gamma)\right)\mu_{L}(d\gamma). 
\end{displaymath}
In case (i) is true, $\mathbb{P}_{\mathcal{L}_{\alpha, L+\tilde{\nu}}}$ is absolutely continuous with respect to $\mathbb{P}_{\mathcal{L}_{\alpha, L}}$ and 
\begin{equation}
\label{Ch4Sec2: EqAbsContLoopSoup}
d\mathbb{P}_{\mathcal{L}_{\alpha, L+\tilde{\nu}}} = 
\dfrac{\exp\left(\int_{I}\widehat{\mathcal{L}}_{\alpha, L}^{x}\tilde{\nu}(dx)\right)}{\mathbb{E}\left[\exp\left(\int_{I}\widehat{\mathcal{L}}_{\alpha, L}^{x}\tilde{\nu}(dx)\right)\right]}
d\mathbb{P}_{\mathcal{L}_{\alpha, L}}.
\end{equation}
But this can not be if $L+\tilde{\nu}\not\in\mathfrak{D}^{-}$, because then, for any $x\in I$, $\widehat{\mathcal{L}}_{\alpha, L}^{x}<+\infty$, and $\widehat{\mathcal{L}}_{\alpha, L+\tilde{\nu}}^{x}=+\infty$. Thus, necessarily, $L+\tilde{\nu}\in\mathfrak{D}^{-}$.

(ii) implies (i): We first assume that $\tilde{\nu}$ is a positive measure and $L+\tilde{\nu}\in\mathfrak{D}^{-}$. Then, $\mathbb{P}_{\mathcal{L}_{\alpha, L}}$ is absolutely continuous with respect to 
$\mathbb{P}_{\mathcal{L}_{\alpha, L+\tilde{\nu}}}$ and
\begin{displaymath}
d\mathbb{P}_{\mathcal{L}_{\alpha, L}} = 
\dfrac{\exp\left(-\int_{I}\widehat{\mathcal{L}}_{\alpha, L+\tilde{\nu}}^{x}\tilde{\nu}(dx)\right)}{\mathbb{E}\left[\exp\left(-\int_{I}\widehat{\mathcal{L}}_{\alpha, L+\tilde{\nu}}^{x}\tilde{\nu}(dx)\right)\right]}
d\mathbb{P}_{\mathcal{L}_{\alpha, L+\tilde{\nu}}}.
\end{displaymath}
Inverting the above absolute continuity relation, we get that
\begin{displaymath}
\mathbb{E}\left[\exp\left(\int_{I}\widehat{\mathcal{L}}_{\alpha, L}^{x}\tilde{\nu}(dx)\right)\right] =
\mathbb{E}\left[\exp\left(-\int_{I}\widehat{\mathcal{L}}_{\alpha, L+\tilde{\nu}}^{x}\tilde{\nu}(dx)\right)\right]^{-1}<+\infty.
\end{displaymath}
If $\tilde{\nu}$ is not positive, let $\tilde{\nu}^{+}$ and $-\tilde{\nu}^{-}$ be its positive respectively negative part. Then,
\begin{equation*}
\begin{split}
\mathbb{E}\bigg[\exp\bigg(\int_{I}\widehat{\mathcal{L}}_{\alpha, L}^{x}\tilde{\nu}(dx)&\bigg)\bigg]\\
&=\mathbb{E}\left[\exp\left(\int_{I}\widehat{\mathcal{L}}_{\alpha, L-\tilde{\nu}^{-}}^{x}\,\tilde{\nu}^{+}(dx)\right)\right]
\mathbb{E}\left[\exp\left(-\int_{I}\widehat{\mathcal{L}}_{\alpha, L}^{x}\tilde{\nu}^{-}(dx)\right)\right]\\
&=\dfrac{\mathbb{E}\left[\exp\left(-\int_{I}\widehat{\mathcal{L}}_{\alpha, L}^{x}\tilde{\nu}^{-}(dx)\right)\right]}{\mathbb{E}\left[\exp\left(-\int_{I}\widehat{\mathcal{L}}_{\alpha, L+\tilde{\nu}}^{x}
\tilde{\nu}^{+}(dx)\right)\right]}<+\infty.
\end{split}
\end{equation*}

For the expression \eqref{Ch4Sec2: EqExpMom} of exponential moments: 
\begin{equation}
\label{Ch4Sec2: EqDiffLambdaExpMom}
\dfrac{d}{ds}\mathbb{E}\left[ \exp\left(s\int_{I}\widehat{\mathcal{L}}_{\alpha, L}^{x}\tilde{\nu}(dx)\right)\right] =
\mathbb{E}\left[ \int_{I}\widehat{\mathcal{L}}_{\alpha, L}^{x}\tilde{\nu}(dx)\exp\left(s\int_{I}\widehat{\mathcal{L}}_{\alpha, L}^{x}\tilde{\nu}(dx)\right)\right].
\end{equation}
From the absolute continuity relation \eqref{Ch4Sec2: EqAbsContLoopSoup} follows that the right-hand side of \eqref{Ch4Sec2: EqDiffLambdaExpMom} equals
\begin{displaymath}
\alpha\int_{I}G_{L+s\tilde{\nu}}(x,x)\tilde{\nu}(dx)\mathbb{E}
\left[\exp\left(s\int_{I}\widehat{\mathcal{L}}_{\alpha, L}^{x}\tilde{\nu}(dx)\right)\right].
\end{displaymath}
This implies \eqref{Ch4Sec2: EqExpMom}.
\end{proof}

As in discrete space case, the above exponential moments can be expressed using determinants. On the complex Hilbert space $\mathbb{L}^{2}(d\vert\tilde{\nu}\vert)$, define, for $s\in[0,1]$, the operators
\begin{displaymath}
(\mathfrak{G}_{s,\tilde{\nu}}f)(x):=\int_{I} G_{L+s\tilde{\nu}}(x,y)f(y)\tilde{\nu}(dy),
\end{displaymath}
\begin{displaymath}
(\vert\mathfrak{G}_{s,\tilde{\nu}}\vert f)(x):=\int_{I} G_{L+s\tilde{\nu}}(x,y)f(y)\vert\tilde{\nu}\vert(dy).
\end{displaymath}
The operator $\vert\mathfrak{G}_{s,\tilde{\nu}}\vert$ is self-adjoint, positive semi-definite with continuous kernel function, and according to \cite{Simon2005TraceIdeals}, Theorem $2.12$, it is trace class. Since trace class operators form a two-sided ideal in the algebra of bounded operators, $\mathfrak{G}_{s,\tilde{\nu}}$ is also trace class. Moreover,
\begin{equation}
\label{Ch4Sec2: EqTrace}
\operatorname{Tr}(\mathfrak{G}_{s,\tilde{\nu}}) = \int_{I} G_{L+s\tilde{\nu}}(x,x)\tilde{\nu}(dx).
\end{equation}
The determinant $\det(Id + \mathfrak{G}_{s,\tilde{\nu}})$ is well defined as a converging product of its eigenvalues 
(see \cite{Simon2005TraceIdeals}, Chapter $3$).

\begin{prop}
\label{Ch4Sec2: PropDeterminant}
\begin{displaymath}
\exp\left(\alpha\int_{0}^{1}\int_{I}G_{L+s\tilde{\nu}}(x,x)\tilde{\nu}(dx)ds\right)=
(\det(Id + \mathfrak{G}_{1,\tilde{\nu}}))^{\alpha}.
\end{displaymath}
\end{prop}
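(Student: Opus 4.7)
The plan is to reduce the identity to the scalar assertion
$$\log\det(Id+\mathfrak{G}_{\tilde\nu}) \;=\; \int_0^1 Tr(\mathfrak{G}_{s\tilde\nu})\,ds,$$
after which one exponentiates and raises to the $\alpha$-th power, using the trace formula $Tr(\mathfrak{G}_{s\tilde\nu})=\int_I G_{L+s\tilde\nu}(x,x)\tilde\nu(dx)$ stated just before the proposition.

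First I would turn the resolvent identity of Lemma \ref{Ch2Sec3: LemResolvent}, applied with $s\tilde\nu$ in place of $\tilde\nu$, into an operator identity on $\mathbb{L}^2(d|\tilde\nu|)$. After properly organising the factor $m(z)$ appearing in Lemma \ref{Ch2Sec3: LemResolvent}, this yields
$$\mathfrak{G}_{s\tilde\nu} \;=\; \mathfrak{G}_0 + s\,\mathfrak{G}_0\mathfrak{G}_{s\tilde\nu}, \qquad \text{equivalently} \qquad \mathfrak{G}_{s\tilde\nu} \;=\; (Id-s\mathfrak{G}_0)^{-1}\mathfrak{G}_0 \;=\; \mathfrak{G}_0(Id-s\mathfrak{G}_0)^{-1},$$
for every $s\in[0,1]$. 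The invertibility of $Id-s\mathfrak{G}_0$ throughout $[0,1]$ rests on the convexity of $\mathfrak{D}^-$ (already invoked in the proof of Proposition \ref{Ch4Sec2: PropExpMoments}): every $L+s\tilde\nu$ lies in $\mathfrak{D}^-$, so every $\mathfrak{G}_{s\tilde\nu}$ is a well-defined trace class operator with continuous kernel.

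Next I would expand via the Neumann series, valid for $s$ small and then extended along $[0,1]$ by the previous step, and integrate termwise:
$$Tr(\mathfrak{G}_{s\tilde\nu}) \;=\; \sum_{k\geq 0} s^k\, Tr(\mathfrak{G}_0^{k+1}), \qquad \int_0^1 Tr(\mathfrak{G}_{s\tilde\nu})\,ds \;=\; \sum_{j\geq 1}\frac{Tr(\mathfrak{G}_0^j)}{j} \;=\; -\log\det(Id-\mathfrak{G}_0),$$
invoking the Plemelj--Smithies identity $\log\det(Id-T)=-\sum_{j\geq 1}Tr(T^j)/j$ valid for trace class $T$ of spectral radius less than $1$. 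Finally, the operator identity evaluated at $s=1$ rewrites as $Id+\mathfrak{G}_{\tilde\nu}=(Id-\mathfrak{G}_0)^{-1}$, whence $\log\det(Id+\mathfrak{G}_{\tilde\nu})=-\log\det(Id-\mathfrak{G}_0)$, matching the integral and closing the chain.

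The main obstacle is the bookkeeping in the first step: the kernel identity of Lemma \ref{Ch2Sec3: LemResolvent} entangles the signed measure $\tilde\nu$ with the speed measure $m$ of the diffusion, so turning it into a clean operator identity on $\mathbb{L}^2(d|\tilde\nu|)$ compatible with the definition of $\mathfrak{G}_{s\tilde\nu}$ demands a careful choice of functional framework (e.g.\ absorbing $m$ into the reference measure or introducing an auxiliary bounded multiplication operator). Once the compositions are checked to remain trace class as products of trace class operators with bounded ones, and the cyclicity of the trace is available, the Neumann expansion and the Plemelj--Smithies formula close the argument.
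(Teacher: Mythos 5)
Your proposal takes a genuinely different route from the paper's. The paper's proof works intrinsically with the eigenvalues of $\mathfrak{G}_{\tilde\nu}$: it first shows those eigenvalues are real, then uses the resolvent identity to track how they transform under $s$, deduces from the uniform boundedness of $(\mathfrak{G}_{s\tilde\nu})_{s\in[0,1]}$ that no eigenvalue of $\mathfrak{G}_{\tilde\nu}$ lies in $(-\infty,-1]$, and finally integrates the resulting eigenvalue sum term by term. Your route instead reorganises the resolvent identity into the operator equation $Id+\mathfrak{G}_{\tilde\nu}=(Id-\mathfrak{G}_0)^{-1}$ (with $\mathfrak{G}_0$ the integral operator built from $G_L$), reducing the statement to a single Fredholm-determinant identity. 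That reduction is elegant and would compress the proof considerably.

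The gap is in the passage from the trace integral to the determinant. You invoke the Neumann expansion $Tr(\mathfrak{G}_{s\tilde\nu})=\sum_{k\geq 0}s^k Tr(\mathfrak{G}_0^{k+1})$ and then the Plemelj--Smithies formula $\log\det(Id-T)=-\sum_{j\geq 1}Tr(T^j)/j$; both steps require the spectral radius of $\mathfrak{G}_0$ to be strictly less than $1$, and this is not generically true. The hypothesis $L+s\tilde\nu\in\mathfrak{D}^-$ for all $s\in[0,1]$ rules out eigenvalues of $\mathfrak{G}_0$ in $[1,+\infty)$ (via the invertibility of $Id-s\mathfrak{G}_0$), but it imposes no lower bound. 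A concrete counterexample: take $L$ the generator of a Brownian motion on a bounded interval killed at the boundary, and $\tilde\nu=-c\,\delta_{x_0}$ for an interior $x_0$ and $c$ large; then $L+\tilde\nu=L-c\delta_{x_0}$ is still a transient diffusion generator (adding killing keeps one in $\mathfrak{D}^-$), but the single eigenvalue of $\mathfrak{G}_0$ on $\mathbb{L}^2(c\delta_{x_0})$ equals $-c\,G_L(x_0,x_0)$, which has modulus larger than $1$ once $c>G_L(x_0,x_0)^{-1}$. In that regime the Neumann series at $s=1$ diverges and Plemelj--Smithies does not apply. The remark that the expansion is ``extended along $[0,1]$ by the previous step'' does not repair this, because analytic continuation of the trace does not give you the termwise integration or the infinite-series form of $\log\det$.

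The fix keeps your framework but replaces the series argument by the differential identity for Fredholm determinants: for a trace class operator $T$ with $Id-sT$ invertible on all of $[0,1]$,
\begin{displaymath}
\dfrac{d}{ds}\log\det(Id-sT)=-\,Tr\bigl(T(Id-sT)^{-1}\bigr),
\end{displaymath}
which holds regardless of the spectral radius. With $T=\mathfrak{G}_0$ and your operator identity $T(Id-sT)^{-1}=\mathfrak{G}_{s\tilde\nu}$, integrating from $0$ to $1$ gives $\log\det(Id-\mathfrak{G}_0)=-\int_0^1 Tr(\mathfrak{G}_{s\tilde\nu})\,ds$, and then $\det(Id+\mathfrak{G}_{\tilde\nu})=\det(Id-\mathfrak{G}_0)^{-1}$ closes the chain. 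Invertibility of $Id-s\mathfrak{G}_0$ on $[0,1]$ is exactly what $L+s\tilde\nu\in\mathfrak{D}^-$ provides (by the convexity of $\mathfrak{D}^-$ and the Fredholm alternative for the compact operator $\mathfrak{G}_0$), and you still need the paper's observation that the eigenvalues are real so that $\det(Id-\mathfrak{G}_0)$ is a positive real number and the logarithm makes sense; that piece of the paper's argument cannot be skipped. With these two repairs, your approach gives a correct and somewhat more streamlined proof than the eigenvalue bookkeeping in the paper.
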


\begin{proof}
$\mathfrak{G}_{1,\tilde{\nu}}$ has only real eigenvalues. Indeed, let $\lambda$ be such an eigenvalue and $f$ a non zero eigenfunction for $\lambda$. The sign of $\tilde{\nu}$, $\operatorname{sign}(\tilde{\nu})$, is a $\lbrace -1, +1\rbrace$-valued function defined $d\vert\tilde{\nu}\vert$ almost everywhere.
\begin{equation}
\label{Ch4Sec2: EqRealEV}
\int_{I}(\operatorname{sign}(\tilde{\nu})\bar{f})(x)\vert\mathfrak{G}_{1,\tilde{\nu}}\vert(\operatorname{sign}(\tilde{\nu})f)(x)\vert\tilde{\nu}\vert(dx)=
\lambda\int_{I}\vert f\vert^{2}(x)\tilde{\nu}(dx).
\end{equation}
The left-hand side of \eqref{Ch4Sec2: EqRealEV} is non-negative. If the right-hand side of \eqref{Ch4Sec2: EqRealEV} is non-zero, then $\lambda$ is real. If it is zero, consider $f_{\varepsilon}:=f+\varepsilon \operatorname{sign}(\tilde{\nu})f$. Then,
\begin{displaymath}
\lambda=\lim_{\varepsilon\rightarrow 0^{+}}\dfrac{1}{2\varepsilon}
\left(\int_{I}(\operatorname{sign}(\tilde{\nu})\bar{f}_{\varepsilon})(x)\vert\mathfrak{G}_{1,\tilde{\nu}}
\vert(\operatorname{sign}(\tilde{\nu})f_{\varepsilon})(x)\vert\tilde{\nu}\vert(dx)\right) 
\left(\int_{I}\vert f\vert^{2}(x)\vert\tilde{\nu}\vert(dx)\right)^{-1},
\end{displaymath}
and thus $\lambda$ is real.

The operators $\mathfrak{G}_{s,\tilde{\nu}}$ are compact and the characteristic space corresponding to each of their non-zero eigenvalue is of finite dimension. Let $(\lambda_{i})_{i\geq 0}$ be the non-increasing sequence of positive eigenvalues of $\mathfrak{G}_{1,\tilde{\nu}}$. Each eigenvalue $\lambda_{i}$ appears as many times as the dimension of its characteristic space $\ker(\mathfrak{G}_{1,\tilde{\nu}}-\lambda_{i}Id)^{n}$ ($n$ large enough). Similarly, let $(-\tilde{\lambda}_{j})_{j\geq 0}$ be the non-decreasing sequence of the negative eigenvalues of 
$\mathfrak{G}_{1,\tilde{\nu}}$. Let $s\in[0,1]$. According to the resolvent identity (Lemma \ref{Ch2Sec3: LemResolvent}),
the operators $\mathfrak{G}_{1,\tilde{\nu}}$ and $\mathfrak{G}_{s,\tilde{\nu}}$ commute and satisfy the relation
\begin{equation}
\label{Ch4Sec2: EqResolventIdentity}
\mathfrak{G}_{1,\tilde{\nu}}\mathfrak{G}_{s,\tilde{\nu}}=\mathfrak{G}_{s,\tilde{\nu}}\mathfrak{G}_{1,\tilde{\nu}}=
\dfrac{1}{1-s}(\mathfrak{G}_{1,\tilde{\nu}}-\mathfrak{G}_{s,\tilde{\nu}}).
\end{equation}
Since $\mathfrak{G}_{1,\tilde{\nu}}$ and $\mathfrak{G}_{s,\tilde{\nu}}$ commute, these operators have common characteristic spaces. From \eqref{Ch4Sec2: EqResolventIdentity} follows that 
$\big(\frac{\lambda_{i}}{1+(1-s)\lambda_{i}}\big)_{i\geq 0}$ is a non-increasing sequence of positive eigenvalues of $\mathfrak{G}_{s,\tilde{\nu}}$. If $\frac{-1}{1-s}$ is not an eigenvalue of $\mathfrak{G}_{1,\tilde{\nu}}$, then $\big(\frac{-\tilde{\lambda}_{j}}{1-(1-s)\tilde{\lambda}_{j}}\big)_{j\geq 0}$ is also a sequence of eigenvalues of $\mathfrak{G}_{s,\tilde{\nu}}$. But the family of operators $(\mathfrak{G}_{s,\tilde{\nu}})_{s\in[0,1]}$ is bounded. Thus, none of $\frac{-\tilde{\lambda}_{j}}{1-(1-s)\tilde{\lambda}_{j}}$ can blow up when $s$ varies. So, it turns out that $\mathfrak{G}_{1,\tilde{\nu}}$ has no eigenvalues in $(-\infty,-1]$. From \eqref{Ch4Sec2: EqTrace} we get
\begin{displaymath}
\int_{I}G_{L+s\tilde{\nu}}(x,x)\tilde{\nu}(dx)=
\sum_{i\geq 0}\dfrac{\lambda_{i}}{1+(1-s)\lambda_{i}} -
\sum_{j\geq 0}\dfrac{\tilde{\lambda}_{j}}{1-(1-s)\tilde{\lambda}_{j}}.
\end{displaymath}
The above sum is absolutely convergent, uniformly for $s\in[0,1]$. Integrating over $[0,1]$ yields
\begin{displaymath}
\int_{0}^{1}\int_{I}G_{L+s\tilde{\nu}}(x,x)\tilde{\nu}(dx)ds=
\sum_{i\geq 0}\log(1+\lambda_{i})+\sum_{j\geq 0}\log(1-\tilde{\lambda}_{j}).
\end{displaymath}
This concludes the proof.
\end{proof}

\section{Isomorphism with the Gaussian free field}
\label{Ch4Sec3}

In \cite{LeJan2011Loops} Le Jan observed the equality in law between the occupation field of a Poisson ensemble of loops of a symmetric Markov jump process on an electircal network, at intensity parameter $\alpha=1/2$, and half the square of a discrete Gaussian free field (GFF). 
His identity is a generalization of Dynkin's isomorphism
\cite{Dynkin1984Isomorphism,Dynkin1984PolynomOccupField,Dynkin1984IsomorphismPresentation}, which relates the square of a GFF and the occupation times of Markovian trajectories. Dynkin's isomorphism itself can be seen as a reformulation of an identity by Symanzik \cite{Symanzik1969QFT},
who expressed the moments of a continuum GFF in $\mathbb{R}^{d}$ as a multiple intergal over a measure on Brownian loops, which is the higher dimensional analogue of our measure given by Definition \ref{Ch3Sec3: DefMesLoops}. Dynkin's isomorphism theorem has multiple variations, such as the generalized Ray-Knight theorems \cite{Eisenbaum94DynkinRK,EKMRS2000RK} (see also \cite{Ray1963Sojourn,Knight1963Sojourn} for the original Ray-Knigth theorems), Eisenbaum's isomorphism \cite{Eisenbaum1995Iso}, Sznitman's isomorphism for random interlacments \cite{Sznitman2012Isomorphism}, and the last in this line, Le Jan's isomorphism for Poisson ensembles of loops \cite{LeJan2011Loops}. Le Jan's isomorphism has the advantage to give the whole square of a GFF as an occupation field of Markovian trajectories, not just parts of it. See also \cite{MarcusRosen2006MarkovGaussianLocTime} and \cite{Sznitman2012LectureIso} for a review on this subject.

We are going to formulate Le Jan's isomorphism in our setting of one-dimensional diffusions.
Let $L$ be a generator of a transient diffusion on $I$ of form \eqref{Ch2Sec2: EqGenerator}. Let $(\phi_{x})_{x\in I}$ be a centered Gaussian process with covariance function
\begin{displaymath}
\mathbb{E}[\phi_{x}\phi_{y}]=G(x,y).
\end{displaymath}
$(\phi_{x})_{x\in I}$ is the Gaussian free field associated to $L$. Let $\widetilde{S}$ be a primitive of 
$\frac{w}{u_{\downarrow}^{2}}$. Then, $\widetilde{S}(\sup I)=+\infty$. Moreover, $\widetilde{S}(\inf I)>-\infty$ because $L$ is the generator of a transient diffusion. 
$\left(\frac{1}{u_{\downarrow}(\widetilde{S}^{-1}(a))}\phi_{\widetilde{S}^{-1}(a)}\right)_{a\in \tilde{S}(I)}$ is a standard Brownian motion starting from $0$ at $\widetilde{S}(\inf I)$. In particular $(\phi_{x})_{x\in I}$ is inhomogeneous Markov and has continuous sample paths.
As observed in \cite{LeJan2011Loops}, Chapter $5$, the following holds:

\begin{property}
\label{Ch4Sec3: PropertyIso}
When $\alpha=\frac{1}{2}$,
$(\widehat{\mathcal{L}}_{\frac{1}{2}}^{x})_{x\in I}$ has the same law as $(\frac{1}{2}\phi_{x}^{2})_{x\in I}$. In case of a Brownian motion on $(0,+\infty)$ killed in $0$,  $(\widehat{\mathcal{L}}_{\frac{1}{2}}^{x})_{x>0}$ is the square of a standard Brownian motion starting from $0$. In case of a Brownian motion on $(0, x_{\rm max})$, killed in $0$ and $x_{\rm max}$,  $(\widehat{\mathcal{L}}_{\frac{1}{2}}^{x})_{0<x<x_{\rm max}}$ is the square of a standard Brownian bridge on $[0,x_{\rm max}]$ from $0$ to $0$. In case of a Brownian motion on $\mathbb{R}$ with constant killing rate $\kappa$, $(\widehat{\mathcal{L}}_{\frac{1}{2}}^{x})_{x\in\mathbb{R}}$ is the square of a stationary Ornstein-Uhlenbeck process.
\end{property}

In our continuous one-dimensional setting one can "polarize" the above identity, in order to relate to $\mathcal{L}_{\frac{1}{2}}$ not only the square of the GFF, but also its sign.

\begin{prop}
\label{Prop PolIso}
Let $\alpha=\frac{1}{2}$. Let $\varsigma: I \rightarrow \lbrace -1,0,1\rbrace$ be a sign function, which is zero on the points not visited by loops in $\mathcal{L}_{\frac{1}{2}}$, and to visited points assigns a sign $-1$ or $1$, constant on each cluster of loops, 
uniform ($\mathbb{P}(\varsigma(x)=1)=\mathbb{P}(\varsigma(x)=-1)=\frac{1}{2}$), and conditional independent on each cluster. Then, the field
$(\varsigma(x)\sqrt{2\widehat{\mathcal{L}}_{\frac{1}{2}}^{x}})_{x\in I}$ is distributed like a Gaussian free field 
$(\phi_{x})_{x\in I}$.
\end{prop}

\begin{proof}
$(\sqrt{2\widehat{\mathcal{L}}_{\frac{1}{2}}^{x}})_{x\in I}$ gives the absolute value of the GFF
$(\vert\phi_{x}\vert)_{x\in I}$. 
According to Proposition \ref{Ch4Sec2 PropClusters}, the clusters of $\mathcal{L}_{\frac{1}{2}}$ correspond exactly to the connected components of
$\lbrace x\in I\vert \phi_{x}\neq 0\rbrace$. Moreover, conditional on $(\vert\phi_{x}\vert)_{x\in I}$,
$\operatorname{sign}(\phi)$ is distributed independently uniformly on each such connected component.
\end{proof}

Proposition \eqref{Prop PolIso} extends to the Poisson ensembles of loops associated to diffusions on metric graphs \cite{Lupu2014LoopsGFF}, as there too the occupation field is continuous, an in particular satisfies the intermediate value property. This relation to the GFF was used in 
\cite{Lupu2015ConvCLE} to show in dimension two the convergence of clusters of discrete loops to clusters of Brownian loops.
Further, the "polarized" version of Le Jan's isomorphism on metric graphs was applied to show that, in some sense, the continuum GFF in dimension two lives on clusters of a Poisson ensemble of Brownian loops, and that these clusters are its sign components \cite{ALS2017FPS,ALS2}. One procedes by approximating the GFF on a continuum two-dimensional domain by the metric graph GFF. See also \cite{QW2015}.

Next we recall the original Dynkin's isomorphism (see \cite{Dynkin1984Isomorphism,Dynkin1984PolynomOccupField,Dynkin1984IsomorphismPresentation}):

\begin{dynkin}
Let $x_{1}, x_{2},\dots, x_{2n}\in I$. Then for any non-negative measurable functional $F$ on continuous paths on $I$,
\begin{multline}
\label{Ch4Sec3: EqDynkinIso}
\mathbb{E}_{\phi}\left[\prod_{i=1}^{2n}\phi_{x_{i}}F((\frac{1}{2}\phi_{x}^{2})_{x\in I}) \right]=\\
\sum_{\text{pairings}}\int\mathbb{E}_{\phi}\Big[F((\frac{1}{2}\phi_{x}^{2}+\sum_{j=1}^{n}\ell^{x}(\gamma_{j}))_{x\in I}) \Big]\prod_{\text{pairs}}\mu^{y_{j},z_{j}}(d\gamma_{j}),
\end{multline}
where $\sum_{\text{pairings}}$ means that the $n$ pairs $\lbrace y_{j},z_{j}\rbrace$ are formed with all $2n$ points $x_{i}$ in all $\frac{(2n)!}{2^{n}n!}$ possible ways.
\end{dynkin}

We will show that in case $x_{i}=x_{i+n}$, for $i\in\lbrace 1,\dots, n\rbrace$ , i.e. $\prod_{i=1}^{2n}\phi_{x_{i}}$ being a product of squares $\prod_{i=1}^{n}\phi_{x_{i}}^{2}$, one can deduce the Dynkin's isomorphism from the relation between the square of the Gaussian free field and the occupation field.  In \cite{LeJanMarcusRosen2012Loops} and \cite{FitzsimmonsRosen2012LoopsIsomorphism} this is only done in case $n=1$ and $x_{1}=x_{2}$ using the Palm's identity for Poissonian ensembles and the analogue of the relation \eqref{Ch3Sec3: EqLoopMeasureLocTime}. To generalize for any $n$ we will use an extended version of Palm's identity and the absolute continuity relation given by Proposition 
\ref{Ch3Sec4: EqMultLocTimeDensity} (ii). 

\begin{lemm}
\label{Ch4Sec3: LemExtendPalm}
Let $\mathcal{E}$ be an abstract Polish space. Let $\mathfrak{M}(\mathcal{E})$ be the space of locally finite measures on $\mathcal{E}$ and let $\mathcal{M}\in\mathfrak{M}(\mathcal{E})$. Let $\Phi$ be a Poisson random measure of intensity $\mathcal{M}$. Let $H$ be a positive measurable function on $\mathfrak{M}(\mathcal{E})\times\mathcal{E}^{n}$. Let $\mathfrak{P}_{n}$ be the set of partitions of $\lbrace 1,\dots,n\rbrace$. If $\mathcal{P}\in \mathfrak{P}_{n}$ and $i\in\lbrace 1,\dots,n\rbrace$, then $\mathcal{P}(i)$ will be the equivalence class of $i$ under $\mathcal{P}$. The following identity holds:
\begin{multline}
\label{Ch4Sec3: EqExtendedPalm}
\mathbb{E}\Big[\int_{\mathcal{E}^{n}}H(\Phi, q_{1},\dots,q_{n})\prod_{i=1}^{n}\Phi(dq_{i})\Big]=\\
\sum_{\mathcal{P}\in\mathfrak{P}_{n}}\int_{\mathcal{E}^{\#\mathcal{P}}}\mathbb{E}
\Big[H(\Phi +\sum_{c\in\mathcal{P}}\delta_{q_{c}},  q_{\mathcal{P}(1)},\dots,q_{\mathcal{P}(n)})\Big]
\prod_{c\in\mathcal{P}}\mathcal{M}(dq_{c}).
\end{multline} 
\end{lemm}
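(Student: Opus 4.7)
The plan is to reduce the statement to the multivariate Slivnyak--Mecke (factorial moment Campbell) formula for Poisson random measures. Writing $\Phi=\sum_k\delta_{x_k}$ as a (random) sum of labelled Dirac masses on its atoms, the integral $\int_{\mathcal{E}^n}H(\Phi,q_1,\dots,q_n)\prod_i\Phi(dq_i)$ expands as $\sum_{(k_1,\dots,k_n)}H(\Phi,x_{k_1},\dots,x_{k_n})$ over all $n$-tuples of atom labels, not necessarily distinct. I would then group these tuples according to the partition $\mathcal{P}\in\mathfrak{P}_n$ defined by $i\sim j\iff k_i=k_j$: within each $\mathcal{P}$-class, the tuple is determined by an injection $c\mapsto j_c$ from the blocks of $\mathcal{P}$ to the labels of $\Phi$, with $k_i=j_{\mathcal{P}(i)}$. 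This yields the decomposition
$$\int_{\mathcal{E}^n}\!H(\Phi,q_1,\dots,q_n)\prod_{i=1}^n\Phi(dq_i)=\sum_{\mathcal{P}\in\mathfrak{P}_n}\ \sum_{\substack{(j_c)_{c\in\mathcal{P}}\\ \text{pairwise distinct}}}\!\!H\bigl(\Phi,x_{j_{\mathcal{P}(1)}},\dots,x_{j_{\mathcal{P}(n)}}\bigr).$$

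The key probabilistic input is then the multi-Mecke identity: for each $m\geq 1$ and every non-negative measurable $F$ on $\mathfrak{M}(\mathcal{E})\times\mathcal{E}^m$,
$$\mathbb{E}\Bigl[\sum_{\substack{(x_{j_1},\dots,x_{j_m})\\ \text{distinct atoms of }\Phi}}F(\Phi,x_{j_1},\dots,x_{j_m})\Bigr]=\int_{\mathcal{E}^m}\mathbb{E}\Bigl[F\bigl(\Phi+\textstyle\sum_{k=1}^m\delta_{y_k},y_1,\dots,y_m\bigr)\Bigr]\prod_{k=1}^m\mathcal{M}(dy_k).$$
This formula is standard and follows by induction on $m$ from the ordinary Mecke identity ($m=1$), using $\sigma$-additivity of $\mathcal{M}$ and the independence of the restrictions of $\Phi$ to disjoint Borel sets. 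Applied to each $\mathcal{P}$-summand above with $m=\sharp\mathcal{P}$ and $F(\Psi,(y_c)_{c\in\mathcal{P}}):=H(\Psi,y_{\mathcal{P}(1)},\dots,y_{\mathcal{P}(n)})$, this converts the inner sum into exactly the $\mathcal{P}$-th term on the right-hand side of \eqref{Ch4Sec3: EqExtendedPalm}; summing over $\mathcal{P}\in\mathfrak{P}_n$ then finishes the argument.

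The main difficulty is essentially clerical rather than conceptual. One must verify the bijection between pairs $(\mathcal{P},(j_c)_{c\in\mathcal{P}})$ and index tuples $(k_1,\dots,k_n)$ grouped by coincidence pattern, and invoke the factorial-moment Mecke formula cleanly. A standard monotone-class argument reduces first to tensor-product integrands $H(\Phi,q_1,\dots,q_n)=H_0(\Phi)\prod_i f_i(q_i)$ and then to general non-negative measurable $H$. Alternatively, one can proceed by direct induction on $n$: apply the $m=1$ Mecke identity to the $q_1$-integration, expand $\prod_{i=2}^n(\Phi+\delta_{q_1})(dq_i)$ by bilinearity as a sum over $S\subseteq\{2,\dots,n\}$ of terms in which $q_i=q_1$ for $i\in S$, and apply the induction hypothesis to the remaining $\Phi$-integration over $\{2,\dots,n\}\setminus S$; the partitions of $\{1,\dots,n\}$ decompose precisely as ``the block containing $1$'' (namely $\{1\}\cup S$) together with a partition of its complement, which matches the claimed formula term-by-term.
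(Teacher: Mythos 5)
Your proposal is correct, and your main route is genuinely different from the paper's. The paper proceeds by a direct induction on $n$ using only the $n=1$ Palm (Mecke) identity: it defines $\widetilde{H}$ by integrating out the last coordinate against $\Phi$, applies the inductive hypothesis for $n-1$, and then handles the resulting $\Phi(dq_n)+\sum_{c'}\delta_{q_{c'}}(dq_n)$ integration by a single further Palm step; the partitions of $\{1,\dots,n\}$ are organised as ``a partition of $\{1,\dots,n-1\}$ together with a decision of whether $n$ is alone or joins one of its blocks.'' You instead expand $\int H\,\Phi^{\otimes n}$ over labelled $n$-tuples of atoms of $\Phi$, group by the coincidence pattern (equivalently, decompose the $n$-th moment measure into factorial moment measures $\Phi^{(m)}$ indexed by partitions), and invoke the multivariate Mecke/Slivnyak formula once for each partition. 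Granting the multivariate Mecke equation as a known tool, your argument is shorter and more conceptual; the paper's version is more self-contained, using only the $m=1$ identity and doing the inductive bookkeeping in-line. Your sketched ``alternative'' inductive proof at the end is essentially the paper's argument with the roles of the first and last coordinate swapped, and with the partitions parametrised as ``the block containing $1$ plus a partition of its complement'' rather than ``a partition of $\{1,\dots,n-1\}$ plus how $n$ attaches''; both decompositions of $\mathfrak{P}_n$ are valid and lead to the same identity. One small remark: your expansion $\Phi=\sum_k\delta_{x_k}$ implicitly uses a measurable enumeration of the atoms of $\Phi$ (with multiplicities, if $\mathcal{M}$ has atoms); this is standard for Poisson processes on Polish spaces with $\sigma$-finite intensity, but it is worth noting, since the paper's proof avoids needing it by never unpacking $\Phi$ into individual points.
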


\begin{proof}
We will make an induction over $n$. If $n=1$, \eqref{Ch4Sec3: EqExtendedPalm} is the Palm's identity for Poisson random measures. Assume that $n\geq 2$ and that \eqref{Ch4Sec3: EqExtendedPalm} holds for $n-1$. We set 
\begin{displaymath}
\widetilde{H}(\Phi, q_{1},\dots,q_{n-1}):=\int_{\mathcal{E}}H(\Phi,  q_{1},\dots,q_{n-1}, q_{n})\Phi(dq_{n}).
\end{displaymath}
Then,
\begin{multline}
\label{Ch4Sec3: EqPartitionsRec}
\mathbb{E}\Big[\int_{\mathcal{E}^{n}}H(\Phi, q_{1},\dots,q_{n-1}, q_{n})\prod_{i=1}^{n}\Phi(dq_{i})\Big]=
\mathbb{E}\Big[\int_{\mathcal{E}^{n-1}}\widetilde{H}(\Phi, q_{1},\dots,q_{n-1})\prod_{i=1}^{n-1}\Phi(dq_{i})\Big]\\=
\sum_{\mathcal{P}'\in\mathfrak{P}_{n-1}}\int_{\mathcal{E}^{\#\mathcal{P}'}}
\mathbb{E}\bigg[\int_{\mathcal{E}}H(\Phi +\sum_{c'\in\mathcal{P}'}\delta_{q_{c'}},q_{\mathcal{P}'(1)},\dots,q_{\mathcal{P}'(n-1)}, q_{n})\\\times(\Phi(dq_{n})+\sum_{c'\in\mathcal{P}'}\delta_{q_{c'}}(dq_{n}))\bigg]
\prod_{c'\in\mathcal{P}'}\mathcal{M}(dq_{c'}).
\end{multline}
Given a partition $\mathcal{P}'\in\mathfrak{P}_{n-1}$, one can extend it to a partition of $\lbrace1,\dots,n-1,n\rbrace$ either by deciding that $n$ is single in its equivalence class or by choosing an equivalence class  $c'\in \mathcal{P}'$ and adjoining $n$ to it. In the identity \eqref{Ch4Sec3: EqPartitionsRec}, the first case corresponds to the integration with respect to $\Phi(dq_{n})$, and according to Palm's identity
\begin{multline*}
\mathbb{E}\left[
\int_{\mathcal{E}}H(\Phi +\sum_{c'\in\mathcal{P}'}\delta_{q_{c'}},  q_{\mathcal{P}'(1)},\dots,q_{\mathcal{P}'(n-1)}, q_{n})\Phi(dq_{n})\right]=\\
\int_{\mathcal{E}}\mathbb{E}\left[H(\Phi +\sum_{c'\in\mathcal{P}'}\delta_{q_{c'}},  q_{\mathcal{P}'(1)},\dots,q_{\mathcal{P}'(n-1)}, q_{n})\right]\mathcal{M}(dq_{n}).
\end{multline*}
The second case corresponds to the integration with respect to $\delta_{q_{c'}}(dq_{n})$. Thus, the right-hand side of \eqref{Ch4Sec3: EqPartitionsRec} equals the right-hand side of \eqref{Ch4Sec3: EqExtendedPalm}.
\end{proof}

Next we show how derive a particular case of Dynkin's isomorphism using the above extended Palm's formula. Since  $(\widehat{\mathcal{L}}_{\frac{1}{2}}^{x})_{x\in I}$ and $(\frac{1}{2}\phi_{x}^{2})_{x\in I}$ are equal in law,
\begin{displaymath}
\mathbb{E}_{\phi}\left[\prod_{i=1}^{n}\phi_{x_{i}}^{2}F((\frac{1}{2}\phi_{x}^{2})_{x\in I}) \right]=
2^{n}\mathbb{E}_{\mathcal{L}_{\frac{1}{2}}}\left[\prod_{i=1}^{n}\widehat{\mathcal{L}}_{\frac{1}{2}}^{x_{i}}F((\widehat{\mathcal{L}}_{\frac{1}{2}}^{x})_{x\in I}) \right].
\end{displaymath}
Applying Lemma \ref{Ch4Sec3: LemExtendPalm}, we get that
\begin{multline*}
\mathbb{E}_{\mathcal{L}_{\frac{1}{2}}}\left[\prod_{i=1}^{n}\widehat{\mathcal{L}}_{\frac{1}{2}}^{x_{i}}F((\widehat{\mathcal{L}}_{\frac{1}{2}}^{x})_{x\in I}) \right]=\\
\sum_{\mathcal{P}\in\mathfrak{P}_{n}}\int\prod_{i=1}^{n}\ell^{x_{i}}(\gamma_{\mathcal{P}(i)})
\mathbb{E}\left[F((\widehat{\mathcal{L}}_{\frac{1}{2}}^{x}+\sum_{c\in\mathcal{P}}
\ell^{x}(\gamma_{c}))_{x\in I})\right]\prod_{c\in\mathcal{P}}\frac{1}{2}\mu^{\ast}(d\gamma_{c}).
\end{multline*}
Let $\mathfrak{S}_{n}(\mathcal{P})$ be all the permutations $\sigma$ of $\lbrace 1,\dots,n\rbrace$ such that the classes of the partition $\mathcal{P}$ are the supports of the disjoint cycles of $\sigma$. Given a class $c\in \mathcal{P}$, let $j_{c}$ be its smallest element. From Property \ref{Ch3Sec4: PropertyShuffleAlg} (ii) follows that 
\begin{displaymath}
\prod_{i=1}^{n}\ell^{x_{i}}(\gamma_{\mathcal{P}(i)})=\sum_{\sigma\in\mathfrak{S}_{n}(\mathcal{P})}\prod_{c\in\mathcal{P}}\ell^{\ast x_{j_{c}}, x_{\sigma(j_{c})},\dots,x_{\sigma^{\vert c\vert}(j_{c})}}(\gamma_{c}).
\end{displaymath} 
Proposition \ref{Ch3Sec4: PropMultLocTimeDens} (ii) states that
\begin{multline*}
\ell^{\ast x_{j_{c}}, x_{\sigma(j_{c})},\dots,x_{\sigma^{\vert c\vert}(j_{c})}}(\gamma_{c})\mu^{\ast}(d\gamma_{c})=\\
\pi_{\ast}(\mu^{j_{c},\sigma(j_{c})}(d\tilde{\gamma}_{j_{c}})\lhd\dots\lhd\mu^{\sigma^{\vert c\vert -1}(j_{c}),\sigma^{\vert c\vert}(j_{c})}(d\tilde{\gamma}_{\sigma^{\vert c\vert -1}(j_{c})})\lhd\mu^{\sigma^{\vert c\vert}(j_{c}),j_{c}}(d\tilde{\gamma}_{\sigma^{\vert c\vert}(j_{c})})),
\end{multline*}
and if the loop $\gamma_{c}$ is a concatenation of paths 
$\tilde{\gamma}_{j_{c}},\dots,\tilde{\gamma}_{\sigma^{\vert c\vert -1}(j_{c})}, \tilde{\gamma}_{\sigma^{\vert c\vert}(j_{c})}$, then
\begin{displaymath}
\ell^{x}(\gamma_{c}) = \ell^{x}(\tilde{\gamma}_{j_{c}}) +\dots + \ell^{x}(\tilde{\gamma}_{\sigma^{\vert c\vert -1}(j_{c})}) + \ell^{x}(\tilde{\gamma}_{\sigma^{\vert c\vert}(j_{c})}).
\end{displaymath}
It follows that
\begin{multline}
\label{Ch4Sec3: EqDynkinSquares}
2^{n}\mathbb{E}_{\mathcal{L}_{\frac{1}{2}}}\left[\prod_{i=1}^{n}
\widehat{\mathcal{L}}_{\frac{1}{2}}^{x_{i}}F((\widehat{\mathcal{L}}_{\frac{1}{2}}^{x})_{x\in I}) \right]=\\
\sum_{\sigma\in\mathfrak{S}_{n}}2^{n-\# ~\text{cycles of}~\sigma}
\int\mathbb{E}_{\mathcal{L}_{\frac{1}{2}}}\left[F((\widehat{\mathcal{L}}_{\frac{1}{2}}^{x}+\sum_{i=1}^{n}\ell^{x}(\tilde{\gamma}_{i}))_{x\in I}) \right]\prod_{i=1}^{n}\mu^{i,\sigma(i)}(d\tilde{\gamma}_{i}).
\end{multline}
But the right-hand side of \eqref{Ch4Sec3: EqDynkinSquares} is just the same as the right-hand side of 
\eqref{Ch4Sec3: EqDynkinIso} in the specific case when for all $i\in\lbrace 1,\dots, n\rbrace$, $x_{i+n}=x_{i}$. This finishes the derivation of the special case of Dynkin's isomorphism.

\chapter{Decomposing paths into Poisson ensembles of loops}
\label{Ch5}

\section{Gluing together excursions ordered by their minima}
\label{Ch5Sec1}

Let $L$ be the generator of a diffusion on $I$ of form \eqref{Ch2Sec2: EqGenerator}. A loop of $\mathcal{L}_{\alpha, L}$ rooted at its minimal point is a positive excursion. For a given $x_{0}\in I$, we will consider the loops $\gamma\in\mathcal{L}_{\alpha, L}$ such that $\min\gamma\in(\inf I,x_{0}]$. We will root these loops at their minima and then order the obtained excursions in the decreasing sense of their minima. Then we will glue all these excursions together and obtain a continuous paths 
$\xi_{\alpha, L}^{(x_{0})}$. The law of this path can be described as a one-dimensional projection of a two-dimensional Markov process. Moreover this path contains all the information on the ensemble of loops 
$\mathcal{L}_{\alpha, L}\cap\lbrace \gamma\in \mathfrak{L}^{\ast}\vert \min \gamma <x_{0}\rbrace$. So this is a way to sample the latter ensemble of loops. 

In case of Brownian loops, the paths $\xi_{\alpha, BM}^{(x_{0})}$ turn out to belong to a family of Brownian motions perturbed at their minima, studied for instance in \cite{LeGallYor86CarreBessel,CarmonaPetitYor94,Perman96}. The decomposition of such perturbed Brownian motions into Poisson point processes of excursions already appears in \cite{LeGallYor86CarreBessel,Perman96}.

We will also see that, for a general diffusion of generator $L$, the case $\alpha=1$ is particular. Indeed, $\xi_{1, L}^{(x_{0})}$ is the sample path of a one-dimensional diffusion. This is the analogue in dimension one of the link between $\mathcal{L}_{1}$ and the loop-erasure procedure already observed in \cite{LawlerWerner2004ConformalLoopSoup}, in the setting of two-dimensional Brownian loops, and in \cite{LeJan2011Loops}, Chapter $8$, in the setting of discrete loops on network, and will de described in detail in Section \ref{Ch5Sec3}. See also \cite{Fitzsimmons2013ExcMin}.

In Section \ref{Ch5Sec1} we will consider generalities about gluing together excursions ordered by their minima and probability laws will not be involved. In Section \ref{Ch5Sec2} we will deal with $\xi_{\alpha, L}^{(x_{0})}$ and identify its law. In Section \ref{Ch5Sec3} we will focus on the case $\alpha=1$ and describe other ways of slicing sample paths of diffusions into Poisson ensembles of loops.

Let $x_{0}\in\mathbb{R}$ and let $\mathcal{Q}$ be a countable everywhere dense subset of $(-\infty, x_{0})$. We consider a deterministic collection of excursions $(\mathtt{e}_{q})_{q\in\mathcal{Q}}$, where 
$(\mathtt{e}_{q}(t))_{0\leq t\leq T(\mathtt{e}_{q})}$ is a continuous excursion above $0$, $T(\mathtt{e}_{q})>0$, and
\begin{displaymath}
\mathtt{e}_{q}(0)=\mathtt{e}_{q}(T(\mathtt{e}_{q}))=0,
\end{displaymath}
\begin{displaymath}
\forall t\in (0,T(\mathtt{e}_{q})),~\mathtt{e}_{q}(t)>0.
\end{displaymath}
We also assume that for all $C>0$ and $a<x_{0}$, there are only finitely many $q\in \mathcal{Q}\cap(a,x_{0})$ such that $\max \mathtt{e}_{q}>C$, and that for all $a<x_{0}$,
\begin{equation}
\label{Ch5Sec1: EqCondFinT}
\sum_{q\in \mathcal{Q}\cap(a,x_{0})}T(\mathtt{e}_{q}) < +\infty.
\end{equation} 
Let $\mathbf{T}(y)$ be the function defined on $[0,+\infty)$ by
\begin{displaymath}
\mathbf{T}(y):=\sum_{q\in \mathcal{Q}\cap(x_{0}-y,x_{0})}T(\mathtt{e}_{q}).
\end{displaymath}
$\mathbf{T}$ is a non-decreasing function. Since $\mathcal{Q}$ is everywhere dense, $\mathbf{T}$ is increasing. $\mathbf{T}$ is right-continuous and jumps when $x_{0}-y\in\mathcal{Q}$. The height of the jump is then 
$T(\mathtt{e}_{-y})$. 

Let $T_{\rm max}:=\mathbf{T}(+\infty)\in (0,+\infty]$. For $t\in [0,T_{\rm max})$, we define
\begin{displaymath}
\theta(t):=x_{0}-\sup\lbrace y\in [0,+\infty)\vert \mathbf{T}(y)>t\rbrace.
\end{displaymath}
$\theta$ is a non-increasing function from $[0,T_{\rm max})$ to $(-\infty, x_{0}]$. Since $\mathbf{T}$ is increasing, $\theta$ is continuous. We define
\begin{displaymath}
b^{-}(t)=\inf\lbrace s\in [0,T_{\rm max})\vert \theta(s)=\theta(t)\rbrace,
\end{displaymath}
\begin{displaymath}
b^{+}(t)=\sup\lbrace s\in [0,T_{\rm max})\vert \theta(s)=\theta(t)\rbrace.
\end{displaymath}
$b^{-}(t)<b^{+}(t)$ if and only if $\theta(t)\in\mathcal{Q}$, and then, $b^{+}(t)-b^{-}(t)=T(\mathtt{e}_{\theta(t)})$. We introduce the set
\begin{displaymath}
\mathfrak{b}^{-}:=\lbrace t\in[0,T_{\rm max})\vert \theta(t)\in\mathcal{Q},\,b^{-}(t)=\theta(t)\rbrace.
\end{displaymath}
$\mathfrak{b}^{-}$ is in one to one correspondence with $\mathcal{Q}$ by $t\mapsto\theta(t)$.

Finally, we define on $[0,T_{\rm max})$ the function $\xi$:
\begin{displaymath}
\xi(t):=
\left\lbrace 
\begin{array}{ll}
\theta(t) & \text{if}~\theta(t)\not\in\mathcal{Q}, \\ 
\theta(t)+\mathtt{e}_{\theta(t)}(t-b^{-}(t)) & \text{if}~\theta(t)\in\mathcal{Q}.
\end{array} 
\right. 
\end{displaymath}
Intuitively, $\xi$ is the function obtained by gluing together the excursions $(q+\mathtt{e}_{q})_{q\in\mathcal{Q}}$ ordered in decreasing sense of their minima. See Figure \ref{FigBM} for an example of $\xi$ and $\theta$.

\begin{prop}
\label{Ch5Sec1: PropContinuity}
$\xi$ is continuous. For all $t\in[0,T_{\rm max})$,
\begin{equation}
\label{Ch5Sec1: EqThetaInfXi}
\theta(t)=\inf_{[0,t]}\xi.
\end{equation}
The set $\mathfrak{b}^{-}$ can be recovered from $\xi$ as follows:
\begin{equation}
\label{Ch5Sec1: EqLeftBound}
\mathfrak{b}^{-}=\lbrace t\in[0,T_{\rm max})\vert \xi(t)=\inf_{[0,t]}\xi,~\text{and}~
\exists\varepsilon>0, \forall s\in(0,\varepsilon),\xi(t+s)>\xi(t)\rbrace.
\end{equation}
If $t_{0}\in \mathfrak{b}^{-}$, then
\begin{equation}
\label{Ch5Sec1: EqRightBound}
b^{+}(t_{0})=\inf\lbrace t\in[t_{0},T_{\rm max}]\vert \xi(t)<\xi(t_{0})\rbrace.
\end{equation}
\end{prop}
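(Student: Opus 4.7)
The plan is to verify the four assertions in sequence, each resting on the structure of $\theta$ as a continuous non-increasing function whose plateaux are indexed by $\mathcal{Q}$. For continuity of $\xi$, I would first observe that on the interior of any plateau $(b^-(t),b^+(t))$ with $q=\theta(t)\in\mathcal{Q}$ one has $\xi(s)=q+\mathtt{e}_q(s-b^-(t))$, which is continuous in $s$ and matches $\theta(t)=q$ at the plateau endpoints since $\mathtt{e}_q(0)=\mathtt{e}_q(T(\mathtt{e}_q))=0$; outside the plateaux $\xi=\theta$ is continuous by continuity of $\theta$. The only delicate case is a sequence $t_n\to t_0$ traversing infinitely many distinct plateaux. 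Continuity of $\theta$ gives $\theta(t_n)\to\theta(t_0)$, and the standing hypothesis that for every $C>0$ and every $a<x_0$ only finitely many $q\in\mathcal{Q}\cap(a,x_0)$ satisfy $\max\mathtt{e}_q>C$ will force $\max\mathtt{e}_{\theta(t_n)}\to 0$, so that $\xi(t_n)-\theta(t_n)\to 0$.

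For \eqref{Ch5Sec1: EqThetaInfXi}, the construction gives $\xi\geq\theta$ pointwise, and since $\theta$ is non-increasing one has $\inf_{[0,t]}\theta=\theta(t)$, hence $\inf_{[0,t]}\xi\geq\theta(t)$; conversely $\xi(b^-(t))=\theta(t)$ in both cases $\theta(t)\in\mathcal{Q}$ or $\theta(t)\notin\mathcal{Q}$, and $b^-(t)\leq t$, which yields the reverse inequality. For \eqref{Ch5Sec1: EqRightBound} with $t_0\in\mathfrak{b}^-$, on $[t_0,b^+(t_0)]$ one has $\xi(t)=\theta(t_0)+\mathtt{e}_{\theta(t_0)}(t-t_0)\geq\theta(t_0)=\xi(t_0)$, giving the $\geq$ direction; for the reverse, $\theta$ is continuous and strictly decreases past $b^+(t_0)$, so $\theta$ takes a continuum of values on $(b^+(t_0),b^+(t_0)+\delta)$, and countability of $\mathcal{Q}$ ensures some $s$ in that interval has $\theta(s)\notin\mathcal{Q}$, whence $\xi(s)=\theta(s)<\theta(t_0)=\xi(t_0)$.

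For the characterisation \eqref{Ch5Sec1: EqLeftBound}, the forward direction will be immediate: if $t_0\in\mathfrak{b}^-$, write $q=\theta(t_0)\in\mathcal{Q}$; then $\xi(t_0)=q=\inf_{[0,t_0]}\xi$ by \eqref{Ch5Sec1: EqThetaInfXi}, and $\xi(t_0+s)=q+\mathtt{e}_q(s)>q$ for $s\in(0,T(\mathtt{e}_q))$ since $\mathtt{e}_q$ is a positive excursion. Conversely, suppose $t_0$ lies in the right-hand side, so $\xi(t_0)=\theta(t_0)$. If $\theta(t_0)\notin\mathcal{Q}$, the plateaux of $\theta$ being indexed by $\mathcal{Q}$ would force $\theta$ to be strictly decreasing on some right-neighbourhood of $t_0$, and then \eqref{Ch5Sec1: EqThetaInfXi} applied to $[0,t_0+s]$ would produce times arbitrarily close to $t_0$ at which $\xi<\xi(t_0)$, contradicting the strict local-minimum condition. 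Hence $\theta(t_0)\in\mathcal{Q}$; since $\mathtt{e}_{\theta(t_0)}$ vanishes only at its two endpoints, $\xi(t_0)=\theta(t_0)$ forces $t_0\in\{b^-(t_0),b^+(t_0)\}$, and $t_0=b^+(t_0)>b^-(t_0)$ is excluded by the same right-neighbourhood argument since $\theta(t)<\theta(t_0)$ strictly for $t>b^+(t_0)$.

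The hardest step will be the continuity claim: one has to exploit the tightness hypothesis on $(\mathtt{e}_q)_{q\in\mathcal{Q}}$ in an essential way to rule out large oscillations coming from nearby short plateaux, while the three identities that follow are essentially bookkeeping around the defining formulas for $\theta$, $b^{\pm}$, and $\xi$, together with the fact that $\mathcal{Q}$ is countable and dense.
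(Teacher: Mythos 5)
Your proposal is correct and follows essentially the same case analysis as the paper's proof: the three-way split for continuity (interior of a plateau, $\theta(t)\notin\mathcal{Q}$, plateau endpoint), the two inequalities for $\theta(t)=\inf_{[0,t]}\xi$, and the disjunction over the position of $t$ relative to its plateau for \eqref{Ch5Sec1: EqLeftBound} and \eqref{Ch5Sec1: EqRightBound}. Two phrasings deserve tightening when you write it out: "$\max\mathtt{e}_{\theta(t_n)}\to 0$" should be read as "$\xi(t_n)-\theta(t_n)\to 0$" (since $\mathtt{e}_{\theta(t_n)}$ is undefined when $\theta(t_n)\notin\mathcal{Q}$, in which case the difference is exactly $0$, and otherwise it is bounded by $\max\mathtt{e}_{\theta(t_n)}$ which is eventually $\leq C$ for any $C>0$ because only finitely many excursions near $\theta(t_0)$ exceed $C$ and each can be visited only finitely often), and "$\theta$ strictly decreasing on a right-neighbourhood of $t_0$" is not literally true since smaller plateaux can accumulate at $t_0$ from the right — what you actually use, and what suffices, is that $\theta(t_0+s)<\theta(t_0)$ for every $s>0$, so $\inf_{[0,t_0+s]}\xi<\xi(t_0)$ and the infimum is attained strictly to the right of $t_0$ by the hypothesis $\xi(t_0)=\inf_{[0,t_0]}\xi$.
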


\begin{proof}
Let $t\in[0,T_{\rm max})$. To prove the continuity of $\xi$ at $t$, we distinguish three case: the first case is when $\theta(t)\in\mathcal{Q}$ and $b^{-}(t)<t<b^{+}(t)$, the second case is when $\theta(t)\not\in\mathcal{Q}$, and the third case is when $\theta(t)\in\mathcal{Q}$ and either $b^{-}(t)=t$ or $b^{+}(t)=t$. 

In the first case, for all $s\in(b^{-}(t),b^{+}(t))$,
\begin{displaymath}
\xi(s)=\theta(t)+\mathtt{e}_{\theta(t)}(s-b^{-}(t)).
\end{displaymath}
$\mathtt{e}_{\theta(t)}$ being continuous, we get the continuity of $\xi$ at $t$.

In the second case, we consider a sequence $(t_{n})_{n\geq 0}$ in $[0,T_{\rm max})$ converging to $t$. Let $C>0$. There are only finitely many $q\in\mathcal{Q}$ such that there is $n\geq 0$ such that $\theta(t_{n})=q$ and $\max\mathtt{e}_{q}>C$. Moreover, for any $q\in\mathcal{Q}$, there are only finitely many $n\geq 0$ such that $\theta(t_{n})=q$. Thus, there are only finitely many $n\geq 0$ such that $\theta(t_{n})\in \mathcal{Q}$ and $\max\mathtt{e}_{\theta(t_{n})}>C$. So, for $n$ large enough,
\begin{equation}
\label{Ch5Sec1: EqEncadrXiTheta}
\theta(t_{n})\leq \xi(t_{n})\leq \theta(t_{n}) + C.
\end{equation}
But, $\xi(t)=\theta(t)$ and $\theta(t_{n})$  converges to $\theta(t)$. Since we may take $C$ arbitrarily small, \eqref{Ch5Sec1: EqEncadrXiTheta} implies that $\xi(t_{n})$ converges to  $\theta(t)$.

Regarding the third case, assume for instance that $\theta(t)\in\mathcal{Q}$ and $t=b^{-}(t)$. The right-continuity of $\xi$ at $t$ follows from the same argument as in the first case and left-continuity from the same argument as in the second case.

By definition, for all $t\in[0,T_{\rm max})$, $\theta(t)\leq\xi(t)$. $\theta$ being non-increasing, for all $t\in[0,T_{\rm max})$
\begin{displaymath}
\theta(t)\leq \inf_{[0,t]}\xi.
\end{displaymath}
For the converse inequality, we have
\begin{displaymath}
\theta(t)=\xi(b^{-}(t))\geq \inf_{[0,t]}\xi.
\end{displaymath}

Regarding \eqref{Ch5Sec1: EqLeftBound} and \eqref{Ch5Sec1: EqRightBound}, we have the following disjunction: If $\theta(t)\in\mathcal{Q}$ and $b^{-}(t)<t<b^{+}(t)$, then $\xi(t)>\theta(t)$. If $\theta(t)\in\mathcal{Q}$ and $t=b^{-}(t)$, then for all $s\in(0,b^{+}(t)-b^{-}(t))$, $\xi(t+s)>\xi(t)$. If either $\theta(t)\in\mathcal{Q}$ and $t=b^{+}(t)$ or $\theta(t)\not\in\mathcal{Q}$, then $\xi(t)=\theta(t)$, and there is a positive sequence $(s_{n})_{n\geq 0}$ decreasing to $0$ such that $\theta(t+s_{n})\not\in\mathcal{Q}$ and $\xi(t+s_{n})=\theta(t+s_{n})<\theta(t)$.
\end{proof}

\begin{figure}[H]

\centering{
\includegraphics[width=1\textwidth]{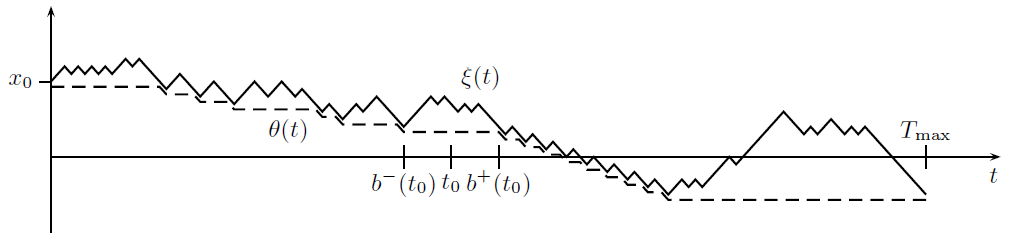}
}
\caption{Drawing of $\xi$ (full line) and $\theta$ (dashed line).}
\label{FigBM}
\end{figure}

Previous proposition shows that one can reconstruct $\mathcal{Q}$ and the family of excursions $(\mathtt{e}_{q})_{q\in\mathcal{Q}}$ only knowing $\xi$. 
\eqref{Ch5Sec1: EqThetaInfXi} shows how to recover $\theta$ from $\xi$. \eqref{Ch5Sec1: EqLeftBound} and \eqref{Ch5Sec1: EqRightBound} show how to recover the left and the right time boundaries of the excursions of $\xi$ above $\theta$. Also, observe that the set defined by the right-hand side of \eqref{Ch5Sec1: EqLeftBound} is countable whatever the continuous function $\xi$ is, even if it is not obtained by gluing together excursions.

\section{Loops represented as excursions and glued together}
\label{Ch5Sec2}

Let $\alpha >0$ and $\mathcal{L}_{\alpha, BM}$ the Poisson ensemble of loops of intensity $\alpha\mu^{\ast}_{BM}$, where $\mu^{\ast}_{BM}$ is the measure on loops associated to the Brownian motion on $\mathbb{R}$. Let $x_{0}\in\mathbb{R}$. We consider the random countable set $\mathcal{Q}$:
\begin{displaymath}
\mathcal{Q}:=\lbrace \min\gamma\vert\gamma\in
\mathcal{L}_{\alpha, BM}\rbrace\cap(-\infty,x_{0}).
\end{displaymath}
Almost surely, $\mathcal{Q}$ is everywhere dense in $(-\infty,x_{0})$, and for every $q\in\mathcal{Q}$, there is only one $\gamma\in\mathcal{L}_{\alpha, BM}$ such that $\min\gamma=q$. Almost surely, $\gamma\in\mathcal{L}_{\alpha, BM}$ reaches its minimum at one single moment. Given $q\in\mathcal{Q}$ and $\gamma\in\mathcal{L}_{\alpha, BM}$ such that $\min\gamma=q$, we consider $\mathtt{e}_{q}$ to be the excursion above $0$ equal to $\gamma - q$, where we root the unrooted loop $\gamma$ at $\operatorname{argmin} \gamma$. Then, the random set of excursions $(\mathtt{e}_{q})_{q\in\mathcal{Q}}$ almost surely satisfies the assumptions of Section \ref{Ch5Sec1}. In particular, the condition \eqref{Ch5Sec1: EqCondFinT} follows from the fact that, according to 
\eqref{Ch3Sec5: EqDisintegBrownianLoops},
\begin{displaymath}
\int_{\mathfrak{L}^{\ast}}1\wedge T(\gamma)1_{\min\gamma\in (a,x_{0})}\mu_{BM}^{\ast}(d\gamma)= (x_{0}-a)\int_{0}^{+\infty}\dfrac{t\wedge 1}{\sqrt{2\pi t^{3}}}dt <+\infty.
\end{displaymath}
Thus, we can consider the random continuous function 
$(\xi_{\alpha, BM}^{(x_{0})}(t))_{t\geq 0}$ constructed by gluing together the excursions $(q+\mathtt{e}_{q})_{q\in\mathcal{Q}}$ in the way described in section \ref{Ch5Sec1}. Let
\begin{displaymath}
\theta_{\alpha, BM}^{(x_{0})}(t)=\inf_{[0,t]}\xi_{\alpha, BM}^{(x_{0})},
\end{displaymath}
\begin{displaymath}
\Xi_{\alpha, BM}^{(x_{0})}(t):=\big(\xi_{\alpha, BM}^{(x_{0})}(t),
\theta_{\alpha, BM}^{(x_{0})}(t)\big).
\end{displaymath}
Next we will describe the law of the two-dimensional process 
$\big(\Xi_{\alpha, BM}^{(x_{0})}(t)\big)_{t\geq 0}$.

\begin{prop}
\label{Ch5Sec2: PropBMContour}
Let $(B_{t})_{t\geq 0}$ be a standard Brownian motion on $\mathbb{R}$ starting from $0$. $\big(\Xi_{\alpha, BM}^{(x_{0})}(t)\big)_{t\geq 0}$ 
has the same law as 
\begin{displaymath}
\left(x_{0}+\vert B_{t}\vert-\dfrac{1}{\alpha}\ell_{t}^{0}(B), 
x_{0}-\dfrac{1}{\alpha}\ell_{t}^{0}(B)\right)_{t\geq 0}. 
\end{displaymath}
In particular, for $\alpha=1$, $(\xi_{1, BM}^{(x_{0})}(t))_{t\leq 0}$ has the same law as a Brownian motion starting from $x_{0}$.
\end{prop}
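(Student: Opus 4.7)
The plan is to identify both two--dimensional processes through the Poisson structure of their excursions above the running infimum.

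First, applying Proposition~\ref{Ch3Sec5: PropDesintegMinBM} to $\mathcal{L}_{\alpha,BM}$ restricted to loops with $\min\gamma\in(-\infty,x_0)$, the random system $(\mathcal{Q},(\mathtt{e}_{q})_{q\in\mathcal{Q}})$ is a Poisson point process on $(-\infty,x_0)$ with an excursion attached at each minimum, of intensity $2\alpha\,dq\otimes\eta^{>0}_{BM}(d\mathtt{e})$ (the factor $2$ being $w(q)\equiv 2$ for Brownian motion). Section~\ref{Ch5Sec1} reconstructs the continuous path $\xi_{\alpha,BM}^{(x_{0})}$ deterministically from this Poisson system, and Proposition~\ref{Ch5Sec1: PropContinuity} identifies $\theta_{\alpha,BM}^{(x_{0})}$ with the running infimum and recovers $\mathcal{Q}$ and $(\mathtt{e}_{q})$ as the excursion system of $\xi_{\alpha,BM}^{(x_{0})}$ above $\theta_{\alpha,BM}^{(x_{0})}$.

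Second, I would invoke It\^o's excursion theorem for $\widetilde{B}$: in the paper's normalisation ($m\equiv 1$, $w\equiv 2$ for Brownian motion, so that $\ell^{0}_{t}(\widetilde{B})$ is the occupation density with respect to Lebesgue measure, i.e.\ the Tanaka local time), the excursions of $\widetilde{B}$ away from $0$ indexed by $\ell^{0}_{t}(\widetilde{B})$ form a Poisson point process of intensity $\eta^{>0}_{BM}+\eta^{<0}_{BM}$. By the reflection symmetry $\eta^{<0}_{BM}=R_{\ast}\eta^{>0}_{BM}$, the excursions of $|\widetilde{B}|$ away from $0$, indexed by $\ell^{0}_{t}(\widetilde{B})$, form a Poisson point process with intensity $2\eta^{>0}_{BM}$. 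The bijective change of parameter $q=x_{0}-\ell/\alpha$ then transports this into a Poisson point process on $(-\infty,x_{0})$ of intensity $2\alpha\,dq\otimes\eta^{>0}_{BM}$, which is exactly the distribution of the loop-minima marked by their excursions above the minimum.

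Third, I would check the pathwise compatibility. The second coordinate $x_{0}-\ell^{0}_{t}(\widetilde{B})/\alpha$ is continuous and non-increasing, constant on every time-interval during which $|\widetilde{B}|$ is in a positive excursion (since $\ell^{0}$ grows only on the zero set of $\widetilde{B}$), and the first coordinate during such an interval exceeds the second by $|\widetilde{B}_{t}|$, tracing out the corresponding excursion above its current minimum. This is precisely the structural description of $\Xi_{\alpha,BM}^{(x_{0})}$ furnished by Proposition~\ref{Ch5Sec1: PropContinuity}: the ``minimum coordinate'' is the running infimum, and above it one sees successive positive excursions of lengths $T(\mathtt{e})$. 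The statement therefore reduces to the equality in distribution of the two underlying Poisson excursion systems, established in the previous step, together with the common measurable reconstruction rule.

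The main obstacle will be handling the time-parametrisations rigorously. One has to verify that ordering the excursions of $|\widetilde{B}|$ by decreasing value of $x_{0}-\ell^{0}/\alpha$ reproduces the real-time evolution of $\widetilde{B}$: this is where the summability condition~\eqref{Ch5Sec1: EqCondFinT} enters, and it is the pathwise translation of the fact that the zero set of $\widetilde{B}$ has zero Lebesgue measure, which forces $t=\mathcal{T}(\ell^{0}_{t}(\widetilde{B})/\alpha)$ almost surely. For the case $\alpha=1$, L\'evy's theorem gives that $(|\widetilde{B}_{t}|-\ell^{0}_{t}(\widetilde{B}))_{t\geq 0}$ has the same law as $(-\widetilde{B}_{t})_{t\geq 0}$, so $x_{0}+|\widetilde{B}_{t}|-\ell^{0}_{t}(\widetilde{B})$ is a Brownian motion starting from $x_{0}$, as asserted.
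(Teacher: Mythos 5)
Your proof is correct and takes essentially the same route as the paper's: both identify the Poisson point process of excursions marked by their minima (via Proposition~\ref{Ch3Sec5: PropDesintegMinBM} applied to $\mathcal{L}_{\alpha, BM}$) with the excursion process of $|\widetilde{B}|$ indexed by $\ell^0_t(\widetilde{B})$ under the parameter change $q = x_0 - \ell/\alpha$, and then conclude the $\alpha=1$ case from L\'evy's theorem. Your proof is slightly more explicit about the deterministic reconstruction from Section~\ref{Ch5Sec1} and the time-parametrisation bookkeeping, but the substance is the same.
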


\begin{proof}
For $a<x_{0}$, let $T_{a}$ be the first time $\theta_{\alpha, BM}^{(x_{0})}$ hits $a$. For $l>0$, let
\begin{displaymath}
\tilde{\tau}^{0}_{l}:=\inf\lbrace t>0\vert \ell_{t}^{0}(B)>l\rbrace.
\end{displaymath}
According to the disintegration \eqref{Ch3Sec5: EqDisintegBrownianLoops} of the measure $\mu^{\ast}_{BM}$ in Proposition \ref{Ch3Sec5: PropDesintegMinBM}, for all $a<x_{0}$, the family $(\mathtt{e}_{q})_{q\in \mathcal{Q}\cap(a,x_{0})}$ of excursions above $0$  is a Poisson point process of intensity $2\alpha\eta_{BM}^{>0}$. This implies the following equality in law:
\begin{displaymath}
\big(\xi_{\alpha, BM}^{(x_{0})}(t)-\theta_{\alpha, BM}^{(x_{0})}(t)\big)
_{0\leq t\leq T_{a}}  \stackrel{(\text{law})}{=} 
(\vert B_{t}\vert)_{0\leq t\leq \tilde{\tau}^{0}_{\alpha (x_{0}-a)}}.
\end{displaymath}
Since the above holds for all $a<x_{0}$, we have the following equality in law:
\begin{displaymath}
(\xi_{\alpha, BM}^{(x_{0})}(t)-\theta_{\alpha, BM}^{(x_{0})}(t),\alpha(x_{0}-\theta_{\alpha, BM}(t)))_{t\geq 0}\stackrel{(\text{law})}{=} 
(\vert B_{t}\vert, \ell_{t}^{0}(B))_{t\geq 0},
\end{displaymath}
which is exactly the equality in law we needed. Finally, for $\alpha=1$, $(x_{0}+\vert B_{t}\vert-\ell_{t}^{0}(B))_{t\geq 0}$ has the law of a Brownian motion starting from $x_{0}$. See \cite{RevuzYor1999BMGrundlehren}, Section VI.2.
\end{proof}

According to Proposition \ref{Ch5Sec2: PropBMContour}, a Brownian sample path can be decomposed into a Poisson process of positive excursion with decreasing minima. This decomposition is for instance described in \cite{LeGallYor86CarreBessel,Perman96} and \cite{KaratzasShreve2010BMStochCalc}, Section $6.2.D$. In case $\alpha=1$, Proposition \ref{Ch4Sec2: PropCBPI} states that the occupation field of a the Poisson ensemble of loops associated to the Brownian motion on $(0,+\infty)$ killed at $0$ is the square of a Bessel process of dimension $2$ starting from $0$ at $0$. This result can also be obtained using the fact that 
$(\xi_{1, BM}^{(x_{0})}(t))_{t\leq 0}$ is a Brownian sample path and applying the first Ray-Knight theorem which gives the law of the occupation field of a Brownian path stopped upon hitting $0$.

From Proposition \ref{Ch5Sec2: PropBMContour} follows in particular that 
$\big(\Xi_{\alpha, BM}^{(x_{0})}(t)\big)_{t\geq 0}$ is a sample path of a two-dimensional Feller process. Let
\begin{displaymath}
T^{+}(\mathbb{R}^{2}):=\lbrace(x,a)\in\mathbb{R}^{2}\vert x\geq a\rbrace,
\qquad
\operatorname{Diag}(\mathbb{R}^{2}):=\lbrace(x,x)\vert x\in\mathbb{R}\rbrace.
\end{displaymath}
For $(x_{0},a_{0})\in T^{+}(\mathbb{R}^{2})$, we define the process
\begin{multline}
\label{Ch5Sec2: EqDefXi}
\big(\Xi^{(x_{0},a_{0})}_{\alpha, BM}(t)\big)_{t\geq 0}=
\big(\xi^{(x_{0},a_{0})}_{\alpha, BM}(t),\theta^{(x_{0},a_{0})}_{\alpha, BM}(t)\big)_{t\geq 0}\\
:=\left(a_{0}+\vert x_{0}-a_{0}+B_{t}\vert-\dfrac{1}{\alpha}
\ell_{t}^{a_{0}-x_{0}}(B), 
a_{0}-\dfrac{1}{\alpha}\ell_{t}^{a_{0}-x_{0}}(B)\right)_{t\geq 0},
\end{multline}
where $(B_{t})_{t\geq 0}$ is a Brownian motion starting from $0$. 
$\Xi^{(x_{0},x_{0})}_{\alpha, BM}$ has the same law as $\Xi^{(x_{0})}_{\alpha, BM}$. The family of paths $\big(\Xi^{(x_{0},a_{0})}_{\alpha, BM}\big)_{x_{0}\geq a_{0}}$ are the sample paths of the same Feller semi-group on $T^{+}(\mathbb{R}^{2})$, starting from all possible positions. Next we describe this semi-group in terms of generator and domain. Let $f$ be a continuous function on $T^{+}(\mathbb{R}^{2})$, $\mathcal{C}^{2}$ on the interior of $T^{+}(\mathbb{R}^{2})$, such that all its second order derivatives extend continuously to $\operatorname{Diag}(\mathbb{R}^{2})$. This implies in particular that the first order derivatives also extend continuously to $\operatorname{Diag}(\mathbb{R}^{2})$. We write $\partial_{1}f$, $\partial_{2}f$ and $\partial_{1,1}f$ for the first order derivative relatively to the first variable, the second variable and the second order derivative relatively the first variable. Applying Itô-Tanaka's formula we get
\begin{multline*}
f\big(\Xi^{(x_{0},a_{0})}_{\alpha, BM}(t)\big)=f(x_{0},a_{0})+
\int_{0}^{t}\partial_{1}f\big(\Xi^{(x_{0},a_{0})}_{\alpha, BM}(s)\big)\operatorname{sign}(x_{0}-a_{0}+B_{s})d B_{s}+
\\\int_{0}^{t}\left(\left(1-\dfrac{1}{\alpha}\right)\partial_{1}-\dfrac{1}{\alpha}\partial_{2}\right)
f\big(\Xi^{(x_{0},a_{0})}_{\alpha, BM}(s)\big)d_{s}\ell_{s}^{a_{0}-x_{0}}(B) 
+\dfrac{1}{2}\int_{0}^{t}\partial_{1,1}f\big(\Xi^{(x_{0},a_{0})}_{\alpha, BM}(s)\big)ds.
\end{multline*}
Let $\mathcal{D}_{\alpha, BM}$ be the set of continuous functions $f$ on $T^{+}(\mathbb{R}^{2})$, $\mathcal{C}^{2}$ on the interior of $T^{+}(\mathbb{R}^{2})$, such that all the second order derivatives extend continuously to $\operatorname{Diag}(\mathbb{R}^{2})$ and that moreover satisfy the following constraints: $f$ and $\partial_{1,1}f$ are uniformly continuous and bounded (which also implies that $\partial_{1}f$ is bounded by the inequality $\Vert \partial_{1}f\Vert_{\infty}\leq 2\sqrt{\Vert f\Vert_{\infty} \Vert \partial_{1,1}f\Vert_{\infty}}$), and on 
$\operatorname{Diag}(\mathbb{R}^{2})$ the following equality holds:
\begin{displaymath}
\left(\left(1-\dfrac{1}{\alpha}\right)\partial_{1}-\dfrac{1}{\alpha}\partial_{2}\right)f(x,x) = 0.
\end{displaymath}
If $f\in\mathcal{D}_{\alpha, BM}$, then $\frac{1}{t}\big(\mathbb{E}\big[f(\Xi^{x_{0},a_{0}}_{\alpha, BM}(t))\big]-f(x_{0},a_{0})\big)$ converges as $t\rightarrow 0^{+}$, uniformly for $(x_{0},a_{0})\in T^{+}(\mathbb{R}^{2})$, to $\frac{1}{2}\partial_{1,1}f(x_{0},a_{0})$. Moreover, $\mathcal{D}_{\alpha, BM}$ is a core for  $\frac{1}{2}\partial_{1,1}$ in the space of continuous bounded function on $T^{+}(\mathbb{R}^{2})$.

Next we describe what we obtain if we glue together the loops, seen as excursion, ordered in the decreasing sense of their minima, where instead of $\mathcal{L}_{\alpha, BM}$ we use the Poisson ensemble of Markov loops associated to a general diffusion. Let $I$ be an open interval of $\mathbb{R}$ and $\widetilde{L}$ a generator on $I$ of form
\begin{displaymath}
\widetilde{L}=\dfrac{1}{\tilde{m}(x)}\dfrac{d}{dx}\left(\dfrac{1}{\tilde{w}(x)}\dfrac{d}{dx}\right) 
\end{displaymath}
with zero Dirichlet boundary conditions. Let $\widetilde{S}$ be a primitive of $\tilde{w}(x)$. We assume that $\widetilde{S}(\sup I)=+\infty$. Let
\begin{displaymath}
T^{+}(I^{2}):=\lbrace(x,a)\in I^{2}\vert x\geq a\rbrace,
\qquad
\operatorname{Diag}(I^{2}):=\lbrace(x,x)\vert x\in I\rbrace.
\end{displaymath}
Let $\widehat{T^{+}(I^{2})}$ be the closure of $T^{+}(I^{2})$ in $(\inf I, \sup I]^{2}$.

Given any $x'_{0}\geq a'_{0}>\frac{1}{2}\widetilde{S}(\inf I)$, let $\tilde{\zeta}_{\alpha}$ be the first time $\Xi^{(x'_{0},a'_{0})}_{\alpha, BM}$ hits $\frac{1}{2}\tilde{S}(\inf I)$. Let
\begin{displaymath}
\widetilde{I}_{t}:=\int_{0}^{t}\dfrac{1}{\tilde{m}}(\widetilde{S}^{-1}
\big(2\xi^{(x'_{0},a'_{0})}_{\alpha, BM}(s))\big)ds.
\end{displaymath}
Let $(\widetilde{I}^{-1}_{t})_{0\leq t< \widetilde{I}_{\tilde{\zeta}_{\alpha}}}$ be the inverse function of $(\widetilde{I}_{t})_{0\leq t<\tilde{\zeta}_{\alpha}}$. It is a family of stopping times for $\Xi^{(x'_{0},a'_{0})}_{\alpha, BM}$. For 
$x_{0}\geq a_{0}\in I$ and $t<\widetilde{I}_{\tilde{\zeta}_{\alpha}}$, let 
\begin{displaymath}
\Xi^{(x_{0},a_{0})}_{\alpha, \widetilde{L}}(t)=\big(\xi^{(x_{0},a_{0})}_{\alpha, \widetilde{L}}(t),\theta^{(x_{0},a_{0})}_{\alpha, \widetilde{L}}(t)\big)
:=\Xi^{(\widetilde{S}(2x_{0}),\widetilde{S}(2a_{0}))}_{\alpha, BM}(\widetilde{I}^{-1}_{t}).
\end{displaymath}
If $\alpha=1$, then $\xi^{(x_{0},a_{0})}_{\alpha, \tilde{L}}$ is just the sample paths starting $x_{0}$ of a diffusion of generator $\widetilde{L}$.
Let $\widehat{\mathcal{D}}_{\alpha, \tilde{L}}$ be the space of continuous functions $f$ on $T^{+}(I^{2})$ satisfying
\begin{itemize}
\item $f\circ\widetilde{S}^{-1}$ is $\mathcal{C}^{2}$ on the interior of $T^{+}(I^{2})$ and all the second order derivatives extend continuously to $\operatorname{Diag}(I^{2})$.
\item $f(x,a)$ and $\frac{1}{\tilde{m}(x)}\partial_{1}\left(\frac{1}{\tilde{w}(x)}\partial_{1}f(x,a)\right)$ are bounded on $T^{+}(I^{2})$ and extend continuously to $\widehat{T^{+}(I^{2})}$.
\item $f(x,a)$ and $\frac{1}{\tilde{m}(x)}\partial_{1}\left(\frac{1}{\tilde{w}(x)}\partial_{1}f(x,a)\right)$ converge to $0$ as $a$ converges to $\inf I$ uniformly in $x$.
\item On $\operatorname{Diag}(I^{2})$ the following equality holds:
\begin{equation}
\label{Ch5Sec2: EqBorderConstraint}
\left(\left(1-\dfrac{1}{\alpha}\right)\partial_{1}-\dfrac{1}{\alpha}\partial_{2}\right)f(x,x) = 0.
\end{equation}
\end{itemize}

\begin{lemm}
\label{Ch5Sec2: LemCore}
$\big(\Xi^{(x_{0},a_{0})}_{\alpha, \widetilde{L}}\big)_{x_{0}\geq a_{0}\in I}$ is a family of sample path starting from all possible positions of the same Markovian or sub-Markovian semi-group on $T^{+}(I^{2})$. The law of the path 
$\Xi^{(x_{0},a_{0})}_{\alpha, \tilde{L}}$ depends weakly continuously on the starting point $(x_{0},a_{0})$. The domain of the generator of this semi-group contains $\widehat{\mathcal{D}}_{\alpha, \tilde{L}}$, and on this space the generator equals
\begin{displaymath}
\dfrac{1}{\tilde{m}(x)}\partial_{1}\left(\dfrac{1}{\tilde{w}(x)}\partial_{1}\right).
\end{displaymath}
Moreover, there is only one Markovian or sub-Markovian semi-group with such generator on $\widehat{\mathcal{D}}_{\alpha, \widetilde{L}}$.
\end{lemm}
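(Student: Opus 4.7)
The plan is to derive all four assertions from the corresponding Brownian-case facts via the two operations that link $\Xi^{(x_0,a_0)}_{\alpha,\widetilde{L}}$ to $\Xi^{(x_0,a_0)}_{\alpha,BM}$: the deterministic spatial change $y\mapsto\widetilde{S}^{-1}(2y)$ and the random time change $\widetilde{I}^{-1}$. The Brownian family is already a Feller process on $T^{+}(\mathbb{R}^{2})$ with generator $\tfrac{1}{2}\partial_{1,1}$ on the core $\mathcal{D}_{\alpha,BM}$, established via the explicit representation \eqref{Ch5Sec2: EqDefXi}. Because the spatial map is a homeomorphism of $\widetilde{S}(I)$ onto $I$ and the time-change rate $1/\tilde{m}(\widetilde{S}^{-1}(2\xi))$ depends only on the first coordinate of the state, the composition produces a Markov family indexed by every starting point of $T^{+}(I^{2})$, in general sub-Markovian because of the possible exit at time $\widetilde{I}_{\tilde{\zeta}_{\alpha}}$. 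Weak continuity in $(x_{0},a_{0})$ follows from the coupling through a common Brownian motion $\widetilde{B}$ built into \eqref{Ch5Sec2: EqDefXi}: the map sending $(x_{0},a_{0})$ first to the Brownian trajectory and then to its scale-and-time-changed image is almost surely continuous for the local uniform topology on paths, since $\widetilde{I}$ and $\widetilde{I}^{-1}$ depend continuously on the underlying trajectory as long as it remains away from the $\inf I$ boundary.

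The analytic crux is the generator identification. For $f\in\widehat{\mathcal{D}}_{\alpha,\widetilde{L}}$, I would set $\tilde{f}(y,b):=f(\widetilde{S}^{-1}(2y),\widetilde{S}^{-1}(2b))$ on $T^{+}(\widetilde{S}(I)^{2})$. The chain rule evaluated at a diagonal point $(y,y)$ expresses $\partial_{1}\tilde{f}$ and $\partial_{2}\tilde{f}$ as the same scalar multiple (namely $2/\tilde{w}(x)$ with $x=\widetilde{S}^{-1}(2y)$) of $\partial_{1}f$ and $\partial_{2}f$ respectively, so the diagonal relation \eqref{Ch5Sec2: EqBorderConstraint} for $f$ is equivalent to the corresponding relation for $\tilde{f}$; combined with the decay hypotheses on $f$ and $\tfrac{1}{\tilde{m}}\partial_{1}\bigl(\tfrac{1}{\tilde{w}}\partial_{1}f\bigr)$ near $\inf I$, this places $\tilde{f}$ in $\mathcal{D}_{\alpha,BM}$. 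Applying the known Brownian generator identity to $\tilde{f}$, then translating the Brownian time $u$ back to $t$ via $u=\widetilde{I}^{-1}(t)$ and differentiating at $t=0$, one recovers uniformly in the starting point exactly the operator $\tfrac{1}{\tilde{m}(x)}\partial_{1}\bigl(\tfrac{1}{\tilde{w}(x)}\partial_{1}\bigr)$ at $(x,a)$, the factors introduced by the scale change combining with those from the time change to produce the generator in the form written in the lemma.

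For uniqueness, I would invoke a standard Hille--Yosida / martingale-problem argument: it suffices to verify that $\widehat{\mathcal{D}}_{\alpha,\widetilde{L}}$ is dense in the space of continuous functions on $\widehat{T^{+}(I^{2})}$ vanishing at the $\inf I$-boundary, that it separates the points of $\widehat{T^{+}(I^{2})}$, and that the operator restricted to this subspace is dissipative. The main obstacle, in my view, will be handling the mixed diagonal boundary condition \eqref{Ch5Sec2: EqBorderConstraint}: this is a reflection-type constraint coupling the two coordinates on $Diag(I^{2})$, and showing that $\widehat{\mathcal{D}}_{\alpha,\widetilde{L}}$ is genuinely rich enough to be a core will most naturally be done by lifting approximations from $\mathcal{D}_{\alpha,BM}$ through the spatial change, but this must be arranged so that the diagonal relation, the interior smoothness, and the decay at $\inf I$ are all preserved by the approximation procedure.
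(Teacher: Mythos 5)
Your overall structure — reduce by scale change to the case $\tilde{w}\equiv 2$, observe that $\Xi^{(x_0,a_0)}_{\alpha,\tilde{L}}$ is then a random time change of $\Xi^{(x_0,a_0)}_{\alpha,BM}$, and transfer the Markov/Feller properties through that time change — matches the paper's. But there are two concrete gaps in the details.

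First, the generator identification. You propose to show that the pullback $\tilde{f}(y,b)=f(\widetilde{S}^{-1}(2y),\widetilde{S}^{-1}(2b))$ lies in $\mathcal{D}_{\alpha,BM}$ and then apply the Brownian generator fact. This does not work as stated: membership in $\mathcal{D}_{\alpha,BM}$ requires $\partial_{1,1}\tilde{f}$ itself to be uniformly continuous and bounded, whereas the hypotheses on $f\in\widehat{\mathcal{D}}_{\alpha,\tilde{L}}$ control $\tfrac{1}{\tilde{m}(x)}\partial_{1}\bigl(\tfrac{1}{\tilde{w}(x)}\partial_{1}f\bigr)$, which after the change of scale becomes $\tfrac{1}{2\tilde{m}(\widetilde{S}^{-1}(2y))}\partial_{1,1}\tilde{f}(y,b)$. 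When $\tilde{m}$ is unbounded below or above, $\partial_{1,1}\tilde{f}$ need not be bounded, so the transfer of domains fails. The paper avoids this by not transferring domains at all: it writes the Itô--Tanaka martingale for $f$ composed with the Brownian family, observes that the term coming from the local time drops because of the diagonal constraint, performs the random time change on the stopped martingale (using that the integrand is bounded so the local martingale is a true martingale), and only then differentiates at $t=0$. The decay hypothesis near $\inf I$ is used precisely at the stopping time $\widetilde{I}_{\tilde{\zeta}_\alpha}$, and the extension of $\tfrac{1}{2\tilde{m}}\partial_{1,1}f$ to $\widehat{T^{+}(I^{2})}$ gives uniformity in the starting point; none of this passes through a membership claim in $\mathcal{D}_{\alpha,BM}$.

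Second, and more seriously, the uniqueness argument. You propose to verify that $\widehat{\mathcal{D}}_{\alpha,\tilde{L}}$ is dense, separates points, and that the operator is dissipative there. These conditions do not imply uniqueness of the semigroup: the domain could be too small to be a core, and indeed the family of processes indexed by $\alpha>0$ all have the same interior generator $\tfrac{1}{2\tilde{m}}\partial_{1,1}$ and differ only through the diagonal boundary condition, so density plus dissipativity cannot distinguish them. What is needed is the range condition of Lumer--Phillips: that $\bigl(\tfrac{1}{2\tilde{m}}\partial_{1,1}-\lambda\bigr)\bigl(\widehat{\mathcal{D}}_{\alpha,\tilde{L}}\bigr)$ contains a dense set. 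The paper proves exactly this by explicit construction: given a compactly supported $g$, it solves the resolvent equation by taking a particular solution $f_{0}$ in $a$-parametrized form and adding $C(a)\tilde{u}_{\lambda,\downarrow}(x)$, where $C$ is obtained by solving a first-order linear ODE that enforces the mixed boundary constraint \eqref{Ch5Sec2: EqBorderConstraint} on the diagonal, then checking that the resulting $f$ is in $\widehat{\mathcal{D}}_{\alpha,\tilde{L}}$. You correctly flag the boundary constraint as the obstacle, but the resolution is this ODE solve, not a density-plus-separation argument.
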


\begin{proof}
Since a change of scale does not alter the validity of the above statement, we can assume that $\tilde{w}\equiv 2$. Then, $\sup I = +\infty$. 
$\big(\Xi^{(x_{0},a_{0})}_{\alpha, \tilde{L}}(t)\big)_{0\leq t \leq \widetilde{I}_{\tilde{\zeta}_{\alpha}}}$ is then obtained from $\big(\Xi^{(x_{0},a_{0})}_{\alpha, BM}(t)\big)_{0\leq t<\tilde{\zeta}_{\alpha}}$ by a random time change. The Markov property and the continuous dependence on the starting point for $\Xi^{(x_{0},a_{0})}_{\alpha, \widetilde{L}}$ follows from analogous  properties for $\Xi^{(x_{0},a_{0})}_{\alpha, BM}$. If 
$f\in\widehat{\mathcal{D}}_{\alpha, \widetilde{L}}$, then
\begin{displaymath}
\left(f\big(\Xi^{(x_{0},a_{0})}_{\alpha, BM}(\widetilde{I}^{-1}_{t}\wedge\tilde{\zeta}_{\alpha})\big)-\dfrac{1}{2}\int_{0}^{\widetilde{I}^{-1}_{t}\wedge\tilde{\zeta}_{\alpha}}\partial_{1,1}
f\big(\Xi^{(x_{0},a_{0})}_{\alpha, BM}(s)\big)ds\right)_{t\geq 0}
\end{displaymath}
is a local martingale. We can rewrite it as 
\begin{displaymath}
\left(f\big(\Xi^{(x_{0},a_{0})}_{\alpha, \widetilde{L}}(t\wedge\widetilde{I}_{\tilde{\zeta}_{\alpha}})\big)-\int_{0}^{t}\dfrac{1}{2\tilde{m}\big(\xi^{(x_{0},a_{0})}_{\alpha, \widetilde{L}}(s)\big)}\partial_{1,1}f\big(\Xi^{(x_{0},a_{0})}_{\alpha, \widetilde{L}}(s)\big)1_{s<\widetilde{I}_{\tilde{\zeta}_{\alpha}}}ds\right)_{t\geq 0}.
\end{displaymath}
The above local martingale is bounded on all finite time intervals and thus is a true martingale. Since $\frac{1}{2\tilde{m}(x)}\partial_{1,1}f(x,a)$ converges to $0$ as $a$ converges to $\inf I$, uniformly in $x$, it follows that
\begin{displaymath}
f\big(\Xi^{(x_{0},a_{0})}_{\alpha,\widetilde{L}}(t\wedge\widetilde{I}_{\tilde{\zeta}_{\alpha}})\big) 
= 1_{t<\widetilde{I}_{\tilde{\zeta}_{\alpha}}}f\big(\Xi^{(x_{0},a_{0})}
_{\alpha,\widetilde{L}}(t)\big).
\end{displaymath}
Thus,
\begin{displaymath}
\lim_{t\rightarrow 0^{+}}\dfrac{1}{t}
\left(\mathbb{E}\left[1_{t<\widetilde{I}_{\tilde{\zeta}_{\alpha}}}
f\big(\Xi^{(x_{0},a_{0})}_{\alpha,\widetilde{L}}(t)\big)\right]- f(x_{0},a_{0})\right)=
\dfrac{1}{2\tilde{m}(x_{0})}\partial_{1,1}f(x_{0},a_{0}).
\end{displaymath}
Moreover, the above convergence is uniform in $(x_{0},a_{0})$ because 
$\frac{1}{2\tilde{m}(x)}\partial_{1,1}f(x,a)$ extends continuously to 
$\widehat{T^{+}(I^{2})}$.

To prove the uniqueness of the semi-group, we need to show that there is $\lambda>0$ such that 
\begin{displaymath}
\left(\dfrac{1}{2\tilde{m}(x)}\partial_{1,1}-\lambda\right)(\widehat{\mathcal{D}}_{\alpha, \tilde{L}})
\end{displaymath}
is sufficiently large, for instance that it contains all functions with compact support in $T^{+}(I^{2})$. Let $g$ be such a function and $\lambda>0$. Consider the equation
\begin{equation}
\label{Ch5Sec2: EqEqRes}
\dfrac{1}{2\tilde{m}(x)}\partial_{1,1}f(x,a)-\lambda f(x,a)=g(x,a).
\end{equation}
Let $\tilde{u}_{\lambda, \downarrow}$ be a positive decreasing solution to 
\begin{displaymath}
\dfrac{1}{2\tilde{m}(x)}\dfrac{d^{2}u}{dx^{2}}(x)-\lambda u(x) = 0.
\end{displaymath}
Let
\begin{displaymath}
f_{0}(x,a):=\tilde{u}_{\lambda, \downarrow}(x)\int_{x}^{+\infty}\int_{y}^{+\infty}2\tilde{m}(z)g(z,a)\tilde{u}_{\lambda, \downarrow}(z)\,dz\,\dfrac{dy}{\tilde{u}_{\lambda, \downarrow}(y)^{2}}.
\end{displaymath}
Then, $f_{0}$ is a solution to \eqref{Ch5Sec2: EqEqRes} and it is compactly supported in 
$T^{+}(I^{2})$. We look for the solutions to \eqref{Ch5Sec2: EqEqRes} of form
\begin{displaymath}
f(x,a) = f_{0}(x,a) + C(a)\tilde{u}_{\lambda, \downarrow}(x).
\end{displaymath}
$f$ satisfies the constraint \eqref{Ch5Sec2: EqBorderConstraint} if and only if $C$ satisfies
\begin{displaymath}
-\dfrac{1}{\alpha}\tilde{u}_{\lambda, \downarrow}(a)\dfrac{dC}{da}(a)+\left(1-\dfrac{1}{\alpha}\right)\dfrac{d\tilde{u}_{\lambda, \downarrow}}{dx}(a)C(a)+h(a)=0, 
\end{displaymath}
where
\begin{displaymath}
h(a)=\left(\left(1-\dfrac{1}{\alpha}\right)\partial_{1}-\dfrac{1}{\alpha}\partial_{2}\right)f_{0}(a,a).
\end{displaymath}
$h$ is compactly supported in $I$. We can set
\begin{displaymath}
C(a)=\tilde{u}_{\lambda, \downarrow}(a)^{\alpha - 1}\int_{\inf I}^{x}\dfrac{h(y)}{\tilde{u}_{\lambda, \downarrow}(y)^{\alpha}}dy.
\end{displaymath}
$C$ is zero in the neighborhood of $\inf I$. Moreover, $\tilde{u}_{\lambda, \downarrow}$ has a limit at $+\infty$. It follows that 
$f\in\widehat{\mathcal{D}}_{\alpha, \tilde{L}}$. 
\end{proof}

Let $L$ be the generator of a diffusion on $I$ of form \eqref{Ch2Sec2: EqGenerator}. Let $x_{0}\in I$. Consider the loops $\gamma$ in $\mathcal{L}_{\alpha, L}$ such that 
$\min \gamma<x_{0}$, rooted at $\operatorname{argmin} \gamma$, seen as excursions. Let 
$(\xi_{\alpha, L}^{(x_{0})}(t))_{0\leq t<\zeta_{\alpha}}$ be the path on $I$ obtained by gluing together this excursions ordered in the decreasing sense of their minima. Let
\begin{displaymath}
\theta_{\alpha, L}^{(x_{0})}(t):=\min_{[0,t]}\xi_{\alpha, L}^{(x_{0})},
\end{displaymath}
\begin{displaymath}
\Xi_{\alpha, L}^{(x_{0})}:=\big(\xi_{\alpha, L}^{(x_{0})},\theta_{\alpha, L}^{(x_{0})}\big).
\end{displaymath}

\begin{prop}
\label{Ch5Sec2: PropGeneralContour}
Let $\widetilde{L}:=\operatorname{Conj}(u_{\downarrow},L)$. Then, 
$\big(\Xi_{\alpha, L}^{(x_{0})}(t)\big)_{0\leq t<\zeta_{\alpha}}$ has the same law as $\big(\Xi^{(x_{0},x_{0})}_{\alpha, \widetilde{L}}(t)\big)
_{0\leq t<\tilde{\zeta}_{\alpha}}$. So it is a sample path of a two-dimensional Feller process. In particular, for $\alpha=1$, $\xi_{1, L}^{(x_{0})}$ is the sample path of a diffusion of generator $\widetilde{L}$. For all $\alpha>0$,
\begin{displaymath}
\liminf_{t\rightarrow \zeta_{\alpha}}\xi_{\alpha, L}^{(x_{0})}(t)= \inf I.
\end{displaymath}
If $L$ is the generator of a recurrent diffusion, then
\begin{displaymath}
\limsup_{t\rightarrow \zeta_{\alpha}}\xi_{\alpha, L}^{(x_{0})}(t)= \sup I. 
\end{displaymath}
Otherwise,
\begin{displaymath}
\limsup_{t\rightarrow \zeta_{\alpha}}\xi_{\alpha, L}^{(x_{0})}(t)= \inf I. 
\end{displaymath}
\end{prop}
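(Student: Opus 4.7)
The plan is to reduce the identification to the Brownian case of Proposition~\ref{Ch5Sec2: PropBMContour} using the invariance and covariance properties of $\mu^{\ast}$ established in Chapter~\ref{Ch3}. First I would invoke h-transform invariance: Property~\ref{Ch3Sec3: PropertyTrivialitiesLoops}(v) gives $\mu^{\ast}_{L}=\mu^{\ast}_{\widetilde L}$, so the Poisson ensembles $\mathcal{L}_{\alpha, L}$ and $\mathcal{L}_{\alpha,\widetilde L}$ coincide in law as random countable collections of time-parametrized unrooted loops. Since $\Xi^{(x_{0})}_{\alpha,L}$ is a deterministic measurable functional of this collection (rooting at the minimum, ordering by decreasing minima and gluing, as in Section~\ref{Ch5Sec1}), it is enough to prove the statement when $L=\widetilde L$, i.e.\ $u_{\downarrow}\equiv 1$. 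Under this normalization $\widetilde L$ carries no killing measure and the primitive $\widetilde S$ of its scale density satisfies $\widetilde S(\sup I)=+\infty$, with $\widetilde S(\inf I)=-\infty$ versus $\widetilde S(\inf I)>-\infty$ corresponding respectively to the recurrent and the transient case.

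Next I would apply the scale change $\widetilde S$ followed by a change of speed that turns the resulting generator into $\frac{1}{2}\partial_{x}^{2}$. By Property~\ref{Ch3Sec3: PropertyTrivialitiesLoops}(iv) and Corollary~\ref{Ch3Sec3: CorTimeChange}, both operations are covariant with $\mu^{\ast}$, and both commute with the gluing of Section~\ref{Ch5Sec1}: the scale change is increasing and hence preserves the rooting at the minimum and the ordering of minima, while a speed change acts on the glued path $\xi$ as the same deterministic time change that it applies to each individual excursion. After these two reductions the image of $\mathcal{L}_{\alpha,\widetilde L}$ is, by Property~\ref{Ch3Sec3: PropertyTrivialitiesLoops}(ii), exactly the sub-collection of $\mathcal{L}_{\alpha,BM}$ on $\mathbb R$ consisting of loops contained in $\widetilde S(I)$, with base point $\widetilde S(x_{0})$. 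In the recurrent case $\widetilde S(I)=\mathbb R$ and Proposition~\ref{Ch5Sec2: PropBMContour} identifies the scale and time-changed $\xi$ as the first coordinate of $\Xi^{(\widetilde S(x_{0}),\widetilde S(x_{0}))}_{\alpha,BM}$. In the transient case $\widetilde S(\inf I)>-\infty$, the same construction run for BM on $\mathbb R$ up to the first time $\theta$ hits $\widetilde S(\inf I)$ yields the desired process, because at that stopping time all loops of $\mathcal{L}_{\alpha,BM}$ with minimum in $(\widetilde S(\inf I),\widetilde S(x_{0}))$ have been exhausted; this extension of Proposition~\ref{Ch5Sec2: PropBMContour} to the half-line is essentially immediate from excursion theory and the same Lévy/Tanaka computation. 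Inverting the time and scale changes recovers precisely the process $\Xi^{(x_{0},x_{0})}_{\alpha,\widetilde L}$ defined by \eqref{Ch5Sec2: EqDefXi}. The $\alpha=1$ claim follows because Lévy's theorem identifies the first coordinate of $\Xi^{(x_{0},x_{0})}_{1,BM}$ with a standard Brownian motion started at $x_{0}$, which pulls back to a $\widetilde L$-diffusion.

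For the $\liminf$ and $\limsup$ claims I would argue as follows. By the disintegration~\eqref{Ch3Sec5: EqDisintegGeneralLoops}, the set $\{\min\gamma:\gamma\in\mathcal{L}_{\alpha,L},\,\min\gamma<x_{0}\}$ is a.s.\ dense in $(\inf I,x_{0})$; since $\theta^{(x_{0})}_{\alpha,L}(t)=\inf_{[0,t]}\xi^{(x_{0})}_{\alpha,L}$ takes each such value by construction, $\theta(t)\to\inf I$ as $t\to\zeta_{\alpha}$, which gives $\liminf_{t\to\zeta_{\alpha}}\xi=\inf I$ in all cases. For the $\limsup$, I would reduce again to the Brownian picture: in the recurrent regime the process $|\widetilde B_{t}|-\tfrac{1}{\alpha}\ell^{0}_{t}(\widetilde B)$ is defined for all $t\geq 0$ and is unbounded above, which pulls back to $\limsup\xi=\sup I$; in the transient regime $\tilde\zeta_{\alpha}<+\infty$, and by continuity $|\widetilde B_{t}|\to 0$ as $t\to\tilde\zeta_{\alpha}$ while $\theta$ tends to $\widetilde S(\inf I)$, so $\xi$ actually converges at $\zeta_{\alpha}$ to $\inf I$, yielding $\limsup\xi=\inf I$.

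The main obstacle will be the bookkeeping in the combined scale and time reduction, especially in the transient case where $\widetilde S(\inf I)>-\infty$: one must verify carefully that pushing forward $\mathcal{L}_{\alpha,\widetilde L}$ by $\widetilde S$ and then by the speed change produces exactly the loops of $\mathcal{L}_{\alpha,BM}$ whose minimum lies above $\widetilde S(\inf I)$, that the inverse time change $\widetilde I$ stays finite up to the Brownian stopping time $\tilde\zeta_{\alpha}$ where $\theta$ first meets $\widetilde S(\inf I)$, and that the extension of Proposition~\ref{Ch5Sec2: PropBMContour} to the killed half-line Brownian motion matches the ad hoc construction of $\Xi^{(x_{0},a_{0})}_{\alpha,\widetilde L}$ preceding the statement (and in particular Lemma~\ref{Ch5Sec2: LemCore}).
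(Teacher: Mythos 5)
Your proposal is correct and follows essentially the same route as the paper's proof: $h$-transform invariance of $\mu^{\ast}$ to replace $L$ by $\widetilde{L}=Conj(u_{\downarrow},L)$, then the scale and time-change covariance of $\mu^{\ast}$ (Property~\ref{Ch3Sec3: PropertyTrivialitiesLoops}(iv) and Corollary~\ref{Ch3Sec3: CorTimeChange}) to reduce to the Brownian case settled in Proposition~\ref{Ch5Sec2: PropBMContour}, and a reduction to Brownian motion on $(a,+\infty)$ for the boundary behaviour. The paper's argument is more terse — it simply notes that $\Xi_{\alpha,L}^{(x_0)}$ and $\Xi^{(x_0,x_0)}_{\alpha,\widetilde{L}}$ are obtained from $\Xi_{\alpha,BM}$ by the same scale and time changes, then observes the $\liminf$/$\limsup$ claims for the Brownian reference case and pulls them back; your more explicit bookkeeping (checking that scale change and speed change commute with the gluing of Section~\ref{Ch5Sec1}, invoking Property~\ref{Ch3Sec3: PropertyTrivialitiesLoops}(ii) in the transient case, and giving a direct density-of-minima argument for the $\liminf$) fills in exactly the details the paper elides without changing the underlying strategy.
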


\begin{proof}
First, notice that if $L$ is the generator of a recurrent diffusion then $\widetilde{L}=L$. Otherwise, a diffusion of generator $\widetilde{L}=L$ is, put informally, a diffusion of generator $L$ conditioned to converge to $\inf I$ (which may occur with zero probability). From h-transform invariance of the measure on loops follows that $\mathcal{L}_{\alpha, L}=\mathcal{L}_{\alpha, \widetilde{L}}$. From Property \ref{Ch3Sec3: PropertyTrivialitiesLoops} (iv) and Corollary 
\ref{Ch3Sec3: CorTimeChange} follows that $\Xi_{\alpha, L}^{(x_{0})}$ is obtained from $\Xi_{\alpha, BM}$ by scale and time change in the same way as $\Xi^{(x_{0},x_{0})}_{\alpha, \widetilde{L}}$, and thus, $\Xi_{\alpha, L}^{(x_{0})}$ and $\Xi^{(x_{0},x_{0})}_{\alpha, \widetilde{L}}$ have the same law. Regarding the limits of $\xi_{\alpha, L}^{(x_{0})}$ at $\zeta_{\alpha}$, we need just to observe that they hold if $L$ is the generator of the Brownian motion on an interval of form $(a,+\infty)$, $a\in[-\infty,+\infty)$, and by time and scale change they hold in general.
\end{proof}

As explained in Proposition \ref{Ch5Sec1: PropContinuity}, the knowledge of the path $\big(\xi_{\alpha, L}^{(x_{0})}(t)\big)_{0\leq t<\zeta_{\alpha}}$ alone is enough to reconstruct $\mathcal{L}_{\alpha, L}\cap\lbrace \gamma\in \mathfrak{L}^{\ast}\vert \min \gamma <x_{0}\rbrace$. From this we deduce the following:

\begin{coro}
\label{Ch5Sec2: CorLoopsField}
If $L$ is the generator of a transient diffusion, it is possible to construct on the same probability space $\mathcal{L}_{\alpha, L}$ and a continuous version of the occupation field $(\widehat{\mathcal{L}}_{\alpha, L}^{x})_{x\in I}$.
\end{coro}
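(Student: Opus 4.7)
The plan is to exploit the deterministic reconstruction of Proposition~\ref{Ch5Sec1: PropContinuity}, together with the two-dimensional Markov description in Proposition~\ref{Ch5Sec2: PropGeneralContour}, to exhibit a single probability space on which the Poisson ensemble and a continuous occupation field are simultaneously defined.

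Fix $x_{0}\in I$. First I would realise the path $\xi_{\alpha,L}^{(x_{0})}$ as the image under a deterministic scale/time change of the Brownian construction \eqref{Ch5Sec2: EqDefXi} driven by a single standard Brownian motion $\widetilde{B}$. From Proposition~\ref{Ch5Sec1: PropContinuity}, in particular identities \eqref{Ch5Sec1: EqLeftBound} and \eqref{Ch5Sec1: EqRightBound}, the whole family of excursions $(\mathtt{e}_{q})_{q\in\mathcal{Q}}$ glued to form $\xi_{\alpha,L}^{(x_{0})}$ is measurable with respect to $\xi_{\alpha,L}^{(x_{0})}$ itself; inverting the identification ``excursion rooted at minimum $\leftrightarrow$ loop'' produces, on the same probability space, the subensemble $\mathcal{L}_{\alpha,L}\cap\{\min\gamma<x_{0}\}$.

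Next I would introduce a jointly continuous family of local times $(L_{t}^{x}(\xi_{\alpha,L}^{(x_{0})}))_{x\in I,\,t<\zeta_{\alpha}}$ for $\xi_{\alpha,L}^{(x_{0})}$ with respect to $m(x)\,dx$. These exist because $\xi_{\alpha,L}^{(x_{0})}$ is obtained by a scale and time change from the semimartingale $x_{0}+|\widetilde{B}_{t}|-\alpha^{-1}\ell_{t}^{0}(\widetilde{B})$, so its local times inherit joint continuity from those of $\widetilde{B}$ via standard occupation density and Tanaka formulas. By the additivity of local times along concatenations, $L_{\zeta_{\alpha}^{-}}^{x}(\xi_{\alpha,L}^{(x_{0})})$ equals $\sum_{\gamma:\min\gamma<x_{0}}\ell^{x}(\gamma)$, and for every $x<x_{0}$ loops with $\min\gamma\geq x_{0}$ do not visit $x$, so this sum coincides with $\widehat{\mathcal{L}}_{\alpha,L}^{x}$. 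This produces a continuous version of $(\widehat{\mathcal{L}}_{\alpha,L}^{x})_{x<x_{0}}$ on the same probability space that carries $\mathcal{L}_{\alpha,L}\cap\{\min\gamma<x_{0}\}$.

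To globalise in $x_{0}$, I would use the Markov/Feller structure of $\Xi_{\alpha,\widetilde{L}}$ given by Lemma~\ref{Ch5Sec2: LemCore}. Choose an increasing sequence $x_{0}^{(n)}\uparrow\sup I$ and construct, by Kolmogorov extension, a single process $(\Xi(t))_{t\geq 0}$ whose restriction between the successive hitting times $\tau_{n+1}<\tau_{n}$ of the levels $x_{0}^{(n+1)}$ and $x_{0}^{(n)}$ by the second coordinate has the law of the corresponding stretch of $\Xi_{\alpha,\widetilde{L}}^{(x_{0}^{(n+1)},x_{0}^{(n+1)})}$; the strong Markov property at $\tau_{n}$ ensures consistency with the construction performed directly from the starting point $(x_{0}^{(n)},x_{0}^{(n)})$. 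Applying the previous paragraph to each stage and taking the union produces the full $\mathcal{L}_{\alpha,L}$ together with the continuous occupation field on a common probability space; continuity across the levels $x_{0}^{(n)}$ is automatic because the local times on each stage match those on the previous stage in the overlap.

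The main obstacle is the identification, without ambiguity, of the continuous family of local times of $\xi_{\alpha,L}^{(x_{0})}$ with the loop occupation sums, since the counterexample preceding the statement shows that a generic continuous modification of an $x$-indexed process need not admit such an interpretation. The argument hinges on the fact that here the continuous version is not freely chosen but is produced intrinsically by the path $\xi_{\alpha,L}^{(x_{0})}$ itself, via its semimartingale local times, and that the additivity of local times over the deterministically reconstructed excursions forces the equality with $\sum_{\gamma}\ell^{x}(\gamma)$ at every level $x<x_{0}$ simultaneously.
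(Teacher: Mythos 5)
Your proof is correct in its two central ideas, which coincide with the paper's: (a) the path $\xi_{\alpha,L}^{(x_0)}$ is a semimartingale after scale/time reduction, so one can produce a jointly continuous family of local times for it, and (b) these local times are additive over the loop intervals recovered from \eqref{Ch5Sec1: EqLeftBound}--\eqref{Ch5Sec1: EqRightBound}, hence equal the loop occupation sums. Where you diverge is the globalisation in $x_0$: you propose a Kolmogorov projective limit over an increasing sequence $x_0^{(n)}\uparrow\sup I$, whereas the paper takes a sequence of \emph{independent} paths $\xi^{(x_n,x_n)}$, each run only until first hitting $x_{n-1}$, so that stage $n$ captures exactly the loops with $\min\gamma\in(x_{n-1},x_n)$, and the occupation field is the sum of the stage local times, which is locally finite. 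The paper's route is genuinely simpler: since disjoint $\min$-slices of a Poisson ensemble are independent, there is no need to glue the stages into a single consistent process, and one entirely avoids the bookkeeping of time origin, projective consistency, and the ambiguity of ``starting at $\sup I$'' when $\sup I=+\infty$. Your Kolmogorov construction is not wrong in principle, but it is underspecified — the ``overlap'' you refer to in the last sentence does not exist (the stages are time-disjoint slices of one path), and what actually needs checking is local summability of the stage contributions, which the paper verifies directly.

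Two further places where you assert what needs to be proved: first, the joint continuity of local times is not a ``standard Tanaka'' fact here, because the semimartingale has a nontrivial finite-variation part $-\alpha^{-1}\ell^0_t(\widetilde B)$; the paper explicitly checks the Revuz--Yor condition $\int_0^t 1_{\xi_s=x}\,d\xi_s=0$, which reduces to $\int 1_{\ell^0_s=\alpha x}\,d\ell^0_s=0$, before invoking VI.1.7. Second, the ``additivity along concatenations'' needs the observation (made explicitly in the paper around \eqref{Ch5Sec2: EqResidualTimes}--\eqref{Ch5Sec2: EqCoincSumLocTimes}) that the set of times on level $x$ not covered by any loop interval is at most a single point, and that $d_t\ell^x_t$ has no atoms, so these residual times carry zero local time. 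Both gaps are fillable, but they are precisely the content that distinguishes the present statement from the counterexample just before it, so they deserve to be spelled out rather than absorbed into ``standard''.
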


\begin{proof}
By scale and time change covariance and invariance by conjugation of the Poisson ensembles of loops, it is enough to prove the statement in case of a Brownian motion on $(0,+\infty)$ killed at $0$. Let $(x_{n})_{n\geq 0}$ be an increasing sequence in $(0,+\infty)$ converging to $+\infty$. We consider a sequence of independent paths $\big(\xi_{\alpha, BM}^{(x_{n},x_{n})}\big)_{n\geq 0}$ defined by 
\eqref{Ch5Sec2: EqDefXi}. Let
\begin{displaymath}
T_{n,x_{n-1}}:=\inf\big\lbrace t\geq 0\vert \xi_{\alpha, BM}^{(x_{n},x_{n})}(t)=x_{n-1}\big\rbrace,
\end{displaymath}
where conventionally we set $x_{-1}:=0$. By decomposing on $[0,T_{n,x_{n-1}}]$ the restricted path $\big(\xi_{\alpha, BM}^{(x_{n},x_{n})}(t)\big)_{0\leq t<T_{n,x_{n-1}}}$, one can reconstruct a family of loops $\gamma$ such that $\min\gamma\in (x_{n-1},x_{n})$: there is a random countable set $\mathscr{B}_{n}$ of disjoint compact subintervals $[b^{-},b^{+}]$ of $[0,T_{n,x_{n-1}}]$ such that 
\begin{displaymath}
\big\lbrace\big(\xi_{\alpha, BM}^{(x_{n},x_{n})}(b^{-}+t)\big)_{0\leq t
\leq b^{+}-b^{-}}\vert[b^{-},b^{+}] \in \mathscr{B}_{n}\big\rbrace = 
\mathcal{L}_{\alpha, BM}\cap\lbrace \gamma\in\mathfrak{L}^{\ast}\vert\min\gamma \in(x_{n-1},x_{n})\rbrace,
\end{displaymath}
(see \eqref{Ch5Sec1: EqLeftBound}). The union of all previous families of loops for $n\geq 0$ is a Poisson ensemble of loops $\mathcal{L}_{\alpha, BM}\cap\lbrace \gamma\in \mathfrak{L}^{\ast}\vert \min \gamma >0\rbrace$. 

Each of $\xi_{\alpha, BM}^{(x_{n},x_{n})}$ is a semi-martingale and its quadratic variation is
\begin{displaymath}
\big\langle \xi_{\alpha, BM}^{(x_{n},x_{n})}, \xi_{\alpha, BM}^{(x_{n},x_{n})}\big\rangle_{t} = t.
\end{displaymath}
Moreover, for all $x\in\mathbb{R}$,
\begin{displaymath}
\int_{0}^{t}1_{\xi_{\alpha, BM}^{x_{n},x_{n}}=x} d\xi_{\alpha, BM}^{(x_{n},x_{n})}(s)=\left(1-\dfrac{1}{\alpha}\right)\int_{0}^{t}1_{\ell_{s}^{0}(B)=\alpha x} d_{s}\ell_{s}^{0}(B) = 0.
\end{displaymath}
From Theorems $1.1$ and $1.7$ in \cite{RevuzYor1999BMGrundlehren}, Section VI.1, follows that we can construct on the same probability space 
$\xi_{\alpha, BM}^{(x_{n},x_{n})}$ and a space-time continuous version of local times $\big(\ell^{x}_{t}\big(\xi_{\alpha, BM}^{(x_{n},x_{n})}\big)\big)_{x\in\mathbb{R}, t\geq 0}$ of $\xi_{\alpha, BM}^{(x_{n},x_{n})}$ relatively to the Lebesgue measure.  In particular ${x\mapsto\ell^{x}_{T_{n,x_{n-1}}}\big(\xi_{\alpha, BM}^{(x_{n},x_{n})}\big)}$ is continuous. If $[b^{-},b^{+}]\in\mathcal{J}_{n}$, then 
\begin{displaymath}
\big(\ell^{x}_{b^{+}}\big(\xi_{\alpha, BM}^{(x_{n},x_{n})})\big)-\ell^{x}_{b^{-}}\big(\xi_{\alpha, BM}^{(x_{n},x_{n})}\big)\big)_{x>0}
\end{displaymath}
is the occupation field of the loop corresponding to the time interval $[b^{-},b^{+}]$. We need to check that
\begin{equation}
\label{Ch5Sec2: EqCoincSumLocTimes}
\text{a.s.}, \forall x>0,\ell^{x}_{T_{n,x_{n-1}}}\big(\xi_{\alpha, BM}^{(x_{n},x_{n})}\big)=
\sum_{[b^{-},b^{+}]\in\mathscr{B}_{n}}\ell^{x}_{b^{+}}\big(
\xi_{\alpha, BM}^{(x_{n},x_{n})}\big)-\ell^{x}_{b^{-}}\big(\xi_{\alpha, BM}^{(x_{n},x_{n})}\big).
\end{equation}
For $x>0$, consider the random set of times
\begin{equation}
\label{Ch5Sec2: EqResidualTimes}
\big\lbrace t\in[0,T_{n,x_{n-1}}]\vert\xi_{\alpha, BM}^{(x_{n},x_{n})}(t)=x \big\rbrace\setminus\bigcup_{[b^{-},b^{+}]\in\mathscr{B}_{n}}[b^{-},b^{+}].
\end{equation}
If $x$ is a minimum of a loop embedded in $\big(\xi_{\alpha, BM}^{(x_{n},x_{n})}(t)\big)_{0\leq t<T_{n,x_{n-1}}}$ or if $x\not\in(x_{n-1},x_{n})$ then the set 
\eqref{Ch5Sec2: EqResidualTimes} is empty. Otherwise, it is reduced to one point: the first hitting time of the level $x$. Almost surely, for all $x>0$, the measure $d_{t}\ell^{x}_{t}\big(\xi_{\alpha, BM}^{(x_{n},x_{n}})\big)$ is supported in 
$\big\lbrace t\geq 0\vert \xi_{\alpha, BM}^{(x_{n},x_{n})}(t)=x\big\rbrace$ and has no atoms, and thus does not charge the set \eqref{Ch5Sec2: EqResidualTimes}. This implies \eqref{Ch5Sec2: EqCoincSumLocTimes}. Finally, we can conclude that $\big(\ell^{x}_{T_{n,x_{n-1}}}\big(\xi_{\alpha, BM}^{(x_{n},x_{n})}\big)\big)_{x>0}$ is the occupation field of $\mathcal{L}_{\alpha, BM}\cap\lbrace \gamma\in \mathfrak{L}^{\ast}\vert \min \gamma \in (x_{n-1},x_{n})\rbrace$.

The occupation field of $\mathcal{L}_{\alpha, BM}\cap\lbrace \gamma\in \mathfrak{L}^{\ast}\vert \min \gamma >0\rbrace$ is
\begin{displaymath}
\bigg(\sum_{n\geq 0} \ell^{x}_{T_{n,x_{n-1}}}\big(\xi_{\alpha, BM}^{(x_{n},x_{n})}\big)\bigg)_{x>0}.
\end{displaymath}
The above sum is locally finite and thus varies continuously with $x$.
\end{proof}

\section{The case $\alpha=1$}
\label{Ch5Sec3}

According to Proposition \ref{Ch5Sec2: PropGeneralContour}, in case $\alpha=1$, the Poisson ensemble of loops $\mathcal{L}_{1, L}$ can be recovered from sample paths of one-dimensional diffusions. A similar property was observed for loops of the two-dimensional Brownian Motion and of Markov jump processes on graphs. In \cite{LeJan2011Loops}, Chapter $8$, it is shown that by launching consecutively symmetric Markov jump processes from different vertices of a finite graph and applying Wilson's algorithm (\cite{Wilson1996UST}), one can simultaneously construct a uniform spanning tree of the graph with prescribed weights on the edges and an independent Poisson ensemble of Markov loops of parameter $\alpha=1$. If $\mathbb{D}$ is a simply-connected open domain of $\mathbb{C}$ other than $\mathbb{C}$, it was shown in \cite{Zhan2012LEBMSLE2} that one can couple a Brownian motion on $\mathbb{D}$, killed at hitting $\partial\mathbb{D}$, and a simple curve (SLE$_{2}$) with same extremal points such that the latter appears as the loop-erasure of the first. It is conjectured that given this loop-erased Brownian motion and an independent  Poisson ensemble of Brownian loops of parameter $1$, by attaching to the simple curve the loops that cross it one reconstructs a Brownian sample path. See \cite{LawlerWerner2004ConformalLoopSoup}, Conjecture $1$, and \cite{LawlerSchrammWerner2003ConformalRestr}, Theorem $7.3$.
More recently, a similar property was proved for loops of the three-dimensional Brownian motion \cite{DaisukeSapozh2017LERW3D}.

In case of one-dimensional diffusions one can partially recover $\mathcal{L}_{1, L}$ from Markovian sample paths otherwise than slicing $\xi_{1, L}^{(x_{0})}$ in excursions. The next result has an analogue for loops of Markov jump processes on graphs. See \cite{LeJan2011Loops}, Remark $21$.

\begin{prop}
\label{Ch5Sec3: PropDirichletSlice}
Assume that $L$ is the generator of a transient diffusion. Let $x\in I$. Let $(X_{t})_{0\leq t<\zeta}$ be the sample path of a diffusion of generator $L$ started from $x$. Let $\widehat{T}_{x}$ the last time $X$ visits $x$. For $l\geq 0$, let
\begin{displaymath}
\tau_{l}^{x}:=\lbrace t\geq 0\vert \ell^{x}_{t}(X)>l\rbrace.
\end{displaymath}
Let $(q_{j})_{j\in\mathbb{N}}$ be a Poisson-Dirichlet partition $PD(0,1)$ of $[0,1]$, independent from $X$, ordered in an arbitrary way. Let
\begin{displaymath}
l_{j}:=\ell^{x}_{\zeta}(X)\sum_{i=0}^{j} q_{i}.
\end{displaymath}
The family of bridges $((X_{t})_{\tau_{l_{j-1}}^{x}\leq t\leq \tau_{l_{j}}^{x}})_{j\geq 0}$ has, up to unrooting, the same law as the loops in
\begin{displaymath}
\mathcal{L}_{1, L}\cap\lbrace\gamma\in\mathfrak{L}^{\ast}\vert x\in \gamma([0,T(\gamma)])\rbrace.
\end{displaymath}
In particular, $(X_{t})_{0\leq t\leq \widehat{T}_{x}}$ can be obtained through sticking together all the loops in $\mathcal{L}_{\alpha, L}$ that visit $x$.
\end{prop}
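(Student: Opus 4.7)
The plan is to identify both sides as Poisson point processes of unrooted loops with the same intensity measure, using Corollary \ref{Ch3Sec3: CorLoopLocTimeBridge} on one side and the Gamma-subordinator characterization of Poisson--Dirichlet on the other. By identity \eqref{Ch3Sec3: EqDecompLoopLocTimex}, the restriction of $\mu^{\ast}$ to loops visiting $x$ is
$$1_{\gamma~\text{visits}~x}\,\mu^{\ast}(d\gamma)=\int_{0}^{+\infty}\pi_{\ast}\mathbb{P}_{x}^{\tau^{x}_{l}}(d\gamma)\,e^{-l/G(x,x)}\frac{dl}{l},$$
so the ensemble $\mathcal{L}_{1,L}\cap\{\gamma : x\in\gamma([0,T(\gamma)])\}$ can be described in two stages: first sample a Poisson point process $\{l^{\ast}_{i}\}_{i}$ on $(0,+\infty)$ with intensity $e^{-l/G(x,x)}\,dl/l$ and, conditionally on these atoms, independently sample for each $l^{\ast}_{i}$ a rooted loop with law $\mathbb{P}_{x}^{\tau^{x}_{l^{\ast}_{i}}}$; then project each by $\pi$. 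It thus suffices to show the right-hand side of the proposition reproduces exactly this two-stage construction.

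To check the first stage, I will first note that $\ell^{x}_{\zeta}(X)\sim\operatorname{Exp}(1/G(x,x))$: integrating Proposition \ref{Ch3Sec2: PropLocTimeDesinteg} against the constant functional recovers the total mass $G(x,x)$ of $\mu^{x,x}$, and reading off the density on $l$ from \eqref{Ch3Sec3: EqDecompLoopLocTimex} gives $\mathbb{P}_{x}(\tau^{x}_{l}<\zeta)=e^{-l/G(x,x)}$. Next I invoke the classical Gamma-subordinator representation of the Poisson--Dirichlet law: if $T\sim\operatorname{Exp}(1)$ and $(q_{j})\sim PD(0,1)$ are independent, then $\{Tq_{j}\}_{j}$ is a Poisson point process on $(0,+\infty)$ with intensity $e^{-l}\,dl/l$, the total mass $T=\sum_{j}Tq_{j}$ being independent of the size-biased sequence. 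Rescaling by $G(x,x)$ and using the independence of $(q_{j})$ from $X$ (and hence from $\ell^{x}_{\zeta}(X)$), the multiset $\{l_{j}-l_{j-1}\}_{j\geq 0}=\{\ell^{x}_{\zeta}(X)\,q_{j}\}_{j\geq 0}$ is a Poisson point process on $(0,+\infty)$ with intensity $e^{-l/G(x,x)}\,dl/l$, as required.

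To check the second stage, I will apply the strong Markov property of $X$ successively at the stopping times $\tau^{x}_{l_{j-1}}$; since $l_{j-1}<l_{j}<\ell^{x}_{\zeta}(X)$, all these stopping times satisfy $\tau^{x}_{l_{j-1}}<\zeta$ almost surely, and $X_{\tau^{x}_{l_{j-1}}}=x$. The resulting post-$\tau^{x}_{l_{j-1}}$ paths are independent copies of $X$ started at $x$, and the piece $(X_{t})_{\tau^{x}_{l_{j-1}}\leq t\leq \tau^{x}_{l_{j}}}$ is one such copy run until its own local time at $x$ reaches $l_{j}-l_{j-1}$, on the event $\{\tau^{x}_{l_{j}-l_{j-1}}<\zeta\}$ which is automatic. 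By definition this piece has conditional law $\mathbb{P}_{x}^{\tau^{x}_{l_{j}-l_{j-1}}}$, and the pieces are conditionally independent given $(l_{j})_{j}$. This matches the second stage of the construction above, so the two families of unrooted loops agree in law. The concluding statement about $(X_{t})_{0\leq t\leq\widehat{T}_{x}}$ is then tautological: concatenating the pieces in their natural order along $X$ reconstructs the sample path up to $\widehat{T}_{x}$.

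The main input is the Gamma-subordinator representation of $PD(0,1)$ together with the independence of the total mass from the normalized atoms; once this is in hand, the rest is a direct strong Markov computation. The only subtlety is that the ordering of $(q_{j})$ is arbitrary, but this is harmless because $\mathcal{L}_{1,L}$ is an unordered multiset of loops and the strong Markov argument gives conditional independence regardless of how one indexes the atoms.
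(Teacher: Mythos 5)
Your overall strategy is exactly the paper's: describe $\mathcal{L}_{1,L}\cap\{\gamma:x\in\gamma\}$ via the two-stage Poisson construction coming from identity \eqref{Ch3Sec3: EqDecompLoopLocTimex}, identify $\ell^{x}_{\zeta}(X)$ as exponential with mean $G(x,x)$, use the Gamma-subordinator characterization of $PD(0,1)$ to see that $\{l_j-l_{j-1}\}$ reproduces the intensity $e^{-l/G(x,x)}\,dl/l$, and then argue the bridges are conditionally independent with laws $\mathbb{P}_x^{\tau^x_{l_j-l_{j-1}}}$. The paper states these same facts and closes by invoking identity \eqref{Ch3Sec3: EqDecompLoopLocTimex} together with ``the theory of marked Poisson ensembles''.

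There is however one step that is not rigorous as you have written it. You claim to ``apply the strong Markov property of $X$ successively at the stopping times $\tau^x_{l_{j-1}}$''. These are not stopping times: $l_j=\ell^x_\zeta(X)\sum_{i\le j}q_i$ is a functional of the \emph{entire} path (through $\ell^x_\zeta(X)$), so $\tau^x_{l_j}$ is anticipating. In particular, the post-$\tau^x_{l_{j-1}}$ path is \emph{not} an independent unconditioned copy of $X$ started at $x$ --- it is constrained to still accumulate local time $\ell^x_\zeta(X)-l_{j-1}$ at $x$ before dying --- and the event $\{\tau^x_{l_j-l_{j-1}}<\zeta\}$ for that ``copy'' is not automatic in the unconditioned sense you suggest, it is only automatic because of exactly this constraint. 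The conclusion you want is correct, but the correct justification is the excursion decomposition (marked Poisson process): given $\ell^x_\zeta(X)=L$, the excursions of $X$ from $x$ indexed by local time form a Poisson point process on $[0,L]$ with intensity the excursion measure at $x$; the bridges over the disjoint intervals $[l_{j-1},l_j]$ are deterministic functionals of the restrictions of this PPP to disjoint intervals, hence independent, and by translation invariance of the PPP each one has the law of the bridge over $[0,l_j-l_{j-1}]$; combined with the independence of $L$ from the excursion PPP (memorylessness of the exponential killing in local time), this identifies that law as $\mathbb{P}_x^{\tau^x_{l_j-l_{j-1}}}$. This is precisely what the paper's appeal to ``the theory of marked Poisson ensembles'' encodes, and it is the step you should substitute for the strong-Markov invocation.
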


\begin{proof}
According to Corollary \ref{Ch3Sec3: CorLoopLocTimeBridge}, 
$(\ell^{x}(\gamma))_{\gamma\in\mathcal{L}_{\alpha, L},\gamma~\text{visits}~x}$ is a Poisson ensemble of intensity 
$e^{-\frac{l}{G(x,x)}}\frac{dl}{l}$. Thus, $\widehat{\mathcal{L}}^{x}_{\alpha, L}$ is an exponential r.v. with mean $G(x,x)$ and has the same law as $\ell^{x}_{\zeta}(X)$. Moreover, the Poisson ensemble 
$(\ell^{x}(\gamma))_{\gamma\in\mathcal{L}_{\alpha, L},\gamma~\text{visits}~x}$ has, up to reordering, the same law as $(l_{j}-l_{j-1})_{j\geq 0}$. Almost surely, $l\mapsto\tau^{x}_{l}$ does not jump at any $l_{j}$. Conditional on $(l_{j})_{j\geq 0}$, $((X_{t})_{\tau_{l_{j-1}}^{x}\leq t\leq \tau_{l_{j}}^{x}})_{j\geq 0}$ is an independent family of bridges and $(X_{t})_{\tau_{l_{j-1}}^{x}\leq t\leq \tau_{l_{j}}^{x}}$ has the same law as $(X_{t})_{0\leq t\leq \tau_{l_{j}-l_{j-1}}^{x}}$. We conclude using identity \eqref{Ch3Sec3: EqDecompLoopLocTimex} and the theory of marked Poisson ensembles.
\end{proof}

Assume that $L$ is the generator of a transient diffusion. Let $x\in I$ and 
let $(X_{t})_{0\leq t<\zeta}$ be a sample path starting from $x$ of the diffusion corresponding to $L$. We will describe two different ways to slice $(X_{t})_{0\leq t<\zeta}$ so as to obtain the loops
\begin{displaymath}
\mathcal{L}_{1,L}\cap\lbrace\gamma\in\mathfrak{L}^{\ast}\vert
\gamma([0,T(\gamma)])\cap[X(0),X(\zeta^{-})](\text{or}~[X(\zeta^{-}),X(0)])\neq\emptyset\rbrace.
\end{displaymath}
The first method corresponds to the "loop-erasure procedure" applied to $(X_{t})_{0\leq t<\zeta}$ and the second to the "loop-erasure procedure" applied to the time-reversed path $(X_{\zeta-t})_{0<t\leq\zeta}$. Let $\widehat{T}_{x}$ be the last time $(X_{t})_{0\leq t<\zeta}$ visits $x$. Let $\widetilde{T}$ be the first time $X$ hits 
$X_{\zeta^{-}}$. If $X_{\zeta^{-}}\in\partial I$, then $\widetilde{T}=\zeta$. Let $(q_{j})_{j\in\mathbb{N}}$ be a Poisson-Dirichlet partition $PD(0,1)$ of $[0,1]$, independent from $X$. The first method of decomposition is the following:
\begin{itemize}
\item The path $(X_{t})_{0\leq t\leq\widehat{T}_{x}}$ is decomposed into bridges $((X_{t})_{\tau_{l_{j-1}}^{x}\leq t\leq \tau_{l_{j}}^{x}})_{j\geq 0}$ from $x$ to $x$ by applying the Poisson-Dirichlet partition $(q_{j})_{j\in\mathbb{N}}$ to 
$\ell^{x}_{\zeta}(X)$, as described in Proposition \ref{Ch5Sec3: PropDirichletSlice}.
\item Given the path $(X_{\widehat{T}_{x}+t})_{0\leq t<\zeta-\widehat{T}_{x}}$, if $X_{\zeta^{-}}<x$, we define
\begin{multline*}
\mathfrak{b}^{+}:=\Big\lbrace t \in [0,\zeta-\widehat{T}_{x})\vert
X_{\widehat{T}_{x}+t}=\sup_{s\in[t,\zeta-\widehat{T}_{x})}X_{\widehat{T}_{x}+s}\\\text{and}~\exists
\varepsilon\in(0,t)~\text{s.t.}~\forall s\in(t-\varepsilon,t), X_{\widehat{T}_{x}+s}<X_{\widehat{T}_{x}+t} \Big\rbrace.
\end{multline*}
$\mathfrak{b}^{+}$ is countable, and we define on $\mathfrak{b}^{+}$ the map $b^{-}$:
\begin{displaymath}
b^{-}(t):=\sup\big\lbrace s\in[0,t)\vert X_{\widehat{T}_{x}+s}=X_{\hat{T}_{x}+t}\big\rbrace.
\end{displaymath}
$((X_{\widehat{T}_{x}+b^{-}(t)+s})_{0\leq s \leq t-b^{-}(t)})_{t\in \mathfrak{b}^{+}}$ is the family of negative excursions of the path 
$(X_{\widehat{T}_{x}+t})_{0\leq t<\zeta-\widehat{T}_{x}}$ below
$(\sup_{[\widehat{T}_{x}+t,\zeta)}X)_{0\leq t<\zeta-\widehat{T}_{x}}$. If $X_{\zeta^{-}}>x$, then
\begin{multline*}
\mathfrak{b}^{+}:=\Big\lbrace t\in[0,\zeta-\widehat{T}_{x})\vert
X_{\widehat{T}_{x}+t}=\inf_{s\in[t,\zeta
-\widehat{T}_{x})}X_{\widehat{T}_{x}+s}\\\text{and}~\exists
\varepsilon\in(0,t)~\text{s.t.}~\forall s\in(t-\varepsilon,t), X_{\widehat{T}_{x}+s}>X_{\widehat{T}_{x}+t}\Big\rbrace.
\end{multline*}
We define on $\mathfrak{b}^{+}$ the map $b^{-}$:
\begin{displaymath}
b^{-}(t):=\sup\big\lbrace s\in[0,t)\vert X_{\widehat{T}_{x}+s}=X_{\widehat{T}_{x}+t}\big\rbrace.
\end{displaymath}
$((X_{\widehat{T}_{x}+b^{-}(t)+s})_{0\leq s \leq t-b^{-}(t)})_{t\in \mathfrak{b}^{+}}$ are the positive excursions of $(X_{\widehat{T}_{x}+t})_{0\leq t<\zeta-\widehat{T}_{x}}$ above $(\inf_{[\widehat{T}_{x}+t,\zeta)}X)_{0\leq t<\zeta-\widehat{T}_{x}}$.
\item We denote $\mathscr{L}^{1}((X_{t})_{0\leq t<\zeta})$ the set of loops 
\begin{displaymath}
\big\lbrace
(X_{\tau_{l_{j-1}}^{x}+s})_{0\leq s\leq \tau_{l_{j}}^{x}-\tau_{l_{j-1}}^{x}}\vert j\geq 0\big\rbrace\cup
\big\lbrace(X_{\widehat{T}_{x}+b^{-}(t)+s})_{0\leq s \leq t-b^{-}(t)}\vert t\in\mathfrak{b}^{+}\big\rbrace,
\end{displaymath}
where the loops are considered to be unrooted.
\end{itemize}
The second method of decomposition is the following:
\begin{itemize}
\item If $X_{\zeta^{-}}<x$, we define
\begin{displaymath}
\mathfrak{b}^{-}:=\Big\lbrace t\in[0,\widetilde{T})\vert X_{t}=\inf_{[0,t]}X~\text{and}~\exists\varepsilon>0~\text{s.t.}~\forall s\in (t,t+\varepsilon), X_{s}>X_{t}\Big\rbrace.
\end{displaymath}
On $\mathfrak{b}^{-}$ we define the map $b^{+}$:
\begin{displaymath}
b^{+}(t):=\inf\lbrace s\in (t,\widetilde{T})\vert X_{s}=X_{t}\rbrace.
\end{displaymath}
$((X_{t+s})_{0\leq s\leq b^{+}(t)-t})_{t\in \mathfrak{b}^{-}}$ are the positive excursions of the path $(X_{t})_{0\leq t<\widetilde{T}}$ above 
$(\inf_{[0,t]}X)_{0\leq t\leq \widetilde{T}}$. This is exactly the decomposition described in the previous Section \ref{Ch5Sec2}. If $X_{\zeta^{-}}>x$, then
\begin{displaymath}
\mathfrak{b}^{-}:=\Big\lbrace t\in[0,\widetilde{T})\vert X_{t}=\sup_{[0,t]}X~\text{and}~\exists\varepsilon>0~\text{s.t.}~\forall s\in (t,t+\varepsilon), X_{s}<X_{t}\Big\rbrace.
\end{displaymath}
The map $b^{+}$ defined on $\mathfrak{b}^{-}$ is
\begin{displaymath}
b^{+}(t):=\inf\lbrace s\in (t,\widetilde{T})\vert X_{s}=X_{t}\rbrace.
\end{displaymath}
$((X_{t+s})_{0\leq s\leq b^{+}(t)-t})_{t\in \mathfrak{b}^{-}}$ are the negative excursions of the path $(X_{t})_{0\leq t<\widetilde{T}}$ below 
$(\sup_{[0,t]}X)_{0\leq t\leq \widetilde{T}}$.
\item If $\widetilde{T}<\zeta$, we introduce:
\begin{displaymath}
\tilde{l}_{j}:=\ell_{\zeta}^{X_{\zeta^{-}}}(X)\sum_{i=0}^{j}q_{i},
\end{displaymath}
and
\begin{displaymath}
\tau_{\tilde{l}_{j}}:=\inf\lbrace t\in[\widetilde{T},\zeta)\vert\ell_{t}^{X_{\zeta^{-}}}(X)>\tilde{l}_{j}\rbrace.
\end{displaymath}
We decompose the path $(X_{t})_{\widetilde{T}\leq t<\zeta}$ into bridges
$((X_{t})_{\tau_{\tilde{l}_{j-1}}\leq t\leq \tau_{\tilde{l}_{j}}})_{j\geq 0}$
from $X_{\zeta^{-}}$ to $X_{\zeta^{-}}$.
\item We denote $\mathscr{L}^{2}((X_{t})_{0\leq t<\zeta})$ the set of loops 
\begin{displaymath}
\big\lbrace(X_{t+s})_{0\leq s \leq b^{+}(t)-t}\vert t\in\mathfrak{b}^{-}\big\rbrace\cup\big\lbrace
(X_{\tau_{\tilde{l}_{j-1}}+s})_{0\leq s\leq \tau_{\tilde{l}_{j}}-\tau_{\tilde{l}_{j-1}}}\vert j\geq 0\big\rbrace,
\end{displaymath}
where the loops are considered to be unrooted.
\end{itemize}
The loops in $\mathscr{L}^{1}((X_{t})_{0\leq t<\zeta})$ and $\mathscr{L}^{2}((X_{t})_{0\leq t<\zeta})$ are not the same but follow the same law.

\begin{prop}
\label{Ch5Sec3: PropTwoDecomp}
$\mathscr{L}^{1}((X_{t})_{0\leq t<\zeta})$ and $\mathscr{L}^{2}((X_{t})_{0\leq t<\zeta})$, considered as collections of unrooted loops, have the same law. Let $\mathcal{L}_{1,L}$ be a Poisson ensemble of loops independent from $X_{\zeta^{-}}$. Then $\mathscr{L}^{1}((X_{t})_{0\leq t<\zeta})$ and
$\mathscr{L}^{2}((X_{t})_{0\leq t<\zeta})$ have the same law as
\begin{equation}
\label{Ch5Sec3: EqLoopsTwoPoints}
\mathcal{L}_{1,L}\cap\lbrace\gamma\in\mathfrak{L}^{\ast}\vert
\gamma([0,T(\gamma)])\cap[X(0),X(\zeta^{-})]~(\text{or}~[X(\zeta^{-}),X(0)])
\neq\emptyset\rbrace.
\end{equation}
\end{prop}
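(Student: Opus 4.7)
The plan is to prove that both $\mathscr{L}^{1}$ and $\mathscr{L}^{2}$ coincide in law with the random set of loops appearing in \eqref{Ch5Sec3: EqLoopsTwoPoints}; the equality in law between $\mathscr{L}^{1}$ and $\mathscr{L}^{2}$ is then automatic. By the reflection symmetry in corollary \ref{Ch3Sec5: CorInvRefBM} it suffices to work on the event $\{X_{\zeta^{-}}<x\}$ and to treat $\mathscr{L}^{2}$ first; the case of $\mathscr{L}^{1}$ then follows by an entirely symmetric argument in reversed time, with the roles of $X(0)=x$ and $X_{\zeta^{-}}$ interchanged (noting that under time reversal $\mathscr{L}^{1}(X)$ becomes $\mathscr{L}^{2}$ applied to the reversed Markovian path).

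I would first disintegrate according to the value $y$ of $Y:=X_{\zeta^{-}}$, using the formula $P_{x}(X_{\zeta^{-}}\in dy)=G_{L}(x,y)m(y)\kappa(dy)$ (plus the accessible boundary contribution) from section \ref{Ch2Sec2}. Conditionally on $Y=y$, the strong Markov property at the first hitting time $\widetilde{T}=T_{y}$ splits the path into two conditionally independent pieces: the pre-$\widetilde{T}$ piece is the $L$-diffusion from $x$ conditioned on $\{T_{y}<\zeta\}$, which by the standard $h$-transform calculation using $P_{z}(T_{y}<\zeta)=u_{\downarrow}(z)/u_{\downarrow}(y)$ for $z\geq y$ is exactly an $\widetilde{L}$-diffusion from $x$ stopped at $y$ with $\widetilde{L}:=Conj(u_{\downarrow},L)$; the post-$\widetilde{T}$ piece is an $L$-diffusion from $y$ further conditioned on dying at $y$.

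For the excursion part: by the identification above and proposition \ref{Ch5Sec2: PropGeneralContour}, $(X_{t})_{0\leq t\leq\widetilde{T}}$ has, conditionally on $Y=y$, the same law as $\xi^{(x)}_{1,L}$ stopped at its first hitting time of $y$. Proposition \ref{Ch5Sec1: PropContinuity} then identifies the excursions of this path above its running minimum with the loops of $\mathcal{L}_{1,L}$ whose minimum belongs to $(y,x]$. For the bridge part: I will verify that proposition \ref{Ch5Sec3: PropDirichletSlice} remains valid after conditioning on $X_{\zeta^{-}}=y$. The key point is that, under this conditioning, the joint law of $\ell^{y}_{\zeta}(X)$ and of the path $(X_{\widetilde{T}+t})_{0\leq t\leq\tau^{y}_{\ell^{y}_{\zeta}(X)}}$ given this total local time is the same as under the unconditional law $\mathbb{P}_{y}$. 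This is a memoryless-exponential argument: the end of the local-time clock at $y$ is the first occurrence in a superposition of two independent Poisson mechanisms (killing at $y$ versus permanent departure from $y$), and conditioning on the type of this first occurrence leaves both its time and the history of returning excursions strictly before it invariant in law. Consequently the Poisson-Dirichlet bridge decomposition of the conditioned post-$\widetilde{T}$ path produces, as in proposition \ref{Ch5Sec3: PropDirichletSlice}, the loops of $\mathcal{L}_{1,L}$ that visit $y$.

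Combining the two pieces, conditionally on $Y=y$, $\mathscr{L}^{2}$ has the law of the disjoint union of (the loops of $\mathcal{L}_{1,L}$ with $\min\gamma\in(y,x]$) and (the loops visiting $y$), which is precisely the set of loops of $\mathcal{L}_{1,L}$ meeting $[y,x]$; these two parts are Poisson-independent both on the left by the conditional independence coming from the strong Markov property, and on the right by disjointness of the corresponding events. Integrating over $y$ against the law of $X_{\zeta^{-}}$ yields the claim for $\mathscr{L}^{2}$. The main obstacle is the memoryless-exponential verification that proposition \ref{Ch5Sec3: PropDirichletSlice} survives the conditioning on $X_{\zeta^{-}}=y$: the case $y\in\partial I$ reduces to $\widetilde{T}=\zeta$ with no bridge part; the atomic case $\kappa(\{y\})>0$ is where the memoryless argument is most transparent; the general case $y\in I$ requires a careful disintegration with respect to the killing measure $\kappa$ before the memoryless computation can be carried out.
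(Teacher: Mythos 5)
Your strategy coincides with the paper's: disintegrate over $y=X_{\zeta^{-}}$, split at $\widetilde{T}$, feed the pre-$\widetilde{T}$ piece to proposition \ref{Ch5Sec2: PropGeneralContour} and the post-$\widetilde{T}$ piece to proposition \ref{Ch5Sec3: PropDirichletSlice}, and obtain $\mathscr{L}^{1}$ by time reversal. Two places, however, need repair.

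The obstacle you flag --- verifying that proposition \ref{Ch5Sec3: PropDirichletSlice} survives the conditioning on $X_{\zeta^{-}}=y$ --- is avoidable. Lemma \ref{Ch3Sec2: LemPathUntilKilling} already shows that conditionally on $X_{\zeta^{-}}=y$ the full path is distributed as $\frac{1}{G(x,y)}\mu^{x,y}_{L}$, and the strong-Markov structure of $\mu^{x,y}$, coming from the local-time representation of proposition \ref{Ch3Sec2: PropLocTimeDesinteg}, splits it at $T_{y}$ into an independent first-passage bridge and the measure $\frac{1}{G(y,y)}\mu^{y,y}_{L}$. The latter is precisely the law on which the construction of proposition \ref{Ch5Sec3: PropDirichletSlice} operates (it only ever looks at the path up to the last visit to $y$), so there is no survival-under-conditioning step to check. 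Your ``memoryless-exponential'' computation would re-derive this fact, but it is a detour, and the case where $y$ lies in the continuous part of $\kappa$ --- which you correctly flag as delicate --- is exactly where the direct route through lemma \ref{Ch3Sec2: LemPathUntilKilling} and the Markov property of $\mu^{x,y}$ is cleaner and already available.

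The more substantive gap is in the passage from $\mathscr{L}^{2}$ to $\mathscr{L}^{1}$. The two ends of the path are not in ``entirely symmetric'' roles: $x$ is a deterministic interior point, whereas $X_{\zeta^{-}}$ may land on $\partial I$ with positive probability. Conditionally on $X_{\zeta^{-}}=\inf I$, the time-reversed path does not start inside the state space, so the argument given for $\mathscr{L}^{2}$ does not apply verbatim. The paper handles this by noting that conditionally on $X_{\zeta^{-}}=\inf I$ the path is an increasing limit of $\frac{1}{G(x,y)}\mu^{x,y}$-distributed paths as $y\to\inf I$, and passing to the limit in the loop decomposition; you need a version of this limiting argument. (A smaller point: the reduction to $\lbrace X_{\zeta^{-}}<x\rbrace$ via corollary \ref{Ch3Sec5: CorInvRefBM} is a misattribution --- that corollary reflects each loop about its own midrange and is stated for constant killing; the symmetry you want is the orientation-reversing scale change $A(x)=-x$, which the paper invokes tacitly with ``symmetrically''.)
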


\begin{proof}
First, we will prove that $\mathscr{L}^{2}((X_{t})_{0\leq t<\zeta})$ has the same law as \eqref{Ch5Sec3: EqLoopsTwoPoints}. If $\mathbb{P}(X_{\zeta^{-}}=\inf I)>0$, then conditional on $X_{\zeta^{-}}=\inf I$, $(X_{t})_{0\leq t<\zeta}$ has the law of a sample path corresponding to the generator $\operatorname{Conj}(u_{\downarrow},L)$. If 
$y\in I\cap(-\infty,x]$ and $y$ is in the support of $\kappa$ (the killing measure in $L$), then, conditional on $X_{\zeta^{-}}=y$, 
$(X_{t})_{0\leq t<\zeta}$ is distributed according the measure $\frac{1}{G(x,y)}\mu_{L}^{x,y}$ (Property 
\ref{Ch3Sec2: PropertyBridges} (i)). According to Lemma 
\ref{Ch3Sec2: LemPathUntilKilling}, $(X_{t})_{0\leq t\leq\widetilde{T}}$ and 
$(X_{\widetilde{T}+t})_{0\leq t\leq\zeta-\tilde{T}}$ are independent conditional on $X_{\zeta^{-}}=y$, $(X_{t})_{0\leq t\leq\widetilde{T}}$ having the law of a sample path corresponding to the generator $\operatorname{Conj}(u_{\downarrow},L)$, run until hitting $y$, and
$(X_{\widetilde{T}+t})_{0\leq t\leq\zeta-\tilde{T}}$ following the law $\frac{1}{G(y,y)}\mu_{L}^{y,y}$. From Proposition \ref{Ch5Sec2: PropGeneralContour} and 
\ref{Ch5Sec3: PropDirichletSlice} follows that $\mathscr{L}^{2}((X_{t})_{0\leq t<\zeta})$ and \eqref{Ch5Sec3: EqLoopsTwoPoints} have the same law on the event 
$X_{\zeta^{-}}\leq x$. Symmetrically, this also true on the event $X_{\zeta^{-}}\geq x$.

The decomposition $\mathscr{L}^{1}((X_{t})_{0\leq t<\zeta})$ is obtained by first applying the decomposition $\mathscr{L}^{2}$ to the time-reversed path 
$(X_{\zeta-t})_{0<t\leq\zeta}$ and then applying again the time-reversal to the obtained loops. The law of the loops in \eqref{Ch5Sec3: EqLoopsTwoPoints} is invariant by time-reversal. Let $y\in I$, $y$ in the support of $\kappa$. Conditional on $X_{\zeta^{-}}=y$, the law of $(X_{\zeta-t})_{0<t\leq\zeta}$ is 
$\frac{1}{G(x,y)}\mu^{y,x}$. So, applying the decomposition $\mathscr{L}^{2}$ to the path $(X_{\zeta-t})_{0<t\leq\zeta}$ conditioned on $X_{\zeta^{-}}=y$ gives
\begin{displaymath}
\mathcal{L}_{1,L}\cap\lbrace\gamma\in\mathfrak{L}^{\ast}\vert
\gamma([0,T(\gamma)])\cap[y,x]~(\text{or}~[x,y])\neq\emptyset\rbrace.
\end{displaymath}
If $\mathbb{P}(X_{\zeta^{-}}=\inf I)>0$, then, conditional on $X_{\zeta^{-}}=\inf I$, the path $(X_{t})_{0\leq t<\zeta}$ is a limit as $y\rightarrow \inf I$ of paths following the law $\frac{1}{G(x,y)}\mu^{x,y}$ (i.e. the latter are restrictions of the former). Thus, conditional on $X_{\zeta^{-}}=\inf I$, $\mathscr{L}^{1}((X_{t})_{0\leq t<\zeta})$ is an increasing limit as $y\rightarrow \inf I$ of 
\begin{displaymath}
\mathcal{L}_{1,L}\cap\lbrace\gamma\in\mathfrak{L}^{\ast}\vert
\gamma([0,T(\gamma)])\cap[y,x]\neq\emptyset\rbrace,
\end{displaymath}
which is 
\begin{displaymath}
\mathcal{L}_{1,L}\cap\lbrace\gamma\in\mathfrak{L}^{\ast}\vert
\gamma([0,T(\gamma)])\cap[\inf I,x]\neq\emptyset\rbrace.
\end{displaymath}
Similar is true conditional on $X_{\zeta^{-}}=\sup I$.
\end{proof}

\chapter{Wilson's algorithm in dimension one}
\label{Ch6}

\section{Description of the algorithm}
\label{Ch6Sec1}

Given a finite undirected connected graph $\mathbb{G}=(V,E)$ and $C$ a positive weight function on its edges, which we interpret as an electrical network with conductance $C$, a uniform spanning tree of the weighted graph $\mathbb{G}$ is a random spanning tree with the occurrence probability of a spanning tree $\mathcal{T}$ proportional to 
\begin{displaymath}
\prod_{e~\text{edge of}~\mathcal{T}}C(e).
\end{displaymath}
The edges belonging to the uniform spanning tree are a determinantal point process (transfer current theorem). In \cite{Wilson1996UST} Wilson showed how to sample a uniform spanning tree using successive random walks to nearest neighbors, with transition probabilities proportional to $C$, starting from different vertices, and erasing the loops created by these random walks. The edges left after loop-erasure form a 
uniform spanning tree. This is known as Wilson's algorithm. See \cite{BenjaminiLyonsPeresSchramm2001UnifSpanFor, Lawler2018Topics} for a review. In \cite{LeJan2011Loops}, chapter $8$, Le Jan shows that the loops erased during the execution of Wilson's algorithm are related to the Poisson ensemble of Markov loops of intensity parameter $\alpha=1$. 

In \cite{LeJan2011Loops}, Chapter $10$, Le Jan suggested that Wilson's algorithm can be adapted to the situation where the random walk on a graph is replaced by a transient diffusion on a subinterval $I$ of $\mathbb{R}$. In this section we will describe the algorithm in the latter setting. The algorithm returns on one hand a sequence of one-dimensional paths which can be decomposed into a Poisson ensemble of Markov loops of parameter $1$ (section \ref{Ch6Sec2}), and on the other hand a pair of interwoven determinantal point processes on $I$, which may be interpreted as some kind of 
uniform spanning tree. In Section \ref{Ch6Sec3} we will derive the law of this pair of determinantal point processes in the setting where the underlying is a Brownian motion on $\mathbb{R}$ with a killing measure. In Section \ref{Ch6Sec4} we will give without proof the law in general case as it follows directly from the Brownian case.

Let $I$ be a subinterval of $\mathbb{R}$ and $L$ a generator of a transient diffusion on $I$ of form \ref{Ch2Sec2: EqGenerator}. Let $\kappa$ be the killing measure in $L$, which may be zero. Let $(x_{n})_{n\geq 1}$ be a sequence of pairwise distinct points in $I$ which is dense in $I$. Let 
$\left(\big(X^{(x_{n})}_{t}\big)_{0\leq t<\zeta_{n}}\right)_{n\geq 1}$ be a sequence of independent sample paths of the diffusion of generator $L$, with starting points
$X^{(x_{n})}_{0}=x_{n}$. In the first step of  Wilson's algorithm we will recursively define sequences $(T_{n})_{n\geq 1}$, $(\mathcal{Y}_{n})_{n\geq 1}$ and $(\mathcal{J})_{n\geq 1}$ where $T_{n}$ is a killing time for $X^{(x_{n})}$, $\mathcal{Y}_{n}$ is a finite subset of $\operatorname{Supp}(\kappa)\cup\partial I$ and $\mathcal{J}_{n}$ is a finite set of disjoint compact subintervals of $\bar{I}$, some of which may be reduced to one point:
\begin{itemize}
\item $T_{1}:=\zeta_{1}$, 
$\mathcal{Y}_{1}:=\big\lbrace X^{(x_{1})}_{T_{1}^{-}}\big\rbrace$,
$\mathcal{J}_{1}:=\big\lbrace\big[x_{1},X^{(x_{1})}_{T_{1}^{-}}\big]\big\rbrace$
\big(or $\big\lbrace\big[B^{(x_{1})}_{T_{1}^{-}},x_{1}\big]\big\rbrace$\big).
\item Assume that $\mathcal{Y}_{n}$ and $\mathcal{J}_{n}$ are constructed. 
If $x_{n+1}\in\bigcup_{J\subseteq\mathcal{J}_{n}}J$, then we set $T_{n+1}:=0$, ${\mathcal{Y}_{n+1}:=\mathcal{Y}_{n}}$ and $\mathcal{J}_{n+1}:=\mathcal{J}_{n}$.
If $x_{n+1}\not\in\bigcup_{J\subseteq\mathcal{J}_{n}}J$, then we define
\begin{displaymath}
T_{n+1}:=\min\Big(\zeta_{n},\inf\Big\lbrace t\geq 0\vert X^{(x_{n+1})}_{t}\in\bigcup_{J\subseteq\mathcal{J}_{n}}J
\Big\rbrace\Big). 
\end{displaymath}
If $X^{(x_{n+1})}_{T_{n+1}^{-}}\in\bigcup_{J\subseteq\mathcal{J}_{n}}J$, then
there is a unique $J\in\mathcal{J}_{n}$ such that $X^{(x_{n+1})}_{T_{n+1}^{-}}\in J$. In this case, we set $\mathcal{Y}_{n+1}:=\mathcal{Y}_{n}$ and
\begin{multline*}
\mathcal{J}_{n+1}:=(\mathcal{J}_{n}\setminus\lbrace J\rbrace)\cup\big\lbrace J\cup\big[x_{n+1},X^{(x_{n+1})}_{T_{n+1}^{-}}\big]\big\rbrace
\\\Big(or~(\mathcal{J}_{n}\setminus\lbrace J\rbrace)\cup\big\lbrace J\cup\big[X^{(x_{n+1})}_{T_{n+1}^{-}},x_{n+1}\big]\big\rbrace\Big).
\end{multline*}
If $X^{(x_{n+1})}_{T_{n+1}^{-}}\not\in\bigcup_{J\subseteq\mathcal{J}_{n}}J$, then we set $\mathcal{Y}_{n+1}:=\mathcal{Y}_{n}\cup\big\lbrace X^{(x_{n+1})}_{T_{n+1}^{-}}\big\rbrace$ and
\begin{displaymath}
\mathcal{J}_{n+1}:=\mathcal{J}_{n}\cup\big\lbrace \big[x_{n+1},X^{(x_{n+1})}_{T_{n+1}^{-}}\big]\big\rbrace
~\Big(~\text{or}~\mathcal{J}_{n}\cup\big\lbrace \big[X^{(x_{n+1})}_{T_{n+1}^{-}},x_{n+1}\big]\big\rbrace\Big).
\end{displaymath}
\end{itemize}

It is immediate to check by induction the following facts: 
\begin{itemize}
\item $\mathcal{Y}_{n}\subseteq \operatorname{Supp}(\kappa)\cup \partial I$. More precisely,
$\mathcal{Y}_{n}\subseteq \operatorname{Supp}(\kappa)\cup \big\lbrace y\in\partial I\vert 
\mathbb{P}\big(X^{(x_{n})}_{\zeta_{n}^{-}}=y\big)>0\big\rbrace$.
\item The intervals in $\mathcal{J}_{n}$ are pairwise disjoint.
\item  $\# \mathcal{Y}_{n} = \# \mathcal{J}_{n}\leq n$.
\item For every $y\in\mathcal{Y}_{n}$, there is one single $J\in\mathcal{J}_{n}$ such that $y\in J$.
\item $\mathcal{Y}_{n}\subseteq \mathcal{Y}_{n+1}$.
\item If $n\leq n'$, then for every $J\in\mathcal{J}_{n}$ there is one single $J'\in \mathcal{J}_{n'}$ such that $J\subseteq J'$. We denote $\imath_{n,n'}$ the corresponding map from $\mathcal{J}_{n}$ to $\mathcal{J}_{n'}$. The map $\imath_{n,n'}$ is injective. Trivially, for $n\leq n'\leq n''$, $\imath_{n,n''}=\imath_{n',n''}\circ\imath_{n,n'}$
\item For any $J\in \mathcal{J}_{n}$, $\partial J\subseteq \mathcal{Y}_{n}\cup \lbrace x_{1},\dots,x_{n}\rbrace$.
\end{itemize}

In the second step of Wilson's algorithm we will take the limit of the sequence $((\mathcal{Y}_{n},\mathcal{J}_{n}))_{n\geq 1}$ and define 
$(\mathcal{Y}_{\infty},\mathcal{J}_{\infty})$ as follows:
\begin{displaymath}
\mathcal{Y}_{\infty}:=\bigcup_{n\geq 1}\mathcal{Y}_{n},
\qquad
\mathcal{J}_{\infty}:=\bigcup_{n\geq 1}\bigcup_{J\in \mathcal{J}_{n}}\bigg\lbrace \bigcup_{n'\geq n}\imath_{n,n'}(J)\bigg\rbrace.
\end{displaymath}
$\mathcal{Y}_{\infty}$ is a finite or countable subset of $\operatorname{Supp}(\kappa)\cup\partial I$. $\mathcal{J}_{\infty}$ is a finite of countable set of disjoint subintervals of $\bar{I}$, but these subintervals are not necessarily closed or bounded. For any 
$y\in\mathcal{Y}_{\infty}$, there is a single $J\in \mathcal{J}_{\infty}$ such that 
$y\in J$, and this induces a bijection between $\mathcal{Y}_{\infty}$ and $\mathcal{J}_{\infty}$. For any $J\in \mathcal{J}_{n}$, there is a single $J'\in \mathcal{J}_{\infty}$ such that $J\subseteq J'$. We define $\imath_{n,\infty}(J)=J'$. $\imath_{n,\infty}$ is injective. Trivially, for $n\leq n'$, $\imath_{n,\infty}=\imath_{n',\infty}\circ\imath_{n,n'}$.
We will sometimes write $\mathcal{Y}_{n}(x_{1},\dots,x_{n})$, $\mathcal{J}_{n}(x_{1},\dots,x_{n})$, $\mathcal{Y}_{\infty}((x_{n})_{n\geq 1})$ and $\mathcal{J}_{\infty}((x_{n})_{n\geq 1})$ in order to emphasize the dependence on the starting points $(x_{n})_{n\geq 1}$. In Sections \ref{Ch6Sec3} and \ref{Ch6Sec4} we will see that:
\begin{itemize}
\item The set $\mathcal{Y}_{\infty}$ is a.s. discrete.
\item A.s., for any intervals $J\in\mathcal{J}_{\infty}$, $J\setminus \partial I$ is open.
\item The subset $I\setminus \bigcup_{J\in \mathcal{J}_{\infty}}J$ is a.s. discrete.
\item The law of $(\mathcal{Y}_{\infty}, \mathcal{J}_{\infty})$ does not depend on the choice of starting points $(x_{n})_{n\geq 1}$.
\end{itemize}
We introduce $\mathcal{Z}_{\infty}:=I\setminus\Big(
\bigcup_{J\in\mathcal{J}_{\infty}}J\Big)$.
We will further see that $\mathcal{Y}_{\infty}$ and $\mathcal{Z}_{\infty}$ are determinantal point processes.

The couple $(\mathcal{Y}_{\infty}, \mathcal{J}_{\infty})$ may be interpreted as a spanning tree. Consider the following undirected "graph": Its set of "vertices" is $\bar{I}\cup\lbrace \dagger\rbrace$, where $\dagger$ is a cemetery point outside of $\bar{I}$. Ever point $x\in I$ is connected by an "edge" to its two infinitesimal neighbors $x-dx$ and $x+dx$. Every point in $\operatorname{Supp}(\kappa)$ is connected by an "edge" to $\dagger$. Finally, any point in $y\in\partial I$ such that $\mathbb{P}\big(X^{(x_{n})}_{\zeta_{n}^{-}}=y\big)>0$ is connected by an "edge" to $\dagger$. On this "graph" $(\mathcal{Y}_{\infty}, \mathcal{J}_{\infty})$ induces the following "spanning tree": 
Each point in $\bigcup_{J\in\mathcal{J}_{\infty}}J$ is connected to its infinitesimal neighbors in $I$ and $\mathcal{Z}_{\infty}$ represents "edges" on $I$ that are missing. Moreover, every point in $\mathcal{Y}_{\infty}$ is connected to $\dagger$.

There are two trivial cases in which $(\mathcal{Y}_{\infty}, \mathcal{J}_{\infty})$ is deterministic. In the first one $\kappa =0$ and $I$ has one single regular or exit boundary point $y$ characterized by $\mathbb{P}\big(X^{(x_{n})}_{\zeta_{n}^{-}}=y\big)>0$
(see \cite{Breiman1992Probability}, Chapter $16$, for the characterization of boundaries). Then, $\mathcal{Y}_{\infty}$ is made of this boundary point and $\mathcal{J}_{\infty}$ contains one single interval $I\cup \mathcal{Y}_{\infty}$. $\mathcal{Z}_{\infty}$ is empty. In the second case $I$ does not have regular or exit boundaries and $\kappa$ is proportional to a Dirac measure $c\delta_{y_{0}}$. Then, $\mathcal{Y}_{\infty}=\lbrace y_{0}\rbrace$ and $\mathcal{J}_{\infty}=\lbrace I\rbrace$. $\mathcal{Z}_{\infty}$ is again empty. In all other situation $\mathcal{Z}_{\infty}$ is non-empty and random. See Figure \ref{FigW1} for an illustration of $(\mathcal{Y}_{n}, \mathcal{J}_{n})$ for $1\leq n\leq 5$ and
Figure \ref{FigW2} for an illustration of $(\mathcal{Y}_{\infty}, \mathcal{Z}_{\infty})$.

\begin{figure}[H]
\centering{
\includegraphics[width=1\textwidth]{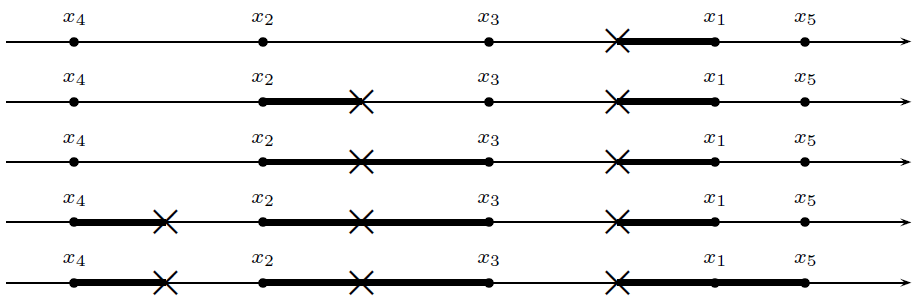}
}
\caption{Illustration of $((\mathcal{Y}_{n},\mathcal{J}_{n}))_{1\leq n\leq 5}$: x-dots represent the points of $\mathcal{Y}_{n}$
and thick lines the intervals in $\mathcal{J}_{n}$.}
\label{FigW1}
\end{figure}

\begin{figure}[H]
\centering{
\includegraphics[width=1\textwidth]{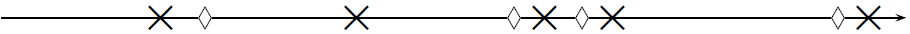}
}
\caption{Illustration of $(\mathcal{Y}_{\infty},\mathcal{J}_{\infty})$: x-dots represent the points of $\mathcal{Y}_{\infty}$ 
and diamonds the points of $\mathcal{Z}_{\infty}$.}
\label{FigW2}
\end{figure}

\section{The erased paths}
\label{Ch6Sec2}

During the execution of Wilson's algorithm we used the paths $\left(\big(X^{(x_{n})}_{t}\big)_{0\leq t<T_{n}}\right)_{n\geq 1}$. These paths can be further decomposed using the procedure described in Section \ref{Ch5Sec3}.

\begin{prop}
\label{Ch6Sec2: PropErasedLoops}
The family of unrooted loops
\begin{displaymath}
\bigcup_{n\geq 1} \mathscr{L}^{1}\left(\big(X^{(x_{n})}_{t}\big)_{0\leq t<T_{n}}\right)
\end{displaymath}
has the same law as the Poisson ensemble $\mathcal{L}_{1,L}$. Moreover it is independent from $(\mathcal{Y}_{\infty},\mathcal{J}_{\infty})$.
\end{prop}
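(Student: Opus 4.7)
My plan is to prove the proposition by induction on $n$, maintaining at each step an auxiliary Poisson ensemble on the still-unexplored region that couples with the algorithm. Writing $U_n := I \setminus \bigcup_{J \in \mathcal{J}_n} J$ and $\mathcal{L}^{\mathrm{ext}}_n := \bigcup_{k=1}^n \mathscr{L}^{1}\big((X^{(x_k)}_t)_{0\leq t < T_k}\big)$, the induction hypothesis $(H_n)$ is: on a suitably enlarged probability space, there exists a random collection of loops $\mathcal{L}^{(n)}$ such that $\mathcal{L}^{(n)}\sqcup\mathcal{L}^{\mathrm{ext}}_n$ has the law of $\mathcal{L}_{1,L}$; conditionally on $\sigma(X^{(x_1)},\dots,X^{(x_n)})$ (equivalently on $(\mathcal{Y}_n,\mathcal{J}_n,\mathcal{L}^{\mathrm{ext}}_n)$), $\mathcal{L}^{(n)}$ is a Poisson ensemble with intensity $\mu^{\ast}_{L_{\vert U_n}}$; and $\mathcal{L}^{(n)}$ is conditionally independent of the future paths $(X^{(x_k)})_{k>n}$.

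The base case $n=0$ is immediate: take $\mathcal{L}^{(0)}:=\mathcal{L}_{1,L}$ sampled independently of the paths. For the inductive step, if $x_{n+1}\in\bigcup_{J\in\mathcal{J}_n}J$ then $T_{n+1}=0$ and we set $\mathcal{L}^{(n+1)}:=\mathcal{L}^{(n)}$. Otherwise $x_{n+1}$ lies in some connected component $\widetilde{I}$ of $U_n$; by the strong Markov property, $(X^{(x_{n+1})}_t)_{0\leq t<T_{n+1}}$ is a sample of the diffusion with generator $L_{\vert\widetilde{I}}$ started at $x_{n+1}$, and by property \ref{Ch3Sec3: PropertyTrivialitiesLoops} (ii) the loops of $\mathcal{L}^{(n)}$ contained in $\widetilde{I}$ form, conditionally on $\mathcal{F}_n := \sigma(X^{(x_1)},\dots,X^{(x_n)})$, a Poisson ensemble of intensity $\mu^{\ast}_{L_{\vert\widetilde{I}}}$ independent of $X^{(x_{n+1})}$. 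Proposition \ref{Ch5Sec3: PropTwoDecomp} applied to this restricted diffusion asserts that, in law given $X^{(x_{n+1})}_{T_{n+1}^-}$, $\mathscr{L}^{1}((X^{(x_{n+1})}_t)_{0\leq t<T_{n+1}})$ coincides with the loops of an independent $\mu^{\ast}_{L_{\vert\widetilde{I}}}$-Poisson ensemble intersecting the newly-created interval $[x_{n+1},X^{(x_{n+1})}_{T_{n+1}^-}]$ (or the reverse). Using this conditional equality in distribution as a coupling transfer, we re-realize the restriction of $\mathcal{L}^{(n)}$ to $\widetilde{I}$ on a further enlargement so that its loops hitting the new interval are literally $\mathscr{L}^{1}((X^{(x_{n+1})}_t)_{0\leq t<T_{n+1}})$, without altering its conditional distribution. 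Defining $\mathcal{L}^{(n+1)}$ as $\mathcal{L}^{(n)}$ with these loops removed and invoking property \ref{Ch3Sec3: PropertyTrivialitiesLoops} (ii) once more establishes $(H_{n+1})$.

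For the passage to the limit, I would show that $\mathcal{L}^{(n)}$ decreases almost surely to the empty ensemble. Every point $x_n$ is, by construction, absorbed into some interval of $\mathcal{J}_n$, so $\bigcup_{n\geq 1}\bigcup_{J\in\mathcal{J}_n}J$ contains the dense sequence $(x_n)_{n\geq 1}$ and hence $U_\infty:=\bigcap_n U_n$ has empty interior in $I$. A non-degenerate element of $\mathfrak{L}^{\ast}$ has image a non-trivial compact subinterval of $I$ and therefore cannot be contained in a set of empty interior; since degenerate loops carry no $\mu^{\ast}_L$-mass, we conclude that $\mu^{\ast}_L(\{\gamma:\gamma\subseteq U_\infty\})=0$. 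Therefore $\mathcal{L}^{(0)}=\bigcup_{n\geq 1}\mathscr{L}^{1}((X^{(x_n)}_t)_{0\leq t<T_n})$ almost surely on the enlarged space, which, combined with the fact that $\mathcal{L}^{(0)}$ was constructed independently of the paths and $(\mathcal{Y}_\infty,\mathcal{J}_\infty)$ is a measurable function of the paths, yields both the distributional identity with $\mathcal{L}_{1,L}$ and the claimed independence from $(\mathcal{Y}_\infty,\mathcal{J}_\infty)$.

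The principal difficulty is in implementing the re-coupling at each inductive step: Proposition \ref{Ch5Sec3: PropTwoDecomp} provides only an equality in distribution, whereas we need a joint realization that preserves both the unconditional law of $\mathcal{L}^{(n)}$ and the marginal independence of the remaining paths. This is handled by realizing $\mathscr{L}^{1}(X)$ as an explicit measurable function of $X$ together with an independent Poisson--Dirichlet partition (via Propositions \ref{Ch5Sec2: PropGeneralContour} and \ref{Ch5Sec3: PropDirichletSlice}) and invoking a standard regular-conditional-distribution transfer theorem on Polish spaces.
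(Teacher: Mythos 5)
Your induction via Proposition~\ref{Ch5Sec3: PropTwoDecomp} is exactly the engine the paper uses, and your elementary observation that each $x_n$ is absorbed into $\bigcup_{J\in\mathcal{J}_n}J$, hence that this union is dense, is a nice self-contained way to justify the final reduction (the paper defers it to the discreteness of $\mathcal{Z}_\infty$). The genuine divergence is in the packaging: you try to construct an almost-sure coupling, where the paper establishes only equality of joint laws and then takes a limit in law.

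The coupling formulation has a gap that I do not think can be repaired as stated. At each inductive step you \emph{re-realize} $\mathcal{L}^{(n)}$ (via the transfer theorem) to $\hat{\mathcal{L}}^{(n)}$ so that its loops through the new interval are literally $\mathscr{L}^{1}\big((X^{(x_{n+1})}_t)\big)$, and you set $\mathcal{L}^{(n+1)}:=\hat{\mathcal{L}}^{(n)}\setminus\mathscr{L}^{1}$. As a result, the Poisson ensemble $\mathcal{L}^{(n)}\sqcup\mathcal{L}^{\mathrm{ext}}_n$ is a \emph{different} random variable at every $n$ (they all share the law of $\mathcal{L}_{1,L}$, but after re-realization $\hat{\mathcal{L}}^{(n)}\sqcup\mathcal{L}^{\mathrm{ext}}_n\neq\mathcal{L}^{(n)}\sqcup\mathcal{L}^{\mathrm{ext}}_n$ as random sets). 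In particular $\mathcal{L}^{\mathrm{ext}}_n$ is not a subset of $\mathcal{L}^{(0)}$, and the final a.s.~identity $\mathcal{L}^{(0)}=\bigcup_{n\geq 1}\mathscr{L}^{1}\big((X^{(x_n)}_t)\big)$ does not follow from your construction. This matters because it is precisely that a.s.~identity you invoke to transfer the independence of $\mathcal{L}^{(0)}$ from the paths to the independence of $\bigcup_n\mathscr{L}^{1}$ from $(\mathcal{Y}_\infty,\mathcal{J}_\infty)$. Note also that a fixed coupling (one with $\mathcal{L}^{(n)}\sqcup\mathcal{L}^{\mathrm{ext}}_n$ constant in $n$) cannot be achieved by re-realizing only the Poisson--Dirichlet partitions either, since $\mathscr{L}^{1}\big(X^{(x_{n+1})}\big)$ is constrained to be a decomposition of the actual path $X^{(x_{n+1})}$, whereas the loops of $\mathcal{L}^{(n)}$ that meet the new interval were sampled independently of that path.

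The repair is to drop the coupling and keep only what your $(H_n)$ implicitly establishes about joint laws: by induction, the triple $\big(\mathcal{Y}_n,\mathcal{J}_n,\mathcal{L}^{\mathrm{ext}}_n\big)$ has the same law as $\big(\mathcal{Y}_n,\mathcal{J}_n,\mathcal{L}_{1,L}\cap\{\gamma:\gamma([0,T(\gamma)])\cap\bigcup_{J\in\mathcal{J}_n}J\neq\emptyset\}\big)$ with $\mathcal{L}_{1,L}$ independent of the algorithm, then extend the first two slots to $\infty$ using the conditional independence of $(\mathcal{Y}_\infty,\mathcal{J}_\infty)$ from $(X^{(x_j)})_{j\leq n}$ given $(\mathcal{Y}_n,\mathcal{J}_n)$, and finally let $n\to\infty$ in the third slot (both sides increase monotonically, and your density observation shows the right-hand side saturates to all of $\mathcal{L}_{1,L}$). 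This yields both the distributional identity and the claimed independence, and is the route the paper itself takes.
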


\begin{proof}
Let $\mathcal{L}_{1,L}$ be a Poisson ensemble of loops independent from the family of paths $\left(\big(X^{(x_{n})}_{t}\big)_{0\leq t<\zeta_{n}}\right)_{n\geq 1}$. Using Proposition \ref{Ch5Sec3: PropTwoDecomp} and induction is it immediate to show that
the triple
\begin{displaymath}
\left(\mathcal{Y}_{n},\mathcal{J}_{n},\bigcup_{j=1}^{n} \mathscr{L}^{1}\left(\big(X^{(x_{j})}_{t}\big)_{0\leq t<T_{j}}\right)\right)
\end{displaymath}
has the same law as
\begin{displaymath}
\left(\mathcal{Y}_{n},\mathcal{J}_{n},
\Big\lbrace(\gamma(t))_{0\leq t\leq T(\gamma)}\in\mathcal{L}_{1,L}\vert
\gamma([0,T(\gamma)])\cap\bigcup_{J\in\mathcal{J}_{n}}J\neq\emptyset\Big\rbrace\right).
\end{displaymath}
Since $(\mathcal{Y}_{\infty},\mathcal{J}_{\infty})$ is by construction independent from
$\left(\big(X^{(x_{j})}_{t}\big)_{0\leq t<T_{j}}\right)_{1\leq j\leq n}$ conditional
on $(\mathcal{Y}_{n},\mathcal{J}_{n})$, we further get that the triple
\begin{displaymath}
\left(\mathcal{Y}_{\infty},\mathcal{J}_{\infty},\bigcup_{j=1}^{n} \mathscr{L}^{1}\left(\big(X^{(x_{j})}_{t}\big)_{0\leq t<T_{j}}\right)\right)
\end{displaymath}
has the same law as
\begin{displaymath}
\left(\mathcal{Y}_{\infty},\mathcal{J}_{\infty},
\Big\lbrace(\gamma(t))_{0\leq t\leq T(\gamma)}\in\mathcal{L}_{1,L}\vert
\gamma([0,T(\gamma)])\cap\bigcup_{J\in\mathcal{J}_{n}}J\neq\emptyset\Big\rbrace\right).
\end{displaymath}
Taking the limit of the third component as $n$ tends to infinity we get that
\begin{displaymath}
\left(\mathcal{Y}_{\infty},\mathcal{J}_{\infty},\bigcup_{j\geq 1} \mathscr{L}^{1}\left(\big(X^{(x_{j})}_{t}\big)_{0\leq t<T_{j}}\right)\right)
\end{displaymath}
has the same law as
\begin{displaymath}
\left(\mathcal{Y}_{\infty},\mathcal{J}_{\infty},
\Big\lbrace(\gamma(t))_{0\leq t\leq T(\gamma)}\in\mathcal{L}_{1,L}\vert
\gamma([0,T(\gamma)])\cap\bigcup_{J\in\mathcal{J}_{\infty}}J\neq\emptyset\Big\rbrace\right).
\end{displaymath}
To conclude we need only to show that almost surely
\begin{displaymath}
\Big\lbrace(\gamma(t))_{0\leq t\leq T(\gamma)}\in\mathcal{L}_{1,L}\vert
\gamma([0,T(\gamma)])\cap\bigcup_{J\in\mathcal{J}_{\infty}}J\neq\emptyset\Big\rbrace = 
\mathcal{L}_{1,L}.
\end{displaymath}
The latter is equivalent to $\bigcup_{J\in\mathcal{J}_{\infty}}J$ being dense in $I$, which will be proved in the next section.
\end{proof}

\section{Determinantal point processes $(\mathcal{Y}_{\infty},\mathcal{Z}_{\infty})$: Brownian case}
\label{Ch6Sec3}

In this section we will describe $(\mathcal{Y}_{\infty},\mathcal{J}_{\infty})$
in the Brownian case by giving the joint law of the point processes
$\mathcal{Y}_{\infty}$ and $\mathcal{Z}_{\infty}$. First, we will study the case of a Brownian motion on a bounded interval $(a,b)$, killed upon hitting $a$ or $b$, and without killing measure. Then, we will study the case of the Brownian motion on $\mathbb{R}$ with a non-zero Radon killing measure $\kappa$. We will write $\big(B^{(x_{n})}_{t}\big)_{0\leq t<\zeta_{n}}$ instead of $\big(X^{(x_{n})}_{t}\big)_{0\leq t<\zeta_{n}}$.

\begin{prop}
\label{Ch6Sec3: PropUnifSep}
In the case of a Brownian motion on a bounded interval $(a,b)$, killed upon hitting $a$ or $b$, and without killing measure, $\mathcal{Y}_{\infty}$ is deterministic and equals $\lbrace a,b\rbrace$ and $\mathcal{Z}_{\infty}$ is made of a single point distributed uniformly on $(a,b)$.
\end{prop}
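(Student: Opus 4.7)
The plan is to encode Wilson's algorithm in this setting as a Markov chain on ``gap'' subintervals. Set $(Y_0, Z_0) := (a, b)$ and, for $n \geq 1$, let $(Y_n, Z_n)$ be the pair with $a \leq Y_n \leq Z_n \leq b$ such that $\mathcal{J}_n$ is the non-degenerate part of $\{[a, Y_n], [Z_n, b]\}$ and $Y_n = a$ iff $a \notin \mathcal{Y}_n$ (symmetrically for $Z_n$ and $b$). The first step of the proof is to verify that $(Y_n, Z_n)_{n\geq 0}$ is a Markov chain with the following transitions: $(Y_{n+1}, Z_{n+1}) = (Y_n, Z_n)$ if $x_{n+1} \notin (Y_n, Z_n)$, while if $x_{n+1} \in (Y_n, Z_n)$,
\begin{displaymath}
(Y_{n+1}, Z_{n+1}) = \begin{cases} (x_{n+1}, Z_n) & \text{with probability } \dfrac{Z_n - x_{n+1}}{Z_n - Y_n}, \\[2pt] (Y_n, x_{n+1}) & \text{with probability } \dfrac{x_{n+1} - Y_n}{Z_n - Y_n}, \end{cases}
\end{displaymath}
independently of the past. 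This is the exit law of Brownian motion from $(Y_n, Z_n)$, which is affine in the starting point and does not distinguish between true boundary points of $I$ and open ends of previously built intervals.

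With this chain in hand, I would prove the uniformity via a martingale identity. For any bounded measurable $f\colon [a,b]\to\mathbb{R}$, the quantity
\begin{displaymath}
M_n := \frac{1}{Z_n - Y_n}\int_{Y_n}^{Z_n} f(y)\, dy
\end{displaymath}
is a bounded martingale, as one checks in one line from the transition probabilities. Separately, since $(x_n)$ is dense in $(a,b)$, for any rational $c < d$ in $(a,b)$ there are infinitely many indices with $x_n \in (c,d)$; any such $x_n$ falling inside the current gap produces an updated gap that excludes $x_n$, so $(c,d)$ cannot be a subset of $(Y_n, Z_n)$ for every $n$. A union over rational pairs yields $Y_\infty = Z_\infty$ almost surely. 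Combining the two facts, for continuous bounded $f$ one has $M_n \to f(Y_\infty)$ almost surely and boundedly, hence by bounded convergence
\begin{displaymath}
\mathbb{E}[f(Y_\infty)] \;=\; M_0 \;=\; \frac{1}{b-a}\int_a^b f(y)\, dy,
\end{displaymath}
which shows that $\mathcal{Z}_\infty = \{Y_\infty\}$ is uniformly distributed on $(a,b)$.

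For the remaining statement $\mathcal{Y}_\infty = \{a, b\}$, the absence of killing in the interior gives $\mathcal{Y}_\infty \subseteq \partial I = \{a, b\}$, while the update rule shows that $a$ is added to $\mathcal{Y}_n$ exactly at the step where $Y_n$ first becomes strictly greater than $a$; since $Y_\infty > a$ almost surely by the uniformity just proved, $a \in \mathcal{Y}_\infty$, and symmetrically for $b$. The main technical obstacle I expect is the first step of the plan: justifying cleanly that $(Y_n, Z_n)$ is a Markov chain governed by the same affine transition formulas regardless of whether each endpoint is a true boundary point or an open end of a frozen interval. This uses the strong Markov property of each $X^{(x_n)}$ together with their mutual independence and the standard exit distribution for one-dimensional Brownian motion.
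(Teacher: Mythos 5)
Your approach is correct but genuinely different from the paper's. The paper computes, by a direct induction on $n$, the distribution of the gap $(a,b)\setminus\bigcup_{J\in\mathcal{J}_n}J$: it shows explicitly that the gap equals $(\tilde{x}_{n,j-1},\tilde{x}_{n,j})$ with probability $\frac{\tilde{x}_{n,j}-\tilde{x}_{n,j-1}}{b-a}$, where $\tilde{x}_{n,0}<\dots<\tilde{x}_{n,n+1}$ is the ordered list $a,x_1,\dots,x_n,b$. From this formula it reads off $\mathbb{P}(\mathcal{Y}_\infty=\{a\})=0$, the non-stationarity of the gap endpoints, and then the uniformity by summing over subgaps inside a fixed $(\tilde{a},\tilde{b})$. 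You instead recast the gap $(Y_n,Z_n)$ as a Markov chain (which is essentially the same observation, just packaged abstractly) and then replace the explicit induction by the bounded martingale $M_n=\frac{1}{Z_n-Y_n}\int_{Y_n}^{Z_n}f$. This is a tidier route to the law of the limit: the martingale encodes the paper's probability formula implicitly, and the passage to the limit is handled in one stroke by bounded convergence rather than by the paper's two-step ``shrinking gaps plus probability on subintervals'' argument. The price is that you lose the finite-$n$ information the paper's formula provides, but none of that information is used elsewhere, so the trade is favorable here.

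One step should be made explicit. You write ``which shows that $\mathcal{Z}_\infty=\{Y_\infty\}$,'' but the identification $\mathcal{Z}_\infty=\{Y_\infty\}$ is not an immediate consequence of the bounded-convergence computation: a priori, if one of the endpoint sequences $(Y_n)$ or $(Z_n)$ stabilized at some value, the corresponding interval of $\mathcal{J}_\infty$ would be closed at that endpoint and $\mathcal{Z}_\infty$ would be empty, not a singleton. (The paper handles this by a separate non-stationarity argument.) In your framework the fix is one line and follows from what you already prove: the martingale identity shows the law of $Y_\infty$ is atomless (uniform on $(a,b)$), while the sequences $(Y_n)$ and $(Z_n)$ take values in the countable set $\{a,b\}\cup\{x_m:m\geq 1\}$, so a.s.\ neither sequence stabilizes; hence the two intervals in $\mathcal{J}_\infty$ are half-open at $Y_\infty$ and $\mathcal{Z}_\infty=\{Y_\infty\}$ indeed. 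With that sentence added, the argument is complete.
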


\begin{proof}
For $n\geq 1$, we define $\tilde{x}_{n,0}<\tilde{x}_{n,1}<\dots <\tilde{x}_{n,n+1}$ as the family $x_{1},\dots,x_{n},a,b$ ordered increasingly. According to this definition, $\tilde{x}_{n,0}=a$ and $\tilde{x}_{n,n+1}=b$. As a convention we denote 
$\tilde{x}_{0,0}:=a$ and $\tilde{x}_{0,1}:=b$. For $n\geq 2$, one of the following situations may occur:
\begin{itemize}
\item $\mathcal{Y}_{n}=\lbrace b\rbrace$ and $\mathcal{J}_{n}=\lbrace[\tilde{x}_{n,1},b]\rbrace$,
\item $\mathcal{Y}_{n}=\lbrace a\rbrace$ and $\mathcal{J}_{n}=\lbrace[a,\tilde{x}_{n,n}]\rbrace$,
\item $\mathcal{Y}_{n}=\lbrace a,b\rbrace$ and for some $j\in \lbrace2,\dots,n\rbrace$,
$\mathcal{J}_{n}=\lbrace[a,\tilde{x}_{n,j-1}],[\tilde{x}_{n,j},b]\rbrace$.
\end{itemize}
In any case, $(a,b)\setminus\Big(\bigcup_{J\in\mathcal{J}_{n}}J\Big)$ is an interval of form $(\tilde{x}_{n,j-1},\tilde{x}_{n,j})$. 

We set $\mathcal\lbrace J\rbrace_{0}=\emptyset$. Let $n\geq 1$. There is a $j\in \lbrace 1,\dots, n\rbrace$ such that $x_{n}\in(\tilde{x}_{n-1,j-1},\tilde{x}_{n-1,j})$. Conditional on 
$(a,b)\setminus\Big(\bigcup_{J\in\mathcal{J}_{n-1}}J\Big)=(\tilde{x}_{n-1,j-1},\tilde{x}_{n-1,j})$, the point $B^{(x_{n})}_{T_{n}^{-}}$ equals $\tilde{x}_{n-1,j-1}$ with probability $\frac{\tilde{x}_{n-1,j}-x_{n}}{\tilde{x}_{n-1,j})-\tilde{x}_{n-1,j-1}}$
and $\tilde{x}_{n-1,j}$ with probability
$\frac{x_{n}-\tilde{x}_{n-1,j-1}}{\tilde{x}_{n-1,j})-\tilde{x}_{n-1,j-1}}$.
By induction we get that
\begin{displaymath}
\mathbb{P}\left((a,b)\setminus\Big(\bigcup_{J\in\mathcal{J}_{n}}J\Big)
=(\tilde{x}_{n,j-1},\tilde{x}_{n,j})\right)=
\dfrac{\tilde{x}_{n,j}-\tilde{x}_{n,j-1}}{b-a}.
\end{displaymath}
Hence,
\begin{displaymath}
\mathbb{P}(\mathcal{Y}_{\infty}=\lbrace a\rbrace)\leq \lim_{n\rightarrow +\infty}
\mathbb{P}\left((a,b)\setminus\Big(\bigcup_{J\in\mathcal{J}_{n}}J\Big)
=(\tilde{x}_{n,0},\tilde{x}_{n,1})\right)
=\lim_{n\rightarrow +\infty} \dfrac{\tilde{x}_{n,1}-\tilde{x}_{n,0}}{b-a} = 0,
\end{displaymath}
and similarly $\mathbb{P}(\mathcal{Y}_{\infty}=\lbrace b\rbrace)=0$. Thus,
$\mathcal{Y}_{\infty}=\lbrace a,b\rbrace$. Almost surely for $n$ large enough, 
$\mathcal{J}_{n}$ will be of form $\lbrace[a,\tilde{x}_{n,j-1}],[\tilde{x}_{n,j},b]\rbrace$ for a random $j\in \lbrace 2,\dots,n\rbrace$.  We denote by $p_{n,1}^{+}$, respectively $p_{n,2}^{-}$, the random values of $\tilde{x}_{n,j-1}$, respectively $\tilde{x}_{n,j}$. Almost surely, neither of the non-decreasing sequence
$(p_{n,1}^{+})_{n}$ or non-increasing sequence of $(p_{n,2}^{-})_{n}$ is stationary. This fact follows from the same argument according to which $\mathcal{Y}_{\infty}$ is not reduced to one point. Moreover, $p_{n,2}^{-}-p_{n,1}^{+}$, bounded by 
$\sup_{2\leq j\leq n}(\tilde{x}_{n,j}-\tilde{x}_{n,j-1})$, converges to $0$. It follows that a.s., $\mathcal{Z}_{\infty}$ is reduced to one point, the common limit of $p_{n,1}^{+}$ and $p_{n,2}^{-}$. Finally, if $\tilde{a}<\tilde{b}$ are two values taken by the sequence $(x_{n})_{n\geq 1}$, then
\begin{displaymath}
\mathbb{P}(\mathcal{Z}_{\infty}\subseteq (\tilde{a},\tilde{b}))
=\dfrac{\tilde{b}-\tilde{a}}{b-a}.
\end{displaymath} It follows that the unique point in $\mathcal{Z}_{\infty}$ is distributed uniformly on $(a,b)$.
\end{proof}

We consider now the case of the Brownian motion on $\mathbb{R}$ with a non-zero Radon killing measure $\kappa$. 
$G(x,y)=u_{\uparrow}(x\wedge y)u_{\downarrow}(x\vee y)$ will be the Green's function of 
$\frac{1}{2}\frac{d^{2}}{dx^{2}}-\kappa$. The law of $(\mathcal{Y}_{n},\mathcal{J}_{n})$ may be expressed explicitly. Let $Q_{n}$ be the cardinal of $\mathcal{Y}_{n}$. Let $Y_{n,1}, Y_{n,2},\dots$,$Y_{n,Q(n)}$ be the points in $\mathcal{Y}_{n}$ ordered in the increasing sense. Denote by $[p^{-}_{n,1},p^{+}_{n,1}], [p^{-}_{n,2},p^{+}_{n,2}],\dots, [p^{-}_{n,Q_{n}},p^{+}_{n,Q_{n}}]$ the intervals in $\mathcal{J}_{n}$ ordered in the increasing sense. For all $q\in \lbrace 1,\dots, Q_{n}\rbrace$, $Y_{n,q}\in [p^{-}_{n,q},p^{+}_{n,q}]$. It happens with positive probability that for some $q$, $p^{-}_{n,q}=p^{+}_{n,q}$, if one of the starting points $x_{1},\dots,x_{n}$ is an atom of $\kappa$. To compute recursively the joint law of above random variables we use the following facts: Given a killed Brownian path $\big(B^{(x)}_{t}\big)_{0\leq t<\zeta}$ starting from $x$, the distribution of $B^{(x)}_{\zeta^{-}}$ is $G(x,y)\kappa(dy)$ (see Section \ref{Ch2Sec2}).
Given $a< x$, let $T_{a}$ be the first time $B^{(x)}$ hits $a$. Then,
\begin{displaymath}
\mathbb{P}_{x}(T_{a}\leq\zeta)=\dfrac{u_{\downarrow}(x)}{u_{\downarrow}(a)}
=\dfrac{G(x,a)}{G(a,a)}.
\end{displaymath}
On the event $T_{a}>\zeta$, the distribution of $B^{(x)}_{\zeta^{-}}$ is
\begin{displaymath}
(G(x,y)-\mathbb{P}_{x}(T_{a}\leq\zeta)G(a,y))1_{y>a}\kappa(dy)=
\bigg(G(x,y)-\dfrac{G(x,a)G(a,y)}{G(a,a)}\bigg)1_{y>a}\kappa(dy).
\end{displaymath}
More generally, if $a<x<b$ and $\tilde{\zeta}$ is the first time $B^{(x)}$ gets either killed by the killing measure $\kappa$ or hits $a$ or $b$, then
\begin{itemize}
\item The probability that $B^{(x)}_{\tilde{\zeta}^{-}}=a$ is:
\begin{displaymath}
\dfrac{u_{\downarrow}(a)u_{\uparrow}(x)-u_{\downarrow}(x)u_{\uparrow}(a)}
{u_{\downarrow}(a)u_{\uparrow}(b)-u_{\downarrow}(b)u_{\uparrow}(a)}
=\dfrac{
\det
\left(
\begin{array}{cc}
G(x,b) & G(a,b) \\ 
G(a,x) & G(a,a)
\end{array} 
\right) 
}
{
\det
\left(
\begin{array}{cc}
G(b,b) & G(a,b) \\ 
G(a,b) & G(a,a)
\end{array} 
\right) 
}.
\end{displaymath}
\item The probability that $B^{(x)}_{\tilde{\zeta}^{-}}=b$ is:
\begin{displaymath}
\dfrac{u_{\downarrow}(x)u_{\uparrow}(b)-u_{\downarrow}(b)u_{\uparrow}(x)}
{u_{\downarrow}(a)u_{\uparrow}(b)-u_{\downarrow}(b)u_{\uparrow}(a)}
=\dfrac{
\det
\left(
\begin{array}{cc}
G(a,x) & G(a,b) \\ 
G(x,b) & G(b,b)
\end{array} 
\right) 
}
{
\det
\left(
\begin{array}{cc}
G(a,a) & G(a,b) \\ 
G(a,b) & G(b,b)
\end{array} 
\right) 
}.
\end{displaymath}
\item The distribution of $B^{(x)}_{\tilde{\zeta}^{-}}$ on $(a,b)$ is:
\begin{displaymath}
\dfrac{\det
\left( 
\begin{array}{ccc}
G(x,y) & G(a,y) & G(y,b) \\ 
G(a,x) & G(a,a) & G(a,b) \\ 
G(x,b) & G(a,b) & G(b,b)
\end{array} 
\right) 
}
{\det
\left(
\begin{array}{cc}
G(a,a) & G(a,b) \\ 
G(a,b) & G(b,b)
\end{array} 
\right) 
}
1_{a<y<b}\kappa(dy).
\end{displaymath}
\end{itemize}

Above expressions give the law of $(\mathcal{Y}_{1}, \mathcal{J}_{1})$ and the law of $(\mathcal{Y}_{n+1}, \mathcal{J}_{n+1})$ conditional on $(\mathcal{Y}_{n}, \mathcal{J}_{n})$. By induction, one can derive the law of $(\mathcal{Y}_{n}, \mathcal{J}_{n})$. We will express it using a single identity involving a determinant. However, this single identity may correspond to different configurations: We will divide the set of indices $\lbrace 1,\dots,Q_{n}\rbrace$ in three categories $E^{-}_{n}$, $E^{+}_{n}$ and $E^{-,+}_{n}$ where for $q\in E^{-}_{n}$, $Y_{n,q}=p^{-}_{n,q}$, for $q\in E^{+}_{n}$, $Y_{n,q}=p^{+}_{n,q}$ and for $q\in E^{-,+}_{n}$, $p^{-}_{n,q}<Y_{n,q}<p^{+}_{n,q}$. For instance, on Figure \ref{FigW1}, $Q_{5}=3$, $E^{-}_{5}=\lbrace 3\rbrace$, $E^{+}_{5}=\lbrace 1\rbrace$ and $E^{-,+}_{5}=\lbrace 2\rbrace$.

\begin{prop}
\label{Ch6Sec3: PropAfternSteps}
Let $q\in\lbrace 1,\dots,n\rbrace$. Let $(E_{n}^{-}, E_{n}^{+}, E_{n}^{-,+})$ be a partition of $\lbrace 1,\dots,q\rbrace$:
\begin{displaymath}
\lbrace 1,\dots,q\rbrace = E_{n}^{-}\amalg E_{n}^{+}\amalg E_{n}^{-,+}.
\end{displaymath}
Let $x^{-}$ be an increasing function from $E_{n}^{-}\amalg E_{n}^{-,+}$ to $\lbrace x_{1},\dots,x_{n}\rbrace$ and $x^{+}$ an increasing function from $E_{n}^{+}\amalg E_{n}^{-,+}$ to $\lbrace x_{1},\dots,x_{n}\rbrace$. We assume that the sets $x^{-}(E_{n}^{-}\amalg E_{n}^{-,+})$ and $x^{+}(E_{n}^{+}\amalg E_{n}^{-,+})$ are disjoint, that for every $i\in E_{n}^{-,+}$ $x^{-}(i)<x^{+}(i)$ and that for every $i\in E_{n}^{-}\amalg E_{n}^{-,+}$ and $j\in E_{q}^{+}\amalg E_{q}^{-,+}$ such that $i\neq j$, $(x^{+}(j)-x^{-}(i))$ has the same sign as $(j-i)$. Let $(\Delta_{i})_{1\leq i\leq n}$ be a family of disjoint bounded intervals each of which may be open, closed or semi-open such that for every $i<j$, $\max \Delta_{i} < \min \Delta_{j}$, that for every $i$, $\min \Delta_{i} \geq x^{-}(i)$ if $i\in E_{n}^{-}\amalg E_{n}^{-,+}$, $\max \Delta_{i} \leq x^{+}(i)$ if $i\in E_{n}^{+}\amalg E_{n}^{-,+}$, and that for all $i$,
\begin{displaymath}
x^{-}(i-1), x^{+}(i-1) <\min \Delta_{i},~\max \Delta_{i} < x^{-}(i+1), x^{+}(i+1),
\end{displaymath}
where in the previous inequalities one should only consider the terms that are defined. Let $p_{i}^{-}(y_{i})$ and $p_{i}^{+}(y_{i})$ be the functions defined by: $p_{i}^{-}(y_{i})=x^{-}(i)$ if $i\in E_{n}^{-}\amalg E_{n}^{-,+}$ and $y_{i}$ otherwise. $p_{i}^{+}(y_{i})=x^{+}(i)$ if $i\in E_{n}^{+}\amalg E_{n}^{-,+}$ and $y_{i}$ otherwise. Then,
\begin{multline}
\label{Ch6Sec3: EqDeterminantnSteps}
\mathbb{P}\big(Q_{n}=q,\forall i\in E_{n}^{-},p^{-}_{n,i}=x^{-}(i),p^{+}_{n,i}=Y_{n,i}
\forall i\in E_{n}^{+}, p^{+}_{n,i}=x^{+}(i), p^{-}_{n,i}=Y_{n,i},
\\\forall i\in E_{n}^{-,+}, p^{-}_{n,i}=x^{-}(i), p^{+}_{n,i}=x^{+}(i),
\forall r\in\lbrace 1,\dots,q\rbrace, Y_{n,r}\in\Delta_{r}\big) = 
\\\int_{y_{1}\in\Delta_{1}}\dots\int_{y_{q}\in\Delta_{q}}\det\left(G(p_{i}^{-}(y_{i}),p_{j}^{+}(y_{j}))\right)_{1\leq i,j\leq q}\prod_{1\leq r\leq q}\kappa(dy_{i}).
\end{multline}
$\det\left(G(p_{i}^{-}(y_{i}),p_{j}^{+}(y_{j}))\right)_{1\leq i,j\leq q}$ may be rewritten as a simpler product:
\begin{multline}
\label{Ch6Sec3: EqSimplerProduct1}
G(p_{1}^{-}(y_{1}),p_{1}^{+}(y_{1}))
\prod_{1\leq r\leq q-1}\bigg(G(p_{r+1}^{-}(y_{r+1}),p_{r+1}^{+}(y_{r+1}))
\\-\dfrac{G(p^{-}_{r}(y_{r}),p^{+}_{r+1}(y_{r+1}))G(p^{+}_{r}(y_{r}),p^{-}_{r+1}(y_{r+1}))}{G(p_{r}^{-}(y_{r}),p_{r}^{+}(y_{r}))}\bigg).
\end{multline}
If $\sigma$ is a permutation of $\lbrace 1,\dots,n\rbrace$, then $(\mathcal{Y}_{n}(x_{\sigma(1)},\dots,x_{\sigma(n)}),$ $\mathcal{J}_{n}(x_{\sigma(1)},\dots,x_{\sigma(n)}))$ has the same law as $(\mathcal{Y}_{n}(x_{1},\dots,x_{n}), \mathcal{J}_{n}(x_{1},\dots,x_{n}))$ Moreover, for any $n'>n$ and any permutation $\sigma$ of 
${\lbrace n+1,\dots,n'\rbrace}$, the law of 
$(\mathcal{Y}_{n'}(x_{1},\dots,x_{n},x_{\sigma(n+1)},\dots,x_{\sigma(n')}),$  $\mathcal{J}_{n'}(x_{1},\dots,x_{n},x_{\sigma(n+1)},\dots,x_{\sigma(n')}))$ conditional on $(\mathcal{Y}_{n}(x_{1},\dots,x_{n}), \mathcal{J}_{n}(x_{1},\dots,x_{n}))$ is the same as the law of $(\mathcal{Y}_{n'}(x_{1},\dots,x_{n},x_{n+1},\dots,x_{n'}),$  $\mathcal{J}_{n'}(x_{1},\dots,x_{n},x_{n+1},\dots,x_{n'}))$ conditional on $(\mathcal{Y}_{n}(x_{1},\dots,x_{n}), \mathcal{J}_{n}(x_{1},\dots,x_{n}))$.
\end{prop}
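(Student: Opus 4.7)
The plan is to prove the main identity \eqref{Ch6Sec3: EqDeterminantnSteps} by induction on $n$, then deduce the product form \eqref{Ch6Sec3: EqSimplerProduct1} as a linear-algebra consequence, and finally read the permutation invariance off the symmetry of the resulting formula. For the base case $n=1$, one has $Q_{1}=1$, and the single trajectory $(B^{(x_{1})}_{t})_{0\leq t<\zeta_{1}}$ gets killed at a random point $y_{1}$ whose distribution is $G(x_{1},y_{1})\kappa(dy_{1})$, as recalled just before the proposition. This matches the $1\times 1$ determinant formula in each of the three sub-cases $1\in E_{1}^{-}$, $1\in E_{1}^{+}$, $1\in E_{1}^{-,+}$ (the last one requiring $x_{1}$ to be an atom of $\kappa$, in which case the path is killed at time $0$ with mass $\kappa(\lbrace x_{1}\rbrace)$).

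For the inductive step I would fix an event of the form prescribed in \eqref{Ch6Sec3: EqDeterminantnSteps} for $n+1$ and decompose it along the position of $x_{n+1}$: either $x_{n+1}\in\bigcup_{J\in\mathcal{J}_{n}}J$ (so nothing changes and the formula is invariant under the trivial update), or $x_{n+1}$ lies strictly in a gap $(p^{+}_{n,r},p^{-}_{n,r+1})$ between two adjacent intervals (with edge cases to the left of all intervals or to the right). Conditionally on $(\mathcal{Y}_{n},\mathcal{J}_{n})$, the diffusion $B^{(x_{n+1})}$ is run until it is either killed by $\kappa$ or hits one of the two neighbouring endpoints $p^{+}_{n,r}$, $p^{-}_{n,r+1}$, and the three possible outcomes have precisely the "three-point" hitting/killing probabilities recalled before the statement. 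Multiplying the inductive hypothesis by the appropriate such density, one has to verify algebraically that the resulting expression is again a determinant of Green's functions of the updated configuration. Here the crucial input is the factorization $G(x,y)=u_{\uparrow}(x\wedge y)u_{\downarrow}(x\vee y)$ and the Wronskian identity $u_{\downarrow}u_{\uparrow}'-u_{\uparrow}u_{\downarrow}'\equiv w$: the Green's matrix in \eqref{Ch6Sec3: EqDeterminantnSteps} has entries $M_{ij}=u_{\uparrow}(\min(p_{i}^{-},p_{j}^{+}))\,u_{\downarrow}(\max(p_{i}^{-},p_{j}^{+}))$ because of the ordering of the $p_{i}^{\pm}$'s, and inserting a new interval (or extending an existing one) produces exactly one new row and column whose contribution can be identified with one of the three transition densities by expanding along that row/column and using the Wronskian. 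I expect this bookkeeping to be the main obstacle: there are several distinct sub-cases according to whether $x_{n+1}$ creates a brand-new interval, extends one existing interval on its left, or extends one on its right, and within each one must check that the incremental determinant matches the one-step probability; but in each case the check is mechanical once the entries are written out.

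To obtain \eqref{Ch6Sec3: EqSimplerProduct1} from \eqref{Ch6Sec3: EqDeterminantnSteps}, I would perform Gaussian elimination on the bordered matrix $(M_{ij})$: because of the multiplicative structure above, each pair (row, column) of indices $\leq r$ is already of rank one outside the block of indices $\geq r+1$, so subtracting a multiple of the $r$-th row (resp.\ column) to zero out the $(r+1,r)$ entry (resp.\ the $(r,r+1)$ entry) leaves the $(r+1,r+1)$ diagonal entry equal to $G(p_{r+1}^{-},p_{r+1}^{+})-G(p_{r}^{-},p_{r+1}^{+})G(p_{r}^{+},p_{r+1}^{-})/G(p_{r}^{-},p_{r}^{+})$, which is precisely the $r$-th factor in \eqref{Ch6Sec3: EqSimplerProduct1}; iterating gives the product. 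Finally, the permutation-invariance statements are immediate consequences of the formula itself: the right-hand side of \eqref{Ch6Sec3: EqDeterminantnSteps} depends on $x_{1},\dots,x_{n}$ only through the two unordered sets $x^{-}(E_{n}^{-}\amalg E_{n}^{-,+})$ and $x^{+}(E_{n}^{+}\amalg E_{n}^{-,+})$, so its value is unaffected by any permutation $\sigma$ of the starting points; the conditional version for $n'>n$ follows by applying the same observation to the ratio of the formulas at step $n'$ and at step $n$.
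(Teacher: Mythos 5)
Your route is genuinely different from the paper's, and the main ideas (Gaussian elimination for the product form; the cocycle/Wronskian structure of $G$; the base case) are sound, but two points need repair.

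First, a structural difference. The paper's proof establishes the product form \eqref{Ch6Sec3: EqSimplerProduct1} first, then proves directly (using the explicit two-step transition densities for $B^{(x_{n-1})}$, $B^{(x_n)}$) that transposing the \emph{last two} starting points preserves the conditional law given the step-$(n{-}2)$ configuration, and only then does the induction on $n$ in the reduced ordered case $x_1<\dots<x_n$. You propose the opposite order: induct on $n$ in the general unordered case, then deduce permutation invariance from the symmetry of the formula. That is a legitimate alternative, but your inductive step is considerably heavier than you acknowledge. When the trajectory from $x_{n+1}$ hits an existing interval $J_r$, the old endpoint $p^{\pm}_{n,r}$ that gets overwritten by $x_{n+1}$ may be either the killing point $Y_{n,r}$ or a \emph{different starting point} $x_j$ that now becomes interior. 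Thus the step-$n$ configuration compatible with a given step-$(n{+}1)$ event is not unique, and the decomposition requires summing over a whole family of step-$n$ events (indexed by the possible previous endpoint of $J_r$). The paper's reduction to the ordered case is precisely what makes this sum disappear. Your argument can be carried out, but it is not the "mechanical" single-case check you describe; this is the reason the paper handles permutation invariance first. (A small side-slip: for $n=1$ the case $1\in E_1^{-,+}$ is simply vacuous — $x^-(1)$ and $x^+(1)$ would have to be distinct elements of the singleton $\{x_1\}$ — rather than requiring $x_1$ to be an atom of $\kappa$.)

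Second, and more seriously, your argument for the \emph{conditional} permutation invariance for $n'>n$ does not work as stated. Formula \eqref{Ch6Sec3: EqDeterminantnSteps} gives only the marginal law of $(\mathcal{Y}_m,\mathcal{J}_m)$ for a single $m$. The step-$n$ configuration is \emph{not} a deterministic function of the step-$n'$ configuration (one cannot reconstruct from $(\mathcal{Y}_{n'},\mathcal{J}_{n'})$ which killing points and endpoints were already present at step $n$), so "the ratio of the formulas at step $n'$ and at step $n$" is not the conditional probability; the joint law of the two configurations at different steps is not determined by the two marginals. To fill this gap one really does need something like the paper's intermediate lemma: the conditional law at step $k+1$ given the step-$(k{-}1)$ configuration is invariant under swapping $x_k$ and $x_{k+1}$, verified directly from the one- and two-step hitting/killing densities. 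Once that is available, adjacent transpositions of $x_{n+1},\dots,x_{n'}$ (which generate the full symmetric group) preserve the conditional law given the step-$n$ configuration. Your unconditional permutation-invariance argument from the formula is fine; it is the conditional one that cannot be read off the marginal determinant identity.
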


\begin{proof}
We will only give the sketch of a short proof. First, let us check that the determinant $\det\left(G(p_{i}^{-}(y_{i}),p_{j}^{+}(y_{j}))\right)_{1\leq i,j\leq q}$ may be indeed expressed as a product \eqref{Ch6Sec3: EqSimplerProduct1}. We use the fact that for any $a<b<\tilde{a}<\tilde{b}\in\mathbb{R}$,
\begin{displaymath}
G(a,\tilde{b})G(b,\tilde{a})=G(a,\tilde{a})G(b,\tilde{b})=
u_{\uparrow}(a)u_{\uparrow}(b)u_{\downarrow}(\tilde{a})u_{\downarrow}(\tilde{b}).
\end{displaymath}
By subtracting from the last line in the matrix $\left(G(p_{i}^{-}(y_{i}),p_{j}^{+}(y_{j}))\right)_{1\leq i,j\leq q}$, which is $(G(p_{q}^{-}(y_{q}),p_{j}^{+}(y_{j})))_{1\leq j\leq q}$, the second to last line 
$(G(p_{q-1}^{-}(y_{q-1}),p_{j}^{+}(y_{j})))_{1\leq j\leq q}$ multiplied by 
$\dfrac{G(p_{q-1}^{-}(y_{q-1}),p_{q}^{+}(y_{q}))}{G(p_{q-1}^{-}(y_{q-1}),
p_{q-1}^{+}(y_{q-1}))}$, we get zero for all coefficient on the last line, except the diagonal one. Thus, $\det\left(G(p_{i}^{-}(y_{i}),p_{j}^{+}(y_{j}))\right)_{1\leq i,j\leq q}$ equals
\begin{multline*}
\det\!\left(\!G(p_{i}^{-}(y_{i}),p_{j}^{+}(y_{j}))\right)_{1\leq i,j\leq q-1}\times
\\\bigg(G(p_{q}^{-}(y_{q}),p_{q}^{+}(y_{q}))-\dfrac{G(p^{-}_{q-1}(y_{q-1}),p^{+}_{q}(y_{q}))G(p^{+}_{q-1}(y_{q-1}),p^{-}_{q}(y_{q}))}{G(p_{q-1}^{-}(y_{q-1}),p_{q-1}^{+}(y_{q-1}))}\bigg).
\end{multline*}
By induction we get \eqref{Ch6Sec3: EqSimplerProduct1}.

Next step is to check that 
$(\mathcal{Y}_{n}(x_{1},\dots,x_{n-2},x_{n-1},x_{n})$, 
$\mathcal{J}_{n}(x_{1},\dots,x_{n-2},x_{n-1},x_{n}))$ and 
$(\mathcal{Y}_{n}(x_{1},\dots,x_{n-2},x_{n},x_{n-1}),
\mathcal{J}_{n}(x_{1},\dots,x_{n-2},x_{n},x_{n-1}))$ have the same law conditional on 
$(\mathcal{Y}_{n-2}(x_{1},\dots,x_{n-2}),\mathcal{J}_{n-2}(x_{1},\dots,x_{n-2}))$. This can be done using the explicit expressions for the conditional destitution of $B^{(x_{n-1})}_{T_{n-1}^{-}}$, $B^{(x_{n})}_{T_{n}^{-}}$, $B^{(x_{n})}_{T_{n-1}^{-}}$ and $B^{(x_{n-1})}_{T_{n}^{-}}$. This invariance by transposition of the two last starting points implies in turn all the invariances by permutation stated in the proposition.

From the invariance by permutation follows that one only needs to prove 
\eqref{Ch6Sec3: EqDeterminantnSteps} in case $x_{1}<x_{2}<\dots<x_{n}$. In this case one can prove \eqref{Ch6Sec3: EqDeterminantnSteps} by induction on $n$ using the expression \eqref{Ch6Sec3: EqSimplerProduct1} for $\det\left(G(p_{i}^{-}(y_{i}),p_{j}^{+}(y_{j}))\right)_{1\leq i,j\leq q}$.
\end{proof}

The fact that the law of the tree obtained after $n$ steps of Wilson's algorithm is invariant under permutations of the starting points $(x_{1},\dots,x_{n})$ is something that is also satisfied in case of random walks on a true finite graph. 
The product \eqref{Ch6Sec3: EqSimplerProduct1} can be further rewritten as
\begin{equation}
\label{Ch6Sec3: EqSimplerProduct2}
u_{\uparrow}(p_{1}^{-}(y_{1}))
u_{\downarrow}(p_{q}^{+}(y_{q}))
\prod_{1\leq r\leq q-1}(u_{\downarrow}(p_{r}^{+}(y_{r}))u_{\uparrow}(p_{r+1}^{-}(y_{r+1}))-u_{\uparrow}(p_{r}^{+}(y_{r}))u_{\downarrow}(p_{r+1}^{-}(y_{r+1}))).
\end{equation}

Next, we will show that $\mathcal{Y}_{\infty}$ and $\mathcal{Z}_{\infty}$ are a.s. discrete.

\begin{lemm}
\label{Ch6Sec3: LemHole}
For all $n\geq 2$ and $q\in\lbrace 2,\dots,n\rbrace$,
\begin{multline*}
\mathbb{P}\left(\mathcal{Y}_{\infty}\cap(p^{-}_{n,q-1},p^{+}_{n,q})=\emptyset
\vert p^{-}_{n,q-1}, p^{+}_{n,q}, Q_{n}\geq q\right)=
\\\dfrac{2(p^{+}_{n,q}-p^{-}_{n,q-1})}{u_{\downarrow}(p^{-}_{n,q-1})
u_{\uparrow}(p^{+}_{n,q})-u_{\uparrow}(p^{-}_{n,q-1})u_{\downarrow}(p^{+}_{n,q})}.
\end{multline*}
\end{lemm}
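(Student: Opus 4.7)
The plan is to reduce the problem, via a locality/restriction argument for Wilson's algorithm, to computing a quantity $\pi(a,b)$ which depends only on the interval endpoints, and then to determine $\pi(a,b)$ by a one-step recursion together with a Wronskian boundary argument.

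First I would argue the following \emph{locality} statement: conditionally on $p^-_{n,q-1}=a'$, $p^+_{n,q}=b'$ and $Q_n\geq q$, the probability of the event $\mathcal{Y}_\infty\cap(a',b')=\emptyset$ equals $\pi(a',b')$, where $\pi(a,b)$ denotes the probability that Wilson's algorithm run on the open sub-interval $(a,b)$ for the Brownian motion with killing measure $\kappa_{\vert(a,b)}$ and \emph{absorbing} boundary at $\{a,b\}$ produces no point of $\mathcal{Y}_\infty$ inside $(a,b)$. The key point for this reduction is that any diffusion launched in the original algorithm from a starting point outside $[a',b']$ is a one-dimensional Brownian trajectory that cannot enter the open interval $(a',b')$ without first hitting the already-occupied points $a'$ or $b'$ (which lie in $\bigcup_{J\in\mathcal{J}_n}J$), and is therefore stopped there. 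Hence all $\mathcal{Y}_\infty$-points inside $(a',b')$ come from the internal dynamics in $(a',b')$. The initial state at step $n$ inside $(a',b')$ (two partial intervals $[a',p^+_{n,q-1}]$ and $[p^-_{n,q},b']$ glued to $a'$ and $b'$) can be absorbed into a Wilson's algorithm on $(a',b')$ run from scratch thanks to the permutation invariance of proposition~\ref{Ch6Sec3: PropAfternSteps} applied to the sub-interval $(a',b')$, which says that the law of the final $(\mathcal{Y}_\infty^{(a',b')},\mathcal{J}_\infty^{(a',b')})$ does not depend on the order of starting points nor on precisely which dense sequence is used.

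Second, I would compute $\pi(a,b)$ by a one-step analysis. Let $D(y,z):=u_\downarrow(y)u_\uparrow(z)-u_\uparrow(y)u_\downarrow(z)$ and recall that for the absorbed Brownian motion on $(a,b)$ the exit probabilities starting from $x\in(a,b)$ are
\begin{displaymath}
u^{(a,b)}_\downarrow(x)=\dfrac{D(x,b)}{D(a,b)},\qquad u^{(a,b)}_\uparrow(x)=\dfrac{D(a,x)}{D(a,b)},
\end{displaymath}
the complement $1-u^{(a,b)}_\downarrow(x)-u^{(a,b)}_\uparrow(x)$ being the probability of being killed in $(a,b)$. Take the first starting point $x\in(a,b)$. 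If the first diffusion is killed inside $(a,b)$ a new $\mathcal{Y}$-point is created and the event fails; if it is absorbed at $a$ (resp.\ $b$) the situation reduces to Wilson's algorithm on $(x,b)$ (resp.\ $(a,x)$) by strong Markov and the structure of Wilson's algorithm. Using permutation invariance of the starting points this yields, for every $x\in(a,b)$,
\begin{displaymath}
\pi(a,b)=\dfrac{D(x,b)}{D(a,b)}\pi(x,b)+\dfrac{D(a,x)}{D(a,b)}\pi(a,x),
\end{displaymath}
equivalently the function $f(a,b):=D(a,b)\pi(a,b)$ satisfies the additivity relation $f(a,b)=f(a,x)+f(x,b)$ for all $a<x<b$.

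Third, this additivity (together with continuity of $\pi$ in the endpoints, which follows from the lemma~\ref{Ch2Sec1: LemContKappa}) forces $f(a,b)=F(b)-F(a)$ for some function $F$. The boundary behaviour as $b\downarrow a$ determines $F$: on the one hand $\pi(a,b)\to 1$ (on an infinitesimal interval the probability of any killing tends to zero), and on the other hand $D(a,b)/(b-a)\to W(u_\downarrow,u_\uparrow)(a)=w(a)=2$ in the Brownian case, so $F'(a)=2$. Thus $F(x)=2x+\mathrm{const}$, giving $\pi(a,b)=2(b-a)/D(a,b)$, which applied to $(a,b)=(p^-_{n,q-1},p^+_{n,q})$ is exactly the claimed formula.

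The main obstacle is the first step, the locality/restriction equivalence: the initial inside-state at time $n$ is nontrivial, and one must justify carefully that the distribution of $(\mathcal{Y}_\infty,\mathcal{J}_\infty)\cap[a',b']$ under the conditioning is the same as the output of an independent Wilson's algorithm on $(a',b')$ started from scratch. Everything else is then a clean recursion plus a Wronskian computation.
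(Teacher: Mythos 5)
Your proposal genuinely departs from the paper's route. The paper fixes a large $n'$, enumerates the new starting points $\tilde{x}_{n',1}<\dots<\tilde{x}_{n',N(n')}$ falling in the interval, and uses permutation invariance (Proposition~\ref{Ch6Sec3: PropAfternSteps}) together with the explicit exit probabilities of the killed Brownian motion to write
$\mathbb{P}\bigl(\mathcal{Y}_{n'}\cap(a,b)=\emptyset\mid\cdots\bigr)=\sum_i D(\tilde{x}_{n',i-1},\tilde{x}_{n',i})/D(a,b)$,
where $D(y,z)=u_\downarrow(y)u_\uparrow(z)-u_\uparrow(y)u_\downarrow(z)$, and then passes to the limit $n'\to\infty$ to turn this into a Riemann sum for $2(b-a)/D(a,b)$, using that $D(y,z)\sim W(u_\downarrow,u_\uparrow)(y)\,(z-y)=2(z-y)$ as $z\downarrow y$. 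You instead posit a restricted quantity $\pi(a,b)$, derive a one-step recursion giving additivity of $f(a,b)=D(a,b)\pi(a,b)$, and solve it by a Wronskian boundary argument. The recursion, the verification that $\pi(a,b)=2(b-a)/D(a,b)$ solves it, and the boundary analysis are clean and correct; the constant $2$ enters in exactly the same way (through the Wronskian) as in the paper, but repackaged as $F'=2$ rather than as a Riemann-sum density.

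The gap is the one you flag: the locality/restriction step is not justified, and it is a real, nontrivial issue, not a formality. The conditioning in the lemma is on $p^-_{n,q-1}$, $p^+_{n,q}$ and $\{Q_n\geq q\}$ only; it does not fix the rest of the step-$n$ configuration inside $(a',b')$, in particular not $p^+_{n,q-1}$, $p^-_{n,q}$, $Y_{n,q-1}$, $Y_{n,q}$. The event $\{\mathcal{Y}_\infty\cap(a',b')=\emptyset\}$ already constrains that step-$n$ data (it forces $Y_{n,q-1}=a'$ and $Y_{n,q}=b'$, since $Y_{n,q-1}\in[a',p^+_{n,q-1}]\subset[a',b')$ and similarly on the other side) and only then asks that no further killing occur in the remaining gap $(p^+_{n,q-1},p^-_{n,q})$. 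So the lemma's conditional probability is, a priori, a mixture over the conditional law of the step-$n$ configuration inside $(a',b')$ of $\mathbf{1}_{\{Y_{n,q-1}=a',\,Y_{n,q}=b'\}}$ times the probability of no further killing in the gap, and it is not obvious that this mixture equals $\pi(a',b')$ for a Wilson's algorithm on $(a',b')$ started from scratch. Permutation invariance of the law of $(\mathcal{Y}^{(a',b')}_\infty,\mathcal{J}^{(a',b')}_\infty)$ in its starting points, which is the only ingredient you invoke, does not by itself give a spatial Markov/restriction statement of this strength; establishing it under this coarse conditioning is essentially what the paper's explicit computation via Proposition~\ref{Ch6Sec3: PropAfternSteps} accomplishes. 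Until that is done, the reduction to $\pi(a',b')$ is a plausible heuristic that happens to produce the right formula, but not a proof, and filling the gap would cost roughly as much as the paper's Riemann-sum argument that you are trying to bypass.
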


\begin{proof}
Let $n$ and $q$ be fixed. For $n'>n$, let
\begin{displaymath}
N(n'):=\#(\lbrace x_{n+1},\dots,x_{n'}\rbrace\cap(p^{-}_{n,q-1},p^{+}_{n,q}),
\end{displaymath}
and $\tilde{x}_{n',1}<\tilde{x}_{n',2}<\dots<\tilde{x}_{n',N(n')}$ the points of 
$\lbrace x_{n+1},\dots,x_{n'}\rbrace\cap(p^{-}_{n,q-1},p^{+}_{n,q})$ ordered increasingly. By convention, we define $\tilde{x}_{n',0}:=p^{-}_{n,q-1}$ and $\tilde{x}_{n',N(n')+1}:=p^{+}_{n,q}$. 
The condition $\mathcal{Y}_{n'}\cap(p^{-}_{n,q-1},p^{+}_{n,q})=\emptyset$ is satisfied if and only if for some $i\in \lbrace 1,2,\dots,N(n')+1\rbrace$, necessarily unique, the following holds:
\begin{displaymath}
[p^{-}_{n,q-1},\tilde{x}_{n',i-1}]\subseteq\bigcup_{J\in\mathcal{J}_{m}}J~~\text{and}~~
[\tilde{x}_{n',i},p^{+}_{n,q}]\subseteq\bigcup_{J\in\mathcal{J}_{n'}}J.
\end{displaymath}
Thus,
\begin{multline*}
\mathbb{P}\left(\mathcal{Y}_{n'}\cap(p^{-}_{n,q-1},p^{+}_{n,q})=\emptyset
\vert p^{-}_{n,q-1}, p^{+}_{n,q}, Q_{n}\geq q\right)=\\
\sum_{i=1}^{N(n')+1}
\mathbb{P}\Big([p^{-}_{n,q-1},\tilde{x}_{n',i-1}]\subseteq\bigcup_{J\in\mathcal{J}_{m}}J,
[\tilde{x}_{n',i},p^{+}_{n,q}]\subseteq\bigcup_{J\in\mathcal{J}_{n'}}J
\Big\vert p^{-}_{n,q-1}, p^{+}_{n,q}, Q_{n}\geq q\Big).
\end{multline*}
Let $T_{n',i}$ be the first time $B^{(\tilde{x}_{n',i})}$ hits either $p^{-}_{n,q-1}$ or $p^{+}_{n,q}$ or gets killed by the killing measure $\kappa$. For 
$i\in\lbrace 1,2,\dots,N(n')+1\rbrace$, let $T_{n',i,\tilde{x}_{n',i-1}}$ be the first time $B^{(\tilde{x}_{n',i})}$ hits $\tilde{x}_{n',i-1}$. Since the law of 
$(\mathcal{Y}_{n'},\mathcal{J}_{n'})$ conditional on $(\mathcal{Y}_{n},\mathcal{J}_{n})$ is invariant by permutation of points in $(x_{n+1},\dots,x_{n'})$, we get that
\begin{multline*}
\mathbb{P}\Big([p^{-}_{n,q-1},\tilde{x}_{n',i-1}]\subseteq\bigcup_{J\in\mathcal{J}_{m}}J,
[\tilde{x}_{n',i},p^{+}_{n,q}]\subseteq\bigcup_{J\in\mathcal{J}_{n'}}J
\Big\vert p^{-}_{n,q-1}, p^{+}_{n,q}, Q_{n}\geq q\Big)=
\\\mathbb{P}\left(B^{(\tilde{x}_{n',i-1})}_{T_{n',i-1}^{-}}=p^{-}_{n,q-1},
B^{(\tilde{x}_{n',i})}_{T_{n',i}^{-}}=p^{+}_{n,q},T_{n',i}<T_{n',i,\tilde{x}_{n',i-1}}
\Big\vert p^{-}_{n,q-1}, p^{+}_{n,q}, Q_{n}\geq q\right) =
\\\dfrac{u_{\downarrow}(\tilde{x}_{n',i-1})u_{\uparrow}(\tilde{x}_{n',i})-
u_{\uparrow}(\tilde{x}_{n',i-1})u_{\downarrow}(\tilde{x}_{n',i})}
{u_{\downarrow}(p^{-}_{n,q-1})u_{\uparrow}(p^{+}_{n,q})-
u_{\uparrow}(p^{-}_{n,q-1})u_{\downarrow}(p^{+}_{n,q})}.
\end{multline*}
It follows that
\begin{multline*}
\mathbb{P}\left(\mathcal{Y}_{n'}\cap(p^{-}_{n,q-1},p^{+}_{n,q})=\emptyset
\vert p^{-}_{n,q-1}, p^{+}_{n,q}, Q_{n}\geq q\right)=\\
\sum_{i=1}^{N(n')+1}
\dfrac{u_{\downarrow}(\tilde{x}_{n',i-1})u_{\uparrow}(\tilde{x}_{n',i})-
u_{\uparrow}(\tilde{x}_{n',i-1})u_{\downarrow}(\tilde{x}_{n',i})}
{u_{\downarrow}(p^{-}_{n,q-1})u_{\uparrow}(p^{+}_{n,q})-
u_{\uparrow}(p^{-}_{n,q-1})u_{\downarrow}(p^{+}_{n,q})}.
\end{multline*}
If $\tilde{x}_{n',i-1}$ is close to $\tilde{x}_{n',i}$, then
\begin{equation*}
\begin{split}
u_{\downarrow}(\tilde{x}_{n',i-1})u_{\uparrow}(\tilde{x}_{n',i})-
u_{\uparrow}(\tilde{x}_{n',i-1})&u_{\downarrow}(\tilde{x}_{n',i})
\\=&W(u_{\downarrow},u_{\uparrow})(\tilde{x}_{n',i-1})
(\tilde{x}_{n',i}-\tilde{x}_{n',i-1}) + o(\tilde{x}_{n',i}-\tilde{x}_{n',i-1})
\\=&2(\tilde{x}_{n',i}-\tilde{x}_{n',i-1}) + o(\tilde{x}_{n',i}-\tilde{x}_{n',i-1}).
\end{split}
\end{equation*}
The sequence $(x_{n'})_{n'\geq n+1}$ is dense in $(p^{-}_{n,q-1},p^{+}_{n,q})$. Thus, 
\begin{multline*}
\lim_{n'\rightarrow+\infty}
\mathbb{P}\left(\mathcal{Y}_{n'}\cap(p^{-}_{n,q-1},p^{+}_{n,q})=\emptyset
\vert p^{-}_{n,q-1}, p^{+}_{n,q}, Q_{n}\geq q\right)=
\\\dfrac{2(p^{+}_{n,q}-p^{-}_{n,q-1})}{u_{\downarrow}(p^{-}_{n,q-1})
u_{\uparrow}(p^{+}_{n,q})-u_{\uparrow}(p^{-}_{n,q-1})u_{\downarrow}(p^{+}_{n,q})}.
\qedhere
\end{multline*}
\end{proof}

\begin{prop}
\label{Ch6Sec3: PropFiniteExp}
Let $a<b\in\mathbb{R}$. Then, for all $n\geq 1$,
\begin{equation}
\label{Ch6Sec3: EqMajorEspNumPoints}
\mathbb{E}\left[\#(\mathcal{Y}_{n}\cap[a,b))\right] \leq \int_{[a,b)}G(x,x)\kappa(dx).
\end{equation}
It follows that a.s., for all $a<b\in\mathbb{R}$, $\mathcal{Y}_{\infty}\cap[a,b)$ is finite.
\end{prop}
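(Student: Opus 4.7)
The assertion splits into the expectation bound \eqref{Ch6Sec3: EqMajorEspNumPoints} and the ensuing almost-sure finiteness of $\mathcal{Y}_\infty \cap [a,b)$. I will prove the bound first; the a.s.\ finiteness then follows by Markov's inequality combined with the monotonicity $\mathcal{Y}_n \uparrow \mathcal{Y}_\infty$ and monotone convergence. The right-hand side $\int_{[a,b)} G(y,y) \kappa(dy)$ is finite since $G(y,y) = u_\uparrow(y) u_\downarrow(y)$ is continuous (and hence bounded on $[a,b]$) while $\kappa$ is Radon. A countable union over rational endpoints then promotes a.s.\ finiteness for each fixed interval to a joint statement over all bounded intervals.

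The key analytic ingredient is the pointwise monotonicity
\[
G(x,y) \leq G(y,y) \quad \text{for every } x, y \in \mathbb{R},
\]
immediate from $G(x,y) = u_\uparrow(x \wedge y) u_\downarrow(x \vee y)$ and the fact that $u_\uparrow$ is non-decreasing while $u_\downarrow$ is non-increasing. The same reasoning yields the refinement $G_{(\alpha,\beta)}(x,y) \leq G_{(\alpha,\beta)}(y,y) \leq G(y,y)$ for the Green's function of the Brownian motion restricted to (and still killed by $\kappa$ on) any sub-interval $(\alpha,\beta) \subseteq \mathbb{R}$; the final inequality uses the obvious monotonicity of the Green's function under further killing.

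I prove \eqref{Ch6Sec3: EqMajorEspNumPoints} by showing that the intensity $\rho_n(y)$ of the random measure $\sum_{y' \in \mathcal{Y}_n} \delta_{y'}$ with respect to $\kappa$ satisfies $\rho_n(y) \leq G(y,y)$ pointwise, so that integrating against $\kappa$ over $[a,b)$ gives the bound. Using the explicit formula \eqref{Ch6Sec3: EqDeterminantnSteps} of Proposition \ref{Ch6Sec3: PropAfternSteps} together with the product representation \eqref{Ch6Sec3: EqSimplerProduct1}, $\rho_n(y)$ decomposes as a sum over admissible configurations $(q, E_n^\pm, E_n^{-,+}, x^\pm)$ and indices $i$ with $y_i = y$, of integrals against the remaining $\kappa(dy_j)$. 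In each such term the factor carrying the dependence on the interval containing $y$ is of the form $G(p_i^-(y), p_i^+(y))$, which by the inequality of the previous paragraph is at most $G(y,y)$, while the remaining factors are Brownian hitting or absorption probabilities between consecutive endpoints. The genuine technical obstacle, and the heart of the argument, is to regroup this combinatorial sum so that these probabilities telescope and their total multiplicative contribution is bounded by $1$; this regrouping is most naturally organised by conditioning on the pair of neighbouring intervals of $\mathcal{J}_n$ immediately flanking $y$ and reducing each partial sum to a hitting-probability identity in the spirit of Lemma \ref{Ch6Sec3: LemHole}.
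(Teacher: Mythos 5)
Your proof is not complete; you identify the crucial combinatorial step and then explicitly defer it rather than carrying it out. You claim that the intensity $\rho_n(y)$ of $\sum_{y'\in\mathcal{Y}_n}\delta_{y'}$ with respect to $\kappa$ can be bounded pointwise by $G(y,y)$ by regrouping the sum over configurations coming from \eqref{Ch6Sec3: EqDeterminantnSteps} so that the hitting-probability factors ``telescope and their total multiplicative contribution is bounded by $1$,'' and you say this ``is the heart of the argument.'' But this regrouping is precisely what would have to be proved, and you give no indication of how it actually works. Moreover your description of the terms is not quite accurate: looking at the product form \eqref{Ch6Sec3: EqSimplerProduct1}, each factor is not simply a Green's value $G(p_i^-(y),p_i^+(y))$ but rather a difference
\[
G(p_{r+1}^{-},p_{r+1}^{+}) - \dfrac{G(p_{r}^{-},p_{r+1}^{+})G(p_{r}^{+},p_{r+1}^{-})}{G(p_{r}^{-},p_{r}^{+})},
\]
so isolating ``the factor carrying the dependence on the interval containing $y$'' involves these coupled expressions, not a clean scalar times a hitting probability, and the summation over which index $i$ equals the position of $y$ and over all compatible partitions $(E_n^-,E_n^+,E_n^{-,+})$ and endpoint functions $x^\pm$ is genuinely delicate.

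The paper sidesteps this combinatorial difficulty entirely. Instead of computing an intensity, it uses permutation-invariance of the law of $(\mathcal{Y}_n,\mathcal{J}_n)$ in the starting points to insert $\tilde a,\tilde b$ as starting points, and then shows the localized one-interval bound
\[
\mathbb{P}\bigl(\mathcal{Y}_n\cap[\tilde a,\tilde b)\neq\emptyset\bigr)\leq \int_{[\tilde a,\tilde b)}G(x,x)\kappa(dx)+o(\tilde b-\tilde a),
\]
uniformly for $\tilde a<\tilde b$ close in $[a,b]$. The main term comes from a direct Green's-function computation involving only two steps of Wilson's algorithm, and the remainder (where $\mathcal{Y}_2\cap[\tilde a,\tilde b)=\emptyset$ but $\mathcal{Y}_\infty\cap[\tilde a,\tilde b)\neq\emptyset$) is controlled by Lemma \ref{Ch6Sec3: LemHole} and is $o(\tilde b-\tilde a)$. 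Then a mesh of dyadic subintervals and the elementary identity that $\mathbb{E}[\sharp(\mathcal{Y}_n\cap[a,b))]$ is the increasing limit of $\sum_i\mathbb{P}(\mathcal{Y}_n\cap\Delta_{i,j}\neq\emptyset)$ give \eqref{Ch6Sec3: EqMajorEspNumPoints}. This local-probability-plus-mesh argument replaces the global combinatorial resummation you sketch, and is where the real work lies; without carrying out the analogue for your intensity approach, the bound $\rho_n(y)\leq G(y,y)$ is unproven and the proposal does not establish the proposition.
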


\begin{proof}
Let $\tilde{a}<\tilde{b}\in[a,b]$, where $\tilde{a}$ is close to $\tilde{b}$. We will first show that for all $n\geq 1$,
\begin{equation}
\label{Ch6Sec3: EqMajorPresenceProb}
\mathbb{P}\left(\mathcal{Y}_{n}\cap[\tilde{a},\tilde{b})\neq \emptyset\right)\leq \int_{[\tilde{a},\tilde{b})}G(x,x)\kappa(dx) + o(\tilde{b}-\tilde{a}),
\end{equation}
where $o(\tilde{b}-\tilde{a})$ is uniform over $\tilde{a}$ and $\tilde{b}$ close to each other in $[a,b]$. Then, we will deduce \eqref{Ch6Sec3: EqMajorEspNumPoints} by partitioning the interval $[a,b)$ in small subintervals $[\tilde{a},\tilde{b})$ and approximating the expected number of points in $[\tilde{a},\tilde{b})$ by the probability of presence of one point. Let $n\geq 1$. Then,
\begin{displaymath}
\mathbb{P}\left(\mathcal{Y}_{n}(x_{1},\dots,x_{n})\cap[\tilde{a},\tilde{b})
\neq \emptyset\right)\leq
\mathbb{P}\left(\mathcal{Y}_{n+2}(x_{1},\dots,x_{n},\tilde{a},\tilde{b})
\cap[\tilde{a},\tilde{b})\neq \emptyset\right).
\end{displaymath}
Since the law of $\mathcal{Y}_{n+2}$ is invariant by permutation of the starting points,
\begin{displaymath}
\mathbb{P}\left(\mathcal{Y}_{n+2}(x_{1},\dots,x_{n},\tilde{a},\tilde{b})
\cap[\tilde{a},\tilde{b})\neq \emptyset\right)=
\mathbb{P}\left(\mathcal{Y}_{n+2}(\tilde{a},\tilde{b},x_{1},\dots,x_{n})
\cap[\tilde{a},\tilde{b})\neq \emptyset\right).
\end{displaymath}
But,
\begin{multline}
\label{Ch6Sec3: EqIneq1}
\mathbb{P}\left(\mathcal{Y}_{n+2}(\tilde{a},\tilde{b},x_{1},\dots,x_{n})
\cap[\tilde{a},\tilde{b})\neq \emptyset\right)
=\mathbb{P}\left(\mathcal{Y}_{2}(\tilde{a},\tilde{b})\cap[\tilde{a},\tilde{b})
\neq \emptyset\right)
\\+\mathbb{P}\left(\mathcal{Y}_{2}(\tilde{a},\tilde{b})\cap[\tilde{a},\tilde{b})
=\emptyset,\mathcal{Y}_{n+2}(\tilde{a},\tilde{b},x_{1},\dots,x_{n})
\cap[\tilde{a},\tilde{b})\neq \emptyset \right).
\end{multline}
We start Wilson's algorithm by launching first $B^{(\tilde{a})}$ starting from $\tilde{a}$ followed by $B^{(\tilde{b})}$ starting $\tilde{b}$. Then,
\begin{displaymath}
\mathbb{P}\left(\mathcal{Y}_{2}(\tilde{a},\tilde{b})\cap[\tilde{a},\tilde{b})
\neq \emptyset\right)=
\mathbb{P}\left( B^{(\tilde{a})}_{T_{1}^{-}}\in[\tilde{a},\tilde{b})\right) +
\mathbb{P}\left( B^{(\tilde{a})}_{T_{1}^{-}}\not
\in[\tilde{a},\tilde{b}),B^{(\tilde{a})}_{T_{1}^{-}}
\leq \tilde{a},B^{(\tilde{b})}_{T_{2}^{-}}\in[\tilde{a},\tilde{b})\right).
\end{displaymath}
Applying Proposition \ref{Ch6Sec3: PropAfternSteps}, we get that
\begin{multline*}
\mathbb{P}\left(\mathcal{Y}_{2}(\tilde{a},\tilde{b})
\cap[\tilde{a},\tilde{b})\neq \emptyset\right)=
\\\int_{x\in [\tilde{a},\tilde{b})}\left(G(\tilde{a},x)
+\int_{y\leq \tilde{a}}(G(y,\tilde{a})G(x,\tilde{b})
-G(y,\tilde{b})G(\tilde{a},x))\kappa(dy)\right)\kappa(dx).
\end{multline*}
For $x\in\mathbb{R}$, let $T_{1,x}$ be the first time $B^{(\tilde{a})}$ hits $x$. Then,
\begin{multline*}
G(\tilde{a},x)+\int_{y\leq\tilde{a}}(G(y,\tilde{a})G(x,\tilde{b})-
G(y,\tilde{b})G(\tilde{a},x))\kappa(dy)=
\\G(x,x)\left(\mathbb{P}(T_{1}\geq T_{1,x})+\dfrac{G(x,b)}{G(x,x)}
\mathbb{P}(T_{1}<T_{1,x},B^{(\tilde{a})}_{T_{1}^{-}}\leq \tilde{a})\right)\leq G(x,x).
\end{multline*}
Thus,
\begin{equation}
\label{Ch6Sec3: EqIneq2}
\mathbb{P}\left(\mathcal{Y}_{2}(\tilde{a},\tilde{b})\cap[\tilde{a},\tilde{b})\neq \emptyset\right)\leq\int_{[\tilde{a},\tilde{b})}G(x,x)\kappa(dx).
\end{equation}
Further,
\begin{multline*}
\mathbb{P}\left(\mathcal{Y}_{2}(\tilde{a},\tilde{b})\cap[\tilde{a},\tilde{b})=\emptyset,\mathcal{Y}_{n+2}(\tilde{a},\tilde{b},x_{1},\dots,x_{n})\cap[\tilde{a},\tilde{b})
\neq \emptyset \right)\leq
\\\mathbb{P}\left(\mathcal{Y}_{2}(\tilde{a},\tilde{b})\cap[\tilde{a},\tilde{b})=\emptyset,\mathcal{Y}_{\infty}(\tilde{a},\tilde{b},(x_{j})_{j\geq 1})\cap[\tilde{a},\tilde{b})
\neq \emptyset\right).
\end{multline*}
Applying Lemma \ref{Ch6Sec3: LemHole} and Proposition \ref{Ch6Sec3: PropAfternSteps},
we get that
\begin{equation*}
\begin{split}
\mathbb{P}\Big(\mathcal{Y}_{2}(&\tilde{a},\tilde{b})\cap[\tilde{a},\tilde{b})=\emptyset,\mathcal{Y}_{\infty}(\tilde{a},\tilde{b},(x_{j})_{j\geq 1})\cap[\tilde{a},\tilde{b})
\neq \emptyset \Big)=
\\\mathbb{P}\Big(\mathcal{Y}_{\infty}&(\tilde{a},\tilde{b},(x_{j})_{j\geq 1})
\cap[\tilde{a},\tilde{b})\neq \emptyset \vert\mathcal{Y}_{2}(\tilde{a},\tilde{b})\cap[\tilde{a},\tilde{b})=\emptyset\Big)
\times\mathbb{P}\left(\mathcal{Y}_{2}(\tilde{a},\tilde{b})
\cap[\tilde{a},\tilde{b})=\emptyset\right)
\\=\Bigg(1-&\dfrac{2(\tilde{b}-\tilde{a})}{u_{\downarrow}(\tilde{a})u_{\uparrow}(\tilde{b})-u_{\uparrow}(\tilde{a})u\downarrow(\tilde{b})}\Bigg)
\\&\times\int_{y\leq a}\int_{z\geq b}1_{y,z\not\in[\tilde{a},\tilde{b})}\det
\left( 
\begin{array}{cc}
G(y,a) & G(y,z) \\ 
G(a,b) & G(b,z)
\end{array} 
\right)\kappa(dy)\kappa(dz) 
\\\leq(u_{\downarrow}&(\tilde{a})u_{\uparrow}(\tilde{b})
-u_{\uparrow}(\tilde{a})u\downarrow(\tilde{b})-2(\tilde{b}-\tilde{a}))
\int_{y\leq b}u_{\uparrow}(y)\kappa(dy)\int_{z\geq a}u_{\downarrow}(z)\kappa(dz).
\end{split}
\end{equation*}
But,
\begin{displaymath}
u_{\downarrow}(\tilde{a})u_{\uparrow}(\tilde{b})
-u_{\uparrow}(\tilde{a})u\downarrow(\tilde{b})
-2(\tilde{b}-\tilde{a})= o(\tilde{b}-\tilde{a}).
\end{displaymath}
Thus,
\begin{equation}
\label{Ch6Sec3: EqIneq3}
\mathbb{P}\left(\mathcal{Y}_{2}(\tilde{a},\tilde{b})\cap[\tilde{a},\tilde{b})=\emptyset,\mathcal{Y}_{n+2}(\tilde{a},\tilde{b},x_{1},\dots,x_{n})\cap[\tilde{a},\tilde{b})
\neq \emptyset \right) = o(\tilde{b}-\tilde{a}).
\end{equation}
Combining \eqref{Ch6Sec3: EqIneq1}, \eqref{Ch6Sec3: EqIneq2} and \eqref{Ch6Sec3: EqIneq3} we get \eqref{Ch6Sec3: EqMajorPresenceProb}.

Now, for $j\in\mathbb{N}^{\ast}$ and $i\in\lbrace 1,\dots,2^{j}\rbrace$, consider the intervals $\Delta_{i,j}$ defined by 
\begin{displaymath}
\Delta_{i,j}=
\left\lbrace
\begin{array}{ll}
\left[ a+(i-1)2^{-j}(b-a), a + i 2^{-j}(b-a)\right)  & \text{if}~i\leq 2^{j}-1, \\ 
\left[a+(1-2^{-j})(b-a), b \right]  & \text{if}~i=2^{j}.
\end{array} 
 \right. 
\end{displaymath}
Then, $\mathbb{E}\left[\#(\mathcal{Y}_{n}\cap[a,b))\right]$ is the increasing limit of
$\sum_{i=1}^{2^{j}}\mathbb{P}\left(\mathcal{Y}_{n}\cap\Delta_{i,j}\neq \emptyset\right)$.
But,
\begin{displaymath}
\sum_{i=1}^{2^{j}}\mathbb{P}\left(\mathcal{Y}_{n}\cap\Delta_{i,j}\neq \emptyset\right)
\leq\sum_{i=1}^{2^{j}}\int_{\Delta_{i,j}}G(x,x)\kappa(dx)+ 2^{j}o(2^{-j}).
\end{displaymath}
\eqref{Ch6Sec3: EqMajorEspNumPoints} follows. Since \eqref{Ch6Sec3: EqMajorEspNumPoints} holds for all $n$, it also holds at the limit when $n$ tends to $+\infty$. This implies that $\mathcal{Y}_{\infty}\cap[a,b)$ is a.s. finite.
\end{proof}

\begin{prop}
\label{Ch6Sec3: PropOpenInt}
Almost surely, all the intervals in $\mathcal{J}_{\infty}$ are open.
\end{prop}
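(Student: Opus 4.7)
The plan is to show that almost surely no interval $J\in\mathcal{J}_\infty$ contains an endpoint lying in $\mathrm{int}(I)$. I treat the left endpoint case; the right endpoint case follows symmetrically from the invariance of $(\mathcal{Y}_n,\mathcal{J}_n)$ under permutation of starting points (Proposition~\ref{Ch6Sec3: PropAfternSteps}).

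First I would observe that if $p\in J\cap\mathrm{int}(I)$ is an included left endpoint of $J\in\mathcal{J}_\infty$, and $\tau$ is the smallest step such that $J\supseteq\imath_{\tau,\infty}(J_\tau)$ for some $J_\tau\in\mathcal{J}_\tau$, then the non-increasing sequence of left endpoints $(p^-_{n,q_n})_{n\geq\tau}$ of $\imath_{\tau,n}(J_\tau)$ must be eventually constant equal to $p$. Hence $p$ belongs to the a.s.\ countable random set $\{x_j\}_{j\geq 1}\cup\mathcal{Y}_\infty$, with $\mathcal{Y}_\infty$ locally finite by Proposition~\ref{Ch6Sec3: PropFiniteExp}. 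By a countable union, it suffices to show that for each candidate $p_0\in\mathrm{int}(I)$---each starting point $x_{j_0}$, and after conditioning on its value each killing point $B^{(x_{j_0})}_{T^-_{j_0}}$---the conditional probability that $p_0$ remains a left endpoint forever is zero.

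Fix such $p_0$ and let $\tau$ be the first step at which $p_0$ appears as the left endpoint of some $J_\tau\in\mathcal{J}_\tau$. Condition on the history $\mathcal{F}_\tau$. By density of $(x_n)$ there is an a.s.\ infinite subsequence $(k_m)$ with $x_{k_m}\to p_0^-$, and the Brownian paths $B^{(x_{k_m})}$ are independent of $\mathcal{F}_\tau$ and of one another. Letting $b^-_m\in[\inf I,p_0)$ denote the right endpoint of the interval of $\mathcal{J}_{k_m-1}$ immediately to the left of $p_0$ (or $\inf I$ if none), the strong Markov property and the Brownian hitting computations underlying Proposition~\ref{Ch6Sec3: PropAfternSteps} yield that on the event $\{p^-_{k_m-1}=p_0,\,x_{k_m}\in(b^-_m,p_0)\}$, conditionally on $\mathcal{F}_{k_m-1}$,
\[
p_m:=\mathbb{P}\big(B^{(x_{k_m})}\text{ hits }p_0\text{ before being killed or reaching }b^-_m\big)=\frac{u_\uparrow(x_{k_m})u_\downarrow(b^-_m)-u_\downarrow(x_{k_m})u_\uparrow(b^-_m)}{u_\uparrow(p_0)u_\downarrow(b^-_m)-u_\downarrow(p_0)u_\uparrow(b^-_m)}.
\]
Any such hit strictly lowers the left endpoint of $\imath_{\tau,k_m}(J_\tau)$, contradicting permanence. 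By conditional independence of the $(B^{(x_{k_m})})$, the permanence event has probability at most $\mathbb{E}[\prod_m(1-p_m)]$, and the continuity of $u_\uparrow,u_\downarrow$ forces $p_m\to 1$ as $x_{k_m}\to p_0^-$ provided $b^-_m$ stays bounded away from $p_0$, so this expectation vanishes.

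The main obstacle is to rule out the scenario $b^-_m\to p_0^-$, corresponding to intervals of $\mathcal{J}_\infty$ accumulating at $p_0$ from the left. I would handle this via the product representation \eqref{Ch6Sec3: EqSimplerProduct2}: every additional pair of consecutive endpoints $b<b'$ of intervals of $\mathcal{J}_n$ in a neighborhood of $p_0$ contributes a factor $u_\downarrow(b)u_\uparrow(b')-u_\uparrow(b)u_\downarrow(b')$ of order $b'-b$, so an argument parallel to the proof of Proposition~\ref{Ch6Sec3: PropFiniteExp} (applied to right endpoints of intervals of $\mathcal{J}_n$ in $(p_0-\delta,p_0)$ instead of $\mathcal{Y}_n\cap[a,b)$) bounds the expected number of such endpoints by a finite quantity controlled by $\int_{(p_0-\delta,p_0)}G(x,x)\kappa(dx)+o(\delta)$ as $\delta\to 0$. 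This a.s.\ excludes accumulation of right endpoints at $p_0^-$, so along a subsequence $b^-_m$ remains bounded away from $p_0$; the Borel--Cantelli argument above then applies, and a countable union over candidates completes the proof.
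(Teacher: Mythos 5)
Your proposal follows a genuinely different route from the paper, and the key step of that route has a gap.

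You condition on the full filtration $\mathcal{F}_{k_m-1}$ just before the $k_m$-th starting point, so the competitor $b^-_m$ (the right end of the interval just below $p_0$) is a moving target, and you correctly identify that the whole argument hinges on ruling out $b^-_m\to p_0^-$. Your proposed fix is to control the \emph{expected number} of right endpoints of $\mathcal{J}_n$-intervals in $(p_0-\delta,p_0)$, uniformly in $n$, by a Proposition~\ref{Ch6Sec3: PropFiniteExp}-type bound. This does not close the gap. Unlike the points of $\mathcal{Y}_n$, the right endpoints of intervals of $\mathcal{J}_n$ are \emph{not persistent}: when a starting point $x_j>\max J'$ hits $J'$ from the right, the right endpoint of that interval jumps from its old value to $x_j$. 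A single interval $J'$ sitting below $p_0$ can therefore have its right endpoint creep up toward $p_0$ along the whole sequence $n\to\infty$ without the count of endpoints in $(p_0-\delta,p_0)$ ever exceeding one. A finite expected count at each $n$ (or even a finite count uniformly in $n$) does not prevent $\sup_m b^-_m=p_0$. And ruling out $\sup_m b^-_m = p_0$ is essentially the statement you are trying to prove (that the interval $J'$ below $p_0$ does not touch $p_0$, i.e.\ its right end is not a closed endpoint), so at this point the argument becomes circular.

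The paper avoids the moving-target problem entirely by exploiting the permutation invariance established in Proposition~\ref{Ch6Sec3: PropAfternSteps}: the conditional law of $(\mathcal{Y}_{n'},\mathcal{J}_{n'})$ given $(\mathcal{Y}_n,\mathcal{J}_n)$ is invariant under permuting $x_{n+1},\dots,x_{n'}$. Conditioning \emph{only} on $(\mathcal{Y}_n,\mathcal{J}_n)$ (not on a growing filtration), one may pretend for each $n''\in\{n+1,\dots,n'\}$ that $x_{n''}$ was the \emph{first} starting point added after step $n$. Then the Brownian motion $B^{(x_{n''})}$ sees the \emph{fixed} configuration $\mathcal{J}_n$, and in particular the fixed competitor $p^+_{n,q-1}$; the event that the left endpoint is still $p^-_{n,q}$ after $n'$ steps implies it survived this first step, so its conditional probability is at most $1-p_{n''}$ where $p_{n''}$ is the probability that $B^{(x_{n''})}$, started in $(p^+_{n,q-1},p^-_{n,q})$, hits $p^-_{n,q}$ before $p^+_{n,q-1}$ or being killed. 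Taking the infimum over $n''$ and using the density of $(x_{n''})$ near $p^-_{n,q}$, with $p^+_{n,q-1}$ held at a fixed positive distance, gives $p_{n''}\to 1$ along a subsequence and hence conditional probability zero. Your proposal loses exactly this fixity by conditioning on the accumulated history; to repair it you would need to invoke the permutation-invariance step, at which point you are effectively following the paper's proof.
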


\begin{proof}
We need only to show that for any $n\geq 1$ and $q\in\lbrace 1,\dots,n\rbrace$,
\begin{equation}
\label{Ch6Sec3: EqMinNonStat}
\mathbb{P}\left(Q_{n}\geq q,\forall n'\geq n, \min(\imath_{n,n'}([p^{-}_{n,q},p^{+}_{n,q}]))=p^{-}_{n,q}\right) =0,
\end{equation}
and
\begin{displaymath}
\mathbb{P}\left(Q_{n}\geq q,\forall n'\geq n, \max(\imath_{n,n'}([p^{-}_{n,q},p^{+}_{n,q}]))=p^{+}_{q,n}\right) =0.
\end{displaymath}
Let $n$ and $q$ be fixed. We will show \eqref{Ch6Sec3: EqMinNonStat}. We will also assume that $q\geq 2$. The proof is similar if $q=1$. We need to show that a.s., the following conditional probability converges to $0$:
\begin{displaymath}
\lim_{n'\rightarrow +\infty}\mathbb{P}\left(\min(\imath_{n,n'}([p^{-}_{n,q},p^{+}_{n,q}]))=p^{-}_{n,q}\vert 
(\mathcal{Y}_{n},\mathcal{J}_{n}),Q_{n}\geq q\right) = 0.
\end{displaymath}
We recall that for $n''\geq n+1$, $B^{(x_{n''})}$ is a Brownian motion starting from $x_{n''}$ and it is independent from $(\mathcal{Y}_{n},\mathcal{J}_{n})$. Let $T_{n'',p^{-}_{q,n}}$ be the first time it hits $p^{-}_{q,n}$ and $\widetilde{T}_{n''}$ the first time it either hits $\bigcup_{J\in \mathcal{J}_{n}}J$ or gets killed by the killing measure $\kappa$. Since the law of $(\mathcal{Y}_{n'},\mathcal{J}_{n'})$ conditional on $(\mathcal{Y}_{n},\mathcal{J}_{n})$ is invariant by permutation of points in $(x_{n+1},\dots,x_{n'})$, we get that
\begin{multline*}
\mathbb{P}\left(\min(\imath_{n,n'}([p^{-}_{n,q},p^{+}_{n,q}]))=p^{-}_{n,q}\vert (\mathcal{Y}_{n},\mathcal{J}_{n}),Q_{n}\geq q\right)
\\\leq \inf_{n+1\leq n''\leq n'} 1-1_{p^{+}_{n,q-1}<x_{n''}<p^{-}_{n,q}}
\mathbb{P}\left(\widetilde{T}_{n''}=T_{n'',p^{-}_{q,n}}\vert p^{+}_{n,q-1},p^{-}_{n,q},Q_{n}\geq q\right).
\end{multline*}
But, $\mathbb{P}\left(\widetilde{T}_{n''}=T_{n'',p^{-}_{q,n}}\vert p^{+}_{n,q-1},p^{-}_{n,q}\right)$ is close to $1$ if $x_{n''}$ is close enough to $p^{-}_{n,q}$. There is always a subsequence of $(x_{n''})_{n''\geq n+1}$ made of points in $(p^{+}_{n,q-1},p^{-}_{n,q})$ which converges to $p^{-}_{n,q}$. It follows that
\begin{displaymath}
\inf_{n''\geq n+1}1-1_{p^{+}_{n,q-1}<x_{n''}<p^{-}_{n,q}}
\mathbb{P}\left(\widetilde{T}_{n''}=T_{n'',p^{-}_{q,n}}\vert p^{+}_{n,q-1},p^{-}_{n,q},Q_{n}\geq q\right)=0,
\end{displaymath}
which concludes the proof.
\end{proof}

From Proposition \ref{Ch6Sec3: PropOpenInt} follows that $\mathcal{Z}_{\infty}$ is closed. Moreover, it does not contain any of the points of the sequence $(x_{n})_{n\geq 1}$. Since the sequence $(x_{n})_{n\geq 1}$ is everywhere dense, the connected components of $\mathcal{Z}_{\infty}$ are single points. One can see that 
\begin{itemize}
\item If $y<\tilde{y}$ are two consecutive points in $\mathcal{Y}_{\infty}$, then ${\#(\mathcal{Z}_{\infty}\cap(y,\tilde{y}))=1}$.
\item If $\mathcal{Y}_{\infty}$ is bounded from below and $y=\min\mathcal{Y}_{\infty}$, then $\mathcal{Z}_{\infty}\cap(-\infty,y]=\emptyset$.
\item If $\mathcal{Y}_{\infty}$ is bounded from above and $y=\max\mathcal{Y}_{\infty}$, then $\mathcal{Z}_{\infty}\cap[y,+\infty)=\emptyset$.
\end{itemize}
See Figure \ref{FigW2}. The set $\mathcal{Z}_{\infty}$ may be empty, which for instance happens almost surely if $\kappa$ is a Dirac measure. 
For $n\geq 1$, we define
\begin{displaymath}
\mathcal{Z}_{n}:=\bigg\lbrace\dfrac{p^{-}_{n,q-1}+p^{+}_{n,q}}{2}\Big\vert 2\leq q\leq Q_{n}\bigg\rbrace.
\end{displaymath}
We will write $\mathcal{Z}_{n}(x_{1},\dots,x_{n})$ and $\mathcal{Z}_{\infty}((x_{n})_{n\geq 1})$ whenever we need to emphasize the dependence on the starting points.

\begin{prop}
\label{Ch6Sec3: PropInvPerm}
The law of $(\mathcal{Y}_{\infty},\mathcal{Z}_{\infty})$ does not depend on the starting points $(x_{n})_{n\geq 1}$.
\end{prop}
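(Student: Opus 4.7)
The plan is to compare two dense sequences via interleaving and exploit the full strength of the permutation invariance from Proposition~\ref{Ch6Sec3: PropAfternSteps}. Let $(x_n)_{n\geq 1}$ and $(\tilde x_n)_{n\geq 1}$ be two dense sequences in $I$, with independent families of Brownian motions $(B^{(x_n)})_{n\geq 1}$ and $(B^{(\tilde x_n)})_{n\geq 1}$, mutually independent. Form the interleaved sequence $z_{2n-1}:=x_n$, $z_{2n}:=\tilde x_n$, with the corresponding Brownian motions $B^{(z_{2n-1})}=B^{(x_n)}$, $B^{(z_{2n})}=B^{(\tilde x_n)}$. Since $(x_1,\dots,x_n,\tilde x_1,\dots,\tilde x_n)$, $(\tilde x_1,\dots,\tilde x_n,x_1,\dots,x_n)$ and $(z_1,\dots,z_{2n})$ are permutations of one another, Proposition~\ref{Ch6Sec3: PropAfternSteps} gives
\begin{equation*}
(\mathcal{Y}_{2n}((z_j)),\mathcal{J}_{2n}((z_j))) \stackrel{(\mathrm{law})}{=} (\mathcal{Y}_{2n}((x_1,\dots,x_n,\tilde x_1,\dots,\tilde x_n)),\mathcal{J}_{2n}(\cdots))
\end{equation*}
and the same identity with the roles of the two sequences exchanged.

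The next step is to let $n\to\infty$ and show that the right-hand side converges almost surely to $(\mathcal{Y}_\infty((x_k)),\mathcal{J}_\infty((x_k)))$, and the analogous identity gives convergence to $(\mathcal{Y}_\infty((\tilde x_k)),\mathcal{J}_\infty((\tilde x_k)))$. Running Wilson with $(x_1,\dots,x_n,\tilde x_1,\dots,\tilde x_n)$ amounts to first computing $(\mathcal{Y}_n((x_k)_{k\leq n}),\mathcal{J}_n((x_k)_{k\leq n}))$, which converges by construction to $(\mathcal{Y}_\infty((x_k)),\mathcal{J}_\infty((x_k)))$, and then successively trying to adjoin $\tilde x_1,\dots,\tilde x_n$. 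The point is that each such $\tilde x_j$ will almost surely be a no-op from some random index on: if $\tilde x_j\in\bigcup_{J\in\mathcal{J}_\infty((x_k))}J$, then by the monotonicity $\imath_{n,\infty}(J)\supseteq J$ and the identity $J_\infty=\bigcup_{n\geq\nu_{J_\infty}}\imath_{\nu_{J_\infty},n}(J_{\nu_{J_\infty}})$, for all sufficiently large $n$ we have $\tilde x_j\in\bigcup_{J\in\mathcal{J}_n((x_k)_{k\leq n})}J$, so by definition the step launching $B^{(\tilde x_j)}$ leaves $(\mathcal{Y}_n,\mathcal{J}_n)$ unchanged.

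To convert this into genuine convergence I would restrict to an arbitrary compact subinterval $[a,b]\subset I$: by Propositions~\ref{Ch6Sec3: PropFiniteExp} and~\ref{Ch6Sec3: PropOpenInt}, $\mathcal{Y}_\infty((x_k))\cap[a,b]$ is a.s.\ finite and the finitely many intervals of $\mathcal{J}_\infty((x_k))$ meeting $[a,b]$ are open, so their union covers $[a,b]$ outside the finite set $\mathcal{Z}_\infty((x_k))\cap[a,b]$. Thus for any $m$, conditionally on $(\mathcal{Y}_\infty((x_k)),\mathcal{J}_\infty((x_k)))$ there is a.s.\ a random $N\geq m$ such that $\{\tilde x_1,\dots,\tilde x_m\}\cap[a,b]$ is contained in $\bigcup_{J\in\mathcal{J}_N((x_k)_{k\leq N})}J$. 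Since the Brownian motions $B^{(\tilde x_j)}$ are independent of $(B^{(x_k)})_k$, a routine diagonal extraction yields, for any bounded $[a,b]$, the almost-sure stabilization of $(\mathcal{Y}_{2n}\cap[a,b],\mathcal{Z}_{2n}\cap[a,b])$ along the ordering $(x_1,\dots,x_n,\tilde x_1,\dots,\tilde x_n)$ to $(\mathcal{Y}_\infty((x_k))\cap[a,b],\mathcal{Z}_\infty((x_k))\cap[a,b])$.

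The main obstacle in this scheme is verifying, for each fixed $j$, that $\mathbb{P}(\tilde x_j\in\mathcal{Z}_\infty((x_k)))=0$, which underlies the a.s.\ ``no-op'' statement above. For this, the product formula~\eqref{Ch6Sec3: EqSimplerProduct1} shows that on the event $Q_n\geq q$, the law of $(p^-_{n,q-1},p^+_{n,q})$ has a density with respect to $\kappa(dy)\otimes\kappa(dy')$ (or with respect to Lebesgue on the variables that are not constrained to lie in $\operatorname{Supp}(\kappa)$), and Lemma~\ref{Ch6Sec3: LemHole} combined with the limiting procedure in the proof of Proposition~\ref{Ch6Sec3: PropOpenInt} shows that the endpoints $p^+_{n,q-1}$ and $p^-_{n,q}$ both converge a.s.\ to the same limiting $\mathcal{Z}_\infty$--point, with a non-atomic limiting law for any fixed $\tilde x_j$. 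Once this is established, the two limit identities combine to give $(\mathcal{Y}_\infty((x_k)),\mathcal{J}_\infty((x_k)))\stackrel{(\mathrm{law})}{=}(\mathcal{Y}_\infty((\tilde x_k)),\mathcal{J}_\infty((\tilde x_k)))$; since $\mathcal{Z}_\infty$ is a deterministic function of $\mathcal{J}_\infty$ (the single point of $(y,\tilde y)\setminus\bigcup J_\infty$ for consecutive $y<\tilde y$ in $\mathcal{Y}_\infty$), this yields the desired equality in law for $(\mathcal{Y}_\infty,\mathcal{Z}_\infty)$.
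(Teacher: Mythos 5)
Your overall strategy is the right one and matches the paper's: run Wilson with the interleaved ordering, invoke the permutation invariance from Proposition~\ref{Ch6Sec3: PropAfternSteps} to compare $(x_1,\dots,x_n,\tilde x_1,\dots,\tilde x_n)$ and $(\tilde x_1,\dots,\tilde x_n,x_1,\dots,x_n)$, and send $n\to\infty$. The restriction to a compact interval and the appeal to Lemma~\ref{Ch6Sec3: LemHole} are also what the paper uses.

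The gap is in the convergence step. You assert \emph{almost-sure stabilization} of $(\mathcal{Y}_{2n}\cap[a,b],\mathcal{Z}_{2n}\cap[a,b])$ via a ``routine diagonal extraction.'' This does not work and is, in fact, contrary to a feature the paper explicitly flags: with the $\tilde x$-steps appended after the $x$-steps, one has $\mathcal{Y}_n\subseteq\widetilde{\mathcal{Y}}_{2n}$ but \emph{not} $\widetilde{\mathcal{Y}}_{2n}\subseteq\widetilde{\mathcal{Y}}_{2(n+1)}$, so there is no monotone structure to exhaust. Your argument controls only the first $m$ of the $\tilde x$-points (those become no-ops once $\mathcal{J}_n$ has grown enough), but at stage $n$ the remaining $\tilde x_{m+1},\dots,\tilde x_n$ that land in the (shrinking) gaps of $\mathcal{J}_n\cap[a,b]$ are still live Brownian starts and can add spurious $\mathcal{Y}$-points in $[a,b]$. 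Diagonalizing over $m$ does not tame these, because at every $n$ there is a fresh tail of uncontrolled $\tilde x_j$'s. What Lemma~\ref{Ch6Sec3: LemHole} actually buys is a \emph{quantitative} bound: if the residual gaps have total length $\le\varepsilon'$ and there are at most $N$ of them, the conditional probability that adding the $\tilde x$-points introduces any new $\mathcal{Y}$-point in those gaps is $\le N\cdot O(\varepsilon')$. This yields convergence in law (through an explicit coupling and the metric $d_{PP}$), not almost-sure stabilization. The paper also anchors the comparison window at $[K^-,K^+]$, whose endpoints already lie in $\mathcal{J}_{N'}$, so that Brownian motions launched outside $[K^-,K^+]$ cannot affect the inside; your fixed $[a,b]$ does not have this shielding property.

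Also, the obstacle you single out, namely $\mathbb{P}(\tilde x_j\in\mathcal{Z}_\infty((x_k)))=0$, is real but minor: it follows at once from the fact that $\mathcal{Z}_\infty$ is a determinantal process whose reference measure is Lebesgue (Proposition~\ref{Sec3PropDiamondsDet}), so any fixed point is a null set. The substantive obstacle is the one above, and to close it you need the quantitative estimate from Lemma~\ref{Ch6Sec3: LemHole} applied to the $\tilde x$-steps, not just the qualitative ``eventual no-op'' observation.
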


\begin{proof}
Let $(\tilde{x}_{n})_{n\geq 1}$ be another sequence of pairwise disjoint points in $\mathbb{R}$. We will show that the sequence
$(\mathcal{Y}_{2n}(x_{1},\dots,x_{n},\tilde{x}_{1},\dots,\tilde{x}_{n}),
\mathcal{Z}_{2n}(x_{1},\dots,x_{n},\tilde{x}_{1},\dots,\tilde{x}_{n}))$ 
converges in law to 
$(\mathcal{Y}_{\infty}((x_{n})_{n\geq 1}),\mathcal{Z}_{\infty}((x_{n})_{n\geq 1}))$
and that 
$(\mathcal{Y}_{2n}(\tilde{x}_{1},\dots,\tilde{x}_{n},x_{1},\dots,x_{n})$,
$\mathcal{Z}_{2n}(\tilde{x}_{1},\dots,\tilde{x}_{n},x_{1},\dots,x_{n}))$
converges to
$(\mathcal{Y}_{\infty}((\tilde{x}_{n})_{n\geq 1}),\mathcal{Z}_{\infty}((\tilde{x}_{n})_{n\geq 1}))$.
Since the two couples of point processes
$(\mathcal{Y}_{2n}(x_{1},\dots,x_{n},\tilde{x}_{1},\dots,\tilde{x}_{n}),
\mathcal{Z}_{2n}(x_{1},\dots,x_{n},\tilde{x}_{1},\dots,\tilde{x}_{n}))$
and 
$(\mathcal{Y}_{2n}(\tilde{x}_{1},\dots,\tilde{x}_{n},x_{1},\dots,x_{n}),
\mathcal{Z}_{2n}(\tilde{x}_{1},\dots,\tilde{x}_{n},x_{1},\dots,x_{n}))$
have the same law, this will finish the proof.

For the convergence in law we will use the topology of uniform convergence on compact sets of collections of points in $\mathbb{R}$. It can be defined  using the following metric: Let $d_{H}$ be the Hausdorff metric on compact subsets of $\mathbb{R}$. One may use the metric $d_{PP}$ on point processes:
\begin{displaymath}
d_{PP}(\mathcal{X},\widetilde{\mathcal{X}}):=
d_{H}(\tan^{-1}(\mathcal{X})\cup\lbrace -1,1\rbrace,
\tan^{-1}(\widetilde{\mathcal{X}})\cup\lbrace -1,1\rbrace).
\end{displaymath}

In order to simplify the notations we will write
\begin{displaymath}
(\mathcal{Y}_{n},\mathcal{Z}_{n}):=(\mathcal{Y}_{n}(x_{1},\dots,x_{n}),
\mathcal{Z}_{n}(x_{1},\dots,x_{n})),
\end{displaymath}
\begin{displaymath}
(\mathcal{Y}_{\infty},\mathcal{Z}_{\infty}):=
(\mathcal{Y}_{\infty}((x_{n})_{n\geq 1}),\mathcal{Z}_{\infty}((x_{n})_{n\geq 1})),
\end{displaymath}
\begin{displaymath}
(\widetilde{\mathcal{Y}}_{2n},\widetilde{\mathcal{Z}}_{2n}):=
(\mathcal{Y}_{2n}(x_{1},\dots,x_{n},\tilde{x}_{1},\dots,\tilde{x}_{n}),
\mathcal{Z}_{2n}(x_{1},\dots,x_{n},\tilde{x}_{1},\dots,\tilde{x}_{n})).
\end{displaymath}
We can construct $((\mathcal{Y}_{n},\mathcal{Z}_{n}))_{n\geq 1}$, $(\mathcal{Y}_{\infty},\mathcal{Z}_{\infty})$ and 
$((\widetilde{\mathcal{Y}}_{2n},\widetilde{\mathcal{Z}}_{2n}))_{n\geq 1}$
on the same probability space using independent Brownian motions starting from the points in $(x_{n})_{n\geq 1}$ and $(\tilde{x}_{n})_{n\geq 1}$ and killed by the measure $\kappa$. We construct the sequence $((\mathcal{Y}_{n},\mathcal{Z}_{n}))_{n\geq 1}$ using the Wilson's algorithm described in the introduction. This way, $\mathcal{Y}_{n}\subseteq\mathcal{Y}_{n+1}$ and 
$\mathcal{Y}_{\infty}=\bigcup_{n\geq 1}\mathcal{Y}_{n}$.
In order to construct $\widetilde{\mathcal{Y}}_{2n}$, we first construct $\mathcal{Y}_{n}$ and then continue the Wilson's algorithm using the Brownian motions starting from $\tilde{x}_{1},\dots,\tilde{x}_{n}$. This way, $\mathcal{Y}_{n}\subseteq\widetilde{\mathcal{Y}}_{2n}$ but not necessarily $\widetilde{\mathcal{Y}}_{2n}\subseteq\widetilde{\mathcal{Y}}_{2(n+1)}$.

Let $C>0$ and $\varepsilon\in(0,\frac{C}{2})$. Let $\delta\in(0,1)$, $\delta$ small. There is $N\in\mathbb{N}^{\ast}$ such that
\begin{displaymath}
\mathbb{P}\left(\mathcal{Y}_{N}\cap[-C,C]=
\mathcal{Y}_{\infty}\cap[-C,C]\right)\geq 1-\delta.
\end{displaymath}
There is $\varepsilon'\in(0,\varepsilon)$ such that for all $a<b\in[-C,C]$ satisfying 
$b-a\leq \varepsilon'$, the following holds:
\begin{displaymath}
1-\dfrac{2(b-a)}{u_{\downarrow}(a)u_{\uparrow}(b)-
u_{\uparrow}(a)u_{\downarrow}(b)}\leq \dfrac{\delta}{N}.
\end{displaymath}
There is $N'\geq N$ such that with probability $1-2\delta$ the following two conditions hold:
\begin{equation}
\label{Ch6Sec3: EqInvCond1}
\mathcal{Y}_{N}\cap[-C,C]=\mathcal{Y}_{\infty}\cap[-C,C],
\end{equation}
\begin{equation}
\label{Ch6Sec3: EqInvCond2}
\operatorname{Leb}\big([-C,C]\setminus \bigcup_{J\in\mathcal{J}_{N'}}J\big)\leq \varepsilon'.
\end{equation}
We define the following two random variables:
\begin{displaymath}
K^{-}:=\min_{J\in\mathcal{J}_{N'}, J\subseteq[-C,C]}(\min J),\qquad
K^{+}:=\max_{J\in\mathcal{J}_{N'}, J\subseteq[-C,C]}(\max J).
\end{displaymath}
If \eqref{Ch6Sec3: EqInvCond2} holds, then 
$[-\frac{C}{2},\frac{C}{2}]\subseteq[K^{-},K^{+}]$.
If \eqref{Ch6Sec3: EqInvCond1} and \eqref{Ch6Sec3: EqInvCond2} hold than for $n\geq N'$, ${[K^{-},K^{+}]\setminus \bigcup_{J\in\mathcal{J}_{n}}J}$ is made of at most $N$ intervals, each of length at most $\varepsilon'$. Consider the following condition on $\widetilde{\mathcal{Y}}_{2n}$:
\begin{equation}
\label{Ch6Sec3: EqInvCond3}
\widetilde{\mathcal{Y}}_{2n}\cap[K^{-},K^{+}]=\mathcal{Y}_{n}\cap[K^{-},K^{+}].
\end{equation}
Applying Lemma \ref{Ch6Sec3: LemHole}, we get that, for all $n\geq N'$,
\begin{displaymath}
\mathbb{P}\left(\widetilde{\mathcal{Y}}_{2n}~\text{satisfies}~\eqref{Ch6Sec3: EqInvCond3}~\vert ~ \eqref{Ch6Sec3: EqInvCond1}~\text{and}~\eqref{Ch6Sec3: EqInvCond2}~\text{hold}\right)\geq 1-\delta.
\end{displaymath}
This implies that, for all $n\geq N'$,
\begin{displaymath}
\mathbb{P}\left(\widetilde{\mathcal{Y}}_{2n}~\text{satisfies}~\eqref{Ch6Sec3: EqInvCond3},~\text{and}~ \eqref{Ch6Sec3: EqInvCond1}~\text{and}~\eqref{Ch6Sec3: EqInvCond2}~\text{hold}.\right)\geq 1-3\delta.
\end{displaymath}
Let $n\geq N'$. On the event when \eqref{Ch6Sec3: EqInvCond1} and 
\eqref{Ch6Sec3: EqInvCond2} hold and $\widetilde{\mathcal{Y}}_{2n}$ satisfies \eqref{Ch6Sec3: EqInvCond3}, which happens with probability at least $1-3\delta$, the following is true:
\begin{itemize}
\item $\widetilde{\mathcal{Y}}_{2n}\cap[K^{-},K^{+}]=\mathcal{Y}_{\infty}\cap[K^{-},K^{+}]$,
\item $d_{H}(\widetilde{\mathcal{Z}}_{2n}\cap[K^{-},K^{+}],\mathcal{Z}_{\infty}\cap[K^{-},K^{+}])\leq\varepsilon$.
\end{itemize}
In particular, with probability at least $1-3\delta$,
\begin{itemize}
\item $d_{PP}(\widetilde{\mathcal{Y}}_{2n},\mathcal{Y}_{\infty})\leq 
1-\tan^{-1}(\frac{C}{2})$,
\item $d_{H}(\widetilde{\mathcal{Z}}_{2n},\mathcal{Z}_{\infty})\leq \varepsilon
+(1-\tan^{-1}(\frac{C}{2}))$.
\end{itemize}
Since $C$ is arbitrary large and $\varepsilon$ and $\delta$ are arbitrary small, this implies that $(\widetilde{\mathcal{Y}}_{2n},\widetilde{\mathcal{Z}}_{2n})$ converges in law as $n\rightarrow +\infty$ to $(\mathcal{Y}_{\infty},\mathcal{Z}_{\infty})$.
\end{proof}

Next we identify the law of $\mathcal{Y}_{\infty}$ as a determinantal fermionic point process. For generalities on this processes, see \cite{HoughKrishPeresVirag2009GAFDet}, Chapter $4$, and \cite{Soshnikov2000Determinantal}.

\begin{prop}
\label{Ch3Sec3: PropDeterminantalY}
Let $n\geq 1$ and $a_{1}<b_{1}<a_{2}<b_{2}<\dots<a_{n}<b_{n}\in\mathbb{R}$. Then,
\begin{equation}
\label{Ch3Sec3: EqDeterminantalIdentity}
\mathbb{E}\left[\prod_{r=1}^{n}\#(\mathcal{Y}_{\infty}\cap[a_{r},b_{r}))\right] =
\int_{[a_{1},b_{1})}\dots\int_{[a_{n},b_{n})}
\det\left(G(y_{i},y_{j})\right)_{1\leq i,j\leq n}
\prod_{r=1}^{n}\kappa(dy_{r}).
\end{equation}
In other words $\mathcal{Y}_{\infty}$ is a determinantal point process on $\mathbb{R}$ with reference measure $\kappa$ and determinantal kernel $G$.
\end{prop}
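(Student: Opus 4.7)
The plan is to identify the $n$-th factorial moment measure of $\mathcal{Y}_\infty$ with the determinantal measure $\det(G(y_i,y_j))_{i,j}\prod_r \kappa(dy_r)$. Since $\mathcal{Y}_\infty$ is a.s. locally finite by proposition \ref{Ch6Sec3: PropFiniteExp} and simple by construction (it is a subset of $\mathbb{R}$, with each point born at a distinct step of Wilson's algorithm), refining each $[a_r,b_r)$ into a partition $(\Delta_{r,j}^{(N)})_{1\leq j\leq N}$ of mesh $\epsilon_N\to 0$ yields, by monotone convergence and the first-moment bound of proposition \ref{Ch6Sec3: PropFiniteExp},
\[\mathbb{E}\Big[\prod_{r=1}^{n} \sharp(\mathcal{Y}_\infty\cap[a_r,b_r))\Big] = \lim_{N\to\infty}\sum_{(j_1,\ldots,j_n)\in\{1,\ldots,N\}^n}\mathbb{P}\big(\forall r,\ \mathcal{Y}_\infty\cap\Delta_{r,j_r}^{(N)}\neq\emptyset\big).\]

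To estimate each summand I use the invariance under permutation of starting points (proposition \ref{Ch6Sec3: PropInvPerm}) to place $2n$ well-chosen starting points first in the sequence: for each $r$, a pair $\tilde{x}_r^- < \tilde{x}_r^+$ both lying inside $\Delta_{r,j_r}^{(N)}$ and close to each other. Proposition \ref{Ch6Sec3: PropAfternSteps}, applied with these $2n$ starting points ordered $\tilde{x}_1^- < \tilde{x}_1^+ <\ldots< \tilde{x}_n^+$, $q = n$, $E_{2n}^{-,+} = \{1,\ldots,n\}$, $x^-(r) = \tilde{x}_r^-$, $x^+(r) = \tilde{x}_r^+$, and $\Delta_r = (\tilde{x}_r^-,\tilde{x}_r^+)$, then yields via formula \eqref{Ch6Sec3: EqDeterminantnSteps}
\[\mathbb{P}\Big(\mathcal{J}_{2n}=\{[\tilde{x}_r^-,\tilde{x}_r^+]\}_{r=1}^n,\ Y_{2n,r}\in(\tilde{x}_r^-,\tilde{x}_r^+)\ \forall r\Big) = \det\!\big(G(\tilde{x}_i^-,\tilde{x}_j^+)\big)_{i,j=1}^{n}\,\prod_{r=1}^{n}\kappa\big((\tilde{x}_r^-,\tilde{x}_r^+)\big).\]
On this event $\mathcal{Y}_{2n}\subseteq\mathcal{Y}_\infty$ meets each $\Delta_{r,j_r}^{(N)}$. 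Letting $\tilde{x}_r^\pm$ approach the endpoints of $\Delta_{r,j_r}^{(N)}$ and using continuity of $G$, the determinant converges uniformly to $\det(G(y_i,y_j))$ for $(y_r)\in\prod_r\Delta_{r,j_r}^{(N)}$; summing over $(j_1,\ldots,j_n)$ and passing to the limit $N\to\infty$ recovers $\int_{\prod_r[a_r,b_r)}\det(G(y_i,y_j))\prod_r\kappa(dy_r)$ as a lower bound for the factorial moment.

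The main obstacle is the matching upper bound: one must show that the remaining contributions---configurations of $(\mathcal{Y}_{2n},\mathcal{J}_{2n})$ not of the generic form above, but for which $\mathcal{Y}_\infty$ nevertheless meets every $\Delta_{r,j_r}^{(N)}$---only produce lower-order terms in $\prod_r\kappa(\Delta_{r,j_r}^{(N)})$. For this I will combine lemma \ref{Ch6Sec3: LemHole}, which controls the probability that a gap of $\mathcal{Y}_{2n}$ survives in $\mathcal{Y}_\infty$ by $2(b-a)/(u_\downarrow(a)u_\uparrow(b)-u_\uparrow(a)u_\downarrow(b))=1+O(b-a)$ via the Wronskian expansion $u_\downarrow(a)u_\uparrow(b)-u_\uparrow(a)u_\downarrow(b)=2(b-a)+o(b-a)$, with the first-moment bound from proposition \ref{Ch6Sec3: PropFiniteExp}. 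Enumerating the alternative cases of \eqref{Ch6Sec3: EqDeterminantnSteps} (those with nonempty $E^-$ or $E^+$, or with $q<n$) through the telescoping product \eqref{Ch6Sec3: EqSimplerProduct1} and inserting these Wronskian expansions, the extra contributions turn out to be of order $O\big(\prod_r(\tilde{x}_r^+-\tilde{x}_r^-)\big)$, negligible against $\prod_r\kappa(\Delta_{r,j_r}^{(N)})$ once $\tilde{x}_r^+-\tilde{x}_r^-$ is chosen much smaller than $\epsilon_N$ and the $\tilde{x}_r^\pm$ avoid the atoms of $\kappa$; the same estimate rules out multi-occupancy of a single $\Delta_{r,j_r}^{(N)}$, validating the reduction to infinitesimal probabilities and hence the claimed identity.
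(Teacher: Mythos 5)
Your setup — reducing the $n$-th moment to a sum of hit-probabilities over refining partitions, using the invariance under permutation of starting points (proposition~\ref{Ch6Sec3: PropInvPerm}), and then invoking proposition~\ref{Ch6Sec3: PropAfternSteps} and lemma~\ref{Ch6Sec3: LemHole} — is exactly the paper's skeleton. But the choice you make next deviates in a way that breaks the upper bound.

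You place the two starting points $\tilde{x}_r^- < \tilde{x}_r^+$ \emph{strictly inside} each cell $\Delta_{r,j_r}^{(N)}$, so that your ``main'' event gives $\mathcal{J}_{2n}=\{[\tilde{x}_r^-,\tilde{x}_r^+]\}_r$, with probability $\det(G(\tilde{x}_i^-,\tilde{x}_j^+))\prod_r\kappa\big((\tilde{x}_r^-,\tilde{x}_r^+)\big)$. Because $(\tilde{x}_r^-,\tilde{x}_r^+)\subsetneq\Delta_{r,j_r}^{(N)}$, this ``main'' term involves $\kappa\big((\tilde{x}_r^-,\tilde{x}_r^+)\big)$, not $\kappa(\Delta_{r,j_r}^{(N)})$. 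The configurations you dismiss as ``extra contributions'' include the event that some $\mathcal{Y}_\infty$-point lands in $\Delta_{r,j_r}^{(N)}\setminus(\tilde{x}_r^-,\tilde{x}_r^+)$, whose probability is on the order of $\kappa\big(\Delta_{r,j_r}^{(N)}\setminus(\tilde{x}_r^-,\tilde{x}_r^+)\big)G(\cdot,\cdot)$ --- it is controlled by a $\kappa$-mass, \emph{not} by a Lebesgue length. Your assertion that these are $O\big(\prod_r(\tilde{x}_r^+-\tilde{x}_r^-)\big)$ is therefore false. Concretely, for $n=1$ the total $\mathbb{P}(\mathcal{Y}_\infty\cap\Delta_{1,j_1}^{(N)}\neq\emptyset)\approx\int_{\Delta_{1,j_1}^{(N)}}G(y,y)\,\kappa(dy)$, while your main term is only $G(\tilde{x}_1^-,\tilde{x}_1^+)\,\kappa\big((\tilde{x}_1^-,\tilde{x}_1^+)\big)$; if $\tilde{x}_1^+-\tilde{x}_1^-\ll\epsilon_N$ these can differ by a factor arbitrarily close to $\kappa(\Delta_{1,j_1}^{(N)})/\kappa\big((\tilde{x}_1^-,\tilde{x}_1^+)\big)$, so the remainder is not small.

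There is also an internal inconsistency: for the lower bound you must send $\tilde{x}_r^\pm$ to the endpoints of $\Delta_{r,j_r}^{(N)}$, while for the upper bound you want $\tilde{x}_r^+-\tilde{x}_r^-\ll\epsilon_N$; these requirements are mutually exclusive. The paper sidesteps both difficulties by choosing the first $2n$ starting points to be \emph{the endpoints themselves} $\tilde{a}_1,\tilde{b}_1,\ldots,\tilde{a}_n,\tilde{b}_n$ of the cells. Then the main event is $\{[\tilde{a}_r,\tilde{b}_r]\subseteq\bigcup_{J\in\mathcal{J}_{2n}}J\ \text{for all }r\}$, whose probability (via \eqref{Ch6Sec3: EqDeterminantnSteps}) is directly $\int_{\prod_r[\tilde{a}_r,\tilde{b}_r)}\det(G(y_i,y_j))\prod_r\kappa(dy_r)$ plus errors of order $O(\tilde{b}_r-\tilde{a}_r)\prod_r\kappa([\tilde{a}_r,\tilde{b}_r))$. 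Crucially, on that event the cell $[\tilde{a}_r,\tilde{b}_r)$ is already covered by $\mathcal{J}_{2n}$, so later stages of Wilson's algorithm can deposit no further points of $\mathcal{Y}_\infty$ there: the count in each cell is exactly one, which is what simultaneously gives the correct upper bound and rules out multi-occupancy. Lemma~\ref{Ch6Sec3: LemHole} is only needed for the cells that $\mathcal{Y}_{2n}$ misses altogether, contributing an $o(\tilde{b}_r-\tilde{a}_r)$ factor. You should rebuild your argument with the cell endpoints as starting points.
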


\begin{proof}
Consider points $\tilde{a}_{r}<\tilde{b}_{r}\in[a_{r},b_{r}]$ for 
$r\in\lbrace 1,\dots,n\rbrace$. We will show that
\begin{multline}
\label{Ch3Sec3: EqDeterm1}
\mathbb{P}\left(\forall r\in\lbrace 1,\dots,n\rbrace,\mathcal{Y}_{\infty}\cap[\tilde{a}_{r},\tilde{b}_{r})
\neq\emptyset \right) =
\\\int_{[\tilde{a}_{1},\tilde{b}_{1})}\dots
\int_{[\tilde{a}_{n},\tilde{b}_{n})}
\det\left(G(y_{i},y_{j})\right)_{1\leq i,j\leq n}\prod_{r=1}^{n}\kappa(dy_{r})
\\+\Big(\sum_{r=1}^{n}O(\tilde{b}_{r}-\tilde{a}_{r})\Big)
\times \prod_{r=1}^{n}\kappa([\tilde{a}_{r},\tilde{b}_{r}))+
\sum_{
\substack{
E\subseteq\lbrace 1,\dots,n\rbrace \\ 
E\neq\emptyset
}}
\prod_{r\in E}o(\tilde{b}_{r}-\tilde{a}_{r})\prod_{r\not\in E}
\kappa([\tilde{a}_{r},\tilde{b}_{r})),
\end{multline}
where the quantities $O(\tilde{b}_{r}-\tilde{a}_{r})$ and $o(\tilde{b}_{r}-\tilde{a}_{r})$ are uniform over $\tilde{a}_{r}<\tilde{b}_{r}\in[a_{r},b_{r}]$, $\tilde{a}_{r}$ close to $\tilde{b}_{r}$. From \eqref{Ch3Sec3: EqDeterm1} one deduces 
\eqref{Ch3Sec3: EqDeterminantalIdentity} by splitting the intervals $[a_{r},b_{r}]$ in small subintervals and approximating the number of points in $\mathcal{Y}_{\infty}\cap[a_{r},b_{r})$ by the number of subintervals of $[a_{r},b_{r})$ that contain a point in $\mathcal{Y}_{\infty}$.

Since the law of $\mathcal{Y}_{\infty}$ does not depend on the choice of everywhere dense sequence of starting points, we will assume that the first $2n$ starting points in Wilson's algorithm are in order $\tilde{a}_{1},\tilde{b}_{1},\dots,\tilde{a}_{n},\tilde{b}_{n}$. We will show that for all non-empty subsets $E$ of $\lbrace 1,\dots,n\rbrace$,
\begin{multline}
\label{Ch3Sec3: EqDeterm2}
\mathbb{P}\left(\forall r\in E,\mathcal{Y}_{2n}\cap[\tilde{a}_{r},\tilde{b}_{r})
=\emptyset,\mathcal{Y}_{\infty}\cap[\tilde{a}_{r},\tilde{b}_{r})\neq\emptyset,
\forall r\not\in E,\mathcal{Y}_{2n}\cap[\tilde{a}_{r},\tilde{b}_{r})\neq\emptyset\right)
\\=\prod_{r\in E}o(\tilde{b}_{r}-\tilde{a}_{r})\prod_{r\not\in E}
\kappa([\tilde{a}_{r},\tilde{b}_{r})).
\end{multline}
Further, we will show that for any $r_{0}\in \lbrace 1,\dots,n\rbrace$,
\begin{equation}
\label{Ch3Sec3: EqDeterm3}
\mathbb{P}\big(\forall r\in\lbrace 1,\dots,n\rbrace,
\mathcal{Y}_{2n}\cap[\tilde{a}_{r},\tilde{b}_{r})\neq\emptyset,
[\tilde{a}_{r_{0}},\tilde{b}_{r_{0}}]\nsubseteq\bigcup_{J\in\mathcal{J}_{2n}}J\big)=
O(\tilde{b}_{r_{0}}-\tilde{a}_{r_{0}})\prod_{r=1}^{n}
\kappa([\tilde{a}_{r},\tilde{b}_{r})).
\end{equation}
If for all $r\in\lbrace 1,\dots,n\rbrace$, 
$\mathcal{Y}_{2n}\cap[\tilde{a}_{r},\tilde{b}_{r})\neq\emptyset$ and 
$[\tilde{a}_{r},\tilde{b}_{r}]\subseteq\bigcup_{J\in\mathcal{J}_{2n}}J$,
 then, necessarily, $Q_{2n}=n$ and 
$\mathcal{J}_{2n}=\lbrace[\tilde{a}_{r},\tilde{b}_{r}]\vert 1\leq r\leq n\rbrace$.
We will use the fact that according to \eqref{Ch6Sec3: EqDeterminantnSteps},
\begin{multline}
\label{Ch3Sec3: EqDeterm4}
\mathbb{P}\left(Q_{2n}=n,\mathcal{J}_{2n}=\lbrace[\tilde{a}_{r},\tilde{b}_{r}]
\vert 1\leq r\leq n\rbrace\right)
\\=\int_{[\tilde{a}_{1},\tilde{b}_{1})}\dots\int_{[\tilde{a}_{n},\tilde{b}_{n})}
\det\left(G^{\tilde{a}_{i},\tilde{b}_{j}}\right)_{1\leq i,j\leq n}
\prod_{r=1}^{n}\kappa(dy_{r})
\\=\int\limits_{[\tilde{a}_{1},\tilde{b}_{1})}\dots\int\limits_{[\tilde{a}_{n},\tilde{b}_{n})}
\det\left(G(y_{i},y_{j})\right)_{1\leq i,j\leq n}
\prod_{r=1}^{n}\kappa(dy_{r})+
\Big(\sum_{r=1}^{n}O(\tilde{b}_{r}-\tilde{a}_{r})\Big) 
\times\prod_{r=1}^{n}\kappa([\tilde{a}_{r},\tilde{b}_{r})).
\end{multline}

Let us show \eqref{Ch3Sec3: EqDeterm2}. A closed expression of the probability in \eqref{Ch3Sec3: EqDeterm2} can be computed using \eqref{Ch6Sec3: EqDeterminantnSteps} and Lemma \ref{Ch6Sec3: LemHole}. Since many different configurations (different values of $Q_{2n}$ and configurations of $J_{2n}$) contribute to the probability in \eqref{Ch3Sec3: EqDeterm2}, we will not give the closed expression and only give the estimates. Let $E$ be a non-empty subset of $\lbrace 1,\dots,n\rbrace$. If $r\not\in E$, then the condition $\mathcal{Y}_{2n}\cap[\tilde{a}_{r},\tilde{b}{r})\neq\emptyset$ contributes by a factor $O(\kappa([\tilde{a}_{r},\tilde{b}_{r})))$ to the probability in 
\eqref{Ch3Sec3: EqDeterm2}. If $r\in E$, then the two conditions $\mathcal{Y}_{2n}\cap[\tilde{a}_{r},\tilde{b}{r})=\emptyset$ and $\mathcal{Y}_{\infty}\cap[\tilde{a}_{r},\tilde{b}{r})\neq\emptyset$ imply that $(\tilde{a}_{r},\tilde{b}{r})\cap\bigcup_{J\in\mathcal{J}_{2n}}J = \emptyset$. According to the identity \eqref{Ch6Sec3: EqSimplerProduct2}, the condition 
$(\tilde{a}_{r},\tilde{b}{r})\cap\bigcup_{J\in\mathcal{J}_{2n}}J = \emptyset$ contributes to the probability in \eqref{Ch3Sec3: EqDeterm2} by a factor
\begin{displaymath}
O(u_{\downarrow}(\tilde{a}_{r})u_{\uparrow}(\tilde{b}_{r})-
u_{\uparrow}(\tilde{a}_{r})u_{\downarrow}(\tilde{b}_{r}))=O(\tilde{b}_{r}-\tilde{a}_{r}).
\end{displaymath}
According to Lemma \ref{Ch6Sec3: LemHole}, the additional condition $\mathcal{Y}_{\infty}\cap[\tilde{a}_{r},\tilde{b}{r})\neq\emptyset$ contributes to the probability in \eqref{Ch3Sec3: EqDeterm2} by a factor
\begin{displaymath}
1-\dfrac{2(\tilde{b}_{r}-\tilde{a}_{r})}{u_{\downarrow}(\tilde{a}_{r})
u_{\uparrow}(\tilde{b}_{r})-u_{\uparrow}(\tilde{a}_{r})u_{\downarrow}(\tilde{b}_{r})}=o(1).
\end{displaymath}
\eqref{Ch3Sec3: EqDeterm2} follows.

We deal now with \eqref{Ch3Sec3: EqDeterm3}. As in the previous case, the condition that for all $r\in\lbrace 1,\dots,n\rbrace$, 
$\mathcal{Y}_{2n}\cap[\tilde{a}_{r},\tilde{b}_{r})\neq\emptyset$, contributes by a factor $O\left(\prod_{r=1}^{n}\kappa([\tilde{a}_{r},\tilde{b}{r}))\right) $ to the probability in \eqref{Ch3Sec3: EqDeterm3}. The condition 
$[\tilde{a}_{r_{0}},\tilde{b}_{r_{0}}]\nsubseteq\bigcup_{J\in\mathcal{J}_{2n}}J$ implies that there is $i\in\lbrace 2,\dots,Q_{2n}\rbrace$ such that ${\tilde{a}_{r_{0}}<p^{+}_{2n,i-1}<p^{-}_{2n,i}<\tilde{b}_{r_{0}}}$. As previously, this contributes by a factor $O(\tilde{b}_{r_{0}}-\tilde{a}_{r_{0}})$ to the probability. Combining \eqref{Ch3Sec3: EqDeterm2}, \eqref{Ch3Sec3: EqDeterm3} and \eqref{Ch3Sec3: EqDeterm4} yields \eqref{Ch3Sec3: EqDeterm1}.
\end{proof}

Let $\mathfrak{G}_{\kappa}$ be the following operator defined for functions in $\mathbb{L}^{2}(dk)$ with compact support:
\begin{displaymath}
(\mathfrak{G}_{\kappa}f)(x):=\int_{\mathbb{R}}G(x,y)f(y)\kappa(dy).
\end{displaymath}
A standard condition for a determinantal point process with kernel $G$ relative to the measure $\kappa$ to be well defined is $\mathfrak{G}_{\kappa}$ to be positive semi-definite, contracting and locally trace class. We explain why this is true. Let $f$ be a compactly supported $\mathbb{L}^{2}(d\kappa)$ function. Then, the weak second derivative of $\mathfrak{G}_{\kappa}f$ is
\begin{displaymath}
d\Big(\dfrac{d(\mathfrak{G}_{\kappa}f)}{dx}\Big) = 2(\mathfrak{G}_{\kappa}f-f)d\kappa.
\end{displaymath}
$\mathfrak{G}_{\kappa}f$ and $\dfrac{d(\mathfrak{G}_{\kappa}f)}{dx}$
 are square-integrable and
\begin{equation}
\begin{split}
\label{Ch3Sec3: EqIPPG}
\int_{\mathbb{R}}(\mathfrak{G}_{\kappa}f)fd\kappa=&
\int_{\mathbb{R}}(\mathfrak{G}_{\kappa}f)^{2}d\kappa+
\dfrac{1}{2}\int_{\mathbb{R}}(\mathfrak{G}_{\kappa}f)
d\Big(\dfrac{d(\mathfrak{G}_{\kappa}f)}{dx}\Big)
\\=&\int_{\mathbb{R}}(\mathfrak{G}_{\kappa}f)^{2}d\kappa+
\dfrac{1}{2}\int_{\mathbb{R}}\Big(\dfrac{d(\mathfrak{G}_{\kappa}f)}{dx}\Big)^{2}dx.
\end{split}
\end{equation}
Identity \eqref{Ch3Sec3: EqIPPG} shows that $\mathfrak{G}_{\kappa}$ is positive semi-definite. It also shows that $\int_{\mathbb{R}}(\mathfrak{G}_{\kappa}f)^{2}d\kappa\leq
\int_{\mathbb{R}}(\mathfrak{G}_{\kappa}f)fd\kappa$, which implies that 
$\mathfrak{G}_{\kappa}$ is contracting and hence can be continuously extended to a contraction of the whole space $\mathbb{L}^{2}(d\kappa)$. $\mathfrak{G}_{\kappa}$ is locally trace class because it is positive semi-definite and its functional kernel is continuous (see Theorem $2.12$ in \cite{Simon2005TraceIdeals}, Chapter $2$). 

Next we give a criterion for $\mathcal{Y}_{\infty}$ to be finite or just to be finite in the neighborhood of either  $+\infty$ or $-\infty$.

\begin{prop}
\label{Ch6Sec3: PropFiniteness}
If $\int_{(0,+\infty)}x\kappa(dx) <+\infty$, then, almost surely, 
$\#(\mathcal{Y}_{\infty}\cap(0,+\infty))$ is finite. Moreover,
\begin{equation}
\label{Ch6Sec3: EqFiniteTrace}
\mathbb{E}\left[\#(\mathcal{Y}_{\infty}\cap(0,+\infty))\right]=
\int_{(0,+\infty)}G(x,x)\kappa(dx) <+\infty.
\end{equation}
If $\int_{(0,+\infty)}x\kappa(dx) =+\infty$, then, almost surely, $\#(\mathcal{Y}_{\infty}\cap(0,+\infty))=+\infty$. In general, for all $a\in\mathbb{R}$,
\begin{equation}
\label{Ch6Sec3: EqSemiFinProb}
\mathbb{P}(\mathcal{Y}_{\infty}\cap(a,+\infty)=\emptyset)=
u_{\downarrow}(+\infty)\int_{(-\infty,a]}u_{\uparrow}(x)\kappa(dx).
\end{equation}
Similarly, if $\int_{\mathbb{R}}\vert x\vert\kappa(dx)<+\infty$, then, a.s., 
$\# \mathcal{Y}_{\infty}$ is finite and
\begin{displaymath}
\mathbb{E}\left[\#\mathcal{Y}_{\infty}\right]=
\int_{\mathbb{R}}G(x,x)\kappa(dx)<+\infty.
\end{displaymath}
If $\int_{\mathbb{R}}\vert x\vert\kappa(dx)=+\infty$, then, a.s., 
$\# \mathcal{Y}_{\infty}=+\infty$.
\end{prop}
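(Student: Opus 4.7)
The plan is to compute the first moment, derive \eqref{Ch6Sec3: EqSemiFinProb}, and deduce the remaining claims from it. First I would apply Proposition \ref{Ch3Sec3: PropDeterminantalY} with $n=1$ and monotone convergence as $b\to+\infty$ to obtain
\begin{displaymath}
\mathbb{E}[\sharp(\mathcal{Y}_\infty\cap(0,+\infty))]=\int_{(0,+\infty)}G(y,y)\kappa(dy)=\int_{(0,+\infty)}u_\uparrow(y)u_\downarrow(y)\,\kappa(dy).
\end{displaymath}
The equivalence (i)$\Leftrightarrow$(iv) in Proposition \ref{Ch2Sec1: PropAsymptotics} then says that this is finite exactly when $\int_{(0,+\infty)}x\,\kappa(dx)<+\infty$, which gives \eqref{Ch6Sec3: EqFiniteTrace} and almost sure finiteness in that case; the analogous statement on $\mathbb{R}$ follows by combining the equivalences at the two ends.

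For \eqref{Ch6Sec3: EqSemiFinProb} I would use Proposition \ref{Ch6Sec3: PropInvPerm} to choose, for each $M>a$, a starting sequence that begins with $x_1=M$ and is dense in $\mathbb{R}$ thereafter. The event $\{\mathcal{Y}_\infty\cap(a,+\infty)=\emptyset\}$ forces $y_1:=B^{(M)}_{\zeta_1^-}\leq a$, an event of probability $u_\downarrow(M)\int_{(-\infty,a]}u_\uparrow\,d\kappa$. Conditional on $y_1\leq a$ the tree after step one is $[y_1,M]$; a direct inspection shows that any subsequent path started below $y_1$ that is killed before hitting the tree is killed in $(-\infty,y_1]\subseteq(-\infty,a]$, any start in $[y_1,M]$ contributes nothing, and any start in $(M,+\infty)$ can produce killing points only in $(M,+\infty)$. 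Since the left- and right-group subalgorithms decouple completely once $[y_1,M]$ is fixed, the conditional probability of the target event equals $p_M$, the probability that an independent Wilson's algorithm on $(M,+\infty)$—with the underlying BM killed upon hitting $M$ and by $\kappa|_{(M,+\infty)}$—produces an empty $\mathcal{Y}$-set. This yields
\begin{displaymath}
\mathbb{P}(\mathcal{Y}_\infty\cap(a,+\infty)=\emptyset)=u_\downarrow(M)\,p_M\int_{(-\infty,a]}u_\uparrow\,d\kappa,
\end{displaymath}
and since the left-hand side does not depend on $M$, the factor $u_\downarrow(M)p_M$ is a constant $c$ on $M>a$ (and, by letting $a\to-\infty$ along values where $\kappa$ charges $(-\infty,a]$, a constant on all of $\mathbb{R}$).

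To identify $c$: when $u_\downarrow(+\infty)>0$, adapting the two-path argument of Proposition \ref{Ch6Sec3: PropFiniteExp} to the restricted algorithm (whose Green's function is dominated by $G$) gives $\mathbb{E}[\sharp\widetilde{\mathcal{Y}}_\infty^M]\leq\int_{(M,+\infty)}u_\uparrow u_\downarrow\,d\kappa$, which tends to zero by integrability (iv), so Markov's inequality forces $p_M\to 1$ and hence $c=u_\downarrow(+\infty)$. When $u_\downarrow(+\infty)=0$, the constancy of $u_\downarrow(M)p_M$ together with $u_\downarrow(M)\to 0$ and $p_M\leq 1$ forces $c=0$; then \eqref{Ch6Sec3: EqSemiFinProb} reads $\mathbb{P}(\mathcal{Y}_\infty\cap(a,+\infty)=\emptyset)=0$ for every $a$, and local finiteness of $\mathcal{Y}_\infty$ on bounded intervals (Proposition \ref{Ch6Sec3: PropFiniteExp}) then forces $\sharp(\mathcal{Y}_\infty\cap(0,+\infty))=+\infty$ almost surely by applying this along a rational sequence $a\to+\infty$. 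The statements on $\mathbb{R}$ follow from a symmetric treatment at $-\infty$. The main obstacle I anticipate is verifying that the intensity bound of Proposition \ref{Ch6Sec3: PropFiniteExp} transfers cleanly to the restricted algorithm; this should follow by re-running its two-path coupling together with the pointwise domination $G^M\leq G$. A secondary consistency check, $u_\downarrow(+\infty)\int_\mathbb{R}u_\uparrow\,d\kappa=1$, comes from the Wronskian identity $W(u_\downarrow,u_\uparrow)\equiv w=2$ together with the vanishing $u_\uparrow(x)u_\downarrow'(x)\to 0$ at $+\infty$ (since $u_\uparrow(x)\int_{(x,+\infty)}u_\downarrow\,d\kappa\leq\int_{(x,+\infty)}u_\uparrow u_\downarrow\,d\kappa\to 0$) and $u_\uparrow'(-\infty)=0$ (forced by positivity and convexity of $u_\uparrow$).
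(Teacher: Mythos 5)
Your proof is correct, and it takes a genuinely different route from the paper's derivation of \eqref{Ch6Sec3: EqSemiFinProb}. Both arguments exploit the freedom to choose the starting sequence for Wilson's algorithm, but the paper starts its first two paths at $a$ and $b$ and works with the avoidance probability of a bounded interval, writing
\begin{displaymath}
\mathbb{P}(\mathcal{Y}_{\infty}\cap(a,b]=\emptyset)=\int_{(b,+\infty)}G(a,x)\kappa(dx)+u_{\downarrow}(b)\int_{(-\infty,a]}u_{\uparrow}(x)\kappa(dx)
\end{displaymath}
and then sending $b\to+\infty$. You attack $\mathbb{P}(\mathcal{Y}_{\infty}\cap(a,+\infty)=\emptyset)$ directly by starting a single path at $M>a$, observing that the event forces the killing point $y_1$ below $a$, and using the decoupling of the right-hand subalgorithm on $(M,+\infty)$ once the tree occupies $[y_1,M]$; this produces $u_{\downarrow}(M)\,p_M\int_{(-\infty,a]}u_{\uparrow}\,d\kappa$, from which $u_{\downarrow}(M)p_M$ is an $M$-independent constant that you pin down by letting $M\to+\infty$. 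Your factorization is not only cleaner but also safer: the paper's two-term expression omits the event where $B^{(a)}$ and $B^{(b)}$ both die outside $[a,b]$ yet $\mathcal{Y}_{\infty}$ still avoids the surviving gap $(a,b)$, which has positive conditional probability by Lemma \ref{Ch6Sec3: LemHole} (one can check against the first moment in the constant-$\kappa$ case that a third term is genuinely present). That missing term vanishes as $b\to+\infty$, so \eqref{Ch6Sec3: EqSemiFinProb} is unaffected, but your derivation never relies on an exact bounded-interval identity.

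The obstacle you flag can also be dispensed with: there is no need to re-prove Proposition \ref{Ch6Sec3: PropFiniteExp} for the restricted half-line algorithm. On $\{y_1\leq a\}$ the right-side killing points of the full-line algorithm coincide with $\mathcal{Y}_{\infty}\cap(M,+\infty)$ and are independent of this conditioning, so
\begin{displaymath}
\mathbb{E}\big[\sharp\big(\mathcal{Y}_{\infty}^{(M\times)}\cap(M,+\infty)\big)\big]
=\mathbb{E}\big[\sharp\big(\mathcal{Y}_{\infty}\cap(M,+\infty)\big)\,\big\vert\,y_1\leq a\big]
\leq\frac{\int_{(M,+\infty)}G(x,x)\kappa(dx)}{\mathbb{P}(y_1\leq a)},
\end{displaymath}
and the right-hand side tends to $0$ since the numerator does while $\mathbb{P}(y_1\leq a)=u_{\downarrow}(M)\int_{(-\infty,a]}u_{\uparrow}\,d\kappa\to u_{\downarrow}(+\infty)\int_{(-\infty,a]}u_{\uparrow}\,d\kappa>0$. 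This is enough for the Markov bound on $1-p_M$, using only the already-proved full-line estimate.
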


\begin{proof}
We need only to deal with the finiteness of $\#(\mathcal{Y}_{\infty}\cap(0,+\infty))$. If $\int_{(0,+\infty)}x\,k(dx) <+\infty$, then \eqref{Ch6Sec3: EqFiniteTrace} holds according to \ref{Ch2Sec1: PropAsymptotics}, and hence, $\#(\mathcal{Y}_{\infty}\cap(0,+\infty))$ is finite is finite a.s.

We will prove \eqref{Ch6Sec3: EqSemiFinProb}. If $\int_{(0,+\infty)}x\kappa(dx)=+\infty$, then, according to \ref{Ch2Sec1: PropAsymptotics}, $u_{\downarrow}(+\infty)>0$, and thus, $\#(\mathcal{Y}_{\infty}\cap(0,+\infty))=+\infty$ a.s. Let $a<b\in\mathbb{R}$. We assume that the two first starting points in Wilson's algorithm are $a$ and $b$. Then,
\begin{equation}
\label{Ch6Sec3: EqNoPointProb}
\begin{split}
\mathbb{P}(\mathcal{Y}_{\infty}\cap(a,b]=\emptyset)=&
\mathbb{P}\big(B^{(a)}_{T_{1}^{-}}>b\big)+
\mathbb{P}\big(B^{(a)}_{T_{1}^{-}}\leq a, B^{(b)}_{T_{2}^{-}}=a\big)
\\=&\mathbb{P}\big(B^{(a)}_{\zeta_{1}^{-}}>b\big)+
\mathbb{P}\big(B^{(a)}_{\zeta_{1}^{-}}\leq a\big)\times\mathbb{P}\big(B^{(b)}~\text{hits a
before time}~\zeta_{2}\big)
\\=&\int_{(b,+\infty)}G(a,x)\kappa(dx) + \Big(\int_{(-\infty,a]}G(a,x)\kappa(dx)\Big)
\times \dfrac{u_{\downarrow}(b)}{u_{\downarrow}(a)}
\\=&\int_{(b,+\infty)}G(a,x)\kappa(dx) + u_{\downarrow}(b)
\int_{(-\infty,a]}u_{\uparrow}(x)\kappa(dx).
\end{split}
\end{equation}
Letting $b$ go to $+\infty$ in \eqref{Ch6Sec3: EqNoPointProb} gives 
\eqref{Ch6Sec3: EqSemiFinProb}.
\end{proof}

Next we will show that $\mathcal{Z}_{\infty}$ is a determinantal point process with kernel $\mathcal{K}$ relatively to the Lebesgue measure where
\begin{equation*}
\begin{split}
\mathcal{K}(y,z):=&-\dfrac{1}{2}\dfrac{du_{\uparrow}}{dx}((y\wedge z)^{+})
\dfrac{du_{\downarrow}}{dx}((y\vee z)^{-})
\\=&2\int_{(-\infty, y\wedge z]}u_{\uparrow}(x)\kappa(dx) \times
\int_{[y\vee z,+\infty)}u_{\downarrow}(x)\kappa(dx).
\end{split}
\end{equation*}

\begin{prop}
\label{Sec3PropDiamondsDet}
Let $n\geq 1$ and $a_{1}<b_{1}<a_{2}<b_{2}<\dots<a_{n}<b_{n}\in\mathbb{R}$. Then,
\begin{equation}
\label{Ch6Sec3: EqDiamondsDet}
\mathbb{E}\left[\prod_{r=1}^{n}\#
(\mathcal{Z}_{\infty}\cap(a_{r},b_{r}))\right] =
\int_{(a_{1},b_{1})}\dots\int_{(a_{n},b_{n})}\det(\mathcal{K}(z_{i},z_{j}))
_{1\leq i,j\leq n}\prod_{r=1}^{n}dz_{r}.
\end{equation}
If for $r\in\lbrace 1,2,\dots,n\rbrace$, 
$\kappa(\lbrace a_{r}\rbrace)=\kappa(\lbrace b_{r}\rbrace)=0$, then
\begin{equation}
\label{Ch6Sec3: EqExclDiamonds}
\mathbb{P}\left(\forall r\in\lbrace 1,2,\dots,n\rbrace, 
\#(\mathcal{Z}_{\infty}\cap(a_{r},b_{r}))=1 \right)=
\det(\mathcal{K}(a_{i},b_{j}))_{1\leq i,j\leq n}\times \prod_{r=1}^{n}(b_{r}-a_{r}).
\end{equation}
\end{prop}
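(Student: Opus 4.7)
The plan is to mirror the proof of Proposition \ref{Ch3Sec3: PropDeterminantalY}: first establish the exclusion identity (\ref{Ch6Sec3: EqExclDiamonds}) via a direct Wilson-algorithm computation, and then deduce the factorial moment formula (\ref{Ch6Sec3: EqDiamondsDet}) from it by subdividing each $(a_r,b_r)$ into fine pieces and passing to a Riemann sum.

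To prove (\ref{Ch6Sec3: EqExclDiamonds}), I would use Proposition \ref{Ch6Sec3: PropInvPerm} to run Wilson's algorithm with initial starting points $x_{2r-1}=a_r$, $x_{2r}=b_r$ for $r=1,\dots,n$, followed by a dense countable set in the complement. The assumption $\kappa(\{a_r\})=\kappa(\{b_r\})=0$ ensures a.s. that $a_r,b_r\notin\mathcal{Y}_\infty\cup\mathcal{Z}_\infty$. Combined with the structure recalled after Proposition \ref{Ch6Sec3: PropOpenInt}, the event $\{\sharp(\mathcal{Z}_\infty\cap(a_r,b_r))=1~\forall r\}$ is equivalent to asking that for every $r$, the intervals of $\mathcal{J}_\infty$ containing $a_r$ and $b_r$ are directly adjacent, with their unique separating point lying in $(a_r,b_r)$. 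Since the $\mathcal{Y}_\infty$-point inside each such interval is unique, this amounts to: no $\mathcal{Y}_\infty$-point is created strictly between the (eventually extended) intervals of $a_r$ and $b_r$ inside $(a_r,b_r)$, and no later Brownian sample path bridges these two intervals into a single one.

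The probability is then decomposed according to the configuration of $\mathcal{J}_{2n}$ (equivalently, the killing points $y_r^a,y_r^b$ of $B^{(a_r)},B^{(b_r)}$ and their relative order) followed by the conditional event that subsequent Brownian motions produce no new $\mathcal{Y}_\infty$-point inside the parts of $(a_r,b_r)$ still uncovered by $\mathcal{J}_{2n}$. The contribution of each $\mathcal{J}_{2n}$-configuration is furnished by formula (\ref{Ch6Sec3: EqDeterminantnSteps}), while the conditional ``no extra $\mathcal{Y}_\infty$-point'' probability factors across $r$ as a product of the Wronskian-type ratios $\frac{2(b_r-a_r)}{u_\downarrow(\cdot)u_\uparrow(\cdot)-u_\uparrow(\cdot)u_\downarrow(\cdot)}$ from Lemma \ref{Ch6Sec3: LemHole}. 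Using the simplified product form (\ref{Ch6Sec3: EqSimplerProduct2}) of the $G$-determinant, the Wronskian factor for the pair $(a_r,b_r)$ cancels the corresponding denominator of Lemma \ref{Ch6Sec3: LemHole}, leaving the prefactor $2(b_r-a_r)$ together with factors of $u_\uparrow$ and $u_\downarrow$ evaluated at the killing points. Integrating against $\kappa(dy_r^a)\kappa(dy_r^b)$ and exploiting the representation $\mathcal{K}(a,b)=2\int_{(-\infty,a]}u_\uparrow\,d\kappa\cdot\int_{[b,+\infty)}u_\downarrow\,d\kappa$ should reconstitute $\det(\mathcal{K}(a_i,b_j))\prod_r(b_r-a_r)$. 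From (\ref{Ch6Sec3: EqExclDiamonds}), the passage to (\ref{Ch6Sec3: EqDiamondsDet}) is a standard subdivision argument: partition each $(a_r,b_r)$ into fine subintervals $\Delta_r^{(k)}$ avoiding atoms of $\kappa$, write $\prod_r\sharp(\mathcal{Z}_\infty\cap(a_r,b_r))$ as a limit of $\sum\prod_r\mathbb{1}\{\mathcal{Z}_\infty\cap\Delta_r^{(k_r)}\neq\emptyset\}$ over one subinterval per factor, observe that the terms with two $\mathcal{Z}_\infty$-points in the same subinterval are quadratically small (by (\ref{Ch6Sec3: EqExclDiamonds}) itself), and recognize a Riemann sum converging to $\int\det(\mathcal{K}(z_i,z_j))\prod dz_r$ by the continuity of $\mathcal{K}$.

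The main technical obstacle is the combinatorial/algebraic step concluding the proof of (\ref{Ch6Sec3: EqExclDiamonds}): one must sum the $\mathcal{J}_{2n}$-contributions over all configurations consistent with the event — in particular the killing points $y_r^a,y_r^b$ may lie on either side of or inside $(a_r,b_r)$ and may interleave with the other $a_s,b_s$, and some Brownian paths may be absorbed into previously-formed intervals, so that $Q_{2n}<2n$ — and check that, after the Wronskian cancellations indicated above, the sum telescopes cleanly into the single determinant $\det(\mathcal{K}(a_i,b_j))\prod_r(b_r-a_r)$. The bookkeeping reduces to exploiting the min/max structure of $\mathcal{K}$ (analogous to that of $G$) and matching each term in the expansion of $\det(\mathcal{K}(a_i,b_j))$ with a specific ordering of the killing points, which should pair up correctly via the symmetry between $u_\uparrow$ and $u_\downarrow$ under left/right reversal.
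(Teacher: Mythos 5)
The paper's proof does not perform the combinatorial sum over $\mathcal{J}_{2n}$-configurations that you describe as your ``main technical obstacle''; it reduces everything to a \emph{single} configuration. Taking starting points $a_1,b_1,\dots,a_n,b_n$ first, the key step \eqref{Ch6Sec3: EqMultiHoles} identifies the target probability with that of the event $\{(a_r,b_r)\subseteq\mathbb{R}\setminus\bigcup_{J\in\mathcal{J}_{2n}}J \text{ and } (a_r,b_r)\cap\mathcal{Y}_\infty=\emptyset,\ \forall r\}$ — i.e.\ $B^{(a_r)}$ is killed at or below $a_r$, $B^{(b_r)}$ at or above $b_r$, and then no further $\mathcal{Y}_\infty$-point appears in the whole gap $(a_r,b_r)$. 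On this one configuration the Lemma~\ref{Ch6Sec3: LemHole} ratios for the full intervals $(a_r,b_r)$ factor off, the residual $\mathcal{J}_{2n}$-probability is evaluated by a single application of \eqref{Ch6Sec3: EqDeterminantnSteps} and \eqref{Ch6Sec3: EqSimplerProduct2}, and the double integrals over killing positions are resolved by the integrations by parts \eqref{Ch6Sec3: EqIPP1}--\eqref{Ch6Sec3: EqIPP2}. There is no sum over configurations to telescope.

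The configurations you insist on including — those where $y^a_r$ or $y^b_r$ falls inside $(a_r,b_r)$ — contribute strictly positive additional mass, so your sum cannot close to $\det(\mathcal{K}(a_i,b_j))\prod_r(b_r-a_r)$. Indeed, if $B^{(a_r)}$ is killed at $y\in(a_r,b_r)$, Lemma~\ref{Ch6Sec3: LemHole} applied to the residual gap $(y,b_r)$ still gives positive probability that no further $\mathcal{Y}_\infty$-point appears there, in which case $\sharp(\mathcal{Z}_\infty\cap(a_r,b_r))=1$ while $y\in\mathcal{Y}_\infty\cap(a_r,b_r)$. You can quantify the gap for $\kappa=c\,dx$: since $\mathcal{Y}_\infty\cup\mathcal{Z}_\infty$ is Poisson of intensity $\sqrt{2c}$ with alternating types, $\mathbb{P}\bigl(\sharp(\mathcal{Z}_\infty\cap(a,b))=1\bigr)=e^{-\mu}\bigl(\tfrac{\mu}{2}+\tfrac{\mu^{2}}{2}+\tfrac{\mu^{3}}{12}\bigr)$ with $\mu=\sqrt{2c}\,(b-a)$, whereas $\mathcal{K}(a,b)\,(b-a)=\tfrac{\mu}{2}e^{-\mu}$. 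So the right-hand side of \eqref{Ch6Sec3: EqExclDiamonds} matches the single-configuration event, not $\{\sharp(\mathcal{Z}_\infty\cap(a_r,b_r))=1\ \forall r\}$ as literally written, and your full combinatorial sum would compute the larger quantity. (The passage to \eqref{Ch6Sec3: EqDiamondsDet} by subdivision is insensitive to this, since the extra probability is $O((b_r-a_r)^2)$ per interval, but it is worth flagging.) The missing idea in your proposal is precisely this single-configuration reduction; without it the route you sketch cannot produce the stated closed form.
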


\begin{proof}
We will only prove \eqref{Ch6Sec3: EqExclDiamonds}. \eqref{Ch6Sec3: EqDiamondsDet} can be deduced from \eqref{Ch6Sec3: EqExclDiamonds} by diving the intervals $(a_{r},b_{r})$ in small subintervals and approximating the expected number of points in these subintervals by the probability to have one single point per subinterval. Observe that, if the measure 
$\kappa$ has atoms, then $\mathcal{K}$ is not continuous. Yet, 
$z\mapsto \frac{du_{\uparrow}}{dx}(z^{+})$ is right-continuous and 
$z\mapsto \frac{du_{\downarrow}}{dx}(z^{-})$ is left-continuous.
 So the approximation can still be done.

Consider the Wilson's algorithm where the $2n$ first starting points are in order 
$a_{1},b_{1},a_{2},b_{2},\dots,a_{n},b_{n}$. Then,
\begin{multline}
\label{Ch6Sec3: EqMultiHoles}
\mathbb{P}\left(\forall r\in\lbrace 1,2,\dots,n\rbrace,
\#(\mathcal{Z}_{\infty}\cap(a_{r},b_{r}))=1 \right) =
\\\mathbb{P}\bigg(\forall r\in\lbrace 1,2,\dots,n\rbrace, 
(a_{r},b_{r})\subseteq\mathbb{R}\setminus \bigcup_{J\in \mathcal{J}_{2n}}J,
(a_{r},b_{r})\cap\mathcal{Y}_{\infty}=\emptyset \bigg).
\end{multline}
Applying Lemma \ref{Ch6Sec3: LemHole}, we get that \eqref{Ch6Sec3: EqMultiHoles} equals
\begin{equation}
\label{Ch6Sec3: EqProd1}
\mathbb{P}\bigg(\forall r\in\lbrace 1,2,\dots,n\rbrace, 
(a_{r},b_{r})\subseteq\mathbb{R}\setminus \bigcup_{J\in \mathcal{J}_{2n}}J\bigg)
\times 
\prod_{r=1}^{n}\dfrac{2(b_{r}-a_{r})}
{u_{\downarrow}(a_{r})u_{\uparrow}(b_{r})-
u_{\uparrow}(a_{r})u_{\downarrow}(b_{r})}.
\end{equation}
Further,
\begin{multline}
\label{Ch6Sec3: EqDiamondsKillPts}
\mathbb{P}\bigg(\forall r\in\lbrace 1,2,\dots,n\rbrace, 
(a_{r},b_{r})\subseteq\mathbb{R}\setminus \bigcup_{J\in \mathcal{J}_{2n}}J\bigg)=
\\\mathbb{P}\left(B^{(a_{1})}_{T_{1}^{-}}\leq a_{1},
B^{(b_{n})}_{T_{2n}^{-}}\geq b_{n},
\forall r\in\lbrace1,\dots,n-1\rbrace,
b_{r}\leq B^{(b_{r})}_{T_{2r}^{-}}\leq B^{(a_{r+1})}_{T_{2r+1}^{-}}
\leq a_{r+1}\right).
\end{multline}
Applying \eqref{Ch6Sec3: EqDeterminantnSteps} and \eqref{Ch6Sec3: EqSimplerProduct2}, we get that \eqref{Ch6Sec3: EqDiamondsKillPts} equals
\begin{multline}
\label{Ch6Sec3: EqDiamondsKillPts2}
\prod_{r=1}^{n}(u_{\downarrow}(a_{r})u_{\uparrow}(b_{r})-
u_{\uparrow}(a_{r})u_{\downarrow}(b_{r}))\times
\int_{(-\infty, a_{1}]}u_{\uparrow}(y_{1})\kappa(dy_{1})\times
\int_{[b_{n},+\infty)}u_{\downarrow}(z_{n})\kappa(dy_{n})
\\\times \prod_{r=1}^{n-1}\bigg(\kappa([b_{r},a_{r+1}])+ 
\int_{b_{r}\leq y_{r}<\tilde{y}_{r}\leq a_{r+1}}
(u_{\downarrow}(y_{r})u_{\uparrow}(\tilde{y}_{r})-
u_{\uparrow}(y_{r})u_{\downarrow}(\tilde{y}_{r}))
\kappa(dy_{r})\kappa(d\tilde{y}_{r})\bigg).
\end{multline}
But,
\begin{multline}
\label{Ch6Sec3: EqIPP1}
\int_{b_{r}\leq y_{r}<\tilde{y}_{r}\leq a_{r+1}}
u_{\downarrow}(y_{r})u_{\uparrow}(\tilde{y}_{r})\kappa(dy_{r})\kappa(d\tilde{y}_{r})
\\=\dfrac{1}{2}\int_{b_{r}\leq y_{r}\leq a_{r+1}}u_{\downarrow}(y_{r})
\Big(\dfrac{du_{\uparrow}}{dx}(a_{r+1})
-\dfrac{du_{\uparrow}}{dx}(y_{r}^{+})\Big)\kappa(dy_{r})
\\=\dfrac{1}{4}\Big(\dfrac{du_{\downarrow}}{dx}(a_{r+1})
-\dfrac{du_{\downarrow}}{dx}(b_{r})\Big)
\dfrac{du_{\uparrow}}{dx}(a_{r+1})-\dfrac{1}{2}\int_{b_{r}\leq y_{r}\leq a_{r+1}} 
u_{\downarrow}(y_{r})\dfrac{du_{\uparrow}}{dx}(y_{r}^{+})\kappa(dy_{r}),
\end{multline}
and
\begin{multline}
\label{Ch6Sec3: EqIPP2}
-\int_{b_{r}\leq y_{r}<\tilde{y}_{r}\leq a_{r+1}}
u_{\uparrow}(y_{r})u_{\downarrow}(\tilde{y}_{r})\kappa(dy_{r})\kappa(d\tilde{y}_{r})
\\=-\dfrac{1}{2}\int_{b_{r}\leq y_{r}\leq a_{r+1}}u_{\uparrow}(y_{r})
\Big(\dfrac{du_{\downarrow}}{dx}(a_{r+1})-
\dfrac{du_{\downarrow}}{dx}(y_{r}^{+})\Big)\kappa(dy_{r})
\\=-\dfrac{1}{4}\Big(\dfrac{du_{\uparrow}}{dx}(a_{r+1})-
\dfrac{du_{\uparrow}}{dx}(b_{r})\Big)\dfrac{du_{\downarrow}}{dx}(a_{r+1})+
\dfrac{1}{2}\int_{b_{r}\leq y_{r}\leq a_{r+1}} 
u_{\uparrow}(y_{r})\dfrac{du_{\downarrow}}{dx}(y_{r})\kappa(dy_{r}).
\end{multline}
Combining \eqref{Ch6Sec3: EqIPP1} and \eqref{Ch6Sec3: EqIPP2} we get that
\begin{equation*}
\begin{split}
\int_{b_{r}\leq y_{r}<\tilde{y}_{r}\leq a_{r+1}}
(u_{\downarrow}(y_{r})u_{\uparrow}(\tilde{y}_{r})-&
u_{\uparrow}(y_{r})u_{\downarrow}(\tilde{y}_{r}))
\kappa(dy_{r})\kappa(d\tilde{y}_{r})
\\=\dfrac{1}{4}\Big(\dfrac{du_{\uparrow}}{dx}(b_{r})&\dfrac{du_{\downarrow}}{dx}(a_{r+1})-
\dfrac{du_{\downarrow}}{dx}(b_{r})\dfrac{du_{\uparrow}}{dx}(a_{r+1})\Big)
\\-&\dfrac{1}{2}\int_{b_{r}\leq y_{r}\leq a_{r+1}}\Big(u_{\downarrow}(y_{r})
\dfrac{du_{\uparrow}}{dx}(y_{r}^{+})-u_{\uparrow}(y_{r})
\dfrac{du_{\downarrow}}{dx}(y_{r}^{+})\Big)\kappa(dy_{r})
\\=\dfrac{1}{4}\Big(\dfrac{du_{\uparrow}}{dx}(b_{r})&\dfrac{du_{\downarrow}}{dx}(a_{r+1})-
\dfrac{du_{\downarrow}}{dx}(b_{r})\dfrac{du_{\uparrow}}{dx}(a_{r+1})\Big)
-\kappa([b_{r},a_{r+1}]).
\end{split}
\end{equation*}
It follows that \eqref{Ch6Sec3: EqDiamondsKillPts2} equals
\begin{multline}
\label{Ch6Sec3: EqDiamondsKillPts3}
\prod_{r=1}^{n}(u_{\downarrow}(a_{r})u_{\uparrow}(b_{r})-
u_{\uparrow}(a_{r})u_{\downarrow}(b_{r}))\times\Big(-
\dfrac{1}{4}\dfrac{du_{\uparrow}}{dx}(a_{1})\dfrac{du_{\downarrow}}{dx}(b_{n})\Big)
\\\times\prod_{r=1}^{n-1}\Big(\dfrac{1}{4}\Big(\dfrac{du_{\uparrow}}{dx}(b_{r})
\dfrac{du_{\downarrow}}{dx}(a_{r+1})-\dfrac{du_{\downarrow}}{dx}(b_{r})
\dfrac{du_{\uparrow}}{dx}(a_{r+1})\Big)\Big)
\\=\dfrac{1}{2^{n}}\prod_{r=1}^{n}(u_{\downarrow}(a_{r})u_{\uparrow}(b_{r})-
u_{\uparrow}(a_{r})u_{\downarrow}(b_{r}))\times
\det(\mathcal{K}(a_{i},b_{j}))_{1\leq i,j\leq n}.
\end{multline}
\eqref{Ch6Sec3: EqProd1} together with \eqref{Ch6Sec3: EqDiamondsKillPts3} gives \eqref{Ch6Sec3: EqExclDiamonds}.
\end{proof}

To see that the operator induced by the kernel $\mathcal{K}$ on $\mathbb{L}^{2}(\operatorname{Leb})$ is positive semi-definite, one can check that for any $\mathbb{L}^{2}$ function $f$ with compact support,
\begin{displaymath}
\int_{\mathbb{R}^{2}}f(y)\mathcal{K}(y,z)f(z) dy dz = 
\int_{\mathbb{R}^{2}} G(\tilde{y},\tilde{z}) 
\left(\int_{\tilde{y}}^{\tilde{z}} f(x)dx\right)^{2}\kappa(d\tilde{y})\kappa(d\tilde{z}).
\end{displaymath}
Too see that $\mathcal{K}$ induces a contraction, one can check that, for any $\mathcal{C}^{1}$ function $f$ with compact support,
\begin{displaymath}
\int_{\mathbb{R}^{2}}f(y)\mathcal{K}(y,z)f(z) dy dz = 
\int_{\mathbb{R}}f(x)^{2} dx-
\dfrac{1}{2}\int_{\mathbb{R}^{2}}\dfrac{df}{dx}(\tilde{y})G(\tilde{y}, \tilde{z})\dfrac{df}{dx}(\tilde{z})d\tilde{y} d\tilde{z},
\end{displaymath}
and that $\int_{\mathbb{R}^{2}}\frac{df}{dx}(\tilde{y})G(\tilde{y}, \tilde{z})\frac{df}{dx}(\tilde{z})d\tilde{y} d\tilde{z}\geq 0$.

The determinantal kernels $G$ and $\mathcal{K}$ both satisfy the following relation: for any $x\leq y\leq z\in\mathbb{R}$,
\begin{equation}
\label{Ch6Sec3: EqCocyc}
G(x,y)G(y,z)=G(x,z)G(y,y) \qquad 
\mathcal{K}(x,y)\mathcal{K}(y,z)=\mathcal{K}(x,z)\mathcal{K}(y,y).
\end{equation}
For $x\in\mathbb{R}$ and $y,z>x$, we define
\begin{equation}
\label{Ch6Sec3: EqGtimesKtriang}
G^{(x\times)}(y,z):=G(y,z)-\dfrac{G(x,y)G(x,z)}{G(x,x)},\qquad
\mathcal{K}^{(x\triangleright)}(y,z):=\mathcal{K}(y,z)
-\dfrac{\mathcal{K}(x,y)\mathcal{K}(x,z)}{\mathcal{K}(x,x)}.
\end{equation}
Relation \eqref{Ch6Sec3: EqCocyc} ensures that 
$\det(G(y_{i},y_{j}))_{1\leq i,j\leq n}$ and 
$\det(\mathcal{K}(z_{i},z_{j}))_{1\leq i,j\leq n}$ can be factorized as follows:
If $y_{1}<y_{2}<\dots<y_{n}$, then
\begin{equation}
\label{Ch6Sec3: EqFacG}
\det(G(y_{i},y_{j}))_{1\leq i,j\leq n}=
G(y_{1},y_{1})\prod_{r=2}^{n}G^{(y_{r-1}\times)}(y_{r},y_{r}).
\end{equation}
If $z_{1}<z_{2}<\dots<z_{n}$, then
\begin{equation}
\label{Ch6Sec3: EqFacK}
\det(\mathcal{K}(z_{i},z_{j}))_{1\leq i,j\leq n}=
\mathcal{K}(z_{1},z_{1})\prod_{r=2}^{n}\mathcal{K}^{(z_{r-1}\triangleright)}(z_{r},z_{r}).
\end{equation}
The relations \eqref{Ch6Sec3: EqCocyc}, or equivalently the factorizations 
\eqref{Ch6Sec3: EqFacG} and \eqref{Ch6Sec3: EqFacK}, imply that the spacings between consecutive points of $\mathcal{Y}_{\infty}$, respectively $\mathcal{Z}_{\infty}$,
are independent, that is to say conditional on $\mathcal{Y}_{\infty}$ having a point at $y_{0}$, the position of the next higher point $y$ is independent on 
${\mathcal{Y}_{\infty}\cap(-\infty,y_{0})}$, and similarly for $\mathcal{Z}_{\infty}$ (\cite{Soshnikov2000Determinantal}, Section $2.4$). 
Conditional on $y_{0}\in \mathcal{Y}_{\infty}$, the distribution of its higher neighbor in $\mathcal{Y}_{\infty}$ is of the form $f_{G}(y_{0},y)\kappa(dy)$. Similarly, denote
$f_{\mathcal{K}}(z_{0},z)dz$ the distribution between two consecutive points in $\mathcal{Z}_{\infty}$ conditional on $z_{0}$ be the lowest one. Following relations relate $G^{(y_{0}\times)}(y,y)$, respectively 
$\mathcal{K}^{(z_{0}\triangleright)}(z,z)$, to $f_{G}$, respectively $f_{\mathcal{K}}$:
\begin{multline*}
G^{(y_{0}\times)}(y,y) = f_{G}(y_{0},y)
\\+\sum_{j\geq 2}\int_{y_{0}<y_{1}<\dots<y_{j-1}<y}
f_{G}(y_{0},y_{1})f_{G}(y_{1},y_{2})\dots f_{G}(y_{j-1},y)\kappa(dy_{1})\dots \kappa(dy_{j-1}),
\end{multline*}
\begin{multline*}
\mathcal{K}^{(z_{0}\triangleright)}(z,z) = f_{\mathcal{K}}(z_{0},z)
\\+\sum_{j\geq 2}\int_{z_{0}<z_{1}<\dots<z_{j-1}<z}
f_{\mathcal{K}}(z_{0},z_{1})f_{\mathcal{K}}(z_{1},z_{2})\dots f_{\mathcal{K}}(z_{j-1},z)dz_{1}\dots dz_{j-1}.
\end{multline*}
If $\int_{(0,+\infty)}x\,k(dx)<+\infty$, i.e. $\mathcal{Y}_{\infty}\cap(0,+\infty)$ a.s. finite, then $\int_{(y_{0},+\infty)}f_{G}(y_{0},y)\kappa(dy)<1$ and
$\int_{z_{0}}^{+\infty}f_{\mathcal{K}}(z_{0},z) dz<1$.

Given a couple of interwoven point processes $(\mathcal{Y},\mathcal{Z})$ on $\mathbb{R}$, such that between any two consecutive point in $\mathcal{Y}$ lies one single point of
$\mathcal{Z}$, and such that for any $J$ bounded subinterval of $\mathbb{R}$, $\mathcal{Y}$ satisfies the constraint
\begin{displaymath}
\mathbb{E}\big[\#(\mathcal{Y}\cap J)\big]<+\infty,
\end{displaymath}
the joint distribution of $(\mathcal{Y},\mathcal{Z})$ can be fully described using the family of measures $(M_{n}(\mathcal{Y},\mathcal{Z}))_{n\geq 0}$ defined by
\begin{displaymath}
\int_{\mathbb{R}}f(y_{0})M_{0}(\mathcal{Y},\mathcal{Z})(dy_{0})=
\mathbb{E}\Big[\sum_{y_{0}\in \mathcal{Y}}f(y_{0})\Big],
\end{displaymath}
\begin{multline*}
\int_{y_{0}<z_{1}<y_{1}<\dots z_{n}<y_{n}}
f(y_{0},z_{1},y_{1},\dots z_{n},y_{n})
M_{n}(\mathcal{Y},\mathcal{Z})(dy_{0},dz_{1},dy_{1},\dots dz_{n},dy_{n})\\=
\mathbb{E}\Bigg[
\sum_{
\substack{
y_{0},\dots,y_{n}\\ 
n+1~\text{consecutive points in}~\mathcal{Y}\\
z_{1},\dots, z_{n}\in \mathcal{Z}\\
y_{0}<z_{1}<y_{1}<\dots z_{n}<y_{n}
 }
}
f(y_{0},z_{1},y_{1},\dots z_{n},y_{n})\Bigg].
\end{multline*}
$M_{n}(\mathcal{Y},\mathcal{Z})(dy_{0},dz_{1},dy_{1},\dots dz_{n},dy_{n})$ is the infinitesimal probability for $y_{0},y_{1},\dots y_{n}$ being $n+1$ consecutive points in $\mathcal{Y}$, and $z_{1},\dots z_{n}$ being the $n$ points in $\mathcal{Z}$ separating them. In case of $(\mathcal{Y}_{\infty},\mathcal{Z}_{\infty})$,
$M_{0}(\mathcal{Y}_{\infty},\mathcal{Z}_{\infty})(dy_{0})=G(y_{0},y_{0})\kappa(dy_{0})$.

\begin{prop}
\label{Ch6Sec3: PropJointLaw}
For $n\geq 1$,
\begin{equation}
\label{Ch6Sec3: EqMn}
\begin{split}
M_{n}(\mathcal{Y}_{\infty},\mathcal{Z}_{\infty})
(dy_{0},dz_{1},\dots dz_{n},dy_{n})=&2^{n}u_{\uparrow}(y_{0})u_{\downarrow}(y_{n})
\kappa(dy_{0})dz_{1}\dots dz_{n}\kappa(dy_{n})\\=&
2^{n}G(y_{0},y_{n})\kappa(dy_{0})dz_{1}\dots dz_{n}\kappa(dy_{n}).
\end{split}
\end{equation}
Moreover,
\begin{displaymath}
f_{G}(y_{0},y)=2(y-y_{0})\dfrac{u_{\downarrow}(y)}{u_{\downarrow}(y_{0})}
\qquad\kappa(dy)-\text{almost everywhere},
\end{displaymath}
\begin{displaymath}
f_{\mathcal{K}}(z_{0},z)=2\kappa((z_{0},z))
\Big(\dfrac{du_{\downarrow}}{dx}(z_{0})\Big)^{-1}
\dfrac{du_{\downarrow}}{dx}(z)\qquad dz-\text{almost everywhere}.
\end{displaymath}
The distribution on $\mathcal{Z}_{\infty}$ conditional on $\mathcal{Y}_{\infty}$ is the following: given two consecutive points $y_{1}<y_{2}$ in $\mathcal{Y}_{\infty}$, then the point of $\mathcal{Z}_{\infty}$ lying between them is distributed uniformly on $(y_{1},y_{2})$ and independently on the behavior of $\mathcal{Z}_{\infty}$ on $(-\infty,y_{1})\cup(y_{2},+\infty)$.
The distribution on $\mathcal{Y}_{\infty}$ conditional on $\mathcal{Z}_{\infty}$ is the following: given two consecutive points $z_{1}<z_{2}$ in $\mathcal{Z}_{\infty}$, then the point of $\mathcal{Y}_{\infty}$ lying between them is distributed on $(z_{1},z_{2})$ according the measure 
$1_{z_{1}<y<z_{2}}\frac{\kappa(dy)}{\kappa((z_{1},z_{2}))}$
and independently on the behavior of $\mathcal{Y}_{\infty}$ on 
$(-\infty,z_{1})\cup(z_{2},+\infty)$. If $\int_{(-\infty,0)}\vert x\vert\kappa(dx)<+\infty$, then $\min\mathcal{Y}_{\infty}$ is distributed conditional on $\mathcal{Z}_{\infty}$ according to the measure
$1_{y<\min\mathcal{Z}_{\infty}}\frac{\kappa(dy)}{k((-\infty, \min\mathcal{Z}_{\infty}))}$ and it is independent on the behavior of $\mathcal{Y}_{\infty}$ on 
$(-\infty,\min\mathcal{Z}_{\infty})$. Similarly, for the distribution of $\max\mathcal{Y}_{\infty}$ conditional on $\max\mathcal{Z}_{\infty}$ if 
$\int_{(0,+\infty)}x\kappa(dx)<+\infty$.
\end{prop}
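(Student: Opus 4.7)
The plan is to first establish the conditional distribution of $\mathcal{Z}_\infty$ given $\mathcal{Y}_\infty$, then to compute the measures $M_n$ by combining this with a direct Wilson's-algorithm computation relying on Proposition~\ref{Ch6Sec3: PropAfternSteps} and Lemma~\ref{Ch6Sec3: LemHole}, and finally to read off $f_G$, $f_{\mathcal{K}}$ and the remaining conditional and extremal statements by elementary manipulations and boundary limits.

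To handle the conditional law of $\mathcal{Z}_\infty$ given $\mathcal{Y}_\infty$, I would fix two consecutive points $y_1<y_2$ of $\mathcal{Y}_\infty$ and argue that the unique point of $\mathcal{Z}_\infty\cap(y_1,y_2)$ is uniform on $(y_1,y_2)$, independently across disjoint gaps. At some finite stage $n_0$ both $y_1$ and $y_2$ already belong to $\mathcal{Y}_{n_0}$, separated by a maximal gap interval $(p^+,p^-)$ inside which the sought $\mathcal{Z}_\infty$-point will fall; from stage $n_0+1$ onwards, only those Brownian motions launched inside the gap can affect its internal structure. Requiring $y_1,y_2$ to remain consecutive in $\mathcal{Y}_\infty$ amounts to requiring every such Brownian motion to reach the current gap boundary before being killed by $\kappa$, and because Wilson's algorithm depends on its inputs only through boundary hitting probabilities, the conditional $\mathcal{Z}_\infty$-output inside the gap has the same law as the output of Wilson's algorithm for pure Brownian motion on the gap killed at its endpoints, to which Proposition~\ref{Ch6Sec3: PropUnifSep} directly applies and yields a uniform singleton. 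Letting the gap endpoints shrink to $y_1,y_2$ as $n_0\to\infty$, and using the independence of Wilson's substeps launched in disjoint gaps, concludes this step.

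Next, denote by $\rho_n(y_0,\dots,y_n)\kappa(dy_0)\cdots\kappa(dy_n)$ the intensity of $y_0<\dots<y_n$ being $n+1$ consecutive points of $\mathcal{Y}_\infty$; the previous step gives
\[
M_n(dy_0,dz_1,\dots,dz_n,dy_n)=\rho_n(y_0,\dots,y_n)\prod_{r=1}^n\frac{dz_r}{y_r-y_{r-1}}\prod_{r=0}^n\kappa(dy_r),
\]
so it suffices to identify $\rho_n$. I would apply Proposition~\ref{Ch6Sec3: PropAfternSteps} to Wilson's algorithm with $n+1$ starting points chosen in infinitesimal neighbourhoods of $y_0,\dots,y_n$. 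The product form \eqref{Ch6Sec3: EqSimplerProduct2} gives, in the infinitesimal limit, the factor
\[
u_\uparrow(y_0)u_\downarrow(y_n)\prod_{r=1}^n\bigl[u_\downarrow(y_{r-1})u_\uparrow(y_r)-u_\uparrow(y_{r-1})u_\downarrow(y_r)\bigr],
\]
while imposing via Lemma~\ref{Ch6Sec3: LemHole} that no further $\mathcal{Y}_\infty$-point appears in any of the $n$ intervening gaps contributes the reciprocal factor
\[
\prod_{r=1}^n\frac{2(y_r-y_{r-1})}{u_\downarrow(y_{r-1})u_\uparrow(y_r)-u_\uparrow(y_{r-1})u_\downarrow(y_r)}.
\]
The Wronskian-type brackets cancel perfectly, leaving $\rho_n(y_0,\dots,y_n)=2^n u_\uparrow(y_0)u_\downarrow(y_n)\prod_{r=1}^n(y_r-y_{r-1})=2^n G(y_0,y_n)\prod_{r=1}^n(y_r-y_{r-1})$, which substituted back yields the announced formula for $M_n$.

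The remaining statements then follow by elementary manipulations: the expression for $f_G$ is the $n=1$ specialisation of $G(y_0,y_0)f_G(y_0,y)=\rho_1(y_0,y)$; the conditional law of $\mathcal{Y}_\infty$ given $\mathcal{Z}_\infty$ and the formula for $f_{\mathcal{K}}$ are obtained by integrating $M_2$ over $y_0,y,y_1$ at fixed $z_0,z$ and dividing by $\mathcal{K}(z_0,z_0)$, after rewriting $\int_{[z,+\infty)}u_\downarrow\,d\kappa$ in terms of $\tfrac{du_\downarrow}{dx}(z^-)$ via the ODE $\tfrac{1}{2}u_\downarrow''=u_\downarrow\kappa$; the extremal statements for $\min\mathcal{Y}_\infty$ and $\max\mathcal{Y}_\infty$ follow by taking $y_0\to-\infty$ (resp.\ $y_n\to+\infty$) in $M_n$ and invoking Propositions~\ref{Ch2Sec1: PropAsymptotics} and~\ref{Ch6Sec3: PropFiniteness}. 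The main obstacle will be the telescoping in the second step: matching the Wronskian factors from \eqref{Ch6Sec3: EqSimplerProduct2} and from Lemma~\ref{Ch6Sec3: LemHole} in a uniform infinitesimal limit across all $n$ gaps simultaneously, and verifying that the conditional ``no new point'' events factor exactly (not merely up to vanishing errors) along disjoint gaps; this should be justified by the strong Markov property applied to the independent Brownian motions in Wilson's algorithm, ensuring that the multiplicative $1+o(1)$ corrections do not accumulate.
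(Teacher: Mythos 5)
The second half of your argument (the $\rho_n$ computation via Proposition~\ref{Ch6Sec3: PropAfternSteps} and Lemma~\ref{Ch6Sec3: LemHole}, with the Wronskian factors cancelling) is essentially the engine that powers the paper's proof as well, and it is sound. The genuine gap is in your first step, the claim that the point of $\mathcal{Z}_\infty$ between two consecutive points of $\mathcal{Y}_\infty$ is uniformly distributed on the gap.

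You justify this by saying that conditioning on $y_1,y_2$ remaining consecutive in $\mathcal{Y}_\infty$ forces every Brownian motion launched inside the gap to reach the current tree boundary before being killed, and that ``because Wilson's algorithm depends on its inputs only through boundary hitting probabilities, the conditional $\mathcal{Z}_\infty$-output inside the gap has the same law as the output of Wilson's algorithm for pure Brownian motion,'' to which Proposition~\ref{Ch6Sec3: PropUnifSep} applies. The first part of this sentence is fine, but the jump to pure Brownian motion is not. A Brownian motion with killing measure $\kappa$, started at $x$ inside a gap $(a,b)$ and conditioned to reach $\{a,b\}$ before being killed, exits at $b$ with conditional probability $q_+(x)/(q_+(x)+q_-(x))$ where $q_\pm$ are ratios of the harmonic functions $u^{(a\times)}_\uparrow$, $u^{(\times b)}_\downarrow$ of the killed diffusion; these are not affine in $x$ unless $\kappa\vert_{(a,b)}=0$. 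So the conditioned process is an $h$-transform with a non-trivial scale, not pure Brownian motion, and Proposition~\ref{Ch6Sec3: PropUnifSep} (which is stated and proved only for Brownian motion with no killing measure on the interval) does not apply. What you actually need is the deletion principle: conditioning on $\mathcal{Y}_\infty$ not charging the gap is equivalent in law to running Wilson's algorithm with $\kappa$ replaced by $\mathbf{1}_{\mathbb{R}\setminus(y_1,y_2)}\kappa$. This is true, but it is exactly Corollary~\ref{Ch7Sec1: CorAbsenceRoot}/Lemma~\ref{Ch7Sec1: LemCondEmpt}, whose proof in the paper uses the very formula $f_G(y_0,y)=2(y-y_0)u_\downarrow(y)/u_\downarrow(y_0)$ that is part of the present proposition; invoking it here would be circular.

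The paper sidesteps this entirely by making a different choice of starting points in Wilson's algorithm. Instead of placing starting points near the $\mathcal{Y}$ test locations and then arguing separately about $\mathcal{Z}$, it places the first $2n$ starting points at $\tilde a_1,\tilde b_1,\dots,\tilde a_n,\tilde b_n$, flanking the $\mathcal{Z}$ test intervals. With this choice, the event $\mathscr{C}_n$ (one $\mathcal{Y}$ point per $[a_r,b_r]$, one $\mathcal{Z}$ point per $(\tilde a_r,\tilde b_r)$, nothing in between) is read off in a single computation from \eqref{Ch6Sec3: EqDeterminantnSteps}, \eqref{Ch6Sec3: EqSimplerProduct2} and Lemma~\ref{Ch6Sec3: LemHole}: the Lebesgue factors $2(\tilde b_r-\tilde a_r)$ giving the $\mathcal{Z}$-uniformity come out of Lemma~\ref{Ch6Sec3: LemHole} rather than being assumed. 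If you want to keep your two-step decomposition, you must replace the pure-Brownian reduction with a direct computation of the conditional law of the $\mathcal{Z}$ point (for instance specialising the paper's $\mathscr{C}_1$ argument, or proving a localized version of Lemma~\ref{Ch7Sec1: LemCondEmpt} that does not rely on the conclusion of the present proposition).
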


\begin{proof}
Let $a_{0}<b_{0}<\tilde{a}_{1}<\tilde{b}_{1}<a_{1}<b_{1}<\dots
<\tilde{a}_{n}<\tilde{b}_{n}<a_{n}<b_{n}\in\mathbb{R}$. Let be the event
$\mathscr{C}_{n}(a_{0},b_{0},\tilde{a}_{1},\tilde{b}_{1},a_{1},b_{1},\dots
,\tilde{a}_{n},\tilde{b}_{n},a_{n},b_{n})$, corresponding to the following conditions:
\begin{itemize}
\item $\mathcal{Y}_{\infty}\cap [a_{0},b_{0}]\neq\emptyset$, 
$\mathcal{Y}_{\infty}\cap [a_{n},b_{n}]\neq\emptyset$,
\item $\forall r\in\lbrace 1,\dots,n\rbrace, 
\#(\mathcal{Y}_{\infty}\cap [a_{r},b_{r}])=1$,
\item  $\forall r\in\lbrace 1,\dots,n\rbrace, 
\#(\mathcal{Z}_{\infty}\cap (\tilde{a}_{r},\tilde{b}_{r}))=1$,
\item $\forall r\in\lbrace 0,\dots,n-1\rbrace, (\mathcal{Y}_{\infty}\cup\mathcal{Z}_{\infty})\cap(b_{r},\tilde{a}_{r}]=
\emptyset,(\mathcal{Y}_{\infty}\cup\mathcal{Z}_{\infty})\cap[\tilde{b}_{r},a_{r+1})=
\emptyset$.
\end{itemize}
We will compute the probability of 
$\mathscr{C}_{n}(a_{0},b_{0},\tilde{a}_{1},\tilde{b}_{1},a_{1},b_{1},\dots
,\tilde{a}_{n},\tilde{b}_{n},a_{n},b_{n})$. Consider that we execute the Wilson's algorithm where the $2n$ first starting points are $\tilde{a}_{1}, \tilde{b}_{1},\dots,\tilde{a}_{n}, \tilde{b}_{n}$. The only configurations that contribute to the studied event are those where $B^{(\tilde{a}_{1})}_{T_{1}^{-}}\in[a_{0},b_{0}]$, $B^{(\tilde{b}_{n})}_{T_{2n}^{-}}\in[a_{n},b_{n}]$ and for 
$r\in\lbrace 1,\dots,n-1\rbrace$, $B^{(\tilde{b}_{r})}_{T_{2r}^{-}}=
B^{(\tilde{a}_{r+1})}_{T_{2r+1}^{-}}\in[a_{r+1},b_{r+1}]$. We further need that for $r\in\lbrace 1,\dots,n\rbrace$, $\mathcal{Y}_{\infty}\cap(\tilde{a}_{r},\tilde{b}_{r})=\emptyset$. Thus, applying \eqref{Ch6Sec3: EqDeterminantnSteps}, 
\eqref{Ch6Sec3: EqSimplerProduct2} and lemma \ref{Ch6Sec3: LemHole} we get that
the probability of the event $\mathscr{C}_{n}(a_{0},b_{0},\tilde{a}_{1},\tilde{b}_{1},a_{1},b_{1},\dots,\tilde{a}_{n},\tilde{b}_{n},a_{n},b_{n})$ equals
\begin{displaymath}
\int_{[a_{0},b_{0}]}u_{\uparrow}(y_{0})\kappa(dy_{0})\times
\int_{[a_{n},b_{n}]}u_{\downarrow}(y_{n})\kappa(dy_{n})\times
\prod_{r=1}^{n-1}\kappa([a_{r},b_{r}])\times
\prod_{r=1}^{n}2(\tilde{b}_{r}-\tilde{a}_{r}).
\end{displaymath}
The above probability also equals 
$M_{n}(\mathcal{Y}_{\infty},\mathcal{Z}_{\infty})([a_{0},b_{0}]\times
[\tilde{a}_{1},\tilde{b}_{1}]\times[a_{1},b_{1}]\times\dots\times
[\tilde{a}_{n},\tilde{b}_{n}]\times[a_{n},b_{n}])$, and gives the expression of 
\eqref{Ch6Sec3: EqMn}. To get the expressions of $f_{G}$ and $f_{\mathcal{K}}$, just observe that
\begin{displaymath}
G(y_{0},y_{0})f_{G}(y_{0},y)\kappa(dy_{0})\kappa(dy)=
M_{1}([y_{0},y_{0}+dy_{0}]\times(y_{0},y)\times[y,y+dy]),
\end{displaymath}
\begin{displaymath}
\mathcal{K}(z_{0},z_{0})f_{\mathcal{K}}(z_{0},z) dz_{0} dz=
M_{3}((-\infty,z_{0})\times[z_{0},z_{0}+dz_{0}]\times(z_{0},z)\times[z,z+dz]
\times (z,+\infty)).
\end{displaymath}

Expression \eqref{Ch6Sec3: EqMn} gives also the law of $\mathcal{Z}_{\infty}$ conditional on $\mathcal{Y}_{\infty}$, and the law of $\mathcal{Y}_{\infty}$ conditional on $\mathcal{Z}_{\infty}$, except for the possible extremal points of $\mathcal{Y}_{\infty}$. Let's deal with the distribution of $\max \mathcal{Y}_{\infty}$ conditional on $\max \mathcal{Z}_{\infty}$ in case 
$\int_{(0,+\infty)}x\kappa(dx)<+\infty$. Again, according to \eqref{Ch6Sec3: EqMn}, conditional on $z_{0}\in\mathcal{Z}_{\infty}$, the distribution of 
$\min \mathcal{Y}_{\infty}\cap(z_{0},+\infty)$ is proportional to $1_{y>z_{0}}u_{\downarrow}(y)\kappa(dy)$.
To obtain the distribution of $\max \mathcal{Y}_{\infty}$ conditional on 
$\max \mathcal{Z}_{\infty}$, one must weight $u_{\downarrow}(y)$ by 
$1-\int_{\tilde{y}>y}f_{G}(y,\tilde{y})\kappa(d\tilde{y})$, i.e. the probability of not having any point in $\mathcal{Y}_{\infty}$ consecutive to $y$. But,
\begin{equation*}
\begin{split}
\int_{\tilde{y}>y}f_{G}(y,\tilde{y})\kappa(d\tilde{y})=&2\int_{\tilde{y}>y}(\tilde{y}-y)
\dfrac{u_{\downarrow}(\tilde{y})}{u_{\downarrow}(y)}\kappa(d\tilde{y})\\=&
\lim_{\tilde{y}\rightarrow +\infty}\dfrac{\tilde{y}-y}{u_{\downarrow}(y)}
\dfrac{du_{\downarrow}}{dx}(\tilde{y}^{+})-\dfrac{1}{u_{\downarrow}(y)}\int_{\tilde{y}>y}
\dfrac{du_{\downarrow}}{dx}(\tilde{y}^{+}) d\tilde{y}.
\end{split}
\end{equation*}
Further,
\begin{displaymath}
(\tilde{y}-y)\dfrac{du_{\downarrow}}{dx}(\tilde{y}^{+})=
(\tilde{y}-y)\int_{(\tilde{y},+\infty)}2u_{\downarrow}(x)\kappa(dx)
\leq 2\int_{(\tilde{y},+\infty)}(x-y)u_{\downarrow}(x)\kappa(dx)\rightarrow 0.
\end{displaymath}
It follows that
\begin{displaymath}
\int_{\tilde{y}>y}f_{G}(y,\tilde{y})\kappa(d\tilde{y})=-\dfrac{1}{u_{\downarrow}(y)}\int_{\tilde{y}>y}\dfrac{du_{\downarrow}}{dx}(\tilde{y}^{+}) d\tilde{y}=
1-\dfrac{u_{\downarrow}(+\infty)}{u_{\downarrow}(y)}.
\end{displaymath}
Thus, $1_{y>z_{0}}u_{\downarrow}(y)\big(1-\int_{\tilde{y}>y}
f_{G}(y,\tilde{y})\kappa(d\tilde{y})\big)\kappa(dy)$ is simply proportional to $1_{y>z_{0}}\kappa(dy)$.
\end{proof}

\begin{prop}
\label{Ch6Sec3: PropUniqPoint}
In case $\int_{\mathbb{R}}\vert x\vert\kappa(dx)<+\infty$,
\begin{displaymath}
\mathbb{P}(\# \mathcal{Y}_{\infty}=1)=u_{\uparrow}(-\infty)u_{\downarrow}(+\infty).
\kappa(\mathbb{R})
\end{displaymath}
Conditional on $\# \mathcal{Y}_{\infty}=1$, the unique point in $\mathcal{Y}_{\infty}$ is distributed according $\frac{\kappa(dy)}{\kappa(\mathbb{R})}$.
\end{prop}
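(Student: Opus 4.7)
The plan is to express the event $\{\sharp\mathcal{Y}_{\infty}=1\}$ as a sum of ``isolated point'' indicators and use the Campbell formula with the first-order intensity $M_{0}(dy)=G(y,y)\kappa(dy)=u_{\uparrow}(y)u_{\downarrow}(y)\kappa(dy)$ established in Proposition \ref{Ch6Sec3: PropJointLaw}. Namely, for any Borel $A\subseteq\mathbb{R}$,
\begin{displaymath}
\mathbb{P}(\sharp\mathcal{Y}_{\infty}=1,~\text{the unique point lies in }A)
=\mathbb{E}\bigg[\sum_{y\in\mathcal{Y}_{\infty}}\mathbf{1}_{A}(y)\mathbf{1}_{\mathcal{Y}_{\infty}\setminus\{y\}=\emptyset}\bigg]
=\int_{A}\mathbb{P}^{y!}(\mathcal{Y}_{\infty}=\emptyset)\,G(y,y)\kappa(dy),
\end{displaymath}
where $\mathbb{P}^{y!}$ denotes the reduced Palm distribution of $\mathcal{Y}_{\infty}$ at $y$. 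The assumption $\int_{\mathbb{R}}|x|\kappa(dx)<+\infty$ guarantees via Proposition \ref{Ch6Sec3: PropFiniteness} that $\mathcal{Y}_{\infty}$ is a.s. finite, so all manipulations are legitimate.

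Second, I would factor the Palm probability using the Markov/renewal structure of $\mathcal{Y}_{\infty}$. The paper already observes after Proposition \ref{Sec3PropDiamondsDet} that the cocycle relation $G(x,y)G(y,z)=G(x,z)G(y,y)$ for $x\leq y\leq z$ makes the spacings independent: conditionally on $y\in\mathcal{Y}_{\infty}$, the next higher point is independent of $\mathcal{Y}_{\infty}\cap(-\infty,y)$. Applying the same reasoning to the reflected kernel $G(-\cdot,-\cdot)$ (equivalently, factoring $\rho_n$ from the right rather than from the left) yields the mirror statement. Combined, these give that under $\mathbb{P}^{y!}$ the restrictions to $(-\infty,y)$ and $(y,+\infty)$ are independent, so
\begin{displaymath}
\mathbb{P}^{y!}(\mathcal{Y}_{\infty}=\emptyset)=\mathbb{P}^{y!}\!\left(\mathcal{Y}_{\infty}\cap(y,+\infty)=\emptyset\right)\cdot\mathbb{P}^{y!}\!\left(\mathcal{Y}_{\infty}\cap(-\infty,y)=\emptyset\right).
\end{displaymath}

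Third, each factor is computed exactly as in the last paragraph of the proof of Proposition \ref{Ch6Sec3: PropJointLaw}. Using $f_{G}(y,\tilde{y})=2(\tilde{y}-y)u_{\downarrow}(\tilde{y})/u_{\downarrow}(y)$ and integration by parts against $u_{\downarrow}''=2u_{\downarrow}\kappa$ gives
\begin{displaymath}
\mathbb{P}^{y!}\!\left(\mathcal{Y}_{\infty}\cap(y,+\infty)=\emptyset\right)
=1-\int_{\tilde{y}>y}f_{G}(y,\tilde{y})\kappa(d\tilde{y})
=\frac{u_{\downarrow}(+\infty)}{u_{\downarrow}(y)},
\end{displaymath}
and the symmetric computation (with $u_{\uparrow}''=2u_{\uparrow}\kappa$ on $(-\infty,y]$, using $\int_{\mathbb{R}}|x|\kappa(dx)<+\infty$ and Proposition \ref{Ch2Sec1: PropAsymptotics} to ensure $u_{\uparrow}(-\infty)>0$) yields
\begin{displaymath}
\mathbb{P}^{y!}\!\left(\mathcal{Y}_{\infty}\cap(-\infty,y)=\emptyset\right)=\frac{u_{\uparrow}(-\infty)}{u_{\uparrow}(y)}.
\end{displaymath}
Plugging these into the Campbell formula, the $y$-dependent factors collapse: the integrand equals $u_{\uparrow}(-\infty)u_{\downarrow}(+\infty)\kappa(dy)$, a constant multiple of $\kappa$. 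Setting $A=\mathbb{R}$ gives $\mathbb{P}(\sharp\mathcal{Y}_{\infty}=1)=u_{\uparrow}(-\infty)u_{\downarrow}(+\infty)\kappa(\mathbb{R})$, and the conditional distribution of the unique point is $\kappa(dy)/\kappa(\mathbb{R})$.

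The only non-routine step is the two-sided conditional independence under $\mathbb{P}^{y!}$, since only its right-sided version is stated in the preceding discussion. This is the main obstacle, but it is cheap: either one invokes the reflection $x\mapsto -x$ (which turns Wilson's algorithm for $\kappa$ into Wilson's algorithm for the reflected killing measure, interchanging $u_{\uparrow}$ and $u_{\downarrow}$), or one observes directly that the factorization $\rho_{n}(y_{1},\dots,y_{n})=G(y_{1},y_{1})\prod_{r=2}^{n}G^{(y_{r-1}\times)}(y_{r},y_{r})$ in \eqref{Ch6Sec3: EqFacG} splits, around any fixed intermediate $y_{k}=y$, into a product of a function of $(y_{1},\dots,y_{k-1})$ and a function of $(y_{k+1},\dots,y_{n})$ (the cocycle relation being precisely what allows one to rewrite $G^{(y_{k-1}\times)}(y,y)$ and repackage the $k$-th factor into the two halves).
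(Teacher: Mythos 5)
Your proof is correct and is essentially the paper's proof rephrased through the reduced Palm/Campbell formalism: the paper's four-term ``sieve identity'' is exactly the Palm probability $\mathbb{P}^{y_0!}(\mathcal{Y}_{\infty}=\emptyset)$ expanded by inclusion--exclusion over ``something below'' and ``something above'' and then factored, and both arguments close with the same one-sided computation $1-\int_{\tilde y>y}f_{G}(y,\tilde y)\kappa(d\tilde y)=u_{\downarrow}(+\infty)/u_{\downarrow}(y)$ (and its mirror) already carried out in the proof of Proposition \ref{Ch6Sec3: PropJointLaw}. Your explicit treatment of the two-sided Palm independence spells out something the paper leaves implicit in the sieve's fourth term, but this is a refinement of presentation, not a different route.
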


\begin{proof}
The distribution of the unique point $y_{0}$ of $\mathcal{Y}_{\infty}$ on the event 
$\# \mathcal{Y}_{\infty}=1$ is given by the following sieve identity:
\begin{equation*}
\begin{split}
\bigg(G(y_{0},y_{0})-&\int_{y_{-1}<y_{0}}G(y_{-1},y_{-1})f_{G}(y_{-1},y_{0})\kappa(dy_{-1})
\\-&\int_{y_{1}>y_{0}}G(y_{0},y_{0})f_{G}(y_{0},y_{1})\kappa(dy_{1})\\
+&\int_{y_{-1}<y_{0}}\int_{y_{1}>y_{0}}G(y_{-1},y_{-1})f_{G}(y_{-1},y_{0})f_{G}(y_{0},y_{1})
\kappa(dy_{-1})\kappa(dy_{1})\bigg)\kappa(dy_{0}).
\end{split}
\end{equation*}
It is the infinitesimal probability of $\mathcal{Y}_{\infty}$ having a point at $y_{0}$, minus the probability of having a point at $y_{0}$ and an other lower,
minus the probability of having a point at $y_{0}$ and an other higher, plus the 
probability of having a point at $y_{0}$ surrounded by two neighbors on both sides. The identity can be further factorized as
\begin{multline*}
\bigg(u_{\uparrow}(y_{0})-2\int_{y_{-1}<y_{0}}(y_{0}-y_{-1})u_{\uparrow}(y_{-1})\kappa(dy_{-1})\bigg)
\\\times\bigg(u_{\downarrow}(y_{0})-2\int_{y_{1}>y_{0}}(y_{1}-y_{0})u_{\downarrow}(y_{1})\kappa(dy_{1})\bigg)\times\kappa(dy_{0}). 
\end{multline*}
According to the calculation done in the proof of Proposition \ref{Ch6Sec3: PropJointLaw}, the above equals $u_{\uparrow}(-\infty)u_{\downarrow}(+\infty) \kappa(dy_{0})$.
\end{proof}

Now, let us describe $(\mathcal{Y}_{\infty},\mathcal{Z}_{\infty})$ in two particular cases. If the killing rate is uniform, that is to say $\kappa(dy)=c dy$, where $c$ is constant, then
\begin{displaymath}
c f_{G}(x_{0},x)=f_{\mathcal{K}}(x_{0},x)=2c(x-x_{0})e^{-\sqrt{2c}(x-x_{0})}.
\end{displaymath}
Both the spacings of $\mathcal{Y}_{\infty}$ and $\mathcal{Z}_{\infty}$ are
i.i.d. gamma-$2$ variables with mean $\sqrt{\frac{2}{c}}$. Actually, the union 
$\mathcal{Y}_{\infty}\cup\mathcal{Z}_{\infty}$ is a Poisson point process with intensity $\sqrt{2c} dx$. If the killing measure is of form $\kappa=c\sum_{j\in\mathbb{Z}}\delta_{j}$,
where $c$ is constant, then again the spacings between consecutive points in $\mathcal{Y}_{\infty}$ are i.i.d random variables, this time integer valued. Let
$N_{2}$ be a random variable with same distribution as this spacings. For any $j\in\mathbb{N}$
\begin{displaymath}
\mathbb{P}(N_{2}=j)=2c j (1+\sqrt{2c})^{-j}.
\end{displaymath}
$N_{2}$ can be written as $N_{2}=N_{1}+\widetilde{N}_{1}-1$, where $N_{1}$ and $\widetilde{N}_{1}$ are two independent geometric variables of parameter $(1+\sqrt{2c})^{-1}$. Actually, if $y_{0}<y$ are two consecutive points in $\mathcal{Y}_{\infty}$ and $z$ the point of $\mathcal{Z}_{\infty}$ lying between them, then, conditional on $y_{0}$, $(\lfloor z\rfloor-y_{0}, y-\lfloor z\rfloor)$ has the same law as $(N_{1}-1,\widetilde{N})$. Moreover, 
$\lbrace\lfloor z\rfloor\vert z\in\mathcal{Z}_{\infty}\rbrace$ has the same law as
$\mathcal{Y}_{\infty}$.

\section{Determinantal point processes $(\mathcal{Y}_{\infty},\mathcal{Z}_{\infty})$: general case}
\label{Ch6Sec4}

Let $I$ be an open subinterval of $\mathbb{R}$ and $L$ be the generator of a transient diffusion on $I$ of form
$L=\frac{1}{m(x)}\frac{d}{dx}\left(\frac{1}{w(x)}\frac{d}{dx}\right)-\kappa$, with zero Dirichlet boundary conditions on 
$\partial I$ with sample path denoted $(X_{t})_{0\leq t<\zeta}$.
We will describe, without proof, the law of
$(\mathcal{Y}_{\infty},\mathcal{Z}_{\infty})$ in this generic case. It can be derived in the same way as it was done in the previous section. Let $G$ be the Green's function of $L$ relatively to the measure 
$m(y) dy$, factorizable as 
$G(x,y)=u_{\uparrow}(x\wedge y)u_{\downarrow}(x\vee y)$.

\begin{prop}
\label{Ch6Sec4: PropJointLawGeneral}
$\mathcal{Y}_{\infty}$ and $\mathcal{Z}_{\infty}$ are a.s. discrete point processes. Let $\partial I$ be the boundary of $I$ in 
$\mathbb{R}\cup\lbrace -\infty,+\infty\rbrace$. Almost surely,
\begin{displaymath}
\mathcal{Y}_{\infty}\cap\partial I=
\left\lbrace y\in\partial I\vert
\mathbb{P}(X_{\zeta^{-}}=y)>0\right\rbrace.
\end{displaymath}
If $\kappa\neq 0$, the points in $\mathcal{Y}_{\infty}\cap I$ are a determinantal point process with determinantal kernel $G(x,y)$ relatively the reference measure $m(y)\kappa(dy)$. $\mathcal{Z}_{\infty}$ is a determinantal point process on $I$ with determinantal kernel
\begin{displaymath}
\dfrac{du_{\uparrow}}{dx}((y\wedge z)^{+})
\dfrac{du_{\downarrow}}{dx}((y\vee z)^{-}),
\end{displaymath}
relatively to the reference measure $\frac{dz}{w(z)}$.
Given two consecutive points $y_{1}<y_{2}$ in $\mathcal{Y}_{\infty}$, then the point of $\mathcal{Z}_{\infty}$ lying between them is distributed according to the measure 
$1_{y_{1}<z<y_{2}}\frac{w(z) dz}{\int_{(y_{1},y_{2})}w(a) da}$,
and independently on the behavior of $\mathcal{Z}_{\infty}$ on $(-\infty,y_{1})\cup(y_{2},+\infty)$.
Given two consecutive points $z_{1}<z_{2}$ in $\mathcal{Z}_{\infty}$, then the point of $\mathcal{Y}_{\infty}$ lying between them is distributed on $(z_{1},z_{2})$ according the measure 
$1_{z_{1}<y<z_{2}}\frac{m(y)\kappa(dy)}{\int_{(z_{1},z_{2})}m(q)\kappa(dq)}$,
and independently on the behavior of $\mathcal{Y}_{\infty}$ on 
$(-\infty,z_{1})\cup(z_{2},+\infty)$.
\end{prop}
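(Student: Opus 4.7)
The plan is to follow the scheme of Section \ref{Ch6Sec3} essentially line by line, substituting in the general-diffusion analogues of the Brownian-specific formulas used there, and to treat the contribution of $\partial I$ to $\mathcal{Y}_\infty$ as a separate step. The key algebraic inputs transfer directly: for a generic transient diffusion on $I$, the distributions of $X_{\zeta^-}$ upon exit from a bounded subinterval $(a,b) \subseteq I$ and upon killing inside $(a,b)$ have exactly the same expressions in terms of $G$, $u_\uparrow$, $u_\downarrow$ as in the Brownian case (recalled in Section \ref{Ch2Sec2}), with the sole modification that the killing measure $\kappa(dy)$ is replaced by $m(y)\kappa(dy)$. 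Moreover the factorization $G(x,y) = u_\uparrow(x\wedge y) u_\downarrow(x \vee y)$ and the Wronskian identity $W(u_\downarrow, u_\uparrow) = w$ continue to hold.

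First I would establish the analogue of Proposition \ref{Ch6Sec3: PropAfternSteps}: by induction on $n$, the joint law of $(\mathcal{Y}_n, \mathcal{J}_n)$ on $I$ has density $\det(G(p_i^-(y_i), p_j^+(y_j)))_{1\leq i,j\leq q}$ against $\prod_r m(y_r)\kappa(dy_r)$; the algebraic rewriting \eqref{Ch6Sec3: EqSimplerProduct1}-\eqref{Ch6Sec3: EqSimplerProduct2} carries over unchanged, whence invariance in law under permutations of the starting points. The analogue of Lemma \ref{Ch6Sec3: LemHole} becomes
\[
\mathbb{P}\big(\mathcal{Y}_\infty \cap (p^-_{n,q-1}, p^+_{n,q}) = \emptyset \,\big|\, p^-_{n,q-1}, p^+_{n,q}, Q_n \geq q\big) = \frac{\int_{p^-_{n,q-1}}^{p^+_{n,q}} w(x)\,dx}{u_\downarrow(p^-_{n,q-1}) u_\uparrow(p^+_{n,q}) - u_\uparrow(p^-_{n,q-1}) u_\downarrow(p^+_{n,q})},
\]
the numerator $\int w\,dx$ replacing the $2(b-a)$ of the Brownian case through $W(u_\downarrow, u_\uparrow) = w$. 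Combined with the bound $\mathbb{E}[\sharp(\mathcal{Y}_n \cap [a,b])] \leq \int_{[a,b]} G(x,x) m(x)\kappa(dx)$, this yields discreteness of $\mathcal{Y}_\infty \cap I$, openness of the intervals of $\mathcal{J}_\infty$, and independence of the law of $(\mathcal{Y}_\infty, \mathcal{Z}_\infty)$ from the starting sequence, exactly as in Propositions \ref{Ch6Sec3: PropFiniteExp}-\ref{Ch6Sec3: PropInvPerm}. The determinantal structures then follow as in Propositions \ref{Ch3Sec3: PropDeterminantalY} and \ref{Sec3PropDiamondsDet}: for $\mathcal{Y}_\infty \cap I$ directly from the determinant above, giving kernel $G$ against $m(y)\kappa(dy)$; for $\mathcal{Z}_\infty$ through the integrations by parts \eqref{Ch6Sec3: EqIPP1}-\eqref{Ch6Sec3: EqIPP2}, which use $W(u_\downarrow, u_\uparrow) = w$ and produce the stated kernel against $\frac{dz}{w(z)}$. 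The conditional descriptions of $\mathcal{Y}_\infty$ given $\mathcal{Z}_\infty$ and vice versa are read off from the product form of the correlation measures $M_n(\mathcal{Y}_\infty, \mathcal{Z}_\infty)$, derived as in Proposition \ref{Ch6Sec3: PropJointLaw}.

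The main obstacle is the identification $\mathcal{Y}_\infty \cap \partial I = \{y \in \partial I : \mathbb{P}(X_{\zeta^-}=y) > 0\}$, a feature absent in Section \ref{Ch6Sec3}. The inclusion from left to right is immediate by construction, since starting a diffusion from any $x_n$ cannot produce a boundary value never reached by $X$. For the reverse inclusion, I would argue that if $y \in \partial I$ has $\mathbb{P}_x(X_{\zeta^-}=y)>0$ for some $x \in I$, then the density of the $x_n$ in $I$ together with the independence of the $X^{(x_n)}$ forces at least one of them (in fact infinitely many) to be absorbed at $y$, whence $y \in \mathcal{Y}_\infty$ almost surely. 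The technical delicacy is to quantify this uniformly near $y$: one needs a lower bound on $\mathbb{P}_x(X_{\zeta^-}=y)$ as $x$ approaches $y$ from inside $I$, which follows from the explicit hitting-probability ratios in terms of $u_\uparrow/u_\downarrow$ together with a Borel-Cantelli argument along a subsequence of $(x_n)$ converging to $y$. The independence of this boundary component from the interior determinantal processes then follows from the fact that, for each sample path, ``exit through $\partial I$'' and ``killed inside $I$'' are disjoint events.
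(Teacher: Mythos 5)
The paper gives no proof here: it states the law of $(\mathcal{Y}_\infty,\mathcal{Z}_\infty)$ ``without proof'' and notes it ``can be derived in the same way as it was done in the previous section.'' Your plan of carrying out that adaptation in detail is therefore exactly the intended approach, and the substitutions you make are the right ones: replacing the killing distribution $\kappa(dy)$ by $m(y)\kappa(dy)$, replacing $2(b-a)$ in Lemma~\ref{Ch6Sec3: LemHole} by $\int_a^b w$, and using $W(u_\downarrow,u_\uparrow)\equiv w$ throughout in the determinant factorizations and integrations by parts. This part of your proposal is sound.

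The one place with a real gap is the boundary identification $\mathcal{Y}_\infty\cap\partial I\supseteq\{y\in\partial I:\mathbb{P}(X_{\zeta^-}=y)>0\}$. Your Borel--Cantelli argument along a subsequence $x_{n_k}\to y$ asks for a uniform lower bound on the probabilities of the events ``$X^{(x_{n_k})}$ dies at $y$ before hitting $\bigcup_{J\in\mathcal{J}_{n_k-1}}J$'', but a lower bound on $\mathbb{P}_x(X_{\zeta^-}=y)$ alone does not give this: the lowest interval in $\mathcal{J}_{n_k-1}$ has itself been pushed towards $y$ by the earlier paths, and the relevant conditional probability is roughly the scale ratio $\bigl(S(x_{n_k})-S(y)\bigr)/\bigl(S(c_{n_k-1})-S(y)\bigr)$, where $c_{n_k-1}$ is the current lowest endpoint of $\mathcal{J}_{n_k-1}$. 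Both numerator and denominator tend to $0$, so there is no obvious uniform lower bound, and independence alone does not rescue a Borel--Cantelli argument whose terms may be summable.

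Two clean fixes. (i) Use the invariance of the law of $(\mathcal{Y}_\infty,\mathcal{J}_\infty)$ under the choice of the dense starting sequence, which you have already established as the analogue of Proposition~\ref{Ch6Sec3: PropInvPerm}: since $\mathcal{Y}_1=\{X^{(x_1)}_{\zeta_1^-}\}\subseteq\mathcal{Y}_\infty$, one has $\mathbb{P}(y\in\mathcal{Y}_\infty)\geq\mathbb{P}_{x_1}(X_{\zeta^-}=y)$, and the law of $\mathcal{Y}_\infty$ does not depend on $x_1$; letting $x_1\to y$ (along any choice) and using $u_\uparrow(y^+)=0$, whence $\int G(x_1,z)m(z)\kappa(dz)\to 0$ and $\mathbb{P}_{x_1}(X_{\zeta^-}=\sup I)\to 0$, gives $\mathbb{P}_{x_1}(X_{\zeta^-}=y)\to 1$ and hence $\mathbb{P}(y\in\mathcal{Y}_\infty)=1$. (ii) Alternatively, argue directly: on the event $y\notin\mathcal{Y}_\infty$, the successive extensions of the lowest interval towards $y$ are won by independent races whose ``hit $J$ before $y$'' probabilities are bounded above by the telescoping scale ratios $\bigl(S(c_{k+1})-S(y)\bigr)/\bigl(S(c_k)-S(y)\bigr)$, whose product tends to $0$, forcing probability zero. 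The first option is the slicker one and leverages work you have already done.
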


\chapter{Monotone couplings for the point processes $(\mathcal{Y}_{\infty},\mathcal{Z}_{\infty})$}
\label{Ch7}

\section{Conditioning}
\label{Ch7Sec1}

In this chapter we will deal with monotone coupling for the determinantal point processes $\mathcal{Y}_{\infty}$ and $\mathcal{Z}_{\infty}$ intruded in Chapter \ref{Ch6}. We will restrict to the Brownian case. Consider two different killing measures $\kappa$ and $\tilde{\kappa}$ on $\mathbb{R}$, with $\kappa\leq \tilde{\kappa}$, and the couples of determinantal point processes $(\mathcal{Y}_{\infty}, \mathcal{Z}_{\infty})$, respectively $(\widetilde{\mathcal{Y}}_{\infty}, \widetilde{\mathcal{Z}}_{\infty})$ corresponding to the Brownian motion on $\mathbb{R}$ with killing measure $\kappa$, respectively $\tilde{\kappa}$. We will show that one can couple $(\mathcal{Y}_{\infty}, \mathcal{Z}_{\infty})$ and $(\widetilde{\mathcal{Y}}_{\infty}, \widetilde{\mathcal{Z}}_{\infty})$ on the same probability space such that $\mathcal{Z}_{\infty}\subseteq\widetilde{\mathcal{Z}}_{\infty}$ and
${\widetilde{\mathcal{Y}}_{\infty}\subseteq\mathcal{Y}_{\infty}\cup \operatorname{Supp}(\tilde{\kappa}-\kappa)}$. Moreover, if $\kappa$ and $\tilde{\kappa}$ are proportional, we may also have $\mathcal{Y}_{\infty}\subseteq\widetilde{\mathcal{Y}}_{\infty}$. We will provide an explicit construction for the this couplings in Section \ref{Ch7Sec2}. 

In Section \ref{Ch7Sec1}, we will prove conditioning results for $(\mathcal{Y}_{\infty},\mathcal{Z}_{\infty})$: what is obtained if $\mathcal{Y}_{\infty}$ or $\mathcal{Z}_{\infty}$ is conditioned by either containing a point at a given location or not containing any points in a given interval. These results will be used in the next section.
The conditional law we will obtain are analogous to those of the uniform spanning tree on a finite undirected connected graph: Let $\mathbb{G}$ be such a graph, $E$ the set of its edges, $C$ a weight function on $E$ and $\Upsilon$ the corresponding uniform spanning tree on $\mathbb{G}$.
Let $E_{1}$ and $E_{2}$ be two disjoint subsets of $E$ such that $E_{1}$ contains no cycles and such that erasing the edges in $E_{2}$ does not disconnect $\mathbb{G}$. The law of $\Upsilon$ conditioned by $E_{1}\subseteq \Upsilon$ and $\Upsilon\cap E_{2} = \emptyset$ can be described as follows: Let $\mathbb{G}'$ be the graph obtained from $\mathbb{G}$ trough erasing the edges in $E_{2}$ and contracting (i.e. identifying the two end vertices) the edges in $E_{1}$. The edges of $\mathbb{G}'$ are in one to one correspondence with $E\setminus E_{2}$. If we keep the same weight function $C$ on these edges and take 
$\Upsilon'$ an uniform spanning tree on $\mathbb{G}'$, then $\Upsilon'\cup E_{1}$ has the same law as $\Upsilon$ conditional on $E_{1}\subseteq \Upsilon$ and $\Upsilon\cap E_{2} = \emptyset$ (see Proposition $4.2$ in \cite{BenjaminiLyonsPeresSchramm2001UnifSpanFor}).

Let $\kappa$ be a Radon measure on $\mathbb{R}$ and 
$G(x,y)=u_{\uparrow}(x\wedge y)u_{\downarrow}(x\vee y)$ the Green's function of $\frac{1}{2}\frac{d^{2}}{dx^{2}}-\kappa$.
First, we will restrict the Brownian motion with killing measure $\kappa$ to a half-line by adding either a killing or a reflecting boundary point and describe what is obtained if we apply the Wilson's algorithm to it.
This is related to some of the conditional laws we are interested in.
Diffusions with reflection were not discussed so far.

For $x_{0}<y$, let
\begin{displaymath}
u^{(x_{0}\times)}_{\uparrow}(y):=u_{\uparrow}(y)
-\dfrac{u_{\uparrow}(x_{0})}{u_{\downarrow}(x_{0})}u_{\uparrow}(y),
\end{displaymath}
and for $x_{0}<y,z$, let
\begin{displaymath}
G^{(x_{0}\times)}(y,z):=u^{(x_{0}\times)}_{\uparrow}(y\wedge z)
u_{\downarrow}(y\vee z),
\end{displaymath}
\begin{displaymath}
\mathcal{K}^{(x_{0}\times)}(y,z):=-\dfrac{1}{2}
\dfrac{du^{(x_{0}\times)}_{\uparrow}}{dx}((y\wedge z)^{+})
\dfrac{du_{\downarrow}}{dx}((y\vee z)^{-}).
\end{displaymath}
$G^{(x_{0}\times)}$ was already introduced in 
\eqref{Ch6Sec3: EqGtimesKtriang}.
For $y<x_{0}$, let
\begin{displaymath}
u^{(\times x_{0})}_{\downarrow}(y):=u_{\downarrow}(y)
-\dfrac{u_{\downarrow}(x_{0})}{u_{\uparrow}(x_{0})}u_{\uparrow}(y),
\end{displaymath}
and for $y,z<x_{0}$, let
\begin{displaymath}
G^{(\times x_{0})}(y,z):=u_{\uparrow}(y\wedge z)
u^{(\times x_{0})}_{\downarrow}(y\vee z),
\end{displaymath}
\begin{displaymath}
\mathcal{K}^{(\times x_{0})}(y,z):=-\dfrac{1}{2}
\dfrac{du_{\uparrow}}{dx}((y\wedge z)^{+})
\dfrac{du^{(\times x_{0})}_{\downarrow}}{dx}((y\vee z)^{-}).
\end{displaymath}
$G^{(x_{0}\times)}$, respectively $G^{(\times x_{0})}$, is the Green's function of $\frac{1}{2}\frac{d^{2}}{dx^{2}}-\kappa$ restricted to the interval $(x_{0}, +\infty)$, respectively $(-\infty,x_{0})$, with zero Dirichlet boundary condition at $x_{0}$.

Let $x_{0}\in\mathbb{R}$ such that $\kappa(\lbrace x_{0}\rbrace)=0$.
For $x_{0}<y$, let
\begin{displaymath}
u^{(x_{0}\triangleright)}_{\uparrow}(y):=u_{\uparrow}(y)+
\Big(\dfrac{du_{\downarrow}}{dx}(x_{0})\Big)^{-1}
\dfrac{du_{\uparrow}}{dx}(x_{0})u_{\downarrow}(y),
\end{displaymath}
and for $y,z<x_{0}$, let
\begin{displaymath}
G^{(x_{0}\triangleright)}(y,z):=u^{(x_{0}\triangleright)}_{\uparrow}(y\wedge z)u_{\downarrow}(y\vee z),
\end{displaymath}
\begin{displaymath}
\mathcal{K}^{(x_{0}\triangleright)}(y,z):=-\dfrac{1}{2}
\dfrac{du^{(x_{0}\triangleright)}_{\uparrow}}{dx}((y\wedge z)^{+})
\dfrac{du_{\downarrow}}{dx}((y\vee z)^{-}).
\end{displaymath}
$\mathcal{K}^{(x_{0}\triangleright)}$ was already introduced in 
\eqref{Ch6Sec3: EqGtimesKtriang}.
For $y<x_{0}$, let
\begin{displaymath}
u^{(\triangleleft x_{0})}_{\downarrow}(y):=u_{\downarrow}(y)+
\Big(\dfrac{du_{\uparrow}}{dx}(x_{0})\Big)^{-1}
\dfrac{du_{\downarrow}}{dx}(x_{0})u_{\uparrow}(y),
\end{displaymath}
and for $y,z<x_{0}$, let
\begin{displaymath}
G^{(\triangleleft x_{0})}(y,z):=u_{\uparrow}(y\wedge z)
u^{(\triangleleft x_{0})}_{\downarrow}(y\vee z),
\end{displaymath}
\begin{displaymath}
\mathcal{K}^{(\triangleleft x_{0})}(y,z):=-\dfrac{1}{2}
\dfrac{du_{\uparrow}}{dx}((y\wedge z)^{+})
\dfrac{du^{(\triangleleft x_{0})}_{\downarrow}}{dx}((y\vee z)^{-}).
\end{displaymath}
$G^{(x_{0}\triangleright)}$, respectively $G^{(\triangleleft x_{0})}$, is the Green's function of $\frac{1}{2}\frac{d^{2}}{dx^{2}}-\kappa$ restricted to the interval $[x_{0}, +\infty)$, respectively $(-\infty,x_{0}]$, with zero Neumann boundary condition at $x_{0}$. Equivalently, $G^{(x_{0}\triangleright)}$, respectively 
$G^{(\triangleleft x_{0})}$, is the restriction to $[x_{0}, +\infty)$, respectively $(-\infty,x_{0}]$, of the Green's function on $\mathbb{R}$ of 
$\frac{1}{2}\frac{d^{2}}{dx^{2}}-1_{[x_{0},+\infty)}\kappa$, respectively
$\frac{1}{2}\frac{d^{2}}{dx^{2}}-1_{(-\infty,x_{0}]}\kappa$.

Consider now $x_{0}\in\mathbb{R}$ and $(x_{n})_{n\geq 1}$ a dense sequence of pairwise disjoint points in $(x_{0},+\infty)$. We consider the Wilson's algorithm applied to the Brownian motion on $(x_{0},+\infty)$, with killing measure $\kappa$ and killing boundary $x_{0}$, where $(x_{n})_{n\geq 0}$ is the sequence of starting points. 
Let $\mathcal{Y}_{\infty}^{(x_{0}\times)}$ and $\mathcal{Z}_{\infty}^{(x_{0}\times)}$  be the interwoven point processes in $[x_{0},+\infty)$ obtained as result. See Figure \ref{FigC1} for an illustration of the first four steps of Wilson's algorithm and of $(\mathcal{Y}_{\infty}^{(x_{0}\times)},\mathcal{Z}_{\infty}^{(x_{0}\times)})$. According to Proposition \ref{Ch6Sec4: PropJointLawGeneral}, $x_{0}\in\mathcal{Y}_{\infty}^{(x_{0}\times)}$ a.s.,
$\mathcal{Y}_{\infty}^{(x_{0}\times)}\cap(x_{0},+\infty)$  is a determinantal point process with determinantal kernel $G^{(x_{0}\times)}$ relatively to the measure $1_{(x_{0},+\infty)}\kappa$ and $\mathcal{Z}_{\infty}^{(x_{0}\times)}$ is a determinantal point process with kernel $\mathcal{K}^{(x_{0}\times)}$ relatively to the measure
$1_{z>x_{0}} dz$. The distribution of the $2n$ closest to $x_{0}$ points in $(\mathcal{Y}_{\infty}^{(x_{0}\times)}\cap(x_{0},+\infty))\cup\mathcal{Z}_{\infty}^{(x_{0}\times)}$, the odd-numbered belonging to $\mathcal{Y}_{\infty}^{(x_{0}\times)}\cap(x_{0},+\infty)$ and the even-numbered to $\mathcal{Z}_{\infty}^{(x_{0}\times)}$, is given by the measure
\begin{displaymath}
M_{n}^{(x_{0}\times)}(\mathcal{Y}_{\infty}^{(x_{0}\times)},\mathcal{Z}_{\infty}^{(x_{0}\times)})
(dz_{1},dy_{1},\dots ,dz_{n},dy_{n}):=
2^{n}\dfrac{u_{\downarrow}(y_{n})}{u_{\downarrow}(x_{0})}
dz_{1}\kappa(dy_{1})\dots dz_{n}\kappa(dy_{n}).
\end{displaymath}
Its total mass equals 
$\mathbb{P}(\# \mathcal{Y}_{\infty}^{(x_{0}\times)}\geq n+1)$. If the Wilson's algorithm is applied to the Brownian motion on $(-\infty,x_{0})$, killed at $x_{0}$ and with killing measure $\kappa$, and $(\mathcal{Y}_{\infty}^{(\times x_{0})},
\mathcal{Z}_{\infty}^{(\times x_{0})})$ are  the point processes returned by the algorithm, then the distribution of the $2n$ closest to $x_{0}$ points in $(\mathcal{Y}_{\infty}^{(\times x_{0})}\cap
(-\infty,x_{0} ))\cup\mathcal{Z}_{\infty}^{(\times x_{0})}$ is given by the measure
\begin{multline*}
M_{n}^{(\times x_{0})}(\mathcal{Y}_{\infty}^{(\times x_{0})},\mathcal{Z}_{\infty}^{(\times x_{0})})
(dz_{-1},dy_{-1},\dots ,dz_{-n},dy_{-n}):=\\
2^{n}\dfrac{u_{\uparrow}(y_{-n})}{u_{\uparrow}(x_{0})}
dz_{-1}\kappa(dy_{-1})\dots dz_{-n}\kappa(dy_{-n}).
\end{multline*}

Let now $x_{0}\in\mathbb{R}$ such that $\kappa(\lbrace x_{0}\rbrace)=0$. If we replace the Brownian motion on $(x_{0},+\infty)$ killed in $x_{0}$ by a Brownian motion on $[x_{0},+\infty)$ reflected in $x_{0}$, and keep the killing measure $\kappa$, we get another pair 
$(\mathcal{Y}_{\infty}^{(x_{0}\triangleright)},\mathcal{Z}_{\infty}^{(x_{0}\triangleright)})$ of interwoven point processes on $[x_{0},+\infty)$. The pair $(\mathcal{Y}_{\infty}^{(x_{0}\triangleright)},\mathcal{Z}_{\infty}^{(x_{0}\triangleright)})$ can be also obtained by applying Wilson's algorithm to a Brownian motion on $\mathbb{R}$ with the killing measure $1_{(x_{0},+\infty)}\kappa$. See Figure \ref{FigC2} for an illustration of $(\mathcal{Y}_{\infty}^{(x_{0}\triangleright)},\mathcal{Z}_{\infty}^{(x_{0}\triangleright)})$. Observe the difference with Figure \ref{FigC1} at the third step of Wilson's algorithm. $\mathcal{Y}_{\infty}^{(x_{0}\triangleright)}$ is a determinantal point process with determinantal kernel $G^{(x_{0}\triangleright)}$ relatively to the measure $1_{(x_{0},+\infty)}\kappa$. $\mathcal{Z}_{\infty}^{(x_{0}\triangleright)}$ is a determinantal point process with kernel $\mathcal{K}^{(x_{0}\triangleright)}$ relatively to the measure $1_{z>x_{0}} dz$. The distribution of the $2n-1$ closest to $x_{0}$ points in $\mathcal{Y}_{\infty}^{(x_{0}\triangleright)}\cup
\mathcal{Z}_{\infty}^{(x_{0}\triangleright)}$, the odd-numbered belonging to $\mathcal{Z}_{\infty}^{(x_{0}\triangleright)}$ and the even-numbered to $\mathcal{Y}_{\infty}^{(x_{0}\triangleright)}$, is given by the measure
\begin{multline*}
M_{n}^{(x_{0}\triangleright)}(\mathcal{Y}_{\infty}^{(x_{0}\triangleright)},\mathcal{Z}_{\infty}^{(x_{0}\triangleright)})
(dy_{1},dz_{1},\dots dz_{n-1},dy_{n}):=\\
-2^{n}\Big(\dfrac{du_{\downarrow}}{dx}(x_{0})\Big)^{-1}
u_{\downarrow}(y_{n})\kappa(dy_{1})dz_{1}\dots dz_{n-1}\kappa(dy_{n}).
\end{multline*}
If the Wilson's algorithm is applied to the Brownian motion on $(-\infty,x_{0}]$, reflected at $x_{0}$ and with killing measure $\kappa$, and $(\mathcal{Y}_{\infty}^{(\triangleleft x_{0})},
\mathcal{Z}_{\infty}^{(\triangleleft x_{0})})$ are  the point processes returned by the algorithm, then the distribution of the $2n-1$ closest to $x_{0}$ points in $\mathcal{Y}_{\infty}^{(\triangleleft x_{0})}
\cup\mathcal{Z}_{\infty}^{(\triangleleft x_{0})}$ is given by the measure
\begin{multline*}
M_{n}^{(\triangleleft x_{0})}
(\mathcal{Y}_{\infty}^{(\triangleleft x_{0})},\mathcal{Z}_{\infty}^{(\triangleleft x_{0})})
(dy_{-1},dz_{-1},\dots dz_{-n+1},dy_{-n}):=\\
2^{n}\Big(\dfrac{du_{\uparrow}}{dx}(x_{0})\Big)^{-1}
u_{\uparrow}(y_{-n})\kappa(dy_{-1})dz_{-1}\dots dz_{-n+1}\kappa(dy_{-n}).
\end{multline*}

\begin{figure}[H]
\centering{
\includegraphics[width=0.95\textwidth]{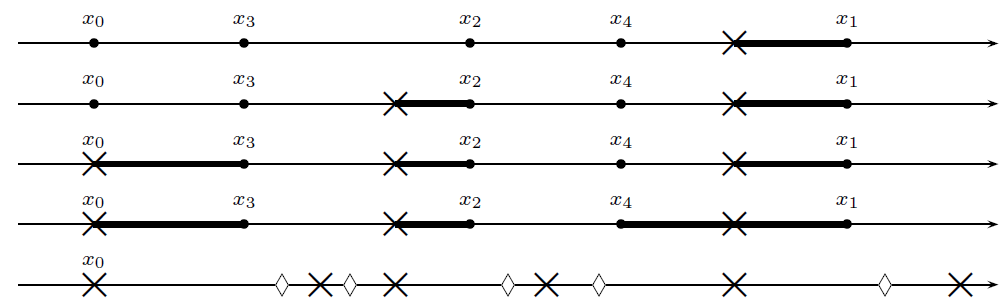}
}
\caption{Illustration of the first four steps of Wilson's algorithm in case of killing at $x_{0}$ and of $(\mathcal{Y}_{\infty}^{(x_{0}\times)},\mathcal{Z}_{\infty}^{(x_{0}\times)})$:
x-dots represent the points of $\mathcal{Y}_{n}^{(x_{0}\times)}$,
diamonds the points of $\mathcal{Z}_{n}^{(x_{0}\times)}$,
and thick lines the intervals in $\mathcal{J}_{n}^{(x_{0}\times)}$.
}
\label{FigC1}
\end{figure} 

\begin{figure}[H]
\centering{\includegraphics[width=0.95\textwidth]{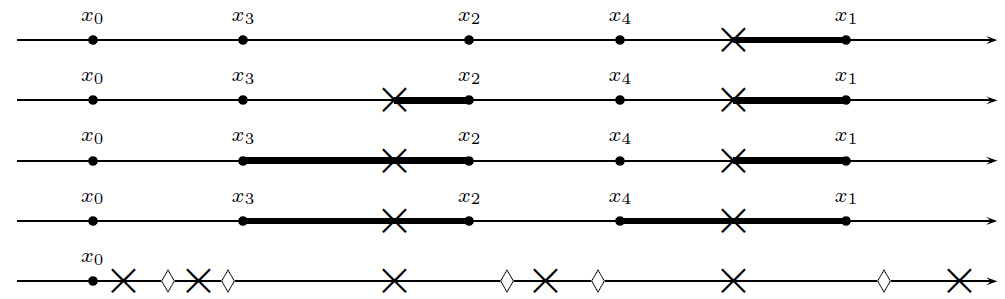}
}
\caption{Illustration of the first four steps of Wilson's algorithm in case of reflection at $x_{0}$
and of $(\mathcal{Y}_{\infty}^{(x_{0}\triangleright)},
\mathcal{Z}_{\infty}^{(x_{0}\triangleright)})$: 
x-dots represent the points of $\mathcal{Y}_{n}^{(x_{0}\triangleright)}$,
diamonds the points of $\mathcal{Z}_{n}^{(x_{0}\triangleright)}$,
and thick lines the intervals in $\mathcal{J}_{n}^{(x_{0}\triangleright)}$.
}
\label{FigC2}
\end{figure}

Let $\mathcal{Y}_{\infty}$ and $\mathcal{Z}_{\infty}$ be the determinantal point processes associated to the Brownian motion on $\mathbb{R}$ with killing measure $\kappa$. Let $n,n'\in\mathbb{N}^{\ast}$. The following two factorizations hold:
\begin{multline*}
M_{n+n'}(\mathcal{Y}_{\infty},\mathcal{Z}_{\infty})
(dy_{-n'},dz_{-n'},\dots dy_{-1},dz_{-1},dy_{0},dz_{1},dy_{1},\dots,
dz_{n},dy_{n})=\\
M_{n'}^{(\times y_{0})}(\mathcal{Y}_{\infty}^{(\times y_{0})},\mathcal{Z}_{\infty}^{(\times y_{0})})
(dz_{-1},dy_{-1},\dots ,dz_{-n'},dy_{-n'})\times
G(y_{0},y_{0})\kappa(dy_{0})\\\times
M_{n}^{(y_{0}\times)}(\mathcal{Y}_{\infty}^{(y_{0}\times)},\mathcal{Z}_{\infty}^{(y_{0}\times)})
(dz_{1},dy_{1},\dots ,dz_{n},dy_{n}),
\end{multline*}
\begin{multline*}
M_{n+n'-1}(\mathcal{Y}_{\infty},\mathcal{Z}_{\infty})
(dy_{-n'},dz_{-n'+1},\dots dz_{-1},dy_{-1},dz_{0},dy_{1},dz_{1},\dots,
dz_{n-1},dy_{n})=\\
M_{n'}^{(\triangleleft z_{0})}
(\mathcal{Y}_{\infty}^{(\triangleleft z_{0})},\mathcal{Z}_{\infty}^{(\triangleleft z_{0})})
(dy_{-1},dz_{-1},\dots dz_{-n'+1},dy_{-n'})\times
\mathcal{K}(z_{0},z_{0}) dz_{0}\\\times
M_{n}^{(z_{0}\triangleright)}(\mathcal{Y}_{\infty}^{(z_{0}\triangleright)},\mathcal{Z}_{\infty}^{(z_{0}\triangleright)})
(dy_{1},dz_{1},\dots dz_{n-1},dy_{n}).
\end{multline*} 
The above factorizations imply the following:

\begin{property}
\label{Ch7Sec1: PropertyCondPres}
Let $\varepsilon>0$ and let $F_{1}$ and $F_{2}$ be two measurable non-negative functionals on couples of point processes on $\mathbb{R}$ and $f$ a measurable non-negative function on $\mathbb{R}$. Then,
\begin{multline*}
\mathbb{E}\bigg[
\sum_{y_{0}\in\mathcal{Y}_{\infty}}
f(y_{0})F_{1}(\mathcal{Y}_{\infty}\cap(-\infty, y_{0}],
\mathcal{Z}_{\infty}\cap(-\infty, y_{0}])
F_{2}(\mathcal{Y}_{\infty}\cap[y_{0},+\infty),
\mathcal{Z}_{\infty}\cap[y_{0},+\infty))\bigg]
\\=\int_{\mathbb{R}}f(y_{0})G(y_{0},y_{0})
\mathbb{E}[F_{1}(\mathcal{Y}_{\infty}^{(\times y_{0})},
\mathcal{Z}_{\infty}^{(\times y_{0})})]
\mathbb{E}[F_{2}(\mathcal{Y}_{\infty}^{(y_{0}\times)},
\mathcal{Z}_{\infty}^{(y_{0}\times)})]\kappa(dy_{0}),
\end{multline*}
and
\begin{multline*}
\mathbb{E}\bigg[
\sum_{z_{0}\in\mathcal{Z}_{\infty}}f(z_{0})
F_{1}(\mathcal{Y}_{\infty}\cap(-\infty, z_{0}],
\mathcal{Z}_{\infty}\cap(-\infty, z_{0}])
F_{2}(\mathcal{Y}_{\infty}\cap[z_{0},+\infty),
\mathcal{Z}_{\infty}\cap[z_{0},+\infty))\bigg]
\\=\int_{\mathbb{R}}f(z_{0})\mathcal{K}(z_{0},z_{0})
\mathbb{E}[F_{1}(\mathcal{Y}_{\infty}^{(\triangleleft z_{0})},
\mathcal{Z}_{\infty}^{(\triangleleft z_{0})})]
\mathbb{E}[F_{2}(\mathcal{Y}_{\infty}^{(z_{0}\triangleright)},
\mathcal{Z}_{\infty}^{(z_{0}\triangleright)})] dz_{0}.
\end{multline*}

If $y_{0}\in \operatorname{Supp}(\kappa)$, then, conditional on $y_{0}\in\mathcal{Y}_{\infty}$, 
$(\mathcal{Y}_{\infty}\cap(-\infty,y_{0}],\mathcal{Z}_{\infty}\cap(-\infty,y_{0}])$
and ${(\mathcal{Y}_{\infty}\cap[y_{0},+\infty),\mathcal{Z}_{\infty}
\cap[y_{0},+\infty))}$ are independent, $(\mathcal{Y}_{\infty}\cap(-\infty,y_{0}],\mathcal{Z}_{\infty}\cap(-\infty,y_{0}])$ has the same law as 
$(\mathcal{Y}_{\infty}^{(\times y_{0})},
\mathcal{Z}_{\infty}^{(\times y_{0})})$
and ${(\mathcal{Y}_{\infty}\cap[y_{0},+\infty),
\mathcal{Z}_{\infty}\cap[y_{0},+\infty))}$ has the same law as $(\mathcal{Y}_{\infty}^{(y_{0}\times)},\mathcal{Z}_{\infty}^{(y_{0}\times)})$.

If $\kappa((-\infty,z_{0}))>0$, $\kappa((z_{0},+\infty))>0$ and
$\kappa(\lbrace z_{0}\rbrace)=0$, then, conditional on $z_{0}\in\mathcal{Z}_{\infty}$, 
$(\mathcal{Y}_{\infty}\cap(-\infty,z_{0}],\mathcal{Z}_{\infty}\cap(-\infty,z_{0}])$
and $(\mathcal{Y}_{\infty}\cap[z_{0},+\infty),
\mathcal{Z}_{\infty}\cap[z_{0},+\infty))$ are independent, $(\mathcal{Y}_{\infty}\cap(-\infty,z_{0}],\mathcal{Z}_{\infty}\cap(-\infty,z_{0}])$ has the same law as 
$(\mathcal{Y}_{\infty}^{(\triangleleft z_{0})},
\mathcal{Z}_{\infty}^{(\triangleleft z_{0})})$
and $(\mathcal{Y}_{\infty}\cap[z_{0},+\infty) 
\mathcal{Z}_{\infty}\cap[z_{0},+\infty))$ has the same law as $(\mathcal{Y}_{\infty}^{(z_{0}\triangleright)},
\mathcal{Z}_{\infty}^{(z_{0}\triangleright)})$.
\end{property}

Let $y_{0}\in\mathbb{R}$ and $c>0$. We will denote by $(\mathcal{Y}^{(y_{0})}_{\infty},\mathcal{Z}^{(y_{0})}_{\infty})$ the pair of interwoven determinantal point processes corresponding to the killing measure $\kappa+c\delta_{y_{0}}$, conditioned on $\mathcal{Y}^{(y_{0})}_{\infty}$ containing $y_{0}$. The law of $(\mathcal{Y}^{(y_{0})}_{\infty},\mathcal{Z}^{(y_{0})}_{\infty})$ does not depend on the value of $c$ according to 
Property \ref{Ch7Sec1: PropertyCondPres}. $(\mathcal{Y}^{(y_{0})}_{\infty}\cap (y_{0},+\infty),\mathcal{Z}^{(y_{0})}_{\infty}\cap (y_{0},+\infty))$ and $(\mathcal{Y}^{(y_{0})}_{\infty}\cap (-\infty,y_{0}),\mathcal{Z}^{(y_{0})}_{\infty}\cap(-\infty,y_{0}))$ are independent. The
distribution of the $2n$ closest to $y_{0}$ points in $(\mathcal{Y}^{(y_{0})}_{\infty}\cup\mathcal{Z}^{(y_{0})}_{\infty})\cap(y_{0},+\infty)$, on the event 
$\#(\mathcal{Y}^{(y_{0})}_{\infty}\cap(y_{0},+\infty))\geq n$, is
\begin{equation}
\label{Ch7Sec1: EqCondPresy0}
1_{y_{0}<z_{1}<y_{1}<\dots<z_{n}<y_{n}}2^{n}\dfrac{u_{\downarrow}(y_{n})}{u_{\downarrow}(y_{0})} dz_{1}\kappa(dy_{1})\dots dz_{n}\kappa(dy_{n}).
\end{equation}
The distribution of the $2n$ closest to $y_{0}$ points in $(\mathcal{Y}^{(y_{0})}_{\infty}\cup\mathcal{Z}^{(y_{0})}_{\infty})
\cap(-\infty,y_{0})$ is
\begin{equation}
\label{Ch7Sec1: EqCondPresy0bis}
1_{y_{0}>z_{-1}>y_{-1}>\dots>z_{-n}>y_{-n}}2^{n}\dfrac{u_{\uparrow}(y_{-n})}{u_{\uparrow}(y_{0})} dz_{-1}\kappa(dy_{-1})
\dots dz_{-n}\kappa(dy_{-n}).
\end{equation}

Let $a<b\in\mathbb{R}$. Next we will describe what happens if we condition by $\mathcal{Z}_{\infty}\cap[a,b]=\emptyset$. This condition implies in particular that
$\#(\mathcal{Y}_{\infty}\cap[a,b])\leq 1$. Let $\widehat{\mathbb{R}}$ be the quotient space, where in $\mathbb{R}$ we identify to one point all the points lying in $[a,b]$. $\widehat{\mathbb{R}}$ is homeomorphic to $\mathbb{R}$. Let $\hat{\pi}$ be the projection from $\mathbb{R}$ to $\widehat{\mathbb{R}}$. Let $\theta$ be the class of $[a,b]$ in $\widehat{\mathbb{R}}$. We define on $\widehat{\mathbb{R}}$ the metric $d_{\widehat{\mathbb{R}}}$:
\begin{itemize}
\item If $x<y<a$ or $b<x<y$, then $d_{\widehat{\mathbb{R}}}
(\hat{\pi}(x),\hat{\pi}(y))=y-x$.
\item If $x<a$ and $y>b$, then 
$d_{\widehat{\mathbb{R}}}(\hat{\pi}(x),\hat{\pi}(y))=(y-x)-(b-a)$.
\item If $x<a$, then $d_{\widehat{\mathbb{R}}}(\hat{\pi}(x),\theta)=a-x$.
\item If $x>b$, then $d_{\widehat{\mathbb{R}}}(\hat{\pi}(x),\theta)=x-b$.
\end{itemize}
$\widehat{\mathbb{R}}$ endowed with $d_{\widehat{\mathbb{R}}}$ is isometric to $\mathbb{R}$. So, we can define a standard Brownian motion on $\widehat{\mathbb{R}}$. Let $\hat{\kappa}$ be the measure $\kappa$ pushed forward by $\hat{\pi}$ on $\widehat{\mathbb{R}}$. In particular 
$\hat{\kappa}(\lbrace \theta\rbrace)=\kappa([a,b])$. Let $(\widehat{\mathcal{Y}}_{\infty},\widehat{\mathcal{Z}}_{\infty})$ be the pair of interwoven determinantal point processes on $\widehat{\mathbb{R}}$ obtained by applying the Wilson's algorithm to the Brownian motion on $\widehat{\mathbb{R}}$ with killing measure $\hat{\kappa}$.

\begin{prop}
\label{Ch7Sec1: PropEdgeContr}
Conditional on $\mathcal{Z}_{\infty}\cap[a,b]=\emptyset$, 
$(\hat{\pi}(\mathcal{Y}_{\infty}),\hat{\pi}(\mathcal{Z}_{\infty}))$ has the same distribution as
$(\widehat{\mathcal{Y}}_{\infty},\widehat{\mathcal{Z}}_{\infty})$. Moreover, on the event $\mathcal{Y}_{\infty}\cap[a,b]\neq\emptyset$, the unique point in $\mathcal{Y}_{\infty}\cap[a,b]$ is distributed according the probability measure 
$\frac{1_{a\leq y\leq b}\kappa(dy)}{\kappa([a,b])}$.
\end{prop}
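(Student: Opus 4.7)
The plan is to identify both sides by matching the multi-point intensity measures $M_n$ described in Proposition \ref{Ch6Sec3: PropJointLaw}: since the joint law of an interlacing pair of point processes admitting only finitely many points per bounded interval is determined by the family $(M_n)_{n\geq 0}$, matching these measures on $\widehat{\mathbb{R}}$ suffices.

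First I would note that since $\mathcal{Y}_\infty$ and $\mathcal{Z}_\infty$ interlace, the event $\mathcal{Z}_\infty\cap[a,b]=\emptyset$ forces $\mathcal{Y}_\infty\cap[a,b]$ to contain at most one point, and I would split the computation into two cases. In Case A, $\mathcal{Y}_\infty\cap[a,b]=\emptyset$, the consecutive $\mathcal{Y}$-points $y_-, y_+$ straddling $[a,b]$ satisfy $y_-<a<b<y_+$, and the unique $\mathcal{Z}$-point between them lies in $(y_-,a)\cup(b,y_+)$. In Case B, $\mathcal{Y}_\infty\cap[a,b]=\{y_0\}$: by \eqref{Ch6Sec3: EqMn}, the $M_2$ density of the quintuple $(y_-,z_-,y_0,z_+,y_+)$ with $y_-<a\le y_0\le b<y_+$, $z_-\in(y_-,a)$, $z_+\in(b,y_+)$ equals $4\,u_\uparrow(y_-)u_\downarrow(y_+)\,\kappa(dy_-)\,dz_-\,\kappa(dy_0)\,dz_+\,\kappa(dy_+)$. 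Since this factorises as $\kappa(dy_0)$ times factors independent of $y_0$, the conditional law of $y_0$ is $\kappa(dy_0)/\kappa([a,b])$, as claimed; integrating $y_0$ over $[a,b]$ yields the factor $\kappa([a,b])=\hat\kappa(\{\theta\})$, which is precisely the atomic weight of $\hat\kappa$ needed to produce the atom of $\widehat{\mathcal{Y}}_\infty$ at $\theta$ in the analogous $\widehat{M}_2$ density.

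To see that the remaining factors also match, I would identify $\widehat{\mathbb{R}}$ isometrically with $\mathbb{R}$ by $\hat x=x$ on $(-\infty,a]$ and $\hat x=x-(b-a)$ on $[b,+\infty)$, so that $\theta=a$. Normalising $\hat u_\uparrow$ to coincide with $u_\uparrow$ on $(-\infty,\theta]$ (which is legitimate since $\hat\kappa=\kappa$ there), $\hat u_\uparrow$ on $[\theta,+\infty)$ is of the form $\alpha\,u_\uparrow(\hat x+(b-a))+\beta\,u_\downarrow(\hat x+(b-a))$; continuity at $\theta$ together with the jump relation $\hat u_\uparrow'(\theta^+)-\hat u_\uparrow'(\theta^-)=2\hat u_\uparrow(\theta)\kappa([a,b])$ determines $\alpha,\beta$ uniquely. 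Combining this with the Wronskian normalisation $W(u_\downarrow,u_\uparrow)=2$ and the ODE identity $u_\uparrow'(b^+)-u_\uparrow'(a^-)=2\int_{[a,b]}u_\uparrow\,d\kappa$ (and its counterpart for $u_\downarrow$), a direct algebraic computation yields the crucial identity
\begin{displaymath}
u_\uparrow(y_-)\,u_\downarrow(y_+)=\hat u_\uparrow(\hat y_-)\,\hat u_\downarrow(\hat y_+),\qquad \hat y_-<\theta<\hat y_+,
\end{displaymath}
together with its analogues involving more interlaced points straddling $\theta$, and in particular confirms $W(\hat u_\downarrow,\hat u_\uparrow)=2$ as required for the normalisation of the $\widehat{M}_n$ measures on $\widehat{\mathbb{R}}$. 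In Case A, integration of $z$ over $(y_-,a)\cup(b,y_+)$ produces Lebesgue length $(a-y_-)+(y_+-b)=\hat y_+-\hat y_-$, which is exactly the Lebesgue length of the gap between $\hat y_-$ and $\hat y_+$ in $\widehat{\mathbb{R}}$.

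Putting these pieces together, the push-forward by $\hat\pi$ of the $M_{n+n'}$ measures of $(\mathcal{Y}_\infty,\mathcal{Z}_\infty)$, restricted and conditioned on $\mathcal{Z}_\infty\cap[a,b]=\emptyset$, coincide with the $\widehat M_{n+n'}$ measures of $(\widehat{\mathcal{Y}}_\infty,\widehat{\mathcal{Z}}_\infty)$, which yields the equality in distribution. The main obstacle is the algebraic identification of $\hat u_\uparrow$ and $\hat u_\downarrow$ carried out above — in particular, verifying that $W(\hat u_\downarrow,\hat u_\uparrow)=2$ automatically and checking that possible atoms of $\kappa$ at $a$ or $b$ (which I would handle by treating the one-sided derivatives carefully, or by first proving the statement under $\kappa(\{a\})=\kappa(\{b\})=0$ and then approximating) do not disturb the matching.
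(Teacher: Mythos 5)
Your overall strategy, matching the multi-point intensity measures $M_n$, is sound and is in substance the same route the paper takes (the paper phrases it in terms of the events $\mathscr{C}_n$, which encode the same joint law). The case split, the observation that the conditional law of the unique $\mathcal{Y}$-point in $[a,b]$ is $\kappa(dy_0)/\kappa([a,b])$, and the reduction to properties of $\hat{u}_{\uparrow},\hat{u}_{\downarrow}$ are all correct. But there is a genuine error at the crux of the argument.

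Normalising $\hat{u}_{\uparrow}$ to agree with $u_{\uparrow}\circ\hat\pi^{-1}$ on $(-\infty,\theta]$ forces, as you note, a unique extension past $\theta$. For the identity $u_{\uparrow}(y_-)u_{\downarrow}(y_+)=\hat{u}_{\uparrow}(\hat y_-)\hat{u}_{\downarrow}(\hat y_+)$ to hold with this $\hat{u}_{\uparrow}$, one must also normalise $\hat{u}_{\downarrow}$ to agree with $u_{\downarrow}\circ\hat\pi^{-1}$ on $[\theta,+\infty)$. With those two choices the identity is automatic, but then the Wronskian is not $2$: the jump relation at $\theta$ gives
\begin{displaymath}
W(\hat{u}_{\downarrow},\hat{u}_{\uparrow})=u_{\downarrow}(b)\,\dfrac{du_{\uparrow}}{dx}(a^{-})-u_{\uparrow}(a)\,\dfrac{du_{\downarrow}}{dx}(b^{+})+2\,u_{\uparrow}(a)\,u_{\downarrow}(b)\,\kappa([a,b])
=2\,\mathbb{P}\big(\mathcal{Z}_{\infty}\cap[a,b]=\emptyset\big),
\end{displaymath}
which is strictly less than $2$ whenever $a<b$ (since $u_{\uparrow}$ is increasing and $u_{\downarrow}$ decreasing). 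So the two things you assert simultaneously, the identity and $W(\hat{u}_{\downarrow},\hat{u}_{\uparrow})=2$, are incompatible; your "direct algebraic computation" would, if carried out, refute the second claim, not confirm it.

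This is not a harmless normalisation slip, because the deficit is exactly the conditioning constant. The Green's function on $\widehat{\mathbb{R}}$ is $\widehat{G}=\hat{u}_{\uparrow}\hat{u}_{\downarrow}/\mathbb{P}(\mathcal{Z}_{\infty}\cap[a,b]=\emptyset)$, and the $\widehat{M}_n$ measures must be built from a pair with Wronskian $2$, i.e.\ from $\hat{u}_{\uparrow}\hat{u}_{\downarrow}$ rescaled by $1/\mathbb{P}(\mathcal{Z}_{\infty}\cap[a,b]=\emptyset)$. That factor is precisely what turns the \emph{restricted} $M_n$ measures of $(\mathcal{Y}_{\infty},\mathcal{Z}_{\infty})$ into \emph{conditional} ones. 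As written, your argument has nowhere to absorb the conditioning normalisation---$\mathbb{P}(\mathcal{Z}_{\infty}\cap[a,b]=\emptyset)$ never appears. Once you compute that probability, replace $W(\hat{u}_{\downarrow},\hat{u}_{\uparrow})=2$ with the correct value above, and track the resulting factor, the $M_n$-matching goes through and you recover the statement.
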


\begin{proof}
First, we compute $\mathbb{P}(\mathcal{Z}_{\infty}\cap[a,b]=\emptyset)$. We consider that $a$ and $b$ are the first two starting points in the Wilson's algorithm. Then,
\begin{equation*}
\begin{split}
\mathbb{P}(\mathcal{Z}_{\infty}\cap[a,b]=\emptyset)=&
\mathbb{P}\Big(B^{(a)}_{T_{1}^{-}}>b\Big)+
\mathbb{P}\Big(B^{(a)}_{T_{1}^{-}}<a, B^{(b)}_{T_{2}^{-}}=a\Big)+
\mathbb{P}\Big(B^{(a)}_{T_{1}^{-}}=B^{(b)}_{T_{2}^{-}}\in[a,b]\Big)
\\=&\dfrac{1}{2}\dfrac{du_{\uparrow}}{dx}(a^{-})u_{\downarrow}(b)
-\dfrac{1}{2}u_{\uparrow}(a)\dfrac{du_{\downarrow}}{dx}(b^{+})+
u_{\uparrow}(a)u_{\downarrow}(b)\kappa([a,b]).
\end{split}
\end{equation*}

Next, we determine the Green's function $\widehat{G}$ of 
$\frac{1}{2}\frac{d^{2}}{d\tilde{x}^{2}}-\hat{\kappa}$ on $\widehat{\mathbb{R}}$. Let $\hat{u}_{\uparrow}$ and $\hat{u}_{\downarrow}$ be two solutions on $\widehat{\mathbb{R}}$ to
\begin{displaymath}
\dfrac{1}{2}\dfrac{d\hat{u}}{dx}
-\hat{u}\hat{\kappa}=0,
\end{displaymath}
with the initial conditions $\hat{u}_{\uparrow}(\theta)=u_{\uparrow}(a)$, 
$\frac{d\hat{u}_{\uparrow}}{dx}(\theta^{-})
=\frac{du_{\uparrow}}{dx}(a^{-})$, 
$\hat{u}_{\downarrow}(\theta)=u_{\downarrow}(b)$ and 
$\frac{d\hat{u}_{\downarrow}}{dx}(\theta^{+})
=\frac{du_{\downarrow}}{dx}(b^{+})$. Then, for $x\leq a$, 
$\hat{u}_{\uparrow}(\hat{\pi}(x))=u_{\uparrow}(x)$ and for $x\geq b$, 
$\hat{u}_{\downarrow}(\hat{\pi}(x))=u_{\downarrow}(x)$. $\hat{u}_{\uparrow}$ and $\hat{u}_{\downarrow}$ are positive, $\hat{u}_{\uparrow}$ is non-decreasing and 
$\hat{u}_{\downarrow}$ non-increasing. Moreover,
\begin{displaymath}
\dfrac{d\hat{u}_{\uparrow}}{dx}(\theta^{+})=
\dfrac{d\hat{u}_{\uparrow}}{dx}(\theta^{-})+
2\hat{u}_{\uparrow}(\theta)\hat{\kappa}(\lbrace\theta\rbrace)=
\dfrac{du_{\uparrow}}{dx}(a^{-})+2u_{\uparrow}(a)\kappa([a,b]).
\end{displaymath}
The Wronskian of $\hat{u}_{\downarrow}$ and $\hat{u}_{\uparrow}$ equals
\begin{equation*}
\begin{split}
W(\hat{u}_{\downarrow},\hat{u}_{\uparrow})=&
\hat{u}_{\downarrow}(\theta)\dfrac{d\hat{u}_{\uparrow}}{dx}(\theta^{+})-
\hat{u}_{\uparrow}(\theta)
\dfrac{d\hat{u}_{\downarrow}}{dx}(\theta^{+})\\=&
\dfrac{du_{\uparrow}}{dx}(a^{-})u_{\downarrow}(b)
-u_{\uparrow}(a)\dfrac{du_{\downarrow}}{dx}(b^{+})+
2u_{\uparrow}(a)u_{\downarrow}(b)\kappa([a,b])
\\=&2\mathbb{P}(\mathcal{Z}_{\infty}\cap[a,b]=\emptyset).
\end{split}
\end{equation*}
Thus, $\widehat{G}$ equals
\begin{displaymath}
\widehat{G}(\tilde{x},\tilde{y})=
\dfrac{\hat{u}_{\uparrow}(\tilde{x}\wedge\tilde{y})
\hat{u}_{\downarrow}(\tilde{x}\vee\tilde{y})}
{\mathbb{P}(\mathcal{Z}_{\infty}\cap[a,b]=\emptyset)}.
\end{displaymath}
In particular, if $x\leq a$ and $y\geq b$, then
\begin{equation}
\label{Ch7Sec1: EqGreenFuncProb2}
\widehat{G}(\hat{\pi}(x),\hat{\pi}(y))
=\dfrac{u_{\uparrow}(x)u_{\downarrow}(y)}
{\mathbb{P}(\mathcal{Z}_{\infty}\cap[a,b]=\emptyset)}=
\dfrac{G(x,y)}{\mathbb{P}(\mathcal{Z}_{\infty}\cap[a,b]=\emptyset)}.
\end{equation}

To prove the equality in law, we need to consider the probabilities of the events
$\mathscr{C}_{n}(a_{0},b_{0},\tilde{a}_{1},\tilde{b}_{1},a_{1},b_{1},\dots,\tilde{a}_{n},\tilde{b}_{n},a_{n},b_{n})$, 
where $n\geq 1$ and $a_{0}<b_{0}<\tilde{a}_{1}<\tilde{b}_{1}<a_{1}<b_{1}<\dots
<\tilde{a}_{n}<\tilde{b}_{n}<a_{n}<b_{n}\in\mathbb{R}$,
corresponding to following conditions:
\begin{itemize}
\item $\mathcal{Y}_{\infty}\cap [a_{0},b_{0}]\neq\emptyset$, 
$\mathcal{Y}_{\infty}\cap [a_{n},b_{n}]\neq\emptyset$,
\item $\forall r\in\lbrace 1,\dots,n\rbrace, 
\#(\mathcal{Y}_{\infty}\cap [a_{r},b_{r}])=1$,
\item  $\forall r\in\lbrace 1,\dots,n\rbrace, 
\#(\mathcal{Z}_{\infty}\cap (\tilde{a}_{r},\tilde{b}_{r}))=1$,
\item $\forall r\in\lbrace 0,\dots,n-1\rbrace, (\mathcal{Y}_{\infty}\cup\mathcal{Z}_{\infty})\cap(b_{r},\tilde{a}_{r}]=
\emptyset,(\mathcal{Y}_{\infty}\cup\mathcal{Z}_{\infty})\cap[\tilde{b}_{r},a_{r+1})=
\emptyset$.
\end{itemize}
We will also assume that either all of the $[a_{r},b_{r}]$ do not intersect $[a,b]$, or one of the $[a_{r},b_{r}]$ is contained in 
$[a,b]$ and the other do not intersect $[a,b]$.
The probabilities of such events determine the joint law of 
$(\mathcal{Y}_{\infty},\mathcal{Z}_{\infty})$ on the event
$\# \mathcal{Y}_{\infty}\geq 2, \mathcal{Z}_{\infty}\cap[a,b]=\emptyset$. We will denote $\widehat{\mathscr{C}}_{n}(\cdot)$ the analogously defined events, where we replace 
$(\mathcal{Y}_{\infty},\mathcal{Z}_{\infty})$ by $(\widehat{\mathcal{Y}}_{\infty},\widehat{\mathcal{Z}}_{\infty})$. We do not need to deal with the event $\# \mathcal{Y}_{\infty}=1$, because then $\mathcal{Z}_{\infty}=\emptyset$.

We first consider the case of $[a,b]\cap\big(\bigcup_{r=0}^{n}[a_{r},b_{r}]\big)=\emptyset$. If there is $r_{0}\in \lbrace 0, n-1\rbrace$ such that
$b_{r_{0}}<a$ and $b<a_{r_{0}+1}$, then
\begin{multline*}
\mathbb{P}\left(\mathscr{C}_{n}(a_{0},b_{0},\tilde{a}_{1},\tilde{b}_{1},a_{1},b_{1},\dots,\tilde{a}_{n},\tilde{b}_{n},a_{n},b_{n}),
\mathcal{Z}_{\infty}\cap[a,b]=\emptyset\right)
\\=\int_{[a_{0},b_{0}]}u_{\uparrow}(y_{0})\kappa(dy_{0})\times
\int_{[a_{n},b_{n}]}u_{\downarrow}(y_{n})\kappa(dy_{n})\times
\prod_{r=1}^{n-1}\kappa([a_{r},b_{r}])\\\times
\prod_{r\neq r_{0}}2(\tilde{b}_{r}-\tilde{a}_{r})\times
2\operatorname{Leb}([\tilde{a}_{r_{0}},\tilde{b}_{r_{0}}]\setminus[a,b]).
\end{multline*}
Using \eqref{Ch7Sec1: EqGreenFuncProb2}, we get that the above equals
\begin{displaymath}
\mathbb{P}(\mathcal{Z}_{\infty}\cap[a,b]=\emptyset)\times
\mathbb{P}\left(\widehat{\mathscr{C}}_{n}(\hat{\pi}(a_{0}),\hat{\pi}(b_{0}),\hat{\pi}(\tilde{a}_{1}),\hat{\pi}(\tilde{b}_{1}),\dots
,\hat{\pi}(a_{n}),\hat{\pi}(b_{n}))\right).
\end{displaymath}
If $b<a_{0}$, then we consider a Wilson's algorithm, where the $2(n+1)$ first starting points are $\tilde{a}_{1},\tilde{b}_{1},\dots,\tilde{a}_{n},\tilde{b}_{n},a,b$.
The conditions $\mathscr{C}_{n}(a_{0},b_{0},\tilde{a}_{1},\tilde{b}_{1},a_{1},b_{1},\dots,\tilde{a}_{n},\tilde{b}_{n},a_{n},b_{n})$ and ${\mathcal{Z}_{\infty}\cap[a,b]=\emptyset}$ are satisfied if and only if the following is true:
\begin{itemize}
\item $B^{(\tilde{a}_{1})}_{T_{1}^{-}}\in[a_{0},b_{0}]$,
$B^{(\tilde{b}_{n})}_{T_{2n}^{-}}\in[a_{n},b_{n}]$, for all 
$r\in\lbrace 1,\dots,n-1\rbrace$, $B^{(\tilde{b}_{r})}_{T_{2r}^{-}}=B^{(\tilde{a}_{r+1})}_{T_{2r+1}^{-}}
\in[a_{r},b_{r}]$ and for all $r\in\lbrace 1,\dots,n\rbrace$, 
$\mathcal{Y}_{\infty}\cap(\tilde{a}_{r},\tilde{b}_{r})=\emptyset$.
\item Either $B^{(a)}_{T_{2n+1}^{-}}\in (b,B^{(\tilde{a}_{1})}_{T_{1}^{-}}]$ or
$B^{(b)}_{T_{2n+2}^{-}}<a$ or $B^{(a)}_{T_{2n+1}^{-}}=B^{(b)}_{T_{2n+2}^{-}}\in [a,b]$.
\end{itemize}
Then,
\begin{equation*}
\begin{split}
\mathbb{P}&\Big(\mathscr{C}_{n}(a_{0},b_{0},\tilde{a}_{1},\tilde{b}_{1},a_{1},b_{1},\dots,\tilde{a}_{n},\tilde{b}_{n},a_{n},b_{n}),
\mathcal{Z}_{\infty}\cap[a,b]=\emptyset\Big)
\\=&\int_{[a_{n},b_{n}]}u_{\downarrow}(y_{n})\kappa(dy_{n})\times
\prod_{r=1}^{n-1}\kappa([a_{r},b_{r}])\times
\prod_{r=1}^{n}2(\tilde{b}_{r}-\tilde{a}_{r})
\\&\times \bigg(u_{\uparrow}(a)\int\limits_{b<y<y_{0},y_{0}\in[a_{0},b_{0}]}(u_{\downarrow}(y)u_{\uparrow}(y_{0})-u_{\uparrow}(y)u_{\downarrow}(y_{0}))\kappa(dy)\kappa(dy_{0})+
u_{\uparrow}(a)\kappa([a_{0},b_{0}])
\\&+\Big(\int_{y_{-1}<a}u_{\uparrow}(y_{-1})\kappa(dy_{-1})
+u_{\uparrow}(a)\kappa([a_{0},b_{0}])\Big)
\\&\times\int_{[a_{0},b_{0}]}(u_{\downarrow}(b)u_{\uparrow}(y_{0})
-u_{\uparrow}(b)u_{\downarrow}(y_{0}))\kappa(dy_{0})\bigg)
\\=&\int_{[a_{n},b_{n}]}u_{\downarrow}(y_{n})\kappa(dy_{n})\times
\prod_{r=1}^{n-1}\kappa([a_{r},b_{r}])\times
\prod_{r=1}^{n}2(\tilde{b}_{r}-\tilde{a}_{r})
\\&\times\Big(\dfrac{1}{2}u_{\uparrow}(a)\int_{[a_{0},b_{0}]}
\Big(\dfrac{du_{\downarrow}}{dx}(b^{+})u_{\uparrow}(y_{0})-
\dfrac{du_{\uparrow}}{dx}(b^{+})u_{\downarrow}(y_{0})\Big)\kappa(dy_{0})
\\&+\Big(\dfrac{1}{2}\dfrac{du_{\uparrow}}{dx}(a^{-})+u_{\uparrow}(a)\kappa([a_{0},b_{01}])\Big)
\int_{[a_{0},b_{0}]}(u_{\downarrow}(b)u_{\uparrow}(y_{0})-u_{\uparrow}(b)u_{\downarrow}(y_{0}))\kappa(dy_{0})\Big).
\end{split}
\end{equation*}
But, for $y_{0}\geq b$,
\begin{equation*}
\begin{split}
\hat{u}_{\uparrow}(\hat{\pi}(y_{0}))=&\dfrac{1}{2}u_{\uparrow}(a)\Big(\dfrac{du_{\downarrow}}{dx}(b^{+})u_{\uparrow}(y_{0})-
\dfrac{du_{\uparrow}}{dx}(b^{+})u_{\downarrow}(y_{0})\Big)
\\+&\Big(\dfrac{1}{2}\dfrac{du_{\uparrow}}{dx}(a^{-})
+u_{\uparrow}(a)\kappa([a_{0},b_{0}])\Big)(u_{\downarrow}(b)
u_{\uparrow}(y_{0})-u_{\uparrow}(b)u_{\downarrow}(y_{0})).
\end{split}
\end{equation*}
Indeed, one can check the initial conditions 
$\hat{u}_{\uparrow}(\hat{\pi}(b))=u_{\uparrow}(a)$ and $\frac{d\hat{u}_{\uparrow}}{dx}(\hat{\pi}(b)^{+})=
\frac{du_{\uparrow}}{dx}(a^{-})+2u_{\uparrow}(a)\kappa([a_{0},b_{0}])$. It follows that
\begin{equation*}
\begin{split}
\mathbb{P}&\Big(\mathscr{C}_{n}(a_{0},b_{0},\tilde{a}_{1},\tilde{b}_{1},\dots,a_{n},b_{n}),\mathcal{Z}_{\infty}\cap[a,b]=\emptyset\Big)
\\=&\int\limits_{[\hat{\pi}(a_{0}),\hat{\pi}(b_{0})]}\hat{u}_{\uparrow}(\tilde{y}_{0})\kappa(d\tilde{y}_{0})\times
\int\limits_{[\hat{\pi}(a_{n}),\hat{\pi}(b_{n})]}\hat{u}_{\downarrow}(\tilde{y}_{n})\kappa(d\tilde{y}_{n})\times
\prod_{r=1}^{n-1}\kappa([a_{r},b_{r}])\times
\prod_{r=1}^{n}2(\tilde{b}_{r}-\tilde{a}_{r})
\\=&\mathbb{P}(\mathcal{Z}_{\infty}\cap[a,b]=\emptyset)\times
\mathbb{P}\left(\widehat{\mathscr{C}}_{n}(\hat{\pi}(a_{0}),\hat{\pi}(b_{0}),\hat{\pi}(\tilde{a}_{1}),\hat{\pi}(\tilde{b}_{1}),\dots
,\hat{\pi}(a_{n}),\hat{\pi}(b_{n}))\right).
\end{split}
\end{equation*}
Similar holds if $b_{n}<a$.

Now, we consider the case when there is $r_{0}\in\lbrace 0,\dots,n\rbrace$ such that
$[a_{r_{0}},b_{r_{0}}]\subseteq[a,b]$ and 
$[a,b]\cap\big(\bigcup_{r\neq r_{0}}[a_{r},b_{r}]\big)=\emptyset$. If 
$1\leq r_{0}\leq n-1$, then
\begin{equation*}
\begin{split}
\mathbb{P}&\left(\mathscr{C}_{n}(a_{0},b_{0},\tilde{a}_{1},\tilde{b}_{1},\dots,a_{n},b_{n}),
\mathcal{Z}_{\infty}\cap[a,b]=\emptyset\right)
\\=&\int_{[a_{0},b_{0}]}u_{\uparrow}(y_{0})\kappa(dy_{0})\times
\int_{[a_{n},b_{n}]}u_{\downarrow}(y_{n})\kappa(dy_{n})\times
\prod_{r=1}^{n-1}\kappa([a_{r},b_{r}])
\\&\times\prod_{r\neq r_{0},r_{0}+1}2(\tilde{b}_{r}-\tilde{a}_{r})
\times 2 \operatorname{Leb}([\tilde{a}_{r_{0}},\tilde{b}_{r_{0}}]\setminus [a,b])
\times 2 \operatorname{Leb}([\tilde{a}_{r_{0}+1},\tilde{b}_{r_{0}+1}]\setminus [a,b])
\\=&\dfrac{\kappa([a_{r_{0}},b_{r_{0}}])}{\kappa([a,b])}\times
\mathbb{P}(\mathcal{Z}_{\infty}\cap[a,b]=\emptyset)
\\&\times
\mathbb{P}\left(\widehat{\mathscr{C}}_{n}(\hat{\pi}(a_{0}),\hat{\pi}(b_{0}),\hat{\pi}(\tilde{a}_{1}),\hat{\pi}(\tilde{b}_{1}),\dots
,\hat{\pi}(a_{n}),\hat{\pi}(b_{n}))\right).
\end{split}
\end{equation*}
Moreover, $\hat{\pi}(a_{r_{0}})=\hat{\pi}(b_{r_{0}})=\theta$.
If $r_{0}=0$, then
\begin{equation*}
\begin{split}
\mathbb{P}\Big(\mathscr{C}_{n}(a_{0},b_{0},\tilde{a}_{1},\tilde{b}_{1},\dots&,a_{n},b_{n}),\mathcal{Z}_{\infty}\cap[a,b]=\emptyset\Big)
\\=&u_{\uparrow}(a)\kappa([a_{0},b_{0}])\times
\int_{[a_{n},b_{n}]}u_{\downarrow}(y_{n})\kappa(dy_{n})\\&\times
\prod_{r=1}^{n-1}\kappa([a_{r},b_{r}])\times
\prod_{r=2}^{n}2(\tilde{b}_{r}-\tilde{a}_{r})\times
2 \operatorname{Leb}([\tilde{a}_{1},\tilde{b}_{1}]\setminus [a,b])
\\=&\dfrac{\kappa([a_{0},b_{0}])}{\kappa([a,b])}\times
\mathbb{P}(\mathcal{Z}_{\infty}\cap[a,b]=\emptyset)
\\&\times
\mathbb{P}\left(\widehat{\mathscr{C}}_{n}(\hat{\pi}(a_{0}),\hat{\pi}(b_{0}),\hat{\pi}(\tilde{a}_{1}),\hat{\pi}(\tilde{b}_{1}),\dots
,\hat{\pi}(a_{n}),\hat{\pi}(b_{n}))\right),
\end{split}
\end{equation*}
and $\hat{\pi}(a_{0})=\hat{\pi}(b_{0})=\theta$. We have a similar expression if $r_{0}=n$.
\end{proof}

Next, we deal with the condition of the determinantal point process 
$\mathcal{Y}_{\infty}$ not charging a given subinterval of $\mathbb{R}$. We will consider the following more general situation:
Let $\kappa$ and $\tilde{\kappa}$ be two different killing measures on $\mathbb{R}$, with $\kappa\leq \tilde{\kappa}$, and the couples of determinantal point processes $(\mathcal{Y}_{\infty}, \mathcal{Z}_{\infty})$ respectively $(\widetilde{\mathcal{Y}}_{\infty}, \widetilde{\mathcal{Z}}_{\infty})$ corresponding to the Brownian motion on $\mathbb{R}$ with killing measure $\kappa$ respectively $\tilde{\kappa}$. Let $\widetilde{G}$ be the Green's function of 
$\frac{1}{2}\frac{d^{2}}{dx^{2}}-\tilde{\kappa}$, factorized as 
\begin{displaymath}
\widetilde{G}(x,y)=\tilde{u}_{\uparrow}(x\wedge y)
\tilde{u}_{\downarrow}(x\vee y).
\end{displaymath}
Let
\begin{displaymath}
\widetilde{\mathcal{K}}(y,z):=-\dfrac{1}{2}
\dfrac{d\tilde{u}_{\uparrow}}{dx}((y\wedge z)^{+})
\dfrac{d\tilde{u}_{\downarrow}}{dx}((y\vee z)^{-}).
\end{displaymath}
We will assume that $\tilde{\kappa}-\kappa$ has a first moment, that is to say
\begin{displaymath}
\int_{\mathbb{R}}\vert x\vert (\tilde{\kappa}(dx)-\kappa(dx))<+\infty.
\end{displaymath}
Let $\chi$ be the Radon-Nikodym derivative
\begin{displaymath}
\chi:=\dfrac{d\kappa}{d\tilde{\kappa}}.
\end{displaymath}
By definition, $0\leq \chi\leq 1$. Let $\Delta\widetilde{\mathcal{Y}}$ be the point process obtained from $\widetilde{\mathcal{Y}}_{\infty}$ as follows: Given a point $y$ in $\widetilde{\mathcal{Y}}_{\infty}$, we chose to erase it with probability $\chi(y)$ and keep it with probability $1-\chi(y)$, each choice being independent from the other choices and the position of other points. It is immediate to check that
$\Delta\widetilde{\mathcal{Y}}$ is a determinantal point process with determinantal kernel 
$(\widetilde{G}(x,y))_{x,y\in\mathbb{R}}$ relatively to the measure $(1-\chi)\tilde{\kappa}$, that is to say the measure $\tilde{\kappa}-\kappa$. We will show that conditional on 
$\Delta\widetilde{\mathcal{Y}}=\emptyset$, 
$(\widetilde{\mathcal{Y}}_{\infty}, \widetilde{\mathcal{Z}}_{\infty})$,
has the same law as $(\mathcal{Y}_{\infty}, \mathcal{Z}_{\infty})$.
In case $1-\chi$ being the indicator function of a bounded subinterval of $\mathbb{R}$, this gives the law of $(\widetilde{\mathcal{Y}}_{\infty}, \widetilde{\mathcal{Z}}_{\infty})$ conditioned on $\widetilde{\mathcal{Y}}_{\infty}$ not charging this subinterval.

\begin{lemm}
\label{Ch7Sec1: LemExtrDet}
$\Delta\widetilde{\mathcal{Y}}$ is a.s. finite. Let
\begin{multline*}
v_{\kappa,\tilde{\kappa}}(y):=\bigg(\tilde{u}_{\uparrow}(y)
-\int_{y_{-1}<y}\tilde{u}_{\uparrow}(y_{-1})(u_{\downarrow}(y_{-1})
u_{\uparrow}(y)-u_{\uparrow}(y_{-1})
u_{\downarrow}(y))(\tilde{\kappa}-\kappa)(dy_{-1})\bigg)\\
\times\bigg(\tilde{u}_{\downarrow}(y)
-\int_{y_{1}>y}\tilde{u}_{\downarrow}(y_{1})(u_{\uparrow}(y_{1})
u_{\downarrow}(y)-u_{\downarrow}(y_{1})
u_{\uparrow}(y))(\tilde{\kappa}-\kappa)(dy_{1})\bigg).
\end{multline*}
Then,
\begin{displaymath}
\mathbb{P}\big(\# \Delta\widetilde{\mathcal{Y}}=1\big)= \int_{\mathbb{R}}v_{\kappa,\tilde{\kappa}}(y)
(\tilde{\kappa}-\kappa)(dy).
\end{displaymath}
The distribution of the unique point in $\Delta\widetilde{\mathcal{Y}}$ conditional on $\# \Delta\widetilde{\mathcal{Y}}=1$ is
\begin{displaymath}
\dfrac{v_{\kappa,\tilde{\kappa}}(y)(\tilde{\kappa}-\kappa)(dy)}
{\mathbb{P}\big(\# \Delta\widetilde{\mathcal{Y}}=1\big)}.
\end{displaymath}
Furthermore,
\begin{displaymath}
\mathbb{P}\big(\# \Delta\widetilde{\mathcal{Y}}\geq 2\big)\leq
\dfrac{1}{2}\Big(\int_{\mathbb{R}}\widetilde{G}(y,y)
(\tilde{\kappa}(dy)-\kappa(dy))\Big)^{2},
\end{displaymath}
and $\mathbb{P}(\Delta\widetilde{\mathcal{Y}}=\emptyset)>0$.
\end{lemm}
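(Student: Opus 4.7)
My proof plan proceeds in four stages. The starting point is that $\Delta\widetilde{\mathcal{Y}}$ is an independent Bernoulli thinning of the determinantal point process $\widetilde{\mathcal{Y}}_{\infty}$, which by Proposition \ref{Ch3Sec3: PropDeterminantalY} has kernel $\widetilde{G}$ relative to $\tilde{\kappa}$, with retention probability $1-\chi$. A standard closure property of determinantal point processes under Bernoulli thinning then implies that $\Delta\widetilde{\mathcal{Y}}$ is itself determinantal with the same kernel $\widetilde{G}$ but reference measure $(1-\chi)\tilde{\kappa}=\tilde{\kappa}-\kappa$; in particular its $n$-point correlations are $\det(\widetilde{G}(y_i,y_j))_{i,j\le n}$. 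Almost-sure finiteness follows from the bound $\mathbb{E}[\sharp\Delta\widetilde{\mathcal{Y}}]=\int\widetilde{G}(y,y)(\tilde{\kappa}-\kappa)(dy)<+\infty$, using the convexity and monotonicity of $\tilde u_\uparrow,\tilde u_\downarrow$ together with the constant Wronskian $W(\tilde u_\downarrow,\tilde u_\uparrow)=2$ to control $\widetilde{G}(y,y)=\tilde u_\uparrow(y)\tilde u_\downarrow(y)$ by a linear function of $|y|$ under suitable normalisation, combined with the first-moment hypothesis. The bound on $\mathbb{P}(\sharp\Delta\widetilde{\mathcal{Y}}\ge 2)$ is the elementary inequality $\mathbb{P}(\sharp\ge 2)\le\tfrac12\mathbb{E}[\sharp(\sharp-1)]$ applied with the pointwise estimate $\det(\widetilde{G}(y_i,y_j))_{i,j\le 2}\le \widetilde{G}(y_1,y_1)\widetilde{G}(y_2,y_2)$.

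To compute the law of $(\sharp\Delta\widetilde{\mathcal{Y}},y_*)$ on the event $\sharp\Delta\widetilde{\mathcal{Y}}=1$, I apply the Palm formula for determinantal point processes: the intensity of pairs $(y,\Delta\widetilde{\mathcal{Y}}\setminus\{y\})$ is $\widetilde{G}(y,y)(\tilde{\kappa}-\kappa)(dy)$ times the law of the reduced Palm configuration at $y$, which is itself determinantal with kernel $\widetilde{G}^{(y)}(a,b)=\widetilde{G}(a,b)-\widetilde{G}(a,y)\widetilde{G}(y,b)/\widetilde{G}(y,y)$. The explicit factorised form $\widetilde{G}(x_1,x_2)=\tilde u_\uparrow(x_1\wedge x_2)\tilde u_\downarrow(x_1\vee x_2)$ forces $\widetilde{G}^{(y)}$ to vanish whenever $a\le y\le b$, to equal $\widetilde{G}^{(\times y)}$ on $(-\infty,y)^2$ and $\widetilde{G}^{(y\times)}$ on $(y,+\infty)^2$. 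Consequently the reduced Palm configuration splits as an independent union of two determinantal point processes, so
\[
\mathbb{P}\big(\sharp\Delta\widetilde{\mathcal{Y}}=1,\,y_*\in dy\big)=\widetilde{G}(y,y)\,p_-(y)\,p_+(y)\,(\tilde{\kappa}-\kappa)(dy),
\]
where $p_\pm(y)$ denotes the emptiness probability of each side.

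The main analytic step is to identify $p_+(y)=B(y)/\tilde u_\downarrow(y)$ (and symmetrically $p_-(y)=A(y)/\tilde u_\uparrow(y)$). The idea is to verify that the function $B$ is $\kappa$-harmonic on all of $\mathbb{R}$ and then pin down coefficients by asymptotics. Rewriting $B$ using the Wronskian identity $u_\downarrow(y_1)u_\uparrow(y)-u_\uparrow(y_1)u_\downarrow(y)=2u^{-,y_1}(y)$ for $y<y_1$, one computes $(\tfrac12\partial_y^2-\kappa)B$ distributionally: the $\tilde\kappa$-harmonicity of $\tilde u_\downarrow$ yields $(\tfrac12\partial_y^2-\kappa)\tilde u_\downarrow=\tilde u_\downarrow(\tilde\kappa-\kappa)$, while the $\kappa$-harmonicity in $y$ of each $u^{-,y_1}(y)$ combined with the boundary contribution $\partial_y u^{-,y_1}(y)|_{y_1=y^+}=-1$ produces $(\tfrac12\partial_y^2-\kappa)\bigl[\int_{y_1>y}\tilde u_\downarrow(y_1)u^{-,y_1}(y)(\tilde\kappa-\kappa)(dy_1)\bigr]=\tfrac12\tilde u_\downarrow(y)(\tilde\kappa-\kappa)(dy)$. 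The two contributions cancel exactly, so $B$ is $\kappa$-harmonic, hence of the form $\alpha u_\uparrow+\beta u_\downarrow$; the tail estimate $B(y)/\tilde u_\downarrow(y)\to 1$ as $y\to+\infty$ (guaranteed by the first-moment hypothesis, which makes the corrective integral negligible relative to $\tilde u_\downarrow(y)$) pins down the coefficients. On the other hand, the resolvent identity (Lemma \ref{Ch2Sec3: LemResolvent}) gives $\widetilde{\mathfrak{G}}^{(y\times)}=\mathfrak{G}^{(y\times)}(I+\mathfrak{G}^{(y\times)})^{-1}$ on $\mathbb{L}^2((y,+\infty),\tilde\kappa-\kappa)$, from which $p_+(y)=\det(I-\widetilde{\mathfrak{G}}^{(y\times)})=\det(I+\mathfrak{G}^{(y\times)})^{-1}$, and the same ODE-plus-asymptotics characterisation shows $\tilde u_\downarrow(y)p_+(y)=B(y)$. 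Getting the asymptotic normalisation rigorously — especially since neither $\kappa$ nor $\tilde\kappa$ need have finite first moment separately — I expect to be the principal obstacle.

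Finally, $\mathbb{P}(\Delta\widetilde{\mathcal{Y}}=\emptyset)>0$ is the statement $\det(I-\widetilde{\mathfrak{G}}_{\tilde\kappa-\kappa})>0$, which follows because the integral operator $\widetilde{\mathfrak{G}}_{\tilde\kappa-\kappa}$ on $\mathbb{L}^2(\tilde\kappa-\kappa)$ is trace-class, positive self-adjoint, and a strict contraction, its spectrum lying in $[0,1)$ by the integration-by-parts estimate analogous to \eqref{Ch3Sec3: EqIPPG}; hence $\prod_i(1-\lambda_i)>0$.
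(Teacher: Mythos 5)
Your Palm-decomposition idea is sound and genuinely different from the paper's route: the paper computes the spacing kernel $f_{\Delta\widetilde{\mathcal{Y}}}(y_0,y)$ explicitly by summing a geometric series of $f_{\widetilde G}$-steps weighted by $\chi$, evaluates it in closed form as $\frac{\tilde u_\downarrow(y)}{\tilde u_\downarrow(y_0)}(u_\downarrow(y_0)u_\uparrow(y)-u_\uparrow(y_0)u_\downarrow(y))$, and then reads off $v_{\kappa,\tilde\kappa}$ directly as the sieve (inclusion--exclusion) identity built from that kernel; you instead split the reduced Palm process at $y$ into two independent half-line determinantal processes and try to identify the half-line emptiness probabilities $p_\pm(y)$. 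Both strategies, if completed, arrive at the same product $\widetilde G(y,y)p_-(y)p_+(y)$; the paper's geometric series does it by a finite, self-contained computation that works regardless of the boundary behaviour of $u_\downarrow,\tilde u_\downarrow$.

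However, there are two genuine gaps in your plan. First, for $\mathbb{P}(\Delta\widetilde{\mathcal{Y}}=\emptyset)>0$, the integration-by-parts estimate analogous to \eqref{Ch3Sec3: EqIPPG} only gives $\langle \widetilde{\mathfrak{G}}_{\tilde\kappa-\kappa}f,f\rangle \geq \|\widetilde{\mathfrak{G}}_{\tilde\kappa-\kappa}f\|^2$, which places the spectrum in $[0,1]$, not $[0,1)$. Excluding $\lambda=1$ requires an extra argument: the paper assumes $f=\widetilde{\mathfrak{G}}_{\tilde\kappa-\kappa}f$ $(\tilde\kappa-\kappa)$-a.e., observes that equality in the IBP forces the $\mathbb{L}^2(dx)$ gradient term and the $\int F^2\,d\kappa$ term to vanish, hence $F$ constant, and then derives a contradiction from $\kappa\neq 0$. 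Your one-line appeal to ``strict contraction'' skips exactly this step. Second, and more seriously, the identification $\tilde u_\downarrow(y)\,p_+(y)=B(y)$ is the crux, and your proposed ``ODE-plus-asymptotics characterisation'' does not close it. Even granting that $B$ is $\kappa$-harmonic (which is the content of the paper's separate Lemma \ref{Ch7Sec1: LemProportionality}), you would still need to establish (i) that $y\mapsto\tilde u_\downarrow(y)p_+(y)$ is itself $\kappa$-harmonic --- which requires differentiating the Fredholm determinant $\det(I+\mathfrak{G}^{(y\times)})^{-1}$ in the domain endpoint and matching the resulting resolvent expression, a nontrivial computation you have not sketched --- and (ii) a normalisation, which you propose to obtain from $p_+(y)\to 1$ as $y\to+\infty$; but that matching only pins down the constant when $\tilde u_\downarrow(+\infty)>0$ and $u_\downarrow(+\infty)>0$, i.e.\ when $\tilde\kappa$ has a first moment on $(0,+\infty)$ (Proposition \ref{Ch2Sec1: PropAsymptotics}), which is not assumed. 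You correctly flag this as the principal obstacle; the paper's geometric-series computation sidesteps it entirely by never needing to take $y\to+\infty$.
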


\begin{proof}
First, let us check that 
$\int_{\mathbb{R}}\widetilde{G}(y,y)(\tilde{\kappa}(dy)-\kappa(dy))<+\infty$. 
Since $\tilde{\kappa}-\kappa$ has a first moment, we need only to show that $\widetilde{G}(y,y)$ grows sub-linearly in the neighborhood of $-\infty$ and $+\infty$. 
Let $a<b\in\mathbb{R}$ such that $\tilde{\kappa}((a,b))>0$. 
Let $\widetilde{G}_{a,b}$ be the Green's function of $\frac{1}{2}\frac{d^{2}}{dx^{2}}-1_{(a,b)}\tilde{\kappa}$. 
Then $\widetilde{G}_{a,b}(y,y)$ is affine on $(-\infty,a)$ and on $(b,+\infty)$. 
Moreover $\widetilde{G}(y,y)\leq\widetilde{G}_{a,b}(y,y)$. 
Thus, we get
\begin{displaymath}
\mathbb{E}\big[\# \Delta\widetilde{\mathcal{Y}}\big]=
\int_{\mathbb{R}}\widetilde{G}(y,y)
(\tilde{\kappa}(dy)-\kappa(dy))<+\infty.
\end{displaymath}
In particular $\Delta\widetilde{\mathcal{Y}}$ is a.s. finite.

To bound $\mathbb{P}\big(\# \Delta\widetilde{\mathcal{Y}}\geq 2\big)$
we use the following:
\begin{equation*}
\begin{split}
\mathbb{P}\big(\# \Delta\widetilde{\mathcal{Y}}\geq 2\big)&
\leq\dfrac{1}{2}\mathbb{E}\big[\#\Delta\widetilde{\mathcal{Y}}(\#\Delta\widetilde{\mathcal{Y}}-1)\big]
\\&=\dfrac{1}{2}\int_{\mathbb{R}^{2}}(\widetilde{G}(x,x)
\widetilde{G}(y,y)-\widetilde{G}(x,y)^{2})
(\tilde{\kappa}(dx)-\kappa(dx))(\tilde{\kappa}(dy)-\kappa(dy))
\\&\leq \dfrac{1}{2}\Big(\int_{\mathbb{R}}\widetilde{G}(y,y)
(\tilde{\kappa}(dy)-\kappa(dy))\Big)^{2}.
\end{split}
\end{equation*}
The expression of $\mathbb{E}\big[\#\Delta\widetilde{\mathcal{Y}}(\#\Delta\widetilde{\mathcal{Y}}-1)\big]$ that we used is general for determinantal point processes.

Let us prove now that 
$\mathbb{P}(\Delta\widetilde{\mathcal{Y}}=\emptyset)>0$. $\Delta\widetilde{\mathcal{Y}}$ is determinantal point process associated to a trace-class self-adjoint positive semi-definite contraction operator on $\mathbb{L}^{2}(d\tilde{\kappa}-d\kappa)$. 
$\mathbb{P}(\Delta\widetilde{\mathcal{Y}}=\emptyset)>0$ if and only if all the eigenvalues of the operator are strictly less then $1$ (see Theorem $4.5.3$ in \cite{HoughKrishPeresVirag2009GAFDet}). Let $f\in\mathbb{L}^{2}(\tilde{\kappa}-\kappa)$. Let
\begin{displaymath}
F(x):=\int_{\mathbb{R}}\widetilde{G}(x,y)f(y)
(\tilde{\kappa}(dy)-\kappa(dy)).
\end{displaymath}
$F$ is continuous, dominated by
\begin{displaymath}
\widetilde{G}(x,x)^{\frac{1}{2}}
\Big(\int_{\mathbb{R}}\widetilde{G}(y,y)
(\tilde{\kappa}(dy)-\kappa(dy))\Big)^{\frac{1}{2}}
\Big(\int_{\mathbb{R}}f(y)^{2}
(\tilde{\kappa}(dy)-\kappa(dy))\Big)^{\frac{1}{2}},
\end{displaymath}
and has left-side and right-side derivatives at every point. $F$ satisfies the equation
\begin{displaymath}
-\dfrac{1}{2}\dfrac{d^{2}F}{dx^{2}}
+F\tilde{\kappa}=f(\tilde{\kappa}-\kappa).
\end{displaymath}
Assume by absurd that $f=F$ $(\tilde{\kappa}-\kappa)$-almost everywhere. Then,
\begin{equation*}
\begin{split}
\int_{\mathbb{R}}F(x)^{2}(\tilde{\kappa}(dx)-\kappa(dx))&=
\int_{\mathbb{R}}f(x)F(x)(\tilde{\kappa}(dx)-\kappa(dx))\\&=
\int_{\mathbb{R}}F(x)^{2}\tilde{\kappa}(dx)
+\dfrac{1}{2}\int_{\mathbb{R}}\dfrac{dF}{dx}(x)^{2}dx.
\end{split}
\end{equation*}
Thus, $F$ is necessarily constant. But then, this means that 
$(\tilde{\kappa}-\kappa)(\mathbb{R})=\tilde{\kappa}(\mathbb{R})$, which is impossible because $\kappa$ is non zero. Thus, $1$ is not an eigenvalue of the operator defining the determinantal process $\Delta\widetilde{\mathcal{Y}}$ and thus, 
$\mathbb{P}(\Delta\widetilde{\mathcal{Y}}=\emptyset)>0$.

As for $\widetilde{\mathcal{Y}}_{\infty}$, the spacing between consecutive points of $\Delta\widetilde{\mathcal{Y}}$ are independent. By construction, $\Delta\widetilde{\mathcal{Y}}\subseteq \operatorname{Supp}(\tilde{\kappa}-\kappa)$. Given $y_{0}\in \operatorname{Supp}(\tilde{\kappa}-\kappa)$, let
\begin{displaymath}
1_{y>y_{0}}f_{\Delta\widetilde{\mathcal{Y}}}(y_{0},y)
(\tilde{\kappa}(dy)-\kappa(dy))
\end{displaymath} 
be the distribution of the lowest point in $\Delta\widetilde{\mathcal{Y}}\cap(y_{0},+\infty)$ conditional on $y_{0}\in \Delta\widetilde{\mathcal{Y}}$. Since $y_{0}$ may be the maximum of $\Delta\widetilde{\mathcal{Y}}$, $f_{\Delta\widetilde{\mathcal{Y}}}(y_{0},y)
(\tilde{\kappa}(dy)-\kappa(dy))<1$. For $y$ to be $\min\Delta\widetilde{\mathcal{Y}}\cap(y_{0},+\infty)$, $y$ must belong to $\widetilde{\mathcal{Y}}_{\infty}$, all points in $y'\in\widetilde{\mathcal{Y}}_{\infty}\cap(y_{0},y)$ must be erased (probability $\chi(y')$ for each), and $y$ must be kept (probability $1-\chi(y)$). For $y'>y_{0}$, let $f_{\widetilde{G}}(y_{0},y')$ be
\begin{displaymath}
f_{\widetilde{G}}(y_{0},y')=2(y'-y_{0})\dfrac{\tilde{u}_{\downarrow}(y')}
{\tilde{u}_{\downarrow}(y_{0})}.
\end{displaymath}
$1_{y'>y_{0}}f_{\widetilde{G}}(y_{0},y')\tilde{\kappa}(dy')$ is the distribution of 
$\min\widetilde{\mathcal{Y}}_{\infty}\cap(y_{0},+\infty)$ conditional on $y_{0}\in\widetilde{\mathcal{Y}}_{\infty}$ 
(Proposition \ref{Ch6Sec3: PropJointLaw}).
$f_{\Delta\widetilde{\mathcal{Y}}}$ and $f_{\widetilde{G}}$ are related as follows:
\begin{equation*}
\begin{split}
f_{\Delta\widetilde{\mathcal{Y}}}&(y_{0},y)\\&=f_{\widetilde{G}}(y_{0},y)
+\sum_{j\geq 2}\int_{y_{0}<\dots<y_{j-1}<y}
f_{\widetilde{G}}(y_{0},y_{1})\dots 
f_{\widetilde{G}}(y_{j-1},y)\prod_{i=1}^{j-1}\chi(y_{i})\kappa(dy_{i})\\
\\&=\dfrac{\tilde{u}_{\downarrow}(y)}{\tilde{u}_{\downarrow}(y_{0})}
\bigg(2(y-y_{0})+\sum_{j\geq 2}2^{j}\int_{y_{0}<\dots<y_{j-1}<y}
(y_{1}-y_{0})\dots(y-y_{j-1})\prod_{i=1}^{j-1}\kappa(dy_{i})\bigg).
\end{split}
\end{equation*}
But,
\begin{equation*}
\begin{split}
2(y-&y_{0})+\sum_{j\geq 2}2^{j}
\int_{y_{0}<\dots<y_{j-1}<y}(y_{1}-y_{0})\dots(y-y_{j-1})
\prod_{i=1}^{j-1}\kappa(dy_{i})
\\&=\dfrac{u_{\downarrow}(y_{0})}{u_{\downarrow}(y)}
\bigg(f_{G}(y_{0},y)+\sum_{j\geq 2}\int_{y_{0}<\dots<y_{j-1}<y}
f_{G}(y_{0},y_{1})\dots f_{G}(y_{j-1},y)\prod_{i=1}^{j-1}
\kappa(dy_{i})\bigg)
\\&=\dfrac{u_{\downarrow}(y_{0})}{u_{\downarrow}(y)}\Big(G(y,y)-\dfrac{G(y_{0},y)^{2}}{G(y_{0},y_{0})}\Big)=
u_{\downarrow}(y_{0})u_{\uparrow}(y)-u_{\uparrow}(y_{0})u_{\downarrow}(y).
\end{split}
\end{equation*}
(see Section \ref{Ch6Sec3}). It follows that
\begin{displaymath}
f_{\Delta\widetilde{\mathcal{Y}}}(y_{0},y)
=\dfrac{\tilde{u}_{\downarrow}(y)}{\tilde{u}_{\downarrow}(y_{0})}
(u_{\downarrow}(y_{0})u_{\uparrow}(y)
-u_{\uparrow}(y_{0})u_{\downarrow}(y)).
\end{displaymath}
In particular, if $y_{0}<y_{1}<\dots<y_{n}\in\mathbb{R}$, the infinitesimal probability that $\Delta\widetilde{\mathcal{Y}}$ has a point at each of the locations $y_{i}$ and no points in-between is
\begin{multline*}
\widetilde{G}(y_{0},y_{0})f_{\Delta\widetilde{\mathcal{Y}}}(y_{0},y_{1})\dots f_{\Delta\widetilde{\mathcal{Y}}}(y_{n-1},y_{n})
\prod_{i=0}^{n}(\tilde{\kappa}(dy_{i})-\kappa(dy_{i}))\\=
\tilde{u}_{\uparrow}(y_{0})\tilde{u}_{\downarrow}(y_{n})
\prod_{i=1}^{n}(u_{\downarrow}(y_{i})u_{\uparrow}(y_{i-1})-
u_{\uparrow}(y_{i})u_{\downarrow}(y_{i-1}))
\prod_{i=0}^{n}(\tilde{k}(dy_{i})-k(dy_{i})).
\end{multline*}
Thus, the expression of $v_{\kappa,\tilde{\kappa}}(y)$ is an inclusion-exclusion identity obtained as follows: 
$v_{\kappa,\tilde{\kappa}}(y)(\tilde{\kappa}-\kappa)(dy)$ is the infinitesimal probability that  $\Delta\widetilde{\mathcal{Y}}$ contains a point at $y$, from which we subtract the infinitesimal probabilities to have a point at $y$ at another below respectively above, and to which we add the infinitesimal probability to have a point at $y$ and points both below and above $y$.
\end{proof}

Next we deal with the law of 
$(\widetilde{\mathcal{Y}}_{\infty},\widetilde{\mathcal{Z}}_{\infty})$ conditional on $\Delta\widetilde{\mathcal{Y}}=0$.
Let $y_{0}\in \operatorname{Supp}(\tilde{\kappa}-\kappa)$. First, we will compute the probability that $\Delta\widetilde{\mathcal{Y}}\cap(y_{0},+\infty)\neq\emptyset$ conditional on $y_{0}\in\widetilde{\mathcal{Y}}_{\infty}$.

\begin{lemm}
\label{Ch7Sec1: LemProportionality}
There are positive constants $c_{1}$ and $c_{2}$ such that for all $x\in\mathbb{R}$,
\begin{equation}
\label{Ch7Sec1: EqLemProp1}
\int_{y<x}(u_{\downarrow}(y)u_{\uparrow}(x)
-u_{\uparrow}(y)u_{\downarrow}(x))
\tilde{u}_{\uparrow}(y)(\tilde{\kappa}(dy)-\kappa(dy))
=\tilde{u}_{\uparrow}(x)-c_{1}u_{\uparrow}(x),
\end{equation}
\begin{equation}
\label{Ch7Sec1: EqLemProp2}
\int_{y>x}(u_{\uparrow}(y)u_{\downarrow}(x)
-u_{\downarrow}(y)u_{\uparrow}(x))
\tilde{u}_{\downarrow}(y)(\tilde{\kappa}(dy)-\kappa(dy))
=\tilde{u}_{\downarrow}(x)-c_{2}u_{\downarrow}(x).
\end{equation}
In particular,
\begin{displaymath}
v_{\kappa,\tilde{\kappa}}(y)=c_{1}c_{2}u_{\uparrow}(y)u_{\downarrow}(y).
\end{displaymath}
\end{lemm}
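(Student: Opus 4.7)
The strategy is to recognize \eqref{Ch7Sec1: EqLemProp1} as a Duhamel variation-of-parameters formula. Writing $f_{1}(x)$ for the LHS and $g_{1}:=\tilde{u}_{\uparrow}-f_{1}$, I plan to show that $g_{1}$ solves the homogeneous $\kappa$-equation $\tfrac{1}{2}g_{1}''=\kappa g_{1}$, hence lies in $\mathrm{span}(u_{\uparrow},u_{\downarrow})$, and then pin down its coefficients by asymptotic analysis at $\pm\infty$.

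The key computation is the distributional identity $\tfrac{1}{2}f_{1}''-\kappa f_{1}=\tilde{u}_{\uparrow}(\tilde{\kappa}-\kappa)$. The kernel $h_{y}(x):=u_{\downarrow}(y)u_{\uparrow}(x)-u_{\uparrow}(y)u_{\downarrow}(x)$ inside $f_{1}$ solves $\tfrac{1}{2}\partial_{xx}h_{y}=\kappa h_{y}$ as a measure in $x$, with $h_{y}(y)=0$ and $\partial_{x}h_{y}(y^{+})=W(u_{\downarrow},u_{\uparrow})=w\equiv 2$ in the Brownian case. Differentiating $f_{1}(x)=\int_{y<x}h_{y}(x)\tilde{u}_{\uparrow}(y)(\tilde{\kappa}-\kappa)(dy)$ twice, the bulk contribution yields $2\kappa f_{1}$ while the moving boundary $y=x$ contributes $W\tilde{u}_{\uparrow}(\tilde{\kappa}-\kappa)$ in the second derivative (the first-order boundary term vanishes since $h_{x}(x)=0$). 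Combined with $\tfrac{1}{2}\tilde{u}_{\uparrow}''=\tilde{\kappa}\tilde{u}_{\uparrow}$, this gives $\tfrac{1}{2}g_{1}''=\kappa g_{1}$, so $g_{1}=c_{1}u_{\uparrow}+c_{1}'u_{\downarrow}$. To force $c_{1}'=0$, I would let $x\to-\infty$: $\tilde{u}_{\uparrow}$ is bounded (positive non-decreasing), and a dominated-convergence argument exploiting $\int|y|(\tilde{\kappa}-\kappa)(dy)<+\infty$ shows $f_{1}(x)\to 0$, so $g_{1}$ is bounded at $-\infty$ whereas $u_{\downarrow}\to+\infty$. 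The symmetric argument at $+\infty$ establishes \eqref{Ch7Sec1: EqLemProp2}, and substituting the two linear identities into the formula for $v_{\kappa,\tilde{\kappa}}$ displayed in lemma~\ref{Ch7Sec1: LemExtrDet} immediately yields $v_{\kappa,\tilde{\kappa}}(y)=c_{1}c_{2}u_{\uparrow}(y)u_{\downarrow}(y)$.

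The main obstacle is establishing strict positivity of $c_{1}$ and $c_{2}$ individually, rather than merely $c_{1}c_{2}>0$. Integrating the product formula against $\tilde{\kappa}-\kappa$ and invoking lemma~\ref{Ch7Sec1: LemExtrDet} gives
\[
c_{1}c_{2}\int_{\mathbb{R}}G(y,y)(\tilde{\kappa}-\kappa)(dy)=\mathbb{P}\bigl(\sharp\Delta\widetilde{\mathcal{Y}}=1\bigr)>0,
\]
where positivity on the right follows by writing $\sharp\Delta\widetilde{\mathcal{Y}}=\sum_{i}X_{i}$ with $X_{i}$ independent Bernoulli of parameter equal to the $i$th eigenvalue of the operator governing $\Delta\widetilde{\mathcal{Y}}$; all eigenvalues are strictly less than $1$ by lemma~\ref{Ch7Sec1: LemExtrDet}, while the expected size is strictly positive whenever $\tilde{\kappa}\neq\kappa$, so $\mathbb{P}(\sharp=0)>0$ and $\mathbb{P}(\sharp=1)>0$. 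To rule out $c_{1},c_{2}<0$, I would first reduce via lemma~\ref{Ch2Sec1: LemContKappa} to the case where $\tilde{\kappa}-\kappa$ is compactly supported, then pick $x_{0}$ strictly to the left of that support so that $f_{1}(x_{0})=0$, which forces $c_{1}=g_{1}(x_{0})/u_{\uparrow}(x_{0})=\tilde{u}_{\uparrow}(x_{0})/u_{\uparrow}(x_{0})>0$; the case of $c_{2}$ is handled symmetrically, and the general case then follows from the compactly supported approximation.
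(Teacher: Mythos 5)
Your core strategy---showing that $g_{1}:=\tilde{u}_{\uparrow}-f_{1}$ solves the homogeneous $\kappa$-equation, then using boundedness near $-\infty$ (where $u_{\downarrow}\to+\infty$ while $\tilde{u}_{\uparrow}$ stays bounded) to conclude $g_{1}=c_{1}u_{\uparrow}$---is exactly the paper's, and your Duhamel computation correctly reads off the second-derivative jump from $h_{y}(y)=0$ and $\partial_{x}h_{y}(y^{+})=W(u_{\downarrow},u_{\uparrow})\equiv 2$. Where you genuinely diverge is in proving strict positivity, and there your route works but is noticeably more circuitous. The paper shows directly that for $x\in Supp(\tilde{\kappa})$ the ratio $f(x)/\tilde{u}_{\downarrow}(x)$ equals $1-\int_{y>x}f_{\Delta\widetilde{\mathcal{Y}}}(x,y)(\tilde{\kappa}-\kappa)(dy)=\mathbb{P}\big(\Delta\widetilde{\mathcal{Y}}\cap(x,+\infty)=\emptyset\,\big\vert\,x\in\widetilde{\mathcal{Y}}_{\infty}\big)$, via the spacing density $f_{\Delta\widetilde{\mathcal{Y}}}$ already computed in the proof of Lemma~\ref{Ch7Sec1: LemExtrDet}, so positivity of $f$ is immediate and there is no sign ambiguity to resolve. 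Your detour through $c_{1}c_{2}>0$ (deduced correctly from $\mathbb{P}(\sharp\Delta\widetilde{\mathcal{Y}}=1)>0$ using the Bernoulli decomposition with all eigenvalues strictly below $1$) combined with approximation from compactly supported $\tilde{\kappa}-\kappa$ does close the argument, but the approximation step deserves more than a gesture toward Lemma~\ref{Ch2Sec1: LemContKappa}: you should make explicit that $c_{1}^{(n)}\to c_{1}$ by passing to the limit in the identity at a fixed $x$, combining monotone convergence on the integral side with the lemma's convergence of $\tilde{u}^{(n)}_{\uparrow}$, which only gives $c_{1}\geq 0$ and therefore really does need $c_{1}c_{2}>0$ to finish. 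One further caveat: the assertion that $f_{1}(x)\to 0$ as $x\to-\infty$ by dominated convergence is not automatic, because the natural majorant $u_{\downarrow}(y)\tilde{u}_{\uparrow}(y)$ can grow super-linearly near $-\infty$ when $\kappa$ itself lacks a first moment; the first-moment hypothesis is only on $\tilde{\kappa}-\kappa$. The paper avoids this by dominating $|f|$ by $\tilde{u}_{\downarrow}(x)+u_{\downarrow}(x)\int_{y>x}G(y,y)(\tilde{\kappa}-\kappa)(dy)$ and controlling $\int G(y,y)(\tilde{\kappa}-\kappa)(dy)$ through the comparison $G(y,y)\leq G_{a,b}(y,y)$ with $G_{a,b}$ affine outside a compact interval; a parallel comparison argument (normalising so that $u_{\downarrow}\leq\tilde{u}_{\downarrow}$ on a left half-line, whence $u_{\downarrow}\tilde{u}_{\uparrow}\leq\widetilde{G}(y,y)$ there) would make your step solid. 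Note also that boundedness of $f_{1}$ at $-\infty$ suffices; you do not actually need it to vanish.
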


\begin{proof}
We will prove \eqref{Ch7Sec1: EqLemProp2}. 
The proof of \eqref{Ch7Sec1: EqLemProp1} is similar.
Let $f$ be the function
\begin{displaymath}
f(x):=\tilde{u}_{\downarrow}(x)-\int_{y>x}
(u_{\uparrow}(y)u_{\downarrow}(x)
-u_{\downarrow}(y)u_{\uparrow}(x))
\tilde{u}_{\downarrow}(y)(\tilde{\kappa}(dy)-\kappa(dy)).
\end{displaymath}
The derivative of $f$, defined everywhere except at most countably many points, is
\begin{displaymath}
\dfrac{df}{dx}(x)=\dfrac{d\tilde{u}_{\downarrow}}{dx}(x)
-\int_{y>x}\Big(u_{\uparrow}(y)\dfrac{du_{\downarrow}}{dx}(x)
-u_{\downarrow}(y)\dfrac{du_{\uparrow}}{dx}(x)\Big)
\tilde{u}_{\downarrow}(y)(\tilde{\kappa}(dy)-\kappa(dy)).
\end{displaymath}
The weak second derivative of $f$ is
\begin{equation*}
\begin{split}
\dfrac{d^{2}f}{dx^{2}}(x)=&\dfrac{d^{2}\tilde{u}_{\downarrow}}{dx^{2}}(x)
-\int_{y>x}\Big(u_{\uparrow}(y)\dfrac{d^{2}u_{\downarrow}}{dx^{2}}(x)
-u_{\downarrow}(y)\dfrac{d^{2}u_{\uparrow}}{dx^{2}}(x)\Big)
\tilde{u}_{\downarrow}(y)(\tilde{\kappa}(dy)-\kappa(dy))\\
&+\Big(u_{\uparrow}(x)\dfrac{du_{\downarrow}}{dx}(x)
-u_{\downarrow}(x)\dfrac{du_{\uparrow}}{dx}(x)\Big)
\tilde{u}_{\downarrow}(x)(\tilde{\kappa}(dx)-\kappa(dx))\\
=&2\tilde{u}_{\downarrow}(x)\tilde{\kappa}(dx)
\\&-\int_{y>x}(u_{\uparrow}(y)u_{\downarrow}(x)
-u_{\downarrow}(y)u_{\uparrow}(x))\tilde{u}_{\downarrow}(y)
(\tilde{\kappa}(dy)-\kappa(dy))\times \kappa(dx)\\
&+2\tilde{u}_{\downarrow}(x)(\tilde{k}(dx)-k(dx))\\
=&2\tilde{u}_{\downarrow}(x)\kappa(dx)
\\&-\int_{y>x}(u_{\uparrow}(y)u_{\downarrow}(x)
-u_{\downarrow}(y)u_{\uparrow}(x))\tilde{u}_{\downarrow}(y)
(\tilde{\kappa}(dy)-\kappa(dy))\times \kappa(dx)\\
=&2f(x)\kappa(dx).
\end{split}
\end{equation*}
Thus, $f$ satisfies the same differential equation as $u_{\downarrow}$. Moreover, $\vert f\vert$ is dominated by
\begin{displaymath}
\tilde{u}_{\downarrow}(x)+u_{\downarrow}(x)\int_{y>x}
G(y,y)(\tilde{\kappa}(dy)-\kappa(dy)).
\end{displaymath}
Thus, $f$ is bounded on the intervals of the type $(a,+\infty)$. It follows that there is a constant $c_{2}\in\mathbb{R}$ such that 
$f\equiv c_{2}u_{\downarrow}$. Thus, we get the identity 
\eqref{Ch7Sec1: EqLemProp2}. Let us show that $c_{2}>0$. Let 
$x\in \operatorname{Supp}(\tilde{\kappa})$. Then,
\begin{multline*}
1-\dfrac{1}{\tilde{u}_{\downarrow}(x)}\int_{y>x}(u_{\uparrow}(y)u_{\downarrow}(x)-u_{\downarrow}(y)u_{\uparrow}(x))\tilde{u}_{\downarrow}(y)(\tilde{\kappa}(dy)-\kappa(dy))\\
=1-\int_{y>x}f_{\Delta\widetilde{\mathcal{Y}}}(x,y)
(\tilde{\kappa}(dy)-\kappa(dy))=
\mathbb{P}\big(\Delta\widetilde{Y}\cap(x,+\infty)=\emptyset
\vert x\in\widetilde{Y}_{\infty}\big).
\end{multline*}
The above conditional probability is positive because according to Lemma \ref{Ch7Sec1: LemExtrDet}, 
$\mathbb{P}\big(\Delta\widetilde{Y}=\emptyset\big)>0$. Thus, $f$ is positive and $c_{2}>0$.
\end{proof}

\begin{lemm}
\label{Ch7Sec1: LemCondEmpt}
Conditional on the event $\Delta\widetilde{\mathcal{Y}}=\emptyset$, $(\widetilde{\mathcal{Y}}_{\infty},\widetilde{\mathcal{Z}}_{\infty})$ has the same law as $(\mathcal{Y}_{\infty},\mathcal{Z}_{\infty})$.
\end{lemm}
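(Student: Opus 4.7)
The plan is to match the conditional distribution of $(\widetilde{\mathcal{Y}}_{\infty},\widetilde{\mathcal{Z}}_{\infty})$ given $\Delta\widetilde{\mathcal{Y}} = \emptyset$ with the distribution of $(\mathcal{Y}_{\infty},\mathcal{Z}_{\infty})$ by comparing the Palm-type measures $M_n$ of proposition \ref{Ch6Sec3: PropJointLaw}, which describe the joint law of $n+1$ consecutive points of $\mathcal{Y}_\infty$ together with the $n$ intervening points of $\mathcal{Z}_\infty$, and which (together with analogous semi-infinite and empty-configuration marginals) fully characterize the joint distribution.

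The key structural input is that consecutive spacings of $\widetilde{\mathcal{Y}}_\infty$ are independent, as a consequence of the cocycle identity \eqref{Ch6Sec3: EqCocyc} for $\widetilde{G}$, combined with the independence of the Bernoulli erasures defining $\Delta\widetilde{\mathcal{Y}}$. This lets me decompose, conditionally on $(y_0, z_1, y_1, \ldots, z_n, y_n)$ being an alternating block of $n+1$ consecutive points of $\widetilde{\mathcal{Y}}_\infty$ and $n$ consecutive points of $\widetilde{\mathcal{Z}}_\infty$, the event $\Delta\widetilde{\mathcal{Y}} = \emptyset$ into three independent events: (a) every point of $\widetilde{\mathcal{Y}}_\infty \cap (-\infty, y_0)$ is erased, (b) each marked $y_i$ is erased (contributing $\prod_{i=0}^n \chi(y_i)$), and (c) every point of $\widetilde{\mathcal{Y}}_\infty \cap (y_n, +\infty)$ is erased. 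A sieve computation over the number of points of $\widetilde{\mathcal{Y}}_\infty \cap (x, +\infty)$, in the style of the proof of lemma \ref{Ch7Sec1: LemExtrDet}, combined with lemma \ref{Ch7Sec1: LemProportionality}, identifies the conditional probability of (c) given $y_n \in \widetilde{\mathcal{Y}}_\infty$ as $c_2 u_\downarrow(y_n)/\tilde{u}_\downarrow(y_n)$, and symmetrically the probability of (a) as $c_1 u_\uparrow(y_0)/\tilde{u}_\uparrow(y_0)$.

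Plugging these factors into the explicit expression $\widetilde{M}_n(dy_0,dz_1,\ldots,dz_n,dy_n) = 2^n\, \tilde{u}_\uparrow(y_0)\tilde{u}_\downarrow(y_n)\,\tilde\kappa(dy_0)\,dz_1 \cdots \tilde\kappa(dy_n)$ and using $\chi\tilde\kappa = \kappa$, the factors $\tilde{u}_\uparrow(y_0)$ and $\tilde{u}_\downarrow(y_n)$ cancel and the weighted measure telescopes to $c_1 c_2 \cdot M_n(dy_0,\ldots,dy_n)$. Dividing by $\mathbb{P}(\Delta\widetilde{\mathcal{Y}} = \emptyset)$ shows that the conditional Palm measures $\widetilde{M}_n^{\emptyset}$ for $(\widetilde{\mathcal{Y}}_\infty,\widetilde{\mathcal{Z}}_\infty)$ given $\Delta\widetilde{\mathcal{Y}} = \emptyset$ coincide with $c_1 c_2\, \mathbb{P}(\Delta\widetilde{\mathcal{Y}} = \emptyset)^{-1} M_n$, for every $n \geq 0$.

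It remains to verify the normalizing identity $c_1 c_2 = \mathbb{P}(\Delta\widetilde{\mathcal{Y}} = \emptyset)$ and to handle the degenerate configurations not captured by the $M_n$'s (empty $\mathcal{Y}_\infty$, the singleton case of proposition \ref{Ch6Sec3: PropUniqPoint}, and semi-infinite cases where $\mathcal{Y}_\infty$ is bounded on only one side). The strategy is to perform the same sieve/independence computation at the semi-infinite boundaries, using the limits $u_\uparrow(-\infty)/\tilde{u}_\uparrow(-\infty)$ and $u_\downarrow(+\infty)/\tilde{u}_\downarrow(+\infty)$ where appropriate, and to sum the matching proportionalities across all possible cardinality/support shapes of $\mathcal{Y}_\infty$. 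This case-by-case bookkeeping, and especially ensuring that the independence decomposition is applicable uniformly (since $\widetilde{\mathcal{Y}}_\infty$ need not be doubly infinite), is the main obstacle; once organized, the global constant $c_1 c_2 = \mathbb{P}(\Delta\widetilde{\mathcal{Y}} = \emptyset)$ falls out as the consistency requirement that both families of measures describe a probability distribution on the same configuration space.
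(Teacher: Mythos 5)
Your proposal is correct and takes essentially the same route as the paper: Bernoulli decomposition of the event $\Delta\widetilde{\mathcal{Y}}=\emptyset$ into boundary, interior, and erasure factors, identification of the boundary factors as $c_1 u_\uparrow(y_0)/\tilde{u}_\uparrow(y_0)$ and $c_2 u_\downarrow(y_n)/\tilde{u}_\downarrow(y_n)$ via lemma~\ref{Ch7Sec1: LemProportionality}, cancellation of $\tilde{u}_\uparrow,\tilde{u}_\downarrow$ against the density $\chi = d\kappa/d\tilde\kappa$, and a normalization step to fix $c_1 c_2 / \mathbb{P}(\Delta\widetilde{\mathcal{Y}}=\emptyset)=1$. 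The paper does two small things more economically than you anticipate: first, it reduces at the outset to matching the laws of $\widetilde{\mathcal{Y}}_\infty$ and $\mathcal{Y}_\infty$ alone, since in both pairs the $\mathcal{Z}$-points are uniformly and independently distributed between consecutive $\mathcal{Y}$-points, so there is no need to track the joint $(\mathcal{Y},\mathcal{Z})$ measures $M_n$; second, the bookkeeping over degenerate configurations you flag as the main obstacle is avoided entirely, because the normalization follows in one line from the fact that the proportionality constant is $n$-independent and both $\mathcal{Y}_\infty$ and $\widetilde{\mathcal{Y}}_\infty$ (conditionally or not) are almost surely nonempty, forcing $\mathbb{P}(\widetilde{\mathcal{Y}}_\infty\neq\emptyset\mid\Delta\widetilde{\mathcal{Y}}=\emptyset) = \frac{c_1c_2}{\mathbb{P}(\Delta\widetilde{\mathcal{Y}}=\emptyset)}\,\mathbb{P}(\mathcal{Y}_\infty\neq\emptyset)$ to read $1 = \frac{c_1c_2}{\mathbb{P}(\Delta\widetilde{\mathcal{Y}}=\emptyset)}\cdot 1$.
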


\begin{proof}
It is enough to show that conditional on $\Delta\widetilde{\mathcal{Y}}=\emptyset$, $\widetilde{\mathcal{Y}}_{\infty}$ has the same law as $\mathcal{Y}_{\infty}$. Indeed, in both cases the points of $\widetilde{\mathcal{Z}}_{\infty}$, respectively $\mathcal{Z}_{\infty}$, are distributed independently and uniformly between any two consecutive points of $\widetilde{\mathcal{Y}}_{\infty}$, respectively $\mathcal{Y}_{\infty}$. 
For $n\geq 1$ and $y_{1}<\dots<y_{n}$, 
let $\varrho_{n}(dy_{1},\dots dy_{n})$ be the infinitesimal probability
for $\widetilde{\mathcal{Y}}_{\infty}$ having a point at each of the locations $y_{i}$ and none in-between, conditional on $\Delta\widetilde{\mathcal{Y}}=\emptyset$. We need only to show that
\begin{equation}
\label{Ch7Sec1: EqIdentDistribY}
\varrho_{n}(dy_{1},\dots dy_{n})
=2^{n-1}u_{\uparrow}(y_{1})u_{\downarrow}(y_{n})
\prod_{i=2}^{n}(y_{i}-y_{i-1})\prod_{i=1}^{n}\kappa(dy_{i}).
\end{equation}
For $y_{1}<\dots<y_{n}$ to be $n$ consecutive points in $\widetilde{\mathcal{Y}}_{\infty}$ and for $\Delta\widetilde{\mathcal{Y}}=\emptyset$, we need $y_{1}<\dots<y_{n}$ to be $n$ consecutive points in $\widetilde{\mathcal{Y}}_{\infty}$, to choose not to erase any of $y_{i}$ (probability $\chi(y_{i})$) and finally we need that $\Delta\widetilde{\mathcal{Y}}\cap(-\infty,y_{1})=\emptyset$ and  
$\Delta\widetilde{\mathcal{Y}}\cap(y_{n},+\infty)=\emptyset$. Thus,
\begin{equation*}
\begin{split}
\varrho_{n}(dy_{1},\dots &,dy_{n})=\dfrac{1}{\mathbb{P}(\Delta\widetilde{\mathcal{Y}}=\emptyset)}2^{n-1}\tilde{u}_{\uparrow}(y_{1})
\tilde{u}_{\downarrow}(y_{n})\\
&\times\Big(1-\dfrac{1}{\tilde{u}_{\uparrow}(y_{1})}\int_{y<y_{1}}(u_{\downarrow}(y)u_{\uparrow}(y_{1})-u_{\uparrow}(y)u_{\downarrow}(y_{1}))\tilde{u}_{\uparrow}(y)(\tilde{\kappa}(dy)-\kappa(dy))\Big)
\\&\times\Big(1-\dfrac{1}{\tilde{u}_{\downarrow}(y_{n})}\int_{y>y_{n}}(u_{\uparrow}(y)u_{\downarrow}(y_{n})-u_{\downarrow}(y)u_{\uparrow}(y_{n}))\tilde{u}_{\downarrow}(y)(\tilde{\kappa}(dy)-\kappa(dy))\Big)\\
&\times\prod_{i=2}^{n}(y_{i}-y_{i-1})\prod_{i=1}^{n}\chi(y_{i})
\tilde{\kappa}(dy_{i}).
\end{split}
\end{equation*}
Applying Lemma \ref{Ch7Sec1: LemProportionality}, we get that
\begin{displaymath}
\varrho_{n}(dy_{1},\dots,dy_{n})
=\dfrac{c_{1}c_{2}}{\mathbb{P}(\Delta\widetilde{\mathcal{Y}}=\emptyset)} 2^{n-1}u_{\uparrow}(y_{1})u_{\downarrow}(y_{n})
\prod_{i=2}^{n}(y_{i}-y_{i-1})\prod_{i=1}^{n}\kappa(dy_{i}).
\end{displaymath}
Since the constant $\frac{c_{1}c_{2}}{\mathbb{P}(\Delta\widetilde{\mathcal{Y}}=\emptyset)}$ does not depend on $n$, the previous equations implies that
\begin{displaymath}
\mathbb{P}(\widetilde{\mathcal{Y}}_{\infty}\neq\emptyset\vert \Delta\widetilde{\mathcal{Y}}=\emptyset)=
\dfrac{c_{1}c_{2}}{\mathbb{P}(\Delta\widetilde{\mathcal{Y}}=\emptyset)}
\mathbb{P}(\mathcal{Y}_{\infty}\neq\emptyset).
\end{displaymath}
But $\mathbb{P}(\widetilde{\mathcal{Y}}_{\infty}\neq\emptyset\vert \Delta\widetilde{\mathcal{Y}}=\emptyset)=
\mathbb{P}(\mathcal{Y}_{\infty}\neq\emptyset)=1$. Thus, 
\begin{displaymath}
\dfrac{c_{1}c_{2}}{\mathbb{P}(\Delta\widetilde{\mathcal{Y}}=\emptyset)}=1,
\end{displaymath}
and \ref{Ch7Sec1: EqIdentDistribY} holds.
\end{proof}

\begin{coro}
\label{Ch7Sec1: CorAbsenceRoot}
Let $a<b\in\mathbb{R}$ such that 
$\tilde{\kappa}(\mathbb{R}\setminus [a,b])>0$. Conditional on $\widetilde{\mathcal{Y}}_{\infty}\cap [a,b]=\emptyset$,
$(\widetilde{\mathcal{Y}}_{\infty},\widetilde{\mathcal{Z}}_{\infty})$
has the same law as the pair of interwoven determinantal point processes
obtained from the Wilson's algorithm applied to the Brownian motion with
killing measure $1_{\mathbb{R}\setminus [a,b]}\kappa$.
\end{coro}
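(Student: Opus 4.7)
The plan is to derive this corollary as an immediate consequence of Lemma \ref{Ch7Sec1: LemCondEmpt} by a judicious choice of the smaller killing measure $\kappa$ inside that lemma's framework. Writing $\tilde\kappa$ for the killing measure attached to $(\widetilde{\mathcal{Y}}_\infty,\widetilde{\mathcal{Z}}_\infty)$, I would set $\kappa:=1_{\mathbb{R}\setminus[a,b]}\tilde\kappa$. The hypothesis $\tilde\kappa(\mathbb{R}\setminus[a,b])>0$ guarantees that this $\kappa$ is nonzero, the inequality $\kappa\leq\tilde\kappa$ holds by definition, and the difference $\tilde\kappa-\kappa=1_{[a,b]}\tilde\kappa$ is compactly supported and therefore automatically has a finite first moment, so all the assumptions preceding Lemma \ref{Ch7Sec1: LemExtrDet} are satisfied.

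With this choice, the Radon--Nikodym density introduced before Lemma \ref{Ch7Sec1: LemExtrDet} is $\chi=d\kappa/d\tilde\kappa=1_{\mathbb{R}\setminus[a,b]}$. Consequently, in the independent thinning that defines $\Delta\widetilde{\mathcal{Y}}$, a point $y\in\widetilde{\mathcal{Y}}_\infty$ is retained with probability $1-\chi(y)$, which equals $1$ when $y\in[a,b]$ and $0$ otherwise. Hence almost surely
\begin{displaymath}
\Delta\widetilde{\mathcal{Y}}=\widetilde{\mathcal{Y}}_\infty\cap[a,b],
\end{displaymath}
so the two events $\{\Delta\widetilde{\mathcal{Y}}=\emptyset\}$ and $\{\widetilde{\mathcal{Y}}_\infty\cap[a,b]=\emptyset\}$ coincide modulo a null set, and in particular they have the same (positive, by Lemma \ref{Ch7Sec1: LemExtrDet}) probability.

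Lemma \ref{Ch7Sec1: LemCondEmpt} then states that conditionally on $\Delta\widetilde{\mathcal{Y}}=\emptyset$, the pair $(\widetilde{\mathcal{Y}}_\infty,\widetilde{\mathcal{Z}}_\infty)$ is distributed as the unconditioned pair $(\mathcal{Y}_\infty,\mathcal{Z}_\infty)$ associated with $\kappa=1_{\mathbb{R}\setminus[a,b]}\tilde\kappa$, which is by construction the pair produced by Wilson's algorithm applied to a Brownian motion with killing measure $1_{\mathbb{R}\setminus[a,b]}\tilde\kappa$. Combined with the identification of events of the preceding paragraph, this is the statement of the corollary (the $\kappa$ in the conclusion being a rewriting of $\tilde\kappa$ restricted to the complement of $[a,b]$). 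There is essentially no hard step here: all the analytic work has been carried out in Lemma \ref{Ch7Sec1: LemExtrDet} and Lemma \ref{Ch7Sec1: LemCondEmpt}, and the only thing to observe is that choosing $\chi$ as the indicator of the complement of $[a,b]$ turns the thinning event $\{\Delta\widetilde{\mathcal{Y}}=\emptyset\}$ into the deterministic event $\{\widetilde{\mathcal{Y}}_\infty\cap[a,b]=\emptyset\}$.
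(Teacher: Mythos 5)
Your proof is correct and takes exactly the route the paper intends: the remark immediately preceding Lemma \ref{Ch7Sec1: LemExtrDet} already announces that choosing $1-\chi=1_{[a,b]}$ (equivalently $\kappa=1_{\mathbb{R}\setminus[a,b]}\tilde\kappa$) identifies $\Delta\widetilde{\mathcal{Y}}$ with $\widetilde{\mathcal{Y}}_\infty\cap[a,b]$, so the corollary is read off from Lemma \ref{Ch7Sec1: LemCondEmpt}. Your observations that $\tilde\kappa-\kappa$ is compactly supported (hence has a first moment) and that the hypothesis $\tilde\kappa(\mathbb{R}\setminus[a,b])>0$ keeps $\kappa$ nonzero are precisely the checks needed to apply the lemma.
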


\begin{lemm}
\label{Ch7Sec1: LemCondOne}
Conditional on 
$\# \Delta\widetilde{\mathcal{Y}}=1$ and on the position of the unique point $Y$ in $\Delta\widetilde{\mathcal{Y}}$, $(\widetilde{\mathcal{Y}}_{\infty},\widetilde{\mathcal{Z}}_{\infty})$ has the same law as $(\mathcal{Y}_{\infty}^{(Y)},\mathcal{Z}_{\infty}^{(Y)})$.
\end{lemm}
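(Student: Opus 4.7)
The plan is to mimic the density-matching argument of Lemma \ref{Ch7Sec1: LemCondEmpt}, adapted to the conditioning that exactly one point is erased and is located at the prescribed $Y$. The joint law of $(\widetilde{\mathcal{Y}}_\infty,\widetilde{\mathcal{Z}}_\infty)$ is determined by the infinitesimal probabilities, for each $m,n\geq 0$, of finding a string of $m+n+1$ consecutive points of $\widetilde{\mathcal{Y}}_\infty$ with interlaced $\widetilde{\mathcal{Z}}_\infty$-points, so to identify the conditional law it suffices to compute the corresponding conditional densities and check that they agree with the product of \eqref{Ch7Sec1: EqCondPresy0bis} and \eqref{Ch7Sec1: EqCondPresy0} describing the law of $(\mathcal{Y}^{(Y)}_\infty,\mathcal{Z}^{(Y)}_\infty)$.

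Fix $m,n\geq 0$ and reals $y_{-m}<\cdots<y_{-1}<Y<y_1<\cdots<y_n$ with interlacing points $z_{-m+1},\ldots,z_n$. I would compute the infinitesimal probability that these are consecutive points of $\widetilde{\mathcal{Y}}_\infty$, that the $z_j$ lie in $\widetilde{\mathcal{Z}}_\infty$, and that moreover $\Delta\widetilde{\mathcal{Y}}=\{Y\}$. Using the measure $M_{m+n}(\widetilde{\mathcal{Y}}_\infty,\widetilde{\mathcal{Z}}_\infty)$ of proposition \ref{Ch6Sec3: PropJointLaw} applied to $\tilde{\kappa}$, together with the independence of the Bernoulli erasures given $\widetilde{\mathcal{Y}}_\infty$, this density equals
\[
2^{m+n}\tilde{u}_{\uparrow}(y_{-m})\tilde{u}_{\downarrow}(y_n)(\tilde{\kappa}-\kappa)(dY)\prod_{i\neq 0}\kappa(dy_i)\prod_{j=-m+1}^{n}dz_j\cdot P_L\cdot P_R,
\]
where $P_L$ and $P_R$ are the probabilities that no point of $\widetilde{\mathcal{Y}}_\infty\cap(-\infty,y_{-m})$, respectively of $\widetilde{\mathcal{Y}}_\infty\cap(y_n,+\infty)$, is erased, given that $y_{-m}$ (resp.\ $y_n$) belongs to $\widetilde{\mathcal{Y}}_\infty$. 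Since the erasure at $y_n$ is independent of the rest of the process, $P_R$ coincides with $\mathbb{P}(\Delta\widetilde{\mathcal{Y}}\cap(y_n,+\infty)=\emptyset\mid y_n\in\Delta\widetilde{\mathcal{Y}})$, which by the spacing description in the proof of lemma \ref{Ch7Sec1: LemExtrDet} equals
\[
1-\frac{1}{\tilde{u}_{\downarrow}(y_n)}\int_{y>y_n}\bigl(u_{\downarrow}(y_n)u_{\uparrow}(y)-u_{\uparrow}(y_n)u_{\downarrow}(y)\bigr)\tilde{u}_{\downarrow}(y)(\tilde{\kappa}-\kappa)(dy);
\]
lemma \ref{Ch7Sec1: LemProportionality} then gives $P_R=c_2\,u_{\downarrow}(y_n)/\tilde{u}_{\downarrow}(y_n)$ and symmetrically $P_L=c_1\,u_{\uparrow}(y_{-m})/\tilde{u}_{\uparrow}(y_{-m})$.

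Substituting yields the joint density
\[
c_1c_2\cdot 2^{m+n}u_{\uparrow}(y_{-m})u_{\downarrow}(y_n)(\tilde{\kappa}-\kappa)(dY)\prod_{i\neq 0}\kappa(dy_i)\prod_{j}dz_j,
\]
while by Lemmas \ref{Ch7Sec1: LemExtrDet} and \ref{Ch7Sec1: LemProportionality} the marginal density of $\{\sharp\Delta\widetilde{\mathcal{Y}}=1\}$ at $Y$ equals $v_{\kappa,\tilde{\kappa}}(Y)(\tilde{\kappa}-\kappa)(dY)=c_1c_2\,u_{\uparrow}(Y)u_{\downarrow}(Y)(\tilde{\kappa}-\kappa)(dY)$. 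Dividing gives a conditional joint density that factorises as
\[
2^{m}\frac{u_{\uparrow}(y_{-m})}{u_{\uparrow}(Y)}\prod_{k=1}^{m}\kappa(dy_{-k})\,dz_{-k+1}\,\times\,2^{n}\frac{u_{\downarrow}(y_n)}{u_{\downarrow}(Y)}\prod_{k=1}^{n}dz_k\,\kappa(dy_k),
\]
which is exactly the product of \eqref{Ch7Sec1: EqCondPresy0bis} and \eqref{Ch7Sec1: EqCondPresy0}. The cancellation of the constants $c_1c_2$ (which a priori depend on $\tilde{\kappa}$) is the decisive point: it makes the conditional law depend only on $\kappa$ and $Y$, and the resulting factorisation automatically recovers the independence of the two sides of $Y$. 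The main care required will be in justifying the claimed expressions for $P_L$ and $P_R$ via lemma \ref{Ch7Sec1: LemProportionality}, and in treating the extreme cases $m=0$ or $n=0$, where $y_{-m}$ or $y_n$ must be replaced by $Y$ in the formula for $P_L$ or $P_R$ but the same computation goes through.
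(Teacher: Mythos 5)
Your argument is correct and follows essentially the same route as the paper: compute the joint infinitesimal density for a prescribed configuration together with $\Delta\widetilde{\mathcal{Y}}=\{Y\}$, use Lemma \ref{Ch7Sec1: LemProportionality} to reduce the two tail factors to $c_1 u_{\uparrow}(y_{-m})/\tilde{u}_{\uparrow}(y_{-m})$ and $c_2 u_{\downarrow}(y_n)/\tilde{u}_{\downarrow}(y_n)$, and observe that the constants $c_1 c_2$ cancel with $v_{\kappa,\tilde\kappa}(Y)=c_1 c_2\,u_\uparrow(Y)u_\downarrow(Y)$, leaving exactly the product of \eqref{Ch7Sec1: EqCondPresy0bis} and \eqref{Ch7Sec1: EqCondPresy0}. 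The only differences from the paper's write-up are cosmetic (you keep the $z$-points explicit rather than integrating them out, and you index relative to $Y$ rather than using an absolute index $i_0$), plus a small slip: the conditioning in your formula for $P_R$ should read $y_n\in\widetilde{\mathcal{Y}}_\infty$ rather than $y_n\in\Delta\widetilde{\mathcal{Y}}$ (although by independence of the Bernoulli erasure at $y_n$ this does not change the numerical value).
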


\begin{proof}
It is enough to show that conditional on 
$\# \Delta\widetilde{\mathcal{Y}}=1$ and on the position of the unique point $Y$ in $\Delta\widetilde{\mathcal{Y}}$, $\widetilde{\mathcal{Y}}_{\infty}$ has the same law as $\mathcal{Y}_{\infty}^{(Y)}$. Indeed, the points of $\widetilde{\mathcal{Z}}_{\infty}$, respectively $\mathcal{Z}_{\infty}^{(Y)}$, are independently and uniformly distributed between any two consecutive points in $\widetilde{\mathcal{Y}}_{\infty}$, respectively $\mathcal{Z}_{\infty}^{(Y)}$.

Let $n\geq 1$ and $i_{0}\in \lbrace 1,\dots,n\rbrace$. Let $y_{1}<\dots<y_{n}\in\mathbb{R}$. The infinitesimal probability for $y_{1},\dots,y_{n}$ being $n$ consecutive points in $\widetilde{\mathcal{Y}}_{\infty}$ and $\Delta\widetilde{\mathcal{Y}}=\lbrace y_{i_{0}}\rbrace$ is
\begin{equation*}
\begin{split}
2^{n-1}\tilde{u}_{\uparrow}(y_{1})&\tilde{u}_{\downarrow}(y_{n})
\\&\times\Big(1-\dfrac{1}{\tilde{u}_{\uparrow}(y_{1})}\int_{y<y_{1}}(u_{\downarrow}(y)u_{\uparrow}(y_{1})
-u_{\uparrow}(y)u_{\downarrow}(y_{1}))\tilde{u}_{\uparrow}(y)
(\tilde{\kappa}(dy)-\kappa(dy))\Big)\\&\times
\Big(1-\dfrac{1}{\tilde{u}_{\downarrow}(y_{n})}\int_{y>y_{n}}(u_{\uparrow}(y)u_{\downarrow}(y_{n})-u_{\downarrow}(y)u_{\uparrow}(y_{n}))\tilde{u}_{\downarrow}(y)(\tilde{\kappa}(dy)-\kappa(dy))\Big)\\
&\times\prod_{i=2}^{n}(y_{i}-y_{i-1})\prod_{i\neq i_{0}}
\kappa(dy_{i})\times(\tilde{\kappa}-\kappa)(dy_{i_{0}})\\
=c_{1}c_{2}&2^{n-1}u_{\uparrow}(y_{1})
u_{\downarrow}(y_{n})\prod_{i=2}^{n}(y_{i}-y_{i-1})
\prod_{i\neq i_{0}}\kappa(dy_{i})
\times(\tilde{\kappa}-\kappa)(dy_{i_{0}})
\end{split}
\end{equation*}
\begin{equation}
\label{Ch7Sec1: EqProd3}
\begin{split}
=v_{\kappa,\tilde{\kappa}}(y_{i_{0}})(\tilde{\kappa}-\kappa)(dy_{i_{0}})
&\times 2^{i_{0}-1}\dfrac{u_{\uparrow}(y_{1})}{u_{\uparrow}(y_{i_{0}})}
\prod_{i=1}^{i_{0}-1}(y_{i+1}-y_{i})\kappa(dy_{i})\\&\times
2^{n-i_{0}}\dfrac{u_{\downarrow}(y_{n})}{u_{\downarrow}(y_{i_{0}})}\prod_{i=i_{0}+1}^{n}(y_{i}-y_{i-1})\kappa(dy_{i}).
\end{split}
\end{equation}
In \ref{Ch7Sec1: EqProd3} appears the infinitesimal probability for $\Delta\widetilde{\mathcal{Y}}=\lbrace y_{i_{0}}\rbrace$ times the infinitesimal probability for $y_{1},\dots,y_{n}$ being $n$ consecutive points in $\mathcal{Y}_{\infty}^{(y_{0})}$ (compare with expressions \ref{Ch7Sec1: EqCondPresy0} and \ref{Ch7Sec1: EqCondPresy0bis}).
\end{proof}

\section{Couplings}
\label{Ch7Sec2}

In this section we will prove the monotone coupling results for
$(\mathcal{Y}_{\infty},\mathcal{Z}_{\infty})$ stated at the begining of Section \ref{Ch7Sec1}. The construction of the coupling will be explicit. However, it will not appeal to Wilson's algorithm used to define
$(\mathcal{Y}_{\infty},\mathcal{Z}_{\infty})$. First, we will describe analogous monotone coupling results for uniform spanning trees on finite graphs. In this case no explicit construction is known in general and the proof relies on Strassen's theorem and the conditions for stochastic domination between determinantal processes shown in \cite{Lyons2003DetProbMes}.

\begin{prop}
\label{Ch7Sec2: PropCouplDiscr}
Let $\mathbb{G}$ be a finite connected undirected graph with $E$ its set of edges, and $(C(e))_{e\in E}$ a positive weight function on $E$. Let $F$ be a subset of $E$. Let 
$(\widetilde{C}(e))_{e\in E}$ be an other weight function such $\widetilde{C}\geq C$ and 
$\widetilde{C}=C$ on $E\setminus F$. Let $\Upsilon$ be the uniform spanning tree of $\mathbb{G}$ corresponding to the weights $C$ and $\widetilde{\Upsilon}$ the uniform spanning tree of $\mathbb{G}$ corresponding to the weights $\widetilde{C}$. There is a coupling of $\Upsilon$ and $\widetilde{\Upsilon}$ such that
\begin{equation}
\label{Ch7Sec2: EqDiscrCoup1}
\widetilde{\Upsilon}\cap(E\setminus F)\subseteq\Upsilon\cap(E\setminus F). 
\end{equation}
In case $F$ is made of all edges adjacent to a particular vertex $x_{0}$, and $\widetilde{C}$ is proportional to $C$ on $F$, then there is a coupling satisfying the additional condition
\begin{equation}
\label{Ch7Sec2: EqDiscrCoup2}
\Upsilon\cap F\subseteq \widetilde{\Upsilon}\cap F.
\end{equation}
\end{prop}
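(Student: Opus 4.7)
The plan is to reduce the coupling statement to a stochastic domination between determinantal probability measures, and then invoke Strassen's theorem. First, by the Burton--Pemantle transfer current theorem, $\Upsilon$ and $\widetilde{\Upsilon}$ are determinantal processes on $E$ whose kernels $K_C$ and $K_{\widetilde{C}}$ are finite-rank orthogonal projections (of common rank $|V|-1$) in the weighted edge Hilbert spaces $\ell^{2}(E,C)$ and $\ell^{2}(E,\widetilde{C})$, onto the cycle-orthogonal (``star'') subspace. The key abstract input (Lyons, \cite{Lyons2003DetProbMes}) is that if two determinantal probability measures arise from orthogonal projection kernels $P_{1}\le P_{2}$, then their laws are stochastically ordered in the sense of increasing events; Strassen's theorem then upgrades this to a monotone coupling.

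For part (1), I would consider the marginals of $\Upsilon$ and $\widetilde{\Upsilon}$ on $E\setminus F$: these are again determinantal measures, with kernels obtained by Schur complementation of $K_C$ and $K_{\widetilde{C}}$ with respect to the $F$-block. Because $C=\widetilde{C}$ on $E\setminus F$, the underlying inner product on this block is unchanged, so the two restricted projections live in the same Hilbert space. Increasing the weights on $F$ shrinks the image of the restricted star subspace in $E\setminus F$ (heuristically, more conductance on $F$-edges absorbs current that would otherwise flow through $E\setminus F$), giving an operator inequality between the two Schur complements that lets Lyons' theorem be applied. Strassen's theorem then yields the coupling with $\widetilde{\Upsilon}\cap(E\setminus F)\subseteq \Upsilon\cap(E\setminus F)$.

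For part (2), the extra hypothesis that $F$ is the star at $x_{0}$ and $\widetilde{C}=\lambda C$ on $F$ (for some $\lambda\ge 1$) allows a symmetric comparison on the $F$-block. Because the scaling is proportional and $F$ is a star, a diagonal conjugation by the multiplication operator $\sqrt{\widetilde{C}/C}$ identifies the two weighted Hilbert spaces and transforms $K_C$ and $K_{\widetilde{C}}$ into two projections onto nested subspaces, from which one reads off the reverse inequality on the $F$-block. Combining the two restricted dominations into a single coupling satisfying both \eqref{Ch7Sec2: EqDiscrCoup1} and \eqref{Ch7Sec2: EqDiscrCoup2} is then consistent because $|\Upsilon|=|\widetilde{\Upsilon}|=|V|-1$ forces the $F$-degree and $(E\setminus F)$-degree increments to have opposite signs. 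Applying Strassen's theorem on the product space $\lbrace 0,1\rbrace^{F}\times\lbrace 0,1\rbrace^{E\setminus F}$ for the appropriate partial order produces the joint coupling.

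The main obstacle is the operator-theoretic step: verifying that increasing the weights on $F$ produces the correct monotonicity on each block of the transfer current. One has to disentangle two simultaneous effects of the weight change, namely the change of the inner product on edge functions and the change of the star subspace itself, so the comparison between $K_C$ and $K_{\widetilde{C}}$ requires an intermediate rescaling before the projections can be placed on the same footing and compared. The star hypothesis in part (2) is what makes this conjugation clean enough to yield monotonicity on both blocks at once.
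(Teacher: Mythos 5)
Your plan for part (1) is in the right spirit---the paper also ultimately appeals to Lyons' determinantal stochastic domination theorems and Strassen's theorem---but the route is different. The paper does not try to compare the two transfer-current projections $K_C$ and $K_{\widetilde C}$ directly; that comparison is genuinely awkward because they live in different weighted Hilbert spaces, and your claim that ``increasing the weights on $F$ shrinks the image of the restricted star subspace'' is left as a heuristic (you yourself flag it as the main obstacle). The paper avoids this entirely: it reduces to $F=\lbrace e\rbrace$ a single edge, notes that $\mathbb{P}(e\in\Upsilon)\leq\mathbb{P}(e\in\widetilde{\Upsilon})$, observes that $\Upsilon$ conditioned on $\lbrace e\in\Upsilon\rbrace$ (resp.\ $\lbrace e\notin\Upsilon\rbrace$) has the same law as $\widetilde{\Upsilon}$ with the same conditioning, and then only needs Lyons--Strassen for the single, classical statement that the UST conditioned on $e\in\Upsilon$ is stochastically dominated on $E\setminus\lbrace e\rbrace$ by the UST conditioned on $e\notin\Upsilon$. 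Induction on $|F|$ finishes part (1). So the overall structure (determinantal domination plus Strassen) agrees with yours, but the paper's reduction makes the operator-theoretic input a known fact about UST conditioning rather than an unverified inequality between Schur complements of two different kernels.

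For part (2) there is a genuine gap in your argument. Knowing the two marginal stochastic dominations $\widetilde{\Upsilon}\cap(E\setminus F)\preceq\Upsilon\cap(E\setminus F)$ and $\Upsilon\cap F\preceq\widetilde{\Upsilon}\cap F$ does not let you invoke Strassen ``on the product space $\lbrace 0,1\rbrace^{F}\times\lbrace 0,1\rbrace^{E\setminus F}$ for the appropriate partial order.'' Strassen would require the full law of $\widetilde{\Upsilon}$ to stochastically dominate that of $\Upsilon$ with respect to the mixed partial order (increasing on $F$, decreasing on $E\setminus F$); that is a strictly stronger joint statement, not implied by the two marginal ones, and it is essentially what has to be proved. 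The cardinality argument ($|\Upsilon|=|\widetilde{\Upsilon}|=|V|-1$) constrains the total counts but does not establish the needed joint domination. The paper instead argues constructively: starting from the coupling of part (1), it deletes all $F$-edges from both $\Upsilon$ and $\widetilde{\Upsilon}$, observes that each component of $\widetilde{\Upsilon}\cap(E\setminus F)$ sits inside some component $\mathcal{T}_{i}$ of $\Upsilon\cap(E\setminus F)$, and then jointly re-samples the $F$-edges by first choosing, for each component of $\widetilde{\Upsilon}\cap(E\setminus F)$, an edge to $x_{0}$ proportional to $C$, and then for each $\mathcal{T}_{i}$ selecting one of those chosen edges to keep for $\Upsilon'$ with probability proportional to the total $C$-weight of edges from $x_{0}$ into the corresponding subcomponent. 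Proportionality of $\widetilde C$ and $C$ on $F$, together with the star structure of $F$, is what makes this re-sampling give the correct conditional law for both trees simultaneously and forces $\Upsilon'\cap F\subseteq\widetilde{\Upsilon}'\cap F$ by construction.
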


\begin{proof}
It is enough to prove the first coupling (\eqref{Ch7Sec2: EqDiscrCoup1}) in case $F$ is a single edge ($F=\lbrace e\rbrace$). Then, by induction on $\# F$ the general result will follow. From definition of uniform spanning trees is clear that 
$\mathbb{P}(e\in\Upsilon)\leq\mathbb{P}(e\in\widetilde{\Upsilon})$. Moreover, $\Upsilon$ conditional on $e\in\Upsilon$, respectively $e\not\in\Upsilon$, has the same law as $\widetilde{\Upsilon}$ conditional on $e\in\widetilde{\Upsilon}$, respectively $e\not\in\widetilde{\Upsilon}$. A possible coupling is the following: first we couple $1_{e\in\Upsilon}$ with $1_{e\in\widetilde{\Upsilon}}$ in a way such that $1_{e\in\Upsilon}\leq 1_{e\in\widetilde{\Upsilon}}$. In case 
$1_{e\in\Upsilon}= 1_{e\in\widetilde{\Upsilon}}=0$, respectively 
$1_{e\in\Upsilon}= 1_{e\in\widetilde{\Upsilon}}=1$, we sample for both $\Upsilon$ and
$\widetilde{\Upsilon}$ the same tree having the law of $\Upsilon$ conditioned by 
$e\not\in\Upsilon$, respectively $e\in\Upsilon$. In case $1_{e\in\Upsilon}=0$ and
$1_{e\in\widetilde{\Upsilon}}=1$, we use the fact that on the edges in $E\setminus \lbrace e\rbrace$, the law of $\Upsilon$ conditioned on $e\in\Upsilon$ is stochastically dominated by the law of $\Upsilon$ conditioned on $e\not\in\Upsilon$, which implies the existence of a monotone coupling by Strassen's theorem. See Theorems $5.2$, $5.3$ and $5.5$ in \cite{Lyons2003DetProbMes}. 

Now we consider the case of $F$ made of all edges adjacent to a particular vertex $x_{0}$, and $\widetilde{C}$ is proportional to $C$ on $F$. Let $(\Upsilon,\widetilde{\Upsilon})$ be a coupling satisfying \eqref{Ch7Sec2: EqDiscrCoup1}. In general it does not satisfy \eqref{Ch7Sec2: EqDiscrCoup2}. To deal with this issue we will re-sample the edges of $\Upsilon$ and $\widetilde{\Upsilon}$ contained in $F$, that is to say sample $\Upsilon'$ having the same law as $\Upsilon$, $\widetilde{\Upsilon}'$ having the same law as $\widetilde{\Upsilon}$, such that 
$\Upsilon'\cap(E\setminus F)=\Upsilon\cap(E\setminus F)$, 
$\widetilde{\Upsilon}'\cap(E\setminus F)=\widetilde{\Upsilon}\cap(E\setminus F)$ and such that $\Upsilon'\cap F\subseteq\widetilde{\Upsilon}'\cap F$. Let $\mathcal{T}_{1},\dots,\mathcal{T}_{N}$ be the connected components of $\Upsilon\cap(E\setminus F)$. \eqref{Ch7Sec2: EqDiscrCoup1} ensures that each connected component of $\widetilde{\Upsilon}'\cap(E\setminus F)$ is contained in one of the $\mathcal{T}_{i}$. Let 
$\mathcal{T}_{1,1},\dots,\mathcal{T}_{1,q_{1}},\dots,\mathcal{T}_{N,1},\dots,\mathcal{T}_{N,q_{N}}$ be the connected components of 
$\widetilde{\Upsilon}'\cap(E\setminus F)$, where $\mathcal{T}_{i,j}\subseteq \mathcal{T}_{i}$. Conditional on $\mathcal{T}_{1} ,\dots,\mathcal{T}_{N}$, $\Upsilon\cap F$ has the following law: for each $\mathcal{T}_{i}$ one chooses an edge connecting $x_{0}$ to $\mathcal{T}_{i}$ with probability proportional to $C$, and independently from the edges of $\Upsilon$ that will connect $x_{0}$ to other $(\mathcal{T}_{i'})_{i'\neq i}$. Similarly, for the law of $\widetilde{\Upsilon}$ conditional on $\mathcal{T}_{1,1},\dots,\mathcal{T}_{1,q_{1}},\dots,\mathcal{T}_{N,1},\dots,\mathcal{T}_{N,q_{N}}$. To construct $\Upsilon'$ and
$\widetilde{\Upsilon}'$ we use the fact that $\widetilde{C}$ is proportional to $C$ on $F$:
\begin{itemize}
\item We start with $\Upsilon$ and $\widetilde{\Upsilon}$ satisfying \eqref{Ch7Sec2: EqDiscrCoup1}.
\item Then we remove from $\Upsilon$ and $\widetilde{\Upsilon}$ the edges contained in $F$.
\item For each $\mathcal{T}_{i,j}$, we add to $\widetilde{\Upsilon}'$ an edge connecting $x_{0}$ to $\mathcal{T}_{i,j}$, chosen proportionally to its weight under $C$, each choice being independent from the others.
\item For each $i\in \lbrace 1,\dots,N\rbrace$, there are $q_{i}$ edges in $\widetilde{\Upsilon}'$ connecting $x_{0}$ to $\mathcal{T}_{i}$, one for each $(\mathcal{T}_{i,j})_{1\leq j\leq q_{i}}$. In order to construct $\Upsilon'$, we need to chose one out of $q_{i}$ to keep and remove the others. We chose to keep the edge corresponding to $\mathcal{T}_{i,j}$ with probability proportional to
\begin{displaymath}
\sum_{
\substack{
e~\text{connecting} \\ 
x_{0}~\text{to}~\mathcal{T}_{i,j}
}}C(e).
\end{displaymath}
The choice is done independently for each 
$i\in \lbrace 1,\dots,N\rbrace$.
\end{itemize}
By construction, $\Upsilon'\cap F\subseteq\widetilde{\Upsilon}'\cap F$.
\end{proof}

Consider now two different killing measures $\kappa$ and $\tilde{\kappa}$ on $\mathbb{R}$, with $\kappa\leq \tilde{\kappa}$, and the couples of determinantal point processes $(\mathcal{Y}_{\infty}, \mathcal{Z}_{\infty})$, respectively $(\widetilde{\mathcal{Y}}_{\infty}, \widetilde{\mathcal{Z}}_{\infty})$, corresponding to the Brownian motion on $\mathbb{R}$ with killing measure $\kappa$, respectively $\tilde{\kappa}$. We want to show that one can couple $(\mathcal{Y}_{\infty}, \mathcal{Z}_{\infty})$ and $(\widetilde{\mathcal{Y}}_{\infty}, \widetilde{\mathcal{Z}}_{\infty})$ on the same probability space such that $\mathcal{Z}_{\infty}\subseteq\widetilde{\mathcal{Z}}_{\infty}$ and
${\widetilde{\mathcal{Y}}_{\infty}\subseteq\mathcal{Y}_{\infty}\cup \operatorname{Supp}(\tilde{\kappa}-\kappa)}$, and if $\kappa$ and $\tilde{\kappa}$ are proportional, also have $\mathcal{Y}_{\infty}\subseteq\widetilde{\mathcal{Y}}_{\infty}$.
The condition $\mathcal{Z}_{\infty}\subseteq\widetilde{\mathcal{Z}}_{\infty}$ and
${\widetilde{\mathcal{Y}}_{\infty}\subseteq\mathcal{Y}_{\infty}\cup \operatorname{Supp}(\tilde{\kappa}-\kappa)}$ is analogous to 
\eqref{Ch7Sec2: EqDiscrCoup1}. The condition $\mathcal{Y}_{\infty}\subseteq\widetilde{\mathcal{Y}}_{\infty}$ is analogous to \eqref{Ch7Sec2: EqDiscrCoup2}, where the cemetery $\dagger$ plays the role of the distinguished vertex $x_{0}$. We used the stochastic domination principle (\cite{Lyons2003DetProbMes}) for determinantal point process with determinantal kernel a projection operator. It ensures the existence of a monotone coupling but does not give one explicitly (see open questions \cite{Lyons2003DetProbMes}). However, for 
$(\mathcal{Y}_{\infty}, \mathcal{Z}_{\infty})$ and $(\widetilde{\mathcal{Y}}_{\infty}, \widetilde{\mathcal{Z}}_{\infty})$ we will construct a whole family of rather explicit monotone couplings.

Let $\widetilde{G}$ be the Green's function of 
$\frac{1}{2}\frac{d^{2}}{dx^{2}}-\tilde{\kappa}$, factorized as 
\begin{displaymath}
\widetilde{G}(x,y)=\tilde{u}_{\uparrow}(x\wedge y)
\tilde{u}_{\downarrow}(x\vee y).
\end{displaymath}
Let
\begin{displaymath}
\widetilde{\mathcal{K}}(y,z):=-\dfrac{1}{2}
\dfrac{d\tilde{u}_{\uparrow}}{dx}((y\wedge z)^{+})
\dfrac{d\tilde{u}_{\downarrow}}{dx}((y\vee z)^{-}).
\end{displaymath}
Let $\mathfrak{G}_{\tilde{\kappa}}$ be the operator on $\mathbb{L}^{2}(d\tilde{\kappa})$ defined on functions with compact support as follows:
\begin{displaymath}
(\mathfrak{G}_{\tilde{\kappa}}f)(x):=\int_{\mathbb{R}}\widetilde{G}(x,y)f(y)\tilde{\kappa}(dy).
\end{displaymath}
In case $\tilde{\kappa}=c\kappa$, where $c$ is a constant, $c>1$, we have the following resolvent identity, which follows from Lemma 
\ref{Ch2Sec3: LemResolvent}:
\begin{equation}
\label{Ch7Sec2: EqResolv}
\dfrac{1}{c}\mathfrak{G}_{ck}\mathfrak{G}_{\kappa}=
\dfrac{1}{c}\mathfrak{G}_{\kappa}\mathfrak{G}_{c\kappa}=
\dfrac{1}{c-1}\Big(\mathfrak{G}_{\kappa}
-\frac{1}{c}\mathfrak{G}_{c\kappa}\Big).
\end{equation}

Next we prove that a simple necessary but not sufficient condition for monotone couplings to exist is satisfied. It won't be used in the sequel but we prefer to give a direct proof for it.

\begin{prop}
\label{Ch7Sec2: PropStochDom}
For any $z_{1},\dots,z_{n}\in \mathbb{R}$ such that 
$\tilde{\kappa}(\lbrace z_{i}\rbrace)=0$,
\begin{equation}
\label{Ch7Sec2: EqStochDomK}
\det(\widetilde{\mathcal{K}}(z_{i},z_{j}))_{1\leq i,j\leq n}\geq
\det(\mathcal{K}(z_{i},z_{j}))_{1\leq i,j\leq n}.
\end{equation}
If $\tilde{\kappa}=c\kappa$, $c>1$, then for any 
$y_{1},\dots,y_{n}\in \operatorname{Supp}(\kappa)$,
\begin{equation}
\label{Ch7Sec2: EqStochDomG}
c^{n}\det(\widetilde{G}(y_{i},y_{j}))_{1\leq i,j\leq n}\geq
\det(G(y_{i},y_{j}))_{1\leq i,j\leq n}.
\end{equation}
\end{prop}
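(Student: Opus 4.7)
The plan is to reduce both determinantal inequalities to their diagonal (one-point) counterparts via the multiplicative factorisations \eqref{Ch6Sec3: EqFacG} and \eqref{Ch6Sec3: EqFacK}. After reordering so that $y_{1} < \dots < y_{n}$ and $z_{1} < \dots < z_{n}$ (the determinants being invariant under simultaneous permutation of rows and columns), one has
\begin{equation*}
\det(G(y_{i},y_{j})) = G(y_{1},y_{1})\prod_{r=2}^{n} G^{(y_{r-1}\times)}(y_{r},y_{r}),\quad \det(\mathcal{K}(z_{i},z_{j})) = \mathcal{K}(z_{1},z_{1})\prod_{r=2}^{n} \mathcal{K}^{(z_{r-1}\triangleright)}(z_{r},z_{r}),
\end{equation*}
and identical formulae for $\widetilde{G}$ and $\widetilde{\mathcal{K}}$. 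Here $G^{(y_{r-1}\times)}(y_{r},y_{r})$ is the diagonal value of the Green's function of Brownian motion on $(y_{r-1},+\infty)$ killed at $y_{r-1}$ with killing measure $\kappa|_{(y_{r-1},+\infty)}$, and $\mathcal{K}^{(z_{r-1}\triangleright)}(z_{r},z_{r})$ is the analogous quantity for Brownian motion reflected at $z_{r-1}$ with killing measure $\kappa|_{(z_{r-1},+\infty)}$. Since the inequalities $\kappa \leq \tilde{\kappa}$ (respectively $\tilde{\kappa}=c\kappa$) persist after such restrictions, both \eqref{Ch7Sec2: EqStochDomG} and \eqref{Ch7Sec2: EqStochDomK} reduce to the two diagonal estimates $c\widetilde{G}(y,y)\geq G(y,y)$ and $\widetilde{\mathcal{K}}(z,z)\geq \mathcal{K}(z,z)$, each proved both on $\mathbb{R}$ and on every relevant half-line with the appropriate Dirichlet or Neumann boundary condition.

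For the first diagonal estimate, introduce $\widetilde{G}_{s}$, the Green's function of Brownian motion with killing measure $s\kappa$, so that $\widetilde{G}_{1}=G$ and $\widetilde{G}_{c}=\widetilde{G}$, and set $f(s):=s\widetilde{G}_{s}(y,y)$. Differentiating the resolvent identity of lemma \ref{Ch2Sec3: LemResolvent} in $s$ yields $\partial_{s}\widetilde{G}_{s}(y,y) = -\int \widetilde{G}_{s}(y,z)^{2}\kappa(dz)$, while pairing the equation $(-\tfrac{1}{2}\partial_{z}^{2}+s\kappa)\widetilde{G}_{s}(y,\cdot)=\delta_{y}$ with $\widetilde{G}_{s}(y,\cdot)$ and integrating by parts produces the energy identity
\begin{equation*}
\widetilde{G}_{s}(y,y) = \tfrac{1}{2}\int_{\mathbb{R}}\!\bigl(\partial_{z}\widetilde{G}_{s}(y,z)\bigr)^{2} dz + \int_{\mathbb{R}}\widetilde{G}_{s}(y,z)^{2}\,(s\kappa)(dz).
\end{equation*}
Combining the two yields $f'(s) = \tfrac{1}{2}\int(\partial_{z}\widetilde{G}_{s}(y,z))^{2} dz \geq 0$, whence $f(c)\geq f(1)$, i.e. $c\widetilde{G}(y,y)\geq G(y,y)$.

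For the second diagonal estimate, recall the quadratic-form identity stated just after proposition \ref{Sec3PropDiamondsDet}: for every $f \in \mathcal{C}^{1}_{c}(\mathbb{R})$,
\begin{equation*}
\langle f, \mathfrak{K}_{\kappa} f\rangle_{\mathbb{L}^{2}(dx)} = \|f\|_{\mathbb{L}^{2}}^{2} - \tfrac{1}{2}\int_{\mathbb{R}^{2}} f'(y)\,G(y,z)\,f'(z)\,dy\,dz,
\end{equation*}
where $\mathfrak{K}_{\kappa}$ denotes the integral operator on $\mathbb{L}^{2}(dx)$ with kernel $\mathcal{K}$. Writing $\mathfrak{G}_{\kappa}=A_{\kappa}^{-1}$ with $A_{\kappa}=-\tfrac{1}{2}\partial_{x}^{2}+\kappa$ and associated quadratic form $\mathcal{E}_{\kappa}(g)=\tfrac{1}{2}\|g'\|^{2}+\int g^{2}\,d\kappa$, the standard duality
\begin{equation*}
\langle h, \mathfrak{G}_{\kappa} h\rangle = \sup_{g}\bigl\{2\langle g,h\rangle - \mathcal{E}_{\kappa}(g)\bigr\}
\end{equation*}
shows that $\kappa \mapsto \langle h, \mathfrak{G}_{\kappa} h\rangle$ is pointwise decreasing in $\kappa$. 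Applying this with $h=f'$ and substituting into the previous identity gives $\widetilde{\mathfrak{K}} \geq \mathfrak{K}$ as quadratic forms on $\mathbb{L}^{2}(dx)$. Testing this against suitable mollifications of $(2\varepsilon)^{-1}\mathbf{1}_{[z-\varepsilon, z+\varepsilon]}$ and letting $\varepsilon \to 0^{+}$ at the common continuity point $(z,z)$ (which is ensured by $\tilde{\kappa}(\{z\})=0$, forcing also $\kappa(\{z\})=0$) yields the diagonal inequality $\widetilde{\mathcal{K}}(z,z)\geq \mathcal{K}(z,z)$.

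The main technical point is transcribing the variational duality and the energy identity onto the restricted half-line problems that arise in the factorisation step: Dirichlet boundary conditions at $y_{r-1}$ for $G^{(y_{r-1}\times)}$, and Neumann boundary conditions (no constraint on admissible test functions at $z_{r-1}$) for $\mathcal{K}^{(z_{r-1}\triangleright)}$. A secondary care-point is the boundary contribution in the integration by parts producing the energy identity when the killing measure has an atom at an endpoint of the half-line, but under our hypotheses (Dirichlet at $y_{r-1}$; and $\tilde{\kappa}(\{z_{i}\})=0$, hence $\kappa(\{z_{i}\})=0$) this contribution vanishes in every relevant case.
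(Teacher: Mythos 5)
Your proof is correct, and it reaches the result by a genuinely different route from the paper even though both rely on the same reduction, namely the cocycle factorisations \eqref{Ch6Sec3: EqFacG} and \eqref{Ch6Sec3: EqFacK} that collapse the $n\times n$ determinants into products of diagonal quantities on nested half-lines. Where you part ways with the paper is in how the two diagonal estimates are obtained. For the $\mathcal{K}$ inequality, the paper's argument is purely ODE-algebraic: it notes that the Wronskian $W(u_{\uparrow},\tilde{u}_{\uparrow})$ vanishes at $-\infty$ and has non-negative differential $2u_{\uparrow}\tilde{u}_{\uparrow}(d\tilde{\kappa}-d\kappa)$, hence is non-negative, and similarly $W(u_{\downarrow},\tilde{u}_{\downarrow})\leq 0$; a short algebraic rearrangement using $W(u_{\downarrow},u_{\uparrow})\equiv W(\tilde{u}_{\downarrow},\tilde{u}_{\uparrow})\equiv 2$ then yields $\widetilde{\mathcal{K}}(z,z)\geq\mathcal{K}(z,z)$, and the same Wronskian computation redone with $u^{(x_{0}\triangleright)}_{\uparrow}$ in place of $u_{\uparrow}$ disposes of the half-line case. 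Your argument instead appeals to the Legendre--Fenchel (variational) characterisation of $\langle h,A_{\kappa}^{-1}h\rangle$: since $\kappa\mapsto\mathcal{E}_{\kappa}$ is pointwise increasing, the inverse quadratic form is decreasing, which combined with the identity $\langle f,\mathfrak{K}_{\kappa}f\rangle=\|f\|^{2}-\tfrac{1}{2}\langle f',\mathfrak{G}_{\kappa}f'\rangle$ immediately gives the operator inequality $\mathfrak{K}_{\kappa}\leq\widetilde{\mathfrak{K}}$; this is more conceptual and makes the monotonicity mechanism transparent, at the cost of having to carry the variational principle and the quadratic-form identity over to the Neumann half-line setting (a step you flag but do not write out). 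For the $G$ inequality, the paper feeds the resolvent identity \eqref{Ch7Sec2: EqResolv} directly into the contractivity of $\mathfrak{G}_{c\kappa}$ to get $\mathfrak{G}_{\kappa}\leq\mathfrak{G}_{c\kappa}$ as positive semi-definite operators on $\mathbb{L}^{2}(d\kappa)$ and then localises with small intervals around $y_{1}$; you instead differentiate the resolvent identity in the coupling parameter $s$ and combine it with a pointwise energy identity for $\widetilde{G}_{s}(y,\cdot)$ to show $s\mapsto s\widetilde{G}_{s}(y,y)$ is non-decreasing, which gives the same diagonal bound without passing to test functions. A small remark on the energy identity: the boundary contribution at the kink $z=y$ in the integration by parts contributes exactly $\widetilde{G}_{s}(y,y)-s\kappa(\{y\})\widetilde{G}_{s}(y,y)^{2}$, which recombines with the atom of $\kappa$ at $y$ to give the stated identity regardless of whether $\kappa(\{y\})>0$; your remark that only the endpoint atoms of the half-line matter is therefore correct but slightly stronger than needed. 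On the whole your proof is sound; the "technical points" you identify are genuine but routine, and both proofs spend roughly the same amount of work overall.
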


\begin{proof}
We will first show \eqref{Ch7Sec2: EqStochDomK}. To begin with we will show that for any $z_{1}\in\mathbb{R}$, $\widetilde{\mathcal{K}}(z_{1},z_{1})\geq \mathcal{K}(z_{1},z_{1})$. The Wronskian
\begin{displaymath}
W(u_{\uparrow},\tilde{u}_{\uparrow})(z):=u_{\uparrow}(z)
\dfrac{d\tilde{u}_{\uparrow}}{dx}(z^{+})-
\tilde{u}_{\uparrow}(z)\dfrac{du_{\uparrow}}{dx}(z^{+})
\end{displaymath}
is non-negative. Indeed, $W(u_{\uparrow},\tilde{u}_{\uparrow})(-\infty)=0$ and
\begin{displaymath}
dW(u_{\uparrow},\tilde{u}_{\uparrow})=2u_{\uparrow}
\tilde{u}_{\uparrow}(d\tilde{\kappa}-d\kappa)\geq 0.
\end{displaymath}
Similarly, the Wronskian
\begin{displaymath}
W(u_{\downarrow},\tilde{u}_{\downarrow})(z):=u_{\downarrow}(z)\dfrac{d\tilde{u}_{\downarrow}}{dx}(z^{+})
-\tilde{u}_{\downarrow}(z)\dfrac{du_{\downarrow}}{dx}(z^{+})
\end{displaymath}
is non-positive. Using the fact that
\begin{displaymath}
W(u_{\downarrow},u_{\uparrow})=W(\tilde{u}_{\downarrow},\tilde{u}_{\uparrow})\equiv 2,
\end{displaymath}
we get
\begin{equation*}
\begin{split}
\widetilde{\mathcal{K}}(z_{1},z_{1})-\mathcal{K}(z_{1},z_{1})
=&\dfrac{1}{2}\Big(\dfrac{du_{\uparrow}}{dx}(z_{1}^{+})
\dfrac{du_{\downarrow}}{dx}(z_{1}^{+})-
\dfrac{d\tilde{u}_{\uparrow}}{dx}(z_{1}^{+})
\dfrac{d\tilde{u}_{\downarrow}}{dx}(z_{1}^{+})\Big)\\
=&\dfrac{1}{4}\Big(\dfrac{du_{\uparrow}}{dx}(z_{1}^{+})
\dfrac{du_{\downarrow}}{dx}(z_{1}^{+})W(\tilde{u}_{\downarrow},\tilde{u}_{\uparrow})-\dfrac{d\tilde{u}_{\uparrow}}{dx}(z_{1}^{+})
\dfrac{d\tilde{u}_{\downarrow}}{dx}(z_{1}^{+})
W(u_{\downarrow},u_{\uparrow})\Big)\\=&
\dfrac{1}{4}\Big(\dfrac{du_{\downarrow}}{dx}(z_{1}^{+})\dfrac{d\tilde{u}_{\downarrow}}{dx}(z_{1}^{+})
W(u_{\uparrow},\tilde{u}_{\uparrow})(z_{1})\\&-
\dfrac{du_{\uparrow}}{dx}(z_{1}^{+})
\dfrac{d\tilde{u}_{\uparrow}}{dx}(z_{1}^{+})
W(u_{\downarrow},\tilde{u}_{\downarrow})(z_{1})\Big)\geq 0.
\end{split}
\end{equation*}
To prove \eqref{Ch7Sec2: EqStochDomK} in general, we will use the factorization \eqref{Ch6Sec3: EqFacK}. For $x_{0}<z$, let
\begin{displaymath}
\tilde{u}^{(x_{0}\triangleright)}_{\uparrow}(z):=\tilde{u}_{\uparrow}(z)
+\Big(\dfrac{d\tilde{u}_{\downarrow}}{dx}(x_{0}^{-})\Big)^{-1}
\dfrac{d\tilde{u}_{\uparrow}}{dx}(x_{0}^{-})\tilde{u}_{\downarrow}(z).
\end{displaymath}
Factorization \eqref{Ch6Sec3: EqFacK} ensures that we only need to prove that, for $x_{0}<z$, with $\kappa(\lbrace x_{0}\rbrace)=0$,
\begin{displaymath}
-\dfrac{d\tilde{u}^{(x_{0}\triangleright)}_{\uparrow}}{dx}(z^{+})
\dfrac{d\tilde{u}_{\downarrow}}{dx}(z)\geq
-\dfrac{du^{(x_{0}\triangleright)}_{\uparrow}}{dx}(z^{+})
\dfrac{du_{\downarrow}}{dx}(z).
\end{displaymath}
First, observe that the Wronskian
\begin{displaymath}
W(u^{(x_{0}\triangleright)}_{\uparrow},\tilde{u}^{(x_{0}\triangleright)}_{\uparrow})(z)
:=u^{(x_{0}\triangleright)}_{\uparrow}(z)
\dfrac{d\tilde{u}^{(x_{0}\triangleright)}_{\uparrow}}{dx}(z^{+})-
\tilde{u}^{(x_{0}\triangleright)}_{\uparrow}(z)
\dfrac{du^{(x_{0}\triangleright)}_{\uparrow}}{dx}(z^{+})
\end{displaymath}
is non-negative on $[x_{0},+\infty)$. Indeed, $W(u^{(x_{0}\triangleright)}_{\uparrow},\tilde{u}^{(x_{0}\triangleright)}_{\uparrow})(x)=0$, and
\begin{displaymath}
dW(u^{(x_{0}\triangleright)}_{\uparrow},\tilde{u}^{(x_{0}\triangleright)}_{\uparrow})=
2u^{(x_{0}\triangleright)}_{\uparrow}(z)
\tilde{u}^{(x_{0}\triangleright)}_{\uparrow}(z)
(d\tilde{\kappa}-d\kappa)\geq 0.
\end{displaymath}
The sequel of the proof works as in the previous case.

Let's prove now \eqref{Ch7Sec2: EqStochDomG}. First, we consider the case $n=1$. From the resolvent identity \eqref{Ch7Sec2: EqResolv} follows that
\begin{displaymath}
\mathfrak{G}_{c\kappa}-\mathfrak{G}_{\kappa}=(c-1)(\mathfrak{G}_{\kappa}-\mathfrak{G}_{c\kappa}\mathfrak{G}_{\kappa}).
\end{displaymath}
Since $\mathfrak{G}_{c\kappa}$ is contracting, this implies that $\mathfrak{G}_{\kappa}\leq \mathfrak{G}_{c\kappa}$, where the inequality stands for positive semi-definite operators on $\mathbb{L}^{2}(d\kappa)$. Let $y_{1}\in \operatorname{Supp}(\kappa)$. Then, for any $\varepsilon>0$,
\begin{equation}
\label{Ch7Sec2: EqDomG1Int}
c\int_{(y_{1}-\varepsilon, y_{1}+\varepsilon)^{2}}
\widetilde{G}(x,y)\kappa(dx)\kappa(dy)\geq
\int_{(y_{1}-\varepsilon, y_{1}+\varepsilon)^{2}}G(x,y)
\kappa(dx)\kappa(dy).
\end{equation}
Since $y_{1}\in \operatorname{Supp}(\kappa)$, both sides of \eqref{Ch7Sec2: EqDomG1Int} are positive. The continuity of $G$ and $\widetilde{G}$ ensures that 
$c\widetilde{G}(y_{1},y_{1})\geq G(y_{1},y_{1})$. In case of general $n$, we use the factorization \eqref{Ch6Sec3: EqFacG}. It is enough to prove that for any $x_{0}<y$, $y\in \operatorname{Supp}(\kappa)$,
\begin{equation}
\label{Ch7Sec2: EqStochDomGx0}
c\widetilde{G}^{(x_{0}\times)}(y,y)\geq G^{(x_{0}\times)}(y,y),
\end{equation}
where
\begin{displaymath}
\widetilde{G}^{(x_{0}\times)}(y,y):=\widetilde{G}(y,y)
-\dfrac{\widetilde{G}(x_{0},y)^{2}}{\widetilde{G}(x_{0},x_{0})}.
\end{displaymath}
$\widetilde{G}$ is the restriction to $(x_{0},+\infty)^{2}$ of the Green's function of 
$\frac{1}{2}\frac{d^{2}}{dx^{2}}-1_{(x_{0},+\infty)}\tilde{\kappa}$. 
Let $\mathfrak{G}_{\kappa}^{(x_{0}\times)}$ and $\mathfrak{G}_{c\kappa}^{(x_{0}\times)}$ be the operators on $\mathbb{L}^{2}(1_{(x_{0},+\infty)}d\kappa)$ defined for functions $f$ with compact support as
\begin{displaymath}
(\mathfrak{G}_{\kappa}^{(x_{0}\times)}f)(x):=
\int_{(x_{0},+\infty)}G^{(x_{0}\times)}(x,y)f(y)\kappa(dy),
\end{displaymath}
\begin{displaymath}
(\mathfrak{G}_{c\kappa}^{(x_{0}\times)}f)(x):=
c\int_{(x_{0},+\infty)}\widetilde{G}^{(x_{0}\times)}(x,y)f(y)\kappa(dy).
\end{displaymath}
$\mathfrak{G}_{\kappa}^{(x_{0}\times)}$ and $\mathfrak{G}_{c\kappa}^{(x_{0}\times)}$ are contractions and satisfy a resolvent identity similar to \eqref{Ch7Sec2: EqResolv}, which similarly implies \eqref{Ch7Sec2: EqStochDomGx0}.
\end{proof}

The resolvent identity \eqref{Ch7Sec2: EqResolv} implies that $\mathfrak{G}_{\kappa}$ and $\mathfrak{G}_{c\kappa}$ commute and that $\mathfrak{G}_{\kappa}\leq \mathfrak{G}_{c\kappa}$. It was shown in case of determianatal point processes on discrete space that this a sufficient condition for a monotone coupling to exist. See Theorem $7.1$ in \cite{Lyons2003DetProbMes}.

To construct the couplings we will give several procedures that take deterministic arguments, among which pairs of interwoven sets of points, and return pairs of interwoven random point processes. The first procedure we describe will be used as sub-procedure in subsequent procedures.

\begin{proc}
\label{Ch7Sec2: ProcSubProc}
Arguments:
\begin{itemize}
\item a pair $(\mathcal{Y},\mathcal{Z})$ of disjoint discrete sets of points in $\mathbb{R}$, such that between any two points in $\mathcal{Y}$ lies a single point in $\mathcal{Z}$ and vice-versa, and such that $\inf \mathcal{Y}\cup\mathcal{Z} \in \mathcal{Y}\cup\lbrace -\infty\rbrace$, $\sup \mathcal{Y}\cup\mathcal{Z} \in \mathcal{Y}\cup\lbrace +\infty\rbrace$,
\item a positive Radon measure $\kappa$,
\item a point $y_{0}\in\mathbb{R}$, such that $y_{0}\not\in \mathcal{Z}$.
\end{itemize}
Procedure:
\begin{itemize}
\item[(i)] If $y_{0}\not\in \mathcal{Y}$, we define a random variable $Z$ distributed as follows:
\begin{itemize}
\item[(i a)] If there are $y'\in\mathcal{Y}$, $z'\in\mathcal{Z}\cup\lbrace +\infty\rbrace$, such that $y'<z'$, $y_{0}\in (y',z')$, and 
$\mathcal{Y}\cap(y',z')=\mathcal{Z}_{\infty}\cap(y',z')=\emptyset$, then $Z$ is distributed according to 
\begin{displaymath}
\dfrac{1_{z\in(y',y_{0})}}{u_{\uparrow}(y_{0})
-u_{\uparrow}(y')}\dfrac{du_{\uparrow}}{dx}(z)dz.
\end{displaymath}
\item[(i b)] If there are $y'\in\mathcal{Y}$, 
$z'\in\mathcal{Z}\cup\lbrace -\infty\rbrace$,
such that $z'<y'$, $y_{0}\in (z',y')$ and 
$\mathcal{Y}\cap(z',y')=\mathcal{Z}\cap(z',y')=\emptyset$, then $Z$ is distributed according to 
\begin{displaymath}
\dfrac{-1_{z\in(y_{0},y')}}{u_{\downarrow}(y')
-u_{\downarrow}(y_{0})}\dfrac{du_{\downarrow}}{dx}(z)dz.
\end{displaymath}
\end{itemize}
\item[(ii)] If there are $y'\in\mathcal{Y}$, $z'\in\mathcal{Z}\cup\lbrace +\infty\rbrace$, such that $y'<z'$, $y_{0}\in (y',z')$, and 
$\mathcal{Y}\cap(y',z')=\mathcal{Z}\cap(y',z')=\emptyset$, then
\begin{itemize}
\item[(ii a)] with probability $\dfrac{u_{\uparrow}(y')}{u_{\uparrow}(y_{0})}$ we set
\begin{displaymath}
(\widetilde{\mathcal{Y}},\widetilde{\mathcal{Z}})=(\mathcal{Y}\cup\lbrace y_{0}\rbrace\setminus\lbrace y'\rbrace,\mathcal{Z}),
\end{displaymath}
\item[(ii b)] and with probability $1-\dfrac{u_{\uparrow}(y')}{u_{\uparrow}(y_{0})}$,
we set
\begin{displaymath}
(\widetilde{\mathcal{Y}},\widetilde{\mathcal{Z}})=(\mathcal{Y}\cup\lbrace y_{0}\rbrace,\mathcal{Z}\cup\lbrace Z\rbrace).
\end{displaymath}
\end{itemize}
\item[(iii)] If there are $y'\in\mathcal{Y}$, $z'\in\mathcal{Z}\cup\lbrace -\infty\rbrace$, such that $z'<y'$, $y_{0}\in (z',y')$, and 
$\mathcal{Y}\cap(z',y')=\mathcal{Z}\cap(z',y')=\emptyset$, then
\begin{itemize}
\item[(iii a)] with probability $\dfrac{u_{\downarrow}(y')}{u_{\downarrow}(y_{0})}$ 
we set
\begin{displaymath}
(\widetilde{\mathcal{Y}},\widetilde{\mathcal{Z}})=(\mathcal{Y}\cup\lbrace y_{0}\rbrace\setminus\lbrace y'\rbrace,\mathcal{Z}),
\end{displaymath}
\item[(iii b)] and with probability $1-\dfrac{u_{\downarrow}(y')}{u_{\downarrow}(y_{0})}$ we set
\begin{displaymath}
(\widetilde{\mathcal{Y}},\widetilde{\mathcal{Z}})=(\mathcal{Y}\cup\lbrace y_{0}\rbrace,\mathcal{Z}\cup\lbrace Z\rbrace).
\end{displaymath}
\end{itemize}
\item[(iv)] If $y_{0}\in\mathcal{Y}$, we set $(\widetilde{\mathcal{Y}},\widetilde{\mathcal{Z}})=(\mathcal{Y},\mathcal{Z})$.
\end{itemize}
Return: $(\widetilde{\mathcal{Y}},\widetilde{\mathcal{Z}})$.
\end{proc}

\begin{lemm}
\label{Ch7Sec2: LemAfterSubProc}
If Procedure \ref{Ch7Sec2: ProcSubProc} is applied to the pair of interwoven determinantal point processes $(\mathcal{Y}_{\infty},\mathcal{Z}_{\infty})$ corresponding to the killing measure $\kappa$, then its result $(\widetilde{\mathcal{Y}},\widetilde{\mathcal{Z}})$ has the same law as $(\mathcal{Y}^{(y_{0})}_{\infty},\mathcal{Z}^{(y_{0})}_{\infty})$.
\end{lemm}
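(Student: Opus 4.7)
The plan is to verify equality of laws by matching finite-dimensional densities. The key structural observation is that Procedure \ref{Ch7Sec2: ProcSubProc} only modifies the input configuration within the unique open interval $I(y_0)$ of $\mathbb{R} \setminus (\mathcal{Y}\cup\mathcal{Z})$ that contains $y_0$ (leaving it untouched entirely if $y_0 \in \mathcal{Y}$). Outside $I(y_0)$, one has $(\widetilde{\mathcal{Y}},\widetilde{\mathcal{Z}})\cap I(y_0)^c = (\mathcal{Y}_\infty,\mathcal{Z}_\infty)\cap I(y_0)^c$. I would combine this with the Markov/conditioning property \ref{Ch7Sec1: PropertyCondPres} for the target process, which states that, conditional on $y_0 \in \mathcal{Y}_\infty^{(y_0)}$, the restrictions of $(\mathcal{Y}_\infty^{(y_0)},\mathcal{Z}_\infty^{(y_0)})$ to $(-\infty,y_0]$ and $[y_0,+\infty)$ are independent with the laws of $(\mathcal{Y}_\infty^{(\times y_0)},\mathcal{Z}_\infty^{(\times y_0)})$ and $(\mathcal{Y}_\infty^{(y_0\times)},\mathcal{Z}_\infty^{(y_0\times)})$ respectively. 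This reduces the task to a purely local computation in the interval containing $y_0$.

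Concretely, I would fix $n,n'\ge 1$ and positions $y_{-n'} < z_{-n'+1} < \cdots < z_{-1} < y_{-1} < y_0 < y_1 < z_1 < \cdots < y_n$ (together with bracketing outer points to identify consecutive-point infinitesimal events) and compute in both models the infinitesimal probability that these points are $n+n'+1$ consecutive points of the $\mathcal{Y}$-process alternating with the listed $\mathcal{Z}$-points. On the target side this density is read off directly from the factorisation
\eqref{Ch7Sec1: EqCondPresy0}--\eqref{Ch7Sec1: EqCondPresy0bis}, giving a product $2^{n+n'}\, G(y_{-n'},y_{-n'})\frac{u_\uparrow(y_{-1})}{u_\uparrow(y_0)}\frac{u_\downarrow(y_n)}{u_\downarrow(y_0)}$ times the factors $\prod\kappa(dy_i)\prod dz_j$ (with one factor treated as $M_0$). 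On the procedure side, I would enumerate the starting configurations of $(\mathcal{Y}_\infty,\mathcal{Z}_\infty)$ that can lead to this output: in case $y_0\in\mathcal{Y}$ the starting configuration equals the output; in case (ii\,a) the starting configuration is obtained by swapping $y_0$ for some $y'\in[y_{-1},y_1]$ (which must be one of the neighbours of $y_0$ in the interwoven structure), with factor $u_\uparrow(y')/u_\uparrow(y_0)$; in case (ii\,b) the starting configuration lacks $y_0$ and one adjacent $z$-point, with the density of $Z$ contributing the factor reconstructing $\frac{du_\uparrow}{dx}$. The identity $W(u_\downarrow,u_\uparrow)\equiv 2$ and the defining ODE $\frac{1}{2}\frac{d^2 u_\uparrow}{dx^2}=u_\uparrow\kappa$ allow one to collapse these contributions into the target density.

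The main obstacle is purely bookkeeping: one must handle the left--right asymmetry (cases (ii) and (iii)), the subcases (a) and (b), and track carefully which starting configurations can lead to each output configuration, all while the normalising densities of $Z$ (involving $\frac{du_\uparrow}{dx}$ in case (ii) and $\frac{du_\downarrow}{dx}$ in case (iii)) must combine correctly with the selection probabilities $u_\uparrow(y')/u_\uparrow(y_0)$ and $u_\downarrow(y')/u_\downarrow(y_0)$. A helpful sanity check at each step is that the probabilities sum to $1$, which uses $u_\uparrow(y')\le u_\uparrow(y_0)$ when $y'<y_0$ (monotonicity of $u_\uparrow$), and that $\int_{y'}^{y_0}\frac{du_\uparrow}{dx}(z)\,dz = u_\uparrow(y_0)-u_\uparrow(y')$, which is exactly the normalising denominator in step (i\,a). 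Once each case is checked separately against the corresponding factor in $M_{n+n'}(\mathcal{Y}_\infty^{(y_0)},\mathcal{Z}_\infty^{(y_0)})$, the equality of laws follows by a standard monotone-class argument extending from these interwoven cylinder events to all Borel events on the space of interwoven point configurations.
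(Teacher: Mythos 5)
Your plan reproduces the paper's own argument: compute the infinitesimal density of the $2n$ right-neighbours and $2n'$ left-neighbours of $y_0$ in the output of Procedure \ref{Ch7Sec2: ProcSubProc} by disjoining over the cases (ii)/(iii) and sub-cases (a)/(b), collapse the sum to the target density via the Wronskian identity $W(u_\downarrow,u_\uparrow)\equiv 2$, and conclude by matching these interwoven-cylinder marginals. Two slips should be fixed before carrying this out. First, the locality claim is false as stated: in case (ii\,a) the procedure deletes the point $y'$, which is an endpoint of the open interval $I(y_0)$ and hence lies in $I(y_0)^c$, so the output does change outside $I(y_0)$; only the complement of the closed interval $\overline{I(y_0)}$ is preserved. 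Second, the target joint density read off \eqref{Ch7Sec1: EqCondPresy0}--\eqref{Ch7Sec1: EqCondPresy0bis} together with the independence in Property \ref{Ch7Sec1: PropertyCondPres} is
$2^{n+n'}\,\frac{u_\uparrow(y_{-n'})}{u_\uparrow(y_0)}\,\frac{u_\downarrow(y_n)}{u_\downarrow(y_0)}$
times the reference measures, not $2^{n+n'}G(y_{-n'},y_{-n'})\frac{u_\uparrow(y_{-1})}{u_\uparrow(y_0)}\frac{u_\downarrow(y_n)}{u_\downarrow(y_0)}$: the conditioned process is already anchored at $y_0$, so there is no extra $M_0$-factor at the outermost point $y_{-n'}$. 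With these corrected, the enumeration of admissible starting configurations per case, the integration over $y'$ and over $Z$, and the Wronskian collapse are exactly the paper's computation.
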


\begin{proof}
By construction, $y_{0}\in\widetilde{\mathcal{Y}}$. 
Let $\widetilde{Z}_{1}<\widetilde{Y}_{1}<\dots<\widetilde{Z}_{n}
<\widetilde{Y}_{n}$ be the $2n$ closest points to $y_{0}$ in $(\widetilde{\mathcal{Y}}\cup\widetilde{\mathcal{Z}})\cap(y_{0},+\infty)$. On the event $\min(\mathcal{Y}_{\infty}\cup\mathcal{Z}_{\infty})
\cap (y_{0},+\infty)\in \mathcal{Z}_{\infty}$ (point (ii) in Procedure \ref{Ch7Sec2: ProcSubProc}), their distribution is given by
\begin{equation}
\label{Ch7Sec2: EqE1}
1_{y_{0}<z_{1}<y_{1}<\dots<z_{n}<y_{n}}2^{n}
\Big(\int_{(-\infty,y_{0})}u_{\uparrow}(y')\kappa(dy')\Big)u_{\downarrow}(y_{n}) dz_{1}\kappa(dy_{1})\dots dz_{n}\kappa(dy_{n}).
\end{equation}
On the event $\min (\mathcal{Y}_{\infty}\cup\mathcal{Z}_{\infty})\cap (y_{0},+\infty)\in \mathcal{Y}_{\infty}$ (point (iii) 
in Procedure \ref{Ch7Sec2: ProcSubProc}), the distribution of 
$\min (\mathcal{Y}_{\infty}\cup\mathcal{Z}_{\infty})\cap (y_{0},+\infty)$ is (see Proposition \ref{Ch6Sec3: PropJointLaw})
\begin{equation*}
\begin{split}
1_{y'>y_{0}}2\Big(\int_{(-\infty,y_{0})}&u_{\uparrow}(y_{-1})
(y_{0}-y_{-1})\kappa(dy_{-1})\Big)u_{\downarrow}(y')\kappa(dy')\\
&+1_{y'>y_{0}}\dfrac{u_{\uparrow}(+\infty)}{u_{\uparrow}(y')}G(y_{0},y_{0})\kappa(dy')\\
=&1_{y'>y_{0}}(u_{\uparrow}(y_{0})
-u_{\uparrow}(+\infty))u_{\downarrow}(y')\kappa(dy')+
1_{y'>y_{0}}u_{\uparrow}(+\infty)u_{\downarrow}(y')\kappa(dy')\\
=&1_{y'>y_{0}}u_{\uparrow}(y_{0})u_{\downarrow}(y')\kappa(dy').
\end{split}
\end{equation*}
Thus, on the event $\min(\mathcal{Y}_{\infty}\cup\mathcal{Z}_{\infty})
\cap (y_{0},+\infty)\in \mathcal{Y}_{\infty}$ (point (iii) in Procedure \ref{Ch7Sec2: ProcSubProc}), the distribution of 
$(Z_{1},Y_{1},\dots,Z_{n},Y_{n})$ is
\begin{equation}
\label{Ch7Sec2: EqE2}
1_{y_{0}<z_{1}<\dots<y_{n}}\Big(\int_{y_{0}<y'<z_{1}}
\dfrac{u_{\downarrow}(y')}{u_{\downarrow}(y_{0})}u_{\uparrow}(y_{0})
u_{\downarrow}(y')2^{n}\dfrac{u_{\downarrow}(y_{n})}
{u_{\downarrow}(y')}\kappa(dy')\Big)
dz_{1}\kappa(dy_{1})\dots dz_{n}\kappa(dy_{n}),
\end{equation}
\begin{equation}
\label{Ch7Sec2: EqE2bis}
+1_{y_{0}<z_{1}<\dots<y_{n}}\dfrac{-1}{u_{\downarrow}(y_{0})}
\dfrac{du_{\downarrow}}{dx}(z_{1})
u_{\uparrow}(y_{0})u_{\downarrow}(y_{1})2^{n-1}
\dfrac{u_{\downarrow}(y_{n})}{u_{\downarrow}(y_{1})}
dz_{1}\kappa(dy_{1})\dots dz_{n}\kappa(dy_{n}).
\end{equation}
The term \eqref{Ch7Sec2: EqE2} corresponds to the case when a point is removed from $\mathcal{Y}_{\infty}$ (case (iii a) in Procedure \ref{Ch7Sec2: ProcSubProc}) and \eqref{Ch7Sec2: EqE2bis} to the case when $Z$ is added to $\mathcal{Z}_{\infty}$ (case (iii b) in Procedure \ref{Ch7Sec2: ProcSubProc}). The sum of the densities that appear in \eqref{Ch7Sec2: EqE1}, \eqref{Ch7Sec2: EqE2}
and \eqref{Ch7Sec2: EqE2bis} is
\begin{equation*}
\begin{split}
2^{n}\Big(\int_{(-\infty,y_{0})}u_{\uparrow}(y')\kappa&(dy')\Big)
u_{\downarrow}(y_{n})+\Big(\int_{y_{0}<y'<z_{1}}
\dfrac{u_{\downarrow}(y')}{u_{\downarrow}(y_{0})}u_{\uparrow}(y_{0})
u_{\downarrow}(y')2^{n}
\dfrac{u_{\downarrow}(y_{n})}{u_{\downarrow}(y')}\kappa(dy')\Big)
\\&+\dfrac{-1}{u_{\downarrow}(y_{0})}
\dfrac{du_{\downarrow}}{dx}(z_{1})
u_{\uparrow}(y_{0})u_{\downarrow}(y_{1})2^{n-1}
\dfrac{u_{\downarrow}(y_{n})}{u_{\downarrow}(y_{1})}
\\=&2^{n-1}\dfrac{du_{\uparrow}}{dx}(y_{0})u_{\downarrow}(y_{n})
+\dfrac{2^{n-1}}{u_{\downarrow}(y_{0})}
\Big(\dfrac{du_{\downarrow}}{dx}(z_{1})-
\dfrac{du_{\downarrow}}{dx}(y_{0}^{+})\Big)
u_{\uparrow}(y_{0})u_{\downarrow}(y_{n})
\\&+\dfrac{-2^{n-1}}{u_{\downarrow}(y_{0})}
\dfrac{du_{\downarrow}}{dx}(z_{1})
u_{\uparrow}(y_{0})u_{\downarrow}(y_{n})
\\=&2^{n-1}u_{\downarrow}(y_{n})
\Big(\dfrac{du_{\uparrow}}{dx}(y_{0})+\dfrac{-1}{u_{\downarrow}(y_{0})}
\dfrac{du_{\downarrow}}{dx}(y_{0}^{+})u_{\uparrow}(y_{0})\Big)=
2^{n}\dfrac{u_{\downarrow}(y_{n})}{u_{\downarrow}(y_{0})}.
\end{split}
\end{equation*}
So, we obtain the density which appears in \eqref{Ch7Sec1: EqCondPresy0}.

It remains to prove that $(\widetilde{\mathcal{Y}}\cap(y_{0},+\infty),\widetilde{\mathcal{Z}}\cap(y_{0},+\infty))$ and $(\widetilde{\mathcal{Y}}\cap(-\infty,y_{0}),\widetilde{\mathcal{Z}}\cap(-\infty,y_{0}))$ are independent. Let $Z_{-1}>Y_{-1}>\dots>Z_{-n'}>Y_{-n'}$
be the $n'$ closest points to $y_{0}$ in $(\widetilde{\mathcal{Y}}\cup\widetilde{\mathcal{Z}})\cap(-\infty,y_{0})$. The distribution of the family of points $(Z_{-1},Y_{-1}\dots,Z_{-n'},Y_{-n'},Z_{1},Y_{1},\dots,Z_{n},Y_{n})$ on the event $\#(\widetilde{\mathcal{Y}}\cap(-\infty,y_{0}))\geq n, \#(\widetilde{\mathcal{Y}}\cap(y_{0},+\infty))\geq n'$ is
\begin{equation}
\label{Ch7Sec2: EqEE1}
\bigg(\int_{y_{0}<y'<z_{1}}2^{n+n'}u_{\uparrow}(y_{-n'})u_{\downarrow}(y_{n})\dfrac{u_{\downarrow}(y')}{u_{\downarrow}(y_{0})}\kappa(dy')
-\dfrac{2^{n+n'-1}u_{\uparrow}(y_{-n'})u_{\downarrow}(y_{n})}{u_{\downarrow}(y_{0})}\dfrac{du_{\downarrow}}{dx}(z_{1})
\end{equation}
\begin{equation}
\label{Ch7Sec2: EqEE2}
+\int_{z_{-1}<y'<y_{0}}2^{n+n'}u_{\uparrow}(y_{-n'})u_{\downarrow}(y_{n})\dfrac{u_{\uparrow}(y')}{u_{\uparrow}(y_{0})}\kappa(dy')
+\dfrac{2^{n+n'-1}u_{\uparrow}(y_{-n'})u_{\downarrow}(y_{n})}{u_{\uparrow}(y_{0})}\dfrac{du_{\uparrow}}{dx}(z_{-1})\bigg)
\end{equation}
\begin{displaymath}
\times 1_{y_{-n'}<z_{-n'}<\dots<z_{-1}<y_{0}<z_{1}<
\dots<z_{n}<y_{n}}\kappa(dy_{-n'})dz_{-n'}\dots
dz_{-1}dz_{1}\dots dz_{n}\kappa(dy_{n}).
\end{displaymath}
The term \eqref{Ch7Sec2: EqEE1} corresponds to point (iii) in Procedure \ref{Ch7Sec2: ProcSubProc} and \eqref{Ch7Sec2: EqEE2} to point (ii) in Procedure \ref{Ch7Sec2: ProcSubProc}. One can check that the sum of the densities equals
\begin{displaymath}
2^{n+n'}\dfrac{u_{\uparrow}(y_{-n'})}{u_{\uparrow}(y_{0})}
\dfrac{u_{\downarrow}(y_{n})}{u_{\downarrow}(y_{0})}.
\end{displaymath}
Thus, $(\widetilde{\mathcal{Y}}\cap(y_{0},+\infty),\widetilde{\mathcal{Z}}\cap(y_{0},+\infty))$ and ${(\widetilde{\mathcal{Y}}\cap(-\infty,y_{0}),\widetilde{\mathcal{Z}}\cap(-\infty,y_{0}))}$ are independent.
\end{proof}

\begin{lemm}
\label{Ch7Sec2: LemContinuitySubproc}
We consider the subspace of triples $((\mathcal{Y},\mathcal{Z}),\kappa,y_{0})$ consisting of a pair of discrete sets of points $(\mathcal{Y},\mathcal{Z})$, a Radon measure $\kappa$ and a point $y_{0}\in\mathbb{R}$, and which satisfies the restrictions on the arguments of Procedure \ref{Ch7Sec2: ProcSubProc}. We assume this subspace endowed with the product topology obtained from the topology of uniform convergence on compact subsets for the pairs $(\mathcal{Y},\mathcal{Z})$, the vague topology for the measures $\kappa$ and standard order topology on $\mathbb{R}$. If $(\widetilde{\mathcal{Y}},\widetilde{\mathcal{Z}})$ is the pair of point processes obtained by applying Procedure \ref{Ch7Sec2: ProcSubProc} to the arguments $((\mathcal{Y},\mathcal{Z}),\kappa,y_{0})$, then its law depends continuously on $((\mathcal{Y},\mathcal{Z}),\kappa,y_{0})$.
\end{lemm}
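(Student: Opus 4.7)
The plan is to take a convergent sequence of arguments and verify convergence in law of the output pair of point processes. Let $((\mathcal{Y}_{n},\mathcal{Z}_{n}),\kappa_{n},y_{0,n})$ converge to $((\mathcal{Y},\mathcal{Z}),\kappa,y_{0})$ in the product topology described, and denote by $(\widetilde{\mathcal{Y}}_{n},\widetilde{\mathcal{Z}}_{n})$ respectively $(\widetilde{\mathcal{Y}},\widetilde{\mathcal{Z}})$ the outputs of procedure \ref{Ch7Sec2: ProcSubProc}. The first ingredient is lemma \ref{Ch2Sec1: LemContKappa}, which ensures that after a common normalization the functions $u_{\kappa_{n},\uparrow}$ and $u_{\kappa_{n},\downarrow}$ converge uniformly on compact subsets of $\mathbb{R}$ to $u_{\kappa,\uparrow}$ and $u_{\kappa,\downarrow}$; since the procedure only uses these functions through the ratios $u_{\uparrow}(y')/u_{\uparrow}(y_{0})$ and $u_{\downarrow}(y')/u_{\downarrow}(y_{0})$ and through the left/right derivatives (which satisfy the ODE and hence converge at every continuity point of $\kappa$), the choice of normalization is irrelevant.

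Next I split into cases according to where $y_{0}$ sits relative to $\mathcal{Y}\cup\mathcal{Z}$. Suppose first that $y_{0}$ lies in the interior of a gap delimited by consecutive points $y',z'$ of $\mathcal{Y}\cup\mathcal{Z}$ (possibly infinite). Then for $n$ large $y_{0,n}$ sits in the corresponding gap of $\mathcal{Y}_{n}\cup\mathcal{Z}_{n}$ whose endpoints $y_{n}',z_{n}'$ converge to $y',z'$, so the same branch (case (ii) or case (iii)) applies for all large $n$ and for the limit. The mixing probability $u_{\kappa_{n},\uparrow}(y_{n}')/u_{\kappa_{n},\uparrow}(y_{0,n})$ (respectively $u_{\kappa_{n},\downarrow}(y_{n}')/u_{\kappa_{n},\downarrow}(y_{0,n})$) then converges to its limit value, and the conditional law of $Z$, which is the push-forward of Lebesgue measure by an affine renormalization of $u_{\kappa_{n},\uparrow}$ (respectively $u_{\kappa_{n},\downarrow}$) on an interval whose endpoints converge, converges weakly to the corresponding limit law. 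A standard coupling argument then yields convergence in law of $(\widetilde{\mathcal{Y}}_{n},\widetilde{\mathcal{Z}}_{n})$.

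The boundary situation that I expect to be the main obstacle is when $y_{0}\in\mathcal{Y}$. In the limit procedure case (iv) applies and the output equals $(\mathcal{Y},\mathcal{Z})$ deterministically, yet for the approximating sequence two scenarios coexist: either $y_{0,n}\in\mathcal{Y}_{n}$ and case (iv) applies directly with output $(\mathcal{Y}_{n},\mathcal{Z}_{n})\to(\mathcal{Y},\mathcal{Z})$, or $y_{0,n}\notin\mathcal{Y}_{n}$ lies in a gap adjacent to a point $y_{n}'\in\mathcal{Y}_{n}$ with $y_{n}'\to y_{0}$. In the latter scenario the uniform convergence of $u_{\kappa_{n},\uparrow}$ on compacts forces $u_{\kappa_{n},\uparrow}(y_{n}')/u_{\kappa_{n},\uparrow}(y_{0,n})\to 1$ (and similarly for $u_{\kappa_{n},\downarrow}$), so the probability of branch (ii a) or (iii a) tends to $1$; on that event the output equals $(\mathcal{Y}_{n}\cup\{y_{0,n}\}\setminus\{y_{n}'\},\mathcal{Z}_{n})$, which converges to $(\mathcal{Y},\mathcal{Z})$ because $y_{n}'\to y_{0}$. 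On the complementary event (branch (ii b) or (iii b), of vanishing probability) the random $Z$ lies in an interval with endpoints $y_{n}'$ and $y_{0,n}$ shrinking to a point, so the perturbation of $(\mathcal{Y}_{n},\mathcal{Z}_{n})$ is contained in an arbitrarily small neighbourhood.

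Finally I combine the three regimes: outside boundary configurations convergence is routine, at $y_{0}\in\mathcal{Y}$ the probability-$1$ branch recovers the deterministic output while the probability-$0$ branch contributes only a vanishing perturbation, and analogous reasoning covers the sub-cases (i~a) versus (i~b) when the nearest neighbour of $y_{0}$ is on the left or on the right. Putting these pieces together yields weak convergence of $(\widetilde{\mathcal{Y}}_{n},\widetilde{\mathcal{Z}}_{n})$ to $(\widetilde{\mathcal{Y}},\widetilde{\mathcal{Z}})$ in all cases.
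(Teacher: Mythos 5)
Your proposal is correct and follows essentially the same route as the paper's own proof: both arguments are built on lemma~\ref{Ch2Sec1: LemContKappa} for the continuity of $u_{\uparrow}$, $u_{\downarrow}$ (and hence of the mixing probabilities and the conditional law of $Z$), together with a case split between $y_0\notin\mathcal{Y}$ (where all quantities vary continuously) and $y_0\in\mathcal{Y}$ (where with probability tending to one the output is a vanishingly small perturbation of the input). Your treatment of the boundary case $y_0\in\mathcal{Y}$ is somewhat more explicit than the paper's one-line remark, tracking the two scenarios $y_{0,n}\in\mathcal{Y}_n$ versus $y_{0,n}$ near a point of $\mathcal{Y}_n$ and noting that the rare branch (ii~b)/(iii~b) only perturbs the configuration inside a shrinking interval — a useful refinement, but not a different method.
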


\begin{proof}
From Lemma \ref{Ch2Sec1: LemContKappa}, it follows that the cumulative distribution function of $Z$ (point (i) in Procedure 
\ref{Ch7Sec2: ProcSubProc}) depends uniformly continuously on 
$((\mathcal{Y},\mathcal{Z}),\kappa,y_{0})$ in the neighborhood of triples where $y_{0}\not\in\mathcal{Y}$. Moreover, the probabilities to make either the choice (ii a) or the choice (ii b), as well as to make either the choice (iii a) or the choice (iii b), depend continuously on $((\mathcal{Y},\mathcal{Z}),\kappa,y_{0})$. Thus, the law of $(\widetilde{\mathcal{Y}},\widetilde{\mathcal{Z}})$ depends continuously on $((\mathcal{Y},\mathcal{Z}),\kappa,y_{0})$ in the neighborhood of triples where $y_{0}\not\in\mathcal{Y}$. Moreover, in the neighborhood of triples where $y_{0}\in\mathcal{Y}$, with high probability, converging to $1$, $(\widetilde{\mathcal{Y}},\widetilde{\mathcal{Z}})=(\mathcal{Y},\mathcal{Z})$. Thus, the law of 
$(\widetilde{\mathcal{Y}},\widetilde{\mathcal{Z}})$ is continuous also at these triples.
\end{proof}

First, we will describe a coupling in case when $\tilde{\kappa}$ and $\kappa$ differ by an atom: $\tilde{\kappa}=\kappa+c\delta_{y_{0}}$. We construct the coupling as follows: 

\begin{proc}
\label{Ch7Sec2: ProcCouplDirac}
Arguments:
\begin{itemize}
\item a pair $(\mathcal{Y},\mathcal{Z})$ of disjoint discrete sets of points in $\mathbb{R}$ such that between any two points in $\mathcal{Y}$ lies a single point in $\mathcal{Z}$ and vice-versa, and such that $\inf \mathcal{Y}\cup\mathcal{Z} \in \mathcal{Y}\cup\lbrace -\infty\rbrace$, $\sup \mathcal{Y}\cup\mathcal{Z} \in \mathcal{Y}\cup\lbrace +\infty\rbrace$,
\item two positive Radon measures $\kappa$ and $\tilde{\kappa}$ where $\tilde{\kappa}$ is of form $\tilde{\kappa}=\kappa+c\delta_{y_{0}}$ and $y_{0}\not\in\mathcal{Z}$.
\end{itemize}
Procedure:
\begin{itemize}
\item[(i)] Let $\beta$ be a Bernoulli r.v. of parameter $c\widetilde{G}(y_{0},y_{0})$.
\item[(ii)] If $\beta=0$, we set $(\widetilde{\mathcal{Y}},\widetilde{\mathcal{Z}})=(\mathcal{Y},\mathcal{Z})$.
\item[(iii)] If $\beta=1$, we apply Procedure 
\ref{Ch7Sec2: ProcSubProc} to the arguments 
$(\mathcal{Y},\mathcal{Z})$, $\kappa$ and $y_{0}$ and set $(\widetilde{\mathcal{Y}},\widetilde{\mathcal{Z}})$ to be its result.
\end{itemize}
Return: $(\widetilde{\mathcal{Y}},\widetilde{\mathcal{Z}})$.
\end{proc}

$(\widetilde{\mathcal{Y}},\widetilde{\mathcal{Z}})$ constructed this way satisfies the following: between any two consecutive points in $\widetilde{\mathcal{Y}}$ lies a single point in $\widetilde{\mathcal{Z}}$, and between any two consecutive points in $\widetilde{\mathcal{Z}}$ lies a point in $\widetilde{\mathcal{Y}}$. By construction, $\mathcal{Z}\subseteq\widetilde{\mathcal{Z}}$ and $\widetilde{\mathcal{Y}}\subseteq \mathcal{Y}\cup\lbrace y_{0}\rbrace$.

\begin{prop}
\label{Ch7Sec2: PropCoupleDirac}
If Procedure \ref{Ch7Sec2: ProcCouplDirac} is applied to to the pair of interwoven determinantal point processes 
$(\mathcal{Y}_{\infty},\mathcal{Z}_{\infty}),$ corresponding to the measure $\kappa$, then the returned pair of point processes $(\widetilde{\mathcal{Y}},\widetilde{\mathcal{Z}})$ has the law of the interwoven determinantal point processes $(\widetilde{\mathcal{Y}}_{\infty},\widetilde{\mathcal{Z}}_{\infty})$, corresponding to $\tilde{\kappa}=\kappa+c\delta_{y_{0}}$.
\end{prop}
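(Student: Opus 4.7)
The plan is to recognize that the procedure is exactly a realization of the two-case mixture representation of $(\widetilde{\mathcal{Y}}_{\infty},\widetilde{\mathcal{Z}}_{\infty})$ that follows from Lemmas \ref{Ch7Sec1: LemExtrDet}, \ref{Ch7Sec1: LemCondEmpt} and \ref{Ch7Sec1: LemCondOne}, combined with Lemma \ref{Ch7Sec2: LemAfterSubProc}. Specifically, I will decompose the target law according to whether $y_{0}$ belongs to $\widetilde{\mathcal{Y}}_{\infty}$ and match each conditional law with what the procedure outputs under $\beta=0$ and $\beta=1$ respectively.

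First I would unpack the auxiliary process $\Delta\widetilde{\mathcal{Y}}$ from Section \ref{Ch7Sec1} in the present setting $\tilde{\kappa}=\kappa+c\delta_{y_{0}}$. The Radon--Nikodym derivative $\chi=d\kappa/d\tilde{\kappa}$ vanishes at $y_{0}$ (since $\kappa(\{y_{0}\})=0$) and equals $1$ elsewhere, so the thinning prescription giving $\Delta\widetilde{\mathcal{Y}}$ keeps every point $y\neq y_{0}$ erased and preserves $y_{0}$ whenever it lies in $\widetilde{\mathcal{Y}}_{\infty}$. Hence $\Delta\widetilde{\mathcal{Y}}=\widetilde{\mathcal{Y}}_{\infty}\cap\{y_{0}\}$, and its determinantal description with reference measure $\tilde{\kappa}-\kappa=c\delta_{y_{0}}$ yields
\begin{displaymath}
\mathbb{P}(\Delta\widetilde{\mathcal{Y}}=\{y_{0}\})=c\widetilde{G}(y_{0},y_{0}),
\qquad
\mathbb{P}(\Delta\widetilde{\mathcal{Y}}=\emptyset)=1-c\widetilde{G}(y_{0},y_{0}).
\end{displaymath}
In particular $c\widetilde{G}(y_{0},y_{0})\in[0,1]$, which justifies the parameter of the Bernoulli $\beta$ in step (i) of the procedure.

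Next I would appeal to Lemma \ref{Ch7Sec1: LemCondEmpt}, which tells us that conditionally on $\Delta\widetilde{\mathcal{Y}}=\emptyset$ the pair $(\widetilde{\mathcal{Y}}_{\infty},\widetilde{\mathcal{Z}}_{\infty})$ has the same law as $(\mathcal{Y}_{\infty},\mathcal{Z}_{\infty})$, and to Lemma \ref{Ch7Sec1: LemCondOne}, which says that conditionally on $\Delta\widetilde{\mathcal{Y}}=\{y_{0}\}$ it has the same law as $(\mathcal{Y}^{(y_{0})}_{\infty},\mathcal{Z}^{(y_{0})}_{\infty})$. Combining the three displays gives the decomposition: $(\widetilde{\mathcal{Y}}_{\infty},\widetilde{\mathcal{Z}}_{\infty})$ is distributed as $(\mathcal{Y}_{\infty},\mathcal{Z}_{\infty})$ with probability $1-c\widetilde{G}(y_{0},y_{0})$ and as $(\mathcal{Y}^{(y_{0})}_{\infty},\mathcal{Z}^{(y_{0})}_{\infty})$ with probability $c\widetilde{G}(y_{0},y_{0})$.

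Finally I would compare this with the procedure applied to a sample of $(\mathcal{Y}_{\infty},\mathcal{Z}_{\infty})$: on the event $\beta=0$ the procedure returns $(\mathcal{Y}_{\infty},\mathcal{Z}_{\infty})$ itself, and on the event $\beta=1$ Lemma \ref{Ch7Sec2: LemAfterSubProc} guarantees that the output of sub-procedure \ref{Ch7Sec2: ProcSubProc} with input $y_{0}$ is distributed as $(\mathcal{Y}^{(y_{0})}_{\infty},\mathcal{Z}^{(y_{0})}_{\infty})$. Since $\beta$ is independent of the underlying pair, these two cases realize exactly the mixture above, proving the proposition. The only non-automatic point in this plan is the identification of $\Delta\widetilde{\mathcal{Y}}$ with $\widetilde{\mathcal{Y}}_{\infty}\cap\{y_{0}\}$ and the computation of its one-point mass $c\widetilde{G}(y_{0},y_{0})$; once this bookkeeping is clear the proof reduces to a direct invocation of the three earlier lemmas.
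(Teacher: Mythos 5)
Your approach is essentially correct and genuinely different from the paper's, but it contains one incorrect step that you mistakenly flag as essential, and which hides the fact that your argument actually needs less than you think.

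The error: you assert $\Delta\widetilde{\mathcal{Y}}=\widetilde{\mathcal{Y}}_{\infty}\cap\{y_{0}\}$ and justify this with "since $\kappa(\{y_{0}\})=0$". But neither Procedure \ref{Ch7Sec2: ProcCouplDirac} nor the proposition assumes $\kappa(\{y_{0}\})=0$; the paper's own proof explicitly splits into the cases $\kappa(\{y_{0}\})=0$ and $\kappa(\{y_{0}\})>0$ precisely because they behave differently. In the second case $\chi(y_{0})=\kappa(\{y_{0}\})/\tilde{\kappa}(\{y_{0}\})\in(0,1)$, so the thinning erases $y_{0}$ with positive probability even when $y_{0}\in\widetilde{\mathcal{Y}}_{\infty}$, and your claimed identity fails. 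Fortunately, nothing in your argument actually uses this identity: what you need is only that $\Delta\widetilde{\mathcal{Y}}\subseteq\{y_{0}\}$ (automatic, since the reference measure $\tilde{\kappa}-\kappa=c\delta_{y_{0}}$ forces $\Delta\widetilde{\mathcal{Y}}$ to be supported at $y_{0}$) and the one-point mass $\mathbb{P}(\Delta\widetilde{\mathcal{Y}}=\{y_{0}\})=\mathbb{E}[\sharp\Delta\widetilde{\mathcal{Y}}]=c\widetilde{G}(y_{0},y_{0})$, which you correctly read off from the determinantal intensity. Once you drop the spurious identification, your mixture decomposition via Lemmas \ref{Ch7Sec1: LemCondEmpt} and \ref{Ch7Sec1: LemCondOne} (with $Y=y_{0}$ deterministically) handles both cases $\kappa(\{y_{0}\})=0$ and $\kappa(\{y_{0}\})>0$ uniformly, and then matching against the procedure via Lemma \ref{Ch7Sec2: LemAfterSubProc} and the independence of $\beta$ finishes the proof.

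On the comparison: the paper instead conditions directly on the event $y_{0}\in\widetilde{\mathcal{Y}}_{\infty}$ rather than on $\Delta\widetilde{\mathcal{Y}}$. When $\kappa(\{y_{0}\})=0$ these events coincide with $\{\beta=1\}$ and the paper uses Corollary \ref{Ch7Sec1: CorAbsenceRoot} plus Property \ref{Ch7Sec1: PropertyCondPres}, which amounts to the same thing as your route. When $\kappa(\{y_{0}\})>0$ the events differ, and the paper has to compare $\mathbb{P}(y_{0}\in\widetilde{\mathcal{Y}})$ and $\mathbb{P}(y_{0}\in\widetilde{\mathcal{Y}}_{\infty})$ using the resolvent identity from Lemma \ref{Ch2Sec3: LemResolvent}, a step your thinned-process route sidesteps entirely. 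So the corrected version of your argument is actually the cleaner of the two; the only thing you were missing is that the role of $\Delta\widetilde{\mathcal{Y}}$ is precisely to absorb the atom-at-$y_{0}$ case analysis, which is why you should not identify it with $\widetilde{\mathcal{Y}}_{\infty}\cap\{y_{0}\}$.
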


\begin{proof}
Observe that a.s., $y_{0}\not\in \mathcal{Z}_{\infty}$.
First, we deal with the case $\kappa(\lbrace y_{0}\rbrace)=0$. Then, almost surely, $y_{0}\not\in\mathcal{Y}_{\infty}$ and 
$y_{0}\in \widetilde{\mathcal{Y}}$ if and only if $\beta=1$. But,
\begin{displaymath}
\mathbb{P}(\beta=1)=\mathbb{P}(y_{0}\in \widetilde{\mathcal{Y}})=c\widetilde{G}(y_{0},y_{0}).
\end{displaymath}
According to Corollary \ref{Ch7Sec1: CorAbsenceRoot}, conditional on $y_{0}\not\in\widetilde{\mathcal{Y}}$,  $(\widetilde{\mathcal{Y}}_{\infty},\widetilde{\mathcal{Z}}_{\infty})$ has the same law as $(\mathcal{Y}_{\infty},\mathcal{Z}_{\infty})$, that is to say the same law as $(\widetilde{\mathcal{Y}},\widetilde{\mathcal{Z}})$ conditional on $\beta=0$. According to Lemma 
\ref{Ch7Sec2: LemAfterSubProc}, conditional on $\beta=1$, $(\widetilde{\mathcal{Y}},\widetilde{\mathcal{Z}})$ follows the same law as ${(\widetilde{\mathcal{Y}}\cap(-\infty,y_{0}),\widetilde{\mathcal{Z}}\cap(-\infty,y_{0}))}$, which is also the law of $(\widetilde{\mathcal{Y}}_{\infty},\widetilde{\mathcal{Z}}_{\infty})$ conditioned on $y_{0}\in\widetilde{\mathcal{Y}}_{\infty}$.

We deal now with the case $\kappa(\lbrace y_{0}\rbrace)>0$.
\begin{displaymath}
\mathbb{P}(y_{0}\in\widetilde{\mathcal{Y}}_{\infty})
=\tilde{\kappa}(\lbrace y_{0}\rbrace)\widetilde{G}(y_{0},y_{0}),
\end{displaymath}
\begin{equation*}
\begin{split}
\mathbb{P}(y_{0}\in\widetilde{\mathcal{Y}})&=\mathbb{P}(\beta =1)+
\mathbb{P}(\beta =0,y_{0}\in\mathcal{Y}_{\infty})\\&=
c\widetilde{G}(y_{0},y_{0})+(1-c\widetilde{G}(y_{0},y_{0}))
\kappa(\lbrace y_{0}\rbrace)G(y_{0},y_{0}).
\end{split}
\end{equation*}
But $G$ and $\widetilde{G}$ satisfy the resolvent identity (see Lemma
\ref{Ch2Sec3: LemResolvent}):
\begin{displaymath}
\widetilde{G}(y_{0},y_{0})\kappa(\lbrace y_{0}\rbrace)G(y_{0},y_{0})=
\dfrac{\kappa(\lbrace y_{0}\rbrace)}{\tilde{\kappa}(\lbrace y_{0}\rbrace)
-\kappa(\lbrace y_{0}\rbrace)}(G(y_{0},y_{0})-\widetilde{G}(y_{0},y_{0})).
\end{displaymath}
It follows that $\mathbb{P}(y_{0}\in\widetilde{\mathcal{Y}})=\mathbb{P}(y_{0}\in\widetilde{\mathcal{Y}}_{\infty})$. Let 
$\check{\kappa}:=\kappa-\kappa(\lbrace y_{0}\rbrace)\delta_{y_{0}}$ and $(\widecheck{\mathcal{Y}}_{\infty},\widecheck{\mathcal{Z}}_{\infty})$ be the interwoven determinantal point processes corresponding to $\check{\kappa}$.et $\tilde{\kappa}':=\tilde{\kappa}
-\kappa(\lbrace y_{0}\rbrace)\delta_{y_{0}}$ and 
$(\widetilde{\mathcal{Y}}'_{\infty},\widetilde{\mathcal{Z}}'_{\infty})$
be the interwoven determinantal processes corresponding to $\tilde{\kappa}'$. According to Corollary 
\ref{Ch7Sec1: CorAbsenceRoot}, $(\widetilde{\mathcal{Y}},\widetilde{\mathcal{Z}})$ conditioned by $y_{0}\not\in\widetilde{\mathcal{Y}}$ has the same law as $(\mathcal{Y}_{\infty},\mathcal{Z}_{\infty})$ conditioned on $y_{0}\not\in \mathcal{Y}_{\infty}$, which is the same law as $(\widetilde{\mathcal{Y}}_{\infty},\widetilde{\mathcal{Z}}_{\infty})$ conditioned on $y_{0}\not\in\widetilde{\mathcal{Y}}_{\infty}$, and it is the law of 
$(\widecheck{\mathcal{Y}}_{\infty},\widecheck{\mathcal{Z}}_{\infty})$. For $y_{0}\in\widetilde{\mathcal{Y}}$, there are two possibilities: either $y_{0}\in \mathcal{Y}_{\infty}$ or $y_{0}\not\in\mathcal{Y}_{\infty}$ and $\beta=1$. In the first case, it follows from Proposition 
\ref{Ch7Sec1: PropertyCondPres} that 
$(\mathcal{Y}_{\infty},\mathcal{Z}_{\infty})$ conditioned on 
$y_{0}\in \mathcal{Y}_{\infty}$ has the same law as $(\widetilde{\mathcal{Y}}_{\infty},\widetilde{\mathcal{Z}}_{\infty})$ conditioned on $y_{0}\in\widetilde{\mathcal{Y}}_{\infty}$. In the second case, $(\mathcal{Y}_{\infty},\mathcal{Z}_{\infty})$ conditioned on $y_{0}\not\in\mathcal{Y}_{\infty}$, has the same law as $(\widecheck{\mathcal{Y}}_{\infty},\widecheck{\mathcal{Z}}_{\infty})$. This brings us back to the situation $\kappa(\lbrace y_{0}\rbrace)=0$. According to what was proved earlier, conditional on $y_{0}\not\in\mathcal{Y}_{\infty}$ and $\beta=1$,  $(\widetilde{\mathcal{Y}},\widetilde{\mathcal{Z}})$ has the same law as 
$(\widetilde{\mathcal{Y}}'_{\infty},\widetilde{\mathcal{Z}}'_{\infty})$ conditioned on $y_{0}\in\widetilde{\mathcal{Y}}'_{\infty}$. But this is the same law as for 
$(\widetilde{\mathcal{Y}}_{\infty},\widetilde{\mathcal{Z}}_{\infty})$ conditioned on $y_{0}\in\widetilde{\mathcal{Y}}_{\infty}$. So again, $(\widetilde{\mathcal{Y}},\widetilde{\mathcal{Z}})$ has the same law as $(\widetilde{\mathcal{Y}}_{\infty},\widetilde{\mathcal{Z}}_{\infty})$.
\end{proof}

Next we consider the more general case where the measure 
$\tilde{\kappa}-\kappa$ has a first moment:
\begin{displaymath}
\int_{\mathbb{R}}\vert x\vert (\tilde{\kappa}(dx)-\kappa(dx))<+\infty.
\end{displaymath}
First, we describe a procedure that does not give a coupling between $(\mathcal{Y}_{\infty},\mathcal{Z}_{\infty})$ and $(\widetilde{\mathcal{Y}}_{\infty},\widetilde{\mathcal{Z}}_{\infty})$, but allows to approach it.

\begin{proc}
\label{Ch7Sec2: ProcAlmost}
Arguments:
\begin{itemize}
\item a pair $(\mathcal{Y},\mathcal{Z})$ of disjoint discrete sets of points in $\mathbb{R}$ such that between any two points in $\mathcal{Y}$ lies a single point in $\mathcal{Z}$ and vice-versa, and such that $\inf \mathcal{Y}\cup\mathcal{Z} \in \mathcal{Y}\cup\lbrace -\infty\rbrace$, $\sup \mathcal{Y}\cup\mathcal{Z} \in \mathcal{Y}\cup\lbrace +\infty\rbrace$
\item two positive Radon measures $\kappa$, $\tilde{\kappa}$ such that $\kappa\leq \tilde{\kappa}$ and $\int_{\mathbb{R}}\vert x\vert\,(\tilde{\kappa}(dx)-\kappa(dx))<+\infty$ and 
$(\tilde{\kappa}-\kappa)(\mathcal{Z})=0$.
\end{itemize}
Procedure:
\begin{itemize}
\item[(i)] Let $\beta$ be a Bernoulli r.v. of parameter
\begin{displaymath}
\int_{\mathbb{R}}v_{\kappa,\tilde{\kappa}}(y)(\tilde{\kappa}-\kappa)(dy)
\end{displaymath}
(see notations of Proposition \ref{Ch7Sec1: LemExtrDet}).
\item[(ii)] Let $Y$ be a real r.v. independent from $\beta$, distributed according to
\begin{displaymath}
\dfrac{v_{\kappa,\tilde{\kappa}}(y)(\tilde{\kappa}-\kappa)(dy)}{\mathbb{P}(\beta =1)}.
\end{displaymath} 
\item[(iii)] If $\beta=0$, we set $(\widetilde{\mathcal{Y}},\widetilde{\mathcal{Z}})=(\mathcal{Y},\mathcal{Z})$.
\item[(iv)] If $\beta=1$, we apply Procedure 
\ref{Ch7Sec2: ProcSubProc} to the arguments $(\mathcal{Y},\mathcal{Z})$, $\kappa$ and $Y$, and set $(\widetilde{\mathcal{Y}},\widetilde{\mathcal{Z}})$ to be its result.
\end{itemize}
Return: $(\widetilde{\mathcal{Y}},\widetilde{\mathcal{Z}})$.
\end{proc}

Observe that in case $\tilde{\kappa}$ and $\kappa$ differ only by an atom, Procedure \ref{Ch7Sec2: ProcAlmost} is the same as Procedure \ref{Ch7Sec2: ProcCouplDirac}.

\begin{lemm}
\label{Ch7Sec2: LemAlmost}
Let $(\mathcal{Y}_{\infty},\mathcal{Z}_{\infty})$, respectively $(\widetilde{\mathcal{Y}}_{\infty},\widetilde{\mathcal{Z}}_{\infty})$, be the pair of interwoven determinantal point processes corresponding to the killing measure $\kappa$, respectively $\tilde{\kappa}$. We assume that Procedure \ref{Ch7Sec2: ProcAlmost} is applied to $(\mathcal{Y}_{\infty},\mathcal{Z}_{\infty})$ and that $(\widetilde{\mathcal{Y}},\widetilde{\mathcal{Z}})$ is the returned pair of point processes. Then, the total variation distance between the law of $(\widetilde{\mathcal{Y}},\widetilde{\mathcal{Z}})$ and the law of $(\widetilde{\mathcal{Y}}_{\infty},\widetilde{\mathcal{Z}}_{\infty})$ is less or equal to $\Big(\int_{\mathbb{R}}\widetilde{G}(y,y)(\tilde{\kappa}(dy)-\kappa(dy))\Big)^{2}$.
\end{lemm}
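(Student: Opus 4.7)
The plan is to couple the output $(\widetilde{\mathcal{Y}},\widetilde{\mathcal{Z}})$ of procedure \ref{Ch7Sec2: ProcAlmost} directly to $(\widetilde{\mathcal{Y}}_{\infty},\widetilde{\mathcal{Z}}_{\infty})$ through the auxiliary thinned process $\Delta \widetilde{\mathcal{Y}}$ introduced in lemma \ref{Ch7Sec1: LemExtrDet}, and to observe that the coupling is exact on the event $\sharp \Delta \widetilde{\mathcal{Y}}\leq 1$.

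First I would check that the four ingredients of procedure \ref{Ch7Sec2: ProcAlmost} each match an ingredient in the decomposition of $(\widetilde{\mathcal{Y}}_{\infty},\widetilde{\mathcal{Z}}_{\infty})$ according to the value of $\sharp \Delta \widetilde{\mathcal{Y}}$. By lemma \ref{Ch7Sec1: LemExtrDet}, $\mathbb{P}(\sharp \Delta \widetilde{\mathcal{Y}} = 1) = \int v_{\kappa,\tilde{\kappa}}(y)(\tilde{\kappa}-\kappa)(dy)$, which is exactly the parameter of the Bernoulli $\beta$ in step (i); and the distribution of the unique point of $\Delta \widetilde{\mathcal{Y}}$ conditional on $\sharp \Delta \widetilde{\mathcal{Y}} = 1$ is exactly that of $Y$ in step (ii). By lemma \ref{Ch7Sec1: LemCondEmpt}, conditionally on $\Delta \widetilde{\mathcal{Y}} = \emptyset$ the law of $(\widetilde{\mathcal{Y}}_{\infty},\widetilde{\mathcal{Z}}_{\infty})$ is that of $(\mathcal{Y}_{\infty},\mathcal{Z}_{\infty})$, which is exactly what is output in branch (iii) of the procedure. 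By lemma \ref{Ch7Sec1: LemCondOne}, conditionally on $\sharp \Delta \widetilde{\mathcal{Y}} = 1$ with unique point $Y$, the law of $(\widetilde{\mathcal{Y}}_{\infty},\widetilde{\mathcal{Z}}_{\infty})$ is that of $(\mathcal{Y}^{(Y)}_{\infty},\mathcal{Z}^{(Y)}_{\infty})$, which by lemma \ref{Ch7Sec2: LemAfterSubProc} is exactly the output of procedure \ref{Ch7Sec2: ProcSubProc} invoked in branch (iv).

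With these four matchings in hand, I would assemble the coupling on a single probability space as follows: sample $\Delta \widetilde{\mathcal{Y}}$ together with $(\widetilde{\mathcal{Y}}_{\infty},\widetilde{\mathcal{Z}}_{\infty})$ jointly; on the event $\Delta \widetilde{\mathcal{Y}} = \emptyset$ set $\beta = 0$ and take $(\mathcal{Y}_{\infty},\mathcal{Z}_{\infty}) = (\widetilde{\mathcal{Y}}_{\infty},\widetilde{\mathcal{Z}}_{\infty})$; on the event $\sharp \Delta \widetilde{\mathcal{Y}} = 1$ set $\beta = 1$, let $Y$ be the unique point of $\Delta \widetilde{\mathcal{Y}}$, and choose the internal randomness of procedure \ref{Ch7Sec2: ProcSubProc} so that its output equals $(\widetilde{\mathcal{Y}}_{\infty},\widetilde{\mathcal{Z}}_{\infty})$; on the event $\sharp \Delta \widetilde{\mathcal{Y}} \geq 2$ sample everything independently. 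By the preceding identifications this gives $(\widetilde{\mathcal{Y}},\widetilde{\mathcal{Z}}) = (\widetilde{\mathcal{Y}}_{\infty},\widetilde{\mathcal{Z}}_{\infty})$ identically on $\{\sharp \Delta \widetilde{\mathcal{Y}} \leq 1\}$, and the total variation distance is bounded by $\mathbb{P}(\sharp \Delta \widetilde{\mathcal{Y}} \geq 2)$, which by lemma \ref{Ch7Sec1: LemExtrDet} is itself bounded by $\tfrac{1}{2}\bigl(\int \widetilde{G}(y,y)(\tilde{\kappa}-\kappa)(dy)\bigr)^{2}$, hence by the claimed bound.

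The only genuine technical point, which is not really an obstacle, is checking the hypothesis $(\tilde\kappa - \kappa)(\mathcal{Z}) = 0$ required by procedure \ref{Ch7Sec2: ProcAlmost} when the input is $(\mathcal{Y}_\infty, \mathcal{Z}_\infty)$: since $\mathcal{Z}_\infty$ is a determinantal point process with a kernel continuous with respect to Lebesgue measure and the set of atoms of $\tilde\kappa - \kappa$ is countable, $\mathbb{P}(\mathcal{Z}_\infty \cap \mathrm{Atoms}(\tilde\kappa - \kappa) \neq \emptyset) = 0$. Everything else is mechanical bookkeeping of the conditional distributions already established.
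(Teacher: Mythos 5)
Your overall strategy---identify the Bernoulli $\beta$ and the randomly chosen $Y$ with the event $\sharp\Delta\widetilde{\mathcal{Y}}=1$ and its unique point, then invoke lemmas \ref{Ch7Sec1: LemExtrDet}, \ref{Ch7Sec1: LemCondEmpt}, \ref{Ch7Sec1: LemCondOne} and \ref{Ch7Sec2: LemAfterSubProc}---is exactly the paper's, and your final observation about $(\tilde\kappa-\kappa)(\mathcal{Z}_\infty)=0$ a.s.\ is a real and necessary point that the paper leaves implicit. But there is a gap in the explicit coupling you assemble.

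On the event $\sharp\Delta\widetilde{\mathcal{Y}}\geq 2$ you ``sample everything independently.'' If this means sampling the procedure's inputs $(\beta, Y, (\mathcal{Y}_\infty,\mathcal{Z}_\infty),\omega)$ afresh with their specified distributions, then the marginal of $\beta$ over the whole space is
\[
\mathbb{P}(\beta=1)=\mathbb{P}(\sharp\Delta\widetilde{\mathcal{Y}}=1)+\mathbb{P}(\sharp\Delta\widetilde{\mathcal{Y}}\geq 2)\cdot\int v_{\kappa,\tilde\kappa}\,d(\tilde\kappa-\kappa)
= p\,(1+\mathbb{P}(\sharp\Delta\widetilde{\mathcal{Y}}\geq 2))\neq p,
\]
where $p=\int v_{\kappa,\tilde\kappa}\,d(\tilde\kappa-\kappa)$. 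Equivalently, if ``everything'' means a fresh procedure output, its law is the unconditional mixture $(1-p)L_0+pL_1$ (with $L_0$, $L_1$ the conditional laws of $(\widetilde{\mathcal{Y}}_\infty,\widetilde{\mathcal{Z}}_\infty)$ given $\Delta\widetilde{\mathcal{Y}}=\emptyset$ and $\sharp\Delta\widetilde{\mathcal{Y}}=1$), and the coupled output then has law $p_0L_0+p_1L_1+p_{\geq 2}\bigl((1-p)L_0+pL_1\bigr)$, which differs from the required law $(1-p)L_0+pL_1$ by $p_{\geq 2}\,p\,(L_1-L_0)$. In either reading the second marginal of your coupling is not that of the procedure output, so the inequality $d_{TV}\le\mathbb{P}(\sharp\Delta\widetilde{\mathcal{Y}}\geq 2)$ does not follow from it as stated. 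The fix is to force $\beta=0$ on $\{\sharp\Delta\widetilde{\mathcal{Y}}\geq 2\}$ and sample a fresh input pair with law $L_0$ (equivalently, with the law of $(\mathcal{Y}_\infty,\mathcal{Z}_\infty)$); then one checks the output has marginal $(1-p)L_0+pL_1$ as required, and the bound goes through. The paper sidesteps the coupling altogether: it writes both laws as mixtures $p_0L_0+p_1L_1+p_{\geq 2}L_{\geq 2}$ versus $(p_0+p_{\geq 2})L_0+p_1L_1$, whose difference is $p_{\geq 2}(L_0-L_{\geq 2})$, and bounds the total variation directly by $2\,\mathbb{P}(\sharp\Delta\widetilde{\mathcal{Y}}\geq 2)$. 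That route avoids specifying what happens on the bad event and is cleaner; your coupling route, once repaired, gives the same (or, under the $\sup$-convention for total variation, slightly sharper) bound.
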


\begin{proof}
Let $\Delta\widetilde{\mathcal{Y}}$ be the determinantal point process
defined in Section \ref{Ch7Sec1} (see Lemma \ref{Ch7Sec1: LemExtrDet}).
According to Lemma \ref{Ch7Sec1: LemCondEmpt}, the law of $(\widetilde{\mathcal{Y}},\widetilde{\mathcal{Z}})$, conditional on $\beta=0$, is the same as the law of $(\widetilde{\mathcal{Y}}_{\infty},\widetilde{\mathcal{Z}}_{\infty})$, conditional on $\Delta\widetilde{\mathcal{Y}}=\emptyset$. From Lemmas 
\ref{Ch7Sec2: LemAfterSubProc} and \ref{Ch7Sec1: LemCondOne} follows that the law of $(\widetilde{\mathcal{Y}},\widetilde{\mathcal{Z}})$, conditional on $\beta=1$, is the same as the law of $(\widetilde{\mathcal{Y}}_{\infty},\widetilde{\mathcal{Z}}_{\infty})$, conditional on $\#\Delta\widetilde{\mathcal{Y}}=1$. Moreover, $\mathbb{P}(\beta=1)=\mathbb{P}(\#\Delta\widetilde{\mathcal{Y}}=1)$. However,
\begin{displaymath}
\mathbb{P}(\beta=0)=\mathbb{P}(\Delta\widetilde{\mathcal{Y}}=\emptyset)+\mathbb{P}(\#\Delta\widetilde{\mathcal{Y}}\geq 2)\geq 
\mathbb{P}(\Delta\widetilde{\mathcal{Y}}=\emptyset).
\end{displaymath}
It follows that the total variation distance between the law of $(\widetilde{\mathcal{Y}},\widetilde{\mathcal{Z}})$ and the law of $(\widetilde{\mathcal{Y}}_{\infty},\widetilde{\mathcal{Z}}_{\infty})$ is less or equal to $2\mathbb{P}(\#\Delta\widetilde{\mathcal{Y}}\geq 2)$, which, according to Lemma \ref{Ch7Sec1: LemExtrDet}, is less or equal to $\Big(\int_{\mathbb{R}}\widetilde{G}(y,y)
(\tilde{\kappa}(dy)-\kappa(dy))\Big)^{2}$.
\end{proof}

\begin{coro}
\label{Ch7Sec2: CorApproxTV}
Let $\kappa_{0}\leq \kappa_{1}\leq\dots\leq \kappa_{j}$ be positive Radon measures such that 
$\int_{\mathbb{R}}\vert x\vert(\kappa_{j}(dx)-\kappa_{0}(dx))<+\infty$. Let $G_{i}$ be the Green's function of 
$\frac{1}{2}\frac{d^{2}}{dx^{2}}-\kappa_{i}$ and $(\mathcal{Y}^{(i)}_{\infty},\mathcal{Z}^{(i)}_{\infty})$ the pair of interwoven determinantal point processes corresponding to $\kappa_{i}$. Let $((\mathcal{Y}^{(i)},\mathcal{Z}^{(i)}))_{0\leq i\leq j}$ be the sequence of pairs of interwoven point processes defined as follows: $(\mathcal{Y}^{(0)},\mathcal{Z}^{(0)}):=
(\mathcal{Y}^{(0)}_{\infty},\mathcal{Z}^{(0)}_{\infty})$; given $(\mathcal{Y}^{(i-1)},\mathcal{Z}^{(i-1)})$, $(\mathcal{Y}^{(i)},\mathcal{Z}^{(i)})$ is obtained by applying Procedure 
\ref{Ch7Sec2: ProcAlmost} to the arguments $(\mathcal{Y}^{(i-1)},\mathcal{Z}^{(i-1)})$, $\kappa_{i-1}$ and $\kappa_{i}$. Then, the total variation distance between the law of $(\mathcal{Y}^{(j)},\mathcal{Z}^{(j)})$ and the law of $(\mathcal{Y}^{(j)}_{\infty},\mathcal{Z}^{(j)}_{\infty})$ is less or equal to
\begin{displaymath}
\sum_{i=1}^{j}\Big(\int_{\mathbb{R}}G_{i-1}(y,y)
(\kappa_{i}(dy)-\kappa_{i-1}(dy))\Big)^{2}.
\end{displaymath}
\end{coro}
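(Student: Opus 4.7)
The plan is to proceed by a standard telescoping argument in total variation, using Lemma \ref{Ch7Sec2: LemAlmost} as the single-step estimate. Denote by $\nu_i$ the law of $(\mathcal{Y}^{(i)},\mathcal{Z}^{(i)})$ and by $\tilde\nu_i$ the law of $(\mathcal{Y}^{(i)}_\infty,\mathcal{Z}^{(i)}_\infty)$; write $T_i$ for the Markov kernel encoding a single application of procedure \ref{Ch7Sec2: ProcAlmost} with arguments $\kappa_{i-1}$ and $\kappa_i$, and set $\varepsilon_i := \int_{\mathbb{R}} G_{i-1}(y,y)(\kappa_i-\kappa_{i-1})(dy)$. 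By construction $\nu_0 = \tilde\nu_0$ and $\nu_i = T_i\nu_{i-1}$, while Lemma \ref{Ch7Sec2: LemAlmost} yields the one-step bound $d_{TV}(T_i\tilde\nu_{i-1},\tilde\nu_i) \leq \varepsilon_i^2$.

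The key step is then to chain these one-step errors. For each $i$, the triangle inequality gives
\begin{displaymath}
d_{TV}(\nu_i,\tilde\nu_i) \leq d_{TV}(T_i\nu_{i-1},T_i\tilde\nu_{i-1}) + d_{TV}(T_i\tilde\nu_{i-1},\tilde\nu_i),
\end{displaymath}
and since $T_i$ is a Markov kernel it is non-expanding for the total variation distance, so the first term is bounded by $d_{TV}(\nu_{i-1},\tilde\nu_{i-1})$ and the second by $\varepsilon_i^2$. Iterating from $i=j$ down to $i=1$ and using $\nu_0=\tilde\nu_0$ produces the desired inequality $d_{TV}(\nu_j,\tilde\nu_j)\leq\sum_{i=1}^{j}\varepsilon_i^2$.

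The main thing that has to be checked, and what I would regard as the only genuine obstacle, is the well-posedness of iterating procedure \ref{Ch7Sec2: ProcAlmost}, namely that at step $i$ the hypotheses on the input of the procedure are almost surely satisfied by $(\mathcal{Y}^{(i-1)},\mathcal{Z}^{(i-1)})$ with respect to the measures $\kappa_{i-1}$ and $\kappa_i$. The interwoven, discrete, and boundary conditions on $(\mathcal{Y}^{(i-1)},\mathcal{Z}^{(i-1)})$ are preserved by procedure \ref{Ch7Sec2: ProcAlmost} inductively, essentially by inspection of the construction. The remaining requirement $(\kappa_i-\kappa_{i-1})(\mathcal{Z}^{(i-1)})=0$ a.s.\ follows because $\mathcal{Z}^{(0)}$ is a determinantal process with Lebesgue reference measure and each later $\mathcal{Z}^{(i)}$ is obtained by adjoining at most one extra point whose conditional law is absolutely continuous with respect to Lebesgue measure; hence any fixed countable set (in particular the atoms of the Radon measure $\kappa_j-\kappa_0$) is almost surely disjoint from $\mathcal{Z}^{(i-1)}$. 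Once this is in place, the rest of the argument is the telescoping inequality above, which is routine.
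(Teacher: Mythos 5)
Your argument is correct and is essentially the paper's proof: the paper likewise introduces the auxiliary process obtained by applying procedure \ref{Ch7Sec2: ProcAlmost} to $(\mathcal{Y}^{(i-1)}_{\infty},\mathcal{Z}^{(i-1)}_{\infty})$, bounds its distance to $(\mathcal{Y}^{(i)}_{\infty},\mathcal{Z}^{(i)}_{\infty})$ by Lemma \ref{Ch7Sec2: LemAlmost}, bounds its distance to $(\mathcal{Y}^{(i)},\mathcal{Z}^{(i)})$ by $d_{i-1}$ (which is exactly the non-expansion of the Markov kernel you name), and telescopes the triangle inequality. Your paragraph verifying that the input hypotheses of the procedure are preserved inductively (in particular that $(\kappa_i-\kappa_{i-1})(\mathcal{Z}^{(i-1)})=0$ almost surely because the conditional law of any added $\mathcal{Z}$-point is absolutely continuous with respect to Lebesgue measure) is a sensible point of care that the paper leaves implicit.
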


\begin{proof}
Let $(\mathcal{Y}'^{(i)},\mathcal{Z}'^{(i)})$ be the pair of point processes obtained by applying Procedure \ref{Ch7Sec2: ProcAlmost} to the arguments 
$(\mathcal{Y}^{(i-1)}_{\infty},\mathcal{Z}^{(i-1)}_{\infty})$, $\kappa_{i-1}$ and $\kappa_{i}$.
According to Lemma \ref{Ch7Sec2: LemAlmost}, the total variation distance between the law of $(\mathcal{Y}'^{(i)},\mathcal{Z}'^{(i)})$ and the law of $(\mathcal{Y}^{(i)}_{\infty},\mathcal{Z}^{(i)}_{\infty})$ is less or equal to 
$\Big(\int_{\mathbb{R}}G_{i-1}(y,y)(\kappa_{i}(dy)-\kappa_{i-1}(dy))\Big)^{2}$. We denote by $d_{q}$ the total variation distance between the law of $(\mathcal{Y}^{(q)},\mathcal{Z}^{(q)})$ and the law of $(\mathcal{Y}^{(q)}_{\infty},\mathcal{Z}^{(q)}_{\infty})$. The total variation distance between the law of $(\mathcal{Y}'^{(i)},\mathcal{Z}'^{(i)})$ and the law of $(\mathcal{Y}^{(i)},\mathcal{Z}^{(i)})$ is less or equal to $d_{i-1}$. It follows that
\begin{displaymath}
d_{i}\leq d_{i-1}+\Big(\int_{\mathbb{R}}G_{i-1}(y,y)
(\kappa_{i}(dy)-\kappa_{i-1}(dy))\Big)^{2},
\end{displaymath}
and thus,
\begin{displaymath}
d_{j}\leq \sum_{i=1}^{j}\Big(\int_{\mathbb{R}}G_{i-1}(y,y)
(\kappa_{i}(dy)-\kappa_{i-1}(dy))\Big)^{2}.
\qedhere
\end{displaymath}
\end{proof}

Next, we give a true monotone coupling between 
$(\mathcal{Y}_{\infty},\mathcal{Z}_{\infty})$ and $(\widetilde{\mathcal{Y}}_{\infty},\widetilde{\mathcal{Z}}_{\infty})$. We still consider that $\kappa\leq \tilde{\kappa}$ and that $\int_{\mathbb{R}}\vert x\vert(\tilde{\kappa}(dx)-\kappa(dx))<+\infty$. To construct the coupling we will use a continuous monotonic increasing path in the space of measures, $(\kappa_{q})_{0\leq q\leq 1}$, joining $\kappa$ to $\tilde{\kappa}$ ($\kappa_{0}=\kappa$, $\kappa_{1}=\tilde{\kappa}$). Such a path is defined as follows: Let $\Lambda$ be a positive Radon measure on $\mathbb{R}\times [0,1]$ satisfying the following constraints:
\begin{itemize}
\item For any $q\in [0,1]$, 
$\Lambda(\mathbb{R}\times \lbrace q\rbrace)=0$.
\item For any $A$ Borel subset of $\mathbb{R}, \Lambda(A\times[0,1])=\tilde{\kappa}(A)$.
\end{itemize}
For $q\in[0,1]$, we define $\kappa_{q}$ as the measure on $\mathbb{R}$ satisfying, for any $A$ Borel subset of $\mathbb{R}$,
\begin{displaymath}
\kappa_{q}(A)=\kappa_{0}(A)+\Lambda(A\times [0,q]).
\end{displaymath}
For any $q\leq q'\in[0,1]$, $\kappa_{q}\leq \kappa_{q'}$. Moreover, the map $q\mapsto \kappa_{q}$ is continuous for the vague topology. In the sequel we will denote $G_{q}$ the Green's function of 
$\frac{1}{2}\frac{d^{2}}{dx^{2}}-\kappa_{q}$ (for $x\leq y$, $G_{q}(x,y)=u_{q,\uparrow}(x)u_{q,\downarrow}(y)$) and use the measure 
$G_{q}(y,y)\Lambda(dy,dq)$, which is finite.

\begin{proc}
\label{Ch7Sec2: ProcCoupling}
Arguments:
\begin{itemize}
\item a pair $(\mathcal{Y},\mathcal{Z})$ of disjoint discrete sets of points in $\mathbb{R}$ such that between any two points in $\mathcal{Y}$ lies a single point in $\mathcal{Z}$ and vice-versa, and such that $\inf \mathcal{Y}\cup\mathcal{Z} \in \mathcal{Y}\cup\lbrace -\infty\rbrace$, $\sup \mathcal{Y}\cup\mathcal{Z} \in \mathcal{Y}\cup\lbrace +\infty\rbrace$,
\item two positive Radon measures $\kappa$, $\tilde{\kappa}$, such that $\kappa\leq \tilde{\kappa}$ and 
$\int_{\mathbb{R}}\vert x\vert(\tilde{\kappa}(dx)-\kappa(dx))<+\infty$ and $(\tilde{\kappa}-\kappa)(\mathcal{Z})=0$,
\item a continuous monotonic increasing path in the space of measures, $(\kappa_{q})_{0\leq q\leq 1}$, joining $\kappa$ to $\tilde{\kappa}$, obtained by integrating the Radon measure $\Lambda$ on $\mathbb{R}\times[0,1]$.
\end{itemize}
Procedure:
\begin{itemize}
\item[(i)] First, sample a Poisson point process of intensity 
$G_{q}(y,y)\Lambda(dy,dq)$ on $\mathbb{R}\times[0,1]$: $((Y_{j},q_{j}))_{1\leq j\leq N}$, the points being ordered in the increasing sense of $q_{j}$.
\item[(ii)] Then, construct recursively the sequence $((\mathcal{Y}^{(j)},\mathcal{Z}^{(j)}))_{0\leq j\leq N}$ of pairs of interwoven point processes as follows: $(\mathcal{Y}^{(0)},\mathcal{Z}^{(0)})$ is set to be $(\mathcal{Y},\mathcal{Z})$. $(\mathcal{Y}^{(j)},\mathcal{Z}^{(j)})$ is obtained by applying Procedure \ref{Ch7Sec2: ProcSubProc} to the arguments $(\mathcal{Y}^{(j-1)},\mathcal{Z}^{(j-1)})$,
$\kappa_{q_{j}}$ and $Y_{j}$.
\item[(iii)] $(\widetilde{\mathcal{Y}},\widetilde{\mathcal{Z}})$ is set to be $(\mathcal{Y}^{(N)},\mathcal{Z}^{(N)})$.
\end{itemize}
Return: $(\widetilde{\mathcal{Y}},\widetilde{\mathcal{Z}})$.
\end{proc}

The condition $(\tilde{\kappa}-\kappa)(\mathcal{Z})=0$ ensures that a.s., none of $\mathcal{Y}^{(j)}$ lies in $\mathcal{Z}$. By construction, $\mathcal{Z}\subseteq\widetilde{\mathcal{Z}}$ and $\widetilde{\mathcal{Y}}\subseteq\mathcal{Y}\cup \operatorname{Supp}(\tilde{\kappa}-\kappa)$. $(\widetilde{\mathcal{Y}},\widetilde{\mathcal{Z}})$ differs from $(\mathcal{Y},\mathcal{Z})$ only by a finite number of points. The law of $(\widetilde{\mathcal{Y}},\widetilde{\mathcal{Z}})$ depends only on the "geometrical path" $(\kappa_{q})_{0\leq q\leq 1}$ and not on its parametrization: if $\vartheta$ is an increasing homeomorphism from $[0,1]$ to itself, then Procedure \ref{Ch7Sec2: ProcCoupling} applied the path 
$(\kappa_{\vartheta(q)})_{0\leq q\leq 1}$ returns the same result (in law). Below, an illustration of Procedure \ref{Ch7Sec2: ProcCoupling}:

\begin{figure}[H]
\centering{ 
\includegraphics[width=1\textwidth]{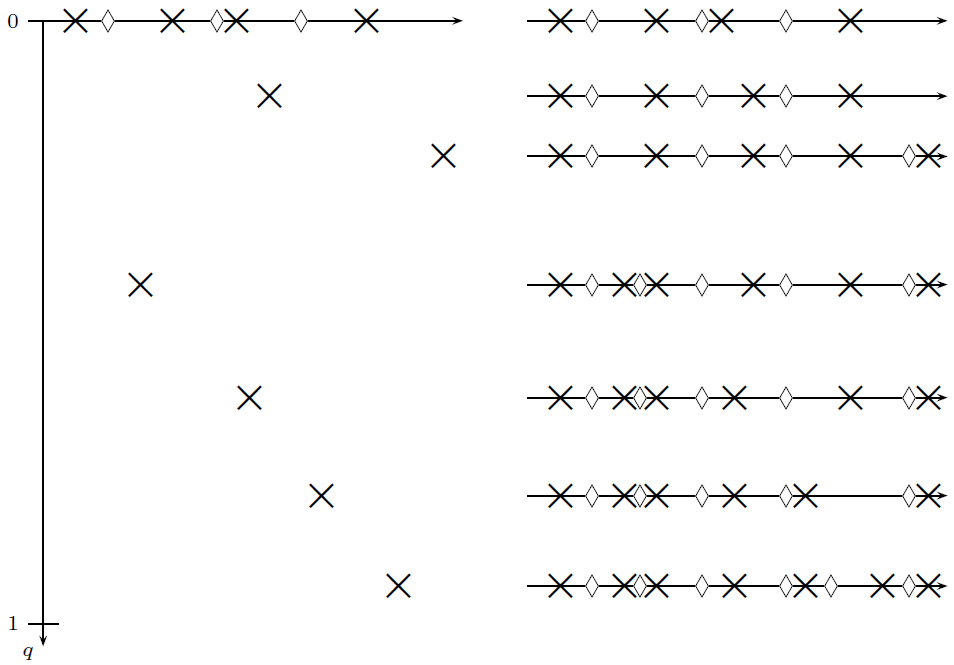}
}
\caption{Illustration of Procedure \ref{Ch7Sec2: ProcCoupling}: On the left are represented $(\mathcal{Y},\mathcal{Z})$ and
the Poisson process $((Y_{j},q_{j}))_{1\leq j\leq N}$. On the right are represented the successive
$((\mathcal{Y}^{(j)},\mathcal{Z}^{(j)}))_{0\leq j\leq N}$. x-dots represent the points of $\mathcal{Y}^{(j)}$ and diamonds the points of $\mathcal{Z}^{(j)}$.
}
\end{figure}

\begin{prop}
\label{Ch7Sec2: PropCouplingPoisson}
Let $(\mathcal{Y}_{\infty},\mathcal{Z}_{\infty})$, respectively $(\widetilde{\mathcal{Y}}_{\infty},\widetilde{\mathcal{Z}}_{\infty})$, be the couple of interwoven determinantal point processes corresponding to the killing measure $\kappa$, respectively $\tilde{\kappa}$. We assume that Procedure \ref{Ch7Sec2: ProcCoupling} is applied to $(\mathcal{Y}_{\infty},\mathcal{Z}_{\infty})$ and that $(\widetilde{\mathcal{Y}},\widetilde{\mathcal{Z}})$ is the returned couple of point processes. Then, $(\widetilde{\mathcal{Y}},\widetilde{\mathcal{Z}})$ has the same law as $(\widetilde{\mathcal{Y}}_{\infty},\widetilde{\mathcal{Z}}_{\infty})$.
\end{prop}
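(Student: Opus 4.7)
The strategy is to exhibit the output of Procedure \ref{Ch7Sec2: ProcCoupling} as the weak limit of discretized procedures to which Corollary \ref{Ch7Sec2: CorApproxTV} applies, and to compare the limit in total variation with $(\widetilde{\mathcal{Y}}_{\infty},\widetilde{\mathcal{Z}}_{\infty})$ to conclude. Concretely, I fix a sequence of subdivisions $0=q_0^{(n)}<q_1^{(n)}<\dots<q_{N_n}^{(n)}=1$ of $[0,1]$ whose mesh tends to $0$, and define $(\mathcal{Y}^{(n)},\mathcal{Z}^{(n)})$ by iterating Procedure \ref{Ch7Sec2: ProcAlmost} along this partition with the pairs $(\kappa_{q_{i-1}^{(n)}},\kappa_{q_i^{(n)}})$, starting from $(\mathcal{Y}_{\infty},\mathcal{Z}_{\infty})$. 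The absence of vertical atoms in $\Lambda$ and the absolute continuity of $\mathcal{Z}_{\infty}$ and of the points successively inserted by Procedure \ref{Ch7Sec2: ProcSubProc} guarantee that the compatibility condition $(\kappa_{q_i^{(n)}}-\kappa_{q_{i-1}^{(n)}})(\mathcal{Z}^{(n,i-1)})=0$ holds almost surely at every intermediate step.

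By Corollary \ref{Ch7Sec2: CorApproxTV}, the total variation distance between the law of $(\mathcal{Y}^{(n)},\mathcal{Z}^{(n)})$ and that of $(\widetilde{\mathcal{Y}}_{\infty},\widetilde{\mathcal{Z}}_{\infty})$ is bounded by $\sum_{i=1}^{N_n}\bigl(\int G_{q_{i-1}^{(n)}}(y,y)(\kappa_{q_i^{(n)}}-\kappa_{q_{i-1}^{(n)}})(dy)\bigr)^2$, itself dominated by $\max_i\int G_{q_{i-1}^{(n)}}(y,y)\Lambda(dy\times(q_{i-1}^{(n)},q_i^{(n)}])\,\cdot\,\int_0^1\!\int G_q(y,y)\Lambda(dy,dq)$. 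The total integral is finite because $G_q\le G_0$ and $\tilde{\kappa}-\kappa$ has a first moment (using Proposition \ref{Ch2Sec1: PropAsymptotics}), while the maximum vanishes as $n\to\infty$ by continuity of $q\mapsto G_q$ on compacts (Lemma \ref{Ch2Sec1: LemContKappa}) and the fact that $\Lambda$ has no vertical atom. Hence $(\mathcal{Y}^{(n)},\mathcal{Z}^{(n)})$ converges in total variation, and a fortiori in law, to $(\widetilde{\mathcal{Y}}_{\infty},\widetilde{\mathcal{Z}}_{\infty})$.

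It remains to prove that $(\mathcal{Y}^{(n)},\mathcal{Z}^{(n)})$ also converges in law to the output $(\widetilde{\mathcal{Y}},\widetilde{\mathcal{Z}})$ of Procedure \ref{Ch7Sec2: ProcCoupling}. At step $i$, the Bernoulli parameter and the conditional insertion density of Procedure \ref{Ch7Sec2: ProcAlmost} are given by $v_{\kappa_{q_{i-1}^{(n)}},\kappa_{q_i^{(n)}}}(y)(\kappa_{q_i^{(n)}}-\kappa_{q_{i-1}^{(n)}})(dy)$. By Lemma \ref{Ch7Sec1: LemProportionality}, $v_{\kappa,\tilde{\kappa}}=c_1c_2\,u_{\uparrow}u_{\downarrow}=c_1c_2\,G(\cdot,\cdot)$ with $c_1,c_2\to 1$ as $\tilde{\kappa}-\kappa\to 0$, so these kernels are asymptotic to $G_{q_{i-1}^{(n)}}(y,y)\Lambda(dy\times(q_{i-1}^{(n)},q_i^{(n)}])$ on each piece. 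A standard Poisson approximation then shows that the marked collection of insertions $(Y_i,q_i^{(n)})$ converges in law, on $\mathbb{R}\times[0,1]$, to the Poisson point process of intensity $G_q(y,y)\Lambda(dy,dq)$ driving Procedure \ref{Ch7Sec2: ProcCoupling}. Finally, Lemma \ref{Ch7Sec2: LemContinuitySubproc} provides joint weak continuity of Procedure \ref{Ch7Sec2: ProcSubProc} in its arguments; iterating this continuity over the finitely many insertions identifies the weak limit of $(\mathcal{Y}^{(n)},\mathcal{Z}^{(n)})$ with the law of $(\widetilde{\mathcal{Y}},\widetilde{\mathcal{Z}})$. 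Combined with the total-variation convergence above, this gives the equality in law.

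The main obstacle is this last step, where both the locations of the insertions and the intermediate measures fed to Procedure \ref{Ch7Sec2: ProcSubProc} (namely $\kappa_{q_{i-1}^{(n)}}$ in the discretized version versus $\kappa_{q_j}$ in Procedure \ref{Ch7Sec2: ProcCoupling}) vary with $n$ and must be controlled simultaneously. The cleanest way to handle this is to realize all the $(\mathcal{Y}^{(n)},\mathcal{Z}^{(n)})$ and $(\widetilde{\mathcal{Y}},\widetilde{\mathcal{Z}})$ on a common probability space carrying a single Poisson point process on $\mathbb{R}\times[0,1]$ of intensity $G_q(y,y)\Lambda(dy,dq)$: for each $n$, group the Poisson points by the subintervals of the $n$-th subdivision, thin them to match the Bernoulli parameters of Procedure \ref{Ch7Sec2: ProcAlmost}, and re-sample their positions in a way that converges to the true Poisson location as the mesh shrinks. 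The almost-sure finiteness of the number of Poisson points (ensured by $\int_0^1\!\int G_q(y,y)\Lambda(dy,dq)<\infty$) reduces the iterated composition to a finite one, and the continuity lemma then applies trajectorywise.
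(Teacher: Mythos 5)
Your proposal follows essentially the same two-pronged strategy as the paper: discretize the path $(\kappa_q)$, use Corollary \ref{Ch7Sec2: CorApproxTV} to get total-variation convergence of the discretized chain to $(\widetilde{\mathcal{Y}}_{\infty},\widetilde{\mathcal{Z}}_{\infty})$, show weak convergence of the same chain to the output of Procedure \ref{Ch7Sec2: ProcCoupling} via a compound-Bernoulli-to-Poisson approximation together with Lemma \ref{Ch7Sec2: LemContinuitySubproc}, and conclude by uniqueness of the limit law.

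One small inequality is written in the wrong direction. You bound $\sum_i a_i^2$ by $(\max_i a_i)\cdot\int_0^1\!\int G_q(y,y)\,\Lambda(dy,dq)$, where $a_i=\int G_{q_{i-1}^{(n)}}(y,y)\,\Lambda\bigl(dy\times(q_{i-1}^{(n)},q_i^{(n)}]\bigr)$, but since $\kappa_q$ is increasing in $q$ one has $G_{q_{i-1}^{(n)}}\geq G_q$ on $(q_{i-1}^{(n)},q_i^{(n)}]$, so $\sum_i a_i\geq\int_0^1\!\int G_q(y,y)\,\Lambda(dy,dq)$ rather than $\leq$. The correct domination uses $G_0$: $\sum_i a_i\leq\int G_0(y,y)\,(\tilde{\kappa}-\kappa)(dy)<+\infty$ (finiteness following from at most linear growth of $G_0$ together with the first-moment hypothesis, not from Proposition \ref{Ch2Sec1: PropAsymptotics} as you cite), and similarly $\max_i a_i\leq\max_i\int G_0(y,y)\,\Lambda\bigl(dy\times(q_{i-1}^{(n)},q_i^{(n)}]\bigr)\to 0$ because $\Lambda$ has no vertical atoms. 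This is exactly the bound the paper uses (phrased with $\sup_x G_0(x,x)/(1+|x|)$) and the fix does not change the structure of your argument. The final paragraph about realizing everything on a common probability space carrying one Poisson process is a reasonable alternative presentation of the same weak-convergence step, though the paper keeps to convergence in law and its explicit estimate on $|v_{\kappa_{(i-1)/n},\kappa_{i/n}}-G_{i/n}|$, which you summarize as "a standard Poisson approximation" without supplying.
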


\begin{proof}
Observe that a.s., $(\tilde{\kappa}-\kappa)(\mathcal{Z}_{\infty})=0$.
Let $n\in\mathbb{N}^{\ast}$. We define the family $((\mathcal{Y}^{(j,n)},\mathcal{Z}^{(j,n)}))_{0\leq j\leq n}$ of interwoven point processes as follows: 
$(\mathcal{Y}^{(0,n)},\mathcal{Z}^{(0,n)})$ equals $(\mathcal{Y}_{\infty},\mathcal{Z}_{\infty})$. Given $(\mathcal{Y}^{(j-1,n)},\mathcal{Z}^{(j-1,n)})$, $(\mathcal{Y}^{(j,n)},\mathcal{Z}^{(j,n)})$ is obtained by applying Procedure 
\ref{Ch7Sec2: ProcAlmost} to the arguments $(\mathcal{Y}^{(j-1,n)},\mathcal{Z}^{(j-1,n)})$, $\kappa_{\frac{j-1}{n}}$ and 
$\kappa_{\frac{j}{n}}$. We will show that, as $n$ tends to infinity,
the law of $(\mathcal{Y}^{(n,n)},\mathcal{Z}^{(n,n)})$ converges in total variation to the law of $(\widetilde{\mathcal{Y}}_{\infty},\widetilde{\mathcal{Z}}_{\infty})$ and converges weakly to the law of $(\widetilde{\mathcal{Y}},\widetilde{\mathcal{Z}})$, which will imply that $(\widetilde{\mathcal{Y}},\widetilde{\mathcal{Z}})$ and $(\widetilde{\mathcal{Y}}_{\infty},\widetilde{\mathcal{Z}}_{\infty})$ have the same law.

Applying Corollary \ref{Ch7Sec2: CorApproxTV}, we get that the total variation distance between the law of 
$(\mathcal{Y}^{(n,n)},\mathcal{Z}^{(n,n)})$ and the law of $(\widetilde{\mathcal{Y}}_{\infty},\widetilde{\mathcal{Z}}_{\infty})$ is bounded by
\begin{equation*}
\begin{split}
\sum_{j=1}^{n}\Big(\int_{\mathbb{R}}G_{\frac{j-1}{n}}(y,y)
(\kappa_{\frac{j}{n}}&(dy)-\kappa_{\frac{j-1}{n}}(dy))\Big)^{2}\\\leq &
\sup_{x\in\mathbb{R}}\left(\dfrac{G_{0}(x,x)}{1+\vert x\vert}\right)^{2}
\sum_{j=1}^{n}\Big(\int_{\mathbb{R}}(1+\vert y\vert)
(\kappa_{\frac{j}{n}}(dy)-\kappa_{\frac{j-1}{n}}(dy))\Big)^{2}\\\leq &
\sup_{x\in\mathbb{R}}\left(\dfrac{G_{0}(x,x)}{1+\vert x\vert}\right)^{2}
\int_{\mathbb{R}}(1+\vert y\vert)(\tilde{\kappa}(dy)-\kappa(dy))
\\&\times\sup_{1\leq j\leq n}\int_{\mathbb{R}}(1+\vert y\vert)
(\kappa_{\frac{j}{n}}(dy)-\kappa_{\frac{j-1}{n}}(dy)).
\end{split}
\end{equation*}
The continuity of the path $(\kappa_{q})_{0\leq q\leq 1}$ ensures that
\begin{displaymath}
\lim_{n\rightarrow +\infty}\sup_{1\leq j\leq n}\int_{\mathbb{R}}
(1+\vert y\vert)(\kappa_{\frac{j}{n}}(dy)-\kappa_{\frac{j-1}{n}}(dy))=0,
\end{displaymath}
and hence, the total variation distance between the law of $(\mathcal{Y}^{(n,n)},\mathcal{Z}^{(n,n)})$ and the law of $(\widetilde{\mathcal{Y}}_{\infty},\widetilde{\mathcal{Z}}_{\infty})$ converges to $0$ as $n$ tends to infinity.

We define a random finite set $E_{n}$ of points in $\mathbb{R}\times\big\lbrace \frac{1}{n},\frac{2}{n},\dots,\frac{n}{n}\big\rbrace$ as follows: Let $(\beta_{1,n},\beta_{2,n},\dots,\beta_{n,n})$ be a family of independent Bernoulli variables, $\beta_{i,n}$ being of parameter
\begin{displaymath}
\int_{\mathbb{R}}v_{\kappa_{\frac{i-1}{n}},\kappa_{\frac{i}{n}}}(y)(\kappa_{\frac{i}{n}}-\kappa_{\frac{i-1}{n}})(dy).
\end{displaymath}
Whenever $\beta_{i,n}=1$, we add to $E_{n}$ a point 
$(Y_{i,n},\frac{i-1}{n})$ to $E_{n}$, where $Y_{i,n}$ is a r.v. distributed according the measure
\begin{displaymath}
\dfrac{1}{\mathbb{P}(\beta_{i,n}=1)} 
v_{\kappa_{\frac{i-1}{n}},\kappa_{\frac{i}{n}}}(y)
(\kappa_{\frac{i}{n}}-\kappa_{\frac{i-1}{n}})(dy).
\end{displaymath} 
The $(Y_{i,n},\frac{i-1}{n})$ are assumed to be independent and independent from the family
$(\beta_{1,n},\beta_{2,n},\dots,\beta_{n,n})$. 
The pair $(\mathcal{Y}^{(n,n)},\mathcal{Z}^{(n,n)})$ is sampled as follows: starting from $(\mathcal{Y}_{\infty},\mathcal{Z}_{\infty})$, independent from $E_{n}$, we apply successively, for $i$ ranging from $1$ to $n$, Procedure \ref{Ch7Sec2: ProcSubProc} with the arguments $\kappa_{\frac{i-1}{n}}$, and $Y_{i,n}$ whenever $\beta_{i,n}=1$. At the end, we get $(\mathcal{Y}^{(n,n)},\mathcal{Z}^{(n,n)})$. According to Lemma \ref{Ch2Sec1: LemContKappa}, the law of the pair of point processes returned by Procedure \ref{Ch7Sec2: ProcSubProc} depends continuously on the arguments. So, to prove that 
$(\mathcal{Y}^{(n,n)},\mathcal{Z}^{(n,n)})$ converges in law to $(\widetilde{\mathcal{Y}},\widetilde{\mathcal{Z}})$, we only need to show that the random set of point $E_{n}$ converges in law to the Poisson point process $((Y_{j},q_{j}))_{1\leq j\leq N}$ used in Procedure \ref{Ch7Sec2: ProcCoupling}. All of the functions 
$v_{\kappa_{\frac{i-1}{n}},\kappa_{\frac{i}{n}}}(y)$ are dominated by $G_{0}(y,y)$. Moreover,
\begin{equation*}
\begin{split}
\big\vert v_{\kappa_{\frac{i-1}{n}},\kappa_{\frac{i}{n}}}(y)
&-G_{\frac{i}{n}}(y,y)\big\vert\\
\leq&u_{\frac{i}{n},\downarrow}(y)\int_{y_{-1}<y}
u_{\frac{i}{n},\uparrow}(y_{-1})(u_{\frac{i-1}{n},\downarrow}(y_{-1})
u_{\frac{i-1}{n},\uparrow}(y)-u_{\frac{i-1}{n},\uparrow}(y_{-1})
u_{\frac{i-1}{n},\downarrow}(y))\\
&\times(\kappa_{\frac{i}{n}}-\kappa_{\frac{i-1}{n}})(dy_{-1})\\
+&u_{\frac{i}{n},\uparrow}(y)\int_{y_{1}>y}u_{\frac{i}{n},\downarrow}(y_{1})(u_{\frac{i-1}{n},\uparrow}(y_{1})u_{\frac{i-1}{n},\downarrow}(y)-u_{\frac{i-1}{n},\downarrow}(y_{1})u_{\frac{i-1}{n},\uparrow}(y))
\\&\times(\kappa_{\frac{i}{n}}-\kappa_{\frac{i-1}{n}})(dy_{1})
\\+&\int_{y_{-1}<y}u_{\frac{i}{n},\uparrow}(y_{-1})
(u_{\frac{i-1}{n},\downarrow}(y_{-1})u_{\frac{i-1}{n},\uparrow}(y)-u_{\frac{i-1}{n},\uparrow}(y_{-1})u_{\frac{i-1}{n},\downarrow}(y))
\\&\times(\kappa_{\frac{i}{n}}-\kappa_{\frac{i-1}{n}})(dy_{-1})
\\\times&\int_{y_{1}>y}u_{\frac{i}{n},\downarrow}(y_{1})
(u_{\frac{i-1}{n},\uparrow}(y_{1})u_{\frac{i-1}{n},\downarrow}(y)-u_{\frac{i-1}{n},\downarrow}(y_{1})u_{\frac{i-1}{n},\uparrow}(y))
\\&\times(\kappa_{\frac{i}{n}}-\kappa_{\frac{i-1}{n}})(dy_{1})
\\\leq & G_{0}(y,y)\int_{y_{-1}<y}G_{0}(y_{-1},y_{-1})
(\kappa_{\frac{i}{n}}-\kappa_{\frac{i-1}{n}})(dy_{-1})
\\+&G_{0}(y,y)\int_{y_{1}>y}G_{0}(y_{1},y_{1})
(\kappa_{\frac{i}{n}}-\kappa_{\frac{i-1}{n}})(dy_{1})\\+&
G_{0}(y,y)\int_{y_{-1}<y}G_{0}(y_{-1},y_{-1})
(\kappa_{\frac{i}{n}}-\kappa_{\frac{i-1}{n}})(dy_{-1})
\\&\times\int_{y_{1}>y}G_{0}(y_{1},y_{1})
(\kappa_{\frac{i}{n}}-\kappa_{\frac{i-1}{n}})(dy_{1}).
\end{split}
\end{equation*}
Thus, given any bounded interval $J$,
\begin{displaymath}
\lim_{n\rightarrow +\infty}\sup_{1\leq i\leq n}\sup_{y\in J}\big\vert v_{\kappa_{\frac{i-1}{n}},\kappa_{\frac{i}{n}}}(y)
-G_{\frac{i}{n}}(y,y)\big\vert =0.
\end{displaymath}
It follows that
\begin{displaymath}
\lim_{n\rightarrow +\infty}\sup_{1\leq i\leq n}
\mathbb{P}(\beta_{i,n}=1)=0,
\end{displaymath}
and the measure
\begin{displaymath}
\sum_{i=1}^{n}v_{\kappa_{\frac{i-1}{n}},\kappa_{\frac{i}{n}}}(y)(\kappa_{\frac{i}{n}}-\kappa_{\frac{i-1}{n}})(dy)\otimes
\delta_{\frac{i}{n}}(dq)
\end{displaymath}
converges weekly to $G_{q}(y,y)\Lambda(dy,dq)$, which is the intensity  of the Poisson point process $((Y_{j},q_{j}))_{1\leq j\leq N}$. Thus, the random sets $E_{n}$ are compound Bernoulli approximations of the Poisson point process $((Y_{j},q_{j}))_{1\leq j\leq N}$ and converge in law to the latter.
\end{proof}

Given a continuous monotonic increasing path 
$(\kappa_{q})_{0\leq q\leq 1}$ in the space of Radon measures, and a pair of interwoven determinantal point processes $(\mathcal{Y}_{\infty},\mathcal{Z}_{\infty})$ corresponding to $\kappa_{0}$, used as argument, Procedure \ref{Ch7Sec2: ProcCoupling} yields non-homogeneous Markov $q$-parametrized process in the space of interwoven pairs of discrete sets of points, whose one-dimensional marginal at any value $q_{0}$ of the parameter is the pair of interwoven determinantal point processes corresponding to the killing measure $\kappa_{q_{0}}$. This corresponds to sampling only the partial Poisson point process of intensity 
$1_{0\leq q\leq q_{0}}G_{q}(y,y)\Lambda(dy,dq)$ and successively applying Procedure \ref{Ch7Sec2: ProcSubProc} for each of its points. In general, multidimensional marginals corresponding to $q_{1}<\dots<q_{n}$ depend not only on $\kappa_{q_{1}},\dots,\kappa_{q_{n}}$, but on the whole path $(\kappa_{q})_{q_{1}\leq q\leq q_{n}}$. For instance, consider two different paths $(\kappa_{q})_{0\leq q\leq 1}$ and 
$(\hat{\kappa}_{q})_{0\leq q\leq 1}$, where
\begin{itemize}
\item $\kappa_{0}=\hat{\kappa}_{0}=\delta_{-\frac{1}{2}}
+\delta_{\frac{1}{2}}$,
\item $\kappa_{1}=\hat{\kappa}_{1}=\delta_{-\frac{3}{2}}
+\delta_{-\frac{1}{2}}+\delta_{\frac{1}{2}}+\delta_{\frac{3}{2}}$,
\item $\kappa_{q}=2q\delta_{-\frac{1}{2}}+\delta_{-\frac{1}{2}}
+\delta_{\frac{1}{2}}$ for $q\in\big[0,\frac{1}{2}\big],$ and 
$\kappa_{q}=\delta_{-\frac{1}{2}}+\delta_{-\frac{1}{2}}+
\delta_{\frac{1}{2}}+(2q-1)\delta_{\frac{3}{2}}$ for 
$q\in\big[\frac{1}{2},1\big]$,
\item $\hat{\kappa}_{q}=\delta_{-\frac{1}{2}}+\delta_{\frac{1}{2}}
+2q\delta_{\frac{1}{2}}$ for $q\in\big[0,\frac{1}{2}\big]$, and 
$\hat{\kappa}_{q}=(2q-1)\delta_{-\frac{1}{2}}+\delta_{-\frac{1}{2}}
+\delta_{\frac{1}{2}}+\delta_{\frac{3}{2}}$ for 
$q\in\big[\frac{1}{2},1\big]$.
\end{itemize}
Let $G_{q}(x,y)=u_{q,\uparrow}(x\wedge y)u_{q,\downarrow}(x\vee y)$ be the Green's function of $\frac{1}{2}\frac{d^{2}}{dx^{2}}-\kappa_{q}$, and 
$\widehat{G}_{q}(x,y)={\hat{u}_{q,\uparrow}(x\wedge y)}\hat{u}_{q,\downarrow}(x\vee y)$ the Green's function of $\frac{1}{2}\frac{d^{2}}{dx^{2}}-\hat{\kappa}_{q}$. Let 
$((\mathcal{Y}_{\infty},\mathcal{Z}_{\infty})$, 
$(\widetilde{\mathcal{Y}}_{\infty},\widetilde{\mathcal{Z}}_{\infty}))$ be the coupling between the point process corresponding to $\kappa_{0}$, respectively $\kappa_{1}$,
induced by the path $(\kappa_{q})_{0\leq q\leq 1}$, and 
$((\mathcal{Y}_{\infty},\mathcal{Z}_{\infty}),(
\widehat{\mathcal{Y}}_{\infty},\widehat{\mathcal{Z}}_{\infty}))$ the coupling induced by the path $(\hat{\kappa}_{q})_{0\leq q\leq 1}$. Then,
\begin{multline*}
\mathbb{P}\Big(\mathcal{Y}_{\infty}=\Big\lbrace -\frac{1}{2},\frac{1}{2}\Big\rbrace,
\widetilde{\mathcal{Y}}_{\infty}=\Big\lbrace-\frac{3}{2},\frac{1}{2},\frac{3}{2}\Big\rbrace\Big)\\=
\mathbb{P}\Big(\mathcal{Y}_{\infty}=\Big\lbrace -\frac{1}{2},\frac{1}{2}\Big\rbrace\Big)
\times G_{\frac{1}{2}}\Big(-\frac{3}{2}\Big)\dfrac{u_{\frac{1}{2},\downarrow}\big(-\frac{1}{2}\big)}{u_{\frac{1}{2},\downarrow}\big(-\frac{3}{2}\big)}G_{1}\Big(\frac{3}{2}\Big)
\bigg(1-\dfrac{u_{1,\uparrow}\big(\frac{1}{2}\big)}{u_{1,\uparrow}\big(\frac{3}{2}\big)}\bigg),
\end{multline*}
\begin{multline*}
\mathbb{P}\Big(\mathcal{Y}_{\infty}=\Big\lbrace -
\frac{1}{2},\frac{1}{2}\Big\rbrace,
\widehat{\mathcal{Y}}_{\infty}=\Big\lbrace-
\frac{3}{2},\frac{1}{2},\frac{3}{2}\Big\rbrace\Big)\\=
\mathbb{P}\Big(\mathcal{Y}_{\infty}=\Big\lbrace 
-\frac{1}{2},\frac{1}{2}\Big\rbrace\Big)
\times \widehat{G}_{\frac{1}{2}}\Big(\frac{3}{2}\Big)\,\bigg(1-\dfrac{\hat{u}_{\frac{1}{2},\uparrow}\big(\frac{1}{2}\big)}{\hat{u}_{\frac{1}{2},\uparrow}\big(\frac{3}{2}\big)}\bigg)\,\widehat{G}_{1}\Big(-\frac{3}{2}\Big)
\dfrac{\hat{u}_{1,\downarrow}\big(-\frac{1}{2}\big)}
{\hat{u}_{1,\downarrow}\big(-\frac{3}{2}\big)}.
\end{multline*}
But,
\begin{displaymath}
\widehat{G}_{\frac{1}{2}}\big(\frac{3}{2}\big)=
G_{\frac{1}{2}}\big(-\frac{3}{2}\big)
\qquad\widehat{G}_{1}\big(-\frac{3}{2}\big)=G_{1}\big(\frac{3}{2}\big),
\end{displaymath}
and
\begin{displaymath}
\dfrac{\hat{u}_{\frac{1}{2},\uparrow}\big(\frac{1}{2}\big)}{\hat{u}_{\frac{1}{2},\uparrow}\big(\frac{3}{2}\big)}=\dfrac{u_{\frac{1}{2},\downarrow}\big(-\frac{1}{2}\big)}{u_{\frac{1}{2},\downarrow}\big(-\frac{3}{2}\big)}\qquad
\dfrac{\hat{u}_{1,\downarrow}\big(-\frac{1}{2}\big)}
{\hat{u}_{1,\downarrow}\big(-\frac{3}{2}\big)}=
\dfrac{u_{1,\uparrow}\big(\frac{1}{2}\big)}{u_{1,\uparrow}\big(\frac{3}{2}\big)}.
\end{displaymath}
Thus,
\begin{displaymath}
\dfrac{\mathbb{P}\Big(\mathcal{Y}_{\infty}=\Big\lbrace -\frac{1}{2},\frac{1}{2}\Big\rbrace,\widetilde{\mathcal{Y}}_{\infty}=\Big\lbrace-\frac{3}{2},\frac{1}{2},\frac{3}{2}\Big\rbrace\Big)}
{\mathbb{P}\Big(\mathcal{Y}_{\infty}=\Big\lbrace -\frac{1}{2},\frac{1}{2}\Big\rbrace,
\widehat{\mathcal{Y}}_{\infty}=\Big\lbrace-\frac{3}{2},\frac{1}{2},\frac{3}{2}\Big\rbrace\Big)}=\dfrac{\dfrac{u_{\frac{1}{2},\downarrow}\big(-\frac{1}{2}\big)}{u_{\frac{1}{2},\downarrow}\big(-\frac{3}{2}\big)}}{1-\dfrac{u_{\frac{1}{2},\downarrow}\big(-\frac{1}{2}\big)}{u_{\frac{1}{2},\downarrow}\big(-\frac{3}{2}\big)}}\times
\dfrac{1-\dfrac{u_{1,\uparrow}\big(\frac{1}{2}\big)}{u_{1,\uparrow}\big(\frac{3}{2}\big)}}{\dfrac{u_{1,\uparrow}\big(\frac{1}{2}\big)}{u_{1,\uparrow}\big(\frac{3}{2}\big)}}.
\end{displaymath}
But,
\begin{displaymath}
\dfrac{u_{\frac{1}{2},\downarrow}\big(-\frac{1}{2}\big)}{u_{\frac{1}{2},\downarrow}\big(-\frac{3}{2}\big)}=\dfrac{3}{11}~~~~~~
\dfrac{u_{1,\uparrow}\big(\frac{1}{2}\big)}{u_{1,\uparrow}\big(\frac{3}{2}\big)}=\dfrac{11}{41}.
\end{displaymath}
Thus,
\begin{displaymath}
\dfrac{\mathbb{P}\Big(\mathcal{Y}_{\infty}=\Big\lbrace -\frac{1}{2},\frac{1}{2}\Big\rbrace,\widetilde{\mathcal{Y}}_{\infty}=\Big\lbrace-\frac{3}{2},\frac{1}{2},\frac{3}{2}\Big\rbrace\Big)}
{\mathbb{P}\Big(\mathcal{Y}_{\infty}=\Big\lbrace -\frac{1}{2},\frac{1}{2}\Big\rbrace,
\widehat{\mathcal{Y}}_{\infty}=\Big\lbrace-\frac{3}{2},\frac{1}{2},\frac{3}{2}\Big\rbrace\Big)}=\dfrac{45}{44}\neq 1.
\end{displaymath}
The two couplings are different.

If $\tilde{\kappa}-\kappa$ does not have a first moment,
we can still construct a coupling between $(\mathcal{Y}_{\infty},\mathcal{Z}_{\infty})$ and $(\widetilde{\mathcal{Y}}_{\infty},\widetilde{\mathcal{Z}}_{\infty})$ as follows: Consider a continuous monotonic increasing path $(\kappa_{q})_{0\leq q\leq 1}$ joining $\kappa$ to $\tilde{\kappa}$, satisfying the constraint
\begin{displaymath}
\forall q\in[0,1), \int_{\mathbb{R}}\vert x\vert
(\kappa_{q}(dx)-\kappa_{0}(dx))<+\infty.
\end{displaymath}
Given $q_{0}\in (0,1)$, one can apply Procedure \ref{Ch7Sec2: ProcCoupling} to the arguments $(\mathcal{Y}_{\infty},\mathcal{Z}_{\infty})$,$\kappa$, $\kappa_{q_{0}}$, and the partial path $(\kappa_{q})_{0\leq q\leq q_{0}}$. As result, we get a two interwoven determinantal point processes corresponding to the killing measure $\kappa_{q_{0}}$. At the limit, as $q_{0}$ tends to $1$, we get something that has the same law as 
$(\widetilde{\mathcal{Y}}_{\infty},\widetilde{\mathcal{Z}}_{\infty})$.

Next, we prove the existence of stronger couplings in case $\tilde{\kappa}=c\kappa$, where $c>1$ is a constant.

\begin{prop}
\label{Ch7Sec2: PropStrongCoup}
If $\tilde{\kappa}=c\kappa$, with $c>1$, then there is a coupling between $(\mathcal{Y}_{\infty},\mathcal{Z}_{\infty})$ and $(\widetilde{\mathcal{Y}}_{\infty},\widetilde{\mathcal{Z}}_{\infty})$, such that $\mathcal{Z}_{\infty}\subseteq \widetilde{\mathcal{Z}}_{\infty}$, and $\mathcal{Y}_{\infty}\subseteq \widetilde{\mathcal{Y}}_{\infty}$.
\end{prop}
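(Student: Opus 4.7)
The plan is to adapt the two-step strategy used in the discrete case (the refinement of proposition \ref{Ch7Sec2: PropCouplDiscr} for edges adjacent to a single vertex with proportional weights), transposing ``edges to the cemetery'' into ``points of $\mathcal{Y}_\infty$'' and ``connected components'' into ``gaps between consecutive points of $\mathcal{Z}_\infty$''. First I would invoke procedure \ref{Ch7Sec2: ProcCoupling} to obtain a coupling between $(\mathcal{Y}_\infty,\mathcal{Z}_\infty)$ and a realization $(\widetilde{\mathcal{Y}}_\infty',\widetilde{\mathcal{Z}}_\infty)$ of the process associated with $\tilde{\kappa}=c\kappa$ satisfying $\mathcal{Z}_\infty\subseteq \widetilde{\mathcal{Z}}_\infty$. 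This coupling need not satisfy $\mathcal{Y}_\infty\subseteq\widetilde{\mathcal{Y}}_\infty'$, so the second step will discard the $\mathcal{Y}$-points of both processes and resample them, leaving $\mathcal{Z}_\infty$ and $\widetilde{\mathcal{Z}}_\infty$ unchanged.

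For the resampling: given $\widetilde{\mathcal{Z}}_\infty$ with consecutive points $\tilde{z}_{i-1}<\tilde{z}_i$, I sample independently one point $\widetilde{Y}_i$ in each gap $(\tilde{z}_{i-1},\tilde{z}_i)$ according to $\kappa(dy)/\kappa((\tilde{z}_{i-1},\tilde{z}_i))$; since $\tilde{\kappa}=c\kappa$ this coincides with $\tilde{\kappa}(dy)/\tilde{\kappa}((\tilde{z}_{i-1},\tilde{z}_i))$, so by proposition \ref{Ch6Sec4: PropJointLawGeneral} the pair $(\widetilde{\mathcal{Y}}_\infty:=\{\widetilde{Y}_i\},\widetilde{\mathcal{Z}}_\infty)$ has the required joint law. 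Then, for each gap $(z_{r-1},z_r)$ between consecutive points of $\mathcal{Z}_\infty$, which is subdivided by points of $\widetilde{\mathcal{Z}}_\infty$ into sub-gaps $(\tilde{z}_{i-1},\tilde{z}_i)$ each already carrying a single $\widetilde{Y}_i$, I declare the $\mathcal{Y}_\infty$-point in $(z_{r-1},z_r)$ to be $\widetilde{Y}_i$ chosen with probability $\kappa((\tilde{z}_{i-1},\tilde{z}_i))/\kappa((z_{r-1},z_r))$, the choices being independent across gaps of $\mathcal{Z}_\infty$. The total-probability identity
\[
\sum_i \frac{\kappa((\tilde{z}_{i-1},\tilde{z}_i))}{\kappa((z_{r-1},z_r))}\cdot\frac{1_{A\cap(\tilde{z}_{i-1},\tilde{z}_i)}\,\kappa(dy)}{\kappa((\tilde{z}_{i-1},\tilde{z}_i))}=\frac{1_{A\cap(z_{r-1},z_r)}\,\kappa(dy)}{\kappa((z_{r-1},z_r))}
\]
then shows, again by proposition \ref{Ch6Sec4: PropJointLawGeneral}, that the resulting pair $(\mathcal{Y}_\infty,\mathcal{Z}_\infty)$ has the correct joint law. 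By construction $\mathcal{Y}_\infty\subseteq\widetilde{\mathcal{Y}}_\infty$ and $\mathcal{Z}_\infty\subseteq\widetilde{\mathcal{Z}}_\infty$.

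The boundary gaps (below $\min\mathcal{Z}_\infty$ or above $\max\mathcal{Z}_\infty$, when they are finite) are handled identically using the description of $\min\mathcal{Y}_\infty$ and $\max\mathcal{Y}_\infty$ conditionally on $\mathcal{Z}_\infty$ provided by proposition \ref{Ch6Sec3: PropJointLaw}; since $\tilde{\kappa}=c\kappa$, the first-moment condition $\int|x|\kappa(dx)<+\infty$ holds simultaneously for both measures, so both processes produce extremal $\mathcal{Y}$-points at the same boundary intervals and the selection mechanism applies verbatim.

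The main obstacle is not in the construction but in verifying that the resampling preserves the marginal laws on both sides: this hinges entirely on the observation that, thanks to $\tilde{\kappa}=c\kappa$, the conditional law of a $\mathcal{Y}$-point inside a gap is the normalized restriction of $\kappa$ for both processes, so the tower property used in the selection step gives back exactly the $\kappa$-normalized law on the coarser gap. Without the proportionality assumption, the two conditional laws inside the same gap would differ and the selection probabilities could no longer be chosen to be independent of the gap, which matches the non-uniqueness of couplings observed in the example following proposition \ref{Ch7Sec2: PropCouplingPoisson} and the fact that the analogous discrete strengthening in proposition \ref{Ch7Sec2: PropCouplDiscr} also requires proportional weights.
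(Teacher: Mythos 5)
Your proposal is correct and follows essentially the same two-step strategy as the paper: invoke procedure \ref{Ch7Sec2: ProcCoupling} to obtain $\mathcal{Z}_{\infty}\subseteq\widetilde{\mathcal{Z}}_{\infty}$, then resample the $\mathcal{Y}$-points by first drawing one $\kappa$-proportional point in each $\widetilde{\mathcal{Z}}_{\infty}$-gap and then selecting among these, with probabilities proportional to $\kappa$-mass, one representative per coarser $\mathcal{Z}_{\infty}$-gap, using proportionality to check both marginal laws are preserved. Your explicit discussion of the boundary gaps and of why the proportionality assumption is essential is a welcome clarification of a point the paper leaves implicit, but the underlying argument is the same.
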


\begin{proof}
Consider a coupling between $(\mathcal{Y}_{\infty},\mathcal{Z}_{\infty})$ and $(\widetilde{\mathcal{Y}}_{\infty},\widetilde{\mathcal{Z}}_{\infty})$ given by Procedure \ref{Ch7Sec2: ProcCoupling}, possibly extended to the case where $\kappa$ does not have a first moment. Then, $\mathcal{Z}_{\infty}\subseteq \widetilde{\mathcal{Z}}_{\infty}$, but in general, 
$\mathcal{Y}_{\infty}\not\subseteq\widetilde{\mathcal{Y}}_{\infty}$. So, we will sample other point processes $\mathcal{Y}'_{\infty}$ and $\widetilde{\mathcal{Y}}'_{\infty}$ that conditional on $\mathcal{Z}_{\infty}$, respectively $\widetilde{\mathcal{Z}}_{\infty}$, have the same law as $\mathcal{Y}_{\infty}$ respectively $\widetilde{\mathcal{Y}}_{\infty}$, and such that $\mathcal{Y}'_{\infty}\subseteq\widetilde{\mathcal{Y}}'_{\infty}$. 
For each connected component $\widetilde{J}$ of $\mathbb{R}\setminus\widetilde{\mathcal{Z}}_{\infty}$, we sample a point $\widetilde{Y}_{\widetilde{J}}$ according the measure $\frac{1_{y\in\widetilde{J}}\tilde{\kappa}(dy)}
{\tilde{\kappa}(\widetilde{J})}$. We assume that, conditional on $\widetilde{\mathcal{Z}}_{\infty}$, all the $\widetilde{Y}_{\widetilde{J}}$ are independent from $\mathcal{Z}_{\infty}$ and independent one from another. We set
\begin{displaymath}
\widetilde{\mathcal{Y}}'_{\infty}:=\lbrace \widetilde{Y}_{\widetilde{J}}\vert \widetilde{J}~\text{connected component of}~\mathbb{R}\setminus
\widetilde{\mathcal{Z}}_{\infty}\rbrace.
\end{displaymath}
Then, $(\widetilde{\mathcal{Y}}'_{\infty},\widetilde{\mathcal{Z}}_{\infty})$ has the same law as $(\widetilde{\mathcal{Y}}_{\infty},\widetilde{\mathcal{Z}}_{\infty})$.
Let be $J$ a connected component of $\mathbb{R}\setminus\mathcal{Z}_{\infty}$, and $\widetilde{J}_{1},\dots,\widetilde{J}_{N_{J}}$ the connected components of $J\setminus\widetilde{\mathcal{Z}}_{\infty}$. On $J$, we define the r.v. $Y_{J}$ as follows: $Y_{J}$ is equal to one of the $\widetilde{Y}_{\widetilde{J}_{n}}$-s, and 
\begin{displaymath}
\mathbb{P}\big(Y_{J}=\widetilde{Y}_{\widetilde{J}_{n}}\vert J,\widetilde{J}_{1},\dots,\widetilde{J}_{N_{J}}\big)=\dfrac{\kappa(\widetilde{J}_{n})}{\kappa(J)}.
\end{displaymath}
We set
\begin{displaymath}
\mathcal{Y}'_{\infty}:=\lbrace Y_{J}\vert J~\text{connected component of}~\mathbb{R}\setminus
\mathcal{Z}_{\infty}\rbrace.
\end{displaymath}
By construction, $\mathcal{Y}'_{\infty}\subseteq\widetilde{\mathcal{Y}}'_{\infty}$. Moreover, the proportionality of $\kappa$ and $\tilde{\kappa}$ ensures that $(\mathcal{Y}'_{\infty},\mathcal{Z}_{\infty})$ has the same law as $(\mathcal{Y}_{\infty},\mathcal{Z}_{\infty})$.
\end{proof}

\backmatter

\bibliographystyle{smfplain}
\bibliography{tituslupu}

\end{document}